\theoremstyle{plain}
\newtheorem{theorem}{Theorem}[section]
\newtheorem{lemma}[theorem]{Lemma}
\theoremstyle{definition}
\newtheorem{assumption}[theorem]{Assumption}
\newtheorem{remark}[theorem]{Remark}
\definecolor{mycolor}{RGB}{0,150,70}
\newcommand{\revision}[1]{{\color{black}#1}}
\icmltitlerunning{High-Probability Convergence for Composite and Distributed Optimization with Heavy-Tailed Noise}
\definecolor{bgcolor}{rgb}{0.8,1,1}
\definecolor{bgcolor2}{rgb}{0.8,1,0.8}
\definecolor{niceblue}{rgb}{0.0,0.19,0.56}
\definecolor{PineGreen}{RGB}{0,110,51}
\definecolor{BrickRed}{RGB}{143,20,2}
\newcommand{\cmark}{{\color{PineGreen}\ding{51}}}%
\newcommand{\xmark}{{\color{BrickRed}\ding{55}}}%
\newcommand{\R}{\mathbb{R}}
\newcommand{\eqdef}{\stackrel{\text{def}}{=}}
\def\<#1,#2>{\left\langle #1,#2\right\rangle}
\newcolumntype{Y}{>{\centering\arraybackslash}X}
\newcommand{\algname}[1]{{\sf  #1}\xspace}
\newcommand{\circledOne}{\text{\ding{172}}}
\newcommand{\circledTwo}{\text{\ding{173}}}
\newcommand{\circledThree}{\text{\ding{174}}}
\newcommand{\circledFour}{\text{\ding{175}}}
\newcommand{\circledFive}{\text{\ding{176}}}
\newcommand{\circledSix}{\text{\ding{177}}}
\newcommand{\circledSeven}{\text{\ding{178}}}
\newcommand{\circledEight}{\text{\ding{179}}}
\newcommand{\circledNine}{\text{\ding{180}}}
\newcommand{\circledTen}{\text{\ding{181}}}
\newcommand{\cC}{{\cal C}}
\newcommand{\cD}{{\cal D}}
\newcommand{\cO}{{\cal O}}
\newcommand{\EE}{\mathbb{E}}
\newcommand{\prox}{\mathop{\mathrm{prox}}\nolimits}
\newcommand{\proxkPsi}{\prox_{\gamma \Psi}}
\newcommand{\PP}{\mathbb{P}}
\newcommand{\tx}{\widetilde{x}}
\newcommand{\tX}{\widetilde{X}}
\newcommand{\tg}{\widetilde{g}}
\newcommand{\hg}{\widehat{g}}
\newcommand{\thh}{\widetilde{h}}
\newcommand{\hh}{\widehat{h}}
\def\clip{\texttt{clip}}
\def\clip{\texttt{clip}}
\def\avg{\texttt{avg}}
\def\gap{\texttt{Gap}}
\newlength{\dhatheight}
\def\la{\langle}
\def\ra{\rangle}
\def\pd#1{{\color{black}#1}}
\begin{document}

\twocolumn[
\icmltitle{High-Probability Convergence for Composite and Distributed Stochastic Minimization and Variational Inequalities with Heavy-Tailed Noise}

% It is OKAY to include author information, even for blind
% submissions: the style file will automatically remove it for you
% unless you've provided the [accepted] option to the icml2024
% package.

% List of affiliations: The first argument should be a (short)
% identifier you will use later to specify author affiliations
% Academic affiliations should list Department, University, City, Region, Country
% Industry affiliations should list Company, City, Region, Country

% You can specify symbols, otherwise they are numbered in order.
% Ideally, you should not use this facility. Affiliations will be numbered
% in order of appearance and this is the preferred way.
\icmlsetsymbol{equal}{*}

\begin{icmlauthorlist}
\icmlauthor{Eduard Gorbunov}{mbzuai}
\icmlauthor{Abdurakhmon Sadiev}{kaust}
\icmlauthor{Marina Danilova}{mipt}
\icmlauthor{Samuel Horv\'ath}{mbzuai}
\icmlauthor{Gauthier Gidel}{udem,cifar}
\icmlauthor{Pavel Dvurechensky}{wias}
\icmlauthor{Alexander Gasnikov}{innopolis,isp,mipt,skoltech}
%\icmlauthor{}{sch}
\icmlauthor{Peter Richt\'arik}{kaust}
%\icmlauthor{}{sch}
%\icmlauthor{}{sch}
\end{icmlauthorlist}

\icmlaffiliation{kaust}{King Abdullah University of Science and Technology, KSA}
\icmlaffiliation{mipt}{Moscow Institute of Physics and Technology, Russia}
\icmlaffiliation{mbzuai}{Mohamed bin Zayed University of Artificial Intelligence, UAE}
\icmlaffiliation{udem}{Universit\'e de Montr\'eal and Mila, Canada}
\icmlaffiliation{cifar}{Canada CIFAR AI Chair}
\icmlaffiliation{wias}{Weierstrass Institute for Applied Analysis and Stochastics, Germany}
\icmlaffiliation{innopolis}{University Innopolis, Russia}
\icmlaffiliation{isp}{Ivannikov Institute for System Programming RAS, Russia}
\icmlaffiliation{skoltech}{Skolkovo Institute of Science and Technology, Russia}

\icmlcorrespondingauthor{Eduard Gorbunov}{eduard.gorbunov@mbzuai.ac.ae}
% \icmlcorrespondingauthor{Firstname2 Lastname2}{first2.last2@www.uk}

% You may provide any keywords that you
% find helpful for describing your paper; these are used to populate
% the "keywords" metadata in the PDF but will not be shown in the document
\icmlkeywords{Stochastic Optimization, High-Probability Convergence, Heavy-Tailed Noise, Gradient Clipping, Distributed Problems, Variational Inequalities, Machine Learning, ICML}

\vskip 0.3in
]

% this must go after the closing bracket ] following \twocolumn[ ...

% This command actually creates the footnote in the first column
% listing the affiliations and the copyright notice.
% The command takes one argument, which is text to display at the start of the footnote.
% The \icmlEqualContribution command is standard text for equal contribution.
% Remove it (just {}) if you do not need this facility.

\printAffiliationsAndNotice{}  % leave blank if no need to mention equal contribution
% \printAffiliationsAndNotice{\icmlEqualContribution} % otherwise use the standard text.

\begin{abstract}
High-probability analysis of stochastic first-order optimization methods under mild assumptions on the noise has been gaining a lot of attention in recent years. Typically, gradient clipping is one of the key algorithmic ingredients to derive good high-probability guarantees when the noise is heavy-tailed. However, if implemented na\"ively, clipping can spoil the convergence of the popular methods for composite and distributed optimization (\algname{Prox-SGD}/\algname{Parallel SGD}) even in the absence of any noise. Due to this reason, many works on high-probability analysis consider only unconstrained non-distributed problems, and the existing results for composite/distributed problems do not include some important special cases (like strongly convex problems) and are not optimal. To address this issue, we propose new stochastic methods for composite and distributed optimization based on the clipping of stochastic gradient differences and prove tight high-probability convergence results (including nearly optimal ones) for the new methods. In addition, we also develop new methods for composite and distributed variational inequalities and analyze the high-probability convergence of these methods.
\end{abstract}

\section{Introduction}
Many recent works on stochastic optimization have the ultimate goal of bridging the theory and practice in machine learning. This is mostly reflected in the attempts at the theoretical analysis of optimization methods under weaker assumptions than the standard ones. Moreover, some phenomena cannot be explained using classical in-expectation convergence analysis (see the motivating example from \citep{gorbunov2020stochastic}) that results in the growing interest in more accurate ways to the analysis of stochastic methods, for example, \emph{high-probability convergence analysis}.

However, despite the significant attention to this topic \citep{nazin2019algorithms, davis2021low, gorbunov2020stochastic, gorbunov2022clipped, cutkosky2021high, sadiev2023high, nguyen2023high, liu2023stochastic, liu2023high}, several important directions remain unexplored. In particular, all mentioned works either consider unconstrained problems or consider general composite/constrained minimization/variational inequality problems but have some noticeable limitations, such as bounded domain assumption, extra logarithmic factors \pd{in the complexity bounds}, not optimal (not accelerated) \pd{convergence} rates, or no analysis of (quasi-) strongly convex (monotone) case. The importance of composite/constrained formulations for the machine learning community can be justified in many ways. For example, composite optimization and distributed optimization have a lot of similarities, i.e., one can view a distributed optimization problem as a special composite optimization problem \citep{parikh2014proximal}. Due to the large sizes of modern machine learning models and datasets, many important problems can be solved in a reasonable time only via distributed methods. Moreover, some problems have a distributed nature, e.g., data determining the problem can be (privately) stored on multiple devices connected with a central server, which is a classical setup for Federated Learning \citep{konevcny2016federated, kairouz2021advances}. Next, composite formulations are very useful for handling different regularizations popular in machine learning and statistics \citep{zou2005regularization, shalev2014understanding, beck2017first}. Finally, variational inequalities are usually considered with constraints as well.

The discrepancy between the importance of composite/constrained and distributed formulations and the lack of high-probability convergence results in this setup can be partially explained as follows. SOTA high-probability convergence results are derived for the algorithms that use \emph{gradient clipping} \citep{pascanu2013difficulty}, i.e., the clipping operator defined as $\clip(x,\lambda) = \min\{1, \nicefrac{\lambda}{\|x\|}\}x$ for $x\neq 0$ and $\clip(0,\lambda) = 0$ with some clipping level $\lambda > 0$ is applied to the stochastic gradients. If $\lambda$ is too small, then na\"ive Proximal Gradient Descent with gradient clipping is not a fixed point method, i.e., the method escapes the solution even if it is initialized there (see a technical explanation in Section~\ref{sec:the_issue_and_star_method}). This fact implies that one has either to increase the clipping level or to decrease the stepsize to converge to the exact solution asymptotically; the latter approach leads to a slower convergence rate. On the other hand, even in the unconstrained case, the existing results with acceleration/linear convergence are derived for the methods using decreasing clipping level \citep{gorbunov2020stochastic,sadiev2023high}. Therefore, new algorithms and analyses are required to handle this issue.

In this work, we close this gap by proposing new stochastic methods for composite and distributed problems via the clipping of \emph{gradient differences} that converge to zero with high probability. This allows us to achieve the desirable acceleration and linear convergence. Before we move on to the presentation of the main contributions, we need to introduce the problem settings formally.

\subsection{Setup}

\paragraph{Notation.} The standard Euclidean norm of vector $x \in \R^d$ is denoted as $\|x\| = \sqrt{\langle x,x \rangle}$. $B_R(x) = \{y\in \R^d\mid \|y-x\| \leq R\}$ is the ball centered at $x$ with radius $R$. Bregman divergence w.r.t.\ function $f$ is denoted as $D_f(x,y) \eqdef f(x) - f(y) - \langle \nabla f(y), x - y \rangle$. In $\cO(\cdot)$, we omit the numerical factors, and in $\widetilde\cO(\cdot)$, we omit numerical and logarithmic factors. For natural $n\geq 1$ the set $\{1,2,\ldots,n\}$ is denoted as $[n]$. We use $\EE_{\xi}[\cdot]$ to denote the expectation w.r.t.\ the randomness coming from $\xi$.

\paragraph{Considered problems.} In this work, we focus on stochastic composite minimization problems:
\begin{equation}
    \min\limits_{x\in\R^d} \left\{\Phi(x) = f(x) + \Psi (x)\right\}, \label{eq:min_problem}
\end{equation}
where $f(x) = \EE_{\xi\sim\cD}[f_{\xi}(x)]$ is a differentiable function satisfying some properties to be defined later and $\Psi(x)$ is a proper, closed, convex function (composite/regularization term). The examples of problem \eqref{eq:min_problem} arise in various applications, e.g., machine learning \citep{shalev2014understanding}, signal processing \citep{combettes2011proximal}, image processing \citep{luke2020proximal}. \revision{We also consider variational inequality problems, see Appendix~\ref{appendix:VIs}.}

The distributed version of \eqref{eq:min_problem} has the following structure:
\begin{equation}
    f(x) = \frac{1}{n}\sum\limits_{i=1}^n \left\{f_i(x) = \EE_{\xi_i\sim \cD_i}[f_{\xi_i}(x)]\right\}. \label{eq:distributed_problem}
\end{equation}
In this case, there are $n$ workers connected in a centralized way with some parameter server; worker $i$ can query some noisy information (stochastic gradients/estimates) about $f_i$.

\paragraph{In-expectation and high-probability convergence.} In-expectation convergence guarantees provide the upper bounds on the number of iterations/oracle calls $\hat K = \hat K(\varepsilon)$ for a method needed to find point $x^{\hat K}$ such that $\EE[\cC(x^{\hat K})] \leq \varepsilon$ for given convergence criterion $\cC(x)$ (e.g., $\cC(x)$ can be $f(x) - f(x^*)$, $\|x - x^*\|^2$, $\|\nabla f(x)\|^2$) and given accuracy $\varepsilon > 0$. High-probability convergence guarantees give the upper bounds on the  number of iterations/oracle calls $K = K(\varepsilon, \beta)$ for a method needed to find point $x$ such that $\PP\{\cC(x^K) \leq \varepsilon\} \geq 1 - \beta$, where $\beta \in (0,1)$ is a confidence level. It is worth noting that Markov's inequality implies $\PP\{\cC(x^K) > \varepsilon\} < \nicefrac{\EE[\cC(x^K)]}{\varepsilon}$, meaning that it is sufficient to take $K = \hat K(\beta\varepsilon) = \hat K$: $\PP\{\cC(x^{\hat K}) > \varepsilon\} < \nicefrac{\EE[\cC(x^{\hat K})]}{\varepsilon} \leq \beta$. However, this typically leads to the polynomial dependence on $\nicefrac{1}{\beta}$ that significantly spoils the complexity of the method when $\beta$ is small. Therefore, we focus on the high-probability convergence guarantees that depend on $\nicefrac{1}{\beta}$ poly-logarithmically. Moreover, such high-probability results are more sensitive to the noise distribution (and, thus, more accurate) than in-expectation ones \citep{gorbunov2020stochastic, sadiev2023high}.

\paragraph{Proximal operator.} We assume that function $\Psi(x)$ has a relatively simple structure such that one can efficiently compute \emph{proximal operator}: $\prox_{\gamma\Psi}(x) = \arg\min_{y\in \R^d}\{\gamma\Psi(y) + \tfrac{1}{2}\|y - x\|^2\}$. For the properties of the proximal operator and examples of functions $\Psi(x)$ such that $\prox_{\gamma\Psi}(x)$ can be easily computed, we refer the reader to \citep{beck2017first}.

\paragraph{Bounded central $\alpha$-th moment.} We consider the situation when $f_i$ and $F_i$ are accessible through the stochastic oracle calls. The stochastic estimates satisfy the following assumption.\footnote{Following \citep{sadiev2023high}, we consider all assumptions only on some bounded set $Q \subseteq \R^d$; the diameter of $Q$ depends on the starting point. \revision{We emphasize that we do not assume boundedness of the domain of the original problem. Instead, we prove via induction that the iterates of the considered methods stay in some ball around the solution with high probability (see the details in Section~\ref{sec:proofs_structure}). Thus, it is sufficient for us to assume everything just on this ball, though our analysis remains unchanged if we introduce all assumptions on the whole domain.}}
\begin{assumption}\label{as:bounded_alpha_moment}
    There exist some set $Q \subseteq \R^d$ and values $\sigma \geq 0$, $\alpha \in (1,2]$ such that for all $x \in Q$ we have $\EE_{\xi_i\sim \cD_i}[\nabla f_{\xi_i}(x)] = \nabla f_i(x)$ and
    \begin{equation}
        \EE_{\xi_i\sim \cD_i}[\left\|\nabla f_{\xi_i}(x) - \nabla f_i(x)\right\|^\alpha] \leq \sigma^\alpha. \label{eq:bounded_alpha_moment_gradient}
    \end{equation}
\end{assumption}
For $\alpha = 2$, Assumption~\ref{as:bounded_alpha_moment} reduces to the bounded variance assumption, and for $\alpha \in (1,2)$ variance of the stochastic estimator can be unbounded, e.g., the noise can have L\'evy $\alpha$-stable distribution \citep{zhang2020adaptive}, which is heavy-tailed.

\paragraph{Assumptions on $f_i$.} We assume that functions $\{f_i\}_{i\in [n]}$ are $L$-smooth.
\begin{assumption}\label{as:L_smoothness}
    We assume that there exist some set $Q \subseteq \R^d$ and constant $L > 0$ such that for all $x, y \in Q, i\in [n]$ and \revision{for all $x^* \in \arg\min_{x\in \R^d} \Phi(x)$}
    \begin{eqnarray}
        \|\nabla f_i(x) - \nabla f_i(y)\| &\leq& L\|x - y\|, \label{eq:L_smoothness}\\
        \|\nabla f_i(x) - \nabla f_i(x^*)\|^2 &\leq& 2LD_{f_i}(x, x^*). \label{eq:L_smoothness_cor_2}
    \end{eqnarray}
\end{assumption}
As noted in \citep[Appendix B]{sadiev2023high}, \eqref{eq:L_smoothness_cor_2} is satisfied on the set $Q \neq \R^d$ if \eqref{eq:L_smoothness} holds on a slightly larger set in the case of $\Psi \equiv 0$, $n = 1$ (unconstrained single-node case). For simplicity, we assume that both \eqref{eq:L_smoothness} and \eqref{eq:L_smoothness_cor_2} hold on $Q$. This is always the case for $L$-smooth functions on $Q = \R^d$ when $\Psi \equiv 0$, $n = 1$. In a more general situation, condition \eqref{eq:L_smoothness_cor_2} can be viewed as an assumption on the structured non-convexity of $\{f_i\}_{i\in [n]}$. \revision{Finally, if $\{f_i\}_{i\in [n]}$ are convex and $L$-smooth on the whole domain of the problem \eqref{eq:min_problem}, then Assumption~\ref{as:L_smoothness} holds.}

Next, for each particular result about the convergence of methods for \eqref{eq:min_problem}, we make one of the following assumptions.
\begin{assumption}\label{as:str_cvx}
    There exist some set $Q \subseteq \R^d$ and constant $\mu \geq 0$ such that $f$ is $\mu$-strongly convex: $\forall \, x, y \in Q$
    \begin{equation}
        f(y) \geq f(x) + \langle \nabla f(x), y - x \rangle + \frac{\mu}{2}\|y - x\|^2. \label{eq:str_cvx}
    \end{equation}
    When $\mu = 0$, function $f$ is called convex on $Q$.
\end{assumption}
This is a standard assumption for optimization literature \citep{nesterov2018lectures}. We also consider a relaxation of strong convexity.
\begin{assumption}\label{as:QSC}
    There exist some set $Q \subseteq \R^d$ and constant $\mu \geq 0$ such that $f_1,\ldots,f_n$ are $(\mu, x^*)$-quasi-strongly convex \revision{for all $x^* \in \arg\min_{x\in \R^d} \Phi(x)$}: $\forall \, x \in Q,\, i\in[n]$
    \begin{equation}
        f_i(x^*) \geq f_i(x) + \langle \nabla f_i(x), x^* - x \rangle + \frac{\mu}{2}\|x - x^*\|^2. \label{eq:QSC}
    \end{equation}
\end{assumption}
Condition \eqref{eq:QSC} is weaker than \eqref{eq:str_cvx} and holds even for some non-convex functions \citep{necoara2019linear}.

\subsection{Our Contributions}

\begin{table*}[t]
    \centering
    \scriptsize
    \caption{\scriptsize Summary of known and new high-probability complexity results for solving (non-) composite (non-) distributed smooth optimization problem \eqref{eq:min_problem}. Column ``Setup'' indicates the assumptions made in addition to Assumptions~\ref{as:bounded_alpha_moment} and \ref{as:L_smoothness}. All assumptions are made only on some ball around the solution with radius $\sim R \geq \|x^0 - x^*\|$. Complexity is the number of stochastic oracle calls (per worker) needed for a method to guarantee that $\PP\{\text{Metric} \leq \varepsilon\} \geq 1 - \beta$ for some $\varepsilon> 0$, $\beta \in (0,1]$ and ``Metric'' is taken from the corresponding column. Numerical and logarithmic factors are omitted for simplicity. Column ``C?'' shows whether the problem \eqref{eq:min_problem} is composite, ``D?'' indicates whether the problem \eqref{eq:min_problem} is distributed. Notation: $L$ = Lipschitz constant; $\sigma$ = parameter from Assumption~\ref{as:bounded_alpha_moment}; $R$ = any upper bound on $\|x^0 - x^*\|$; \revision{$\zeta_*$ = any upper bound on $\sqrt{\frac{1}{n}\sum_{i=1}^n \|\nabla f_i(x^*)\|^2}$; $\widehat{R}^2 = R\left(3R + L^{-1}(2\eta \sigma + \|\nabla f(x^0)\|)\right)$ for some $\eta > 0$ (for the result from \citep{nguyen2023improved}; one can show that $\widehat{R}^2 = \Theta(R^2 + \nicefrac{R\zeta_*}{L})$ when $n=1$, see the discussion after Theorem~\ref{thm:D_prox_clipped_SGD_main_cvx});} %$M$ = any upper bound on $\|x^0 -x^*\|^2 + \tfrac{C^2\alpha^2_{K_0+1}}{n^2}\sum_{i=1}^n\|\nabla f_i(x^*)\|^2$, where $C = K_0 = \Theta(\ln\tfrac{n}{\varepsilon\beta})$; 
    $\mu$ = (quasi-)strong convexity parameter. The results of this paper are highlighted in blue.
    }
    \label{tab:comparison_of_rates_minimization}
    % \vspace{-2.5mm}
    \begin{threeparttable}
        \begin{tabular}{|c|c c c c c|}
        \hline
        Setup & Method & Metric & Complexity & C? & D?\\
        \hline\hline
        \multirow{12}{*}{\makecell{As.~\ref{as:str_cvx}\\ ($\mu = 0$)}} &\begin{tabular}{c}
             \algname{clipped-SGD} \\ \citep{sadiev2023high}
        \end{tabular}  & $f(\overline{x}^K) - f(x^*)$ & $\max\left\{\frac{LR^2}{\varepsilon}, \left(\frac{\sigma R}{\varepsilon}\right)^{\frac{\alpha}{\alpha-1}}\right\}$ & \xmark & \xmark \\
        & \begin{tabular}{c}
             \algname{clipped-SSTM} \\ \citep{sadiev2023high}
        \end{tabular} & $f(y^K) - f(x^*)$ & $\max\left\{\sqrt{\frac{LR^2}{\varepsilon}}, \left(\frac{\sigma R}{\varepsilon}\right)^{\frac{\alpha}{\alpha-1}}\right\}$ &\xmark &\xmark\\
        & \begin{tabular}{c}
             \algname{Clipped-SMD}\tnote{\color{blue}(1),(2)} \\ \citep{nguyen2023improved}
        \end{tabular} &$\Phi(\overline{x}^K) - \Phi(x^*)$ &$\max\left\{\frac{L \revision{\widehat R^2}}{\varepsilon}, \left(\frac{\sigma R}{\varepsilon}\right)^{\frac{\alpha}{\alpha-1}}\right\}$ &\cmark &\xmark \\
        &\begin{tabular}{c}
             \algname{Clipped-ASMD}\tnote{\color{blue}(1)} \\ \citep{nguyen2023improved}
        \end{tabular} &$\Phi(y^K) - \Phi(x^*)$ &$\max\left\{\sqrt{\frac{L R^2}{\varepsilon}}, \left(\frac{\sigma R}{\varepsilon}\right)^{\frac{\alpha}{\alpha-1}}\right\}$ &\cmark\xmark \tnote{\color{blue}(3)}  &\xmark \\
        & \cellcolor{bgcolor} \begin{tabular}{c}
        
             \algname{DProx-clipped-SGD-shift} \\ Theorem~\ref{thm:D_prox_clipped_SGD_main_cvx}
        \end{tabular}  & \cellcolor{bgcolor}$\Phi(\overline{x}^K) - \Phi(x^*)$ & \cellcolor{bgcolor}$\max\left\{\frac{L\revision{R^2}}{\varepsilon}, \revision{\frac{R\zeta_*}{\sqrt{n}\varepsilon},} \frac{1}{n}\left(\frac{\sigma \revision{R}}{\varepsilon}\right)^{\frac{\alpha}{\alpha-1}}\right\}$ &\cellcolor{bgcolor} \cmark &\cellcolor{bgcolor} \cmark \\
        & \cellcolor{bgcolor} \begin{tabular}{c}

             \algname{DProx-clipped-SSTM-shift} \\ Theorem~\ref{thm:D_prox_clipped_SSSTM_main}
        \end{tabular} &\cellcolor{bgcolor} $\Phi(y^K) - \Phi(x^*)$ &\cellcolor{bgcolor} $\max\left\{\sqrt{\frac{L\revision{R^2}}{\varepsilon}}, \revision{\sqrt{\frac{R\zeta_*}{\sqrt{n}\varepsilon}},} \frac{1}{n}\left(\frac{\sigma \revision{R}}{\varepsilon}\right)^{\frac{\alpha}{\alpha-1}}\right\}$ &\cellcolor{bgcolor} \cmark &\cellcolor{bgcolor} \cmark \\
        \hline
        \multirow{3}{*}{\makecell{As.~\ref{as:QSC}\\ ($\mu > 0$)}} &  \begin{tabular}{c}
             \algname{clipped-SGD} \\ \citep{sadiev2023high}
        \end{tabular} & $\|x^K - x^*\|^2$ & $\max\left\{\frac{L}{\mu}, \left(\frac{\sigma^2}{\mu^2\varepsilon}\right)^{\frac{\alpha}{2(\alpha-1)}}\right\}$ &\xmark
        &\xmark \\
        & \cellcolor{bgcolor} \begin{tabular}{c}
             
             \algname{DProx-clipped-SGD-shift} \\ Theorem~\ref{thm:D_prox_clipped_SGD_main}
        \end{tabular} &\cellcolor{bgcolor} $\|x^K - x^*\|^2$ &\cellcolor{bgcolor} $\max\left\{\frac{L}{\mu}, \frac{1}{n}\left(\frac{\sigma^2}{\mu^2\varepsilon}\right)^{\frac{\alpha}{2(\alpha-1)}}\right\}$ &\cellcolor{bgcolor}\cmark &\cellcolor{bgcolor}\cmark \\
        \hline
    \end{tabular}
    \begin{tablenotes}
        {\scriptsize 
        \item [{\color{blue}(1)}] All assumptions are made on the whole domain.
        \item [{\color{blue}(2)}] \revision{The authors additionally assume that for a chosen point $\widehat x$ from the domain and for $\eta > 0$ one can compute an estimate $\widehat g$ such that $\mathbb{P}\lbrace \| \widehat g - \nabla f(\hat x) \| > \eta \sigma \rbrace \leq \epsilon$. Such an estimate can be found using geometric median of $\cO(\ln \epsilon^{-1})$ samples \citep{minsker2015geometric}.}
        \item [{\color{blue}(3)}] The authors assume that $\nabla f(x^*) = 0$, which is not true for general composite optimization.
        }
    \end{tablenotes}
    \end{threeparttable}
    % \vspace{-5mm}
\end{table*}

$\bullet$ \textbf{Methods with clipping of gradient differences for distributed composite minimization.} We develop two stochastic methods for composite minimization problems -- Proximal Clipped \algname{SGD} with shifts (\algname{Prox-clipped-SGD-shift}) and Proximal Clipped Similar Triangles Method with shifts (\algname{Prox-clipped-SSTM-shift}). Instead of clipping stochastic gradients, these methods clip the difference between the stochastic gradients and the shifts that are updated on the fly. This trick allows us to use decreasing clipping levels, and, as a result, we derive the first accelerated high-probability convergence rates and tight high-probability convergence rates for the non-accelerated method in the quasi-strongly convex case. We also generalize the proposed methods to the distributed case (\algname{DProx-clipped-SGD-shift} and \algname{DProx-clipped-SSTM-shift}) and prove that they benefit from parallelization. To the best of our knowledge, our results are the first showing linear speed-up under Assumption~\ref{as:bounded_alpha_moment}.

$\bullet$ \textbf{Methods with clipping of gradient differences for distributed composite VIPs.} We also apply the proposed trick to the methods for variational inequalities. In particular, we propose \algname{DProx-clipped-SGDA-shifts} and \algname{DProx-clipped-SEG-shifts} and rigorously analyze their high-probability convergence. As in the minimization case, the proposed methods have provable benefits from parallelization.

$\bullet$ \textbf{Tight convergence rates.} As a separate contribution, we highlight the tightness of our analysis: in the known special cases ($\Psi \equiv 0$ and/or $n=1$), the derived complexity bounds either recover or outperform previously known ones (see Table~\ref{tab:comparison_of_rates_minimization} and also Table~\ref{tab:comparison_of_rates_VIP} in the appendix). Moreover, in certain regimes, the results have optimal (up to logarithms) dependencies on $\varepsilon$. This is achieved under quite general assumptions.

\subsection{Closely Related Work}

We discuss closely related work here and defer additional discussion to Appendix~\ref{appendix:extra_related}.

\paragraph{High-probability bounds for unconstrained convex problems.} Standard high-probability convergence results are obtained under the so-called light-tails assumption (sub-Gaussian noise) \citep{nemirovski2009robust, juditsky2011solving, ghadimi2012optimal}. The first work addressing this limitation is \citep{nazin2019algorithms}, where the authors derive the first high-probability complexity bounds for the case of minimization on a bounded set under bounded variance assumption. In the unconstrained case, these results are extended and accelerated by \citet{gorbunov2020stochastic} for smooth convex and strongly convex minimization problems. \citet{gorbunov2021near} tightens them and generalizes to the case of problems with H\"older-continuous gradients and \citet{gorbunov2022clipped} derives high-probability convergence rates in the case of VIPs. \citet{sadiev2023high} relaxes the assumption of bounded variance to Assumption~\ref{as:bounded_alpha_moment} for all problem classes mentioned above, and the results under the same assumption are also derived for \algname{clipped-SGD} (without acceleration) by \citet{nguyen2023high} in the convex and non-convex cases.

\paragraph{High-probability bounds for composite convex problems.} \citet{nazin2019algorithms} propose a truncated version of Mirror Descent for convex and strongly convex composite problems and prove non-accelerated rates of convergence under bounded variance and \emph{bounded domain} assumptions. Accelerated results under bounded variance assumption for strongly convex composite problems are proven by \citet{davis2021low}, who propose an approach based on robust distance estimation. Since this approach requires solving some auxiliary problem at each iteration of the method, the complexity bound from \citet{davis2021low} contains extra logarithmic factors independent of the confidence level. Finally, in their very recent work, \citet{nguyen2023improved} prove high-probability convergence for Clipped Stochastic Mirror Descent (\algname{Clipped-SMD}) for \emph{convex} composite problems. Moreover, the authors also propose Accelerated \algname{Clipped-SMD} (\algname{Clipped-ASMD}) and show that the algorithm is indeed accelerated \emph{but only under the additional assumption that $\nabla f(x^*) = 0$}.

% \section{Main Results for Composite Distributed Minimization Problems}\label{sec:the_issue_and_star_method}

\section{Main Results}\label{sec:the_issue_and_star_method}

In this section, we consider problem \eqref{eq:min_problem} and methods for it.

% \subsection{The Issue and Non-Implementable Solution as a First Step}\label{sec:the_issue_and_star_method}

\paragraph{Failure of the na\"ive approach.}For simplicity, consider a non-stochastic case with strongly convex $f(x)$, $n = 1$. The standard deterministic first-order method for solving problems like \eqref{eq:min_problem} is Proximal Gradient Descent (\algname{Prox-GD}) \citep{combettes2011proximal,nesterov2013gradient}: $x^{k+1} = \prox_{\gamma\Psi}(x^k - \gamma \nabla f(x^k))$. 
% \begin{equation}
%     x^{k+1} = \prox_{\gamma\Psi}(x^k - \gamma \nabla f(x^k)). \notag
% \end{equation}
Due to the good interplay between the structure of the problem, properties of the proximal operator, and the structure of the method, \algname{Prox-GD} has the same (linear) convergence rate as \algname{GD} for minimization of $f(x)$. One of the key reasons for that is that any solution $x^*$ of problem \eqref{eq:min_problem} satisfies $x^* = \prox_{\gamma \Psi}(x^* - \gamma \nabla f(x^*))$, 
% \begin{equation}
%     x^* = \prox_{\gamma \Psi}(x^* - \gamma \nabla f(x^*)), \label{eq:fixed_point_property}
% \end{equation}
i.e., the solutions of \eqref{eq:min_problem} are fixed points of \algname{Prox-GD} (and vice versa), which is equivalent to $-\nabla f(x^*) \in \partial \Psi(x^*)$, where $\partial \Psi(x^*)$ is a subdifferential of $\Psi$ at $x^*$. However, if we apply gradient clipping to \algname{Prox-GD} na\"ively
\begin{equation}
    x^{k+1} = \prox_{\gamma\Psi}\left(x^k - \gamma \clip(\nabla f(x^k), \lambda)\right), \label{eq:prox_clip_gd_naive}
\end{equation}
then the method loses a fixed point property if $\|\nabla f(x^*)\| > \lambda$, because in this case, $-\clip(\nabla f(x^*), \lambda)$ does not necessarily belongs to $\partial \Psi(x^*)$ and $x^* \neq \prox_{\gamma \Psi}(x^* - \gamma \clip(\nabla f(x^*), \lambda))$ in general. Therefore, for such $\lambda$, one has to decrease the stepsize $\gamma$ to achieve any accuracy of the solution. This approach slows down the convergence making it sublinear even without any stochasticity in the gradients. To avoid this issue, it is necessary to set $\lambda$ large enough. This strategy works in the deterministic case but becomes problematic for a stochastic version of \eqref{eq:prox_clip_gd_naive}:
\begin{equation}
    x^{k+1} = \prox_{\gamma\Psi}\left(x^k - \gamma \clip(\nabla f_{\xi^k}(x^k), \lambda_k)\right), \label{eq:prox_clip_sgd_naive}
\end{equation}
where $\xi^k$ is sampled independently from previous iterations. The problem comes from the fact the existing analysis in the unconstrained case (which is a special case of the composite case) requires taking decreasing $\lambda_k$ \citep{gorbunov2021near, sadiev2023high} that contradicts the requirement that clipping level has to be large enough. Therefore, more fundamental algorithmic changes are needed.

\paragraph{Non-implementable solution.} Let us reformulate the issue: (i) to handle the heavy-tailed noise, we want to use decreasing clipping level $\lambda_k$, (ii) but the method should also converge linearly without the noise, i.e., when $\nabla f_{\xi^k}(x^k) = \EE_{\xi^k}[\nabla f_{\xi^k}(x^k)] = \nabla f(x^k)$. In other words, \emph{the expectation of the vector that is clipped in the method should converge to zero with the same rate as $\lambda_k$}. The method should converge, i.e., with high probability, we should have $\nabla f(x^k) \to \nabla f(x^*)$. These observations lead us to the following purely theoretical algorithm that we call \algname{Prox-clipped-SGD-star}\footnote{The idea behind and the name of this method is inspired by \algname{SGD-star} proposed by \citet{gorbunov2020unified, hanzely2019one}.}:
\begin{gather}
    x^{k+1} = \proxkPsi \left(x^k - \gamma \tg^k\right),\;\; \text{where} \label{eq:prox_clipped_SGD_star}\\
    \tg^k = \nabla f(x^*) + \clip\left(\nabla f_{\xi^k}(x^k) - \nabla f(x^*), \lambda_k\right). 
\end{gather}
The method is non-implementable since $\nabla f(x^*)$ is unknown in advance. Nevertheless, as we explain in the next subsection, the method is useful in designing and analyzing implementable versions. The following theorem gives the complexity of \algname{Prox-clipped-SGD-star}.

\begin{theorem}\label{thm:prox_clipped_SGD_main}
    Let $n=1$ and Assumptions~\ref{as:bounded_alpha_moment}, \ref{as:L_smoothness}, and \ref{as:QSC} with $\mu > 0$ hold for $Q = B_{2R}(x^*)$, $R \geq \|x^0 - x^*\|$\revision{, for some\footnote{\revision{If all of our results, one can use any solution $x^*$, e.g., one can take $x^*$ being a projection of $x^*$ on the solution set.}} $x^* \in \arg\min_{x\in \R^d} \Phi(x)$.} Assume that $K \geq 1$, $\beta \in (0,1)$, $A = \ln \tfrac{4(K+1)}{\beta}$, 
    \revision{\begin{gather*}
        0 < \gamma = \cO\left(\min\left\{\frac{1}{LA}, \frac{\ln(B_K)}{\mu(K+1)}\right\}\right),\\
        B_K = \Theta\left(\max\left\{2, \frac{(K+1)^{\nicefrac{2(\alpha-1)}{\alpha}}\mu^2 R^2}{\sigma^2 A^{\nicefrac{2(\alpha-1)}{\alpha}}\ln^2(B_K)}\right\}\right),\\
        \lambda_k = \Theta\left(\frac{\exp(-\gamma\mu(1+\nicefrac{k}{2}))R}{\gamma A}\right).
    \end{gather*}}
    % $0 < \gamma = \cO\left(\min\{\nicefrac{1}{LA}, \nicefrac{\ln(B_K)}{\mu(K+1)}\}\right)$, $B_K = \Theta\left(\max\{2, \nicefrac{(K+1)^{\nicefrac{2(\alpha-1)}{\alpha}}\mu^2 R^2}{\sigma^2 A^{\nicefrac{2(\alpha-1)}{\alpha}}\ln^2(B_K)}\}\right)$, $\lambda_k = \Theta(\nicefrac{\exp(-\gamma\mu(1+\nicefrac{k}{2}))R}{\gamma A})$. 
    Then to guarantee $\|x^K - x^*\|^2 \leq \varepsilon$ with probability $\geq 1 - \beta$ \algname{Prox-clipped-SGD-star} requires
    \begin{equation}
        \widetilde\cO\left(\max\left\{\frac{L}{\mu}, \left(\frac{\sigma^2}{\mu^2\varepsilon}\right)^{\frac{\alpha}{2(\alpha-1)}}\right\}\right) \label{eq:SGD_star_main_result_QSC}
    \end{equation}
    iterations/oracle calls.
\end{theorem}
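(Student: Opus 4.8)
The plan is to track $\|x^k-x^*\|^2$, show it contracts at rate $(1-\gamma\mu)$ up to a stochastic perturbation, and then control the accumulated perturbation with high probability by exploiting that the clipped vector is bounded by $\lambda_k$. First I would use that $\proxkPsi$ is nonexpansive together with the fixed-point identity $x^* = \proxkPsi(x^*-\gamma\nabla f(x^*))$ (equivalent to $-\nabla f(x^*)\in\partial\Psi(x^*)$) to write, with $\hg^k \eqdef \clip(\nabla f_{\xi^k}(x^k)-\nabla f(x^*),\lambda_k)$,
\begin{equation*}
\|x^{k+1}-x^*\|^2 \le \|x^k-x^*-\gamma\hg^k\|^2 = \|x^k-x^*\|^2 - 2\gamma\ve{\hg^k}{x^k-x^*} + \gamma^2\|\hg^k\|^2.
\end{equation*}
Decomposing $\hg^k = (\nabla f(x^k)-\nabla f(x^*)) + b^k + \omega^k$, where $b^k \eqdef \EE_{\xi^k}[\hg^k]-(\nabla f(x^k)-\nabla f(x^*))$ is the clipping bias and $\omega^k \eqdef \hg^k - \EE_{\xi^k}[\hg^k]$ is conditionally mean-zero, cleanly separates the deterministic descent from the stochastic error.

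\textbf{Deterministic contraction and clipping control.} For the deterministic part I would combine quasi-strong convexity (Assumption~\ref{as:QSC}), in the form $\ve{\nabla f(x^k)}{x^k-x^*}\ge f(x^k)-f(x^*)+\tfrac{\mu}{2}\|x^k-x^*\|^2$, with the identity $\ve{\nabla f(x^*)}{x^k-x^*}=f(x^k)-f(x^*)-D_f(x^k,x^*)$ to obtain $\ve{\nabla f(x^k)-\nabla f(x^*)}{x^k-x^*}\ge \tfrac{\mu}{2}\|x^k-x^*\|^2 + D_f(x^k,x^*)$, and bound $\gamma^2\|\nabla f(x^k)-\nabla f(x^*)\|^2 \le 2L\gamma^2 D_f(x^k,x^*)$ via \eqref{eq:L_smoothness_cor_2}. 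For $\gamma\le \nicefrac{1}{L}$ the $D_f$ terms are nonpositive, so the deterministic part yields the factor $(1-\gamma\mu)\|x^k-x^*\|^2$. The bias and noise of the clipping satisfy the standard estimates $\|b^k\|\lesssim \sigma^\alpha\lambda_k^{1-\alpha}$, $\EE_{\xi^k}\|\omega^k\|^2\lesssim \lambda_k^{2-\alpha}\sigma^\alpha$, and $\|\hg^k\|\le\lambda_k$, but these are valid only once $\|\nabla f(x^k)-\nabla f(x^*)\|\le \nicefrac{\lambda_k}{2}$; by $L$-smoothness this reads $L\|x^k-x^*\|\le\nicefrac{\lambda_k}{2}$, which is exactly why the iterates must stay close to $x^*$ and the stepsize must satisfy $\gamma=\cO(\nicefrac{1}{LA})$ so that the decreasing $\lambda_k$ can track $\|x^k-x^*\|$.

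\textbf{Unrolling and the high-probability induction.} Unrolling the one-step bound gives
\begin{equation*}
\|x^{K}-x^*\|^2 \le (1-\gamma\mu)^K\|x^0-x^*\|^2 + \sum_{k=0}^{K-1}(1-\gamma\mu)^{K-1-k}\left(\beta_k + \nu_k\right),
\end{equation*}
where $\beta_k$ collects the terms in $b^k$ and $\|b^k\|^2$ (bounded deterministically on the good event) and $\nu_k$ collects $-2\gamma\ve{\omega^k}{x^k-x^*}$, the cross term $2\gamma^2\ve{\nabla f(x^k)-\nabla f(x^*)+b^k}{\omega^k}$, and $\gamma^2\|\omega^k\|^2$. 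The heart of the argument is an induction on $k$: on a high-probability event $E_k$ I would maintain $\|x^l-x^*\|^2\le C(1-\gamma\mu)^l R^2$ for all $l\le k$, so that the clipping condition $L\|x^l-x^*\|\le\nicefrac{\lambda_l}{2}$ holds and validates the estimates above. The weighted martingale sum $\sum (1-\gamma\mu)^{K-1-k}\ve{\omega^k}{x^k-x^*}$ is then controlled by Bernstein's inequality for martingale differences — here clipping is essential because $\|\omega^k\|\le 2\lambda_k$ makes the increments bounded and the conditional variances are controlled by $\lambda_k^{2-\alpha}\sigma^\alpha$, so the heavy tails ($\alpha<2$) do not obstruct concentration; the $\gamma^2\|\omega^k\|^2$ term is handled by a separate Bernstein bound for $\|\omega^k\|^2-\EE_{\xi^k}\|\omega^k\|^2$ together with its expectation. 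Propagating $E_k\Rightarrow E_{k+1}$ with failure probability $\nicefrac{\beta}{(K+1)}$ per step and a union bound yields $\PP(E_K)\ge 1-\beta$.

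\textbf{Parameter choice and the main difficulty.} With $\lambda_k = \widetilde\Theta(\exp(-\gamma\mu(1+\nicefrac{k}{2}))R/(\gamma A))$ and $\gamma=\widetilde\Theta(\min\{\nicefrac{1}{LA},\ln(B_K)/(\mu(K+1))\})$ as in the statement, the contraction term is $\le\nicefrac{\varepsilon}{2}$ once $K\gtrsim\tfrac{1}{\gamma\mu}\ln\tfrac{R^2}{\varepsilon}$, while balancing the accumulated bias and variance (scaling like $\sigma^\alpha\lambda^{1-\alpha}$ and $\gamma\sigma^\alpha\lambda^{2-\alpha}/\mu$) against $\varepsilon$ produces the term $(\nicefrac{\sigma^2}{\mu^2\varepsilon})^{\nicefrac{\alpha}{2(\alpha-1)}}$; together these give $K=\widetilde\cO(\max\{\nicefrac{L}{\mu},(\nicefrac{\sigma^2}{\mu^2\varepsilon})^{\nicefrac{\alpha}{2(\alpha-1)}}\})$. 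The delicate point is the interlocking of the two inductive claims: the clipping bias/variance estimates are valid only when $x^k$ is close enough to $x^*$, yet that closeness is precisely what the same induction must establish. Resolving this requires carefully conditioning on $E_k$ before applying the martingale concentration and verifying that $\gamma$, $\lambda_k$, and $B_K$ are mutually consistent so that the error budget closes at exactly the claimed rate.
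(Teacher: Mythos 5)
Your proposal is correct and follows essentially the same route as the paper's proof: nonexpansiveness of the prox plus the fixed-point identity, decomposition of the clipped estimator into a deterministic contraction, a clipping bias, and a conditionally mean-zero part, an induction maintaining $\|x^t-x^*\|^2\lesssim\exp(-\gamma\mu t)R^2$ on a high-probability event so that the clipping condition $\|\nabla f(x^t)-\nabla f(x^*)\|\le\nicefrac{\lambda_t}{2}$ stays valid, and Bernstein's inequality for the two martingale sums. The only cosmetic difference is that the paper first converts Assumptions~\ref{as:L_smoothness} and \ref{as:QSC} into quasi-strong monotonicity and star-cocoercivity of $F=\nabla f$ (Lemma~\ref{lem:QSC_to_QSM}) and analyzes the method as \algname{Prox-clipped-SGDA-star}, whereas you run the equivalent computation directly with $D_f(x^k,x^*)$ terms in the minimization setting.
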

\begin{proof}[Sketch of the proof]
    Following \citet{gorbunov2020stochastic, sadiev2023high}, we prove by induction\footnote{We use the induction to apply Bernstein's inequality for the estimation of the sums appearing due to the stochasticity of the gradients. We refer to Section~\ref{sec:proofs_structure} for the details.} that $\|x^k - x^*\|^2 \leq 2\exp(-\gamma\mu k)R^2$ with high probability. This and $L$-smoothness imply that $\|\nabla f(x^k) - \nabla f(x^*)\| \sim \exp(-\nicefrac{\gamma\mu k}{2})$ and $\|\nabla f(x^k) - \nabla f(x^*)\| \leq \nicefrac{\lambda_k}{2}$ with high probability. These facts allow us to properly clip the heavy-tailed noise without sacrificing the convergence rate. See the complete formulation of Theorem~\ref{thm:prox_clipped_SGD_main} and the full proof in Appendix~\ref{appendix:prox_clipped_SGD_star}.
\end{proof}

The above complexity bound for \algname{Prox-clipped-SGD-star} coincides with the known one for \algname{clipped-SGD} for the unconstrained problems under the same assumptions \citep{sadiev2023high} -- similarly as the complexity of \algname{Prox-GD} coincides with the complexity of \algname{GD} for unconstrained smooth problems.

\begin{remark}[On the logarithmic factors.]
    In our results, we do not focus on the logarithmic factors to keep the proofs simpler. Therefore, in the main part of the paper, we omit logarithmic dependencies, but we provide them in the Appendix. We believe it is possible to improve the logarithmic factors in our complexity bounds following the proof technique from \citep{nguyen2023improved}.
\end{remark}

\begin{remark}[Dependence of the parameters on $R$.]
    Similarly to many previous works \citep{gorbunov2020stochastic, gorbunov2022clipped, sadiev2023high, nguyen2023high, nguyen2023improved}, in all our results, the choice of the stepsize and clipping level depends on some upper bound\footnote{One can avoid this via considering problems defined on bounded domain \citep{nazin2019algorithms} (then one can take $R$ equal to the diameter of the domain) or assuming that the gradient is bounded \citep{cutkosky2021high} (in this case, one can at least remove the dependence on $R$ from the clipping level). However, both options have clear limitations.} $R$ for $\|x^0 - x^*\|$. Deriving matching high-probability results for parameter-free methods is an important open problem going beyond the scope of this paper. In practice, one can run the method with $R = \widetilde R$ for a sufficiently large number of steps (prescribed by theory). If the target accuracy is not achieved, then one needs to take $R = 2\widetilde R$ and repeat the procedure. Then, with a high probability after the logarithmic number of restarts, either parameter $R$ will be chosen appropriately, or the method will reach the desired accuracy.
\end{remark}

% \subsection{Clipping of Gradient Differences}

\paragraph{Prox-clipped-SGD-shift.} As mentioned before, the key limitation of \algname{Prox-clipped-SGD-star} is that it explicitly uses shift $\nabla f(x^*)$, which is not known in advance. Therefore, guided by the literature on variance reduction and communication compression \citep{gorbunov2020unified, gower2020variance, mishchenko2019distributed}, it is natural to approximate $\nabla f(x^*)$ via shifts $h^k$. This leads us to a new method called \algname{Prox-clipped-SGD-shift}: as before $x^{k+1} = \proxkPsi \left(x^k - \gamma \tg^k\right)$ but now
\begin{gather}
    \tg^k = h^k + \hat\Delta^k,\;\; h^{k+1} = h^k + \nu \hat \Delta^k, \label{eq:prox_clipped_SGD_shift_shifts}\\
    \hat\Delta^k = \clip\left(\nabla f_{\xi^k}(x^k) - h^k, \lambda_k\right), 
\end{gather}
where $\nu > 0$ is a stepsize for learning shifts. Similar shifts are proposed by \citet{mishchenko2019distributed} in the context of distributed optimization with communication compression. Despite the similarities between compression operators and clipping, the latter does not have a constant contraction factor, making it non-trivial to extend the techniques from the literature on communication compression to the methods with clipping, e.g., see \citep[Section 5]{khirirat2023clip21}. Moreover, the proofs from \citep{mishchenko2019distributed} rely on the unbiasedness of the compression operator. In contrast, gradient clipping introduces bias, and we consider high-proabbility analysis that differs significantly from the in-expectation one considered by \citet{mishchenko2019distributed}.

Since \algname{Prox-clipped-SGD-shift} is a special case of its distributed variant, we continue our discussion with the distributed version of the method.
% \eduard{Say that although clipping has a similar property to compression, it is not the same thing: the contraction factor is dependent on the input.}

\paragraph{Distributed Prox-clipped-SGD-shift.} We propose a generalization of \algname{Prox-clipped-SGD-shift} to the distributed case \eqref{eq:distributed_problem} called Distributed \algname{Prox-clipped-SGD-shift} (\algname{DProx-clipped-SGD-shift}):
\begin{gather}
    x^{k+1} = \proxkPsi \left(x^k - \gamma \tg^k\right),\;\; \text{where}\label{eq:Dprox_clipped_SGD_shift}\\
    \tg^k = \frac{1}{n}\sum\limits_{i=1}^n \tg_i^k,\;\; \tg_i^k = h_i^k + \hat\Delta_i^k,\;\; h_i^{k+1} = h_i^k + \nu \hat \Delta_i^k, \notag\\
    \hat\Delta_i^k = \clip\left(\nabla f_{\xi_i^k}(x^k) - h_i^k, \lambda_k\right), \label{eq:Dprox_clipped_SGD_shift_shifts}
\end{gather}
where $\xi_1^k,\ldots, \xi_n^k$ are sampled independently from each other and previous steps. In this method, worker $i$ updates the shift $h_i^k$ and sends clipped vector $\hat \Delta_i^k$ to the server. Since $\tg^k = h^k + \tfrac{1}{n}\sum_{i=1}^n\hat \Delta_i^k$ and $h^{k+1} = h^k + \tfrac{\nu}{n}\sum_{i=1}^n\hat \Delta_i^k$, where $h^k = \frac{1}{n}\sum_{i=1}^n h_i^k$, workers do not need to send $h_i^k$ to the server for $k > 0$. \revision{We notice that even when $\Psi \equiv 0$, i.e., the problem is unconstrained, individual gradients $\{\nabla f_i(x^*)\}_{i\in [n]}$ of the clients' function at the solution of problem \eqref{eq:min_problem} are not necessary zero, though their sum equals to zero. However, if applied without any shifts to the local (stochastic) gradients, then, similarly to the case of non-distributed \algname{Prox-GD} \eqref{eq:prox_clip_gd_naive}, the clipping operation also breaks the fixed point property, since $\frac{1}{n}\sum_{i=1}^n \clip(\nabla f_i(x^*), \lambda) \neq 0$ for small values of $\lambda$. This highlights the importance of the shifts for distributed unconstrained case.}

For the proposed method, we derive the following result. 

\revision{\begin{theorem}[Convergence of \algname{DProx-clipped-SGD-shift}: quasi-strongly convex case]\label{thm:D_prox_clipped_SGD_main}
    Let $K \geq 1$, $\beta \in (0,1)$, $A = \ln \frac{48n(K+1)}{\beta}$. Let Assumptions~\ref{as:bounded_alpha_moment}, \ref{as:L_smoothness}, and \ref{as:QSC} with $\mu > 0$ hold for $Q = B_{3n\sqrt{2}R}(x^*)$, where $R \geq \|x^0 - x^*\|^2.$ 
    % Assume that $0 < \nu = \cO(\nicefrac{1}{\sqrt{n}A})$, $0 < \gamma = \cO\left(\min\{\nicefrac{1}{\sqrt{n}A\mu}, \nicefrac{1}{LA}, \nicefrac{\ln(B_K)}{\mu(K+1)}\}\right)$, $B_K = \Theta\left(\max\{2, \nicefrac{(K+1)^{\nicefrac{2(\alpha-1)}{\alpha}}\mu^2 n^{\nicefrac{2(\alpha-1)}{\alpha}} V}{\sigma^2 A^{\nicefrac{2(\alpha-1)}{\alpha}}\ln^2(B_K)}\}\right)$, $\lambda_k = \Theta(\nicefrac{n\exp(-\gamma\mu(1+\nicefrac{k}{2}))\sqrt{V}}{\gamma A})$.
    \revision{Assume that $\zeta_* \geq \sqrt{\frac{1}{n}\sum_{i=1}^n \|\nabla f_i(x^*)\|^2}$,} $\nu = \Theta(\nicefrac{1}{A})$,
    \begin{gather*}
        0 < \gamma = \cO\left(\min\left\{\frac{1}{LA}, \frac{\sqrt{n}R}{A\zeta_*}, \frac{\ln(B_K)}{\mu(K+1)}\right\}\right),\\
        B_K = \Theta\left(\max\left\{2, \frac{(K+1)^{\nicefrac{2(\alpha-1)}{\alpha}}\mu^2 n^{\nicefrac{2(\alpha-1)}{\alpha}} R^2}{\sigma^2 A^{\nicefrac{2(\alpha-1)}{\alpha}}\ln^2(B_K)}\right\}\right),\\
        \lambda_k = \Theta\left(\frac{n\exp(-\gamma\mu(1+\nicefrac{k}{2}))R}{\gamma A}\right).
    \end{gather*}
    Then to guarantee $\|x^K - x^*\|^2 \leq \varepsilon$ with probability $\geq 1 - \beta$ \algname{DProx-clipped-SGD-shift} requires
    \begin{align}
        \widetilde\cO\left(\max\left\{\frac{L}{\mu}, \frac{\zeta_*}{\sqrt{n}\mu R}, \frac{1}{n}\left(\frac{\sigma^2}{\mu^2\varepsilon}\right)^{\frac{\alpha}{2(\alpha-1)}}\right\}\right) \label{eq:d_prox_clipped_SGD_shift_main_result_QSC}
    \end{align}
    iterations/oracle calls per worker.
\end{theorem}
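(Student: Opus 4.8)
The plan is to mirror the high-probability induction scheme used for \algname{Prox-clipped-SGD-star} (Theorem~\ref{thm:prox_clipped_SGD_main}), augmented with a second potential term that tracks how well the shifts $h_i^k$ approximate the individual gradients $\nabla f_i(x^*)$. First I would derive a one-step recursion. Since $x^* = \prox_{\gamma\Psi}(x^* - \gamma\nabla f(x^*))$ (the optimality condition $-\nabla f(x^*)\in\partial\Psi(x^*)$) and $\prox_{\gamma\Psi}$ is nonexpansive, the update \eqref{eq:Dprox_clipped_SGD_shift} gives
\[
\|x^{k+1}-x^*\|^2 \leq \|x^k - x^* - \gamma(\tg^k - \nabla f(x^*))\|^2 .
\]
I would then decompose $\tg^k - \nabla f(x^*) = \tfrac{1}{n}\sum_{i=1}^n(\nabla f_i(x^k) - \nabla f_i(x^*)) + \overline{b}^k + \overline{\theta}^k$, where $\overline{\theta}^k$ is the zero-mean (martingale) part of the averaged clipped differences $\hat\Delta_i^k$ and $\overline{b}^k$ collects the clipping bias. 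The deterministic part is controlled by quasi-strong convexity (Assumption~\ref{as:QSC}) together with the smoothness inequality \eqref{eq:L_smoothness_cor_2}, which turns $\langle x^k-x^*,\nabla f(x^k)-\nabla f(x^*)\rangle$ and the squared gradient term into a contraction of the form $(1-\gamma\mu)\|x^k-x^*\|^2$ up to noise.

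Next I would introduce a Lyapunov function $V^k = \|x^k - x^*\|^2 + c\gamma^2\cdot\tfrac{1}{n}\sum_{i=1}^n\|h_i^k - \nabla f_i(x^*)\|^2$ for a suitable constant $c>0$ and shift stepsize $\nu=\Theta(\nicefrac{1}{A})$, and show that the shift update $h_i^{k+1} = h_i^k + \nu\hat\Delta_i^k$ makes the second term contract on the event where clipping does not truncate, so that $V^k$ obeys a recursion $V^{k+1}\lesssim(1-\gamma\mu)V^k + (\text{bias}) + (\text{noise})$. The core of the argument is then the induction, following \citet{gorbunov2020stochastic, sadiev2023high}: I would prove by induction on $k$ that, with probability at least $1-\nicefrac{k\beta}{(K+1)}$, the event $\{\|x^t - x^*\|^2\le 2\exp(-\gamma\mu t)R^2 \text{ for all } t\le k\}$ holds, the iterates stay inside $Q=B_{3n\sqrt{2}R}(x^*)$, and the shift errors remain bounded. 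On this event $\|\nabla f_i(x^k)-h_i^k\|\le\|\nabla f_i(x^k)-\nabla f_i(x^*)\| + \|h_i^k - \nabla f_i(x^*)\| \lesssim L\exp(-\nicefrac{\gamma\mu k}{2})R + (\text{shift error})$, and I would verify that the prescribed $\lambda_k$ dominates this by a constant factor.

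This is precisely what makes the clipping bias $\|\overline{b}^k\|$ of order $\lambda_k^{1-\alpha}\sigma^\alpha$ (negligible relative to the contraction, using Assumption~\ref{as:bounded_alpha_moment}) and keeps the conditional variance of $\overline{\theta}^k$ controlled, so that the accumulated martingale sums $\sum_t\langle x^t-x^*,\overline{\theta}^t\rangle$ and $\sum_t\|\overline{\theta}^t\|^2$ can be bounded via Bernstein's inequality; the factor $\nicefrac{1}{n}$ in the variance, coming from averaging $n$ independent workers, is what yields the linear speed-up term $\nicefrac{1}{n}(\nicefrac{\sigma^2}{\mu^2\varepsilon})^{\nicefrac{\alpha}{2(\alpha-1)}}$ in \eqref{eq:d_prox_clipped_SGD_shift_main_result_QSC}. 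Closing the induction gives $\|x^K-x^*\|^2\le 2\exp(-\gamma\mu K)R^2$ with probability $\ge 1-\beta$, and substituting the stated $\gamma$, $B_K$, $\lambda_k$ and solving $2\exp(-\gamma\mu K)R^2\le\varepsilon$ produces the claimed complexity.

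The main obstacle I anticipate is the simultaneous control of the coupled dynamics of $x^k$ and the shifts $h_i^k$: the clipped quantity $\nabla f_{\xi_i^k}(x^k)-h_i^k$ depends on the shift accuracy, yet the shifts are themselves updated through the biased, clipped stochastic differences. Ensuring that the iterate distance contracts at rate $(1-\gamma\mu)$ \emph{and} the shift errors decay fast enough to keep $\|\nabla f_i(x^k)-h_i^k\|\le\nicefrac{\lambda_k}{2}$ throughout the induction — while the initial shift mismatch, governed by $\zeta_*=\sqrt{\tfrac1n\sum_i\|\nabla f_i(x^*)\|^2}$, forces the stepsize restriction $\gamma=\cO(\nicefrac{\sqrt{n}R}{A\zeta_*})$ and hence the extra term $\nicefrac{\zeta_*}{\sqrt{n}\mu R}$ in the complexity — is the delicate balancing act among the Lyapunov weight $c$, the shift stepsize $\nu$, and the clipping schedule $\lambda_k$.
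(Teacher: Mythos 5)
Your plan matches the paper's argument in all essentials: the one-step bound via nonexpansiveness of the prox and the fixed-point property of $x^*$, the Lyapunov function coupling $\|x^k-x^*\|^2$ with $\frac{1}{n}\sum_{i}\|h_i^k-\nabla f_i(x^*)\|^2$, the induction over high-probability events, and Bernstein's inequality for the martingale sums. The only organizational difference is that the paper does not prove the quasi-strongly convex case directly: it proves the analogous statement for \algname{DProx-clipped-SGDA-shift} under quasi-strong monotonicity and star-cocoercivity (Lemma~\ref{lem:optimization_lemma_str_mon_SGDA_2} and Theorem~\ref{thm:main_result_prox_clipped_SGDA_2}) and then invokes Lemma~\ref{lem:QSC_to_QSM}, which shows that Assumptions~\ref{as:L_smoothness} and \ref{as:QSC} yield exactly those operator conditions for $F=\nabla f$ (with $\mu\mapsto\nicefrac{\mu}{2}$ and $\ell=2L$); your direct use of \eqref{eq:QSC} together with \eqref{eq:L_smoothness_cor_2} is the same computation. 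One step your sketch underestimates: the high-probability $\nicefrac{1}{n}$ speed-up does not follow from ``averaging reduces the variance'' alone. Expanding $\bigl\|\tfrac1n\sum_{i}\omega_{i,k}^u\bigr\|^2$ produces cross terms $\tfrac{2}{n^2}\sum_{j}\bigl\langle\sum_{i<j}\omega_{i,k}^u,\omega_{j,k}^u\bigr\rangle$ whose Bernstein bound requires the partial sums $\bigl\|\tfrac{\gamma}{n}\sum_{i<j}\omega_{i,k}^u\bigr\|$ to be bounded, and this must itself be established by a nested induction over the workers with its own truncation and Bernstein step (the events $\widetilde E_{T-1,j}$ in the appendix); the crude bound $\|\omega_{k}^u\|\le 2\lambda_k$ applied to the averaged vector would lose the factor $n$. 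Relatedly, the factor $n$ in $\lambda_k$ is not incidental: the per-worker clipping bias $\sim\sigma^\alpha/\lambda_k^{\alpha-1}$ does not average down, so the clipping level must be inflated by $n$ for the bias to match the reduced variance (Lemma~\ref{lem:bias_and_variance_clip_distributed}).
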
}
\begin{proof}[Sketch of the proof]
    The proof follows similar steps to the proof of Theorem~\ref{thm:prox_clipped_SGD_main} up the change of the Lyapunov function: by induction, we prove that $V_k \leq 2\exp(-\gamma\mu k)V$ with high probability, where $V_k = \|x^k - x^*\|^2 + \frac{C^2\gamma^2 A^2}{n}\sum_{i=1}^n \|h_i^k - \nabla f_i(x^*)\|^2$. The choice of the Lyapunov function reflects the importance of the ``quality'' of shifts $\{h_i^k\}_{i\in[n]}$, i.e., their proximity to $\{\nabla f_i(x^*)\}_{i\in[n]}$. Moreover, we increase the clipping level $n$ times to balance the bias and variance of $\tg^k$; see Appendix~\ref{appendix:aux_results}. This allows us to reduce the last term in the complexity bound $n$ times. See the complete formulation of Theorem~\ref{thm:D_prox_clipped_SGD_main} and the full proof in Appendix~\ref{appendix:dprox_clipped_SGD_shift}.
\end{proof}

\revision{The next theorem gives the result in the convex case.

\begin{theorem}[Convergence of \algname{DProx-clipped-SGD-shift}: convex case]\label{thm:D_prox_clipped_SGD_main_cvx}
    Let $K \geq 1$, $\beta \in (0,1)$, $A = \ln \frac{48n(K+1)}{\beta}$. Let Assumptions~\ref{as:bounded_alpha_moment}, \ref{as:L_smoothness},  and \ref{as:str_cvx} with $\mu = 0$ hold for $Q = B_{\sqrt{2R}}(x^*)$, where $R \geq \|x^0 - x^*\|$. Assume that $\nu = 0$, $\zeta_* \geq \sqrt{\frac{1}{n}\sum_{i=1}^n \|\nabla f_i(x^*)\|^2}$,
    \begin{gather*}
        0 < \gamma = \cO\left(\min\left\{\frac{1}{LA}, \frac{\sqrt{n}R}{A\zeta_*}, \frac{n^{\nicefrac{(\alpha-1)}{\alpha}}R}{\sigma K^{\nicefrac{1}{\alpha}}A^{\nicefrac{(\alpha-1)}{\alpha}}}\right\}\right),\\
        \lambda_k = \lambda = \Theta\left(\frac{nR}{\gamma A}\right).
    \end{gather*}
    Then to guarantee $\Phi(\bar{x}^K) - \Phi(x^*) \leq \varepsilon$ for $\bar{x}^K = \frac{1}{K+1}\sum_{k=0}^K x^k$ with probability $\geq 1 - \beta$ \algname{DProx-clipped-SGD-shift} requires
    \begin{align}
        \widetilde\cO\left(\max\left\{\frac{LR^2}{\varepsilon}, \frac{R\zeta_*}{\sqrt{n}\varepsilon}, \frac{1}{n}\left(\frac{\sigma R}{\varepsilon}\right)^{\frac{\alpha}{\alpha-1}}\right\}\right) \label{eq:d_prox_clipped_SGD_shift_main_result_cvx}
    \end{align}
    iterations/oracle calls per worker.
\end{theorem}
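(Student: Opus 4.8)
The plan is to mirror the high-probability proximal analysis of \citet{sadiev2023high}, specialized to the distributed fixed-shift regime ($\nu = 0$, so $h_i^k \equiv h_i^0$) and tracking the plain Lyapunov function $\|x^k - x^*\|^2$ (no shift-drift term is needed because the shifts do not move). First I would exploit the optimality condition of the proximal step: from $x^{k+1} = \proxkPsi(x^k - \gamma \tg^k)$ one gets $\gamma^{-1}(x^k - x^{k+1}) - \tg^k \in \partial\Psi(x^{k+1})$, and combining the subgradient inequality for $\Psi$ with $L$-smoothness \eqref{eq:L_smoothness} and convexity \eqref{eq:str_cvx} (with $\mu=0$) of $f$ yields, for $\gamma \le \nicefrac{1}{2L}$, a one-step bound
\[ 2\gamma(\Phi(x^{k+1}) - \Phi(x^*)) \le \|x^k - x^*\|^2 - \|x^{k+1}-x^*\|^2 - 2\gamma\langle\theta^k, x^k - x^*\rangle + 2\gamma^2\|\theta^k\|^2, \]
where $\theta^k = \tg^k - \nabla f(x^k)$ and the cross term $\langle\theta^k, x^{k+1}-x^k\rangle$ has been absorbed into the negative $\|x^{k+1}-x^k\|^2$ produced by smoothness via Young's inequality. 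Crucially, the surviving inner product now pairs $\theta^k$ with the $\xi^k$-independent vector $x^k - x^*$. Summing over $k$ and applying Jensen reduces the target $\Phi(\bar x^K)-\Phi(x^*)$ to controlling $\|x^0-x^*\|^2\le R^2$ plus the two stochastic sums $\sum_k\langle\theta^k, x^k-x^*\rangle$ and $\sum_k\|\theta^k\|^2$.

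Next I would decompose the per-worker error $\tg_i^k - \nabla f_i(x^k) = h_i^0 + \clip(\nabla f_{\xi_i^k}(x^k) - h_i^0, \lambda) - \nabla f_i(x^k)$ into its conditional bias $b_i^k = \EE_{\xi_i^k}[\cdot]$ and a zero-mean part $u_i^k$, so $\theta^k = \theta_b^k + \theta_u^k$ with $\theta_b^k = \frac1n\sum_i b_i^k$, $\theta_u^k = \frac1n\sum_i u_i^k$. Provided the clipped vector is centered, i.e.\ $\|\nabla f_i(x^k) - h_i^0\| \le \nicefrac{\lambda}{2}$, the standard clipping estimates under Assumption~\ref{as:bounded_alpha_moment} give $\|b_i^k\| \lesssim \sigma^\alpha\lambda^{1-\alpha}$, $\EE_{\xi_i^k}\|u_i^k\|^2 \lesssim \lambda^{2-\alpha}\sigma^\alpha$, and $\|u_i^k\| \le 2\lambda$ almost surely. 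Independence across workers contracts the conditional variance of the average to $\EE_{\xi^k}\|\theta_u^k\|^2 \lesssim \nicefrac{\lambda^{2-\alpha}\sigma^\alpha}{n}$, which is the seed of the $\nicefrac1n$ speed-up. The centering condition is exactly what the stepsize constraints enforce: since $\lambda = \Theta(\nicefrac{nR}{\gamma A})$, the bound $\gamma\le\nicefrac{\sqrt n R}{A\zeta_*}$ forces $\lambda \gtrsim \sqrt n\zeta_* \ge \max_i\|\nabla f_i(x^*)\|$ and $\gamma\le\nicefrac{1}{LA}$ forces $\lambda\gtrsim LR$, so that together with $L$-smoothness and $x^k$ staying near $x^*$ the center remains below $\nicefrac\lambda2$.

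The core of the argument, and the step I expect to be the main obstacle, is a probabilistic induction showing that $\|x^k-x^*\|^2 \le 2R^2$ (so $x^k\in Q$ and the clipping estimates are legitimate) holds for all $k\le K$ simultaneously with probability $\ge 1-\beta$. Granting the hypothesis on steps $0,\dots,k$, the bias contributions are bounded deterministically through $\|b_i^t\|\lesssim\sigma^\alpha\lambda^{1-\alpha}$ and $\|x^t-x^*\|\le\sqrt2 R$, while the zero-mean contributions are handled by Bernstein's inequality applied to the \emph{doubly-indexed} martingale $\{-\tfrac{2\gamma}{n}\langle u_i^t, x^t-x^*\rangle\}_{t\le k,\,i\in[n]}$ and to the analogous sum from $2\gamma^2\|\theta_u^t\|^2$. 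Keeping the worker index separate (rather than first averaging over $i$) is essential: it makes the per-increment almost-sure bound scale as $\nicefrac{\gamma\lambda R}{n}$ and the summed conditional variance as $\sim\nicefrac{\gamma^2\lambda^{2-\alpha}\sigma^\alpha R^2 K}{n}$, both of which reduce to $\cO(R^2)$ (up to the logarithmic $A$) after substituting $\lambda=\Theta(\nicefrac{nR}{\gamma A})$ -- this is precisely where the linear speed-up in $n$ survives into the almost-sure term. The delicate point is the circularity: the variance proxy fed into Bernstein depends on $\sum_t\|x^t-x^*\|^2$, which is controlled only by the very bound being proven, so the deviation estimate and the ball bound must be closed against each other inside one induction, with a union bound over the $K+1$ steps accounting for the $\ln\frac{48n(K+1)}{\beta}$ in $A$.

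Finally, I would collect all error terms -- the initial $R^2$, the accumulated bias ($\sim\gamma K\sigma^\alpha\lambda^{1-\alpha}R$ from the inner products and $\sim\gamma^2 K\sigma^{2\alpha}\lambda^{2(1-\alpha)}$ from $\|\theta_b^t\|^2$), and the Bernstein residuals together with the almost-sure terms -- substitute $\lambda=\Theta(\nicefrac{nR}{\gamma A})$, and divide the telescoped inequality by $2\gamma(K+1)$. Each stepsize constraint then neutralizes one error source: $\gamma\le\nicefrac{1}{LA}$ produces the $\nicefrac{LR^2}{\varepsilon}$ term, $\gamma\le\nicefrac{\sqrt n R}{A\zeta_*}$ (also responsible for the centering) produces the $\nicefrac{R\zeta_*}{\sqrt n\varepsilon}$ term, and $\gamma\le\nicefrac{n^{(\alpha-1)/\alpha}R}{\sigma K^{1/\alpha}A^{(\alpha-1)/\alpha}}$ balances the heavy-tailed noise and yields $\tfrac1n(\nicefrac{\sigma R}{\varepsilon})^{\alpha/(\alpha-1)}$. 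Reading off the smallest $K$ for which the active constraint meets the target accuracy gives the stated complexity \eqref{eq:d_prox_clipped_SGD_shift_main_result_cvx}.
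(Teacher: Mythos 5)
Your proposal follows essentially the same route as the paper's proof (Lemma~\ref{lem:main_opt_lemma_prox_clipped_SGD_convex} and Theorem~\ref{thm:prox_clipped_SGD_convex_case}): a one-step proximal inequality that pairs the noise with a $\xi^k$-independent vector, the bias/variance decomposition of the clipped estimator under the centering condition $\|\nabla f_i(x^k)-h_i^0\|\le\nicefrac{\lambda}{2}$ (enforced exactly as you say by $\gamma\lesssim\nicefrac{1}{LA}$ and $\gamma\lesssim\nicefrac{\sqrt nR}{A\zeta_*}$ through $\lambda=\Theta(\nicefrac{nR}{\gamma A})$), a Bernstein-plus-induction argument keeping the iterates in a ball, and the matching of the three stepsize constraints to the three complexity terms. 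One cosmetic difference: the paper pairs $\omega_k$ with $\hat x^k-x^*$ where $\hat x^k=\proxkPsi(x^k-\gamma\nabla f(x^k))$ is the virtual deterministic proximal step (using nonexpansiveness to absorb $\|\hat x^k-x^{k+1}\|\le\gamma\|\omega_k\|$), whereas you pair it with $x^k-x^*$ and absorb the remainder via Young's inequality into the smoothness term; both give a valid martingale structure.

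The one place your sketch under-specifies a genuinely necessary ingredient is the treatment of $\sum_k\|\theta_u^k\|^2$. Applying Bernstein to $\{2\gamma^2(\|\theta_u^k\|^2-\EE\|\theta_u^k\|^2)\}_k$ as single increments fails: the almost-sure bound is $\sim\gamma^2\lambda^2\sim\nicefrac{n^2R^2}{A^2}$, which carries no $\nicefrac1n$ factor and destroys the speed-up. One must expand $\|\theta_u^k\|^2=\frac{1}{n^2}\sum_i\|u_i^k\|^2+\frac{2}{n^2}\sum_{j}\langle\sum_{i<j}u_i^k,u_j^k\rangle$; the diagonal part is fine, but the cross-term martingale has increments whose almost-sure bound requires controlling the partial sums $\|\frac{\gamma}{n}\sum_{i<j}u_i^k\|\le\nicefrac{\sqrt V}{2}$, and this is itself a high-probability statement proved in the paper by a second, nested induction over the client index $j=2,\dots,n$ (the events $\widetilde E_{T-1,j}$ and the sums $\circledSix$, $\circledSeven$ in Theorem~\ref{thm:prox_clipped_SGD_convex_case}). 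Your phrase ``keeping the worker index separate'' points in the right direction, but without this inner induction the cross-term increments are only bounded by $\sim\nicefrac{\gamma^2\lambda^2 j}{n^2}\sim\nicefrac{nR^2}{A^2}$, still a factor of $n$ too large. This is the main technical device distinguishing the distributed analysis from the single-node one and should be made explicit to close the argument.
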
}

\revision{Discussion of the results for \algname{DProx-clipped-SGD-shift}.} Up to the difference between $V$ and $\|x^0 - x^*\|^2$, in the single-node case, the derived results coincide with ones known for \algname{clipped-SGD} in the unconstrained case \citep{sadiev2023high}. \revision{In the composite non-distributed case ($n=1$), the result of Theorem~\ref{thm:D_prox_clipped_SGD_main} is the first known of its type, and Theorem~\ref{thm:D_prox_clipped_SGD_main_cvx} recovers (up to logarithmic factors) the result from \citep{nguyen2023improved} for a version of Stochastic Mirror Descent with gradient clipping (\algname{Clipped-SMD}), see Table~\ref{tab:comparison_of_rates_minimization}. Indeed, parameter $\widehat{R}^2 = R\left(3R + L^{-1}(2\eta \sigma + \|\nabla f(x^0)\|)\right)$ for some $\eta > 0$ from the result by \citet{nguyen2023improved} equals $\Theta(R^2 + \nicefrac{R\zeta_*}{L})$, when $\eta$ is sufficiently small (otherwise $\widehat R$ can be worse than $\Theta(R^2 + \nicefrac{R\zeta_*}{L})$), which can be seen from the following inequalities following smoothness: $\|\nabla f(x^0)\| \leq \|\nabla f(x^*)\| + \|\nabla f(x^0) - \nabla f(x^*)\| \leq \|\nabla f(x^*)\| + L\|x^0 - x^*\|$ and $\|\nabla f(x^*)\| \leq \|\nabla f(x^0)\| + \|\nabla f(x^0) - \nabla f(x^*)\| \leq \|\nabla f(x^0)\| + L\|x^0 - x^*\|$. Since in this work we do not focus on the logarithmic factors, we do not show them in the main text and provide the complete expressions in the appendix. \citet{nguyen2023improved} has better dependencies on the parameters under logarithms than our results. In particular, the logarithmic factor in front of the last term in \eqref{eq:d_prox_clipped_SGD_shift_main_result_cvx} is $\ln\left(\beta^{-1}\left(\nicefrac{\sqrt{2}\sigma R}{\varepsilon}\right)^{\nicefrac{\alpha}{(\alpha-1)}}\right)$ (see \eqref{eq:prox_clipped_SGD_convex_case_complexity} from Theorem~\ref{thm:prox_clipped_SGD_convex_case}), which matches the logarithmic factors from the result obtained by \citet{sadiev2023high}. In the same setting with $n=1$, \citet{nguyen2023improved} obtained a high-probability complexity result with $\ln(\beta^{-1})$ logarithmic factor, which is smaller than the one we have. However, this discrepancy can be relatively small in many situations. For example, if $\sigma \sim 10^3$ (high variance), $R \sim 10^3$ (starting point is far from the solution), $\alpha = \nicefrac{3}{2}$, $\varepsilon \sim 10^{-6}$, $\beta \sim 10^{-6}$ (very accurate solution and small failure probability), then $\ln\left(\beta^{-1}\left(\nicefrac{\sqrt{2}\sigma R}{\varepsilon}\right)^{\nicefrac{\alpha}{(\alpha-1)}}\right) \approx 98$ and $\ln(\beta^{-1}) \approx 14$, i.e., our logarithmic factor is just $\sim 7$ times larger in this case. We conjecture that adjusting the proof technique from \citep{nguyen2023improved} one can improve the logarithmic factors in our results as well.}

It is worth mentioning that shifts are not needed in the \revision{convex} case because the method does not have fast \revision{enough} convergence, which makes it work with a constant clipping level\revision{, i.e., the method in the convex case requires less tight gradient estimates and is more robust to the bias than in strongly convex}. In the \revision{quasi-strongly convex} case, \revision{the shifts' stepsize is chosen as $\nu \sim \Theta(\nicefrac{1}{A})$ and it does not explicitly affect the rate since $\gamma\mu = \Theta(\nicefrac{1}{A})$, see the details in Section~\ref{sec:proofs_structure} and Appendix~\ref{appendix:dprox_clipped_SGD_shift}.} Moreover, as we show later, shifts are important for achieving acceleration in the convex case as well. A similar tradeoff between the needed accuracy of the estimate and the speed of convergence of the method is observed in the literature on the convergence of methods with inexact oracle \citep{devolder2013exactness, devolder2014first}.

Next, as expected for a distributed method, the terms in the complexity bounds related to the noise improve with the growth of $n$. \revision{More precisely, the terms depending on the noise level $\sigma$ are proportional to $\nicefrac{1}{n}$, i.e., our results show so-called linear speed-up in the complexity -- a desirable feature for a stochastic distributed method.} This \revision{aspect} highlights the benefits of parallelization. \revision{To the best of our knowledge, the results for the distributed methods proposed in our work are the only existing ones under Assumption~\ref{as:bounded_alpha_moment} (even if we take into account the in-expectation convergence results). In the special case of $\alpha = 2$, our results match (up to logarithmic factors) the SOTA ones from \citep{gorbunov2021near} since parallelization with linear speed-up follows for free under the bounded variance assumption, if the clipping is applied after averaging as it should be in the parallelized version of methods from \citep{gorbunov2021near} to keep the analysis from \citep{gorbunov2021near} unchanged. Indeed, when $\{\nabla f_{\xi_i}(x)\}_{i\in [n]}$ are independent stochastic gradients satisfying Assumption~\ref{as:bounded_alpha_moment} with parameters $\sigma > 0$ and $\alpha =2$, then $\frac{1}{n}\sum_{i\in[n]}\nabla f_{\xi_i}(x)$ also satisfies Assumption~\ref{as:bounded_alpha_moment} with parameters $\nicefrac{\sigma}{\sqrt{n}}$ and $\alpha =2$. However, when $\alpha < 2$ achieving linear speed-up is not that straightforward. If $\{\nabla f_{\xi_i}(x)\}_{i\in [n]}$ are independent stochastic gradients satisfying Assumption~\ref{as:bounded_alpha_moment} with parameters $\sigma > 0$ and $\alpha < 2$, then the existing results \citep[Lemma 7]{wang2021convergence} give a weaker guarantee: $\frac{1}{n}\sum_{i\in[n]}\nabla f_{\xi_i}(x)$ satisfies Assumption~\ref{as:bounded_alpha_moment} with parameters $\frac{2^{2-\alpha}d^{\frac{1}{\alpha}-\frac{1}{2}}\sigma}{n^{\frac{\alpha-1}{\alpha}}}$, which is \emph{dimension dependent}, and the same $\alpha$. Therefore, if one applies this result to the known ones from \citep{sadiev2023high, nguyen2023improved}, then the resulting complexity will have an extra factor of $d^{\frac{1}{\alpha-1} - \frac{\alpha}{2(\alpha-1)}}$ in the term that depends on $\sigma$. For large-scale or even medium-scale heavy-tailed problems, this factor can be huge, e.g., when $d = 1000$ and $\alpha = \frac{7}{6}$, this factor is $1000^{6 - \frac{7}{3}} > 1000^3 = 10^9$.}

\revision{To avoid these issues,} we apply gradient clipping on the workers and then average clipped vectors, not vice versa. This is \revision{also} partially motivated by the popularity of gradient clipping for ensuring differential privacy guarantees \citep{abadi2016deep, chen2020understanding} in Federated Learning. \revision{Therefore, the proposed distributed methods can be useful for differential privacy as well, though we do not study this aspect in our work.}

\begin{remark}[Dependence of the parameters on $\zeta_*$.]
    In our results, $\gamma$ and $\{\lambda_k\}_{k\geq 0}$ depend on $\zeta_*$, which is an upper bound for $\sqrt{\frac{1}{n}\sum_{i=1}^n \|\nabla f_i(x^*)\|^2}$. Since $f_i$ are $L$-smooth for all $i\in [n]$, we have $\sqrt{\frac{1}{n}\sum_{i=1}^n \|\nabla f_i(x^*)\|^2} \leq G_0 + \sqrt{\frac{1}{n}\sum_{i=1}^n \|\nabla f_i(x^*) - \nabla f_i(x^0)\|^2} \leq G_0 + L\|x^0 - x^*\|$, where $G_0 = \sqrt{\frac{1}{n}\sum_{i=1}^n \|\nabla f_i(x^0)\|^2}$. As suggested by \citet{nazin2019algorithms, nguyen2023improved}, one can estimate $\|\nabla f_i(x^0)\|$ with probability at least $1 - \beta$ using the procedure from \citep{minsker2015geometric} requiring $\cO(\ln(n\beta^{-1}))$ samples on each client $i\in [n]$. Therefore, to estimate $\zeta_*$, it is sufficient to know $R$.
\end{remark}

\paragraph{Acceleration.} Next, we propose a distributed version of clipped Stochastic Similar Triangles Method \citep{gorbunov2020stochastic, gasnikov2016universal} for composite problems (\algname{DProx-clipped-SSTM-shift}): $x^0 = y^0 = z^0$, $A_0 = \alpha_0 = 0$, $\alpha_{k+1} = \frac{k+2}{2aL}$, $A_{k+1} = A_k + \alpha_{k+1}$ and
\begin{gather}
    x^{k+1} = \frac{A_k y^k + \alpha_{k+1}z^k}{A_{k+1}},\\
    z^{k+1} = \prox_{\alpha_{k+1}\Psi}\left(z^k - \alpha_{k+1}\tg(x^{k+1})\right), \label{eq:D_prox_clipped_SSTM_shift_x_z}\\
    y^{k+1} = \frac{A_k y^k + \alpha_{k+1}z^{k+1}}{A_{k+1}} \label{eq:D_prox_clipped_SSTM_shift_y}
\end{gather}
with
\begin{gather}
    \tg(x^{k+1}) = \frac{1}{n}\sum\limits_{i=1}^n \tg_i(x^{k+1}),\;\; \tg_i(x^{k+1}) = h_i^k + \hat\Delta_i^k, \label{eq:D_prox_clipped_SSTM_shift_estimator}\\
    h_i^{k+1} = h_i^k + \nu_k \hat \Delta_i^k,\\
    \hat\Delta_i^k = \clip\left(\nabla f_{\xi_i^k}(x^{k+1}) - h_i^k, \lambda_k\right), \label{eq:D_prox_clipped_SSTM_shift_estimator_2}
\end{gather}
where $\xi_1^k,\ldots, \xi_n^k$ are sampled independently from each other and previous steps. For the proposed method, we derive the following result.

\revision{\begin{theorem}[Convergence of \algname{DProx-clipped-SSTM-shift}]\label{thm:D_prox_clipped_SSSTM_main}
    Let Assumptions~\ref{as:bounded_alpha_moment}, \ref{as:L_smoothness},  and \ref{as:str_cvx} with $\mu = 0$ hold for $Q = B_{5\sqrt{2}nR}(x^*)$, where $R \geq \|x^0 - x^*\|^2$. Let $\zeta_* \geq \sqrt{\frac{1}{n}\sum_{i=1}^n \|\nabla f_i(x^*)\|^2}$, $C = \Theta(\nicefrac{A}{\sqrt{n}})$, $K_0 = \Theta(A^2)$, where $K \geq 1$, $\beta \in (0,1)$, $A = \ln \tfrac{10nK}{\beta}$. Let
    \begin{gather*}
        \nu_k = \begin{cases}
            \frac{2k+5}{(k+3)^2},& \text{if } k > K_0,\\ \frac{(k+2)^2}{C^2(K_0+2)^2n},& \text{if } k \leq K_0,
        \end{cases},\;\; \lambda_k = \Theta\left(\frac{n R}{\alpha_{k+1}A}\right),\\
        a = \Theta\left(\max\left\{2, \frac{A^4}{n}, \frac{A^3\zeta_*}{L\sqrt{n}R}, \frac{\sigma K^{\nicefrac{(\alpha+1)}{\alpha}}A^{\nicefrac{(\alpha-1)}{\alpha}}}{LRn^{\nicefrac{\alpha-1}{\alpha}}}\right\}\right).
    \end{gather*}
    % Assume that $\nu_k = \frac{2k+5}{(k+3)^2}$, if $k > K_0$, and $\nu_k = \nicefrac{(k+2)^2}{C^2(K_0+2)^2n}$, if $k \leq K_0$, $a = \Theta(\max\{2, \frac{A^4}{n}, \nicefrac{\sigma K^{\nicefrac{(\alpha+1)}{\alpha}}A^{\nicefrac{(\alpha-1)}{\alpha}}}{L\sqrt{M}n^{\nicefrac{\alpha-1}{\alpha}}}\})$, $\lambda_k = \Theta(\nicefrac{n \sqrt{M}}{(\alpha_{k+1}A)})$. 
    Then to guarantee $\Phi(y^K) - \Phi(x^*) \leq \varepsilon$ with probability $\geq 1 - \beta$ \algname{DProx-clipped-SSTM-shift} requires
    \begin{align}
        \widetilde\cO\left(\max\left\{\sqrt{\frac{LR^2}{\varepsilon}}, \sqrt{\frac{R\zeta_*}{\sqrt{n}\varepsilon}}, \frac{1}{n}\left(\frac{\sigma R}{\varepsilon}\right)^{\frac{\alpha}{\alpha-1}}\right\}\right) \label{eq:d_prox_clipped_SSTM_shift_main_result_cvx}
    \end{align}
    iterations/oracle calls per worker.
\end{theorem}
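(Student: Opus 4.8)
The plan is to adapt the high-probability analysis of the Stochastic Similar Triangles Method from \citep{gorbunov2020stochastic} to the composite distributed setting with clipping of gradient differences, running the same inductive scheme sketched for Theorem~\ref{thm:D_prox_clipped_SGD_main}. I would work with the potential
\[
\Theta_k = A_k\big(\Phi(y^k)-\Phi(x^*)\big) + \tfrac12\|z^k-x^*\|^2 + \frac{C^2\alpha_{k+1}^2}{2n}\sum_{i}\|h_i^k-\nabla f_i(x^*)\|^2,
\]
and prove by induction on $k$ that $\Theta_k = \cO(R^2)$ with probability at least $1-\nicefrac{\beta k}{K}$. The last, shift-tracking term measures the proximity of the running shifts $h_i^k$ to the unknown $\nabla f_i(x^*)$; keeping it small is precisely what lets us use a \emph{decreasing} clipping radius $\lambda_k$ while preserving acceleration. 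Bounding $\Theta_k$ simultaneously keeps $x^{k+1},y^k,z^k$ inside $Q=B_{5\sqrt{2}nR}(x^*)$ (so Assumptions~\ref{as:L_smoothness} and \ref{as:str_cvx} apply) and re-establishes the bias condition needed at the next step.

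First I would derive the deterministic \algname{SSTM} recursion. Using the prox-update for $z^{k+1}$, convexity of $\Phi$, $L$-smoothness, and the step relation $\alpha_{k+1}^2 L\le A_{k+1}$ (which holds since $a\ge 2$), the three-point identity for the prox gives
\begin{align*}
&A_{k+1}\big(\Phi(y^{k+1})-\Phi(x^*)\big)+\tfrac12\|z^{k+1}-x^*\|^2\\
&\quad\le A_k\big(\Phi(y^k)-\Phi(x^*)\big)+\tfrac12\|z^k-x^*\|^2+\alpha_{k+1}\la\delta^{k+1},x^*-z^k\ra+E_k,
\end{align*}
where $\delta^{k+1}=\tg(x^{k+1})-\nabla f(x^{k+1})$ and $E_k$ collects terms absorbed by the acceleration slack. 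I would then split $\delta^{k+1}=\delta^{k+1}_b+\delta^{k+1}_u$ into its conditional mean (bias) and a zero-mean part, and invoke the clipping estimates of \citep{sadiev2023high}: under the inductive bias condition $\|\nabla f_i(x^{k+1})-h_i^k\|\le\nicefrac{\lambda_k}{2}$ one has $\|\delta^{k+1}_b\|=\cO(\nicefrac{\sigma^\alpha}{\lambda_k^{\alpha-1}})$, $\EE_k\|\delta^{k+1}_u\|^2=\cO(\nicefrac{\lambda_k^{2-\alpha}\sigma^\alpha}{n})$, and each per-worker clipped increment is bounded by $\lambda_k$. The inflation of $\lambda_k$ by the factor $n$ in its definition, together with averaging over the $n$ workers, is exactly what yields the $\nicefrac{1}{n}$ scaling of the variance and hence the linear-speed-up in the noise term of the final rate.

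Next I would telescope the recursion and control the accumulated error. The bias contributions $\sum_k\alpha_{k+1}\la\delta_b^{k+1},x^*-z^k\ra$ are bounded deterministically by the ball radius times $\sum_k\alpha_{k+1}\nicefrac{\sigma^\alpha}{\lambda_k^{\alpha-1}}$; the zero-mean contributions $\sum_k\alpha_{k+1}\la\delta_u^{k+1},x^*-z^k\ra$ form a martingale whose increments are bounded (clipping) and whose conditional variances are controlled by the variance estimate above, so Bernstein's inequality for martingale differences gives a bound depending on $\nicefrac{1}{\beta}$ only through $A=\ln\tfrac{10nK}{\beta}$. In parallel, the shift update $h_i^{k+1}=h_i^k+\nu_k\hat\Delta_i^k$ is used to telescope the shift-tracking term of $\Theta_k$ and, crucially, to re-verify $\|\nabla f_i(x^{k+2})-h_i^{k+1}\|\le\nicefrac{\lambda_{k+1}}{2}$, which closes the induction and the union bound over $k$.

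I expect the main obstacle to be the coupled bookkeeping between the accelerated iterates and the shifts through the two-regime choice of $\nu_k$ (switching at $K_0=\Theta(A^2)$): in the warm-up phase $k\le K_0$ the shifts must approach $\nabla f_i(x^*)$ fast enough that the clipping bias is negligible against the target accuracy, yet slowly enough that the shift-tracking term does not overwhelm $\Theta_k$; for $k>K_0$ the schedule $\nu_k=\nicefrac{(2k+5)}{(k+3)^2}$ must keep $h_i^k$ locked onto the slowly varying $\nabla f_i(x^{k+1})$ so that $\lambda_k$ can keep decreasing. Once the induction is closed, $A_K\big(\Phi(y^K)-\Phi(x^*)\big)=\cO(R^2)$ holds with probability $\ge1-\beta$; substituting $A_K=\Theta(\nicefrac{K^2}{aL})$ and the three-term expression for $a$ and solving $\Phi(y^K)-\Phi(x^*)\le\varepsilon$ for $K$ produces the three maximands of \eqref{eq:d_prox_clipped_SSTM_shift_main_result_cvx}: $\sqrt{\nicefrac{LR^2}{\varepsilon}}$ from the smoothness part of $a$, $\sqrt{\nicefrac{R\zeta_*}{\sqrt{n}\varepsilon}}$ from the $\zeta_*$ part, and $\nicefrac{1}{n}\,(\nicefrac{\sigma R}{\varepsilon})^{\alpha/(\alpha-1)}$ from the $\sigma$ part.
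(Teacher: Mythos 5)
Your overall strategy is the paper's: an induction over iterations that keeps the iterates in $Q$, a Lyapunov function coupling $\|z^k-x^*\|^2$ with the shift-tracking term $\frac{C^2}{n}\sum_i\|h_i^k-\nabla f_i(x^*)\|^2$, the unbiased/biased split of the clipping error with the estimates of Lemma~\ref{lem:bias_and_variance_clip}, and Bernstein's inequality (Lemma~\ref{lem:Bernstein_ineq}) for the martingale part. There is, however, one concrete step in your plan that would fail as written. You weight the shift-tracking term by $\alpha_{k+1}^2$ for \emph{all} $k$. Telescoping that term against the update $h_i^{k+1}=h_i^k+\nu_k\hat\Delta_i^k$ requires $(1-\nu_k)\,\alpha_{k+2}^2\le\alpha_{k+1}^2$, i.e.\ $\nu_k\ge\tfrac{2k+5}{(k+3)^2}$; this is exactly consistent with the prescribed $\nu_k$ for $k>K_0$, but for $k\le K_0$ the prescribed $\nu_k=\tfrac{(k+2)^2}{C^2(K_0+2)^2n}$ is far smaller than $\tfrac{2k+5}{(k+3)^2}$ (it must be small, or the large early clipped increments would wreck the shifts), so with your weights the shift term \emph{grows} during the warm-up and the potential does not decrease. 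The paper resolves this with the two-regime weight $\widetilde\alpha_{k+1}$ (equal to $\alpha_{K_0+1}$ for $k\le K_0$ and to $\alpha_{k+1}$ afterwards) in the Lyapunov function $M_k=\|z^k-x^*\|^2+C^2\widetilde\alpha_{k+1}^2\frac1n\sum_i\|h_i^k-h_i^*\|^2$: with a frozen weight the telescoping condition becomes $(1-\nu_k)\alpha_{K_0+1}^2\le\alpha_{K_0+1}^2$, which holds for any $\nu_k\ge0$, and the residual $2\nu_kC^2\widetilde\alpha_{k+2}^2\|\nabla f_i(x^{k+1})-\nabla f_i(x^*)\|^2$ is absorbed by the $-\tfrac{\alpha_{k+1}}{2Ln}\sum_i\|\nabla f_i(x^{k+1})-\nabla f_i(x^*)\|^2$ slack from smoothness. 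You correctly identify this warm-up tension as the main obstacle, but the fix is a modification of the potential itself, not just of the bookkeeping.

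Two smaller points on the concentration step. First, the martingale to which Bernstein is applied must be indexed by worker--iteration pairs $(i,k)$, with increments $\tfrac{\alpha_{k+1}}{n}\la\omega_{i,k+1}^u,x^*-z^k\ra$ each bounded by $\cO(\nicefrac{\alpha_{k+1}\lambda_k R}{n})=\cO(\nicefrac{R^2}{A})$; if you treat the per-iteration average $\delta_u^{k+1}$ as a single increment, its almost-sure bound is only $2\lambda_k$ (not $\nicefrac{2\lambda_k}{n}$), the Bernstein constant $c$ is inflated by $n$, and the linear speed-up is lost. Second, the realized (not just expected) value of $\sum_k\alpha_{k+1}^2\|\omega_{k+1}\|^2$ contains cross-worker terms $\la\sum_{i<j}\omega_{i,k+1}^u,\omega_{j,k+1}^u\ra$ whose control requires the truncation device and the additional nested induction over clients (the events $\widetilde E_{T-1,j}$ in the paper); your appeal to $\EE\|\delta_u^{k+1}\|^2=\cO(\nicefrac{\lambda_k^{2-\alpha}\sigma^\alpha}{n})$ alone does not give a high-probability bound on these sums. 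Both of these are present in the scheme of Theorem~\ref{thm:D_prox_clipped_SGD_main} that you invoke, so they are inherited rather than missing, but they should be made explicit.
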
}
\begin{proof}[Sketch of the proof]
    The proof resembles the one for \algname{clipped-SSTM} from \citep{sadiev2023high} but has some noticeable differences. In addition to handling the extra technical challenges appearing due to the composite structure (e.g., one cannot apply some useful formulas like $z^k - z^{k+1} = \alpha_{k+1}\tg(x^{k+1})$ that hold in the unconstrained case), we use a non-standard potential function $M_k$ defined as $M_k = \|z^k - x^*\|^2 + \frac{C^2\alpha_{K_0+1}^2}{n}\sum_{i=1}^n \|h_i^k - \nabla f_i(x^*)\|^2$ for $k \leq K_0$ and $M_k = \|z^k - x^*\|^2 + \tfrac{C^2\alpha_{k+1}^2}{n}\sum_{i=1}^n \|h_i^k - \nabla f_i(x^*)\|^2$ for $k > K_0$. See more details in Appendix~\ref{appendix:dprox_clipped_SSTM_shift}. %We elaborate on this and provide the complete proof in Appendix~\ref{appendix:dprox_clipped_SSTM_shift}.
\end{proof}

When $n=1$, the derived result has optimal dependence on $\varepsilon$ (up to logarithmic factors) \citep{nemirovskij1983problem, zhang2020adaptive}. In contrast to the result from \citep{nguyen2023improved}, we do not assume that $\nabla f(x^*) = 0$. Moreover, as \algname{DProx-clipped-SGD-shift}, \algname{DProx-clipped-SSTM-shift} benefits from parallelization since the second term in \eqref{eq:d_prox_clipped_SSTM_shift_main_result_cvx} is proportional $\nicefrac{1}{n}$. When $n$ is sufficiently large, the effect of acceleration can become significant even for large $\sigma$. In Appendix~\ref{appendix:SSTM_str_cvx}, we also provide the convergence results for the restarted version of \algname{DProx-clipped-SSTM-shift} assuming additionally that $f$ is strongly convex and one can compute starting shifts $h_i^0$ as $\nabla f_i(x^0)$.

\section{\revision{On the Proofs Structure}\label{sec:proofs_structure}}

\begin{table*}[t]
    \centering
    \scriptsize
    \caption{\scriptsize Summary of known and new high-probability complexity results for solving (non-) composite (non-) distributed variational inequality problem \eqref{eq:main_problem}. Column ``Setup'' indicates the assumptions made in addition to Assumptions~\ref{as:bounded_alpha_moment}. All assumptions are made only on some ball around the solution with radius $\sim R \geq \|x^0 - x^*\|$ (for the results from \citep{sadiev2023high}) or radius $\sim\sqrt{V}$ (Theorems~\ref{thm:D_prox_clipped_SGDA_main} and \ref{thm:D_prox_clipped_SEG_main}). Complexity is the number of stochastic oracle calls(per worker) needed for a method to guarantee that $\PP\{\text{Metric} \leq \varepsilon\} \geq 1 - \beta$ for some $\varepsilon> 0$, $\beta \in (0,1]$ and ``Metric'' is taken from the corresponding column. Numerical and logarithmic factors are omitted for simplicity. Column ``C?'' shows whether the problem \eqref{eq:min_problem} is composite, ``D?'' indicates whether the problem \eqref{eq:min_problem} is distributed. Notation: $\tx^K_{\text{avg}} = \frac{1}{K+1}\sum_{k=0}^K \tx^k$ (for \algname{SEG}-type methods), $x^K_{\avg} = \frac{1}{K+1}\sum_{k=0}^K x^k$ (for \algname{SGDA}-type methods); $L$ = Lipschitz constant; $\sigma$ = parameter from Assumption~\ref{as:bounded_alpha_moment}; $R$ = any upper bound on $\|x^0 - x^*\|$ (for the results from \citep{sadiev2023high}); $V$ = any upper bound on $\|x^0 - x^*\|^2 + \frac{409600\gamma^2\ln^2\frac{48n(K+1)}{\beta}}{n^2}\sum_{i=1}^n\|F_i(x^*)\|^2$ (for the results of this paper); $\mu$ = quasi-strong monotonicity parameter; $\ell$ = star-cocoercivity parameter. The results of this paper are highlighted in blue.}
    \label{tab:comparison_of_rates_VIP}
    % \vspace{-2.5mm}
    \begin{threeparttable}
        \begin{tabular}{|c|c c c c c|}
        \hline
        Setup & Method  & Metric & Complexity & C? & D?\\
        \hline\hline
        \multirow{3}{*}{As.~\ref{as:L_Lip} \& \ref{as:monotonicity}} & \begin{tabular}{c}
            \algname{clipped-SEG} \\ \citep{sadiev2023high}
        \end{tabular} & $\gap_{R}(\tx_{\avg}^K)$ & $\max\left\{\frac{LR^2}{\varepsilon}, \left(\frac{\sigma R}{\varepsilon}\right)^{\frac{\alpha}{\alpha-1}}\right\}$ & \xmark & \xmark \\
        & \cellcolor{bgcolor}\begin{tabular}{c}
            \algname{DProx-clipped-SEG-shift}\\ Theorem~\ref{thm:D_prox_clipped_SEG_monotone_appendix}
        \end{tabular} & \cellcolor{bgcolor}$\gap_{\sqrt{V}}(\tx_{\avg}^K)$ &\cellcolor{bgcolor} $\max\left\{\frac{LV}{\varepsilon}, \frac{1}{n}\left(\frac{\sigma \sqrt{V}}{\varepsilon}\right)^{\frac{\alpha}{\alpha-1}}\right\}$ &\cellcolor{bgcolor} \cmark &\cellcolor{bgcolor} \cmark \\
        \hline
        \multirow{3}{*}{As.~\ref{as:L_Lip} \& \ref{as:str_monotonicity}} & \begin{tabular}{c}
            \algname{clipped-SEG} \\ \citep{sadiev2023high}
        \end{tabular} & $\|x^k - x^*\|^2$ & $\max\left\{\frac{L}{\mu}, \left(\frac{\sigma^2}{\mu^2\varepsilon}\right)^{\frac{\alpha}{2(\alpha-1})}\right\}$ & \xmark & \xmark\\
        & \cellcolor{bgcolor}\begin{tabular}{c}
            \algname{DProx-clipped-SEG-shift}\\ Theorem~\ref{thm:D_prox_clipped_SEG_QSM_appendix}
        \end{tabular} & \cellcolor{bgcolor}$\|x^k - x^*\|^2$ &\cellcolor{bgcolor} $\max\left\{\frac{L}{\mu}, \frac{1}{n}\left(\frac{\sigma^2}{\mu^2\varepsilon}\right)^{\frac{\alpha}{2(\alpha-1})}\right\}$ &\cellcolor{bgcolor} \cmark &\cellcolor{bgcolor} \cmark \\
        \hline\hline
        \multirow{3}{*}{As.~\ref{as:monotonicity} \& \ref{as:star_cocoercivity} \& \ref{as:cocoercivity}} & \begin{tabular}{c}
            \algname{clipped-SGDA} \\ \citep{sadiev2023high}
        \end{tabular} & $\gap_{R}(x_{\avg}^K)$ & $\max\left\{\frac{\ell R^2}{\varepsilon}, \left(\frac{\sigma R}{\varepsilon}\right)^{\frac{\alpha}{\alpha-1}}\right\}$ & \xmark & \xmark\\
        & \cellcolor{bgcolor}\begin{tabular}{c}
            \algname{DProx-clipped-SGDA-shift}\\ Theorem~\ref{thm:main_result_gap_prox_clipped_SGDA}
        \end{tabular} & \cellcolor{bgcolor}$\gap_{\sqrt{V}}(x_{\avg}^K)$ &\cellcolor{bgcolor} $\max\left\{\frac{\ell V}{\varepsilon}, \frac{1}{n}\left(\frac{\sigma \sqrt{V}}{\varepsilon}\right)^{\frac{\alpha}{\alpha-1}}\right\}$ &\cellcolor{bgcolor} \cmark &\cellcolor{bgcolor} \cmark \\
        \hline
        \multirow{3}{*}{As.~\ref{as:str_monotonicity} \& \ref{as:star_cocoercivity}} & \begin{tabular}{c}
            \algname{clipped-SGDA} \\ \citep{sadiev2023high}
        \end{tabular} & $\|x^K - x^*\|^2$ & $\max\left\{\frac{\ell}{\mu}, \left(\frac{\sigma^2}{\mu^2\varepsilon}\right)^{\frac{\alpha}{2(\alpha-1})}\right\}$ & \xmark & \xmark\\
        & \cellcolor{bgcolor}\begin{tabular}{c}
            \algname{DProx-clipped-SGDA-shift}\\ Theorem~\ref{thm:main_result_prox_clipped_SGDA_2}
        \end{tabular} & \cellcolor{bgcolor}$\|x^K - x^*\|^2$ &\cellcolor{bgcolor} $\max\left\{\frac{\ell}{\mu}, \frac{1}{n}\left(\frac{\sigma^2}{\mu^2\varepsilon}\right)^{\frac{\alpha}{2(\alpha-1})}\right\}$ &\cellcolor{bgcolor} \cmark &\cellcolor{bgcolor} \cmark \\
        \hline
    \end{tabular}
    % \begin{tablenotes}
    %     {\scriptsize \item [{\color{blue}(1)}] All assumptions are made on the whole domain.
    %     \item [{\color{blue}(2)}] \citet{juditsky2011solving} assume that the noise is sub-Gaussian: $\EE\left[\exp\left(\nicefrac{\|F_{\xi}(x) - F(x)\|^2}{\sigma^2}\right)\right] \leq \exp(1)$ for all $x$ from the domain.
    %     }
    % \end{tablenotes}
    \end{threeparttable}
    % \vspace{-6mm}
\end{table*}

\revision{In this section, we elaborate on the proofs structure of our results and highlight additional challenges appearing due to the presence of the composite term and distributed nature of the methods. The proof of each result consists of two parts: optimization/descent lemma and the analysis of the sums appearing due to the stochasticity and biasedness of the updates (due to the clipping). In the first part, we usually follow some standard analysis of corresponding deterministic method without clipping and separate the stochastic part from the deterministic one (though for \algname{DProx-clipped-SSTM-shift} we use quite non-standard Lyapunov function, which can be interesting on its own). For example, in the analysis\footnote{\revision{In the appendix, we analyze this case in the generality of variational inequalities. Here we provide a simplified version for minimization.}} of \algname{DProx-clipped-SGD-shift} under Assumption~\ref{as:QSC}, we prove the following inequality:
\begin{align}
        &V_{K+1} \leq (1 - \gamma \mu)^{K+1}V_0 \notag\\
        &\;\; + \frac{2\gamma}{n} \sum\limits_{k=0}^K\sum\limits_{i=1}^n (1-\gamma\mu)^{K-k}\langle s_k, \omega_{i,k} \rangle\notag\\
        &\;\; + \gamma^2\sum\limits_{k=0}^K(1-\gamma \mu)^{K-k}\left(\|\omega_k\|^2 + \frac{1}{n^2}\sum\limits_{i=1}^n  \|\omega_{i,k}\|^2\right), \notag
\end{align}
where $V_k = \|x^k - x^*\|^2 + \frac{C^2\gamma^2 A^2}{n}\sum_{i=1}^n \|h_i^k - \nabla f_i(x^*)\|^2$ for some numerical constant $C> 0$, $s_k = x^k - x^* - \gamma (\nabla f(x^k)- \nabla f(x^*))$ and vectors $\omega_{i,k} = \nabla f_i(x^k) - \tg_i^k$ represent the discrepancy between the full gradients and their estimates. Moreover, to use this inequality for some $K = T \geq 0$ we need to show that $\{x^k\}_{k=0}^T$ belong to the set where the assumptions hold (in this particular case, to $B_{3n\sqrt{2}R}(x^*)$) with high probability. We do it always by induction. More precisely, we prove that $\PP\{E_k\} \geq 1 - \nicefrac{k\beta}{(K+1)}$ for the probability event $E_k$ defined as follows: inequalities $V_t \leq 4\exp(-\gamma\mu t)R^2$ and $\left\|\frac{\gamma}{n}\sum_{i=1}^{r-1}\omega_{i,t-1}^u\right\| \leq \exp(-\nicefrac{\gamma\mu(t-1)}{2})\sqrt{\nicefrac{R^2}{2}}$ hold for $t = 0,1,\ldots,k$ and $r = 1, 2, \ldots, n$ simultaneously, where $\omega_{i,t}^u = \EE_{\xi_i^t}[\tg_i^t] - \tg_i^t$ and $\EE_{\xi_i^t}[\cdot]$ denotes an expectation w.r.t.\ $\xi_i^t$. To prove this, we use Bernstein inequality for martingale difference (see Lemma~\ref{lem:Bernstein_ineq}). However, to apply Bernstein inequality we need to circumvent multiple technical difficulties related to the estimation of the norm of the clipped vector (that involves derivations related to the shifts $\{h_i^k\}_{i\in [n]}$), proper choice of the clipping level to control the bias and variance and achieve desired linear speed-up (see Lemma~\ref{lem:bias_and_variance_clip_distributed} and the following discussion). Moreover, when $n > 1$ (distributed case), we also need to apply additional induction over clients to estimate sums like $\circledSix$ from \eqref{eq:SGDA_2_prox_clipped_12345_bound}.
}

\section{Extensions for Variational Inequalities}

In addition to the minimization problems, we also consider stochastic composite variational inequality problems (VIPs):
\begin{gather}
    \text{find }\; x^* \in \R^d \; \text{ such that } \notag\\
    \langle F(x^*), x - x^* \rangle + \Psi(x) - \Psi (x^*) \geq  0, \label{eq:main_problem}
\end{gather}
where the assumptions on operator $F(x) = \EE_{\xi\sim \cD}[F_{\xi}(x)]: \R^d \to \R^d$ will be specified later and, as in the case of minimization, $\Psi(x)$ is a proper, closed, convex function. When $f(x)$ is convex problem \eqref{eq:min_problem} is a special case of \eqref{eq:main_problem} with $F(x) = \nabla f(x)$. For the examples of problems of type \eqref{eq:main_problem}, we refer to \citep{alacaoglu2022stochastic, beznosikov2022stochastic}.

The distributed version of \eqref{eq:main_problem} has the following structure:
\begin{equation}
    F(x) = \frac{1}{n}\sum\limits_{i=1}^n \left\{F_i(x) = \EE_{\xi_i\sim \cD_i}[F_{\xi_i}(x)]\right\}. \label{eq:distributed_problem_VI}
\end{equation}
In this case, there are $n$ workers connected in a centralized way with some parameter server; worker $i$ can query some noisy information (stochastic gradients/estimates) about $F_i$.

Similarly to the minimization case, we consider the situation when $F_i$ is accessible through the stochastic oracle calls. The stochastic estimates satisfy the following assumption.
% \footnote{Following \citep{sadiev2023high}, we consider all assumptions only on some bounded set $Q \subseteq \R^d$; the diameter of $Q$ depends on the starting point.}
\begin{assumption}\label{as:bounded_alpha_moment_VI}
    There exist some set $Q \subseteq \R^d$ and values $\sigma \geq 0$, $\alpha \in (1,2]$ such that for all $x \in Q$
    we have $\EE_{\xi_i\sim \cD_i}[F_{\xi_i}(x)] = F_i(x)$ and
    \begin{equation}
        \EE_{\xi_i\sim \cD_i}[\left\|F_{\xi_i}(x) - F_i(x)\right\|^\alpha] \leq \sigma^\alpha. \label{eq:bounded_alpha_moment_operator}
    \end{equation}
\end{assumption}

Under this and additional assumptions on $\{F_i\}_{i=1}^n$, we obtain several convergence results for the distributed versions\footnote{\algname{GDA}-type methods are analogs of \algname{GD} iteration $x^{k+1} = x^k - \gamma F(x^k)$. Such methods converge under (star-)cocoercivity but fail for general monotone problems. To address this issue, versions of Extragradient-type iteration \citep{korpelevich1976extragradient} $x^{k+1} = x^k - \gamma F(x^k - \gamma F(x^k))$ are used; see further details in Appendix~\ref{appendix:VIs}.} of \algname{clipped-SGDA} and \algname{clipped-SEG} from \citep{gorbunov2022clipped}. The results are summarized in Table~\ref{tab:comparison_of_rates_VIP}. As one can see from the table, our results generalize the ones from \citep{sadiev2023high} to the composite distributed case and achieve linear speedup. Further details on the assumptions and our main results for distributed composite variational inequalities are deferred to Appendix~\ref{appendix:VIs}.

\section*{Acknowledgements}
The work of A.~Gasnikov was supported by a grant for research centers in the field of artificial intelligence, provided by the Analytical Center for the Government of the Russian Federation in accordance with the subsidy agreement (agreement identifier 000000D730321P5Q0002) and the agreement with the Ivannikov Institute for System Programming of the Russian Academy of Sciences dated November 2, 2021 No. 70-2021-00142.

\section*{Impact Statement}
This paper presents work whose goal is to advance the field of Machine Learning. There are many potential societal consequences of our work, none of which we feel must be specifically highlighted here.

% \eduard{Maybe, list all the technical novelties and difficulties.}

% \eduard{Add a short conclusion.}

\bibliography{refs}
\bibliographystyle{icml2024}

%%%%%%%%%%%%%%%%%%%%%%%%%%%%%%%%%%%%%%%%%%%%%%%%%%%%%%%%%%%%%%%%%%%%%%%%%%%%%%%
%%%%%%%%%%%%%%%%%%%%%%%%%%%%%%%%%%%%%%%%%%%%%%%%%%%%%%%%%%%%%%%%%%%%%%%%%%%%%%%
% APPENDIX
%%%%%%%%%%%%%%%%%%%%%%%%%%%%%%%%%%%%%%%%%%%%%%%%%%%%%%%%%%%%%%%%%%%%%%%%%%%%%%%
%%%%%%%%%%%%%%%%%%%%%%%%%%%%%%%%%%%%%%%%%%%%%%%%%%%%%%%%%%%%%%%%%%%%%%%%%%%%%%%
\newpage
\appendix
\onecolumn

% \tableofcontents

% \clearpage

\section{Extra Related Work}\label{appendix:extra_related}

\paragraph{Non-convex case.} \citet{li2020high} analyze the high-probability convergence rate of \algname{SGD} for finding first-order stationary points for smooth non-convex unconstrained problems. The first high-probability result under Assumption~\ref{as:bounded_alpha_moment} for the same class of functions is derived by \citet{cutkosky2021high}. However, the result of \citet{cutkosky2021high} relies on the additional assumption that the gradients are bounded. \citet{sadiev2023high} remove the bounded gradient assumption but derive a slightly worse rate. \citet{nguyen2023high} improve the result and achieve the same rate as in \citep{cutkosky2021high} without assuming boundedness of the gradients. It is worth mentioning that \citet{cutkosky2021high, sadiev2023high, nguyen2023high} derive their main results for the methods that use gradient clipping.

\paragraph{Gradient clipping} is a very useful algorithmic tool in the training of deep neural networks \citep{pascanu2013difficulty, goodfellow2016deep}. Gradient clipping also has some good theoretical properties, e.g., it can be useful for minimization of $(L_0, L_1)$-smooth functions \citep{zhang2020gradient}, in differential privacy \citep{abadi2016deep}, Byzantine-robustness \citep{karimireddy2021learning}. Moreover, as we already mentioned, almost all existing high-probability results that do not rely on the light-tailed noise assumption are derived for the methods with clipping. Recently, \citet{sadiev2023high} theoretically showed that \algname{SGD} has worse high-probability convergence than \algname{clipped-SGD} even when the noise in the gradient has bounded variance.

% \paragraph{In-expectation convergence under Assumption~\ref{as:bounded_alpha_moment}.}

% \begin{itemize}
%     \item \citep{nemirovskij1983problem} -- first results
%     \item \citep{vural2022mirror} -- extended
%     \item \citep{zhang2020adaptive} -- strongly convex and non-convex cases
% \end{itemize}

\clearpage

\section{Auxiliary and Technical Results}\label{appendix:aux_results}

\paragraph{Bernstein inequality.} In the final stages of our proofs, we need to estimate certain sums of random variables. The main tool that we use to handle such sums is {\it Bernstein inequality for martingale differences} \citep{bennett1962probability,dzhaparidze2001bernstein,freedman1975tail}.
\begin{lemma}\label{lem:Bernstein_ineq}
    Let the sequence of random variables $\{X_i\}_{i\ge 1}$ form a martingale difference sequence, i.e.\ $\EE\left[X_i\mid X_{i-1},\ldots, X_1\right] = 0$ for all $i \ge 1$. Assume that conditional variances $\sigma_i^2\eqdef\EE\left[X_i^2\mid X_{i-1},\ldots, X_1\right]$ exist and are bounded and also assume that there exists deterministic constant $c>0$ such that $|X_i| \le c$ almost surely for all $i\ge 1$. Then for all $b > 0$, $G > 0$ and $n\ge 1$
    \begin{equation}
        \PP\left\{\Big|\sum\limits_{i=1}^nX_i\Big| > b \text{ and } \sum\limits_{i=1}^n\sigma_i^2 \le G\right\} \le 2\exp\left(-\frac{b^2}{2G + \nicefrac{2cb}{3}}\right).
    \end{equation}
\end{lemma}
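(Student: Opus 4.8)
The plan is to prove this as a special case of Freedman's inequality via the classical exponential-supermartingale (Chernoff--Cram\'er) method. Let $\mathcal{F}_i = \sigma(X_1,\ldots,X_i)$, write $S_n = \sum_{i=1}^n X_i$ and $V_n = \sum_{i=1}^n \sigma_i^2$, and note that each $\sigma_i^2 = \EE[X_i^2\mid \mathcal{F}_{i-1}]$ is $\mathcal{F}_{i-1}$-measurable (predictable); this structural fact is what makes the whole argument work, since it lets $V_n$ sit inside an exponential while preserving the supermartingale property.

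First I would establish the one-step conditional exponential bound. Fixing $t > 0$ and using that $\phi(u) = (e^u - 1 - u)/u^2$ (with $\phi(0) = \tfrac12$) is nondecreasing on $\mathbb{R}$ together with $X_i \leq c$, one obtains $e^{tX_i} - 1 - tX_i = (tX_i)^2\phi(tX_i) \leq X_i^2 t^2\phi(tc) = X_i^2 g(t)$, where $g(t) \eqdef (e^{tc}-1-tc)/c^2$. Taking conditional expectation and using $\EE[X_i\mid\mathcal{F}_{i-1}] = 0$ yields $\EE[e^{tX_i}\mid\mathcal{F}_{i-1}] \leq 1 + \sigma_i^2 g(t) \leq \exp(\sigma_i^2 g(t))$.

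Next I would define $M_n = \exp(tS_n - g(t)V_n)$ and verify that it is a supermartingale with $M_0 = 1$: since $V_n - V_{n-1} = \sigma_n^2$ is $\mathcal{F}_{n-1}$-measurable, $\EE[M_n\mid\mathcal{F}_{n-1}] = M_{n-1}e^{-g(t)\sigma_n^2}\EE[e^{tX_n}\mid\mathcal{F}_{n-1}] \leq M_{n-1}$, so $\EE[M_n] \leq 1$. On the event $\{S_n > b\}\cap\{V_n \leq G\}$ we have $M_n > \exp(tb - g(t)G)$ because $g(t) > 0$, whence Markov's inequality gives $\PP\{S_n > b,\, V_n \leq G\} \leq \exp(-(tb - g(t)G))$.

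Finally I would optimize over $t$. Using the elementary bound $g(t) \leq \tfrac{t^2/2}{1 - tc/3}$ (valid for $0 \leq tc < 3$, which the optimizer satisfies automatically since $G > 0$) and choosing $t = b/(G + cb/3)$, so that $t(G + cb/3) = b$ and $1 - tc/3 = G/(G + cb/3)$, a short computation gives $g(t)G \leq tb/2$, hence $tb - g(t)G \geq b^2/(2G + 2cb/3)$ and the one-sided bound $\PP\{S_n > b,\, V_n \leq G\} \leq \exp(-b^2/(2G + 2cb/3))$. Applying the identical argument to $\{-X_i\}$ (same constant $c$, same conditional variances) controls $\{S_n < -b\}$, and a union bound over the two tails yields the factor $2$ and the claimed inequality. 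The step I would treat most carefully is the interaction between the random normalizer $V_n$ and the event $\{V_n \leq G\}$: because $g(t) > 0$, bounding $M_n$ from below on this event is exactly what removes any need for a deterministic bound on $V_n$, while the predictability of $\sigma_i^2$ is what keeps $M_n$ a genuine supermartingale.
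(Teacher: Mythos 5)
Your proof is correct and complete: the one-step bound via the monotonicity of $(e^u-1-u)/u^2$, the predictability of $\sigma_i^2$ making $\exp(tS_n - g(t)V_n)$ a supermartingale, the lower bound on the event $\{S_n>b,\ V_n\le G\}$, the choice $t=b/(G+cb/3)$ with the estimate $e^x-1-x\le \tfrac{x^2/2}{1-x/3}$, and the union bound over the two tails all check out. The paper states this lemma without proof, citing Bennett, Dzhaparidze--van Zanten, and Freedman; your argument is precisely the standard Freedman-type proof given in those references, so there is nothing to compare beyond noting that you have supplied the proof the paper delegates to the literature.
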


\paragraph{Impact of clipping on the bias and variance.} The following lemma also helps to handle the aforementioned sums of random variables.

\begin{lemma}[Lemma~5.1 from \cite{sadiev2023high}]\label{lem:bias_and_variance_clip}
    Let $X$ be a random vector in $\R^d$ and $\tX = \clip(X,\lambda)$. Then, $\|\tX - \EE[\tX]\| \leq 2\lambda$. 
    % \begin{equation}
    %     \left\|\tX - \EE[\tX]\right\| \leq 2\lambda. \label{eq:bound_X_main}
    % \end{equation} 
    Moreover, if for some $\sigma \geq 0$ and $\alpha \in (1,2]$ we have $\EE[X] = x\in\R^d$, $\EE[\|X - x\|^\alpha] \leq \sigma^\alpha$, 
    % \begin{equation}
    %     \EE[X] = x\in\R^d,\quad \EE[\|X - x\|^\alpha] \leq \sigma^\alpha \label{eq:UBV_X_main}
    % \end{equation}
    and $\|x\| \leq \nicefrac{\lambda}{2}$, then
    \begin{eqnarray}
        \left\|\EE[\tX] - x\right\| &\leq& \frac{2^\alpha\sigma^\alpha}{\lambda^{\alpha-1}}, \label{eq:bias_X_main}\\
        % \EE\left[\left\|\tX - x\right\|^2\right] &\leq& 18 \lambda^{2-\alpha} \sigma^\alpha, \label{eq:distortion_X_main}\\
        \EE\left[\left\|\tX - \EE[\tX]\right\|^2\right] &\leq& 18 \lambda^{2-\alpha} \sigma^\alpha. \label{eq:variance_X_main}
    \end{eqnarray}
\end{lemma}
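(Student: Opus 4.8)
The plan is to treat the three claims separately, since only the last two use the moment hypothesis. The first bound is immediate: by the definition of $\clip(\cdot,\lambda)$ one always has $\|\tX\| \le \lambda$, hence $\|\EE[\tX]\| \le \EE\|\tX\| \le \lambda$ by Jensen's inequality, and the triangle inequality gives $\|\tX - \EE[\tX]\| \le \|\tX\| + \|\EE[\tX]\| \le 2\lambda$.

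For the bias bound, the key observation is that $\EE[\tX] - x = \EE[\tX - X]$ because $\EE[X]=x$, so by Jensen $\|\EE[\tX]-x\| \le \EE\|\tX - X\|$. Now $\tX - X$ vanishes on the event $\{\|X\| \le \lambda\}$ and equals $(\nicefrac{\lambda}{\|X\|}-1)X$ otherwise, so that $\|\tX - X\| = (\|X\|-\lambda)\mathbbm{1}[\|X\|>\lambda]$. The hypothesis $\|x\|\le \nicefrac{\lambda}{2}$ does the work here: on the clipping event $\{\|X\|>\lambda\}$ we have $\|X-x\| \ge \|X\|-\|x\| > \nicefrac{\lambda}{2}$, which lets me both bound $\|X\|-\lambda \le \|X-x\|$ and convert the indicator via $\mathbbm{1}[\|X\|>\lambda] \le (\nicefrac{2\|X-x\|}{\lambda})^{\alpha-1}$ (valid since the argument exceeds $1$ and $\alpha-1>0$). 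Multiplying these and taking expectations turns the bound into $(\nicefrac{2}{\lambda})^{\alpha-1}\EE\|X-x\|^\alpha \le \nicefrac{2^{\alpha-1}\sigma^\alpha}{\lambda^{\alpha-1}}$, which is at most the claimed $\nicefrac{2^\alpha\sigma^\alpha}{\lambda^{\alpha-1}}$.

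For the variance bound I would first replace $\EE[\tX]$ by the constant $x$ using the variational characterization of the mean, $\EE\|\tX - \EE[\tX]\|^2 \le \EE\|\tX - x\|^2$, and then split the expectation according to whether clipping is active. On $\{\|X\|\le\lambda\}$ one has $\tX=X$ and $\|X-x\|\le \|X\|+\|x\| \le \nicefrac{3\lambda}{2}$, so $\|X-x\|^2 \le (\nicefrac{3\lambda}{2})^{2-\alpha}\|X-x\|^\alpha$; on $\{\|X\|>\lambda\}$ one has $\|\tX\|=\lambda$ and hence $\|\tX - x\| \le \nicefrac{3\lambda}{2}$, while the same device (now with exponent $\alpha$) gives $\mathbbm{1}[\|X\|>\lambda]\le (\nicefrac{2\|X-x\|}{\lambda})^\alpha$. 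Taking expectations on each piece and invoking $\EE\|X-x\|^\alpha \le \sigma^\alpha$ produces two terms of the form $c\,\lambda^{2-\alpha}\sigma^\alpha$; using $\alpha\in(1,2]$ to bound the numerical constants ($2^\alpha\le 4$, $(\nicefrac{3}{2})^{2-\alpha}\le \nicefrac{3}{2}$) collapses the sum to at most $18\lambda^{2-\alpha}\sigma^\alpha$.

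The only genuinely delicate point is the uniform \emph{indicator trick}: replacing the hard indicator of the clipping event by a power of $\nicefrac{\|X-x\|}{\lambda}$ so that the $\alpha$-th moment assumption can be applied. This step is exactly where the hypothesis $\|x\|\le\nicefrac{\lambda}{2}$ is needed, as it guarantees that the substituted quantity exceeds $1$ on the relevant event, and getting the exponents of $\lambda$ to match ($\lambda^{-(\alpha-1)}$ for the bias and $\lambda^{2-\alpha}$ for the variance) is the main bookkeeping to watch.
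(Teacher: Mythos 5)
Your proof is correct, and it follows essentially the same route as the standard argument for this lemma (which the paper itself does not reprove but imports from Sadiev et al., 2023): Jensen's inequality for the first claim, the inclusion $\{\|X\|>\lambda\}\subseteq\{\|X-x\|>\nicefrac{\lambda}{2}\}$ enabled by $\|x\|\le\nicefrac{\lambda}{2}$, and the indicator-to-power (Markov-type) trick to invoke the $\alpha$-th moment bound. Your constants even come out slightly sharper than the stated $2^{\alpha}$ and $18$ ($2^{\alpha-1}$ and $10.5$, respectively), so both inequalities follow.
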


\paragraph{Intuition behind the choice of clipping level in the distributed case.} To better illustrate why we increase clipping level $n$ times, we prove the following lemma.

\begin{lemma}\label{lem:bias_and_variance_clip_distributed}
    Let $X_1, X_2, \ldots, X_n$ be independent random vectors in $\R^d$ and $\tX_i = \clip(X_i,\lambda)$ for all $i \in [n]$. Then, for $\tX = \frac{1}{n}\sum_{i=1}^n \tX_i$ we have  $\|\tX - \EE[\tX]\| \leq 2\lambda$. Moreover, if for some $\sigma \geq 0$ and $\alpha \in (1,2]$ we have $\EE[X_i] = x_i\in\R^d$, $\EE[\|X_i - x_i\|^\alpha] \leq \sigma^\alpha$, and $\|x_i\| \leq \nicefrac{\lambda}{2}$ for all $i \in [n]$, then for $x = \frac{1}{n}\sum_{i=1}^n x_i$ the following inequalities hold
    \begin{eqnarray}
        \left\|\EE[\tX] - x\right\| &\leq& \frac{2^\alpha\sigma^\alpha}{\lambda^{\alpha-1}}, \label{eq:bias_X_main_distr}\\
        % \EE\left[\left\|\tX - x\right\|^2\right] &\leq& 18 \lambda^{2-\alpha} \sigma^\alpha, \label{eq:distortion_X_main_distr}\\
        \EE\left[\left\|\tX - \EE[\tX]\right\|^2\right] &\leq& \frac{18 \lambda^{2-\alpha} \sigma^\alpha}{n}. \label{eq:variance_X_main_distr}
    \end{eqnarray}
\end{lemma}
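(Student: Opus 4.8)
The plan is to reduce the distributed statement (Lemma~\ref{lem:bias_and_variance_clip_distributed}) to the single-vector statement (Lemma~\ref{lem:bias_and_variance_clip}) wherever possible, and to exploit independence to gain the extra factor of $\nicefrac{1}{n}$ in the variance bound. First I would dispatch the almost-sure bound: since each $\tX_i = \clip(X_i,\lambda)$ satisfies $\|\tX_i\|\leq\lambda$ by definition of the clipping operator, the average obeys $\|\tX\|\leq\lambda$, hence $\|\tX-\EE[\tX]\|\leq\|\tX\|+\|\EE[\tX]\|\leq 2\lambda$ by the triangle inequality and Jensen. This part requires no independence and is immediate.

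For the bias bound \eqref{eq:bias_X_main_distr}, I would write $\EE[\tX]-x = \frac{1}{n}\sum_{i=1}^n(\EE[\tX_i]-x_i)$ and apply the triangle inequality. Each summand is controlled by the single-vector bias estimate \eqref{eq:bias_X_main} from Lemma~\ref{lem:bias_and_variance_clip}, which applies because the hypotheses $\EE[X_i]=x_i$, $\EE[\|X_i-x_i\|^\alpha]\leq\sigma^\alpha$, and $\|x_i\|\leq\nicefrac{\lambda}{2}$ hold for every $i$. This gives $\|\EE[\tX]-x\|\leq\frac{1}{n}\sum_{i=1}^n\frac{2^\alpha\sigma^\alpha}{\lambda^{\alpha-1}}=\frac{2^\alpha\sigma^\alpha}{\lambda^{\alpha-1}}$, matching the claim. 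Note that the bias does \emph{not} improve with $n$, which is expected: averaging independent biased estimators does not reduce their common bias.

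The variance bound \eqref{eq:variance_X_main_distr} is the step where independence is essential and where the factor $\nicefrac{1}{n}$ appears. Here I would use that $\tX-\EE[\tX]=\frac{1}{n}\sum_{i=1}^n(\tX_i-\EE[\tX_i])$ is a sum of independent, mean-zero random vectors. By independence the cross terms vanish, so $\EE[\|\tX-\EE[\tX]\|^2]=\frac{1}{n^2}\sum_{i=1}^n\EE[\|\tX_i-\EE[\tX_i]\|^2]$. Bounding each per-coordinate-vector variance by the single-vector estimate \eqref{eq:variance_X_main}, namely $\EE[\|\tX_i-\EE[\tX_i]\|^2]\leq 18\lambda^{2-\alpha}\sigma^\alpha$, yields $\frac{1}{n^2}\cdot n\cdot 18\lambda^{2-\alpha}\sigma^\alpha=\frac{18\lambda^{2-\alpha}\sigma^\alpha}{n}$, exactly as claimed.

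The main obstacle is conceptual rather than technical: one must take care that the clipping is applied \emph{before} averaging (each $\tX_i$ is clipped individually), so that the summands $\tX_i-\EE[\tX_i]$ remain independent and the orthogonality of cross terms holds. This is precisely the algorithmic choice emphasized in the distributed methods, where each worker clips locally and the server averages. Given the two single-vector estimates from Lemma~\ref{lem:bias_and_variance_clip}, the remaining work is just the triangle inequality for the bias and the standard variance-of-a-sum-of-independent-terms identity for the variance; no new probabilistic machinery is needed.
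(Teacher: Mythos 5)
Your proof is correct and follows essentially the same route as the paper: the bias bound via the triangle inequality applied to the average of the single-vector bias estimates from Lemma~\ref{lem:bias_and_variance_clip}, and the variance bound via independence (vanishing cross terms) combined with the single-vector variance estimate, yielding the $\nicefrac{1}{n}$ factor. The only cosmetic difference is in the almost-sure bound, where you use $\|\tX\|\leq\lambda$ and Jensen directly while the paper averages the per-term bounds $\|\tX_i-\EE[\tX_i]\|\leq 2\lambda$; both are immediate and equivalent.
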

\begin{proof}
    From Lemma~\ref{lem:bias_and_variance_clip} we have for all $i \in [n]$ that $\|\tX_i - \EE[\tX_i]\| \leq 2\lambda$ and
    \begin{eqnarray}
        \left\|\EE[\tX_i] - x_i\right\| &\leq& \frac{2^\alpha\sigma^\alpha}{\lambda^{\alpha-1}}, \label{eq:bias_X_main_1111111}\\
        % \EE\left[\left\|\tX - x\right\|^2\right] &\leq& 18 \lambda^{2-\alpha} \sigma^\alpha, \label{eq:distortion_X_main}\\
        \EE\left[\left\|\tX_i - \EE[\tX_i]\right\|^2\right] &\leq& 18 \lambda^{2-\alpha} \sigma^\alpha. \label{eq:variance_X_main_1111111}
    \end{eqnarray}
    Jensen's inequality implies
    \begin{eqnarray*}
        \left\|\tX - \EE[\tX]\right\| &=& \left\| \frac{1}{n}\sum\limits_{i=1}^n \left( \tX_i - \EE[\tX_i]\right) \right\| \leq \frac{1}{n}\sum\limits_{i=1}^n \left\|\tX_i - \EE[\tX_i]\right\| \leq 2\lambda,\\
        \left\|\EE[\tX] - x\right\| &=& \left\| \frac{1}{n}\sum\limits_{i=1}^n \left(\EE[\tX_i] - x_i\right) \right\| \leq \frac{1}{n}\sum\limits_{i=1}^n \left\|\EE[\tX_i] - x_i\right\| \overset{\eqref{eq:bias_X_main_1111111}}{\leq} \frac{2^\alpha\sigma^\alpha}{\lambda^{\alpha-1}}.
    \end{eqnarray*}
    Finally, using the independence of $\tX_1,\ldots, \tX_n$, we derive
    \begin{eqnarray*}
        \EE\left[\left\|\tX - \EE[\tX]\right\|^2\right] &=& \EE\left[\left\| \frac{1}{n}\sum\limits_{i=1}^n \left( \tX_i - \EE[\tX_i]\right) \right\|^2\right] = \frac{1}{n^2}\sum\limits_{i=1}^n\EE\left[\left\|\tX_i - \EE[\tX_i]\right\|^2\right]\\
        &\overset{\eqref{eq:variance_X_main_1111111}}{\leq}& \frac{18 \lambda^{2-\alpha} \sigma^\alpha}{n}
    \end{eqnarray*}
    that concludes the proof.
\end{proof}

From \eqref{eq:bias_X_main_distr}-\eqref{eq:variance_X_main_distr}, we see that number of workers $n$ appears differently in the bound on bias and variance. However, if we replace $\lambda$ with $n\lambda$, then both bounds will transform to \eqref{eq:bias_X_main}-\eqref{eq:variance_X_main} respectively with $\sigma^\alpha = \nicefrac{\sigma^\alpha}{n^{\alpha-1}}$ (in other words, bias and variance will have the same dependence on $n$). These observations hint that the complexity bounds for distributed methods should be similar to the ones proven for non-distributed methods (in the unconstrained case) by \citet{sadiev2023high} up to the replacement of $\sigma^\alpha$ with $\nicefrac{\sigma^\alpha}{n^{\alpha-1}}$. Nevertheless, our analysis of the distributed case does not rely on Lemma~\ref{lem:bias_and_variance_clip_distributed} and has some important differences with the single-node case (even when $\Psi \equiv 0$).

\paragraph{Useful inequality related to prox-operator.} In the analysis of \algname{DProx-clipped-SGDA-shift}, we use the following standard result.

\begin{lemma}[Theorem 6.39 (iii) from \citep{beck2017first}]\label{lem:prox_lemma}
    Let $\Psi$ be a proper lower semicontinuous convex function and $x^+ = \proxkPsi(x)$. Then for all $y \in \R^d$ the following inequality holds:
    \begin{equation*}
        \langle x^+ - x, y - x^+ \rangle \geq \gamma \left(\Psi(x^+) - \Psi(y)\right).
    \end{equation*}
\end{lemma}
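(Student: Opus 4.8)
The plan is to exploit the variational characterization of $x^+ = \proxkPsi(x)$ as the unique minimizer of the strongly convex objective $u \mapsto \gamma\Psi(u) + \tfrac{1}{2}\|u - x\|^2$, and then read off the desired inequality from the first-order optimality condition. First I would note that the smooth part $\tfrac{1}{2}\|u - x\|^2$ is differentiable with gradient $u - x$, while $\Psi$ is proper lsc convex, so Fermat's rule applied to the sum gives $0 \in (x^+ - x) + \gamma\partial\Psi(x^+)$. Equivalently, $\tfrac{1}{\gamma}(x - x^+) \in \partial\Psi(x^+)$, i.e.\ the rescaled displacement is a valid subgradient of $\Psi$ at $x^+$.

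The key step is then to invoke the subgradient inequality at $x^+$ with precisely this subgradient: for every $y \in \R^d$,
\begin{equation*}
    \Psi(y) \geq \Psi(x^+) + \left\langle \tfrac{1}{\gamma}(x - x^+),\, y - x^+ \right\rangle.
\end{equation*}
Multiplying through by $\gamma > 0$ and rearranging the inner product yields $\langle x^+ - x, y - x^+ \rangle \geq \gamma\left(\Psi(x^+) - \Psi(y)\right)$, which is exactly the claim.

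To keep the argument self-contained and sidestep any constraint-qualification issue in the subdifferential sum rule, I would alternatively argue directly from minimality. Fix $y \in \R^d$, let $t \in (0,1]$, and set $y_t = (1-t)x^+ + t y$. By optimality of $x^+$ one has $\gamma\Psi(x^+) + \tfrac{1}{2}\|x^+ - x\|^2 \leq \gamma\Psi(y_t) + \tfrac{1}{2}\|y_t - x\|^2$; bounding $\Psi(y_t) \leq (1-t)\Psi(x^+) + t\Psi(y)$ by convexity and expanding $\|y_t - x\|^2 = \|x^+ - x\|^2 + 2t\langle x^+ - x, y - x^+\rangle + t^2\|y - x^+\|^2$ reduces everything, after dividing by $t$, to $\gamma(\Psi(x^+) - \Psi(y)) \leq \langle x^+ - x, y - x^+\rangle + \tfrac{t}{2}\|y - x^+\|^2$; letting $t \to 0^+$ kills the last term and gives the result. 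There is no genuine obstacle here: the only care required is the sign bookkeeping in the inner product and the clean vanishing of the $\tfrac{t}{2}\|y - x^+\|^2$ term in the limit, and the hypothesis that $\Psi$ is proper lsc convex is exactly what guarantees the minimizer exists, is unique, and satisfies the optimality condition we exploit.
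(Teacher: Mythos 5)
Your proof is correct; note that the paper itself does not prove this lemma at all but simply cites it as Theorem~6.39(iii) of Beck (2017), so any complete argument is already more than the paper supplies. Both of your routes are sound. The first (Fermat's rule giving $\tfrac{1}{\gamma}(x - x^+) \in \partial\Psi(x^+)$, then the subgradient inequality) is the textbook derivation and is what Beck's proof amounts to; the sum rule you invoke is unproblematic here because the quadratic term is finite and differentiable everywhere, so no constraint qualification is needed. The second (comparing the objective at $x^+$ with its value at $y_t = (1-t)x^+ + ty$, expanding the square, dividing by $t$, and letting $t \to 0^+$) is entirely elementary and avoids subdifferential calculus altogether; the only case worth a word is $\Psi(y) = +\infty$, where the claimed inequality holds vacuously, and $\Psi(x^+)$ is finite because $x^+$ minimizes a proper objective. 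Either argument would serve as a self-contained replacement for the citation.
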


\clearpage

\section{\revision{Composite Distributed Variational Inequalities}}\label{appendix:VIs}

In this section, we provide further details on the results we obtained for variational inequalities.

% \subsection{Setup}

% In addition to the minimization problems, we also consider stochastic composite variational inequality problems (VIPs):
% \begin{equation}
%     \text{find }\; x^* \in \R^d \; \text{ such that } \;  \langle F(x^*), x - x^* \rangle + \Psi(x) - \Psi (x^*) \geq  0, \label{eq:main_problem}
% \end{equation}
% where the assumptions on operator $F(x) = \EE_{\xi\sim \cD}[F_{\xi}(x)]: \R^d \to \R^d$ will be specified later and, as in the case of minimization, $\Psi(x)$ is a proper, closed, convex function. When $f(x)$ is convex problem \eqref{eq:min_problem} is a special case of \eqref{eq:main_problem} with $F(x) = \nabla f(x)$. For the examples of problems of type \eqref{eq:main_problem}, we refer to \citep{alacaoglu2022stochastic, beznosikov2022stochastic}.

% The distributed version of \eqref{eq:main_problem} has the following structure of $F$:
% \begin{equation}
%     F(x) = \frac{1}{n}\sum\limits_{i=1}^n \left\{F_i(x) = \EE_{\xi_i\sim \cD_i}[F_{\xi_i}(x)]\right\}. \label{eq:distributed_problem_VI}
% \end{equation}
% In this case, there are $n$ workers connected in a centralized way with some parameter server; worker $i$ can query some noisy information (stochastic gradients/estimates) about $F_i$.

\subsection{Assumptions}

% \paragraph{Bounded central $\alpha$-th moment.} We consider the situation when $F_i$ are accessible through the stochastic oracle calls. The stochastic estimates satisfy the following assumption.\footnote{Following \citep{sadiev2023high}, we consider all assumptions only on some bounded set $Q \subseteq \R^d$; the diameter of $Q$ depends on the starting point.}
% \begin{assumption}\label{as:bounded_alpha_moment_VI}
%     There exist some set $Q \subseteq \R^d$ and values $\sigma \geq 0$, $\alpha \in (1,2]$ such that for all $x \in Q$
%     we have $\EE_{\xi_i\sim \cD_i}[F_{\xi_i}(x)] = F_i(x)$ and
%     \begin{equation}
%         \EE_{\xi_i\sim \cD_i}[\left\|F_{\xi_i}(x) - F_i(x)\right\|^\alpha] \leq \sigma^\alpha. \label{eq:bounded_alpha_moment_operator}
%     \end{equation}
% \end{assumption}

In addition to Assumption~\ref{as:bounded_alpha_moment_VI}, we consider the following assumptions in the case of variational inequalities.

\paragraph{Assumptions on $F_i$.} We make standard assumptions on $\{F_i\}_{i\in [n]}$. The first one is Lipschitzness. 

\begin{assumption}\label{as:L_Lip}
    There exist some set $Q \subseteq \R^d$ such that operators $F_i$ are $L$-Lipschitz:
    \begin{eqnarray}
        \|F_i(x) - F_i(y)\| \leq L\|x - y\| \quad \forall \, x, y \in Q, \, i\in [n]. \label{eq:L_Lip}
    \end{eqnarray}
\end{assumption}

Next, for each particular result, we make one or two of the following assumptions.

\begin{assumption}\label{as:monotonicity}
    There exist some set $Q \subseteq \R^d$ such that $F$ is monotone on $Q$:
    \begin{equation}
        \langle F(x) - F(y), x - y \rangle \geq  0 \quad \forall \, x, y \in Q.\label{eq:monotonicity}
    \end{equation}
\end{assumption}

\begin{assumption}\label{as:str_monotonicity}
    There exist some set $Q \subseteq \R^d$ such that $F$ is $(\mu, x^*)$-quasi strongly monotone on $Q$ for some $\mu \geq 0$ and \revision{any solution $x^*$ of \eqref{eq:main_problem}}:
    \begin{gather}
        \langle F(x) - F(x^*) , x - x^* \rangle \geq \mu\|x - x^*\|^2, \quad \forall\, x\in Q.
        \label{eq:str_monotonicity}
    \end{gather}
\end{assumption}

\begin{assumption}\label{as:star_cocoercivity}
    There exist some set $Q \subseteq \R^d$ such that $\{F_i\}_{i\in[n]}$ are $(\ell, x^*)$-star-cocoercive on $Q$ for some $\ell > 0$ and \revision{any solution $x^*$ of \eqref{eq:main_problem}}:
    \begin{gather}
        \|F_i(x) - F_i(x^*)\|^2 \leq \ell\langle F_i(x) - F_i(x^*), x - x^* \rangle, \quad \forall\, x \in Q,\, i\in [n]. \label{eq:star_cocoercivity}
    \end{gather}
\end{assumption}

\begin{assumption}\label{as:cocoercivity}
    There exist some set $Q \subseteq \R^d$ such that $F$ is $\ell$-cocoercive on $Q$ for some $\ell > 0$:
    \begin{gather}
        \|F(x) - F(y)\|^2 \leq \ell\langle F(x) - F(y), x - y \rangle, \quad \forall\, x,y \in Q. \label{eq:cocoercivity}
    \end{gather}
\end{assumption}

Assumption~\ref{as:monotonicity} is a standard assumption for the literature on VIPs. Quasi-strong monotonicity \citep{mertikopoulos2019learning, song2020optimistic, loizou2021stochastic} is weaker than standard strong monotonicity\footnote{Operator $F$ is called $\mu$-strongly monotone on $Q$ if $\langle F(x) - F(y), x - y \rangle \geq  \mu\|x - y\|^2$.} and star-cocoercivity is weaker than standard cocoercivity (Assumption~\ref{as:cocoercivity}), which implies monotonicity and Lipschitzness but not vice versa. Both conditions \eqref{eq:str_monotonicity} and \eqref{eq:star_cocoercivity} imply neither monotonicity nor Lipschitzness \citep{loizou2021stochastic}.

\subsection{DProx-clipped-SGDA-shift}

For composite variational inequalities, we start with Distributed \algname{Prox-clipped-SGDA-shift} (\algname{DProx-clipped-SGDA-shift}) that is defined in \eqref{eq:Dprox_clipped_SGD_shift}-\eqref{eq:Dprox_clipped_SGD_shift_shifts} with the following change: $\hat\Delta_i^k = \clip\left(F_{\xi_i^k}(x^k) - h_i^k, \lambda_k\right)$, 
% \begin{gather}
%     x^{k+1} = \proxkPsi \left(x^k - \gamma \tg^k\right),\;\; \text{where}\;\; \tg^k = \frac{1}{n}\sum\limits_{i=1}^n \tg_i^k,\;\; \tg_i^k = h_i^k + \hat\Delta_i^k, \label{eq:Dprox_clipped_SGDA_shift}\\
%     h_i^{k+1} = h_i^k + \nu \hat \Delta_i^k,\quad \hat\Delta_i^k = \clip\left(F_{\xi_i^k}(x^k) - h_i^k, \lambda_k\right), \label{eq:Dprox_clipped_SGDA_shift_shifts}
% \end{gather}
where $\xi_1^k,\ldots, \xi_n^k$ are sampled independently from each other and previous steps. For the proposed method, we derive the following result.
\begin{theorem}[Convergence of \algname{DProx-clipped-SGDA-shift}]\label{thm:D_prox_clipped_SGDA_main}
    Let $K \geq 1$, $\beta \in (0,1)$, $A = \ln \frac{48n(K+1)}{\beta}$, $V \geq \|x^0 - x^*\|^2 + \frac{25600\gamma^2A^2}{n^2}\sum_{i=1}^n \|F_i(x^*)\|^2$.\newline
    \textbf{Case 1.} Let Assumptions~\ref{as:bounded_alpha_moment}, \ref{as:str_monotonicity} with $\mu > 0$, and \ref{as:star_cocoercivity} hold for $Q = B_{3\sqrt{V}}(x^*)$. Assume that $0 < \nu = \cO(\nicefrac{1}{\sqrt{n}A})$, $0 < \gamma = \cO\left(\min\{\nicefrac{1}{\sqrt{n}A\mu}, \nicefrac{1}{\ell A}, \nicefrac{\ln(B_K)}{\mu(K+1)}\}\right)$, $B_K = \Theta\left(\max\{2, \nicefrac{(K+1)^{\nicefrac{2(\alpha-1)}{\alpha}}\mu^2 n^{\nicefrac{2(\alpha-1)}{\alpha}} V}{\sigma^2 A^{\nicefrac{2(\alpha-1)}{\alpha}}\ln^2(B_K)}\}\right)$, $\lambda_k = \Theta(\nicefrac{n\exp(-\gamma\mu(1+\nicefrac{k}{2}))\sqrt{V}}{\gamma A})$. \newline
    \textbf{Case 2.} Let Assumptions~\ref{as:bounded_alpha_moment}, \ref{as:monotonicity},  and \ref{as:star_cocoercivity} hold for $Q = B_{3\sqrt{V}}(x^*)$. Assume that $\nu = 0$, $0 < \gamma = \cO(\min\{\nicefrac{1}{\ell A}, \nicefrac{n^{\nicefrac{(\alpha-1)}{\alpha}}\sqrt{V}}{\sigma K^{\nicefrac{1}{\alpha}}A^{\nicefrac{(\alpha-1)}{\alpha}}}\})$, $\lambda_k = \lambda = \Theta(\nicefrac{n\sqrt{V}}{\gamma A})$.\newline
    Then to guarantee $\|x^K - x^*\|^2 \leq \varepsilon$ in \textbf{Case 1} and $\gap_{\sqrt{V}}(x_{\text{avg}}^K) = \max_{y \in B_{\sqrt{V}}(x^*)}\left\{\langle F(y), x_{\text{avg}}^K - y \rangle  + \Psi(x_{\text{avg}}^K) - \Psi(y)\right\} \leq \varepsilon$ in \textbf{Case 2} with $x_{\text{avg}}^K = \frac{1}{K+1}\sum_{k=0}^K x^k$ with probability $\geq 1 - \beta$ \algname{DProx-clipped-SGDA-shift} requires
    \begin{align}
        \textbf{Case 1:}&\quad \widetilde\cO\left(\max\left\{\frac{\ell}{\mu}, \frac{1}{n}\left(\frac{\sigma^2}{\mu^2\varepsilon}\right)^{\frac{\alpha}{2(\alpha-1)}}\right\}\right)& \text{iterations/oracle calls per worker}, \label{eq:d_prox_clipped_SGDA_shift_main_result_QSM}\\
        \textbf{Case 2:}&\quad \widetilde\cO\left(\max\left\{\frac{\ell V}{\varepsilon}, \frac{1}{n}\left(\frac{\sigma \sqrt{V}}{\varepsilon}\right)^{\frac{\alpha}{\alpha-1}}\right\}\right)&  \text{iterations/oracle calls per worker}. \label{eq:d_prox_clipped_SGDA_shift_main_result_mon}
    \end{align}
\end{theorem}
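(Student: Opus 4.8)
The plan is to follow the two-part scheme of Section~\ref{sec:proofs_structure}: a deterministic descent lemma that isolates the stochastic error, followed by a high-probability control of the resulting sums via Bernstein's inequality, with both cases (quasi-strongly monotone and monotone) handled in parallel. For the descent lemma I would start from the iterate $x^{k+1} = \prox_{\gamma\Psi}(x^k - \gamma\tg^k)$ together with the VI optimality condition, which can be written as $x^* = \prox_{\gamma\Psi}(x^* - \gamma F(x^*))$. Applying the prox-inequality of Lemma~\ref{lem:prox_lemma} with $y = x^*$ gives
\[
\langle x^{k+1} - x^k + \gamma\tg^k, x^* - x^{k+1}\rangle \geq \gamma\bigl(\Psi(x^{k+1}) - \Psi(x^*)\bigr).
\]
Expanding the first inner product through the three-point identity, applying Young's inequality to the cross term $\gamma\langle\tg^k, x^k - x^{k+1}\rangle$ to cancel $-\tfrac12\|x^{k+1}-x^k\|^2$, and bounding the residual $\|\tg^k\|^2$ via star-cocoercivity (Assumption~\ref{as:star_cocoercivity}) after writing $\tg^k = F(x^k) - \omega_k$, I obtain a one-step inequality governed by $\langle F(x^k) - F(x^*), x^k - x^*\rangle$, the $\Psi$-terms, and the error $\omega_k = F(x^k) - \tg^k$. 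I would split $\omega_k = \omega_k^b + \omega_k^u$ into its conditional bias $\omega_k^b = F(x^k) - \EE_{\xi^k}[\tg^k]$ and its martingale part $\omega_k^u = \EE_{\xi^k}[\tg^k] - \tg^k$.

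Next I would assemble the Lyapunov function. In Case~1, mirroring the minimization proof, I would use $V_k = \|x^k - x^*\|^2 + \tfrac{C^2\gamma^2A^2}{n}\sum_{i=1}^n\|h_i^k - F_i(x^*)\|^2$, invoke quasi-strong monotonicity (Assumption~\ref{as:str_monotonicity}) to extract the contraction factor $(1-\gamma\mu)$, and use the shift update $h_i^{k+1} = h_i^k + \nu\hat\Delta_i^k$ to show the shift term improves at a matching rate, with $\nu = \cO(\nicefrac{1}{\sqrt{n}A})$ chosen to balance the two summands. In Case~2 ($\nu = 0$, monotone only) there is no geometric decay, so I would instead sum the descent inequality over $k = 0,\dots,K$, telescope $\|x^k - x^*\|^2$, and use monotonicity of $F$ together with convexity of $\Psi$ to convert the accumulated inner products into the gap functional $\gap_{\sqrt{V}}(x_{\text{avg}}^K)$ evaluated at $x_{\text{avg}}^K$; this step also forces a supremum over $y \in B_{\sqrt{V}}(x^*)$ that must be carried through the stochastic estimates.

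The stochastic control is where clipping enters. The clipping level $\lambda_k = \Theta(\nicefrac{n\sqrt{V}}{\gamma A})$ (constant in Case~2) is chosen so that, on the event that the iterates stay in $B_{3\sqrt{V}}(x^*)$ and the shifts are accurate, the clipped argument satisfies $\|F_i(x^k) - h_i^k\| \leq \lambda_k/2$; this lets me invoke Lemma~\ref{lem:bias_and_variance_clip_distributed} to bound the bias by $\cO(\sigma^\alpha/\lambda_k^{\alpha-1})$ and the variance of $\omega_k^u$ by $\cO(\lambda_k^{2-\alpha}\sigma^\alpha/n)$, the factor $1/n$ yielding the linear speed-up and the enlargement of $\lambda_k$ by a factor $n$ balancing bias against variance. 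I would then prove by induction that $\PP\{E_k\} \geq 1 - \nicefrac{k\beta}{(K+1)}$, where $E_k$ asserts the Lyapunov bound together with the required smallness of the partial martingale sums for all $t \leq k$; the inductive step applies Bernstein's inequality (Lemma~\ref{lem:Bernstein_ineq}) to the sums $\sum_t\langle s_t, \omega_t^u\rangle$, whose increments are bounded by $2\lambda_k$ and whose conditional variances are controlled by the variance estimate. For $n > 1$ an additional inner induction over the workers is needed to bound the worker-indexed partial sums arising from averaging the clipped vectors.

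The main obstacle is the tight coupling inside this induction: Bernstein's inequality requires a deterministic bound on the martingale increments (forcing $\lambda_k$ not too large), while controlling the clipping bias requires $\lambda_k$ large relative to $\|F_i(x^k) - h_i^k\|$, and both must be reconciled with the requirement that this clipped argument stays below $\lambda_k/2$ --- a condition that itself depends on the shift quality $\|h_i^k - F_i(x^*)\|$ tracked by the Lyapunov function. Closing the induction therefore demands that the shift term, the iterate distance, and the clipping parameters all remain in the narrow regime where the bias/variance estimates of Lemma~\ref{lem:bias_and_variance_clip_distributed} apply simultaneously for every worker. In Case~2 there is the further difficulty that the gap functional carries an explicit supremum over $B_{\sqrt{V}}(x^*)$, so the martingale-sum bounds must hold uniformly over this set, which I would address by absorbing the supremum into the choice of $V$ and the radius of the ball on which the assumptions are posed.
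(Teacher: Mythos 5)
Your plan coincides with the paper's proof in all of its main components: the split of $\omega_k=F(x^k)-\tg^k$ into a conditional bias and a martingale part, the Lyapunov function augmenting $\|x^k-x^*\|^2$ by the shift errors $\|h_i^k-F_i(x^*)\|^2$ with contraction from quasi-strong monotonicity in Case~1, telescoping plus Jensen in Case~2, the clipping level inflated by the factor $n$ so that bias and variance balance as in Lemma~\ref{lem:bias_and_variance_clip_distributed}, and the outer induction over iterations combined with an inner induction over workers, closed by Bernstein's inequality. (The paper obtains the Case~1 recursion from nonexpansiveness of $\prox_{\gamma\Psi}$ against the fixed point $x^*=\prox_{\gamma\Psi}(x^*-\gamma F(x^*))$ rather than from Lemma~\ref{lem:prox_lemma}; this difference is cosmetic.)

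The one concrete gap is in Case~2. You derive the one-step inequality only at $y=x^*$ and then propose to ``convert'' the accumulated inner products into $\gap_{\sqrt{V}}(x^K_{\avg})$; that conversion cannot be done after the fact, because the gap requires the inequality to hold for every reference point $u\in B_{\sqrt{V}}(x^*)$. Running the prox-inequality with arbitrary $u$ from the start (as in Lemma~\ref{lem:optimization_lemma_gap_prox_clipped_SGDA}) produces the term $\gamma^2\|F(x^k)-F(u)\|^2$ from Young's inequality, and star-cocoercivity --- which only controls $F$ relative to $x^*$ --- cannot absorb it; the paper cancels it against $-\tfrac{2\gamma}{\ell}\|F(x^k)-F(u)\|^2$ using full cocoercivity of $F$ (Assumption~\ref{as:cocoercivity}, present in the appendix statement and in Table~\ref{tab:comparison_of_rates_VIP}, though omitted from the main-text theorem). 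Relatedly, the supremum over $u$ is not ``absorbed into the choice of $V$'': the paper isolates it as the single scalar term $\max_{u}2\gamma\langle x^*-u,\sum_l\omega_l\rangle=2\gamma\sqrt{V}\,\lVert\sum_l\omega_l\rVert$ and adds $\lVert\gamma\sum_l\omega_l\rVert\le\sqrt{V}$ as an extra clause of the inductive event $E_k$, estimated by its own Bernstein argument. With these two repairs your Case~2 argument matches the paper's; Case~1 as you describe it is already the paper's proof.
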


As in the case of minimization, in the single-node case, the derived results coincide with ones known for \algname{clipped-SGD} in the unconstrained case \citep{sadiev2023high} Up to the difference between $V$ and $\|x^0 - x^*\|^2$. In the distributed case, we also observe the benefits of parallelization.

\subsection{DProx-clipped-SEG-shift}

Finally, we propose a distributed version of \algname{clipped-SEG} for composite VIPs (\algname{DProx-clipped-SEG-shift}):
% \vspace{-0.3cm}
\begin{gather}
    \tx^k = \prox_{\gamma\Psi}\left(x^k - \gamma \tg^k\right),\;\; \tg^k = \frac{1}{n}\sum\limits_{i=1}^n \tg_i^k,\;\; \tg_i^k = \thh_{i}^k + \tilde\Delta_i^k,\;\; \thh_i^{k+1} = \thh_i^k + \nu \tilde \Delta_i^k\label{eq:clipped_SEG_extr_step}\\
    x^{k+1} = \prox_{\gamma\Psi}\left(x^k - \gamma \hg^k\right),\;\; \hg^k = \frac{1}{n}\sum\limits_{i=1}^n \hg_i^k,\;\; \hg_i^k = \hh_{i}^k + \hat\Delta_i^k,\;\; \hh_i^{k+1} = \hh_i^k + \nu \hat \Delta_i^k \label{eq:clipped_SEG_upd_step}
    % \\
    % \thh_i^{k+1} = \thh_i^k + \nu \tilde \Delta_i^k,\;\; \hh_i^{k+1} = \hh_i^k + \nu \hat \Delta_i^k, \label{eq:clipped_SEG_shifts} \\
    % \tilde\Delta_i^k = \clip(F_{\xi_{1,i}^k}(x^k) - \thh_i^k, \lambda_k),\;\; \hat\Delta_i^k = \clip(F_{\xi_{2,i}^k}(\tx^k) - \hh_i^k, \lambda_k)  \label{eq:clipped_SEG_deltas}
\vspace{-0.3cm}
\end{gather}
where $\tilde\Delta_i^k = \clip(F_{\xi_{1,i}^k}(x^k) - \thh_i^k, \lambda_k)$, $\hat\Delta_i^k = \clip(F_{\xi_{2,i}^k}(\tx^k) - \hh_i^k, \lambda_k)$ and $\xi_{1,1}^k,\ldots, \xi_{1,n}^k, \xi_{2,1}^k,\ldots, \xi_{2,n}^k$ are sampled independently from each other and previous steps. For the proposed method, we derive the following result.
\begin{theorem}[Convergence of \algname{DProx-clipped-SEG-shift}]\label{thm:D_prox_clipped_SEG_main}
    Let $K \geq 1$, $\beta \in (0,1)$, $A = \ln \frac{48n(K+1)}{\beta}$, $V \geq \|x^0 - x^*\|^2 + \frac{409600\gamma^2 A^2}{n^2}\sum_{i=1}^n \|F_i(x^*)\|^2$.\newline
    \textbf{Case 1.} Let Assumptions~\ref{as:bounded_alpha_moment}, \ref{as:L_Lip}, and \ref{as:str_monotonicity} with $\mu > 0$ hold for $Q = B_{3\sqrt{V}}(x^*)$. Assume that $\nu = \gamma\mu$, $0 < \gamma = \cO\left(\min\{\nicefrac{1}{\mu A^2}, \nicefrac{1}{L}, \nicefrac{\sqrt{n}}{L A}, \nicefrac{\ln(B_K)}{\mu(K+1)}\}\right)$, $B_K = \Theta\left(\max\{2, \nicefrac{(K+1)^{\nicefrac{2(\alpha-1)}{\alpha}}\mu^2 n^{\nicefrac{2(\alpha-1)}{\alpha}} V}{\sigma^2 A^{\nicefrac{2(\alpha-1)}{\alpha}}\ln^2(B_K)}\}\right)$, $\lambda_k = \Theta(\nicefrac{n\exp(-\gamma\mu(1+\nicefrac{k}{4}))\sqrt{V}}{\gamma A})$. \newline
    \textbf{Case 2.} Let Assumptions~\ref{as:bounded_alpha_moment}, \ref{as:L_Lip}, and \ref{as:monotonicity} hold for $Q = B_{4n\sqrt{V}}(x^*)$. Assume that $\nu = 0$, $0 < \gamma = \cO(\min\{\nicefrac{1}{L A}, \nicefrac{n^{\nicefrac{(\alpha-1)}{\alpha}}\sqrt{V}}{\sigma K^{\nicefrac{1}{\alpha}}A^{\nicefrac{(\alpha-1)}{\alpha}}}\})$, $\lambda_k = \lambda = \Theta(\nicefrac{n\sqrt{V}}{\gamma A})$.\newline
    Then to guarantee $\|x^K - x^*\|^2 \leq \varepsilon$ in \textbf{Case 1} and $\gap_{\sqrt{V}}(\tx_{\text{avg}}^K) = \max_{y \in B_{\sqrt{V}}(x^*)}\left\{\langle F(y), \tx_{\text{avg}}^K - y \rangle + \Psi(\tx_{\text{avg}}^K) - \Psi(y)\right\} \leq \varepsilon$ in \textbf{Case 2} with $\tx_{\text{avg}}^K = \frac{1}{K+1}\sum_{k=0}^K \tx^k$ with probability $\geq 1 - \beta$ \algname{DProx-clipped-SEG-shift} requires
    \begin{align}
        \textbf{Case 1:}&\quad \widetilde\cO\left(\max\left\{\frac{L}{\mu}, \frac{1}{n}\left(\frac{\sigma^2}{\mu^2\varepsilon}\right)^{\frac{\alpha}{2(\alpha-1)}}\right\}\right)& \text{iterations/oracle calls per worker}, \label{eq:d_prox_clipped_SEG_shift_main_result_QSM}\\
        \textbf{Case 2:}&\quad \widetilde\cO\left(\max\left\{\frac{L V}{\varepsilon}, \frac{1}{n}\left(\frac{\sigma \sqrt{V}}{\varepsilon}\right)^{\frac{\alpha}{\alpha-1}}\right\}\right)&  \text{iterations/oracle calls per worker}. \label{eq:d_prox_clipped_SEG_shift_main_result_mon}
    \end{align}
\end{theorem}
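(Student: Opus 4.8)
The plan is to follow the two-part template of Section~\ref{sec:proofs_structure}, adapted to the extragradient structure with its two prox/oracle stages and two shift sequences. I would fix a solution $x^*$ and work with the Lyapunov function $V_k = \|x^k - x^*\|^2 + \frac{C^2\gamma^2 A^2}{n^2}\sum_{i=1}^n\left(\|\thh_i^k - F_i(x^*)\|^2 + \|\hh_i^k - F_i(x^*)\|^2\right)$ for a numerical constant $C$ with $2C^2 = 409600$, so that (initializing both shift sequences at $0$) one has $V_0 \leq V$ for the quantity $V$ in the statement. This $V_k$ tracks both the iterate distance and the proximity of the two shift sequences to $\{F_i(x^*)\}_{i\in[n]}$, which is exactly what makes a decreasing clipping level admissible.

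For the deterministic/descent part I would invoke Lemma~\ref{lem:prox_lemma} on the update prox step with $y = x^*$ (Case 1) or an arbitrary $y \in B_{\sqrt{V}}(x^*)$ (Case 2), yielding $\|x^{k+1} - x^*\|^2 \leq \|x^k - x^*\|^2 - \|x^{k+1} - x^k\|^2 + 2\gamma\langle\hg^k, x^* - x^{k+1}\rangle - 2\gamma(\Psi(x^{k+1}) - \Psi(x^*))$, and split $\langle\hg^k, x^* - x^{k+1}\rangle = \langle F(\tx^k), x^* - \tx^k\rangle + \langle F(\tx^k), \tx^k - x^{k+1}\rangle + \langle\hg^k - F(\tx^k), x^* - x^{k+1}\rangle$. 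The first term is controlled by quasi-strong monotonicity (giving the $-\mu\|\tx^k - x^*\|^2$ contraction in Case 1) or by the VIP inequality for the gap (Case 2), with the residual $\Psi$ terms cancelled against the extrapolation prox inequality applied with the same $y$; the second term is absorbed into $-\|x^{k+1} - x^k\|^2$ using $L$-Lipschitzness of $F$ and Young's inequality, which forces $\gamma L = \cO(1)$ and is the place where the composite setting prevents the unconstrained simplification $x^k - x^{k+1} = \gamma\hg^k$; and the third term is the stochastic error carrying the clipped noise. The $\nu$-recursions for $\thh_i^k,\hh_i^k$ feed controllable terms into $V_k$ (in Case 2, $\nu=0$ keeps the shift contribution constant).

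For the stochastic part I would write each error as $\hg_i^k - F_i(\tx^k) = \clip(X_i, \lambda_k) - \EE[X_i]$ with $X_i = F_{\xi_{2,i}^k}(\tx^k) - \hh_i^k$, and split it into a zero-mean piece and the bias $\EE[\clip(X_i,\lambda_k)] - \EE[X_i]$, applying Lemma~\ref{lem:bias_and_variance_clip} together with the averaging of Lemma~\ref{lem:bias_and_variance_clip_distributed}; the extrapolation estimator is treated identically. The decisive choice is the enlarged clipping level $\lambda_k = \Theta(n\sqrt{V}\exp(-\gamma\mu(1+\nicefrac{k}{4}))/(\gamma A))$ (Case 1) or $\lambda = \Theta(n\sqrt{V}/(\gamma A))$ (Case 2): scaling it by $n$ balances the bias $\sim\sigma^\alpha/\lambda^{\alpha-1}$ against the averaged variance $\sim\lambda^{2-\alpha}\sigma^\alpha/n$ and yields the $\nicefrac{1}{n}$ speedup. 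To close the argument rigorously I would run an induction proving $\PP\{E_k\} \geq 1 - \nicefrac{k\beta}{(K+1)}$, where $E_k$ asserts $V_t \leq 2\exp(-\gamma\mu t)V$ (Case 1) / $V_t \leq 2V$ (Case 2) together with the boundedness of the partial martingale sums for all $t \leq k$; on $E_k$ the centering hypothesis $\|F_i(\tx^k) - \hh_i^k\| \leq \nicefrac{\lambda_k}{2}$ of Lemma~\ref{lem:bias_and_variance_clip} holds (the shifts stay close to $F_i(x^*)$ and the iterates close to $x^*$), so clipping loses nothing, and Bernstein's inequality (Lemma~\ref{lem:Bernstein_ineq}) bounds the noise sums with poly-logarithmic dependence on $\nicefrac{1}{\beta}$.

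The hardest part will be the bookkeeping of this induction in the distributed extragradient setting. Unlike the minimization case there are two oracle stages and two shift sequences per iteration, so the error inner products couple $\tx^k$ and $x^{k+1}$, and I must additionally perform an inner induction over the $n$ clients to control partial sums of the type denoted $\circledSix$ in the SGDA analysis. Moreover I need $\tx^k$, not only $x^{k+1}$, to remain in the assumption ball so that $F(\tx^k)$ is governed by the hypotheses — this is precisely why Case 2 demands the larger radius $B_{4n\sqrt{V}}(x^*)$, the extrapolation step being able to move by $\sim\gamma\lambda \sim \nicefrac{n\sqrt{V}}{A}$ — and I must verify that the clipping bias never overwhelms the Lipschitz cancellation of $\|x^{k+1} - x^k\|^2$. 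Once the induction closes, telescoping the contraction (Case 1) or averaging and invoking the gap definition (Case 2), then inverting the resulting bound in $K$, produces the stated complexities.
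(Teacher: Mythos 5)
Your proposal is correct and follows essentially the same route as the paper's proof in Appendix~\ref{appendix:dprox_clipped_SEG_shift}: the same two-prox-inequality descent lemma with the Lipschitz cross-term absorbed via Young's inequality, the same Lyapunov function tracking both shift sequences in the quasi-strongly monotone case, the same $n$-scaled clipping level to balance bias against averaged variance, and the same outer induction over iterations combined with an inner induction over clients closed by Bernstein's inequality. The only differences are cosmetic (choice of numerical constants and the exact bookkeeping of which comparison point is fed into each prox inequality).
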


The main properties of the above result are similar to the ones of the result for \algname{DProx-clipped-SGDA-shift}. The only difference is that the methods (\algname{DProx-clipped-SGDA/SEG-shift}) are analyzed for different classes of problems and, thus, complement each other. According to the known lower bounds, our upper bound \eqref{eq:d_prox_clipped_SEG_shift_main_result_QSM} has optimal dependence on $\varepsilon$ up to logarithmic factors.

\clearpage

\section{Missing Proofs for \algname{Prox-clipped-SGD-star}}\label{appendix:prox_clipped_SGD_star}

This section provides the complete formulations of our results for \algname{Prox-clipped-SGD-star} and rigorous proofs. We start with the following result -- a generalization of Lemma E.7 from \citep{sadiev2023high} to the composite distributed problems.
\begin{lemma}\label{lem:QSC_to_QSM}
    Consider differentiable function $f: \R^d \to \R$ having a finite-sum structure \eqref{eq:distributed_problem}. If $f$ satisfies Assumption~\ref{as:QSC} on some set $Q$ with parameter $\mu$ and $D_f(x,x^*) \geq 0$ for all\footnote{For example $D_f(x,x^*) \geq 0$ when $f$ is convex or when $\Psi(x) = 0$. We notice that Assumption~\ref{as:L_smoothness} implies $D_f(x,x^*)\geq 0$ since the right-hand side of \eqref{eq:L_smoothness_cor_2} equals $D_f(x,x^*)$ after averaging.} $x\in Q$, then operator $F(x) = \nabla f(x)$ satisfies Assumption~\ref{as:str_monotonicity} on $Q$ with parameter $\nicefrac{\mu}{2}$. If $f_1,\ldots,f_n$ satisfy Assumption~\ref{as:L_smoothness} and \ref{as:QSC} with $\mu = 0$ on some set $Q$, then operator $F(x) = \nabla f(x)$ satisfies Assumption~\ref{as:star_cocoercivity} on $Q$ with $\ell = 2L$.
\end{lemma}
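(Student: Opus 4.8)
The plan is to derive both statements from the elementary identity
\[
\langle \nabla g(x) - \nabla g(x^*), x - x^* \rangle = D_g(x,x^*) + D_g(x^*,x),
\]
which holds for any differentiable $g$ and any points $x, x^*$, and to observe that each quasi-strong convexity inequality is exactly a lower bound on the \emph{reversed} Bregman divergence $D_g(x^*,x)$. Verifying this identity is a one-line expansion of the two Bregman terms in which the function-value differences cancel.

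First I would treat the quasi-strongly monotone claim. Averaging \eqref{eq:QSC} over $i \in [n]$ shows that $f = \tfrac1n\sum_{i} f_i$ is itself $(\mu,x^*)$-quasi-strongly convex, i.e.\ $D_f(x^*,x) \geq \tfrac{\mu}{2}\|x-x^*\|^2$ on $Q$. Substituting this bound together with the hypothesis $D_f(x,x^*)\geq 0$ into the decomposition identity applied to $g=f$ gives
\[
\langle \nabla f(x) - \nabla f(x^*), x - x^* \rangle \;\geq\; \tfrac{\mu}{2}\|x-x^*\|^2,
\]
which is precisely Assumption~\ref{as:str_monotonicity} for $F=\nabla f$ with parameter $\nicefrac{\mu}{2}$.

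Next I would treat the star-cocoercivity claim for each component $f_i$. The starting point is \eqref{eq:L_smoothness_cor_2}, namely $\|\nabla f_i(x) - \nabla f_i(x^*)\|^2 \leq 2L\,D_{f_i}(x,x^*)$; note this forces $D_{f_i}(x,x^*)\geq 0$ automatically. Applying the $\mu=0$ case of \eqref{eq:QSC} to $f_i$ yields $D_{f_i}(x^*,x)\geq 0$, so the decomposition identity (with $g=f_i$) gives $\langle \nabla f_i(x)-\nabla f_i(x^*),x-x^*\rangle = D_{f_i}(x,x^*)+D_{f_i}(x^*,x)\geq D_{f_i}(x,x^*)$. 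Chaining this with the smoothness bound produces $\|\nabla f_i(x)-\nabla f_i(x^*)\|^2 \leq 2L\langle \nabla f_i(x)-\nabla f_i(x^*),x-x^*\rangle$, which is Assumption~\ref{as:star_cocoercivity} with $\ell = 2L$.

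There is no serious obstacle here: the arguments are a few lines of algebra once the decomposition identity is in hand. The only point requiring care is the bookkeeping of which direction of the Bregman divergence each hypothesis controls; in particular, the factor $\nicefrac{\mu}{2}$ (rather than $\mu$) in the first part is intrinsic, since quasi-strong convexity constrains only $D_f(x^*,x)$ while the companion term $D_f(x,x^*)$ contributes only its nonnegativity. I would also verify that all invoked inequalities are assumed on the same set $Q$ for the same solution $x^*$, so that each conclusion holds on $Q$ verbatim.
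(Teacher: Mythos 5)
Your proof is correct and follows essentially the same route as the paper: part one is the paper's argument verbatim (average \eqref{eq:QSC}, then use $\langle \nabla f(x)-\nabla f(x^*),x-x^*\rangle = D_f(x,x^*)+D_f(x^*,x)$ together with $D_f(x,x^*)\ge 0$), and part two chains \eqref{eq:L_smoothness_cor_2} with $D_{f_i}(x,x^*)\le \langle \nabla f_i(x)-\nabla f_i(x^*),x-x^*\rangle$, which is exactly the content of the paper's expansion of $\bigl\|x-x^*-\tfrac{1}{L}(F_i(x)-F_i(x^*))\bigr\|^2\le\|x-x^*\|^2$, just written without the auxiliary squared norm. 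No gaps; your closing remark about tracking which direction of the Bregman divergence each hypothesis controls is exactly the right point of care.
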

\begin{proof}
    Let Assumption~\ref{as:QSC} hold on some set $Q$ and $D_f(x,x^*) \geq 0$ for all $x\in Q$. Then, averaging inequalities \eqref{eq:QSC}, we get that for all $x \in Q$
    \begin{equation*}
        f(x^*) \geq f(x) + \langle \nabla f(x), x^* - x \rangle+ \frac{\mu}{2}\|x - x^*\|^2, 
    \end{equation*}
    implying for $F(x) = \nabla f(x)$ that
    \begin{eqnarray*}
        \langle F(x) - F(x^*), x  - x^* \rangle &\geq& D_f(x,x^*) + \frac{\mu}{2}\|x - x^*\|^2 \geq \frac{\mu}{2}\|x - x^*\|^2,
    \end{eqnarray*}
    meaning that Assumption~\ref{as:str_monotonicity} is satisfied with parameter $\nicefrac{\mu}{2}$.

    It remains to show the second part of the lemma. Let Assumptions~\ref{as:L_smoothness} and \ref{as:QSC} with $\mu = 0$ hold on some set $Q$. We need to show that operators $F_i(x) = \nabla f_i(x)$, $i=1,\ldots,n$ satisfy Assumption~\ref{as:star_cocoercivity} on $Q$ with $\ell = 2L$. Guided by \citep[Lemma C.6]{gorbunov2022extragradient} and \citep[Lemma E.7]{sadiev2023high}, we derive
    \begin{eqnarray}
        \left\|x - x^* -\frac{1}{L}(F_i(x) - F_i(x^*))\right\|^2 &=& \|x - x^*\|^2 - \frac{2}{L}\langle x - x^*, F_i(x) - F_i(x^*) \rangle\notag\\
        &&\quad + \frac{1}{L^2}\|F_i(x) - F_i(x^*)\|^2 \label{eq:cjsidcncnvdf}\\
        &=& \|x - x^*\|^2 - \frac{2}{L}\langle x - x^*, \nabla f_i(x) - \nabla f_i(x^*) \rangle\notag\\
        &&\quad + \frac{1}{L^2}\|\nabla f_i(x) - \nabla f_i(x^*)\|^2 \notag\\
        &\overset{\eqref{eq:L_smoothness_cor_2}}{\leq}&  \|x - x^*\|^2 - \frac{2}{L} \langle x - x^*, \nabla f_i(x) \rangle \notag\\
        &&\quad + \frac{2}{L}\left(f_i(x) - f_i(x^*)\right) \notag\\
        &\overset{\eqref{eq:QSC}}{\leq}& \|x - x^*\|^2. \label{eq:vhjdfbhjvdfbjv}
    \end{eqnarray}
    From \eqref{eq:cjsidcncnvdf} and \eqref{eq:vhjdfbhjvdfbjv} we get
    \begin{equation*}
        \|x - x^*\|^2 - \frac{2}{L}\langle x - x^*, F_i(x) - F_i(x^*) \rangle + \frac{1}{L^2}\|F_i(x) - F_i(x^*)\|^2 \leq \|x - x^*\|^2
    \end{equation*}
    that is equivalent to \eqref{eq:star_cocoercivity} with $\ell = 2L$.
\end{proof}

Therefore, for smooth quasi-strongly convex $f$ such that $D_f(x,x^*)\geq 0$ ($n =1$) we can consider operator $F(x) = \nabla f(x)$ and VI formulation instead. In this case, the method is equivalent to \algname{Prox-clipped-SGDA-star}:
\begin{gather*}
    x^{k+1} = \proxkPsi \left(x^k - \gamma \tg^k\right),\;\; \tg^k = F(x^*) + \clip\left(F_{\xi^k}(x^k) - F(x^*), \lambda_k\right)\\
    \hg^k = \clip\left(F_{\xi^k}(x^k) - F(x^*), \lambda_k\right).
\end{gather*}

The following lemma is the main ``optimization'' part of the analysis of \algname{Prox-clipped-SGDA-star}.

\begin{lemma}\label{lem:optimization_lemma_str_mon_SGDA}
    Let $n=1$, Assumptions~\ref{as:str_monotonicity}, \ref{as:star_cocoercivity} hold for $Q = B_{2R}(x^*)$, where $R \geq R_0 \eqdef \|x^0 - x^*\|$, and $0 < \gamma \leq \nicefrac{1}{\ell}$. If $x^k$ lies in $B_{2R}(x^*)$ for all $k = 0,1,\ldots, K$ for some $K\geq 0$, then the iterates produced by \algname{Prox-clipped-SGDA-star} satisfy
    \begin{eqnarray}
        \|x^{K+1} - x^*\|^2 &\leq& (1 - \gamma \mu)^{K+1}\|x^0 - x^*\|^2 + \gamma^2 \sum\limits_{k=0}^K (1-\gamma\mu)^{K-k} \|\omega_k\|^2\notag\\
        &&\quad + 2\gamma \sum\limits_{k=0}^K (1-\gamma\mu)^{K-k}\langle x^k - x^* - \gamma (F(x^k)-F(x^*)), \omega_k \rangle, \label{eq:optimization_lemma_prox_SGDA_str_mon}\\
        \omega_k &\eqdef& F(x^k) - F(x^*)-\hg^k. \label{eq:omega_k_prox_SGDA}
    \end{eqnarray}
\end{lemma}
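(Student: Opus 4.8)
The plan is to establish a one-step contraction and then unroll it by a trivial induction. First I would record the two structural facts that drive the argument. Since $\tg^k = F(x^*) + \hg^k$ and $\omega_k \eqdef F(x^k) - F(x^*) - \hg^k$, the direction actually used by the method rewrites as $\tg^k = F(x^k) - \omega_k$. Moreover, because $x^*$ solves the variational inequality \eqref{eq:main_problem}, it satisfies $-F(x^*) \in \partial\Psi(x^*)$, which is equivalent to the prox fixed-point identity $x^* = \prox_{\gamma\Psi}(x^* - \gamma F(x^*))$. Writing $p^k = x^k - \gamma\tg^k$ and $p^* = x^* - \gamma F(x^*)$, both $x^{k+1} = \prox_{\gamma\Psi}(p^k)$ and $x^* = \prox_{\gamma\Psi}(p^*)$ are images of the same non-expansive proximal map, so $\|x^{k+1} - x^*\|^2 \leq \|p^k - p^*\|^2$.

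Next I would introduce $s_k \eqdef x^k - x^* - \gamma(F(x^k) - F(x^*))$, which is exactly the vector appearing in the cross term of the target bound, and observe that $p^k - p^* = s_k + \gamma\omega_k$. Expanding the square gives $\|x^{k+1} - x^*\|^2 \leq \|s_k\|^2 + 2\gamma\langle s_k, \omega_k\rangle + \gamma^2\|\omega_k\|^2$; this algebraic step is what lets the cross term land on $s_k$ rather than on the awkward $x^{k+1}$, and so matches the desired form. The remaining work for the single step is to show $\|s_k\|^2 \leq (1-\gamma\mu)\|x^k - x^*\|^2$. Expanding $\|s_k\|^2$ and bounding the quadratic-in-$\gamma$ term by star-cocoercivity (Assumption~\ref{as:star_cocoercivity}, which for $n=1$ reads $\|F(x^k) - F(x^*)\|^2 \leq \ell\langle F(x^k) - F(x^*), x^k - x^*\rangle$) yields $\|s_k\|^2 \leq \|x^k - x^*\|^2 - \gamma(2-\gamma\ell)\langle F(x^k) - F(x^*), x^k - x^*\rangle$. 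Since $\gamma \leq \nicefrac{1}{\ell}$, the factor $2 - \gamma\ell \geq 1$ is positive, so quasi-strong monotonicity (Assumption~\ref{as:str_monotonicity}) bounds the inner product below by $\mu\|x^k - x^*\|^2$, giving $\|s_k\|^2 \leq (1-\gamma(2-\gamma\ell)\mu)\|x^k - x^*\|^2 \leq (1-\gamma\mu)\|x^k - x^*\|^2$. Here I use the hypothesis $x^k \in B_{2R}(x^*)$ so that both assumptions are available at $x^k$.

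Combining the two displays gives the one-step recursion $\|x^{k+1} - x^*\|^2 \leq (1-\gamma\mu)\|x^k - x^*\|^2 + 2\gamma\langle s_k, \omega_k\rangle + \gamma^2\|\omega_k\|^2$. Finally I would unroll it: setting $r_k \eqdef \|x^k - x^*\|^2$ and $b_k \eqdef 2\gamma\langle s_k, \omega_k\rangle + \gamma^2\|\omega_k\|^2$, the recursion $r_{k+1} \leq (1-\gamma\mu)r_k + b_k$ telescopes to $r_{K+1} \leq (1-\gamma\mu)^{K+1}r_0 + \sum_{k=0}^K (1-\gamma\mu)^{K-k}b_k$, which is precisely \eqref{eq:optimization_lemma_prox_SGDA_str_mon}. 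The induction preserves the inequality direction because $1-\gamma\mu \geq 0$; this holds since the two assumptions together force $\mu \leq \ell$ (one checks this by combining star-cocoercivity with Cauchy--Schwarz), and $\gamma \leq \nicefrac{1}{\ell}$.

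I do not expect a genuine obstacle, as this is essentially the standard prox fixed-point contraction argument made deterministic in the iterates. The two points requiring care are the identity $\|p^k - p^*\|^2 = \|s_k\|^2 + 2\gamma\langle s_k, \omega_k\rangle + \gamma^2\|\omega_k\|^2$, which aligns the stochastic cross term with the target, and tracking the constant $2-\gamma\ell$ so that the contraction factor cleanly reduces to $1-\gamma\mu$; the clipping itself does not enter this lemma, since all the randomness is absorbed into the $\omega_k$ terms that are carried through untouched.
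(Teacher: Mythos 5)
Your proposal is correct and follows essentially the same route as the paper's proof: non-expansiveness of the prox around the fixed point $x^* = \prox_{\gamma\Psi}(x^* - \gamma F(x^*))$, expansion of $\|x^k - x^* - \gamma \hg^k\|^2$ (your grouping into $s_k + \gamma\omega_k$ is just a reorganization of the same algebra), then star-cocoercivity to absorb $\gamma^2\|F(x^k)-F(x^*)\|^2$ and quasi-strong monotonicity with $\gamma \leq \nicefrac{1}{\ell}$ to get the factor $1-\gamma\mu$, followed by unrolling. No gaps.
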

\begin{proof}
     Using the update rule of \algname{Prox-clipped-SGDA-star}, we obtain
     \begin{eqnarray*}
        \|x^{k+1} - x^*\|^2 &=& \|\proxkPsi\left(x^k - \gamma\tg^k\right) - \proxkPsi\left(x^* - \gamma F(x^*)\right)\|^2\\
        &\le& \|x^k - x^* - \gamma(\tg^k - F(x^*))\|^2\\
        &=& \|x^k - x^*\|^2 -2\gamma \langle x^k - x^*, \hg^k \rangle + \gamma^2\|\hg^k\|^2\\
        &\overset{\eqref{eq:omega_k_prox_SGDA}}{=}& \|x^k - x^*\|^2 - 2\gamma \langle x^k - x^*, F(x^k)-F(x^*) \rangle - 2\gamma^2\langle F(x^k) - F(x^*), \omega_k\rangle\\
        &&\quad  + 2\gamma \langle x^k - x^*, \omega_k \rangle  + \gamma^2\|F(x^k) - F(x^*)\|^2 + \gamma^2\|\omega_k\|^2\\
        &\overset{\eqref{eq:star_cocoercivity}}{\leq}&  \|x^k - x^*\|^2 + 2\gamma \langle x^k - x^*, \omega_k \rangle - 2 \gamma^2 \langle F(x^k) - F(x^*), \omega_k\rangle\\
        &&\quad  - 2\gamma \left(1 - \frac{\gamma \ell}{2}\right) \langle x^k - x^*, F(x^k)  -F(x^*)\rangle + \gamma^2\|\omega_k\|^2\\
        &\overset{\eqref{eq:str_monotonicity}, \gamma \leq \frac{1}{\ell}}{\leq}&\|x^k - x^*\|^2 + 2\gamma \langle x^k - x^* - \gamma (F(x^k) - F(x^*)), \omega_k \rangle\\
        &&\quad  - 2\gamma \mu \left(1 - \frac{\gamma \ell}{2}\right) \|x^k - x^*\|^2 + \gamma^2\|\omega_k\|^2\\
        &\overset{\gamma \leq \frac{1}{\ell}}{\leq}&(1-\gamma \mu)\|x^k - x^*\|^2 + 2\gamma \langle x^k - x^* - \gamma (F(x^k) - F(x^*)), \omega_k \rangle + \gamma^2\|\omega_k\|^2.
    \end{eqnarray*}
    Unrolling the recurrence, we obtain \eqref{eq:optimization_lemma_prox_SGDA_str_mon}.
\end{proof}

\begin{theorem}\label{thm:main_result_prox_clipped_SGDA_star}
Let $n =1$, Assumptions~\ref{as:str_monotonicity}, \ref{as:star_cocoercivity}, hold for $Q = B_{2R}(x^*) = \{x\in\R^d\mid \|x - x^*\| \leq 2R\}$ for any $x \in B_{2R}(x^*)$, where $R \geq \|x^0 - x^*\|$, and
    \begin{eqnarray}
        0< \gamma &\leq& \min\left\{\frac{1}{400 \ell\ln \tfrac{4(K+1)}{\beta}}, \frac{\ln(B_K)}{\mu(K+1)}\right\}, \label{eq:gamma_SGDA_prox_clipped}\\
        B_K &=& \max\left\{2, \frac{(K+1)^{\frac{2\alpha-1}{\alpha}}\mu^2R^2}{4\cdot 10^{\frac{1}{\alpha}}120^{\frac{2(\alpha-1)}{\alpha}}\sigma^2\ln^{\frac{2(\alpha-1)}{\alpha}}\left(\frac{4(K+1)}{\beta}\right)\ln^2(B_K)} \right\}  \label{eq:B_K_SGDA_prox_clipped_1} \\
        &=& \cO\left(\max\left\{2, \frac{K^{\frac{2\alpha-1}{\alpha}}\mu^2R^2}{\sigma^2\ln^{\frac{2(\alpha-1)}{\alpha}}\left(\frac{K}{\beta}\right)\ln^2\left(\max\left\{2, \frac{K^{\frac{2\alpha-1}{\alpha}}\mu^2R^2}{\sigma^2\ln^{\frac{2(\alpha-1)}{\alpha}}\left(\frac{K}{\beta}\right)} \right\}\right)} \right\}\right\},  \label{eq:B_K_SGDA_prox_clipped_2} \\
        \lambda_k &=& \frac{\exp(-\gamma\mu(1 + \nicefrac{k}{2}))R}{120\gamma \ln \tfrac{4(K+1)}{\beta}}, \label{eq:lambda_SGDA_prox_clipped}
    \end{eqnarray}
    for some $K \geq 0$ and $\beta \in (0,1]$ such that $\ln \tfrac{4(K+1)}{\beta} \geq 1$. Then, after $K$ iterations the iterates produced by \algname{Prox-clipped-SGDA-star} with probability at least $1 - \beta$ satisfy 
    \begin{equation}
        \|x^{K+1} - x^*\|^2 \leq 2\exp(-\gamma\mu(K+1))R^2. \label{eq:main_result_str_mon_SGDA}
    \end{equation}
    In particular, when $\gamma$ equals the minimum from \eqref{eq:gamma_SGDA_prox_clipped}, then the iterates produced by \algname{Prox-clipped-SGDA-star} after $K$ iterations with probability at least $1-\beta$ satisfy
    \begin{eqnarray}
       R_K^2 = \cO\left(\max\left\{R^2\exp\left(- \frac{\mu K}{\ell \ln \tfrac{K}{\beta}}\right), \frac{\sigma^2\ln^{\frac{2(\alpha-1)}{\alpha}}\left(\frac{K}{\beta}\right)\ln^2\left(\max\left\{2, \frac{K^{\frac{2\alpha-1}{\alpha}}\mu^2R^2}{\sigma^2\ln^{\frac{2(\alpha-1)}{\alpha}}\left(\frac{K}{\beta}\right)} \right\}\right)}{K^{\frac{2\alpha-1}{\alpha}}\mu^2}\right\}\right),  \label{eq:clipped_SGDA_prox_clipped_case_1}
    \end{eqnarray}
    meaning that to achieve $R_K^2 = \|x^{K} - x^*\|^2 \leq \varepsilon$ with probability at least $1 - \beta$ \algname{Prox-clipped-SGDA-star} requires
    \begin{equation}
        K = \cO\left(\frac{\ell}{\mu}\ln\left(\frac{R^2}{\varepsilon}\right)\ln\left(\frac{\ell}{\mu \beta}\ln\frac{R^2}{\varepsilon}\right), \left(\frac{\sigma^2}{\mu^2\varepsilon}\right)^{\frac{\alpha}{2\alpha-1}}\ln \left(\frac{1}{\beta} \left(\frac{\sigma^2}{\mu^2\varepsilon}\right)^{\frac{\alpha}{2\alpha-1}}\right)\ln^{\frac{\alpha}{\alpha-1}}\left(B_\varepsilon\right)\right)  \label{eq:clipped_SGDA_prox_clipped_case_complexity_appendix}
    \end{equation}
    iterations/oracle calls, where
    \begin{equation*}
        B_\varepsilon = \max\left\{2, \frac{R^2}{\varepsilon \ln \left(\frac{1}{\beta} \left(\frac{\sigma^2}{\mu^2\varepsilon}\right)^{\frac{\alpha}{2\alpha-1}}\right)}\right\}.
    \end{equation*}
\end{theorem}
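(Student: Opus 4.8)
The plan is to follow the induction-based template of \citet{gorbunov2020stochastic, sadiev2023high}, using the deterministic descent inequality \eqref{eq:optimization_lemma_prox_SGDA_str_mon} from Lemma~\ref{lem:optimization_lemma_str_mon_SGDA} as the backbone. The target is to prove by induction on $t$ that the event $E_t$, on which $\|x^s - x^*\|^2 \leq 2\exp(-\gamma\mu s)R^2$ holds for all $s = 0,1,\ldots,t$, occurs with probability at least $1 - \nicefrac{t\beta}{(K+1)}$. The base case $t=0$ is immediate from $R \geq \|x^0 - x^*\|$. Since $2\exp(-\gamma\mu s)R^2 \leq 2R^2 < (2R)^2$, the event $E_t$ guarantees that all iterates lie in $B_{2R}(x^*)$, so Assumptions~\ref{as:str_monotonicity} and \ref{as:star_cocoercivity} are in force and Lemma~\ref{lem:optimization_lemma_str_mon_SGDA} applies up to step $t$.

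First I would decompose the discrepancy $\omega_k = \omega_k^b + \omega_k^u$ into a bias part $\omega_k^b = F(x^k) - F(x^*) - \EE_{\xi^k}[\hg^k]$ and a zero-mean part $\omega_k^u = \EE_{\xi^k}[\hg^k] - \hg^k$. The key enabling step is to verify, on $E_k$, that the clipped vector is well-centered: star-cocoercivity gives $\|F(x^k) - F(x^*)\| \leq \ell\|x^k - x^*\| \leq \ell\sqrt{2}\exp(-\nicefrac{\gamma\mu k}{2})R$, and the choice \eqref{eq:lambda_SGDA_prox_clipped} of $\lambda_k$ together with the stepsize bound \eqref{eq:gamma_SGDA_prox_clipped} forces $\|F(x^k) - F(x^*)\| \leq \nicefrac{\lambda_k}{2}$. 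This is exactly the hypothesis needed to invoke Lemma~\ref{lem:bias_and_variance_clip} with $X = F_{\xi^k}(x^k) - F(x^*)$, yielding the deterministic bounds $\|\omega_k^b\| \leq \nicefrac{2^\alpha\sigma^\alpha}{\lambda_k^{\alpha-1}}$, $\EE_{\xi^k}[\|\omega_k^u\|^2] \leq 18\lambda_k^{2-\alpha}\sigma^\alpha$, and $\|\omega_k^u\| \leq 2\lambda_k$.

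Next I would plug the decomposition into \eqref{eq:optimization_lemma_prox_SGDA_str_mon}, separating the deterministic contributions from the martingale ones. Writing $\rho_k = x^k - x^* - \gamma(F(x^k) - F(x^*))$ and using $\gamma\ell \leq 1$ to bound $\|\rho_k\| \leq 2\|x^k - x^*\| \leq 2\sqrt{2}\exp(-\nicefrac{\gamma\mu k}{2})R$, the bias terms $\gamma^2\sum(1-\gamma\mu)^{K-k}\|\omega_k^b\|^2$ and $2\gamma\sum(1-\gamma\mu)^{K-k}\langle\rho_k,\omega_k^b\rangle$, together with the conditional-variance term $\gamma^2\sum(1-\gamma\mu)^{K-k}\EE_{\xi^k}[\|\omega_k^u\|^2]$, are controlled purely by the choices of $\lambda_k$ and $B_K$. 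The remaining sums — $2\gamma\sum(1-\gamma\mu)^{K-k}\langle\rho_k,\omega_k^u\rangle$ and the centered part of $\gamma^2\sum(1-\gamma\mu)^{K-k}\|\omega_k^u\|^2$ — are martingale-difference sums, since $\rho_k$ is measurable with respect to the past and $\EE_{\xi^k}[\omega_k^u]=0$. For each I would apply Bernstein's inequality (Lemma~\ref{lem:Bernstein_ineq}): the almost-sure bound $c$ comes from $\|\omega_k^u\| \leq 2\lambda_k$ and the bound on $\|\rho_k\|$, while the variance proxy $G$ comes from $\EE_{\xi^k}[\|\omega_k^u\|^2] \leq 18\lambda_k^{2-\alpha}\sigma^\alpha$. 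A union bound over these Bernstein events per step yields the $1-\nicefrac{\beta}{(K+1)}$ increment in the induction probability and verifies $\|x^{k+1}-x^*\|^2 \leq 2\exp(-\gamma\mu(k+1))R^2$, closing the induction and proving \eqref{eq:main_result_str_mon_SGDA}.

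The hard part will be the bookkeeping that makes the induction close: one must show that the total error budget — the sum of all bias, variance, and Bernstein fluctuation terms — stays below $\exp(-\gamma\mu(K+1))R^2$, i.e.\ half of the induction target, for \emph{every} horizon $k \leq K$ simultaneously. This imposes a delicate two-sided constraint on $\lambda_k$: large enough that the accumulated bias $\nicefrac{2^\alpha\sigma^\alpha}{\lambda_k^{\alpha-1}}$ remains negligible, yet small enough that the clipped fluctuations $2\lambda_k$ feed manageable $c$ and $G$ into Bernstein. Matching the $\exp(-\nicefrac{\gamma\mu k}{2})$ decay of $\lambda_k$ to that of $\|x^k-x^*\|$ is what makes both sides balance, and the geometric weights $(1-\gamma\mu)^{K-k}$ must be summed carefully against these decaying factors — this is precisely where $B_K$ in \eqref{eq:B_K_SGDA_prox_clipped_1} enters. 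Once \eqref{eq:main_result_str_mon_SGDA} is established, setting $\gamma$ equal to the minimum in \eqref{eq:gamma_SGDA_prox_clipped} and splitting into the two regimes — the $\nicefrac{\ell}{\mu}$-dominated deterministic phase versus the $\sigma$-dominated stochastic phase — yields the rate \eqref{eq:clipped_SGDA_prox_clipped_case_1} and, by inverting it in $\varepsilon$, the complexity \eqref{eq:clipped_SGDA_prox_clipped_case_complexity_appendix}.
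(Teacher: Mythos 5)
Your plan matches the paper's proof essentially step for step: the same induction on the event $\{R_t^2 \leq 2\exp(-\gamma\mu t)R^2\}$ with per-step failure budget $\nicefrac{\beta}{(K+1)}$, the same bias/unbiased decomposition of $\omega_k$, the same use of star-cocoercivity to verify $\|F(x^k)-F(x^*)\|\leq \nicefrac{\lambda_k}{2}$ so that Lemma~\ref{lem:bias_and_variance_clip} applies, and the same split into two Bernstein-controlled martingale sums plus three deterministically bounded bias/variance sums. The one detail to be careful about when executing it: Bernstein's inequality (Lemma~\ref{lem:Bernstein_ineq}) requires the increments to be bounded \emph{almost surely}, whereas your $\rho_k = x^k - x^* - \gamma(F(x^k)-F(x^*))$ is only bounded on the induction event, so you must replace it by its truncation (set to $0$ whenever $\|\rho_k\| > \sqrt{2}(1+\gamma\ell)\exp(-\nicefrac{\gamma\mu k}{2})R$), exactly as the paper does with $\eta_t$ in \eqref{eq:eta_t_SGDA_str_mon}, and then note that the truncated and untruncated sums coincide on $E_{T-1}$.
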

\begin{proof}
    Let $R_k = \|x^k - x^*\|$ for all $k\geq 0$. Our proof is induction-based: by induction, we show that the iterates of the method stay in some ball around the solution with high probability. To formulate the statement rigorously, we introduce probability event $E_k$ for each $k = 0,1,\ldots, K+1$ as follows: inequalities
    \begin{equation}
        R_t^2 \leq 2 \exp(-\gamma\mu t) R^2 \label{eq:induction_inequality_prox_clipped_SGDA_star}
    \end{equation}
    hold for $t = 0,1,\ldots,k$ simultaneously. We will prove by induction that $\PP\{E_k\} \geq  1 - \nicefrac{k\beta}{(K+1)}$ for all $k = 0,1,\ldots,K+1$. The base of the induction follows immediately by the definition of $R$. Next, assume that for $k = T-1 \leq K$ the statement holds: $\PP\{E_{T-1}\} \geq  1 - \nicefrac{(T-1)\beta}{(K+1)}$. Given this, we need to prove $\PP\{E_{T}\} \geq  1 - \nicefrac{T\beta}{(K+1)}$. Since $R_t^2 \leq 2\exp(-\gamma\mu t) R^2 \leq 2R^2$, we have $x^t \in B_{2R}(x^*)$ for $t = 0,1,\ldots, T-1$, where operator $F$ is $\ell$-star-cocoercive. Thus, $E_{T-1}$ implies
    \begin{eqnarray}
        \|F(x^t) - F(x^*)\| &\leq& \ell\|x^t - x^*\| \overset{\eqref{eq:induction_inequality_prox_clipped_SGDA_star}}{\leq} \sqrt{2}\ell\exp(- \nicefrac{\gamma\mu t}{2})R \overset{\eqref{eq:gamma_SGDA_prox_clipped},\eqref{eq:lambda_SGDA_prox_clipped}}{\leq} \frac{\lambda_t}{2} \label{eq:operator_bound_x_t_SGDA_prox_clipped}
    \end{eqnarray}
    and
    \begin{eqnarray}
        \|\omega_t\|^2 &\leq& 2\|F(x^t)-F(x^*)\|^2 + 2\|\hg^t\|^2\overset{\eqref{eq:operator_bound_x_t_SGDA_prox_clipped}}{\leq} \frac{5}{2}\lambda_t^2 \overset{\eqref{eq:lambda_SGDA_prox_clipped}}{\leq} \frac{\exp(-\gamma\mu t)R^2}{4\gamma^2} \label{eq:omega_bound_x_t_SGDA_prox_clipped}
    \end{eqnarray}
    for all $t = 0, 1, \ldots, T-1$, where we use that $\|a+b\|^2 \leq 2\|a\|^2 + 2\|b\|^2$ holding for all $a,b \in \R^d$. 
    This means that we can apply Lemma~\ref{lem:optimization_lemma_str_mon_SGDA} and $(1 - \gamma\mu)^T \leq \exp(-\gamma\mu T)$: $E_{T-1}$ implies
    \begin{eqnarray}
        R_T^2 &\leq& \exp(-\gamma\mu T)R^2 + 2\gamma \sum\limits_{t=0}^{T-1} (1-\gamma\mu)^{T-1-t} \langle x^t - x^* - \gamma (F(x^t)-F(x^*)), \omega_t \rangle\notag\\
        &&\quad + \gamma^2 \sum\limits_{t=0}^{T-1} (1-\gamma\mu)^{T-1-t}\|\omega_t\|^2. \notag
    \end{eqnarray}
    Before we proceed, we introduce a new notation:
    \begin{gather}
        \eta_t = \begin{cases} \underbrace{x^t - x^* - \gamma (F(x^t)-F(x^*))}_{\hat{\eta}_t},& \text{if } \|\hat{\eta}_t\| \leq \sqrt{2}(1 + \gamma \ell) \exp(- \nicefrac{\gamma\mu t}{2})R,\\ 0,& \text{otherwise}, \end{cases} \label{eq:eta_t_SGDA_str_mon}
    \end{gather}
    for $t = 0, 1, \ldots, T-1$. Random vectors $\{\eta_t\}_{t=0}^T$ are bounded almost surely:
    \begin{equation}
         \|\eta_t\| \leq \sqrt{2}(1 + \gamma \ell)\exp(-\nicefrac{\gamma\mu t}{2})R \label{eq:eta_t_bound_SGDA_prox_clipped} 
    \end{equation}
    for all $t = 0, 1, \ldots, T-1$. We also notice that $E_{T-1}$ implies $\|F(x^t)-F(x^*)\| \leq \sqrt{2}\ell\exp(-\nicefrac{\gamma\mu t}{2})R$ (due to \eqref{eq:operator_bound_x_t_SGDA_prox_clipped}) and
    \begin{eqnarray*}
        \|x^t - x^* - \gamma (F(x^t)-F(x^*))\| &\leq& \|x^t - x^*\| + \gamma \|F(x^t)-F(x^*)\|\\
        &\overset{\eqref{eq:operator_bound_x_t_SGDA_prox_clipped}}{\leq}& \sqrt{2}(1 + \gamma \ell)\exp(-\nicefrac{\gamma\mu t}{2})R
    \end{eqnarray*}
    for $t = 0, 1, \ldots, T-1$. Therefore, $E_{T-1}$ implies  $\eta_t = x^t - x^* - \gamma (F(x^t)-F(x^*))$ for all $t = 0,1,\ldots,T-1$ and from $E_{T-1}$ it follows that
    \begin{eqnarray}
        R_T^2 &\leq& \exp(-\gamma\mu T)R^2 + 2\gamma \sum\limits_{t=0}^{T-1} (1-\gamma\mu)^{T-1-t} \langle \eta_t, \omega_t \rangle\notag\\
        &&\quad + \gamma^2 \sum\limits_{t=0}^{T-1} (1-\gamma\mu)^{T-1-t} \|\omega_t\|^2. \notag
    \end{eqnarray}
    For convenience, we define unbiased and biased parts of $ \omega_t$:
    \begin{gather}
        \omega_t^u \eqdef \EE_{\xi^t}\left[\hg^t\right] - \hg^t,\quad \omega_t^b \eqdef F(x^t) - F(x^*) - \EE_{\xi^t}\left[\hg^t\right], \label{eq:omega_unbias_bias_SGDA_str_mon}
    \end{gather}
    for all $t = 0,\ldots, T-1$. By definition we have  $\omega_t = \omega_t^u + \omega_t^b$ for all $t = 0,\ldots, T-1$. Therefore, $E_{T-1}$ implies
    \begin{eqnarray}
        R_T^2 &\leq& \exp(-\gamma\mu T) R^2 + \underbrace{2\gamma \sum\limits_{t=0}^{T-1} (1-\gamma\mu)^{T-1-t} \langle \eta_t, \omega_t^u \rangle}_{\circledOne} \notag\\ &&\quad + \underbrace{2\gamma \sum\limits_{t=0}^{T-1} (1-\gamma\mu)^{T-1-t} \langle \eta_t, \omega_t^b \rangle}_{\circledTwo} 
        + \underbrace{2\gamma^2 \sum\limits_{t=0}^{T-1} (1-\gamma\mu)^{T-1-t} \EE_{\xi^t}\left[\|\omega^u_t\|^2\right]}_{\circledThree}\notag\\
        &&\quad + \underbrace{2\gamma^2 \sum\limits_{t=0}^{T-1} (1-\gamma\mu)^{T-1-t}\left( \|\omega^u_t\|^2 -  \EE_{\xi^t}\left[\|\omega^u_t\|^2\right]\right)}_{\circledFour}\notag\\
        &&\quad + \underbrace{2\gamma^2 \sum\limits_{t=0}^{T-1} (1-\gamma\mu)^{T-1-t} \|\omega^b_t\|^2}_{\circledFive}. \label{eq:SGDA_prox_clipped_12345_bound}
    \end{eqnarray}
    where we also use inequality $\|a+b\|^2 \leq 2\|a\|^2 + 2\|b\|^2$ holding for all $a,b \in \R^d$ to upper bound $\|\omega_t\|^2$. To derive high-probability bounds for $\circledOne, \circledTwo, \circledThree, \circledFour, \circledFive$ we need to establish several useful inequalities related to $\omega_{i,t}^u, \omega_{i,t}^b$. First, by definition of clipping
    \begin{equation}
        \|\omega_t^u\| \leq 2\lambda_t.\label{eq:omega_magnitude_prox_clipped}
    \end{equation}
    Next, $E_{T-1}$ implies that $\|F(x^t)-F(x^*)\| \leq \nicefrac{\lambda_t}{2}$ for all $t = 0,1, \ldots, T-1$ (see \eqref{eq:operator_bound_x_t_SGDA_prox_clipped}). Therefore, from Lemma~\ref{lem:bias_and_variance_clip} we also have that $E_{T-1}$ implies
    \begin{gather}
        \left\|\omega_t^b\right\| \leq \frac{2^{\alpha}\sigma^{\alpha}}{\lambda_t^{\alpha-1}}, \label{eq:bias_omega_prox_clipped}\\
        \EE_{\xi^t}\left[\left\|\omega_t^b\right\|^2\right] \leq 18 \lambda_t^{2-\alpha} \sigma^{\alpha}, \label{eq:distortion_omega_prox_clipped}\\
        \EE_{\xi^t}\left[\left\|\omega_t^u\right\|^2\right] \leq 18 \lambda_t^{2-\alpha} \sigma^{\alpha}, \label{eq:variance_omega_prox_clipped}
    \end{gather}
    for all $t = 0,1, \ldots, T-1$.

\paragraph{Upper bound for $\circledOne$.} To estimate this sum, we will use Bernstein's inequality. The summands have conditional expectations equal to zero:
    \begin{equation*}
        \EE_{\xi^t}\left[2\gamma (1-\gamma\mu)^{T-1-t} \langle \eta_t, \omega_t^u \rangle\right] = 0.
    \end{equation*}
    Next, the summands are bounded:
    \begin{eqnarray}
        |2\gamma (1-\gamma\mu)^{T-1-t} \langle \eta_t, \omega_t^u \rangle | &\leq& 2\gamma\exp(-\gamma\mu (T - 1 - t)) \|\eta_t\|\cdot \|\omega_t^u\|\notag\\
        &\overset{\eqref{eq:eta_t_bound_SGDA_prox_clipped},\eqref{eq:omega_magnitude_prox_clipped}}{\leq}& 4\sqrt{2}\gamma (1 + \gamma \ell) \exp(-\gamma\mu (T - 1 - \nicefrac{t}{2})) R \lambda_t\notag\\
        &\overset{\eqref{eq:gamma_SGDA_prox_clipped},\eqref{eq:lambda_SGDA_prox_clipped}}{\leq}& \frac{\exp(-\gamma\mu T)R^2}{5\ln\tfrac{4(K+1)}{\beta}} \eqdef c. \label{eq:SGDA_str_mon_technical_1_1}
    \end{eqnarray}
Finally, conditional variances $\sigma_t^2 \eqdef \EE_{\xi^t}\left[4\gamma^2 (1-\gamma\mu)^{2T-2-2t} \langle \eta_t, \omega_t^u \rangle^2\right]$ of the summands are bounded:
    \begin{eqnarray}
        \sigma_t^2 &\leq& \EE_{\xi^t}\left[4\gamma^2\exp(-\gamma\mu (2T - 2 - 2t)) \|\eta_t\|^2\cdot \|\omega_t^u\|^2\right]\notag\\
        &\overset{\eqref{eq:eta_t_bound_SGDA_prox_clipped}}{\leq}& 8\gamma^2 (1 + \gamma \ell)^2 \exp(-\gamma\mu (2T - 2 - t)) R^2 \EE_{\xi^t}\left[\|\omega_t^u\|^2\right]\notag\\
        &\overset{\eqref{eq:gamma_SGDA_prox_clipped}}{\leq}& 10\gamma^2\exp(-\gamma\mu (2T - t))R^2 \EE_{\xi^t}\left[\|\omega_t^u\|^2\right]. \label{eq:SGDA_str_mon_technical_1_2}
    \end{eqnarray}
Applying Bernstein's inequality (Lemma~\ref{lem:Bernstein_ineq}) with $X_t = 2\gamma (1-\gamma\mu)^{T-1-t} \langle \eta_t, \omega_t^u \rangle$, constant $c$ defined in \eqref{eq:SGDA_str_mon_technical_1_1}, $b = \tfrac{1}{5}\exp(-\gamma\mu T) R^2$, $G = \tfrac{\exp(-2 \gamma\mu T) R^4}{150\ln\frac{4(K+1)}{\beta}}$, we get
    \begin{eqnarray*}
        \PP\left\{|\circledOne| > \frac{1}{5}\exp(-\gamma\mu T) R^2 \text{ and } \sum\limits_{t=0}^{T-1}\sigma_t^2 \leq \frac{\exp(- 2\gamma\mu T) R^4}{150\ln\tfrac{4(K+1)}{\beta}}\right\} &\leq& 2\exp\left(- \frac{b^2}{2F + \nicefrac{2cb}{3}}\right)\\
        &=& \frac{\beta}{2(K+1)}.
    \end{eqnarray*}
The above is equivalent to $\PP\{E_{\circledOne}\} \geq 1 - \frac{\beta}{2(K+1)}$ for
    \begin{equation}
        E_{\circledOne} = \left\{\text{either} \quad \sum\limits_{t=0}^{T-1}\sigma_t^2 > \frac{\exp(- 2\gamma\mu T) R^4}{150\ln\tfrac{4(K+1)}{\beta}}\quad \text{or}\quad |\circledOne| \leq \frac{1}{5}\exp(-\gamma\mu T) R^2\right\}. \label{eq:bound_1_SGDA_prox_clipped}
    \end{equation}
Moreover, $E_{T-1}$ implies
    \begin{eqnarray}
        \sum\limits_{t=0}^{T-1}\sigma_t^2 &\overset{\eqref{eq:SGDA_str_mon_technical_1_2}}{\leq}& 10\gamma^2\exp(- 2\gamma\mu T)R^2\sum\limits_{t=0}^{T-1} \frac{\EE_{\xi^t}\left[\|\omega_t^u\|^2\right]}{\exp(-\gamma\mu t)}\notag\\ 
        &\overset{\eqref{eq:variance_omega_prox_clipped}, T \leq K+1}{\leq}& 180\gamma^2\exp(-2\gamma\mu T) R^2 \sigma^2 \sum\limits_{t=0}^{K} \frac{\lambda_t^{2-\alpha}}{\exp(-\gamma\mu t)}\notag\\
        &\overset{\eqref{eq:lambda_SGDA_prox_clipped}}{\leq}&\frac{ 180\gamma^{\alpha}\exp(-2\gamma\mu T) R^{4-\alpha} \sigma^{\alpha} (K+1)\exp(\frac{\gamma \mu \alpha K}{2})}{120^{2-\alpha}\ln^{2-\alpha}\frac{4(K+1)}{\beta}}\notag\\
        &\overset{\eqref{eq:gamma_SGDA_prox_clipped}}{\leq}& \frac{\exp(-2\gamma\mu T)R^4}{150\ln\tfrac{4(K+1)}{\beta}}. \label{eq:bound_1_variances_SGDA_prox_clipped}
    \end{eqnarray}

 \paragraph{Upper bound for $\circledTwo$.} Probability event $E_{T-1}$ implies
    \begin{eqnarray}
        \circledTwo &\leq& 2\gamma \exp(-\gamma\mu (T-1)) \sum\limits_{t=0}^{T-1} \frac{\|\eta_t\|\cdot \|\omega_t^b\|}{\exp(-\gamma\mu t)}\notag\\
        &\overset{\eqref{eq:eta_t_bound_SGDA_prox_clipped}, \eqref{eq:bias_omega_prox_clipped}}{\leq}& 2^{1+\alpha}\sqrt{2} \gamma (1+\gamma \ell) \exp(-\gamma\mu (T-1)) R \sigma^{\alpha} \sum\limits_{t=0}^{T-1} \frac{1}{\lambda_t^{\alpha-1} \exp(-\nicefrac{\gamma\mu t}{2})}\notag\\
        &\overset{\eqref{eq:lambda_SGDA_prox_clipped},  T \leq K+1}{\leq}& \frac{2^{1+\alpha}120^{\alpha-1}\sqrt{2} \gamma^{\alpha}\sigma^{\alpha} R^{2-\alpha} (1+\gamma \ell)\exp(-\gamma\mu (T-1)) (K+1) \exp\left(\frac{\gamma\mu \alpha K}{2} \right)}{\ln^{1-\alpha}\tfrac{4(K+1)}{\beta}} \notag \\
        &\overset{\eqref{eq:gamma_SGDA_prox_clipped}}{\leq}& \frac{1}{5}\exp(-\gamma\mu T) R^2. \label{eq:bound_2_SGDA_prox_clipped}
    \end{eqnarray}

\paragraph{Upper bound for $\circledThree$.} Probability event $E_{T-1}$ implies
    \begin{eqnarray}
        \circledThree &=& 2\gamma^2 \exp(-\gamma\mu (T-1)) \sum\limits_{t=0}^{T-1} \frac{\EE_{\xi^t}\left[\|\omega_t^u\|^2\right]}{\exp(-\gamma\mu t)} \notag\\
        &\overset{\eqref{eq:variance_omega_prox_clipped}}{\leq}& 36\gamma^2\exp(-\gamma\mu (T-1)) \sigma^{\alpha}\sum\limits_{t=0}^{T-1} \frac{\lambda^{2-\alpha}_t}{\exp(-\gamma\mu t)} \notag\\
        &\overset{\eqref{eq:lambda_SGDA_prox_clipped},  T \leq K+1}{\leq}&  \frac{36\gamma^{\alpha} R^{2-\alpha} \exp(-\gamma\mu (T-1)) \sigma^{\alpha} (K+1)\exp(\frac{\gamma\mu\alpha K}{2})}{120^{2-\alpha}\ln^{2-\alpha}\frac{4(K+1)}{\beta}} \notag\\
        &\overset{\eqref{eq:gamma_SGDA_prox_clipped}}{\leq}& \frac{1}{5} \exp(-\gamma\mu T) R^2. \label{eq:bound_3_SGDA_prox_clipped}
    \end{eqnarray}

\paragraph{Upper bound for $\circledFour$.} To estimate this sum, we will use Bernstein's inequality. The summands have conditional expectations equal to zero:
    \begin{equation*}
        2\gamma^2 (1-\gamma\mu)^{T-1-t}\EE_{\xi^t}\left[\|\omega_t^u\|^2  -\EE_{\xi^t}\left[\|\omega_t^u\|^2\right] \right] = 0.
    \end{equation*}
    Next, the summands are bounded:
    \begin{eqnarray}
        2\gamma^2 (1-\gamma\mu)^{T-1-t}\left| \|\omega_t^u\|^2  -\EE_{\xi^t}\left[\|\omega_t^u\|^2\right] \right| 
        &\overset{\eqref{eq:omega_magnitude_prox_clipped}}{\leq}& \frac{16\gamma^2 \exp(-\gamma\mu T) \lambda_t^2}{\exp(-\gamma\mu (t+1))}\notag\\
        &\overset{\eqref{eq:lambda_SGDA_prox_clipped}}{\leq}& \frac{\exp(-\gamma\mu T)R^2}{5\ln\tfrac{4(K+1)}{\beta}}\notag\\
        &\eqdef& c. \label{eq:SGDA_prox_clipped_technical_4_1}
    \end{eqnarray}
    Finally, conditional variances
    \begin{equation*}
        \widetilde\sigma_t^2 \eqdef \EE_{\xi^t}\left[4\gamma^4 (1-\gamma\mu)^{2T-2-2t} \left|\|\omega_t^u\|^2  -\EE_{\xi^t}\left[\|\omega_t^u\|^2\right] \right|^2 \right]
    \end{equation*}
    of the summands are bounded:
    \begin{eqnarray}
        \widetilde\sigma_t^2 &\overset{\eqref{eq:SGDA_prox_clipped_technical_4_1}}{\leq}& \frac{2\gamma^2\exp(-2\gamma\mu T)R^2}{5\exp(-\gamma\mu (1+t))\ln\tfrac{4(K+1)}{\beta}} \EE_{\xi^t}\left[ \left|\|\omega_t^u\|^2  -\EE_{\xi^t}\left[\|\omega_t^u\|^2\right] \right|\right]\notag\\
        &\leq& \frac{4\gamma^2\exp(-2\gamma\mu T)R^2}{5\exp(-\gamma\mu (1+t))\ln\tfrac{4(K+1)}{\beta}} \EE_{\xi^t}\left[\|\omega_t^u\|^2\right]. \label{eq:SGDA_str_mon_technical_4_2}
    \end{eqnarray}
    Applying Bernstein's inequality (Lemma~\ref{lem:Bernstein_ineq}) with $X_t = 2\gamma^2 (1-\gamma\mu)^{T-1-t}\left( \|\omega_t^u\|^2 -\EE_{\xi^t}\left[\|\omega_t^u\|^2\right]\right)$, constant $c$ defined in \eqref{eq:SGDA_prox_clipped_technical_4_1}, $b = \tfrac{1}{5}\exp(-\gamma\mu T) R^2$, $G = \tfrac{\exp(-2 \gamma\mu T) R^4}{150\ln\frac{4(K+1)}{\beta}}$, we get:
    \begin{eqnarray*}
        \PP\left\{|\circledFour| > \frac{1}{5}\exp(-\gamma\mu T) R^2 \text{ and } \sum\limits_{l=0}^{T-1}\widetilde\sigma_t^2 \leq \frac{\exp(-2\gamma\mu T) R^4}{150\ln\frac{4(K+1)}{\beta}}\right\} &\leq& 2\exp\left(- \frac{b^2}{2G + \nicefrac{2cb}{3}}\right)\\
        &=& \frac{\beta}{2(K+1)}.
    \end{eqnarray*}
    The above is equivalent to $\PP\{E_{\circledFour}\} \geq 1 - \frac{\beta}{2(K+1)}$ for
    \begin{equation}
        E_{\circledFour} = \left\{\text{either} \quad \sum\limits_{t=0}^{T-1}\widetilde\sigma_t^2 > \frac{\exp(-2\gamma\mu T) R^4}{150\ln\tfrac{4(K+1)}{\beta}}\quad \text{or}\quad |\circledFour| \leq \frac{1}{5}\exp(-\gamma\mu T) R^2\right\}. \label{eq:bound_4_SGDA_prox_clipped}
    \end{equation}
    Moreover, $E_{T-1}$ implies
    \begin{eqnarray}
        \sum\limits_{l=0}^{T-1}\widetilde\sigma_t^2 &\overset{\eqref{eq:SGDA_str_mon_technical_4_2}}{\leq}& \frac{4\gamma^2\exp(-\gamma\mu (2T-1))R^2}{5\ln\tfrac{4(K+1)}{\beta}} \sum\limits_{t=0}^{T-1} \frac{\EE_{\xi^t}\left[\|\omega_l^u\|^2\right]}{\exp(-\gamma\mu t)}\notag\\ &\overset{\eqref{eq:variance_omega_prox_clipped}, T \leq K+1}{\leq}& \frac{72\gamma^2\exp(-\gamma\mu (2T-1)) R^2 \sigma^{\alpha}}{5\ln\tfrac{4(K+1)}{\beta}} \sum\limits_{t=0}^{K} \frac{\lambda_t^{2-\alpha}}{\exp(-\gamma\mu t)}\notag\\
        &\overset{\eqref{eq:lambda_SGDA_prox_clipped}}{\leq}& \frac{72\gamma^{\alpha}\exp(-\gamma\mu (2T-1)) R^{4-\alpha} \sigma^{\alpha} (K+1)\exp(\frac{\gamma\mu\alpha K}{2})}{5\cdot120^{2-\alpha}\ln^{3-\alpha}\tfrac{4(K+1)}{\beta}} \notag\\
        &\overset{\eqref{eq:gamma_SGDA_prox_clipped}}{\leq}& \frac{\exp(-2\gamma\mu T)R^4}{150\ln\tfrac{4(K+1)}{\beta}}. \label{eq:bound_4_variances_SGDA_prox_clipped}
    \end{eqnarray}

\paragraph{Upper bound for $\circledFive$.} Probability event $E_{T-1}$ implies
    \begin{eqnarray}
        \circledFive &=&  2\gamma^2 \sum\limits_{t=0}^{T-1} \exp(-\gamma\mu (T-1-t)) \|\omega_t^b\|^2\notag\\
        &\overset{\eqref{eq:bias_omega_prox_clipped}}{\leq}& 2\cdot 2^{2 \alpha} \gamma^2 \sigma^{2 \alpha}  \exp(-\gamma\mu (T-1)) \sum\limits_{t=0}^{T-1} \frac{1}{\lambda_t^{2\alpha-2} \exp(-\gamma\mu t)} \notag\\
        &\overset{\eqref{eq:lambda_SGDA_prox_clipped}, T \leq K+1}{\leq}& \frac{2\cdot 2^{2\alpha}120^{2\alpha-2}\gamma^{2\alpha} \sigma^{2\alpha} \exp(-\gamma\mu (T-3))  \ln^{2\alpha-2}\tfrac{4(K+1)}{\beta}}{R^{2\alpha-2}} \sum\limits_{t=0}^{K} \exp\left(\gamma\mu\alpha t\right)\notag\\
        &\leq& \frac{2\cdot 2^{2\alpha}120^{2\alpha-2}\gamma^{2\alpha} \sigma^{2\alpha} \exp(-\gamma\mu (T-3))  \ln^{2\alpha-2}\tfrac{4(K+1)}{\beta}(K+1)\exp(\gamma\mu\alpha K)}{R^{2\alpha-2}}\notag\\
        &\overset{\eqref{eq:gamma_SGDA_prox_clipped}}{\leq}& \frac{1}{5}\exp(-\gamma\mu T) R^2. \label{eq:bound_5_SGDA_prox_clipped}
    \end{eqnarray}

    That is, we derive the upper bounds for  $\circledOne, \circledTwo, \circledThree, \circledFour, \circledFive$. More precisely, $E_{T-1}$ implies
    \begin{gather*}
        R_T^2 \overset{\eqref{eq:SGDA_prox_clipped_12345_bound}}{\leq} \exp(-\gamma\mu T) R^2 + \circledOne + \circledTwo + \circledThree + \circledFour + \circledFive ,\\
        \circledTwo \overset{\eqref{eq:bound_2_SGDA_prox_clipped}}{\leq} \frac{1}{5}\exp(-\gamma\mu T)R^2,\quad \circledThree \overset{\eqref{eq:bound_3_SGDA_prox_clipped}}{\leq} \frac{1}{5}\exp(-\gamma\mu T)R^2,\quad \circledFive \overset{\eqref{eq:bound_5_SGDA_prox_clipped}}{\leq} \frac{1}{5}\exp(-\gamma\mu T)R^2,\\
        \sum\limits_{t=0}^{T-1}\sigma_t^2 \overset{\eqref{eq:bound_1_variances_SGDA_prox_clipped}}{\leq}  \frac{\exp(-2\gamma\mu T)R^4}{150\ln\tfrac{4(K+1)}{\beta}},\quad \sum\limits_{t=0}^{T-1}\widetilde\sigma_t^2 \overset{\eqref{eq:bound_4_variances_SGDA_prox_clipped}}{\leq} \frac{\exp(-2\gamma\mu T)R^4}{150\ln\tfrac{4(K+1)}{\beta}}.
    \end{gather*}
     In addition, we also establish (see \eqref{eq:bound_1_SGDA_prox_clipped}, \eqref{eq:bound_4_SGDA_prox_clipped} and our induction assumption)
     \begin{gather*}
        \PP\{E_{T-1}\} \geq 1 - \frac{(T-1)\beta}{K+1},\\
        \PP\{E_{\circledOne}\} \geq 1 - \frac{\beta}{2(K+1)}, \quad \PP\{E_{\circledFour}\} \geq 1 - \frac{\beta}{2(K+1)}.
    \end{gather*}
    where
    \begin{eqnarray}
        E_{\circledOne}&=&  \left\{\text{either} \quad \sum\limits_{t=0}^{T-1}\sigma_t^2 > \frac{\exp(- 2\gamma\mu T) R^4}{150\ln\tfrac{4(K+1)}{\beta}}\quad \text{or}\quad |\circledOne| \leq \frac{1}{5}\exp(-\gamma\mu T) R^2\right\},\notag\\
        E_{\circledFour}&=& \left\{\text{either} \quad \sum\limits_{t=0}^{T-1}\widetilde\sigma_t^2 > \frac{\exp(-2\gamma\mu T) R^4}{150\ln\tfrac{4(K+1)}{\beta}}\quad \text{or}\quad |\circledFour| \leq \frac{1}{5}\exp(-\gamma\mu T) R^2\right\}.\notag
    \end{eqnarray}
    Therefore, probability event $E_{T-1} \cap E_{\circledOne} \cap E_{\circledFour} $ implies
    \begin{eqnarray*}
        R_T^2 &\overset{\eqref{eq:SGDA_prox_clipped_12345_bound}}{\leq}& \exp(-\gamma\mu T) R^2 + \circledOne + \circledTwo + \circledThree + \circledFour + \circledFive\\
        &\leq& 2\exp(-\gamma\mu T) R^2,
    \end{eqnarray*}
    which is equivalent to \eqref{eq:induction_inequality_prox_clipped_SGDA_star} for $t = T$. Moreover,
    \begin{equation}
        \PP\{E_T\} \geq \PP\{E_{T-1} \cap E_{\circledOne} \cap E_{\circledFour} \} = 1 - \PP\{\overline{E}_{T-1} \cup \overline{E}_{\circledOne} \cup \overline{E}_{\circledFour} \} \geq 1 - \frac{T\beta}{K+1}. \notag
    \end{equation}
    In other words, we showed that $\PP\{E_k\} \geq 1 - \nicefrac{k\beta}{(K+1)}$ for all $k = 0,1,\ldots,K+1$. For $k = K+1$ we have that with probability at least $1 - \beta$
    \begin{equation}
        \|x^{K+1} - x^*\|^2 \leq 2\exp(-\gamma\mu (K+1))R^2. \notag
    \end{equation}
    
    Finally, if 
    \begin{eqnarray*}
        \gamma &=& \min\left\{\frac{1}{400 \ell \ln \tfrac{4(K+1)}{\beta}}, \frac{\ln(B_K)}{\mu(K+1)}\right\}, \notag\\
        B_K &=& \max\left\{2, \frac{(K+1)^{\frac{2\alpha-1}{\alpha}}\mu^2R^2}{4\cdot 10^{\frac{1}{\alpha}}120^{\frac{2(\alpha-1)}{\alpha}}\sigma^2\ln^{\frac{2(\alpha-1)}{\alpha}}\left(\frac{4(K+1)}{\beta}\right)\ln^2(B_K)} \right\}  \\
        &=& \cO\left(\max\left\{2, \frac{K^{\frac{2\alpha-1}{\alpha}}\mu^2R^2}{\sigma^2\ln^{\frac{2(\alpha-1)}{\alpha}}\left(\frac{K}{\beta}\right)\ln^2\left(\max\left\{2, \frac{K^{\frac{2\alpha-1}{\alpha}}\mu^2R^2}{\sigma^2\ln^{\frac{2(\alpha-1)}{\alpha}}\left(\frac{K}{\beta}\right)} \right\}\right)} \right\}\right\}
    \end{eqnarray*}
    then with probability at least $1-\beta$
    \begin{eqnarray*}
        \|x^{K+1} - x^*\|^2 &\leq& 2\exp(-\gamma\mu (K+1))R^2\\
        &=& 2R^2\max\left\{\exp\left(-\frac{\mu(K+1)}{400 \ell \ln \tfrac{4(K+1)}{\beta}}\right), \frac{1}{B_K} \right\}\\
        &=& \cO\left(\max\left\{R^2\exp\left(- \frac{\mu K}{\ell \ln \tfrac{K}{\beta}}\right), \frac{\sigma^2\left(\frac{K}{\beta}\right)\ln^2\left(\max\left\{2, \frac{K^{\frac{2\alpha-1}{\alpha}}\mu^2R^2}{\sigma^2\ln^{\frac{2(\alpha-1)}{\alpha}}\left(\frac{K}{\beta}\right)} \right\}\right)}{\ln^{\frac{2(1-\alpha)}{\alpha}}K^{\frac{2\alpha-1}{\alpha}}\mu^2}\right\}\right).
    \end{eqnarray*}
    To get $\|x^{K+1} - x^*\|^2 \leq \varepsilon$ with probability at least $1-\beta$, $K$ should be
    \begin{equation*}
         K = \cO\left(\frac{\ell}{\mu}\ln\left(\frac{R^2}{\varepsilon}\right)\ln\left(\frac{\ell}{\mu \beta}\ln\frac{R^2}{\varepsilon}\right), \left(\frac{\sigma^2}{\mu^2\varepsilon}\right)^{\frac{\alpha}{2\alpha-1}}\ln \left(\frac{1}{\beta} \left(\frac{\sigma^2}{\mu^2\varepsilon}\right)^{\frac{\alpha}{2\alpha-1}}\right)\ln^{\frac{\alpha}{\alpha-1}}\left(B_\varepsilon\right)\right),
    \end{equation*}
    where
    \begin{equation*}
        B_\varepsilon = \max\left\{2, \frac{R^2}{\varepsilon \ln \left(\frac{1}{\beta} \left(\frac{\sigma^2}{\mu^2\varepsilon}\right)^{\frac{\alpha}{2\alpha-1}}\right)}\right\}.
    \end{equation*}
\end{proof}

\clearpage

\section{Missing Proofs for \algname{DProx-clipped-SGD-shift}}\label{appendix:dprox_clipped_SGD_shift}

In this section, we give the complete formulations of our results for \algname{DProx-clipped-SGD-shift} and rigorous proofs. For the readers' convenience, the method's update rule is repeated below:
\begin{gather*}
    x^{k+1} = \proxkPsi \left(x^k - \gamma \tg^k\right),\;\; \text{where}\;\; \tg^k = \frac{1}{n}\sum\limits_{i=1}^n \tg_i^k,\;\; \tg_i^k = h_i^k + \hat\Delta_i^k, \\
    h_i^{k+1} = h_i^k + \nu \hat \Delta_i^k,\quad \hat\Delta_i^k = \clip\left(\nabla f_{\xi_i^k}(x^k) - h_i^k, \lambda_k\right). 
\end{gather*}

\begin{lemma}\label{lem:main_opt_lemma_prox_clipped_SGD_convex}
    Let Assumptions~\ref{as:L_smoothness} and \ref{as:str_cvx} with $\mu = 0$ hold on $Q = B_{3n\sqrt{V}}(x^*)$, where $V \geq \|x^0 - x^*\|^2 + \frac{36864\gamma^2 \ln^2\frac{48n(K+1)}{\beta}}{n^2}\sum\limits_{i=1}^n\|\nabla f_i(x^*)\|^2$, and let stepsize  $\gamma$ satisfy $\gamma \leq \frac{1}{L}$. If $x^{k} \in Q$ for all $k = 0,1,\ldots,K+1$, $K \ge 0$, then after $K$ iterations of \algname{DProx-clipped-SGD-shift} we have
    \begin{eqnarray}
       2\gamma\left(\Phi(\overline{x}^{K+1}) -\Phi(x^*)\right) &\leq& \frac{\|x^0 - x^*\|^2 - \|x^{K+1} - x^*\|^2}{K+1}\notag\\
       &&\quad - \frac{2\gamma}{K+1}\sum\limits_{k=0}^{K} \langle \omega_k, \Hat{x}^k - x^* \rangle  + \frac{2\gamma^2}{K+1}\sum\limits_{k=0}^{K} \|\omega_k\|^2 ,\label{eq:main_opt_lemma_clipped_SGD_star_convex}\\
       \overline{x}^{K+1} &\eqdef& \frac{1}{K+1}\sum\limits_{k=0}^{K} x^{k+1},
       \label{eq:x_avg_prox_clipped_SGD}\\
       \Hat{x}^k &\eqdef& \proxkPsi \left(x^k - \gamma \nabla f(x^k)\right),
       \label{eq:x_hat_prox_clipped_SGD}\\
        \omega_k &\eqdef& \nabla f(x^k) - \tg^k. \label{eq:omega_k_prox_clipped_SGD}
    \end{eqnarray}
\end{lemma}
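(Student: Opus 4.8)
The plan is to establish a per-iteration composite descent inequality and then telescope it and apply Jensen's inequality. The natural starting point is the optimality/prox inequality of Lemma~\ref{lem:prox_lemma}. Writing the update as $x^{k+1} = \proxkPsi(x^k - \gamma\tg^k)$ and applying Lemma~\ref{lem:prox_lemma} with $x = x^k - \gamma\tg^k$ and $y = x^*$ gives $\langle x^{k+1} - x^k + \gamma\tg^k, x^* - x^{k+1}\rangle \geq \gamma(\Psi(x^{k+1}) - \Psi(x^*))$. Expanding the first inner product via the identity $\langle x^{k+1} - x^k, x^* - x^{k+1}\rangle = \tfrac12(\|x^k - x^*\|^2 - \|x^{k+1} - x^*\|^2 - \|x^{k+1} - x^k\|^2)$ isolates the telescoping distance terms.

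Next I would substitute $\tg^k = \nabla f(x^k) - \omega_k$ and convert the deterministic inner product $\langle\nabla f(x^k), x^* - x^{k+1}\rangle$ into function-value differences. Splitting it as $\langle\nabla f(x^k), x^* - x^k\rangle + \langle\nabla f(x^k), x^k - x^{k+1}\rangle$, convexity (Assumption~\ref{as:str_cvx} with $\mu=0$) bounds the first by $f(x^*) - f(x^k)$, while $L$-smoothness (Assumption~\ref{as:L_smoothness}) bounds the second by $f(x^k) - f(x^{k+1}) + \tfrac{L}{2}\|x^{k+1} - x^k\|^2$; both require $x^k, x^{k+1} \in Q$, which holds by hypothesis. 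Combining these with the $\Psi$ terms produces $\Phi(x^{k+1}) - \Phi(x^*)$ on the left, the telescoping distances, a coefficient $-\tfrac{1-\gamma L}{2}$ on $\|x^{k+1}-x^k\|^2$ that is nonpositive since $\gamma \leq \tfrac1L$ and can be dropped, and a residual error term of the form $\gamma\langle\omega_k, x^{k+1} - x^*\rangle$.

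The crux --- and the reason for introducing $\hat x^k$ --- is that this residual involves $x^{k+1}$, which depends on the stochastic vector $\tg^k$ and is therefore statistically entangled with $\omega_k$; this would obstruct the martingale/Bernstein step in the probabilistic part of the analysis. I would decouple it by inserting $\hat x^k$: write $\langle\omega_k, x^{k+1} - x^*\rangle = \langle\omega_k, \hat x^k - x^*\rangle + \langle\omega_k, x^{k+1} - \hat x^k\rangle$, where the first term is measurable given the iterates up to step $k$. The second is controlled by the nonexpansiveness of the proximal operator: since $\hat x^k = \proxkPsi(x^k - \gamma\nabla f(x^k))$ and $x^{k+1} = \proxkPsi(x^k - \gamma\tg^k)$ differ only by $\gamma\omega_k$ in the argument, $\|x^{k+1} - \hat x^k\| \leq \gamma\|\omega_k\|$, so by Cauchy--Schwarz $\langle\omega_k, x^{k+1} - \hat x^k\rangle \leq \gamma\|\omega_k\|^2$. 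This substitution is the main obstacle and the key idea; everything else is routine.

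Finally, summing the per-iteration inequality over $k = 0,\dots,K$ telescopes the distance terms to $\|x^0-x^*\|^2 - \|x^{K+1}-x^*\|^2$, and dividing by $K+1$ and invoking Jensen's inequality on the convex $\Phi$ (so that $\Phi(\bar x^{K+1}) \leq \tfrac{1}{K+1}\sum_{k=0}^K\Phi(x^{k+1})$) yields the claimed bound \eqref{eq:main_opt_lemma_clipped_SGD_star_convex}, up to the sign convention on the $\langle\omega_k, \hat x^k - x^*\rangle$ term and the numerical constant in front of $\sum_k\|\omega_k\|^2$ (both immaterial downstream, since that inner product is later bounded in absolute value and the $\gamma^2$-term only enters an additive error). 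I do not expect the algebra to be hard; the only genuine care is to apply every convexity/smoothness inequality at points guaranteed to lie in $Q$.
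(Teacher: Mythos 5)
Your proposal is correct and follows essentially the same route as the paper: the paper obtains your one-step composite descent inequality by citing Lemma~C.2 of Khaled et al.\ (2020) rather than rederiving it from the prox optimality condition, convexity, and $L$-smoothness, and then performs exactly your decoupling $\langle\omega_k, x^{k+1}-x^*\rangle = \langle\omega_k,\hat x^k - x^*\rangle + \langle\omega_k, x^{k+1}-\hat x^k\rangle$ with prox nonexpansiveness giving the $2\gamma^2\|\omega_k\|^2$ term, followed by telescoping and Jensen. You are also right that the sign in front of $\langle\omega_k,\hat x^k - x^*\rangle$ is a harmless convention (the paper's own intermediate step has the opposite sign from what the definition $\omega_k = \nabla f(x^k) - \tg^k$ yields), since that term is later bounded in absolute value.
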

\begin{proof}
    Using Lemma~C.2 from \citep{khaled2020unified} with $p=x^{k+1}, \ y = x^k - \gamma \tg^k, \ x=x^k$, we derive for all $k = 0,1, \ldots, K$ that
    \begin{eqnarray*}
        2\gamma\left(\Phi(x^{k+1}) - \Phi(x^*)\right) &\leq& \|x^{k} - x^*\|^2 - \|x^{k+1} - x^*\|^2 - 2\gamma \langle \tg^k - \nabla f(x^k), x^{k+1}-x^*\rangle.
    \end{eqnarray*}
    Next, we obtain the following inequality
    \begin{eqnarray*}
        -2\gamma\langle \tg^k - \nabla f(x^k), x^{k+1} - x^*\rangle &=& -2\gamma \langle \tg^k - \nabla f(x^k), \Hat{x}^k - x^*\rangle +2\gamma \langle \tg^k - \nabla f(x^k), \Hat{x}^k - x^{k+1}\rangle\\
        &\overset{\eqref{eq:omega_k_prox_clipped_SGD}}{\leq}& - 2\gamma \langle \omega_k, \Hat{x}^k - x^*\rangle + 2\gamma \|\tg^k - \nabla f(x^k)\|\cdot\|\Hat{x}^k - x^{k+1}\|\\
        &\overset{\eqref{eq:x_hat_prox_clipped_SGD}}{=}& -2\gamma \langle \omega_k, \Hat{x}^k - x^*\rangle + 2\gamma \|\tg^k - \nabla f(x^k)\|\\
        && \cdot\|\proxkPsi \left(x^k - \gamma \nabla f(x^k)\right) - \proxkPsi \left(x^k - \gamma \tg^k\right)\|\\
        &\overset{\eqref{eq:omega_k_prox_clipped_SGD}}{\leq}& -2\gamma \langle \omega_k, \Hat{x}^k - x^*\rangle + 2 \gamma^2 \|\omega_k\|^2.
    \end{eqnarray*}
    Putting all together we get 
    \begin{eqnarray*}
        2\gamma\left(\Phi(x^{k+1}) - \Phi(x^*)\right) &\leq& \|x^{k} - x^*\|^2 - \|x^{k+1} - x^*\|^2 - 2\gamma \langle \omega_k, \Hat{x}^k - x^*\rangle + 2 \gamma^2 \|\omega_k\|^2.
    \end{eqnarray*}
    Summing up the above inequalities for $k = 0,1,\ldots, K$, we get
    \begin{eqnarray*}
        \frac{2\gamma}{K+1}\sum\limits_{k=0}^{K}\left(\Phi(x^{k+1}) - \Phi(x^*)\right) &\leq& \frac{1}{K+1}\sum\limits_{k=0}^{K}\left(\|x^k - x^*\|^2 - \|x^{k+1} - x^*\|^2\right)\\
        &&\quad - \frac{2\gamma}{K+1}\sum\limits_{k=0}^{K} \langle \omega_k, \Hat{x}^k - x^* \rangle + \frac{2\gamma^2}{K+1}\sum\limits_{k=0}^{K} \|\omega_k\|^2\\
        &=& \frac{\|x^0 - x^*\|^2 - \|x^{K+1} - x^*\|^2}{K+1} - \frac{2\gamma}{K+1}\sum\limits_{k=0}^{K} \langle \omega_k, \Hat{x}^k - x^* \rangle \\
        &&\quad + \frac{2\gamma^2}{K+1}\sum\limits_{k=0}^{K} \|\omega_k\|^2.  
    \end{eqnarray*}
    Finally, we use the definition of $\overline{x}^{K}$ and Jensen's inequality and get the result.
\end{proof}

\begin{theorem}\label{thm:prox_clipped_SGD_convex_case}
    Let Assumptions~\ref{as:L_smoothness} and \ref{as:str_cvx} with $\mu = 0$ hold on $Q = B_{3n\sqrt{V}}(x^*)$, where $V \geq \|x^0 - x^*\|^2 + \frac{36864\gamma^2 \ln^2\frac{48n(K+1)}{\beta}}{n^2}\sum\limits_{i=1}^n\|\nabla f_i(x^*)\|^2$, and $\nu = 0$, $h_1^0 = \ldots = h_n^0 = 0$,
    \begin{gather}
        \gamma \leq \min\left\{\frac{1}{360L \ln \frac{48n(K+1)}{\beta}}, \revision{\frac{R\sqrt{n}}{192 A \zeta_*},} \frac{\sqrt{V}n^{\frac{\alpha-1}{\alpha}}}{27^{\frac{1}{\alpha}}\cdot 48 \sigma  K^{\frac{1}{\alpha}}\left(\ln \frac{48n(K+1)}{\beta}\right)^{\frac{\alpha-1}{\alpha}}}\right\},\label{eq:prox_clipped_SGD_step_size_cvx}\\
        \lambda_{k} = \lambda = \frac{n\sqrt{V}}{48 \gamma\ln\frac{48n(K+1)}{\beta}}, \label{eq:prox_clipped_SGD_clipping_level_cvx}
    \end{gather}
    for some \revision{$\zeta_* \geq \sqrt{\frac{1}{n}\sum_{i=1}^n \|\nabla f_i(x^*)\|^2}$,} $K+1 > 0$ and $\beta \in (0,1]$. Then, after $K+1$ iterations of \algname{DProx-clipped-SGD-shift} the iterates with probability at least $1 - \beta$ satisfy
    \begin{equation}
        \Phi(\overline{x}^{K+1}) - \Phi(x^*) \leq \frac{V}{\gamma(K+1)} \quad \text{and} \quad  \{x^k\}_{k=0}^{K+1} \subseteq B_{3n\sqrt{V}}(x^*). \label{eq:prox_clipped_SGD_convex_case}
    \end{equation}
    In particular, we have \revision{$V \leq 2R^2$}, and when $\gamma$ equals the minimum from \eqref{eq:prox_clipped_SGD_step_size_cvx}, then the iterates produced by \algname{DProx-clipped-SGD-shift} after $K+1$ iterations with probability at least $1-\beta$ satisfy
    \begin{equation}
        \Phi(\overline{x}^{K+1}) - \Phi(x^*) = \cO\left(\max\left\{\frac{L\revision{R^2} \ln \frac{nK}{\beta}}{K}, \revision{\frac{R\zeta_* \ln \frac{nK}{\beta}}{\sqrt{n}K}}, \frac{\sigma \revision{R}\ln^{\frac{\alpha-1}{\alpha}}\frac{nK}{\beta}}{n^{\frac{\alpha-1}{\alpha}}K^{\frac{\alpha-1}{\alpha}}}\right\}\right), \label{eq:prox_clipped_SGD_convex_case_2}
    \end{equation}
    meaning that to achieve $\Phi(\overline{x}^{K+1}) - \Phi(x^*) \leq \varepsilon$ with probability at least $1 - \beta$ \algname{DProx-clipped-SGD-shift} requires
    \begin{equation}
        K = \cO\left(\max\left\{\frac{L\revision{R^2}}{\varepsilon}\revision{\ln\frac{nL\revision{R^2}}{\beta\varepsilon}}, \revision{\frac{R\zeta_*}{\sqrt{n}\varepsilon}}\revision{\ln\frac{\sqrt{n}R\zeta_*}{\beta\varepsilon}}, \left(\frac{\sigma \sqrt{V}}{\varepsilon n^{\frac{\alpha-1}{\alpha}}}\right)^{\frac{\alpha}{\alpha-1}} \ln \left(\frac{1}{\beta} \left(\frac{\sigma \sqrt{V}}{\varepsilon}\right)^{\frac{\alpha}{\alpha-1}}\right) \right\}\right) \label{eq:prox_clipped_SGD_convex_case_complexity}
    \end{equation}
    \revision{iterations/oracle calls.}
\end{theorem}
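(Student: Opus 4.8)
The plan is to run an induction over the iteration counter that simultaneously keeps the iterates in a ball around $x^*$ and controls the accumulated stochastic error, in the spirit of the proof of Theorem~\ref{thm:main_result_prox_clipped_SGDA_star} but specialized to the convex ($\mu=0$), shift-free ($\nu=0$, $h_i^0=0$) regime. Since $\nu=0$, we have $h_i^k\equiv 0$ and $\tg_i^k=\clip(\nabla f_{\xi_i^k}(x^k),\lambda)$, so the entire shift machinery collapses and the relevant Lyapunov quantity is just $\|x^k-x^*\|^2$. First I would discard the nonnegative term $2\gamma(\Phi(x^{k+1})-\Phi(x^*))$ in the single-step version of Lemma~\ref{lem:main_opt_lemma_prox_clipped_SGD_convex} to obtain the telescoping recursion
\begin{equation*}
\|x^{K+1}-x^*\|^2 \le \|x^0-x^*\|^2 - 2\gamma\sum_{k=0}^{K}\langle \omega_k,\hat{x}^k-x^*\rangle + 2\gamma^2\sum_{k=0}^{K}\|\omega_k\|^2,
\end{equation*}
with $\omega_k=\nabla f(x^k)-\tg^k$ and $\hat{x}^k=\prox_{\gamma\Psi}(x^k-\gamma\nabla f(x^k))$. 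Defining the event $E_k=\{x^0,\dots,x^k\in B_{3n\sqrt V}(x^*)\}$, I would prove $\PP\{E_k\}\ge 1-\tfrac{k\beta}{K+1}$ by induction, the base case holding because $\|x^0-x^*\|^2\le V$, so $x^0\in B_{\sqrt V}(x^*)\subseteq B_{3n\sqrt V}(x^*)$.

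For the inductive step, under $E_{T-1}$ every $x^t$ lies in the ball, so $L$-smoothness gives $\|\nabla f_i(x^t)\|\le\|\nabla f_i(x^*)\|+L\|x^t-x^*\|\le \sqrt n\,\zeta_*+3nL\sqrt V$. The stepsize caps $\gamma\le\tfrac{1}{360LA}$ and $\gamma\le\tfrac{R\sqrt n}{192A\zeta_*}$ together with the clipping level $\lambda=\tfrac{n\sqrt V}{48\gamma A}$ are chosen precisely so that this is $\le\lambda/2$ (the same two constraints, with $192^2=36864$, also force $V\le 2R^2$). This is exactly the hypothesis required by Lemma~\ref{lem:bias_and_variance_clip} and its distributed version Lemma~\ref{lem:bias_and_variance_clip_distributed}, from which I extract, for $\omega_t^u=\EE_{\xi^t}[\tg^t]-\tg^t$ and $\omega_t^b=\nabla f(x^t)-\EE_{\xi^t}[\tg^t]$, the bounds $\|\omega_t^u\|\le2\lambda$, $\|\omega_t^b\|\le 2^\alpha\sigma^\alpha/\lambda^{\alpha-1}$, and the key variance estimate $\EE_{\xi^t}\|\omega_t^u\|^2\le 18\lambda^{2-\alpha}\sigma^\alpha/n$. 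The $1/n$ here is the source of the linear speed-up; it is bought by enlarging $\lambda$ by a factor $n$, which by the discussion after Lemma~\ref{lem:bias_and_variance_clip_distributed} keeps the bias term under control at the same time.

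Substituting $\omega_t=\omega_t^u+\omega_t^b$ into the recursion, I would split the error into five pieces mirroring $\circledOne$–$\circledFive$ of the \algname{SGDA-star} proof: the martingale term $-2\gamma\sum\langle\omega_t^u,\hat{x}^t-x^*\rangle$, the bias term $-2\gamma\sum\langle\omega_t^b,\hat{x}^t-x^*\rangle$, the mean-variance term $2\gamma^2\sum\EE_{\xi^t}\|\omega_t^u\|^2$, the variance-fluctuation martingale $2\gamma^2\sum(\|\omega_t^u\|^2-\EE_{\xi^t}\|\omega_t^u\|^2)$, and the squared-bias term $2\gamma^2\sum\|\omega_t^b\|^2$. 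The two martingale pieces are handled by Bernstein's inequality (Lemma~\ref{lem:Bernstein_ineq}): the conditional means vanish since $\hat{x}^t$ is measurable with respect to the past; almost-sure boundedness follows from $\|\hat{x}^t-x^*\|\le(1+\gamma L)\,3n\sqrt V$ (nonexpansiveness of $\prox$) and $\|\omega_t^u\|\le 2\lambda$; and the conditional-variance sums are controlled by the $18\lambda^{2-\alpha}\sigma^\alpha/n$ bound. Tuning the Bernstein parameters so each failure probability is $\le\tfrac{\beta}{\mathrm{const}\cdot(K+1)}$ and each good-event bound is $\le\tfrac15 V$, while bounding the three deterministic pieces by $\tfrac15 V$ using the third stepsize cap $\gamma\lesssim\tfrac{\sqrt V\,n^{(\alpha-1)/\alpha}}{\sigma K^{1/\alpha}A^{(\alpha-1)/\alpha}}$, I would conclude on $E_{T-1}\cap E_{\circledOne}\cap E_{\circledFour}$ that $\|x^T-x^*\|^2\le 2V\le 9n^2V$, closing the induction via a union bound. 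Running Lemma~\ref{lem:main_opt_lemma_prox_clipped_SGD_convex} once more on $E_{K+1}$ (now keeping the $\Phi$-gap term) and reusing the same five bounds yields $\Phi(\overline{x}^{K+1})-\Phi(x^*)\le\tfrac{V}{\gamma(K+1)}$; plugging in the minimal admissible $\gamma$ gives \eqref{eq:prox_clipped_SGD_convex_case_2}–\eqref{eq:prox_clipped_SGD_convex_case_complexity}.

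The main obstacle I anticipate is the bookkeeping that makes the \emph{constant} (non-decaying) clipping level $\lambda$ simultaneously (i) large enough to satisfy $\|\nabla f_i(x^t)\|\le\lambda/2$ for \emph{nonzero} $\nabla f_i(x^*)$—this is what forces the $\zeta_*$-dependent stepsize cap and produces the middle term $\tfrac{R\zeta_*}{\sqrt n\,\varepsilon}$ in the complexity—and (ii) small enough that the bias sums $\circledTwo,\circledFive$, which scale like $K\lambda^{-(\alpha-1)}$, stay below $\tfrac15 V$. Because $\lambda$ does not decay (unlike the quasi-strongly convex case), these competing requirements are genuinely tighter and pin down the numerical constants $48,192,360$ and the precise shape of \eqref{eq:prox_clipped_SGD_step_size_cvx}. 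The distributed aspect itself is comparatively mild: independence of the clients lets the aggregate $\omega_t^u=\tfrac1n\sum_i\omega_{i,t}^u$ be treated as a single martingale increment per step, with the $1/n$ variance gain delivered cleanly by Lemma~\ref{lem:bias_and_variance_clip_distributed}, so no inner induction over clients is needed once $\nu=0$.
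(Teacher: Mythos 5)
Your overall architecture (induction on the event that the iterates stay in the ball and the accumulated error stays below $2V$, per-iterate decomposition of $\omega_t$ into unbiased and biased parts, Bernstein for the martingale pieces, deterministic bounds for the bias pieces) matches the paper's proof. The genuine gap is in your last paragraph, where you assert that the aggregate $\omega_t^u=\tfrac1n\sum_i\omega_{i,t}^u$ can be ``treated as a single martingale increment per step'' and that ``no inner induction over clients is needed.'' This shortcut fails for $n>1$. Bernstein's inequality (Lemma~\ref{lem:Bernstein_ineq}) needs an \emph{almost-sure} bound $c$ on each increment, and the only almost-sure bound available for the aggregate is $\|\omega_t^u\|\le 2\lambda$ (the triangle inequality gives no $1/\sqrt n$ gain pathwise). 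With the clipping level \eqref{eq:prox_clipped_SGD_clipping_level_cvx}, which is deliberately inflated by a factor of $n$, the per-step increment of the linear term is then $|2\gamma\langle\omega_t^u,\eta_t\rangle|\le 8\gamma\lambda\sqrt V=\Theta(nV/A)$ and that of the quadratic fluctuation term is $\Theta(\gamma^2\lambda^2)=\Theta(n^2V/A^2)$; plugging these into the $\nicefrac{2cb}{3}$ denominator of Bernstein with the target deviation $b=\Theta(V)$ gives a failure probability of order $\exp(-\Theta(A/n))$ rather than $\exp(-\Theta(A))$, so the union bound over $K+1$ steps no longer closes unless $A\gtrsim n$ (resp.\ $A\gtrsim n^2$). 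In other words, the $1/n$ gain from Lemma~\ref{lem:bias_and_variance_clip_distributed} shows up in the conditional variance $G$ but not in the increment bound $c$, and Bernstein needs both.

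The paper's proof resolves exactly this by refining the filtration to reveal clients one at a time within each step: the sums are decomposed as martingale difference sequences indexed by $(l,i)$ with increments $\tfrac{2\gamma}{n}\langle\eta_l,\omega^u_{i,l}\rangle$, whose almost-sure bound is $n$ times smaller while the total conditional variance is unchanged. For the quadratic term this decomposition produces the cross terms $\tfrac{8\gamma^2}{n^2}\sum_l\sum_{j=2}^n\langle\sum_{i=1}^{j-1}\omega^u_{i,l},\omega^u_{j,l}\rangle$ (the sum $\circledSix$ in the paper), and bounding \emph{their} increments requires knowing that the partial sums $\tfrac{\gamma}{n}\sum_{i=1}^{j-1}\omega^u_{i,l}$ are at most $\tfrac{\sqrt V}{2}$ --- which is precisely what the inner induction over clients $j=2,\dots,n$ establishes, and why the high-probability event $E_k$ carries the extra condition on these partial sums. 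You would need to add both the per-client martingale decomposition and this inner induction to make your argument go through; without them the constants in \eqref{eq:prox_clipped_SGD_step_size_cvx} cannot be met for general $n$. (For $n=1$ your sketch collapses to the paper's argument and is fine; the remaining discrepancies --- five error pieces versus six, bounding $\|\nabla f_i(x^t)\|$ via the $3n\sqrt V$ radius rather than the tighter $\sqrt{2V}$ available on the event --- only affect numerical constants.)
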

\begin{proof}
 The key idea behind the proof is similar to the one used in \citep{gorbunov2022clipped, sadiev2023high}: we prove by induction that the iterates do not leave some ball and the sums decrease as $\nicefrac{1}{K+1}$. To formulate the statement rigorously, we introduce probability event $E_k$ for each $k = 0,1,\ldots, K+1$ as follows: inequalities
    \begin{eqnarray}
        \underbrace{\|x^0 - x^*\|^2 - 2\gamma\sum\limits_{l=0}^{t-1} \langle \omega_l, \Hat{x}^l - x^* \rangle  + 2\gamma^2\sum\limits_{l=0}^{t-1} \|\omega_l\|^2}_{A_t} \leq 2V, \label{eq:prox_clipped_SGD_convex_induction_inequality_1}
    \end{eqnarray}
    \begin{gather}
        \left\|\frac{\gamma}{n}\sum\limits_{i=1}^{r-1}\omega_{i,t-1}^u\right\| \leq \frac{\sqrt{V}}{2}
    \end{gather}
hold for $t = 0,1,\ldots,k$ and $r = 1, 2, \ldots, n$ simultaneously, where
    \begin{gather}
        \omega_l = \omega_l^u + \omega_l^b, \label{eq:prox_clipped_SGD_technical_4_full_theta_omega}\\
        \omega_l^u \eqdef \frac{1}{n}\sum\limits_{i=1}^n \omega_{i,l}^u,\quad \omega_l^b \eqdef \frac{1}{n}\sum\limits_{i=1}^n \omega_{i,l}^b, \\
        \omega_{i,l}^u \eqdef \EE_{\xi_{i}^l}\left[\tg_{i}^l\right] - \tg_{i}^l,\quad \omega_{i,l}^b \eqdef \nabla f(x^l) - \EE_{\xi_{i}^l}\left[\tg_{i}^l\right] \quad \forall\; i\in [n]. \label{eq:prox_clipped_SGD_convex_theta_u_b}
    \end{gather}
    We will prove by induction that $\PP\{E_k\} \geq  1 - \nicefrac{k\beta}{(K+1)}$ for all $k = 0,1,\ldots,K+1$. The base of induction follows immediately: $\|x^0 - x^*\|^2 \leq V < 2V$ and for $k = 0$ we have  $\|\frac{\gamma}{n}\sum_{i=1}^{r-1} \omega_{i,k-1}^u\| = 0$ since $\omega_{i,-1}^u = 0$. Next, we assume that the statement holds for $k = T-1 \leq K$, i.e., $\PP\{E_{T-1}\} \geq  1 - \nicefrac{(T-1)\beta}{(K+1)}$. Let us show that it also holds for $k = T$, i.e., $\PP\{E_{T}\} \geq  1 - \nicefrac{T\beta}{(K+1)}$.
    
    To proceed, we need to show that $E_{T-1}$ implies $\|x^t - x^*\| \leq 3n\sqrt{V}$ for all $t = 0,1,\ldots,T$. First, for $t = 0,1,\ldots,T-1$ probability event $E_{T-1}$ implies (in view of, $\Phi(\overline{x}^t) - \Phi(x^*) \geq 0$)
    \begin{equation}
        \|x^t - x^*\|^2 \overset{\eqref{eq:main_opt_lemma_clipped_SGD_star_convex}}{\leq} A_t \overset{\eqref{eq:prox_clipped_SGD_convex_induction_inequality_1}}{\leq} 2V. \label{eq:jsknfdbvufbvudfu}
    \end{equation}
    Next, by definition of $V$ we have
    \begin{eqnarray}
        \|\nabla f(x^*)\| &=& \sqrt{\|\nabla f(x^*)\|^2} \leq \sqrt{\sum\limits_{i=1}^n \|\nabla f_i(x^*)\|^2} \leq \frac{n\sqrt{V}}{192\gamma\ln\frac{48n(K+1)}{\beta}}.  \label{eq:msodfnvidvidbsi}
    \end{eqnarray}
    Then, for $t = T$ we have that $E_{T-1}$ implies
    \begin{eqnarray*}
        \|x^T - x^*\| &=& \|\proxkPsi (x^k - \gamma \tg^k) - \proxkPsi (x^* - \gamma \nabla f(x^*))\|\\
        &\leq& \|x^k - \gamma \tg^k - x^* + \gamma \nabla f(x^*)\| \leq \|x^k - x^*\| + \gamma\|\tg^k\| + \gamma\|\nabla f(x^*)\|\\
        &\overset{\eqref{eq:jsknfdbvufbvudfu},\eqref{eq:msodfnvidvidbsi}}{\leq}& \left(\sqrt{2} + \frac{n}{192\ln\frac{48n(K+1)}{\beta}}\right)\sqrt{V} + \gamma\lambda \overset{\eqref{eq:prox_clipped_SGD_clipping_level_cvx}}{\leq} 3n\sqrt{V}.
    \end{eqnarray*}
    
    % We will use the induction argument as well. The base is already proven. Next, we assume that $\|x^t - x^*\| \leq \sqrt{V}$ for all $t = 0,1,\ldots,t'$ for some $t' < T$. For $t = 0,1,\ldots, t'$
    % \begin{eqnarray*}
    %     \|\Hat{x}^{t} - x^*\| &=& \|\proxkPsi \left(x^{t} - \gamma \nabla f(x^{t})\right) - \proxkPsi \left(x^* - \gamma \nabla f(x^*)\right)\|\\
    %     &\leq& \|x^{t} - x^* - \gamma(\nabla f(x^{t}) - \nabla f(x^*))\| \leq \|x^{t} - x^*\|\\
    %     && + \gamma \|\nabla f(x^{t}) - \nabla f(x^*)\|\\
    %     &\overset{\eqref{eq:L_smoothness}}{\leq}& (1+L\gamma)\|x^{t} - x^*\| \overset{\eqref{eq:prox_clipped_SGD_step_size_cvx}}{\leq} 2\|x^{t} - x^*\|\leq 2\sqrt{V}.
    % \end{eqnarray*}
    
    This means that $E_{T-1}$ implies $x^t \in B_{3n\sqrt{V}}(x^*)$ for $t = 0,1,\ldots, T$ and we can apply Lemma~\ref{lem:main_opt_lemma_prox_clipped_SGD_convex}: $E_{T-1}$ implies
     \begin{eqnarray}
        2\gamma \left(\Phi(\overline{x}^{T}) -\Phi(x^*)\right) &\leq& \frac{\|x^0 - x^*\|^2 - \|x^{T} - x^*\|^2}{T}\notag\\
       &&\quad - \frac{2\gamma}{T}\sum\limits_{l=0}^{T-1} \langle \omega_l, \Hat{x}^l - x^* \rangle  + \frac{2\gamma^2}{T}\sum\limits_{l=0}^{T-1} \|\omega_l\|^2 \notag\\
       &\leq& \frac{A_T}{T}.\label{eq:prox_clipped_SGD_convex_technical_1}
    \end{eqnarray}
    % that gives
    %  \begin{eqnarray}
    %     \gamma t\left(\Phi(\overline{x}^{t}) - \Phi(x^*)\right) &\leq& \frac{1}{2}\left(\|x^0 - x^*\|^2 - 2\gamma\sum\limits_{l=0}^{t-1} \langle \omega_l, \Hat{x}^l - x^* \rangle  + 2\gamma^2\sum\limits_{l=0}^{t-1} \|\omega_l\|^2\right)  \overset{\eqref{eq:prox_clipped_SGD_convex_technical_1}}{\leq} V. \label{eq:prox_clipped_SGD_convex_technical_1_1}
    % \end{eqnarray}
    % That is, we showed that $E_{T-1}$ implies $\|x^t - x^*\| \leq \sqrt{V}$, $\|\Hat{x}^t - x^*\| \leq 2\sqrt{V}$ and
    % \begin{equation}
    %     A_t \leq 2V \label{eq:prox_clipped_SGD_technical_1_5}
    % \end{equation}
    % for all $t = 0, 1, \ldots, T$.
    Before we proceed, we introduce a new notation:
    \begin{equation}
        \eta_t = \begin{cases} \Hat{x}^t - x^*,& \text{if } \|\Hat{x}^t - x^*\| \leq 2\sqrt{V},\\ 0,&\text{otherwise}, \end{cases} \notag
    \end{equation}
    for all $t = 0, 1, \ldots, T-1$. Random vectors $\{\eta_t\}_{t=0}^T$ are bounded almost surely:
    \begin{equation}
        \|\eta_t\| \leq 2\sqrt{V}. \label{eq:prox_clipped_SGD_convex_technical_6}
    \end{equation}
    for all $t = 0, 1, \ldots, T-1$. In addition, $E_{T-1}$ implies for all $t = 0, 1, \ldots, T-1$ that
    \begin{eqnarray*}
        \|\Hat{x}^{t} - x^*\| &=& \|\proxkPsi \left(x^{t} - \gamma \nabla f(x^{t})\right) - \proxkPsi \left(x^* - \gamma \nabla f(x^*)\right)\|\\
        &\leq& \|x^{t} - x^* - \gamma(\nabla f(x^{t}) - \nabla f(x^*))\| \\
        &\leq& \|x^{t} - x^*\| + \gamma \|\nabla f(x^{t}) - \nabla f(x^*)\|\\
        &\overset{\eqref{eq:L_smoothness}}{\leq}& (1+L\gamma)\|x^{t} - x^*\| \overset{\eqref{eq:prox_clipped_SGD_step_size_cvx}}{\leq} \frac{361}{360}\|x^{t} - x^*\| \overset{\eqref{eq:jsknfdbvufbvudfu}}{\leq} 2\sqrt{V}.
    \end{eqnarray*}
    meaning that $\eta_t = \Hat{x}^t - x^*$ follows from $E_{T-1}$ for all $t = 0, 1, \ldots, T-1$. Thus, $E_{T-1}$ implies
    \begin{eqnarray}
        A_T &\overset{\eqref{eq:prox_clipped_SGD_convex_induction_inequality_1}}{=}& \|x^0 - x^*\|^2 - 2\gamma\sum\limits_{l=0}^{T-1} \langle \omega_l, \Hat{x}^l - x^* \rangle  + 2\gamma^2\sum\limits_{l=0}^{T-1} \|\omega_l\|^2\notag\\
        &\leq& V - 2\gamma\sum\limits_{l=0}^{T-1} \langle \omega_l, \eta_l \rangle  + 2\gamma^2\sum\limits_{l=0}^{T-1} \|\omega_l\|^2.\label{eq:prox_clipped_SGD_convex_technical_2}
    \end{eqnarray}
    Using the notation from \eqref{eq:prox_clipped_SGD_technical_4_full_theta_omega}-\eqref{eq:prox_clipped_SGD_convex_theta_u_b}, we can rewrite $\|\omega_l\|^2$ as
    \begin{eqnarray}
        \|\omega_l\|^2 &\leq&  2\|\omega_l^u\|^2 + 2\|\omega_l^b\|^2 = \frac{2}{n^2}\left\|\sum\limits_{i=1}^n \omega_{i,l}^u\right\|^2 + 2\|\omega_l^b\|^2\notag \\
        &=& \frac{2}{n^2}\sum\limits_{i=1}^n\|\omega_{i,l}^u\|^2 + \frac{4}{n^2}\sum\limits_{j=2}^n\left\langle \sum\limits_{i=1}^{j-1} \omega_{i,l}^u, \omega_{j,l}^u \right\rangle + 2\|\omega_l^b\|^2 .\label{eq:omega_bound_x_t_prox_clipped_SGD_convex}
    \end{eqnarray}
    Putting all together, we obtain that $E_{T-1}$ implies
    \begin{eqnarray}
        A_T 
        &\leq& V \underbrace{-\frac{2\gamma}{n}\sum\limits_{l=0}^{T-1}\sum\limits_{i=1}^n\langle \omega_{i,l}^u, \eta_l\rangle}_{\circledOne}  \underbrace{-2\gamma\sum\limits_{l=0}^{T-1}\langle \omega_l^b, \eta_l\rangle}_{\circledTwo} + \underbrace{\frac{4\gamma^2}{n^2}\sum\limits_{l=0}^{T-1}\sum\limits_{i=1}^n\left(\left\|\omega_{i,l}^u\right\|^2 - \EE_{\xi^l_i}\left[\left\|\omega_{i,l}^u\right\|^2\right]\right)}_{\circledThree}\notag\\
        &&\quad + \underbrace{\frac{4\gamma^2}{n^2}\sum\limits_{l=0}^{T-1}\sum\limits_{i=1}^n\EE_{\xi^l_i}\left[\left\|\omega_{i,l}^u\right\|^2\right]}_{\circledFour} + \underbrace{4\gamma^2\sum\limits_{l=0}^{T-1}\|\omega_{l}^b\|^2}_{\circledFive} + \underbrace{\frac{8\gamma^2}{n^2}\sum\limits_{l=0}^{T-1}\sum\limits_{j=2}^n\left\langle \sum\limits_{i=1}^{j-1}\omega_{i,l}^u, \omega_{j,l}^u\right\rangle}_{\circledSix}. \label{eq:prox_clipped_SGD_convex_technical_7}
    \end{eqnarray}
    To finish the proof, it remains to estimate $\circledOne, \circledTwo, \circledThree, \circledFour, \circledFive, \circledSix$ with high probability. More precisely, the goal is to prove that $\circledOne + \circledTwo + \circledThree + \circledFour + \circledFive + \circledSix \leq V$ with high probability. Before we proceed, we need to derive several useful inequalities related to $ \omega_{i,l}^u, \omega_{l}^b$. First of all, we have
    \begin{equation}
       \|\omega_{i,l}^u\| \leq 2\lambda \label{eq:prox_clipped_SGD_convex_norm_theta_u_bound}
    \end{equation}
    by definition of the clipping operator. Next, probability event $E_{T-1}$ implies 
    \begin{eqnarray}
        \|\nabla f_i(x^{t})\| &\leq& \|\nabla f_i(x^t) - \nabla f_i(x^*)\| + \|\nabla f_i(x^*)\|\overset{\eqref{eq:L_smoothness}}{\leq} L\|x^t - x^*\| + \sqrt{\sum\limits_{i=1}^n\|\nabla f_i(x^*)\|^2}\notag\\ 
        &\leq& \sqrt{2}L\sqrt{V} + \frac{n\sqrt{V}}{192\gamma\ln\frac{48n(K+1)}{\beta}} \leq \frac{n\sqrt{V}}{96\gamma\ln\frac{48n(K+1)}{\beta}} \leq \frac{\lambda}{2}.\label{eq:prox_clipped_SGD_convex_technical_4}
    \end{eqnarray}for $t = 0,1,\ldots,T-1$ and $i\in [n]$. Therefore, Lemma~\ref{lem:bias_and_variance_clip} and $E_{T-1}$ imply
    \begin{gather}
        \left\|\omega_l^b\right\| \leq \frac{1}{n}\sum\limits_{i=1}^n \|\omega_{i,l}^b\| \leq \frac{2^\alpha\sigma^\alpha}{\lambda^{\alpha-1}}, \label{eq:prox_clipped_SGD_convex_norm_theta_b_bound}\\
        \EE_{\xi_{i}^l}\left[\left\|\omega_{i,l}^u\right\|^2\right] \leq 18 \lambda^{2-\alpha}\sigma^\alpha, \label{eq:prox_clipped_SGD_convex_second_moment_theta_u_bound}
    \end{gather}
    for all $l = 0,1, \ldots, T-1$ and $i\in [n]$.

\textbf{Upper bound for $\circledOne$.} 
To estimate this sum, we will use Bernstein's inequality. The summands have conditional expectations equal to zero:
    \begin{equation}
        \EE_{\xi^l_i}\left[-\frac{2\gamma}{n}\langle\omega_{i,l}^u, \eta_l\rangle\right] = -\frac{2\gamma}{n} \left\langle \eta_l, \EE_{\xi_{i}^l}[\omega_{i,l}^u] \right\rangle =  0. \notag
    \end{equation}
    Moreover, for all $l = 0,\ldots, T-1$ random vectors $\{\omega_{i,l}^u\}_{i=1}^n$ are independent. Thus, sequence $\left\{-\frac{2\gamma}{n} \langle \eta_l, \omega_{i,l}^u \rangle\right\}_{l,i = 0,1}^{T-1,n}$ is a martingale difference sequence. Next, the summands are bounded:
    \begin{equation}
        \left|\frac{2\gamma}{n}\left\la \omega_{i,l}^u, \eta_l\right\ra\right| \leq \frac{2\gamma}{n} \|\omega_{i,l}^u\| \cdot \|\eta_l\| \overset{\eqref{eq:prox_clipped_SGD_convex_technical_6},\eqref{eq:prox_clipped_SGD_convex_norm_theta_u_bound}}{\leq} \frac{8\gamma \lambda \sqrt{V}}{n}\overset{\eqref{eq:prox_clipped_SGD_clipping_level_cvx}}{=} \frac{V}{6\ln\frac{48n(K+1)}{\beta}} \eqdef c. \label{eq:prox_clipped_SGD_convex_technical_8} 
    \end{equation}
    Finally, conditional variances $\sigma_{i,l}^2 \eqdef \EE_{\xi^l_i}[\frac{4\gamma^2}{n^2}\langle\omega_{i,l}^u, \eta_l\rangle^2]$ of the summands are bounded:
    \begin{equation}
        \sigma_{i,t}^2 \leq \EE_{\xi^t_i}\left[\frac{4\gamma^2}{n^2}\|\omega_{i,t}^u\|^2\cdot \|\eta_t\|^2\right] \overset{\eqref{eq:prox_clipped_SGD_convex_technical_6}}{\leq} \frac{16\gamma^2 V}{n^2} \EE_{\xi^t_i}\left[\|\omega_{i,t}^u\|^2\right]. \label{eq:prox_clipped_SGD_convex_technical_9}
    \end{equation}
    Applying Bernstein's inequality (Lemma~\ref{lem:Bernstein_ineq}) with $X_{i,l} = -\frac{2\gamma}{n} \langle \eta_l, \omega_{i,l}^u \rangle$, constant $c$ defined in \eqref{eq:prox_clipped_SGD_convex_technical_8}, $b = \frac{V}{6}$, $G = \frac{V^2}{216\ln\frac{48n(K+1)}{\beta}}$, we get
    \begin{equation*}
        \PP\left\{|\circledOne| > \frac{V}{6}\quad \text{and}\quad \sum\limits_{l=0}^{T-1}\sum\limits_{i=1}^n \sigma_{i,l}^2 \leq \frac{V^2}{216\ln\frac{48n(K+1)}{\beta}}\right\} \leq 2\exp\left(- \frac{b^2}{2G + \nicefrac{2cb}{24n}}\right) = \frac{\beta}{24n(K+1)}.
    \end{equation*}
    The above is equivalent to 
    \begin{equation}
        \PP\left\{ E_{\circledOne} \right\} \geq 1 - \frac{\beta}{24n(K+1)},\quad \text{for}\quad E_{\circledOne} = \left\{ \text{either} \quad  \sum\limits_{l=0}^{T-1}\sum\limits_{i=1}^n \sigma_{i,l}^2 > \frac{V^2}{216\ln\frac{48n(K+1)}{\beta}} \quad \text{or}\quad |\circledOne| \leq \frac{V}{6}\right\}. \label{eq:prox_clipped_SGD_convex_sum_1_upper_bound}
    \end{equation}
    Moreover, $E_{T-1}$ implies
    \begin{eqnarray}
        \sum\limits_{l=0}^{T-1}\sum\limits_{i=1}^n \sigma_{i,l}^2 &\overset{\eqref{eq:prox_clipped_SGD_convex_technical_9}}{\leq}& \frac{16\gamma^2 V}{n^2} \sum\limits_{l=0}^{T-1} \sum\limits_{i=1}^n \EE_{\xi^l_i}\left[\|\omega_{i,l}^u\|^2\right] \overset{\eqref{eq:prox_clipped_SGD_convex_second_moment_theta_u_bound}}{\leq} \frac{288\gamma^2 V \sigma^{\alpha}T \lambda^{2-\alpha}}{n}\notag\\
        &\overset{\eqref{eq:prox_clipped_SGD_clipping_level_cvx}}{=}& \frac{ 48^{\alpha}\sqrt{V}^{4-\alpha}\sigma^\alpha T\gamma^{\alpha}}{8 n^{\alpha-1}\ln^{2-\alpha}\frac{48n(K+1)}{\beta}} \overset{\eqref{eq:prox_clipped_SGD_step_size_cvx}}{\leq} \frac{V^2}{216 \ln\frac{48n(K+1)}{\beta}}. \label{eq:prox_clipped_SGD_convex_sum_1_variance_bound}
    \end{eqnarray}
    
 \textbf{Upper bound for $\circledTwo$.} Probability event $E_{T-1}$ implies
    \begin{eqnarray}
        \circledTwo &=& -2\gamma\sum\limits_{l=0}^{T-1}\langle \omega_l^b, \eta_l \rangle \leq 2\gamma\sum\limits_{l=0}^{T-1}\|\omega_{l}^b\|\cdot \|\eta_l\| \overset{\eqref{eq:prox_clipped_SGD_convex_technical_6},\eqref{eq:prox_clipped_SGD_convex_norm_theta_b_bound}}{\leq}  \frac{4\cdot 2^\alpha \gamma \sigma^\alpha T \sqrt{V}}{\lambda^{\alpha-1}} \notag\\ &\overset{\eqref{eq:prox_clipped_SGD_clipping_level_cvx}}{=}& \frac{96^\alpha}{12} \cdot \frac{\sigma^\alpha T \sqrt{V}^{2-\alpha}\gamma^\alpha}{n^{\alpha-1}\ln^{1-\alpha}\frac{48n(K+1)}{\beta}}\overset{\eqref{eq:prox_clipped_SGD_step_size_cvx}}{\leq} \frac{V}{6}. \label{eq:prox_clipped_SGD_convex_sum_2_upper_bound}
\end{eqnarray}

\textbf{Upper bound for $\circledThree$.} To estimate this sum, we will use Bernstein's inequality. The summands have conditional expectations equal to zero:
    \begin{equation}
        \EE_{\xi^l_i}\left[\frac{4\gamma^2}{n^2}\left(\left\|\omega_{i,l}^u\right\|^2 - \EE_{\xi^l_i}\left[\left\|\omega_{i,l}^u\right\|^2\right]\right)\right] = 0. \notag
    \end{equation}
    Moreover, for all $l = 0,\ldots, T-1$ random vectors $\{\omega_{i,l}^u\}_{i=1}^n$ are independent. Thus, sequence $\left\{\frac{4\gamma^2}{n^2}\left(\|\omega_{i,l}^u\|^2 - \EE_{\xi_{i}^l}\left[\|\omega_{i,l}^u\|^2\right]\right)\right\}_{l,i = 0,1}^{T-1,n}$ is a martingale difference sequence. Next, the summands are bounded:
    \begin{eqnarray}
        \left|\frac{4\gamma^2}{n^2}\left(\left\|\omega_{i,l}^u\right\|^2 - \EE_{\xi^l_i}\left[\left\|\omega_{i,l}^u\right\|^2\right]\right)\right| &\leq& \frac{4\gamma^2}{n^2}\left( \|\omega_{i,l}^u\|^2 +   \EE_{\xi^l_i}\left[\left\|\omega_{i,l}^u\right\|^2\right]\right)\notag\\
        &\overset{\eqref{eq:prox_clipped_SGD_convex_norm_theta_u_bound}}{\leq}& \frac{32\gamma^2\lambda^2}{n^2}\overset{\eqref{eq:prox_clipped_SGD_clipping_level_cvx}}{=} \frac{V}{72\ln^2\frac{48n(K+1)}{\beta}} \notag\\
        &\le& \frac{V}{12 \ln\frac{48n(K+1)}{\beta}}\eqdef c. \label{eq:prox_clipped_SGD_convex_technical_10}
    \end{eqnarray}
    Finally, conditional variances  $$\widetilde\sigma_{i,t}^2 \eqdef \EE_{\xi^t_i}\left[\frac{16\gamma^4}{n^4}\left(\left\|\omega_{i,t}^u\right\|^2 - \EE_{\xi^t_i}\left[\left\|\omega_{i,t}^u\right\|^2\right]\right)^2\right]$$
    of the summands are bounded:
    \begin{eqnarray}
        \widetilde\sigma_{i,t}^2 &\overset{\eqref{eq:prox_clipped_SGD_convex_technical_10}}{\leq}& \frac{V}{12 \ln\frac{48n(K+1)}{\beta}} \EE_{\xi^t_i}\left[\frac{4\gamma^2}{n^2}\left|\left\|\omega_{i,t}^u\right\|^2 - \EE_{\xi^t_i}\left[\left\|\omega_{i,t}^u\right\|^2\right]\right|\right] \notag\\
        &\leq& \frac{\gamma^2 V}{3n^2\ln\frac{48n(K+1)}{\beta}} \EE_{\xi^t_i}\left[\|\omega_{i,t}^u\|^2\right]. \label{eq:prox_clipped_SGD_convex_technical_11}
    \end{eqnarray}
    Applying Bernstein's inequality (Lemma~\ref{lem:Bernstein_ineq}) with $X_{i,l} = \frac{4\gamma^2}{n^2}\left(\|\omega_{i,l}^u\|^2  - \EE_{\xi_{i}^l}\left[\|\omega_{i,l}^u\|^2\right]\right)$, constant $c$ defined in \eqref{eq:prox_clipped_SGD_convex_technical_10}, $b = \frac{V}{12}$, $G = \frac{V^2}{864\ln\frac{48n(K+1)}{\beta}}$, we get
    \begin{equation*}
        \PP\left\{|\circledThree| > \frac{V}{12}\quad \text{and}\quad \sum\limits_{l=0}^{T-1}\sum\limits_{i=1}^n \widetilde\sigma_{i,l}^2 \leq \frac{V^2}{864\ln\frac{48n(K+1)}{\beta}}\right\} \leq 2\exp\left(- \frac{b^2}{2G + \nicefrac{2cb}{3}}\right) = \frac{\beta}{24n(K+1)}.
    \end{equation*}
    The above is equivalent to 
    \begin{equation}
        \PP\left\{ E_{\circledThree} \right\} \geq 1 - \frac{\beta}{24n(K+1)},\quad \text{for}\quad E_{\circledThree} = \left\{ \text{either} \quad  \sum\limits_{l=0}^{T-1}\sum\limits_{i=1}^n \widetilde\sigma_{i,l}^2 > \frac{V^2}{864\ln\frac{48n(K+1)}{\beta}} \quad \text{or}\quad |\circledThree| \leq \frac{V}{12}\right\}. \label{eq:prox_clipped_SGD_convex_sum_3_upper_bound}
    \end{equation}
    Moreover, $E_{T-1}$ implies
    \begin{eqnarray}
        \sum\limits_{l=0}^{T-1}\sum\limits_{i=1}^n \widetilde\sigma_{i,l}^2 &\overset{\eqref{eq:prox_clipped_SGD_convex_technical_11}}{\leq}& \frac{\gamma^2V}{3n^2\ln\frac{48n(K+1)}{\beta}} \sum\limits_{l=0}^{T-1}\sum\limits_{i=1}^n  \EE_{\xi^l_i}\left[\|\omega_{i,l}^u\|^2\right] \overset{\eqref{eq:prox_clipped_SGD_convex_second_moment_theta_u_bound}}{\leq}  
        \frac{18\gamma^2V\lambda^{2-\alpha}\sigma^{\alpha}T}{3n\ln\frac{48n(K+1)}{\beta}}\notag\\
        &\overset{\eqref{eq:prox_clipped_SGD_clipping_level_cvx}}{=}&
        \frac{ 48^\alpha\cdot 6}{48^2}\cdot\frac{\sigma^\alpha T \sqrt{V}^{4-\alpha}\gamma^{\alpha}}{n^{\alpha-1}\ln^{3-\alpha}\frac{48n(K+1)}{\beta}} \overset{\eqref{eq:prox_clipped_SGD_step_size_cvx}}{\leq} \frac{V^2}{864 \ln\frac{48n(K+1)}{\beta}}. \label{eq:prox_clipped_SGD_convex_sum_3_variance_bound}
    \end{eqnarray}

\textbf{Upper bound for $\circledFour$.} Probability event $E_{T-1}$ implies
\begin{equation}
        \circledFour = \frac{4\gamma^2}{n^2}\sum\limits_{l=0}^{T-1}\sum\limits_{i=1}^n\EE_{\xi^l_i}\left[\left\|\omega_{i,l}^u\right\|^2\right] \overset{\eqref{eq:prox_clipped_SGD_convex_second_moment_theta_u_bound}}{\leq} \frac{72\gamma^2\lambda^{2-\alpha}\sigma^{\alpha}T}{n} \overset{\eqref{eq:prox_clipped_SGD_clipping_level_cvx}}{=}
        \frac{48^{\alpha}\gamma^{\alpha}\sigma^{\alpha}T\sqrt{V}^{2-\alpha}}{32n^{\alpha-1}\ln^{2-\alpha}\frac{48n(K+1)}{\beta}}\overset{\eqref{eq:prox_clipped_SGD_step_size_cvx}}{\leq} \frac{V}{12}.\label{eq:prox_clipped_SGD_convex_sum_4_upper_bound}
\end{equation}

\textbf{Upper bound for $\circledFive$.} Probability event $E_{T-1}$ implies
\begin{equation}
        \circledFive = 4\gamma^2\sum\limits_{l=0}^{T-1}\left\|\omega_{l}^b\right\|^2 \overset{\eqref{eq:prox_clipped_SGD_convex_norm_theta_b_bound}}{\leq} \frac{4^{(\alpha+1)} \sigma^{2\alpha}T\gamma^2}{\lambda^{2(\alpha-1)}} \overset{\eqref{eq:prox_clipped_SGD_clipping_level_cvx}}{=}   \frac{9216^\alpha}{576}\cdot \frac{\sigma^{2\alpha}T\gamma^{2\alpha}\sqrt{V}^{2(1-\alpha)}}{n^{2(1-\alpha)}\ln^{2(1-\alpha)}\frac{48n(K+1)}{\beta}}\overset{\eqref{eq:prox_clipped_SGD_step_size_cvx}}{\leq} \frac{V}{6}.\label{eq:prox_clipped_SGD_convex_sum_5_upper_bound}
\end{equation}

\paragraph{Upper bounds for $\circledSix$.} This sum requires a more refined analysis. We introduce new vectors:
    \begin{equation}
        \delta_{j}^l = \begin{cases}
            \frac{\gamma}{n}\sum\limits_{i=1}^{j-1} \omega_{i,l}^u,& \text{if } \left\|\frac{\gamma}{n}\sum\limits_{i=1}^{j-1} \omega_{i,l}^u\right\| \leq \frac{\sqrt{V}}{2},\\
            0,& \text{otherwise,}
        \end{cases}\label{eq:SGD_zeta_delta_def}
    \end{equation}
    for all $j \in [n]$ and $l = 0,\ldots, T-1$. Then, by definition
    \begin{equation}
        \|\delta_j^l\| \leq \frac{\sqrt{V}}{2} \label{eq:SGD_zeta_delta_bound}
    \end{equation}
    and
    \begin{eqnarray}
        \circledSix &=& \underbrace{\frac{8\gamma}{n} \sum\limits_{l=0}^{T-1} \sum\limits_{j=2}^n \left\langle \delta_j^l, \omega_{j,l}^u \right\rangle}_{\circledSix'} + \frac{8\gamma}{n} \sum\limits_{l=0}^{T-1} \sum\limits_{j=2}^n \left\langle \frac{\gamma}{n}\sum\limits_{i=1}^{j-1} \omega_{i,l}^u - \delta_j^l, \omega_{j,l}^u \right\rangle. \label{eq:SGD_extra_sums_seven_distributed}
    \end{eqnarray}
    We also note here that $E_{T-1}$ implies
    \begin{eqnarray}
        \frac{8\gamma}{n} \sum\limits_{l=0}^{T-1} \sum\limits_{j=2}^n \left\langle \frac{\gamma}{n}\sum\limits_{i=1}^{j-1} \omega_{i,l}^u - \delta_j^l, \omega_{j,l}^u \right\rangle &=& \frac{8\gamma}{n}  \sum\limits_{j=2}^n \left\langle \frac{\gamma}{n}\sum\limits_{i=1}^{j-1} \omega_{i,T-1}^u - \delta_j^{T-1}, \omega_{j,T-1}^u \right\rangle. \label{eq:SGD_extra_sums_seven_distributed_1}
    \end{eqnarray}
    
\paragraph{Upper bound for $\circledSix'$.} To estimate this sum, we will use Bernstein's inequality. The summands have conditional expectations equal to zero:
    \begin{equation*}
    \EE_{\xi_{j}^l}\left[\frac{8\gamma}{n}\left\langle \delta_j^l, \omega_{j,l}^u \right\rangle\right] = \frac{8\gamma}{n}\left\langle \delta_j^l, \EE_{\xi_{j}^l}[\omega_{j,l}^u] \right\rangle = 0.
    \end{equation*}
    Moreover, for all $l = 0,\ldots, T-1$ random vectors $\{\omega_{i,l}^u\}_{i=1}^n$ are independent. Thus, sequence $\left\{\frac{8\gamma}{n}\left\langle \delta_j^l, \omega_{j,l}^u \right\rangle\right\}_{l,j = 0,2}^{T-1,n}$ is a martingale difference sequence. Next, the summands are bounded:
    \begin{eqnarray}
        \left|\frac{8\gamma}{n}\left\langle \delta_j^l, \omega_{j,l}^u \right\rangle \right| \leq \frac{8\gamma}{n} \left\|\delta_j^l\right\| \cdot \|\omega_{j,l}^u\| \overset{\eqref{eq:SGD_zeta_delta_bound},\eqref{eq:prox_clipped_SGD_convex_norm_theta_u_bound}}{\leq} \frac{8\gamma}{n}\cdot \frac{\sqrt{V}}{2}\cdot 2\lambda = \frac{V}{6\ln\frac{48n(K+1)}{\beta}} \eqdef c. \label{eq:gap_thm_SGD_technical_6_5_1234}
    \end{eqnarray}
    Finally, conditional variances $(\sigma_{j,l}')^2 \eqdef \EE_{\xi_{j}^l}\left[\frac{64\gamma^2}{n^2} \langle \delta_j^l, \omega_{j,l}^u \rangle^2\right]$ of the summands are bounded:
    \begin{equation}
        (\sigma_{j,l}')^2 \leq \EE_{\xi_{j}^l}\left[\frac{64\gamma^2 }{n^2}\|\delta_j^l\|^2\cdot \|\omega_{j,l}^u\|^2\right] \overset{\eqref{eq:SGD_zeta_delta_bound}}{\leq} \frac{16\gamma^2 V}{n^2} \EE_{\xi_{j}^l}\left[\|\omega_{j,l}^u\|^2\right]. \label{eq:gap_thm_SGD_technical_7_1234}
    \end{equation}
    Applying Bernstein's inequality (Lemma~\ref{lem:Bernstein_ineq}) with $X_{i,l} = \frac{8\gamma}{n}\left\langle \delta_j^l, \omega_{j,l}^u \right\rangle$, constant $c$ defined in \eqref{eq:gap_thm_SGD_technical_6_5_1234}, $b = \frac{V}{6}$, $G = \frac{V^2}{216\ln\frac{48n(K+1)}{\beta}}$, we get
    \begin{equation*}
        \PP\left\{|\circledSix'| > \frac{V}{6} \text{ and } \sum\limits_{l=0}^{T-1}\sum\limits_{j=2}^n(\sigma_{i,l}')^2 \leq \frac{V^2}{216\ln\frac{48n(K+1)}{\beta}}\right\} \leq 2\exp\left(- \frac{b^2}{2G + \nicefrac{2cb}{3}}\right) = \frac{\beta}{24n(K+1)}.
    \end{equation*}
    The above is equivalent to 
    \begin{equation}
        \PP\{E_{\circledSix'}\} \geq 1 - \frac{\beta}{24n(K+1)},\; \text{ for }\; E_{\circledSix'} = \left\{\text{either } \sum\limits_{l=0}^{T-1}\sum\limits_{j=2}^n(\sigma_{i,l}')^2 > \frac{V^2}{216\ln\frac{48n(K+1)}{\beta}} \text{ or } |\circledSix'| \leq \frac{V}{6}\right\}. \label{eq:bound_7_gap_SGD}
    \end{equation}
    Moreover, $E_{T-1}$ implies
    \begin{eqnarray}
        \sum\limits_{l=0}^{T-1}\sum\limits_{j=2}^n(\sigma_{j,l}')^2 &\overset{\eqref{eq:gap_thm_SGD_technical_7_1234}}{\leq}& \frac{16\gamma^2 V}{n^2} \sum\limits_{l=0}^{T-1} \sum\limits_{j=2}^n \EE_{\xi_{j}^l}\left[\|\omega_{j,l}^u\|^2\right] \overset{\eqref{eq:prox_clipped_SGD_convex_second_moment_theta_u_bound}, T \leq K+1}{\leq} \frac{288(K+1)\gamma^2 V \lambda^{2-\alpha} \sigma^\alpha}{n} \notag \\
        &\overset{\eqref{eq:prox_clipped_SGD_clipping_level_cvx}}{\leq}& \frac{288 (K+1)\gamma^{\alpha}\sigma^\alpha V^{2-\frac{\alpha}{2}}}{48^{2-\alpha} n^{\alpha-1} \ln^{2-\alpha}\frac{48n(K+1)}{\beta}} \overset{\eqref{eq:prox_clipped_SGD_step_size_cvx}}{\leq} \frac{V^2}{216\ln\frac{48n(K+1)}{\beta}}. \label{eq:bound_7_variances_gap_SGD}
    \end{eqnarray}

That is, we derive the upper bounds for  $\circledOne, \circledTwo, \circledThree, \circledFour, \circledFive, \circledSix$. More precisely, $E_{T-1}$ implies
\begin{gather*}
        A_T \overset{\eqref{eq:prox_clipped_SGD_convex_technical_7}}{\leq} V + \circledOne + \circledTwo + \circledThree + \circledFour + \circledFive + \circledSix,\\
        \circledSix \overset{\eqref{eq:SGD_extra_sums_seven_distributed}}{=} \circledSix' + \frac{8\gamma}{n} \sum\limits_{j=2}^n \left\langle \frac{\gamma}{n}\sum\limits_{i=1}^{j-1} \omega_{i,T-1}^u - \delta_j^{T-1}, \omega_{j,T-1}^u \right\rangle,\\
        \circledTwo \overset{\eqref{eq:prox_clipped_SGD_convex_sum_2_upper_bound}}{\leq} \frac{V}{6},\quad \circledFour \overset{\eqref{eq:prox_clipped_SGD_convex_sum_4_upper_bound}}{\leq} \frac{V}{12}, \quad \circledFive \overset{\eqref{eq:prox_clipped_SGD_convex_sum_5_upper_bound}}{\leq} \frac{V}{6},\\
        \sum\limits_{t=0}^{T-1} \sigma_t^2 \overset{\eqref{eq:prox_clipped_SGD_convex_sum_1_variance_bound}}{\leq} \frac{V^2}{216 \ln\frac{48n(K+1)}{\beta}},\quad \sum\limits_{t=0}^{T-1} \widetilde\sigma_t^2 \overset{\eqref{eq:prox_clipped_SGD_convex_sum_3_variance_bound}}{\leq} \frac{V^2}{864 \ln\frac{48n(K+1)}{\beta}},\\
        \sum\limits_{l=0}^{T-1}\sum\limits_{j=2}^n(\sigma_{j,l}')^2 \overset{\eqref{eq:bound_7_variances_gap_SGD}}{\leq} \frac{V^2}{216\ln\frac{48n(K+1)}{\beta}}.
\end{gather*}
    In addition, we also establish (see \eqref{eq:prox_clipped_SGD_convex_sum_1_upper_bound}, \eqref{eq:prox_clipped_SGD_convex_sum_3_upper_bound}, \eqref{eq:bound_7_gap_SGD} and our induction assumption)
\begin{gather*}
        \PP\{E_{T-1}\} \geq 1 - \frac{(T-1)\beta}{K+1},\\
        \PP\{E_{\circledOne}\} \geq 1 - \frac{\beta}{24n(K+1)}, \quad \PP\{E_{\circledThree}\} \geq 1 - \frac{\beta}{24n(K+1)}, \quad \PP\{E_{\circledSix'}\} \geq 1 - \frac{\beta}{24n(K+1)},
    \end{gather*}
    where
\begin{eqnarray*}
        E_{\circledOne} &=& \left\{ \text{either} \quad  \sum\limits_{l=0}^{T-1} \sigma_{l}^2 > \frac{V^2}{216\ln\frac{48n(K+1)}{\beta}} \quad \text{or}\quad |\circledOne| \leq \frac{V}{6}\right\},\\
        E_{\circledThree} &=& \left\{ \text{either} \quad  \sum\limits_{l=0}^{T-1} \widetilde\sigma_{l}^2 > \frac{V^2}{864\ln\frac{48n(K+1)}{\beta}} \quad \text{or}\quad |\circledThree| \leq \frac{V}{12}\right\}, \\
        E_{\circledSix'} &=& \left\{ \text{either} \quad  \sum\limits_{t=0}^{T-1} \sum\limits^n_{j=2}(\sigma_{j,l}')^2  > \frac{V^2}{216\ln\frac{48n(K+1)}{\beta}} \quad \text{or}\quad |\circledSix'| \leq \frac{V}{6}\right\}.
\end{eqnarray*}
     Therefore, probability event $E_{T-1} \cap E_{\circledOne} \cap E_{\circledThree} \cap E_{\circledSix'}$ implies

     \begin{eqnarray}
        A_T &\leq& V + \frac{V}{6} + \frac{V}{6} + \frac{V}{12} + \frac{V}{12} + \frac{V}{6} + \frac{V}{6}\notag\\
        &&\quad + \frac{8\gamma}{n} \sum\limits_{j=2}^n \left\langle \frac{\gamma}{n}\sum\limits_{i=1}^{j-1} \omega_{i,T-1}^u - \delta_j^{T-1}, \omega_{j,T-1}^u \right\rangle \notag \\
        &\leq& 2V + \frac{8\gamma}{n} \sum\limits_{j=2}^n \left\langle \frac{\gamma}{n}\sum\limits_{i=1}^{j-1} \omega_{i,T-1}^u - \delta_j^{T-1}, \omega_{j,T-1}^u \right\rangle \label{eq:gap_thm_SGD_technical_11}
    \end{eqnarray}
    for $t = T$. 

    In the final part of the proof, we will show that $\frac{\gamma}{n}\sum\limits_{i=1}^{j-1} \omega_{i,T-1}^u = \delta_j^{T-1}$ with high probability. In particular, we consider probability event $\widetilde{E}_{T-1,j}$ defined as follows: inequalities
    \begin{equation}
         \left\|\frac{\gamma}{n}\sum\limits_{i=1}^{r-1} \omega_{i,T-1}^u\right\| \leq \frac{\sqrt{V}}{2} \notag
    \end{equation}
    hold for $r = 2, \ldots, j$ simultaneously. We want to show that $\PP\{E_{T-1} \cap \widetilde{E}_{T-1,j}\} \geq 1 - \frac{(T-1)\beta}{K+1} - \frac{j\beta}{8n(K+1)}$ for all $j = 2, \ldots, n$. For $j = 2$ the statement is trivial since
    \begin{eqnarray*}
         \left\|\frac{\gamma}{n} \omega_{1,T-1}^u\right\| \overset{\eqref{eq:prox_clipped_SGD_convex_norm_theta_u_bound}}{\leq} \frac{2\gamma\lambda}{n} \leq \frac{\sqrt{V}}{2}.
    \end{eqnarray*}
    Next, we assume that the statement holds for some $j = m-1 < n$, i.e.,  $\PP\{E_{T-1}\cap\widetilde{E}_{T-1,m-1}\} \geq 1 - \frac{(T-1)\beta}{K+1} - \frac{(m-1)\beta}{8n(K+1)}$. Our goal is to prove that $\PP\{E_{T-1}\cap\widetilde{E}_{T-1,m}\} \geq 1 - \frac{(T-1)\beta}{K+1} - \frac{m\beta}{8n(K+1)}$. We have
    \begin{eqnarray}
        \left\|\frac{\gamma}{n}\sum\limits_{i=1}^{m-1} \omega_{i,T-1}^u\right\| &=& \sqrt{\frac{\gamma^2}{n^2}\left\|\sum\limits_{i=1}^{m-1} \omega_{i,T-1}^u\right\|^2} \notag\\
        &=& \sqrt{\frac{\gamma^2}{n^2}\sum\limits_{i=1}^{m-1}\|\omega_{i,T-1}^u\|^2 + \frac{2\gamma}{n}\sum\limits_{i=1}^{m-1}\left\langle \frac{\gamma}{n}\sum\limits_{r=1}^{i-1}\omega_{r,T-1}^u , \omega_{i,T-1}^u \right\rangle} \notag\\
        &\leq& \sqrt{\frac{\gamma^2}{n^2}\sum\limits_{l=0}^{T-1}\sum\limits_{i=1}^{m-1}\|\omega_{i,l}^u\|^2 + \frac{2\gamma}{n}\sum\limits_{i=1}^{m-1}\left\langle \frac{\gamma}{n}\sum\limits_{r=1}^{i-1}\omega_{r,T-1}^u , \omega_{i,T-1}^u \right\rangle}. \notag
    \end{eqnarray}

    Next, we introduce a new notation:
    \begin{gather*}
        \rho_{i,T-1}' = \begin{cases}
            \frac{\gamma}{n}\sum\limits_{r=1}^{i-1}\omega_{r,T-1}^u,& \text{if } \left\|\frac{\gamma}{n}\sum\limits_{r=1}^{i-1}\omega_{r,T-1}^u\right\| \leq \frac{\sqrt{V}}{2},\\
            0,& \text{otherwise}
        \end{cases}
    \end{gather*}
    for $i = 1,\ldots,m-1$. By definition, we have
    \begin{equation}
        \|\rho_{i,T-1}'\| \leq \frac{\sqrt{V}}{2} \label{eq:SGD_bound_rho}
    \end{equation}
    for $i = 1,\ldots,m-1$. Moreover, $\widetilde{E}_{T-1,m-1}$ implies $\rho_{i,T-1}' = \frac{\gamma}{n}\sum\limits_{r=1}^{i-1}\omega_{r,T-1}^u$ for $i = 1,\ldots,m-1$ and
    \begin{eqnarray}
        \left\|\frac{\gamma}{n}\sum\limits_{i=1}^{m-1} \omega_{i,l}^u\right\| &\leq& \sqrt{\circledThree + \circledFour + \circledSeven}, \notag
    \end{eqnarray}
    where
    \begin{gather*}
         \circledSeven =  \frac{2\gamma}{n}\sum\limits_{i=1}^{m-1}\left\langle \rho_{i,T-1}' , \omega_{i,T-1}^u \right\rangle.
    \end{gather*}
    It remains to estimate $\circledSeven$.

     \textbf{Upper bound for $\circledSeven$.} To estimate this sum, we will use Bernstein's inequality. The summands have conditional expectations equal to zero:
    \begin{equation*}
        \EE_{\xi_{i}^{T-1}}\left[\frac{2\gamma}{n} \langle \rho_{i,T-1}', \omega_{i,T-1}^u \rangle\right] = \frac{2\gamma}{n} \left\langle \rho_{i,T-1}', \EE_{\xi_{i}^{T-1}}[\omega_{i,T-1}^u] \right\rangle = 0,
    \end{equation*}
    since random vectors $\{\omega_{i,T-1}^u\}_{i=1}^n$ are independent. Thus, sequence $\left\{\frac{2\gamma}{n} \langle \rho_{i,T-1}', \omega_{i,T-1}^u \rangle\right\}_{i = 1}^{m-1}$ is a martingale difference sequence. Next, the summands are bounded:
    \begin{eqnarray}
        \left|\frac{2\gamma}{n}\langle \rho_{i,T-1}', \omega_{i,T-1}^u \rangle \right| \leq \frac{2\gamma}{n} \|\rho_{i,T-1}'\|\cdot \|\omega_{i,T-1}^u\| \overset{\eqref{eq:SGD_bound_rho},\eqref{eq:prox_clipped_SGD_convex_norm_theta_u_bound}}{\leq} \frac{\gamma}{n} \sqrt{V}\lambda \overset{\eqref{eq:prox_clipped_SGD_clipping_level_cvx}}{=} \frac{V}{24\ln\tfrac{48n(K+1)}{\beta}} \eqdef c. \label{eq:gap_thm_SGD_technical_6_5_10'}
    \end{eqnarray}
    Finally, conditional variances $(\widetilde\sigma_{i,T-1}')^2 \eqdef \EE_{\xi_{i}^{T-1}}\left[\frac{4\gamma^2}{n^2} \langle \rho_{i,T-1}', \omega_{i,T-1}^u \rangle^2\right]$ of the summands are bounded:
    \begin{equation}
        (\widetilde\sigma_{i,T-1}')^2 \leq \EE_{\xi_{i}^{T-1}}\left[\frac{4\gamma^2 }{n^2}\|\rho_{i,T-1}'\|^2\cdot \|\omega_{i,T-1}^u\|^2\right] \overset{\eqref{eq:SGD_bound_rho}}{\leq} \frac{\gamma^2 V}{n^2} \EE_{\xi_{i}^{T-1}}\left[\|\omega_{i,T-1}^u\|^2\right]. \label{eq:gap_thm_SGD_technical_7_10'}
    \end{equation}
    Applying Bernstein's inequality (Lemma~\ref{lem:Bernstein_ineq}) with $X_{i} = \frac{2\gamma}{n} \langle \rho_{i,T-1}', \omega_{i,T-1}^u \rangle$, constant $c$ defined in \eqref{eq:gap_thm_SGD_technical_6_5_10'}, $b = \frac{V}{24}$, $G = \frac{V^2}{3456\ln\frac{48n(K+1)}{\beta}}$, we get
    \begin{equation*}
        \PP\left\{|\circledSeven| > \frac{V}{24} \text{ and } \sum\limits_{i=1}^{m-1}(\widetilde\sigma_{i,T-1}')^2 \leq \frac{V^2}{3456\ln\frac{48n(K+1)}{\beta}}\right\} \leq 2\exp\left(- \frac{b^2}{2G + \nicefrac{2cb}{3}}\right) = \frac{\beta}{24n(K+1)}.
    \end{equation*}
    The above is equivalent to 
    \begin{equation}
        \PP\{E_{\circledSeven}\} \geq 1 - \frac{\beta}{24n(K+1)},\; \text{ for }\; E_{\circledSeven} = \left\{\text{either } \sum\limits_{i=1}^{m-1}(\widetilde\sigma_{i,T-1}')^2 > \frac{V^2}{3456\ln\frac{48n(K+1)}{\beta}} \text{ or } |\circledSeven| \leq \frac{V}{24}\right\}. \label{eq:bound_10_gap_SGD}
    \end{equation}
    Moreover, $E_{T-1}$ implies
    \begin{eqnarray}
        \sum\limits_{i=1}^{m-1}(\widetilde\sigma_{i,T-1}')^2 &\overset{\eqref{eq:gap_thm_SGD_technical_7_10'}}{\leq}& \frac{\gamma^2 V}{n^2} \sum\limits_{i=1}^n \EE_{\xi_{i}^{T-1}}\left[\|\omega_{i,T-1}^u\|^2\right] \overset{\eqref{eq:prox_clipped_SGD_convex_second_moment_theta_u_bound}}{\leq} \frac{18\gamma^2 V \lambda^{2-\alpha} \sigma^\alpha}{n} \notag \\
        &\overset{\eqref{eq:prox_clipped_SGD_clipping_level_cvx}}{\leq}& \frac{18 \gamma^{\alpha}\sigma^\alpha V^{2-\frac{\alpha}{2}}}{48^{2-\alpha} n^{\alpha-1} \ln^{2-\alpha}\frac{48n(K+1)}{\beta}} \overset{\eqref{eq:prox_clipped_SGD_step_size_cvx}}{\leq} \frac{V^2}{3456\ln\tfrac{48n(K+1)}{\beta}}. \label{eq:bound_10_variances_gap_SGD}
    \end{eqnarray}

    Putting all together we get that $E_{T-1}\cap \widetilde{E}_{T-1,m-1}$ implies
    \begin{gather*}
         \left\|\frac{\gamma}{n}\sum\limits_{i=1}^{m-1} \omega_{i,T-1}^u\right\| \leq \sqrt{\circledThree + \circledFour + \circledSeven},\quad \circledFour \overset{\eqref{eq:prox_clipped_SGD_convex_sum_4_upper_bound}}{\leq} \frac{V}{6},\\
        \sum\limits_{l=0}^{T-1}\sum\limits_{i=1}^n\widetilde\sigma_{i,l}^2 \overset{\eqref{eq:prox_clipped_SGD_convex_sum_3_variance_bound}}{\leq} \frac{V^2}{216\ln\frac{48n(K+1)}{\beta}},\quad 
        \sum\limits_{i=1}^{m-1}(\widetilde\sigma_{i,T-1}')^2 \leq \frac{V^2}{3456\ln\tfrac{48n(K+1)}{\beta}}.
    \end{gather*}
    In addition, we also establish (see \eqref{eq:prox_clipped_SGD_convex_sum_3_upper_bound}, \eqref{eq:bound_10_gap_SGD} and our induction assumption)
    \begin{gather*}
        \PP\{E_{T-1}\cap \widetilde{E}_{T-1,m-1}\} \geq 1 - \frac{(T-1)\beta}{K+1} - \frac{(m-1)\beta}{8n(K+1)},\\
        \PP\{E_{\circledThree}\} \geq 1 - \frac{\beta}{24n(K+1)}, \quad \PP\{E_{\circledSeven}\} \geq 1 - \frac{\beta}{24n(K+1)}
    \end{gather*}
    where
    \begin{eqnarray}
        E_{\circledThree} &=& \left\{\text{either } \sum\limits_{l=0}^{T-1}\sum\limits_{i=1}^n\widetilde\sigma_{i,l}^2 > \frac{V^2}{864\ln\frac{48n(K+1)}{\beta}} \text{ or } |\circledThree| \leq \frac{V}{12}\right\}, \notag\\
        E_{\circledSeven} &=& \left\{\text{either } \sum\limits_{i=1}^{m-1}(\widetilde\sigma_{i,T-1}')^2 > \frac{V^2}{3456\ln\frac{48n(K+1)}{\beta}} \text{ or } |\circledSeven| \leq \frac{V}{24}\right\} \notag
    \end{eqnarray}
    Therefore, probability event $E_{T-1} \cap \widetilde{E}_{T-1,m-1} \cap E_{\circledThree} \cap E_{\circledSeven}$ implies
    \begin{gather*}
        \left\|\frac{\gamma}{n}\sum\limits_{i=1}^{m-1} \omega_{i,T-1}^u\right\| \leq \sqrt{\frac{V}{12} + \frac{V}{12} + \frac{V}{24}} \leq \frac{\sqrt{V}}{2}.
    \end{gather*}
     This implies $\widetilde E_{T-1,m}$ and
    \begin{eqnarray*}
        \PP\{E_{T-1} \cap \widetilde{E}_{T-1,m}\} &\geq& \PP\{E_{T-1} \cap \widetilde{E}_{T-1,m-1} \cap E_{\circledThree} \cap E_{\circledSeven} \} \\
        &=& 1 - \PP\left\{\overline{E_{T-1} \cap \widetilde{E}_{T-1,m-1}} \cup \overline{E}_{\circledThree} \cup \overline{E}_{\circledSeven} \right\}\\
        &\geq& 1 - \frac{(T-1)\beta}{K+1} - \frac{m\beta}{8n(K+1)}.
    \end{eqnarray*}
    Therefore, for all $m = 2,\ldots,n$ the statement holds and, in particular, $\PP\{E_{T-1} \cap \widetilde{E}_{T-1, n}\} \geq 1 - \frac{(T-1)\beta}{K+1} - \frac{\beta}{8(K+1)}$. Taking into account \eqref{eq:gap_thm_SGD_technical_11}, we conclude that $E_{T-1} \cap \widetilde{E}_{T-1, n} \cap E_{\circledOne} \cap E_{\circledThree} \cap E_{\circledSix'}$ implies
    \begin{equation*}
        A_T \leq 2V
    \end{equation*}
    that is equivalent to \eqref{eq:prox_clipped_SGD_convex_induction_inequality_1} for $t = T$. Moreover,
    \begin{eqnarray*}
        \PP\left\{E_T\right\} &\geq& \PP\left\{E_{T-1} \cap \widetilde{E}_{T-1,n} \cap E_{\circledOne} \cap E_{\circledThree} \cap E_{\circledSix'}\right\} \\
        &=& 1 - \PP\left\{\overline{E_{T-1} \cap \widetilde{E}_{n}} \cup \overline{E}_{\circledOne} \cup \overline{E}_{\circledThree} \cup \overline{E}_{\circledSix'}\right\}\\
        &=& 1 - \frac{(T-1)\beta}{K+1} - \frac{\beta}{8(K+1)} - 3\cdot \frac{\beta}{24n(K+1)} = 1 - \frac{T\beta}{K+1}.
    \end{eqnarray*}
     In other words, we showed that $\PP\{E_k\} \geq 1 - \nicefrac{k\beta}{(K+1)}$ for all $k = 0,1,\ldots,K+1$. For $k = K+1$ we have that with probability at least $1 - \beta$
\begin{equation*}
        \Phi(\overline{x}^{K+1}) - \Phi(x^*) \overset{\eqref{eq:prox_clipped_SGD_convex_technical_1},\eqref{eq:prox_clipped_SGD_convex_induction_inequality_1}}{\leq}\frac{V}{\gamma(K+1)}.
\end{equation*}
Finally, if
\begin{equation*}
        \gamma \leq \min\left\{\frac{1}{360 L \ln \frac{48n(K+1)}{\beta}}, \; \frac{n^{\frac{\alpha-1}{\alpha}}\sqrt{V}}{27^{\frac{1}{\alpha}}\cdot 48 \sigma  K^{\frac{1}{\alpha}}\left(\ln \frac{48n(K+1)}{\beta}\right)^{\frac{\alpha-1}{\alpha}}}\right\},
\end{equation*}
then with probability at least $1-\beta$
\begin{eqnarray*}
        \Phi(\overline{x}^{K+1}) - \Phi(x^*) &\leq& \frac{V}{\gamma(K+1)}\\
        &=& \max\left\{\frac{360 LV \ln \frac{48n(K+1)}{\beta}}{K+1}, \frac{48\cdot 27^{\frac{1}{\alpha}}\sigma \sqrt{V}K^{\frac{1}{\alpha}}\left(\ln\frac{48n(K+1)}{\beta}\right)^{\frac{
        \alpha-1
        }{\alpha}}}{n^{\frac{\alpha-1}{\alpha}}(K+1)}\right\}\notag\\
        &=& \cO\left(\max\left\{\frac{L V \ln \frac{nK}{\beta}}{K}, \frac{ \sigma \sqrt{V}\ln^{\frac{\alpha-1}{\alpha}}\frac{nK}{\beta}}{n^{\frac{\alpha-1}{\alpha}}K^{\frac{\alpha-1}{\alpha}}}\right\}\right).
\end{eqnarray*}
To get $\Phi(\overline{x}^{K+1}) - \Phi(x^*) \leq \varepsilon$ with probability at least $1-\beta$ it is sufficient to choose $K$ such that both terms in the maximum above are $\cO(\varepsilon)$. This leads to
\begin{equation*}
         K = \cO\left(\max\left\{\frac{LV}{\varepsilon} \ln\frac{L V}{\varepsilon\beta}, \left(\frac{\sigma \sqrt{V}}{\varepsilon n^{\frac{\alpha-1}{\alpha}}}\right)^{\frac{\alpha}{\alpha-1}} \ln \left(\frac{1}{\beta} \left(\frac{\sigma \sqrt{V}}{\varepsilon }\right)^{\frac{\alpha}{\alpha-1}}\right) \right\}\right),
\end{equation*}
which concludes the proof.
\end{proof}

In view of Lemma~\ref{lem:QSC_to_QSM}, the result in the quasi-strongly convex case for \algname{DProx-clipped-SGD-shift} follows from our result for \algname{DProx-clipped-SGDA-shift}.

\clearpage

\section{Missing Proofs for \algname{DProx-clipped-SSTM-shift}}\label{appendix:dprox_clipped_SSTM_shift}

In this section, we provide the complete formulations of our results for \algname{DProx-clipped-SSTM-shift} and proofs. For the readers' convenience, the method's update rule is repeated below: $x^0 = y^0 = z^0$, $A_0 = \alpha_0 = 0$, $\alpha_{k+1} = \frac{k+2}{2aL}$, $A_{k+1} = A_k + \alpha_{k+1}$ and
\begin{gather*}
    x^{k+1} = \frac{A_k y^k + \alpha_{k+1}z^k}{A_{k+1}},\quad z^{k+1} = \prox_{\alpha_{k+1}\Psi}\left(z^k - \alpha_{k+1}\tg(x^{k+1})\right), \\
    \tg(x^{k+1}) = \frac{1}{n}\sum\limits_{i=1}^n \tg_i(x^{k+1}),\;\; \tg_i(x^{k+1}) = h_i^k + \hat\Delta_i^k, \\
    h_i^{k+1} = h_i^k + \nu_k \hat \Delta_i^k,\quad \hat\Delta_i^k = \clip\left(\nabla f_{\xi_i^k}(x^{k+1}) - h_i^k, \lambda_k\right), \\
    y^{k+1} = \frac{A_k y^k + \alpha_{k+1}z^{k+1}}{A_{k+1}} 
\end{gather*}
where $\xi_1^k,\ldots, \xi_n^k$ are sampled independently from each other and previous steps.

\subsection{Convex case}
The following lemma is the main ``optimization'' part of the analysis of \algname{DProx-clipped-SSTM-shift}.
\begin{lemma}
\label{lem:SSTM_optimization_lemma_convex}
    Let Assumptions~\ref{as:bounded_alpha_moment}, \ref{as:L_smoothness} and \ref{as:str_cvx}($\mu = 0$) hold on $Q = B_{5n\sqrt{M}}(x^*)$, where $M \geq \|x^0-x^*\|^2+C^2\alpha^2_{K_0+1}\frac{1}{n}\sum^n_{i=1}\|\nabla f_i(x^*)\|^2$, where $C >0$, and $a \geq 0$. Let $x^{k},~y^k,~z^k$ lie in $B_{5n\sqrt{M}}(x^*)$ for all $k = 0,1,\dots, K$ for some $K\geq 0$. Additionally, let parameters of \algname{DProx-clipped-SSTM-shift} satisfy
    \begin{equation}
        \label{eq:a_K0_D_prox_clipped_SSTM_shift_lemma}
        a \geq \max\left\{2, \frac{7}{6}C^2\right\},\quad K_0 = \left\lceil\frac{3}{2}C^2n\right\rceil;
    \end{equation}
    \begin{equation}
         \label{eq:nu_k_D_prox_clipped_SSTM_shift_lemma}
         \nu_k = \begin{cases}
                    \frac{(k+2)^2}{C^2(K_0+2)^2n} ,& \text{if $k < K_0$};\\
                    \frac{2k+5}{(k+3)^2} ,& \text{if $k \geq K_0$};
    \end{cases},     
    \end{equation}
    
    then the iterates produced by \algname{DProx-clipped-SSTM-shift} satisfy 
    \begin{eqnarray}
        A_K(\Phi(y^K)-\Phi(x^*)) &\leq& \frac{1}{2}M_0-\frac{1}{2}M_K + \sum^{K-1}_{k=0}\alpha_{k+1}\la\omega_{k+1}, x^*-z^k\ra  + \sum^{K-1}_{k=0}\alpha^2_{k+1}\|\omega_{k+1}\|^2\notag\\
        && + \sum^{K-1}_{k=0}\sum^{n}_{i=1}\frac{\alpha^2_{k+1}}{n^2}\|\omega_{i,k+1}\|^2,
    \end{eqnarray}
    where Lyapunov function $M_k$ is defined as follows 
    \begin{equation}
        \label{eq:Lyapunov_function_D_prox_clipped_SSTM_shift}
        M_k = \|z^k-x^*\|^2 +C^2\widetilde\alpha^2_{k+1}\frac{1}{n}\sum^n_{i=1}\|h^k_i-h^*_i\|^2,
    \end{equation}
    where 
    \begin{equation}
        \widetilde\alpha_{k+1} = \begin{cases}
                    \alpha_{K_0+1}& \text{if $k < K_0$};\\
                    \alpha_{k+1} & \text{if $k \geq K_0$};
    \end{cases},     
    \end{equation}
     and $\omega_{k+1}$ is defined as follows 
    \begin{equation}
    \label{eq:omega_k+1_def_dprox_clipped_SSTM_shift}
        \omega_{i,k+1} \eqdef \tg_i(x^{k+1}) - \nabla f_i(x^{k+1}),\quad \omega_{k+1} \eqdef \frac{1}{n}\sum^n_{i=1}\omega_{i,k+1}.
    \end{equation}
\end{lemma}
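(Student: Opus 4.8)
The plan is to run the accelerated Similar Triangles descent, isolate the stochastic noise, and then augment the standard potential with the shift term so that everything telescopes. First I would record the algebraic identities forced by $A_{k+1}=A_k+\alpha_{k+1}$ and the definitions of $x^{k+1},y^{k+1}$: namely $y^{k+1}-x^{k+1}=\tfrac{\alpha_{k+1}}{A_{k+1}}(z^{k+1}-z^k)$ and $A_k(x^{k+1}-y^k)=\alpha_{k+1}(z^k-x^{k+1})$. Applying $L$-smoothness (Assumption~\ref{as:L_smoothness}) at $y^{k+1}$, multiplying by $A_{k+1}$ and substituting these identities gives $A_{k+1}f(y^{k+1})\le A_{k+1}f(x^{k+1})+\alpha_{k+1}\langle\nabla f(x^{k+1}),z^{k+1}-z^k\rangle+\tfrac{L\alpha_{k+1}^2}{2A_{k+1}}\|z^{k+1}-z^k\|^2$. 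I would then split $A_{k+1}f(x^{k+1})=A_kf(x^{k+1})+\alpha_{k+1}f(x^{k+1})$, bounding the first via convexity against $y^k$ and writing the second as the exact identity $\alpha_{k+1}f(x^{k+1})=\alpha_{k+1}f(x^*)+\alpha_{k+1}\langle\nabla f(x^{k+1}),x^{k+1}-x^*\rangle-\alpha_{k+1}D_f(x^*,x^{k+1})$, deliberately keeping the negative Bregman term for later use. After the inner products collapse to $\alpha_{k+1}\langle\nabla f(x^{k+1}),z^{k+1}-x^*\rangle$, I substitute $\nabla f(x^{k+1})=\tg(x^{k+1})-\omega_{k+1}$.

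Next I would convert the $\tg$-part using the prox inequality (Lemma~\ref{lem:prox_lemma}) with $y=x^*$ together with the three-point identity, producing $-\alpha_{k+1}(\Psi(z^{k+1})-\Psi(x^*))+\tfrac12\|z^k-x^*\|^2-\tfrac12\|z^{k+1}-x^*\|^2-\tfrac12\|z^{k+1}-z^k\|^2$, and then use convexity of $\Psi$ along the convex combination $y^{k+1}$, i.e. $A_{k+1}\Psi(y^{k+1})\le A_k\Psi(y^k)+\alpha_{k+1}\Psi(z^{k+1})$, to assemble $A_{k+1}\Phi(y^{k+1})$ on the left. For the noise I split $\alpha_{k+1}\langle\omega_{k+1},x^*-z^{k+1}\rangle=\alpha_{k+1}\langle\omega_{k+1},x^*-z^k\rangle+\alpha_{k+1}\langle\omega_{k+1},z^k-z^{k+1}\rangle$; the first is exactly the claimed $\mathcal F_k$-measurable linear term, and the second is controlled by Young's inequality with $\alpha_{k+1}^2\|\omega_{k+1}\|^2+\tfrac14\|z^{k+1}-z^k\|^2$. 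The residual $\|z^{k+1}-z^k\|^2$ coefficient $-\tfrac12+\tfrac{L\alpha_{k+1}^2}{2A_{k+1}}+\tfrac14$ is nonpositive because $\tfrac{L\alpha_{k+1}^2}{A_{k+1}}\le\tfrac1a\le\tfrac12$ for $a\ge2$, so it is dropped. This yields a clean per-step recursion in $\tfrac12\|z^k-x^*\|^2$, plus the linear and quadratic noise, minus the Bregman gain $\alpha_{k+1}D_f(x^*,x^{k+1})$.

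The genuinely new ingredient is the shift term. I would add $\tfrac12 C^2\widetilde\alpha_{k+2}^2\tfrac1n\sum_i\|h_i^{k+1}-h_i^*\|^2$ (with $h_i^*=\nabla f_i(x^*)$) to both sides to form $M_{k+1}$ on the left and $M_k$ on the right, and expand the shift recursion $h_i^{k+1}-h_i^*=(1-\nu_k)(h_i^k-h_i^*)+\nu_k\big(\nabla f_i(x^{k+1})-\nabla f_i(x^*)+\omega_{i,k+1}\big)$ via convexity of $\|\cdot\|^2$. The $\|h_i^k-h_i^*\|^2$ part telescopes against $M_k$ exactly when $\widetilde\alpha_{k+2}^2(1-\nu_k)\le\widetilde\alpha_{k+1}^2$, which holds with equality for $k\ge K_0$ (since the choice $\nu_k=\tfrac{2k+5}{(k+3)^2}=1-\alpha_{k+1}^2/\alpha_{k+2}^2$) and trivially for $k<K_0$ where the weight is frozen at $\alpha_{K_0+1}^2$; the remaining term $\nu_k\|\omega_{i,k+1}\|^2$ produces the per-worker noise $\sum_i\tfrac{\alpha_{k+1}^2}{n^2}\|\omega_{i,k+1}\|^2$ precisely because $\nu_k=\tfrac{(k+2)^2}{C^2(K_0+2)^2n}$ in the first phase makes $C^2\widetilde\alpha_{k+2}^2\nu_k/n$ match $\alpha_{k+1}^2/n^2$ (see \eqref{eq:nu_k_D_prox_clipped_SSTM_shift_lemma}). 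The shift expansion also generates a smoothness residual which, via Assumption~\ref{as:L_smoothness} and \eqref{eq:L_smoothness_cor_2}, is bounded by $2LC^2\widetilde\alpha_{k+2}^2\nu_k\,D_f(x^{k+1},x^*)$; this is where the retained Bregman gain $-\alpha_{k+1}D_f(x^*,x^{k+1})$ (and the leftover $\|z^{k+1}-z^k\|^2$ budget) must pay. Telescoping from $k=0$ to $K-1$ with $A_0=0$ then delivers the stated bound.

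The hard part will be this last absorption together with the two-phase calibration. A naive coefficient comparison does not close — the smoothness residual involves the per-worker sum $\tfrac1n\sum_i\|\nabla f_i(x^{k+1})-\nabla f_i(x^*)\|^2$, which dominates the aggregate $\|\nabla f(x^{k+1})-\nabla f(x^*)\|^2$, and the available negative term is the argument-swapped Bregman divergence $D_f(x^*,x^{k+1})$ rather than $D_f(x^{k+1},x^*)$. Reconciling these, and checking that the residuals are dominated uniformly across both the warm-up regime $k<K_0$ and the regime $k\ge K_0$, is exactly what forces the constraints $a\ge\max\{2,\tfrac76C^2\}$ and $K_0=\lceil\tfrac32C^2n\rceil$ in \eqref{eq:a_K0_D_prox_clipped_SSTM_shift_lemma}. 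I expect the simultaneous balancing of four quantities — the $\|z^{k+1}-z^k\|^2$ budget, the shift-weight telescoping across the boundary $k=K_0$, the per-worker noise coefficient, and the Bregman absorption — to be the principal technical obstacle, with the rest being routine bookkeeping.
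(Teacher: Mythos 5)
Your skeleton is the paper's: the Similar-Triangles identities, the prox three-point inequality, the split $\nabla f(x^{k+1})=\tg(x^{k+1})-\omega_{k+1}$, Young's inequality paying $\tfrac14\|z^{k+1}-z^k\|^2$ against $\alpha_{k+1}^2\|\omega_{k+1}\|^2$, the shift recursion $\|h_i^{k+1}-h_i^*\|^2\le(1-\nu_k)\|h_i^k-h_i^*\|^2+2\nu_k\|\omega_{i,k+1}\|^2+2\nu_k\|\nabla f_i(x^{k+1})-\nabla f_i(x^*)\|^2$, and the two-phase weights $\widetilde\alpha_{k+1}$ all match. But you leave the decisive absorption step open — you say reconciling the per-worker residual $\tfrac{2\nu_k}{n}\sum_i\|\nabla f_i(x^{k+1})-\nabla f_i(x^*)\|^2$ with an aggregate negative term is "the principal technical obstacle" — and the obstacle as you frame it is of your own making. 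The paper never works with the aggregate Bregman divergence of $f$: in the step corresponding to your convexity bound at $z=x^*$, it writes $f(x^{k+1})+\langle\nabla f(x^{k+1}),x^*-x^{k+1}\rangle=\tfrac1n\sum_i\bigl(f_i(x^{k+1})+\langle\nabla f_i(x^{k+1}),x^*-x^{k+1}\rangle\bigr)$ and applies the convexity-plus-$L$-smoothness lower bound to \emph{each} $f_i$ separately, obtaining the negative term $-\tfrac{\alpha_{k+1}}{2Ln}\sum_i\|\nabla f_i(x^{k+1})-\nabla f_i(x^*)\|^2$. This is term-by-term the same per-worker quantity as the shift residual, so absorption reduces to the scalar condition $\tfrac{\alpha_{k+1}}{2L}\ge\nu_k C^2\widetilde\alpha_{k+2}^2$, which is what the choices of $a$, $K_0$ and $\nu_k$ in \eqref{eq:a_K0_D_prox_clipped_SSTM_shift_lemma}--\eqref{eq:nu_k_D_prox_clipped_SSTM_shift_lemma} are calibrated to guarantee in both phases.

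Two of your stated worries are also non-issues. First, your retained term $D_f(x^*,x^{k+1})$ is actually strong enough, because $D_f(x^*,x^{k+1})=\tfrac1n\sum_i D_{f_i}(x^*,x^{k+1})\ge\tfrac{1}{2Ln}\sum_i\|\nabla f_i(x^{k+1})-\nabla f_i(x^*)\|^2$; the point is to decompose it per worker rather than compare it with $\|\nabla f(x^{k+1})-\nabla f(x^*)\|^2$. Second, the orientation of the Bregman divergence is irrelevant here: for convex $L$-smooth $f_i$ the inequality $D_{f_i}(x,y)\ge\tfrac{1}{2L}\|\nabla f_i(x)-\nabla f_i(y)\|^2$ holds in both argument orders, so the "argument-swapped" divergence you have is exactly the one you need. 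With that one observation inserted, your argument closes and coincides with the paper's; without it, the proof is incomplete at the step that justifies the entire shift mechanism.
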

\begin{proof}
  By optimality condition for the problem \eqref{eq:D_prox_clipped_SSTM_shift_x_z}, we have for any $z\in B_{3\sqrt{M}}(x^*)$
\begin{eqnarray*}
    \alpha_{k+1}\left\la \tg(x^{k+1}), z^k-z\right\ra &\leq& \alpha_{k+1}(\Psi(z)-\Psi(z^{k+1})) +\alpha_{k+1}\left\la \tg(x^{k+1}), z^k-z^{k+1}\right\ra \\
    &&+ \frac{1}{2}\|z^k-z\|^2 - \frac{1}{2}\|z^{k+1}-z\|^2 - \frac{1}{2}\|z^{k+1}-z^k\|^2  \\
    &\overset{\eqref{eq:omega_k+1_def_dprox_clipped_SSTM_shift}}{\leq}& \alpha_{k+1}(\Psi(z)-\Psi(z^{k+1})) +\alpha_{k+1}\left\la \omega_{k+1}, z^k-z^{k+1}\right\ra \\ && +\alpha_{k+1}\left\la \nabla f(x^{k+1}), z^k-z^{k+1}\right\ra\\
    && + \frac{1}{2}\|z^k-z\|^2 - \frac{1}{2}\|z^{k+1}-z\|^2 - \frac{1}{2}\|z^{k+1}-z^k\|^2 
\end{eqnarray*}
Using $A_{k+1}(y^{k+1}-x^{k+1})= \alpha_{k+1}(z^{k+1}-z^k)$, we get 
\begin{eqnarray*}
    \alpha_{k+1}\left\la \tg(x^{k+1}), z^k-z\right\ra &\leq& \alpha_{k+1}(\Psi(z)-\Psi(z^{k+1})) +\alpha_{k+1}\left\la \omega_{k+1}, z^k-z^{k+1}\right\ra \\ && +A_{k+1}\left\la \nabla f(x^{k+1}), x^{k+1}-y^{k+1}\right\ra\\
    && + \frac{1}{2}\|z^k-z\|^2 - \frac{1}{2}\|z^{k+1}-z\|^2 - \frac{1}{2}\|z^{k+1}-z^k\|^2 \\
    &\overset{(\ast)}{\leq}& \alpha_{k+1}(\Psi(z)-\Psi(z^{k+1})) +\alpha_{k+1}\left\la \omega_{k+1}, z^k-z^{k+1}\right\ra \\ && +A_{k+1}\left(f(x^{k+1})-f(y^{k+1}) + \frac{L}{2}\|y^{k+1}-x^{k+1}\|^2\right)\\
    && + \frac{1}{2}\|z^k-z\|^2 - \frac{1}{2}\|z^{k+1}-z\|^2 - \frac{1}{2}\|z^{k+1}-z^k\|^2 \\
    &=& \alpha_{k+1}(\Psi(z)-\Psi(z^{k+1})) +\alpha_{k+1}\left\la \omega_{k+1}, z^k-z^{k+1}\right\ra \\ && +A_{k+1}\left(f(x^{k+1})-f(y^{k+1})\right) + \frac{\alpha_{k+1}^2L}{2A_{k+1}}\|z^{k+1}-z^{k}\|^2\\
    && + \frac{1}{2}\|z^k-z\|^2 - \frac{1}{2}\|z^{k+1}-z\|^2 - \frac{1}{2}\|z^{k+1}-z^k\|^2 \\
    &=& \alpha_{k+1}(\Psi(z)-\Psi(z^{k+1})) +\alpha_{k+1}\left\la \omega_{k+1}, z^k-z^{k+1}\right\ra \\ && +A_{k+1}\left(f(x^{k+1})-f(y^{k+1})\right) + \frac{1}{2}\|z^k-z\|^2 - \frac{1}{2}\|z^{k+1}-z\|^2\\
    &&  - \frac{1}{2}\left(1- \frac{\alpha_{k+1}^2L}{A_{k+1}}\right)\|z^{k+1}-z^k\|^2 
\end{eqnarray*}
where in $(\ast)$ $L$-smoothness of $f$ was used. 
Using Young's inequality, we have 
\begin{eqnarray*}
    \alpha_{k+1}\left\la \tg(x^{k+1}), z^k-z\right\ra &\leq&  \alpha_{k+1}(\Psi(z)-\Psi(z^{k+1})) +\alpha_{k+1}\frac{D}{2}\|\omega_{k+1}\|^2+ \frac{\alpha_{k+1}}{2D} \|z^k-z^{k+1}\|^2 \\ 
    && +A_{k+1}\left(f(x^{k+1})-f(y^{k+1})\right) + \frac{1}{2}\|z^k-z\|^2 - \frac{1}{2}\|z^{k+1}-z\|^2\\
    &&  - \frac{1}{2}\left(1- \frac{\alpha_{k+1}^2L}{A_{k+1}}\right)\|z^{k+1}-z^k\|^2 \\
    &\overset{D= 2\alpha_{k+1}}{=}&  \alpha_{k+1}(\Psi(z)-\Psi(z^{k+1})) +\alpha^2_{k+1}\|\omega_{k+1}\|^2+ \frac{1}{4} \|z^k-z^{k+1}\|^2 \\ 
    && +A_{k+1}\left(f(x^{k+1})-f(y^{k+1})\right) + \frac{1}{2}\|z^k-z\|^2 - \frac{1}{2}\|z^{k+1}-z\|^2\\
    &&  - \frac{1}{2}\left(1- \frac{\alpha_{k+1}^2L}{A_{k+1}}\right)\|z^{k+1}-z^k\|^2 
\end{eqnarray*}
Now, by  $a \geq 2$, we have $\frac{1}{2}- \frac{\alpha_{k+1}^2L}{A_{k+1}} \geq 0$ and 
\begin{eqnarray}
\label{eq:hdfjdofodpf}
    \alpha_{k+1}\left\la \tg(x^{k+1}), z^k-z\right\ra &\leq& \alpha_{k+1}(\Psi(z)-\Psi(z^{k+1})) +\alpha^2_{k+1}\|\omega_{k+1}\|^2 \notag\\ 
    && +A_{k+1}\left(f(x^{k+1})-f(y^{k+1})\right) + \frac{1}{2}\|z^k-z\|^2 - \frac{1}{2}\|z^{k+1}-z\|^2\\
    &&  - \frac{1}{2}\left(\frac{1}{2}- \frac{\alpha_{k+1}^2L}{A_{k+1}}\right)\|z^{k+1}-z^k\|^2 \notag\\
    &\leq& \alpha_{k+1}(\Psi(z)-\Psi(z^{k+1})) +\alpha^2_{k+1}\|\omega_{k+1}\|^2 +A_{k+1}\left(f(x^{k+1})-f(y^{k+1})\right) \notag\\
    &&+ \frac{1}{2}\|z^k-z\|^2 - \frac{1}{2}\|z^{k+1}-z\|^2.
\end{eqnarray}

To continue the proof, we have to mention that 
\begin{eqnarray}
\label{eq:dfjfimo}
    \left\la \tg(x^{k+1}), y^k-x^{k+1}\right\ra &\overset{\eqref{eq:omega_k+1_def_dprox_clipped_SSTM_shift}}{=}&
    \left\la \nabla f(x^{k+1}), y^k-x^{k+1}\right\ra + \left\la \omega_{k+1}, y^k-x^{k+1}\right\ra \notag\\
    &\leq&  f(y^k) - f(x^{k+1}) + \left\la \omega_{k+1}, y^k-x^{k+1}\right\ra,
\end{eqnarray}
where in the last inequality we used convexity of $f$. Also, by convexity of $\Psi$ and definition of $y^{k+1}$, we have 
\begin{eqnarray}
\label{eq:hfjfk}
    \Psi(y^{k+1}) &=& \Psi\left(\frac{A_k}{A_{k+1}}y^{k} + \frac{\alpha_{k+1}}{A_{k+1}} z^{k+1}\right) \leq \frac{A_k}{A_{k+1}}\Psi(y^{k}) + \frac{\alpha_{k+1}}{A_{k+1}} \Psi(z^{k+1});\notag\\
    -\alpha_{k+1}\Psi(z^{k+1})&\leq& - A_{k+1}\Psi(y^{k+1}) + A_k \Psi(y^k). 
\end{eqnarray}

Thus, we acquire 
\begin{eqnarray*}
    \alpha_{k+1} \left\la \tg(x^{k+1}), x^{k+1}- z\right\ra  &= & \alpha_{k+1} \left\la \tg(x^{k+1}), x^{k+1}- z^k\right\ra + \alpha_{k+1} \left\la \tg(x^{k+1}), z^k-z\right\ra \\
    &=& A_k \left\la \tg(x^{k+1}), y^k - x^{k+1}\right\ra + \alpha_{k+1} \left\la \tg(x^{k+1}), z^k-z\right\ra
\end{eqnarray*}
where the last equation is true due to that $\alpha_{k+1}(x^{k+1}-z^k) = A_k(y^k-x^{k+1})$. By \eqref{eq:hdfjdofodpf}, \eqref{eq:dfjfimo}, we get 
\begin{eqnarray*}
    \alpha_{k+1} \left\la \tg(x^{k+1}), x^{k+1}- z\right\ra &\leq& A_k( f(y^k) - f(x^{k+1})) + A_k\left\la \omega_{k+1}, y^k-x^{k+1}\right\ra\\
    &&+ \alpha_{k+1}(\Psi(z)-\Psi(z^{k+1})) +A_{k+1}\left(f(x^{k+1})-f(y^{k+1})\right) \notag\\
    && +\alpha^2_{k+1}\|\omega_{k+1}\|^2 + \frac{1}{2}\|z^k-z\|^2 - \frac{1}{2}\|z^{k+1}-z\|^2\\
    &\overset{\eqref{eq:hfjfk}}{\leq}& A_k( f(y^k) - f(x^{k+1})) + A_k\left\la \omega_{k+1}, y^k-x^{k+1}\right\ra\\
    &&+ \alpha_{k+1}\Psi(z)- A_{k+1}\Psi(y^{k+1}) + A_k \Psi(y^k) +A_{k+1}\left(f(x^{k+1})-f(y^{k+1})\right) \notag\\
    && +\alpha^2_{k+1}\|\omega_{k+1}\|^2 + \frac{1}{2}\|z^k-z\|^2 - \frac{1}{2}\|z^{k+1}-z\|^2.
\end{eqnarray*}
By definition of function $\Phi(\cdot)$ \eqref{eq:min_problem}, we have 
\begin{eqnarray*}
    \alpha_{k+1} \left\la \tg(x^{k+1}), x^{k+1}- z\right\ra &\leq&  A_k \Phi(y^k) - A_{k+1} \Phi(y^{k+1})  + A_k\left\la \omega_{k+1}, y^k-x^{k+1}\right\ra\\
    &&+ \alpha_{k+1}\Psi(z) + (A_{k+1} - A_k)f(x^{k+1}) \\
    && +\alpha^2_{k+1}\|\omega_{k+1}\|^2 + \frac{1}{2}\|z^k-z\|^2 - \frac{1}{2}\|z^{k+1}-z\|^2\\
    &\overset{(\ast\ast)}{=}& A_k \Phi(y^k) - A_{k+1} \Phi(y^{k+1}) + \alpha_{k+1}\left\la \omega_{k+1},x^{k+1} - z^k\right\ra\\
    &&+ \alpha_{k+1}\Psi(z) +  \alpha_{k+1}f(x^{k+1})\\
    && +\alpha^2_{k+1}\|\omega_{k+1}\|^2 + \frac{1}{2}\|z^k-z\|^2 - \frac{1}{2}\|z^{k+1}-z\|^2
\end{eqnarray*}
where in $(\ast\ast)$ we used $\alpha_{k+1}(x^{k+1}-z^k) = A_k(y^k-x^{k+1})$ and $A_{k+1} = A_k + \alpha_{k+1}$. Making a small rearrangement, we derive
\begin{eqnarray}
\label{eq:uyerueio}
    A_{k+1} \Phi(y^{k+1}) - A_k \Phi(y^k) &\leq&   \frac{1}{2}\|z^k-z\|^2 - \frac{1}{2}\|z^{k+1}-z\|^2\ + \alpha_{k+1}\Psi(z)  \notag\\
    &&  + \alpha_{k+1}f(x^{k+1}) + \alpha_{k+1} \left\la \tg(x^{k+1}),z - x^{k+1}\right\ra \notag\\
    &&  +  \alpha_{k+1}\left\la \omega_{k+1},x^{k+1} - z^k\right\ra +\alpha^2_{k+1}\|\omega_{k+1}\|^2 \notag\\
    &\overset{\eqref{eq:omega_k+1_def_dprox_clipped_SSTM_shift}}{=}& \frac{1}{2}\|z^k-z\|^2 - \frac{1}{2}\|z^{k+1}-z\|^2\ + \alpha_{k+1}\Psi(z)  \notag\\
    &&  + \alpha_{k+1}\frac{1}{n}\sum^n_{i=1}f_i(x^{k+1}) + \alpha_{k+1} \left\la \frac{1}{n}\sum^n_{i=1}\nabla f_i(x^{k+1}),z - x^{k+1}\right\ra \notag\\
    &&  + \alpha_{k+1} \left\la \omega_{k+1} ,z - x^{k+1}\right\ra +  \alpha_{k+1}\left\la \omega_{k+1},x^{k+1} - z^k\right\ra +\alpha^2_{k+1}\|\omega_{k+1}\|^2 \notag\\
    &\leq& \frac{1}{2}\|z^k-z\|^2 - \frac{1}{2}\|z^{k+1}-z\|^2 + \alpha_{k+1}\Psi(z)  + \alpha_{k+1} f(z) \notag\\
    &&  + \alpha_{k+1} \left\la \omega_{k+1} ,z  - z^k\right\ra +\alpha^2_{k+1}\|\omega_{k+1}\|^2 \notag\\
    && -\frac{\alpha_{k+1}}{2Ln}\sum^n_{i=1}\|\nabla f_i(x^{k+1}) - \nabla f_i(z)\|^2, 
\end{eqnarray}
where in the last inequality we used $L$-smoothness and convexity of each $f_i$. Now we consider the sequences of $h^k_i$, produced by the method, for any $i \in [n]$. Denoting $h^*_i = \nabla f_i(x^*)$ and , we have 
\begin{eqnarray}
\label{eq:djkfdjksd}
    \|h^{k+1}_i-h^*_i\|^2 &\overset{\eqref{eq:D_prox_clipped_SSTM_shift_estimator_2}}{=}& \|h^{k}_i-h^*_i\|^2 + 2\nu_k \left\la \hat\Delta^k_i, h^k_i-h^*_i\right\ra + \nu^2_k\|\hat\Delta^k_i\|^2\notag\\
    &=& \|h^{k}_i-h^*_i\|^2 + 2\nu_k \left\la \tg_i(x^{k+1}) -h^k_i, h^k_i-h^*_i\right\ra + \nu^2_k\|\tg_i(x^{k+1}) -h^k_i\|^2\notag\\
    &\overset{\nu_k \leq 1}{\leq}& \|h^{k}_i-h^*_i\|^2 + 2\nu_k \left\la \tg_i(x^{k+1})-h^k_i, h^k_i-h^*_i\right\ra + \nu_k\|\tg_i(x^{k+1})-h^k_i\|^2\notag\\
    &=&\|h^{k}_i-h^*_i\|^2 + \nu_k \left\la \tg_i(x^{k+1})-h^k_i,  \tg_i(x^{k+1}) + h^k_i- 2h^*_i\right\ra\notag\\
    &\leq& (1-\nu_k)\|h^{k}_i-h^*_i\|^2 + \nu_k\|\tg_i(x^{k+1})-h^*_i\|^2 \notag\\
    &\leq& (1-\nu_k)\|h^k_i-h^*_i\|^2 + 2\nu_k\|\tg_i(x^{k+1})-\nabla f_i(x^{k+1})\|^2 + 2\nu_k\|\nabla f_i(x^{k+1})-h^*_i\|^2 \notag\\
    &\overset{\eqref{eq:omega_k+1_def_dprox_clipped_SSTM_shift}}{=}& (1-\nu_k)\|h^{k}_i-h^*_i\|^2 + 2\nu_k\|\omega_{i,k+1}\|^2 + 2\nu_k\|\nabla f_i(x^{k+1})-\nabla f_i(x^{\star})\|^2.
\end{eqnarray}
Summing up \eqref{eq:djkfdjksd} by $i$ from $1$ to $n$, we obtain 
\begin{eqnarray}
\label{eq:jdjldsdslocxnxcmn}
    \frac{1}{n}\sum^n_{i=1}\|h^{k+1}_i-h^*_i\|^2 &\leq& (1-\nu_k)\frac{1}{n}\sum^n_{i=1}\|h^{k}_i-h^*_i\|^2 + \frac{2\nu_k}{n}\sum^n_{i=1}\|\omega_{i,k+1}\|^2\notag\\
    && + \frac{2\nu_k}{n}\sum^n_{i=1}\|\nabla f_i(x^{k+1})-\nabla f_i(x^{\star})\|^2.
\end{eqnarray}

Combining inequality \eqref{eq:uyerueio}, where we take $z = x^*$, and inequality \eqref{eq:jdjldsdslocxnxcmn} multiplied by $\frac{1}{2}C^2\widetilde\alpha^2_{k+2}$, we get 
\begin{eqnarray*}
    A_{k+1}\left( \Phi(y^{k+1})-\Phi(x^*)\right) &\leq& A_{k}\left( \Phi(y^{k})-\Phi(x^*)\right) + \frac{1}{2}\|z^k-x^*\|^2 + \frac{1}{2}C^2\widetilde\alpha^2_{k+1}\frac{1}{n}\sum^n_{i=1} \|h^k_i-h^*_i\|^2 \\
    && - \frac{1}{2}\|z^{k+1}-x^*\|^2 - \frac{1}{2}C^2\widetilde\alpha^2_{k+2}\frac{1}{n}\sum^n_{i=1} \|h^{k+1}_i-h^*_i\|^2\\
    &&+ \frac{1}{2}(1-\nu_k)C^2\widetilde\alpha^2_{k+2}\frac{1}{n}\sum^n_{i=1} \|h^k_i-h^*_i\|^2 - \frac{1}{2}C^2\widetilde\alpha^2_{k+1}\frac{1}{n}\sum^n_{i=1} \|h^k_i-h^*_i\|^2\\
    && + \alpha_{k+1} \left\la \omega_{k+1} , x^*- z^k\right\ra +\alpha^2_{k+1}\|\omega_{k+1}\|^2 + \frac{1}{2}C^2\widetilde\alpha^2_{k+2}\frac{2\nu_k}{n}\sum^n_{i=1}\|\omega_{i,k+1}\|^2\\
    && - \left(\frac{\alpha_{k+1}}{2Ln} - \frac{1}{n}\nu_kC^2\widetilde\alpha^2_{k+2}\right)\sum^n_{i=1}\|\nabla f_i(x^{k+1}) - \nabla f_i(z)\|^2.
\end{eqnarray*}
By the selection of parameters \eqref{eq:a_K0_D_prox_clipped_SSTM_shift_lemma}, \eqref{eq:nu_k_D_prox_clipped_SSTM_shift_lemma} and definition of Lyapunov function $M_k$ \eqref{eq:Lyapunov_function_D_prox_clipped_SSTM_shift}, we have 
\begin{eqnarray*}
    A_{k+1}\left( \Phi(y^{k+1})-\Phi(x^*)\right) &\leq& A_{k}\left( \Phi(y^{k})-\Phi(x^*)\right) + \frac{1}{2}M_k - \frac{1}{2}M_{k+1}\\
    && + \alpha_{k+1} \left\la \omega_{k+1} , x^*- z^k\right\ra +\alpha^2_{k+1}\|\omega_{k+1}\|^2 + \frac{\alpha^2_{k+1}}{n^2}\sum^n_{i=1}\|\omega_{i,k+1}\|^2.
\end{eqnarray*}
Summing up the previous inequality by $k$ from $0$ to $K-1$, we finish the proof. 

\end{proof}

\begin{theorem}
\label{thm:D_prox_clipped_SSTM_shift_full}
    Let Assumptions~\ref{as:bounded_alpha_moment}, \ref{as:L_smoothness} and \ref{as:str_cvx}($\mu = 0$) hold on $Q = B_{5n\sqrt{M}}(x^*)$, where $M \geq \|x^0-x^*\|^2+C^2\alpha^2_{K_0+1}\frac{1}{n}\sum^n_{i=1}\|\nabla f_i(x^*)\|^2$, where $C =\frac{864}{n}\ln \frac{10nK}{\beta}$, and $a \geq 0$, and 
    \begin{gather}
        a \geq \max\left\{2, \frac{8\cdot 3^5\cdot 72^4}{n} \ln^4\frac{10nK}{\beta}, \frac{ 18\cdot 6^5\sigma K^{\frac{1}{\alpha}}(K+1)}{\sqrt{M}Ln^{\frac{\alpha-1}{\alpha}}} \ln^{\frac{\alpha-1}{\alpha}}\frac{10nK}{\beta} \right\},\label{eq:Dprox_clipped_SSTM_shift_parameter_a}\\
        \lambda_{k} = \frac{n\sqrt{M}}{72\widetilde\alpha_{k+1}\ln\frac{10nK}{\beta}}, \label{eq:Dprox_clipped_SSTM_shift_clipping_level}
    \end{gather}
    for some $K \geq K_0 =  \left\lceil\frac{3}{2}C^2n\right\rceil > 0$ and $\beta \in (0,1]$ such that $\ln\frac{10nK}{\beta} \geq 1$. Then, after $K$ iterations of~\algname{DProx-clipped-SSTM-shift} the following inequality holds with probability at least $1 - \beta$ 
    \begin{equation}
        \Phi(y^K) - \Phi(x^*) \leq \frac{6aL M}{K(K+3)} \quad \text{and} \quad  \{x^k\}_{k=0}^{K+1}, \{z^k\}_{k=0}^{K}, \{y^k\}_{k=0}^K \subseteq B_{2\sqrt{M}}(x^*). \label{eq:Dprox_clipped_SSTM_shift}
    \end{equation}
    In particular, when parameter $a$ equals the maximum from \eqref{eq:Dprox_clipped_SSTM_shift_parameter_a}, then after $K$ iterations of \algname{DProx-clipped-SSTM-shift}, we have with probability at least $1-\beta$
    \begin{equation}
        \Phi(y^K) - \Phi(x^*) = \cO\left(\max\left\{\frac{LM}{K^2}, \frac{LM\ln^4\frac{nK}{\beta}}{nK^2},\frac{\sigma \sqrt{M} \ln^{\frac{\alpha-1}{\alpha}}\frac{nK}{\beta}}{n^{\frac{\alpha-1}{\alpha}}K^{\frac{\alpha-1}{\alpha}}}\right\}\right), \label{eq:Dprox_clipped_SSTM_shift_2}
    \end{equation}
    i.e. achieve $\Phi(y^K) - \Phi(x^*) \leq \varepsilon$ with probability at least $1 - \beta$ \algname{DProx-clipped-SSTM-shift} requires
    \begin{equation}
        K = \cO\left(\max\left\{\sqrt{\frac{LM}{\varepsilon}}, \sqrt{\frac{LM}{\varepsilon n}}\ln^2\frac{nLM}{\varepsilon\beta}, \frac{1}{n}\left(\frac{\sigma \sqrt{M}}{\varepsilon}\right)^{\frac{\alpha}{\alpha-1}}\ln\frac{\sigma \sqrt{M}}{\varepsilon\beta}\right\}\right) \label{eq:Dprox_clipped_SSTM_shift_complexity}
    \end{equation}
    iterations/oracle calls per worker.
\end{theorem}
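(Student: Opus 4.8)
The plan is to run the same induction-based high-probability scheme used in the proof of Theorem~\ref{thm:prox_clipped_SGD_convex_case}, but now driven by the non-standard potential $M_k$ from Lemma~\ref{lem:SSTM_optimization_lemma_convex} instead of by $\|x^k-x^*\|^2$. Concretely, I would introduce for each $t=0,1,\dots,K$ a probability event $E_t$ asserting that $M_s\le 2M$ together with the partial-sum control $\|\frac{\alpha_{s}}{n}\sum_{i=1}^{r-1}\omega_{i,s}^u\|\le\frac{\sqrt{M}}{2}$ holds for all $s\le t$ and all $r\in[n]$ simultaneously, and prove by induction that $\PP\{E_t\}\ge 1-\nicefrac{t\beta}{K}$. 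The base case is immediate from $M_0\le M$. For the inductive step, the first task is to show that $E_{T-1}$ forces $x^{s},y^s,z^s\in B_{2\sqrt{M}}(x^*)$ (hence inside the larger ball $B_{5n\sqrt{M}}(x^*)$ where the assumptions live), using $M_s\le 2M$, the averaging relations $x^{k+1}=\frac{A_ky^k+\alpha_{k+1}z^k}{A_{k+1}}$ and $y^{k+1}=\frac{A_ky^k+\alpha_{k+1}z^{k+1}}{A_{k+1}}$, and the bound on $\|\nabla f_i(x^*)\|$ coming from the definition of $M$; this is what licenses the application of Lemma~\ref{lem:SSTM_optimization_lemma_convex}.

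Next I would decompose each error vector as $\omega_{i,k+1}=\omega_{i,k+1}^u+\omega_{i,k+1}^b$ with $\omega_{i,k+1}^u=\EE_{\xi_i^k}[\tg_i(x^{k+1})]-\tg_i(x^{k+1})$, expand $\|\omega_{k+1}\|^2$ into diagonal squares plus the cross terms $\frac{2}{n^2}\sum_{j\ge 2}\langle\sum_{i<j}\omega_{i,k+1}^u,\omega_{j,k+1}^u\rangle$, and thereby split the right-hand side of the lemma into pieces analogous to $\circledOne,\dots,\circledSix$. Because the induction hypothesis yields $\|\nabla f_i(x^{k+1})\|\le\nicefrac{\lambda_k}{2}$ via $L$-smoothness and ball containment, Lemma~\ref{lem:bias_and_variance_clip} applies and gives $\|\omega_{i,k+1}^b\|\le\nicefrac{2^\alpha\sigma^\alpha}{\lambda_k^{\alpha-1}}$ and $\EE_{\xi_i^k}[\|\omega_{i,k+1}^u\|^2]\le 18\lambda_k^{2-\alpha}\sigma^\alpha$. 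The purely deterministic bias/variance sums are controlled directly by the choices of $a$ and $\lambda_k$ in \eqref{eq:Dprox_clipped_SSTM_shift_parameter_a}--\eqref{eq:Dprox_clipped_SSTM_shift_clipping_level}, while the zero-mean martingale sums (the $\circledOne$-type linear term, the variance-fluctuation $\circledThree$-type term, and the cross-term $\circledSix'$-type term) are each handled by Bernstein's inequality (Lemma~\ref{lem:Bernstein_ineq}), each contributing a failure probability of order $\nicefrac{\beta}{nK}$.

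The cross term requires the same secondary induction over clients as in Theorem~\ref{thm:prox_clipped_SGD_convex_case}: I introduce clipped surrogates $\delta_j^k$ and $\rho'_{i}$ that coincide with the true partial sums on $E_{T-1}\cap\widetilde E_{T-1,j}$, and prove $\PP\{E_{T-1}\cap\widetilde E_{T-1,m}\}\ge 1-\nicefrac{(T-1)\beta}{K}-\nicefrac{m\beta}{(8nK)}$ by one further Bernstein application; this is exactly what upgrades the partial-sum control inside $E_T$. A union bound over all the Bernstein events and the induction hypothesis then closes the induction at level $T$, giving $M_T\le 2M$ with probability at least $1-\nicefrac{T\beta}{K}$. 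Finally, using $M_K\ge 0$ and the telescoped bound of Lemma~\ref{lem:SSTM_optimization_lemma_convex} on $E_K$ yields $\Phi(y^K)-\Phi(x^*)\le\frac{6aLM}{K(K+3)}$, after which substituting the explicit $a$ from \eqref{eq:Dprox_clipped_SSTM_shift_parameter_a} and solving $\Phi(y^K)-\Phi(x^*)\le\varepsilon$ gives the complexity \eqref{eq:Dprox_clipped_SSTM_shift_complexity}.

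The main obstacle I anticipate is the bookkeeping of the time-varying weights. Since the method is accelerated, $\alpha_{k+1}\sim\nicefrac{k}{aL}$ grows while $\lambda_k\sim\nicefrac{n\sqrt{M}}{\widetilde\alpha_{k+1}A}$ decays, so every Bernstein variance sum carries factors of the form $\sum_k\alpha_{k+1}^2\lambda_k^{2-\alpha}$ with the $K_0$-dependent switch in $\widetilde\alpha_{k+1}$ and $\nu_k$. Verifying that the prescribed $a$ (in particular its $\nicefrac{A^4}{n}$ and $\nicefrac{\sigma K^{(\alpha+1)/\alpha}}{\cdots}$ branches) makes each of $\circledOne$--$\circledSix$ at most a fixed fraction of $M$ \emph{simultaneously} is where the delicate constant chasing lives, and it is precisely the reason the potential $M_k$ must freeze the weight at $\alpha_{K_0+1}$ for $k<K_0$.
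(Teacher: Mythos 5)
Your proposal follows essentially the same route as the paper's proof: the same induction on events controlling the potential $M_t$ and the per-client partial sums, the same ball-containment step licensing Lemma~\ref{lem:SSTM_optimization_lemma_convex}, the same unbiased/biased decomposition with Bernstein bounds on the linear, variance-fluctuation, and cross terms, and the same secondary induction over clients. One small correction: the vector being clipped is $\nabla f_{\xi_i^k}(x^{k+1})-h_i^k$, so the condition needed for Lemma~\ref{lem:bias_and_variance_clip} is $\|\nabla f_i(x^{k+1})-h_i^k\|\le\nicefrac{\lambda_k}{2}$ (not $\|\nabla f_i(x^{k+1})\|\le\nicefrac{\lambda_k}{2}$), and establishing it is exactly where the shift-distance part of $M_k$ and the frozen weight $\widetilde\alpha_{k+1}$ enter, as the paper does in \eqref{eq:D_prox_clipped_SSTM_shift_technical_3}--\eqref{eq:D_prox_clipped_SSTM_shift_technical_4}.
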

\begin{proof}
    The key idea behind the proof is similar to the one used in \citep{gorbunov2021near,sadiev2023high}. We prove by induction that the iterates do not leave some ball and $\Phi(y^K)-\Phi(y^*)$ decreases as $\sim \nicefrac{1}{K(K+3)}$
    
    Firstly, we denote $R_k = \|z^k - x^*\|$, $\widetilde{R}_0 = R_0$, $\widetilde{R}_{k+1} = \max\{\widetilde{R}_k, R_{k+1}\}$ for all $k\geq 0$, and now we show by induction that for all $k\geq 0$ the iterates $x^{k+1}, z^k, y^k$ lie in $B_{\widetilde{R}_k}(x^*)$. The induction base is trivial since $y^0 = z^0$, $\widetilde{R}_0 = R_0$, and $x^1 = \tfrac{A_0 y^0 + \alpha_1 z^0}{A_1} = z^0$. Next, we assume this statement is true for some  $l \geq 1$: $x^{l}, z^{l-1}, y^{l-1} \in B_{\widetilde{R}_{l-1}}(x^*)$. According to definitions of $R_l$ and $\widetilde{R}_l$, we obtain $z^l \in B_{R_{l}}(x^*)\subseteq B_{\widetilde{R}_{l}}(x^*)$. Due to that $y^l$ is a convex combination of $y^{l-1}\in B_{\widetilde{R}_{l-1}}(x^*)\subseteq B_{\widetilde{R}_{l}}(x^*)$, $z^l\in B_{\widetilde{R}_{l}}(x^*)$ and $B_{\widetilde{R}_{l}}(x^*)$ is a convex set, we have that $y^l \in B_{\widetilde{R}_{l}}(x^*)$. Finally, since $x^{l+1}$ is a convex combination of $y^l$ and $z^l$, we conclude $x^{l+1}$ lies in $B_{\widetilde{R}_{l}}(x^*)$ as well.

    Now to formulate the statement rigorously, we introduce probability event $E_k$ for each for each $k = 0,\ldots, K$ as follows: inequalities
    \begin{gather}
         \underbrace{2\sum\limits^{t-1}_{l=0}\alpha_{l+1}\la\omega_{l+1}, x^*-z^l\ra  + 2\sum\limits^{t-1}_{l=0}\alpha^2_{l+1}\|\omega_{l+1}\|^2 + 2\sum\limits^{t-1}_{l=0}\sum^{n}_{i=1}\frac{\alpha^2_{l+1}}{n^2}\|\omega_{i,l+1}\|^2}_{B_t} \leq M, \label{eq:SSTM_induction_inequality_1}\\
        R_t\leq  \sqrt{M_t} \leq 2\sqrt{M} \label{eq:SSTM_induction_inequality_2},\\
        \left\|\frac{\alpha_{t}}{n}\sum^r_{i=1}\omega^u_{i,t}\right\| \leq \frac{M}{2}\label{eq:SSTM_induction_inequality_3}
    \end{gather}
    hold for  $t = 0,1,\ldots, k$ and $r=1,2\dots,n$ simultaneously, where 
     \begin{gather}
        \omega_{l+1} = \omega_{l+1}^u +\omega^b_{l+1}, \label{eq:SSTM_omega_full}\\
        \omega_{l+1}^u \eqdef \frac{1}{n}\sum_{i=1}^n \omega_{i,l+1}^u, \quad \omega_{l+1}^b \eqdef \frac{1}{n}\sum_{i=1}^n \omega_{i,l+1}^b, \label{eq:SSTM_omega_u_omega_b}\\
        \omega_{i,l+1}^u \eqdef \tg_i(x^{l+1})  - \EE_{\xi^l_i}\left[\tg_i(x^{l+1}) \right],\quad \omega_{i,k+1}^b \eqdef \EE_{\xi^k_i}\left[\tg_i(x^{k+1}) \right] - \nabla f_i(x^{k+1}),~~\forall i \in [n]. \label{eq:SSTM_omega_i}
    \end{gather}

     We want to show via induction $\widetilde{R}_{l} \leq 5n\sqrt{M}$ with high probability, which allows us to apply the result of Lemma~\ref{lem:SSTM_optimization_lemma_convex} and  Bernstein's inequality to estimate the stochastic part of the upper-bound. After that, we will prove by induction that $\PP\{E_k\} \geq 1 - \nicefrac{k\beta}{K}$ for all $k = 0,1,\ldots, K$. The base induction follows immediately: the left-hand side of \eqref{eq:SSTM_induction_inequality_1} equals zero and $M \geq M_0$ by definition, and for $k=0$ we have  $\left\|\frac{\alpha_{0}}{n}\sum\limits^r_{i=1}\omega^u_{i,0}\right\| = 0$, since $\alpha_0 = 0$. Next we assume that the statement holds for some $k = T - 1 \leq K-1$: $\PP\{E_{T-1}\} \geq 1 - \nicefrac{(T-1)\beta}{K}$. Let us show that $\PP\{E_{T}\} \geq 1 - \nicefrac{T\beta}{K}$. 
     
     To proceed, we need to show that probability event $E_{T-1}$ implies that $\widetilde{R}_t \leq 2\sqrt{M}$ for all $t = 0,1,\ldots, T$. The base is already proven. Next we assume that $\widetilde{R}_{t} \leq 2\sqrt{M}$ for all $t = 0,1,\ldots, t'$ for some $t' < T$. Then for all $t = 0,1,\dots, t'$
    \begin{eqnarray*}
        \|z^{t} - x^{\star}\|&=& \|\operatorname{prox}_{\alpha_{t}\Psi}\left(z^{t-1} - \alpha_{t}\tg (x^{t})\right) - \operatorname{prox}_{\alpha_{t}\Psi}\left(x^{\star} - \alpha_{t}\nabla f(x^{\star})\right)\| \\
        &\leq&\|z^{t-1} - x^* -\alpha_{t}\left(\tg(x^{t})- \nabla f(x^{\star})\right)\|\\
        &\leq&\|z^{t-1} - x^*\| + \alpha_{t}\|\tg(x^t)-h^{t-1}\| + \alpha_{t}\|h^{t-1}- h^*\| \\
        &\leq& \left(1 + \frac{1}{C}\right)\sqrt{\|z^{t-1} - x^*\|^2 + C^2\widetilde\alpha^2_t\frac{1}{n}\sum^n_{i=1}\|h^{t-1}_i- h^*_i\|^2}+ \alpha_{t}\lambda_{t-1}\\
        &\leq& 2\sqrt{M_{t-1}} + \alpha_{t}\lambda_{t-1} \overset{\eqref{eq:Dprox_clipped_SSTM_shift_clipping_level},\eqref{eq:SSTM_induction_inequality_2}}{\leq} 4\sqrt{M} + n\sqrt{M}\leq 5n\sqrt{M} .\notag
    \end{eqnarray*}
    
    This means that $x^t, z^t, y^t \in B_{5n\sqrt{M}}(x^*)$ for $t=0,1,\dots, t'$and we can apply Lemma~\ref{lem:SSTM_optimization_lemma_convex}: $E_{T-1}$ implies
    \begin{eqnarray}
        A_{t'}\left(\Phi(y^{t'}) - \Phi(x^*)\right) &\leq& \frac{1}{2}M_0 - \frac{1}{2}M_{t'} + \sum^{t'-1}_{l=0}\alpha_{l+1}\la\omega_{l+1}, x^*-z^l\ra  + \sum^{t'-1}_{l=0}\alpha^2_{l+1}\|\omega_{l+1}\|^2\notag\\
        && + \sum^{t'-1}_{k=0}\sum^{n}_{i=1}\frac{\alpha^2_{l+1}}{n^2}\|\omega_{i,k+1}\|^2\notag\\
        &\leq& \frac{1}{2}M_0 -\frac{1}{2}M_{t'} +B_{t'} \leq \frac{3}{2}M \label{eq:D_prox_clipped_SSTM_shift_technical_1}
    \end{eqnarray}
    that gives
    \begin{equation*}
        M_{t'} \leq M_0 +M \leq 2M. 
    \end{equation*}
    That is, we showed that $E_{T-1}$ implies $x^t, z^t, y^t \in B_{2\sqrt{M}}(x^*)$ and 
    \begin{eqnarray}
        \Phi(y^{t}) - \Phi(x^*) \overset{\eqref{eq:SSTM_induction_inequality_1}, \eqref{eq:D_prox_clipped_SSTM_shift_technical_1}}{\leq} \frac{\frac{1}{2}M_0 - \frac{1}{2}M_{t} + M}{A_{t}} \leq \frac{3M}{2A_{t}} = \frac{6aLM}{t(t+3)}. \label{eq:D_prox_clipped_SSTM_shift_technical_1_1}
    \end{eqnarray}
    for all $t =0,1,\dots,T$.  Before we proceed, we introduce a new notation:
    \begin{equation*}
        \eta_t = \begin{cases} x^* - z^t,& \text{if } \|x^* - z^t\| \leq 2\sqrt{M},\\ 0,& \text{otherwise,} \end{cases}
    \end{equation*}
    for all $t = 0, 1, \ldots, T$. Random vectors $\{\eta_t\}_{t=0}^T$ are bounded almost surely:
     \begin{equation}
        \|\eta_t\| \leq 2\sqrt{M}  \label{eq:gap_thm_SSTM_technical_3}
    \end{equation}
    for all $t = 0, 1, \ldots, T$. In addition, $\eta_t = x^* - z^t$ follows from $E_{T-1}$ for all $t=0,1,\dots, T$ and, thus, $E_{T-1}$ implies 
    \begin{eqnarray}
        B_T &=& 2 \sum^{T-1}_{k=0}\alpha_{k+1}\la\omega_{k+1}, x^*-z^k\ra  + 2\sum^{T-1}_{k=0}\alpha^2_{k+1}\|\omega_{k+1}\|^2+ 2\sum^{T-1}_{k=0}\sum^{n}_{i=1}\frac{\alpha^2_{k+1}}{n^2}\|\omega_{i,k+1}\|^2 \notag\\
        &=& 2 \sum^{T-1}_{k=0}\alpha_{k+1}\la\omega_{k+1}, \eta_k\ra  + 2\sum^{T-1}_{k=0}\alpha^2_{k+1}\|\omega_{k+1}\|^2+ 2\sum^{T-1}_{k=0}\sum^{n}_{i=1}\frac{\alpha^2_{k+1}}{n^2}\|\omega_{i,k+1}\|^2 . \label{eq:D_prox_clipped_SSTM_shift_technical_2}
    \end{eqnarray}
    Using the notation from \eqref{eq:SSTM_omega_full}-\eqref{eq:SSTM_omega_i}, we can rewrite $\|\omega_{k+1}\|^2$ and $\|\omega_{i,k+1}\|^2$ as
    \begin{eqnarray}
        \|\omega_{k+1}\|^2 &\leq& \frac{2}{n^2}\sum\limits_{i=1}^n\|\omega_{i,k+1}^u\|^2 + \frac{4}{n^2}\sum\limits_{j=2}^n\left\langle \sum\limits_{i=1}^{j-1} \omega_{i,k+1}^u, \omega_{j,k+1}^u \right\rangle + 2\|\omega_{k+1}^b\|^2. \label{eq:SSTM_sum_omega_representation}
    \end{eqnarray}
    Putting all together, we obtain that $E_{T-1}$  implies
    \begin{eqnarray}
        B_T 
        &\leq& \underbrace{2\sum\limits_{k=0}^{T-1}\sum^n_{i=1}\frac{\alpha_{k+1}}{n}\left\la \omega_{i,k+1}^u, \eta_k\right\ra}_{\circledOne} + \underbrace{2\sum\limits_{k=0}^{T-1}\sum^n_{i=1}\frac{\alpha_{k+1}}{n}\left\la \omega_{i,k+1}^b, \eta_k\right\ra}_{\circledTwo} \notag\\
        &&\quad+ \underbrace{8\sum\limits_{k=0}^{T-1}\sum^n_{i=1}\frac{\alpha_{k+1}^2}{n^2}\left(\left\|\omega_{i,k+1}^u\right\|^2 - \EE_{\xi^k_i}\left[\left\|\omega_{i,k+1}^u\right\|^2\right]\right)}_{\circledThree}\notag\\
        &&\quad + \underbrace{8\sum\limits_{k=0}^{T-1}\sum^n_{i=1}\frac{\alpha_{k+1}^2}{n^2}\EE_{\xi^k}\left[\left\|\omega_{i,k+1}^u\right\|^2\right]}_{\circledFour} + \underbrace{8\sum\limits_{k=0}^{T-1}\sum^n_{i=1}\frac{\alpha_{k+1}^2}{n}\left\|\omega_{i,k+1}^b\right\|^2}_{\circledFive}\notag\\
        &&\quad +\underbrace{8\sum\limits_{k=0}^{T-1}\sum^n_{j=2}\frac{\alpha_{k+1}^2}{n^2}\left\la\sum^{j-1}_{i=1}\omega^u_{i,k+1}, \omega^u_{j, k+1}\right\ra}_{\circledSix}. \label{eq:D_prox_clipped_SSTM_shift_technical_7}
    \end{eqnarray}

    To finish the proof, it remains to estimate  $\circledOne, \circledTwo, \circledThree, \circledFour, \circledFive, \circledSix$ with high probability. More precisely, the goal to prove  that $\circledOne + \circledTwo + \circledThree + \circledFour + \circledFive + \circledSix  \leq M$ with high probability.
    Before we proceed, we need to derive several useful inequalities related to $\omega^u_{i,k+1},~\omega^b_{i,k+1}$. First of all, we have 
    \begin{equation}
        \|\omega_{i,k+1}^u\| \leq 2\lambda_{k}. \label{eq:D_prox_clipped_SSTM_shift_norm_omega_u_bound}
    \end{equation}
    by definition of the clipping operator. Next, probability event $E_{T-1}$ implies that for $t = 0$ we have $x^1 = x^0$ and
    \begin{eqnarray}
        \|\nabla f_i(x^{1}) - h^0_i\| &\leq& \|\nabla f_i(x^0)- \nabla f_i(x^{\star})\| + \|h^0_i -h^*_i\| \notag\\
        &\overset{\text{\small smooth}}{\leq}& L\|x^0 - x^*\| + \frac{\sqrt{n}}{C\widetilde\alpha_1}\sqrt{C^2\widetilde\alpha^2_1\frac{1}{n}\sum^n_{i=1}\|h^0_i-h^*_i\|^2}\notag\\
        &\leq& \left(\frac{2(K_0+2)}{a\widetilde\alpha_1}+\frac{\sqrt{n}}{C\widetilde\alpha_1}\right) \sqrt{M} \notag\\
        &\overset{\eqref{eq:Dprox_clipped_SSTM_shift_parameter_a},\eqref{eq:Dprox_clipped_SSTM_shift_clipping_level} }{\leq}& \frac{\lambda_0}{2}. \label{eq:D_prox_clipped_SSTM_shift_technical_3}
    \end{eqnarray}
     Next, for $t = 1,\ldots, T-1$ event $E_{T-1}$ implies
    \begin{eqnarray}
        \|\nabla f_i(x^{t+1})-h^t_i\| &\leq& \|\nabla f_i(x^{t+1}) - \nabla f_i(y^t)\| + \|\nabla f_i(y^t) - \nabla f_i(x^{\star})\| + \|h^t_i - h^*_i\| \notag\\
        &\leq& L\|x^{t+1} - y^t\| + \sqrt{2L\left(f_i(y^t) - f_i(x^*) - \la \nabla f_i(x^{\star}), y^t-x^{\star}\ra\right)} \notag\\
        &\overset{(\ast)}{\leq}& L\|x^{t+1} - y^t\| + \sqrt{2nL\left(\Phi(y^t) - \Phi(x^*)\right)} + \sqrt{\sum^n_{i=1}\|h^t_i - h^*_i\|^2}\\
        &\overset{\eqref{eq:D_prox_clipped_SSTM_shift_technical_1_1}}{\leq}& \frac{L\alpha_{t+1}}{A_t}\|x^{t+1} - z^t\| + \sqrt{\frac{12anL^2M}{t(t+3)}} + \frac{\sqrt{n}}{C\widetilde{\alpha}_{t+1}}\sqrt{\frac{1}{n}\sum^n_{i=1}\|h^t_i - h^*_i\|^2}\notag\\
        &\leq& \frac{4L\sqrt{M}\alpha_{t+1}}{A_t} + \sqrt{\frac{12anL^2M}{t(t+3)}} + \frac{\sqrt{n}}{C\widetilde{\alpha}_{t+1}}\sqrt{M_t} \notag\\
        &\overset{\eqref{eq:Dprox_clipped_SSTM_shift_clipping_level}}{\leq}&  \frac{\lambda_t}{2}\left(\frac{8\cdot 72L\alpha_{t+1}\widetilde\alpha_{t+1}\ln\frac{10nK}{\beta}}{nA_t} + 2\sqrt{\frac{12\cdot 72^2 aL^2\widetilde\alpha_{t+1}^2\ln^2\frac{10nK}{\beta}}{nt(t+3)}}\right)\notag\\
        &&+ \frac{\lambda_t}{2}\cdot \frac{288}{C\sqrt{n}}\ln\frac{10nK}{\beta}\notag\\
        &\leq& \frac{\lambda_t}{2}\cdot\frac{576aL^2\max\left\{K_0+2, t+2\right\}(t+2)\ln\frac{10nK}{\beta}}{a^2L^2t(t+3)n} \notag\\
        &&+ \frac{\lambda_t}{2}\cdot\sqrt{\frac{12aL^2\max\left\{(K_0+2)^2, (t+2)^2\right\}72^2\ln^2\frac{10nK}{\beta}}{na^2L^2t(t+3)}}\notag\\
        &&+ \frac{\lambda_t}{2}\cdot \frac{288}{C\sqrt{n}}\ln\frac{10nK}{\beta} \notag\\
        &\leq& \frac{\lambda_t}{2} \cdot \frac{9}{a} \max\left\{(K_0+2), 2\right\}\frac{72}{n} \ln\frac{10nK}{\beta}\notag\\
        &&+\frac{\lambda_t}{2}\sqrt{\frac{3}{a}\max\left\{(K_0+2)^2,9\right\}\frac{72^2}{n}\ln^2\frac{10nK}{\beta}}\notag\\
        &&+ \frac{\lambda_t}{2}\cdot \frac{288}{C\sqrt{n}}\ln\frac{10nK}{\beta} \overset{\eqref{eq:Dprox_clipped_SSTM_shift_parameter_a}}{\leq} \frac{\lambda_t}{2}, \label{eq:D_prox_clipped_SSTM_shift_technical_4}
    \end{eqnarray}
    where in $(\ast)$ we use $- \la \frac{1}{n}\sum\limits^n_{i=1}\nabla f_i(x^{\star}), y^t-x^{\star}\ra \leq \Psi(y^t) - \Psi(x^{\star})$, and  in the last row we use $\frac{(t+2)^2}{t(t+3)} \leq \frac{9}{4}$ for all $t \geq 1$ and $C \geq 12 \cdot 72 \ln\frac{10nK}{\beta}$.

    Therefore, Lemma~\ref{lem:bias_and_variance_clip} and  $E_{T-1}$ imply
    \begin{eqnarray}
        \|\omega_{i,k+1}^b\| &\leq& \frac{2^\alpha\sigma^\alpha}{\lambda_{k}^{\alpha-1}}, \label{eq:D_prox_clipped_SSTM_shift_norm_omega_b_bound} \\
        \EE_{\xi^k_i}\left[\|\omega_{i,k+1}^u\|^2\right] &\leq& 18 \lambda_k^{2-\alpha}\sigma^\alpha. \label{eq:D_prox_clipped_SSTM_shift_second_moment_omega_u_bound}
    \end{eqnarray}

    \paragraph{Upper bound for $\circledOne$.} To estimate this sum, we will use Bernstein's inequality. The summands have conditional equal to zero, since  $\EE_{\xi^k_i}[\omega_{i,k+1}^u] = 0$:
    \begin{equation}
        \EE_{\xi^k_i}\left[\frac{\alpha_{k+1}}{n}\left\la \omega_{i,k+1}^u, \eta_k\right\ra\right] = 0. \notag
    \end{equation}
    Moreover, for all $k = 0,\dots, T-1$ random vectors $\left\{ \omega_{i,k+1}^u\ra\right\}_{k=0}^{T-1}$ are independent. Thus, sequence $\left\{2\frac{\alpha_{k+1}}{n}\left\la \omega_{i,k+1}^u, \eta_k\right\ra\right\}_{k=0}^{T-1}$ is a martingale difference sequence. Next, the summands are bounded:
    \begin{eqnarray}
        \left|2\frac{\alpha_{k+1}}{n}\left\la \omega_{i,k+1}^u, \eta_k\right\ra\right| &\leq& 2\frac{\alpha_{k+1}}{n} \|\omega_{i,k+1}^u\| \cdot \|\eta_k\| \overset{\eqref{eq:D_prox_clipped_SSTM_shift_norm_omega_u_bound}}{\leq} 4\frac{\alpha_{k+1}}{n}\lambda_k \sqrt{M}\notag\\
        &\overset{\eqref{eq:Dprox_clipped_SSTM_shift_clipping_level}}{=}& \frac{4n\alpha_{k+1}\sqrt{M}}{72n\widetilde\alpha_{k+1}\ln\frac{10nK}{\beta}}\leq \frac{\sqrt{M}}{6\ln\frac{10nK}{\beta}} \eqdef c. \label{eq:D_prox_clipped_SSTM_shift_technical_8} 
    \end{eqnarray}
    
    Finally, conditional variances $\sigma_{i,k}^2 \eqdef \EE_{\xi^k_i}\left[4\frac{\alpha_{k+1}^2}{n^2}\left\la \omega_{i,k+1}^u, \eta_k\right\ra^2\right]$ of the summands are bounded::
    \begin{equation}
        \sigma_{i,k}^2 \leq \EE_{\xi^k_i}\left[4\frac{\alpha_{k+1}^2}{n^2}\|\omega_{i,k+1}^u\|^2\cdot \|\eta_k\|^2\right] \leq  16\frac{\alpha_{k+1}^2}{n^2}M \EE_{\xi^t}\left[\|\omega_{i,k+1}^u\|^2\right]. \label{eq:D_prox_clipped_SSTM_shift_technical_9}
    \end{equation}
    Applying Bernstein's inequality (Lemma~\ref{lem:Bernstein_ineq}) with $X_{i,k} = 2\frac{\alpha_{k+1}}{n}\left\la \omega_{i,k+1}^u, \eta_k\right\ra$, parameter $c$ as in \eqref{eq:D_prox_clipped_SSTM_shift_technical_8}, $b = \frac{M}{6}$, $G = \frac{M^2}{6^3\ln\frac{10nK}{\beta}}$:
    \begin{equation*}
        \PP\left\{|\circledOne| > \frac{M}{6}\quad \text{and}\quad \sum\limits_{k=0}^{T-1}\sum^n_{i=1} \sigma_{i,k}^2 \leq \frac{M^2}{6^3\ln\frac{10nK}{\beta}}\right\} \leq 2\exp\left(- \frac{b^2}{2G + \nicefrac{2cb}{3}}\right) = \frac{\beta}{5nK}.
    \end{equation*}
    The above is equivalent to 
    \begin{equation}
        \PP\left\{ E_{\circledOne} \right\} \geq 1 - \frac{\beta}{5nK},\quad \text{for}\quad E_{\circledOne} = \left\{ \text{either} \quad  \sum\limits_{k=0}^{T-1}\sum^n_{i=1} \sigma_{i,k}^2 > \frac{M^2}{6^3\ln\frac{10nK}{\beta}} \quad \text{or}\quad |\circledOne| \leq \frac{M}{6}\right\}. \label{eq:D_prox_clipped_SSTM_shift_sum_1_upper_bound}
    \end{equation}
    Moreover, $E_{T-1}$ implies that
    \begin{eqnarray}
        \sum\limits_{k=0}^{T-1}\sum^n_{i=1} \sigma_{i, k}^2 &\overset{\eqref{eq:D_prox_clipped_SSTM_shift_technical_9}}{\leq}& 16 M \sum\limits_{k=0}^{T-1}\sum^n_{i=1} \frac{\alpha_{k+1}^2}{n^2} \EE_{\xi^t}\left[\|\omega_{i,k+1}^u\|^2\right] \overset{\eqref{eq:D_prox_clipped_SSTM_shift_second_moment_omega_u_bound}}{\leq} 288\sigma^\alpha M \sum\limits_{k=0}^{T-1}\sum^n_{i=1} \frac{\alpha_{k+1}^2}{n^2}\lambda_k^{2-\alpha}\notag\\
        &\overset{\eqref{eq:Dprox_clipped_SSTM_shift_clipping_level}}{\leq}& \frac{288\sigma^\alpha M^{2-\nicefrac{\alpha}{2}}}{72^{2-\alpha}\ln^{2-\alpha}\frac{10nK}{\beta}}\sum\limits_{k=0}^{T-1} \frac{\alpha_{k+1}^{2}}{n^{\alpha-1}\widetilde\alpha_{k+1}^{2-\alpha}}  \leq \frac{288\sigma^\alpha M^{2-\nicefrac{\alpha}{2}}}{72^{2-\alpha}\ln^{2-\alpha}\frac{10nK}{\beta}}\sum\limits_{k=0}^{T-1} \frac{\alpha_{k+1}^{\alpha}}{n^{\alpha-1}}\notag\\
        &\leq& \frac{288\sigma^\alpha M^{2-\nicefrac\alpha2}}{n^{\alpha-1}72^{2-\alpha}\cdot 2^\alpha a^\alpha L^\alpha\ln^{2-\alpha}\frac{10nK}{\beta}}\sum\limits_{k=0}^{T-1} (k+2)^\alpha \notag\\
        &\leq& \frac{1}{a^\alpha} \cdot \frac{144\sigma^\alpha M^{2- \nicefrac\alpha2} T(T+1)^\alpha}{n^{\alpha-1} L^\alpha\ln^{2-\alpha}\frac{10nK}{\beta}} \overset{\eqref{eq:Dprox_clipped_SSTM_shift_parameter_a}}{\leq} \frac{M^2}{6^3 \ln\frac{10nK}{\beta}}. \label{eq:prox_clipped_SSTM_shift_sum_1_variance_bound}
    \end{eqnarray}

     \paragraph{Upper bound for $\circledTwo$.} Probability event $E_{T-1}$ implies
    \begin{eqnarray}
        \circledTwo &\leq& 2\sum\limits_{k=0}^{T-1}\sum^n_{i=1}\frac{\alpha_{k+1}}{n}\|\omega_{i,k+1}^b\|\cdot \|\eta_k\| \overset{\eqref{eq:D_prox_clipped_SSTM_shift_norm_omega_b_bound}}{\leq} 4\sqrt{M}\cdot 2^\alpha\sigma^\alpha \sum\limits_{k=0}^{T-1}\frac{\alpha_{k+1}}{\lambda_{k}^{\alpha-1}}\notag\\ &\overset{\eqref{eq:Dprox_clipped_SSTM_shift_clipping_level}}{\leq}& \frac{16\cdot 72^{\alpha-1}M^{1-\nicefrac{\alpha}{2}}\sigma^\alpha }{n^{\alpha-1}}\ln^{\alpha-1}\frac{10nK}{\beta} \sum\limits_{k=0}^{T-1}\max\left\{\alpha_{k+1}\widetilde\alpha_{k+1}^{\alpha-1},\alpha_{k+1}^{\alpha}\right\}\notag\\
        &\leq& \frac{16\cdot 72^{\alpha-1}\sigma^\alpha M^{1 - \nicefrac\alpha2}}{2^\alpha a^{\alpha} L^\alpha}\ln^{\alpha-1}\frac{10nK}{\beta}\sum\limits_{t=0}^{T-1} \max\left\{(K_0+2)(k+2)^{\alpha-1},(k+2)^\alpha\right\} \notag\\
        &\overset{T, K_0\leq K}{\leq}& \frac{1}{a^\alpha} \cdot \frac{12\cdot 16\cdot 72^{\alpha-1}\sigma^\alpha M^{1 - \nicefrac\alpha2} K(K+1)^\alpha}{4^{\alpha}L^\alpha} \ln^{\alpha-1}\frac{10nK}{\beta} \notag\\
        &\overset{\eqref{eq:Dprox_clipped_SSTM_shift_parameter_a}}{\leq}& \frac{M}{6}. \label{eq:D_prox_clipped_SSTM_shift_sum_2_upper_bound}
    \end{eqnarray}

    \paragraph{Upper bound for $\circledThree$.} To estimate this sum, we will use Bernstein's inequality. The summands have conditional expectations equal to zero: 
    \begin{equation}
        \frac{8\alpha_{k+1}^2}{n^2}\EE_{\xi^k_i}\left[\left\|\omega_{i,k+1}^u\right\|^2 - \EE_{\xi^k_i}\left[\left\|\omega_{i,k+1}^u\right\|^2\right]\right] = 0. \notag
    \end{equation}
    Moreover, for all $k = 0,\dots, T-1$ random vectors $\left\{\omega^u_{i,k+1}\right\}^n_{i=1}$ are independent. Thus, sequence $\left\{ \frac{8\alpha_{k+1}^2}{n^2}\left(\left\|\omega_{i,k+1}^u\right\|^2 - \EE_{\xi^k_i}\left[\left\|\omega_{i,k+1}^u\right\|^2\right]\right) \right\}_{k,i = 0,1}^{T-1, n}$ is a martingale difference sequence. Next, the summands are bounded: 
    \begin{eqnarray}
        \left|\frac{8\alpha_{k+1}^2}{n^2}\left(\left\|\omega_{i,k+1}^u\right\|^2 - \EE_{\xi^k_i}\left[\left\|\omega_{i,k+1}^u\right\|^2\right]\right)\right| &\leq& \frac{8\alpha_{k+1}^2}{n^2}\left( \|\omega_{i,k+1}^u\|^2 +   \EE_{\xi^k_i}\left[\left\|\omega_{i,k+1}^u\right\|^2\right]\right)\notag\\
        &\overset{\eqref{eq:D_prox_clipped_SSTM_shift_norm_omega_u_bound}}{\leq}& \frac{64\alpha_{k+1}^2\lambda_k^2}{n^2} \overset{\eqref{eq:Dprox_clipped_SSTM_shift_clipping_level}}{\leq} \frac{M}{9\ln\frac{10nK}{\beta}} \eqdef c. \label{eq:D_prox_clipped_SSTM_shift_technical_10}
    \end{eqnarray}

   Finally, conditional variances $$\widetilde\sigma_{i,k}^2 \eqdef \EE_{\xi^k_i}\left[\frac{64 \alpha_{k+1}^4}{n^4}\left(\left\|\theta_{t+1}^u\right\|^2 - \EE_{\xi^k_i}\left[\left\|\omega_{k+1}^u\right\|^2\right]\right)^2\right]$$
   of the summands are bounded: 
    \begin{eqnarray}
        \widetilde\sigma_{i,k}^2 &\overset{\eqref{eq:D_prox_clipped_SSTM_shift_technical_10}}{\leq}& \frac{8 \alpha_{k+1}^2 M}{9 n^2 \ln\frac{10nK}{\beta}} \EE_{\xi^k_i}\left[\left|\left\|\omega_{i,k+1}^u\right\|^2 - \EE_{\xi^k_i}\left[\left\|\omega_{i,k+1}^u\right\|^2\right]\right|\right]\notag\\ 
        &\leq& \frac{16\alpha_{k+1}^2M}{9n^2} \EE_{\xi^k_i}\left[\|\omega_{i,k+1}^u\|^2\right]. \label{eq:D_prox_clipped_SSTM_shift_technical_11}
    \end{eqnarray}
    
    Applying Bernstein's inequality (Lemma~\ref{lem:Bernstein_ineq}) with $\widetilde X_{i,k} = \frac{8\alpha_{k+1}^2}{n^2}\left(\left\|\omega_{k+1}^u\right\|^2 - \EE_{\xi^k_i}\left[\left\|\omega_{i,k+1}^u\right\|^2\right]\right)$, parameter $c$ defined in \eqref{eq:D_prox_clipped_SSTM_shift_technical_10}, $b = \frac{M}{9}$, $G = \frac{M^2}{6\cdot9^2\ln\frac{10nK}{\beta}}$:
    \begin{equation*}
        \PP\left\{|\circledThree| > \frac{M}{9}\quad \text{and}\quad \sum\limits_{k=0}^{T-1}\sum^n_{i=1} \widetilde\sigma_{i,k}^2 \leq \frac{M^2}{6\cdot 9^2\ln\frac{10nK}{\beta}}\right\} \leq 2\exp\left(- \frac{b^2}{2G + \nicefrac{2cb}{3}}\right) = \frac{\beta}{5nK}.
    \end{equation*}
    The above is equivalent to
    \begin{equation}
        \PP\left\{ E_{\circledThree} \right\} \geq 1 - \frac{\beta}{5nK},\quad \text{for}\quad E_{\circledThree} = \left\{ \text{either} \quad  \sum\limits_{k=0}^{T-1}\sum^n_{i=1} \widetilde\sigma_{i,k}^2 > \frac{M^2}{6\cdot 9^2\ln\frac{10nK}{\beta}} \quad \text{or}\quad |\circledThree| \leq \frac{M}{9}\right\}. \label{eq:D_prox_clipped_SSTM_shift_sum_3_upper_bound}
    \end{equation}
    Moreover, $E_{T-1}$ implies 
    \begin{eqnarray}
        \sum\limits_{k=0}^{T-1}\sum^n_{i=1} \widetilde\sigma_{i,k}^2 &\overset{\eqref{eq:D_prox_clipped_SSTM_shift_technical_11}}{\leq}& \frac{16}{9}M \sum\limits_{k=0}^{T-1} \sum^n_{i=1}\frac{\alpha_{k+1}^2}{n^2} \EE_{\xi^k_i}\left[\|\omega_{k+1}^u\|^2\right] \overset{\eqref{eq:prox_clipped_SSTM_shift_sum_1_variance_bound}}{\leq} \frac{M^2}{6 \cdot 9^2\ln\frac{10nK}{\beta}}. \label{eq:D_prox_clipped_SSTM_shift_sum_3_variance_bound}
    \end{eqnarray}

    \paragraph{Upper bound for $\circledFour$.} Probability event $E_{T-1}$ implies
    \begin{eqnarray}
        \circledFour &=& 8\sum\limits_{k=0}^{T-1}\sum^n_{i=1}\frac{\alpha_{k+1}^2}{n^2}\EE_{\xi^k_i}\left[\left\|\omega_{i,k+1}^u\right\|^2\right] \leq \frac{1}{M}\cdot 8M\sum\limits_{k=0}^{T-1}\sum^n_{i=1}\frac{\alpha_{k+1}^2}{n^2}\EE_{\xi^t}\left[\left\|\omega_{i,k+1}^u\right\|^2\right]\notag\\ &\overset{\eqref{eq:prox_clipped_SSTM_shift_sum_1_variance_bound}}{\leq}& \frac{M}{6^3\ln\frac{10nK}{\beta}} \leq \frac{M}{9}. \label{eq:D_prox_clipped_SSTM_shift_sum_4_upper_bound}
    \end{eqnarray}

    \paragraph{Upper bound for $\circledFive$.} Probability event $E_{T-1}$ implies
    \begin{eqnarray}
        \circledFive &=& 8\sum\limits_{k=0}^{T-1}\sum^n_{i=1}\frac{\alpha_{k+1}^2}{n}\left\|\omega_{i,k+1}^b\right\|^2 \leq 2^{2\alpha + 3}\sigma^{2\alpha} \sum\limits_{k=0}^{T-1}\frac{\alpha_{k+1}^2}{\lambda_{k}^{2\alpha - 2}} \notag\\
        &\overset{\eqref{eq:Dprox_clipped_SSTM_shift_clipping_level}}{=}& \frac{2^{2\alpha + 3}\cdot 72^{2\alpha-2}\sigma^{2\alpha} \ln^{2\alpha-2}\frac{10nK}{\beta}}{n^{2\alpha-2}M^{\alpha-1}} \sum\limits_{k=0}^{T-1}\max\left\{\alpha^{2}_{k+1}\widetilde\alpha_{k+1}^{2\alpha-2},\alpha_{k+1}^{2}\right\} \notag\\
        &=&   \frac{2^{2\alpha + 3}\cdot 72^{2\alpha-2}\sigma^{2\alpha} \ln^{2\alpha-2}\frac{10nK}{\beta}}{2^{2\alpha}a^{2\alpha} n^{2\alpha-2} L^{2\alpha} M^{\alpha-1}} \sum\limits_{k=0}^{T-1}\max\left\{(k+2)^{2},(K_0+2)^{2\alpha-2}(k+2)^{2}\right\} \notag\\
        &\leq& \frac{1}{a^{2\alpha}} \cdot \frac{8\cdot72^{2\alpha-2} \sigma^{2\alpha}K(K+1)^{2\alpha}\ln^{2\alpha-2}\frac{10nK}{\beta}}{n^{2\alpha-2}L^{2\alpha}M^{\alpha-1}} \overset{\eqref{eq:Dprox_clipped_SSTM_shift_parameter_a}}{\leq} \frac{M}{6}.\label{eq:D_prox_clipped_SSTM_shift_sum_5_upper_bound}
    \end{eqnarray}

    \paragraph{Upper bound for $\circledSix$.} This sum requires more refined analysis. We introduce a new vector: 
    \begin{equation}
        \chi_{j}^k = \begin{cases}
            \frac{\alpha_{k+1}}{n}\sum\limits_{i=1}^{j-1} \omega_{i,k+1}^u,& \text{if } \left\|\frac{\alpha_{k+1}}{n} \sum\limits_{i=1}^{j-1} \omega_{i,k+1}^u\right\| \leq \frac{\sqrt{M}}{2},\\
            0,& \text{otherwise,}
        \end{cases}\label{eq:prox_clipped_SSTM_chi_def}
    \end{equation}
    Then, by definition
    \begin{equation}
         \|\chi_j^k\| \leq \frac{\sqrt{V}}{2} \label{eq:prox_clipped_SGDA_chi_bound}
    \end{equation}
    and
    \begin{eqnarray}
        \circledSix &=& \underbrace{8 \sum\limits_{k=0}^{T-1} \sum\limits_{j=2}^n \frac{\alpha_{k+1}}{n}\left\langle \chi_j^k, \omega_{j,k+1}^u \right\rangle}_{\circledSix'} + 8\sum\limits_{k=0}^{T-1} \sum\limits_{j=2}^n \left\langle \frac{\alpha_{k+1}}{n}\sum\limits_{i=1}^{j-1} \omega_{i,k+1}^u - \chi_j^k, \omega_{j,k+1}^u \right\rangle. \label{eq:prox_clipped_SSTM_extra_sums_six_distributed}
    \end{eqnarray}

    We also note here that $E_{T-1}$ implies
    \begin{equation}
        8\sum\limits_{k=0}^{T-1} \sum\limits_{j=2}^n \left\langle \frac{\alpha_{k+1}}{n}\sum\limits_{i=1}^{j-1} \omega_{i,k+1}^u - \chi_j^k, \omega_{j,k+1}^u \right\rangle = 8\sum\limits_{j=2}^n \left\langle \frac{\alpha_{T}}{n}\sum\limits_{i=1}^{j-1} \omega_{i,T}^u - \chi_j^{T-1}, \omega_{j,T}^u \right\rangle. \label{eq:prox_SSTM_extra_sum_six_bound}
    \end{equation}

    \paragraph{Upper bound for $\circledSix'$.} To estimate this sum, we will use Bernstein's inequality. The summands have conditional expectations equal to zero:
    \begin{equation}
        \EE_{\xi^k_j}\left[\frac{8\alpha_{k+1}}{n}\langle\chi_j^k, \omega^u_{j,k+1} \rangle \right] = \frac{8\alpha_{k+1}}{n}\la \chi_j^k, \EE_{\xi^k_j}\left[\omega^u_{j,k+1}\right]\ra = 0\notag.
    \end{equation}

    Moreover, for all $k = 0,\ldots, T-1$ random vectors $\{\omega_{i,l}^u\}_{i=1}^n$ are independent. Thus, sequence $\left\{\frac{8\alpha_{k+1}}{n}\langle\chi_j^k, \omega^u_{j,k+1} \rangle\right\}_{k,j = 0,2}^{T-1,n}$ is a martingale difference sequence. Next, the summands are bounded:
    \begin{eqnarray}
        \left|\frac{8\alpha_{k+1}}{n}\la \chi^k_j, \omega^u_{j,k+1}\ra\right| \leq \frac{8\alpha_{k+1}}{n} \|\chi^k_j\|\|\omega^u_{j,k+1}\| \overset{\eqref{eq:D_prox_clipped_SSTM_shift_norm_omega_u_bound},\eqref{eq:prox_clipped_SGDA_chi_bound}}{\leq} \frac{8\alpha_{k+1}}{n} \cdot \frac{\sqrt{M}}{2}\cdot 2\lambda_{k} \leq \frac{M}{6 \ln \frac{10nK}{\beta}} \eqdef c. \label{eq:prox_clipped_SSTM_shift_technical_6_1}
    \end{eqnarray}

    Finally, conditional variances $$\hat{\sigma}_{j,k}^2 \eqdef \EE_{\xi^k_j}\left[\frac{64\alpha^2_{k+1}}{n^2} \la \chi^j_k, \omega_{j,k+1}^u\ra^2\right]$$ of summands are bounded: 
    \begin{equation}
        \hat{\sigma}_{j,k}^2 \leq \frac{64\alpha^2_{k+1}}{n^2}\EE_{\xi^k_j}\left[\|\chi^k_j\|^2\|\omega^u_{k+1}\|^2\right] \leq \frac{16\alpha_{k+1}^2 M}{n^2}\EE_{\xi^k_{j}}\left[\|\omega_{j,k+1}^u\|^2\right].
        \label{eq:prox_clipped_SSTM_shift_technical_6_2}
    \end{equation}

    Applying Bernstein's inequality (Lemma~\ref{lem:Bernstein_ineq}) with $X_{j,k} =\frac{8\alpha_{k+1}}{n}\left\la \chi^k_j, \omega^u_{j,k+1}\right\ra$, constant $c$ defined in \eqref{eq:prox_clipped_SSTM_shift_technical_6_1}, $b = \frac{M}{6}$, $G = \frac{M^2}{6^3\ln\frac{10nK}{\beta}}$, we get
    \begin{equation*}
        \PP\left\{|\circledSix'| > \frac{M}{6} \text{ and } \sum\limits_{k=0}^{T-1}\sum\limits_{j=2}^n\hat{\sigma}_{i,k}^2 \leq \frac{M^2}{6^3\ln\frac{10nK}{\beta}}\right\} \leq 2\exp\left(- \frac{b^2}{2G + \nicefrac{2cb}{3}}\right) = \frac{\beta}{5nK}.
    \end{equation*}
    The above is equivalent to 
    \begin{equation}
        \PP\{E_{\circledSix'}\} \geq 1 - \frac{\beta}{5nK},\; \text{ for }\; E_{\circledSix'} = \left\{\text{either} ~~ \sum\limits_{k=0}^{T-1}\sum\limits_{j=2}^n\hat{\sigma}_{i,l}^2 > \frac{M^2}{6^3\ln\frac{10nK}{\beta}} \quad\text{ or } ~~|\circledSix'| ~~ \frac{M}{6}\right\}. \label{eq:prox_clipped_SSTM_shift_technical_6_3}
    \end{equation}
    
    Moreover, $E_{T-1}$ implies 
    \begin{equation}
    \label{eq:prox_clipped_SSTM_shift_sum_6_prime_variance_bound}
        \sum^{T-1}_{k=0}\sum^{n}_{j=2}\hat{\sigma}^2_{i,k} \overset{\eqref{eq:prox_clipped_SSTM_shift_technical_6_2}}{\leq} 16 M\sum^{T-1}_{k=0}\sum^{n}_{j=1}\frac{\alpha^2_{k+1}}{n^2} \EE_{\xi^k_{j}}\left[\|\omega_{j,k+1}^u\|^2\right] \overset{\eqref{eq:prox_clipped_SSTM_shift_sum_1_variance_bound}}{\leq} \frac{M^2}{6^3\ln\frac{10nK}{\beta}}
    \end{equation}

    That is, we derive the upper bounds for  $ \circledOne, \circledTwo, \circledThree, \circledFour, \circledFive, \circledSix$. More precisely, $E_{T-1}$ implies

    \begin{gather*}
        B_T \overset{\eqref{eq:D_prox_clipped_SSTM_shift_technical_7}}{\leq} \circledOne + \circledTwo + \circledThree + \circledFour + \circledFive + \circledSix,\\
        \circledSix \overset{\eqref{eq:prox_clipped_SSTM_extra_sums_six_distributed}, \eqref{eq:prox_SSTM_extra_sum_six_bound}}{=} \circledSix' + 8 \sum\limits_{j=2}^n \left\langle \frac{\alpha_{T}}{n}\sum\limits_{i=1}^{j-1} \omega_{i,T}^u - \chi_j^{T-1}, \omega_{j,T}^u \right\rangle,\\
        \circledTwo \overset{\eqref{eq:D_prox_clipped_SSTM_shift_sum_2_upper_bound}}{\leq} \frac{M}{6},\quad \circledFour \overset{\eqref{eq:D_prox_clipped_SSTM_shift_sum_4_upper_bound}}{\leq} \frac{M}{9},\quad \circledFive \overset{\eqref{eq:D_prox_clipped_SSTM_shift_sum_5_upper_bound}}{\leq} \frac{M}{6},\\
        \sum\limits_{k=0}^{T-1}\sum^n_{i=1} \sigma_{i, k}^2 \overset{\eqref{eq:prox_clipped_SSTM_shift_sum_1_variance_bound}}{\leq} \frac{M^2}{6^3\ln\frac{10nK}{\beta}},\quad  \sum\limits_{k=0}^{T-1}\sum^n_{i=1} \widetilde{\sigma}_{i, k}^2 \overset{\eqref{eq:D_prox_clipped_SSTM_shift_sum_3_variance_bound}}{\leq} \frac{M^2}{6\cdot 9^2\ln \frac{10nK}{\beta}},\\
        \sum\limits_{k=0}^{T-1}\sum\limits_{j=2}^n\hat{\sigma}_{i,k}^2 \overset{\eqref{eq:prox_clipped_SSTM_shift_sum_6_prime_variance_bound}}{\leq} \frac{M^2}{6^3\ln\frac{10nK}{\beta}}.
    \end{gather*}

    In addition, we also establish (see \eqref{eq:D_prox_clipped_SSTM_shift_sum_1_upper_bound}, \eqref{eq:D_prox_clipped_SSTM_shift_sum_3_upper_bound}, \eqref{eq:prox_clipped_SSTM_shift_technical_6_3} and our induction assumption):
    \begin{gather*}
        \PP\{E_{T-1}\} \geq 1 - \frac{(T-1)\beta}{K},\\
        \PP\{E_{\circledOne}\} \geq 1 - \frac{\beta}{5nK}, \quad \PP\{E_{\circledThree}\} \geq 1 - \frac{\beta}{5nK},\quad  \PP\{E_{\circledSix'}\} \geq 1 - \frac{\beta}{5nK},
    \end{gather*}
    where 
    \begin{eqnarray}
        E_{\circledOne} &=& \left\{ \text{either} \quad  \sum\limits_{k=0}^{T-1}\sum^n_{i=1} \sigma_{i,k}^2 > \frac{M^2}{6^3\ln\frac{10nK}{\beta}} \quad \text{or}\quad |\circledOne| \leq \frac{M}{6}\right\}. \notag\\
        E_{\circledThree} &=& \left\{ \text{either} \quad  \sum\limits_{k=0}^{T-1}\sum^n_{i=1} \widetilde\sigma_{i,k}^2 > \frac{M^2}{6\cdot 9^2\ln\frac{10nK}{\beta}} \quad \text{or}\quad |\circledThree| \leq \frac{M}{9}\right\}, \notag\\
        E_{\circledSix'} &=& \left\{\text{either} \quad\sum\limits_{k=0}^{T-1}\sum\limits_{j=2}^n\hat{\sigma}_{i,l}^2 > \frac{M^2}{6^3\ln\frac{10nK}{\beta}}\quad \text{ or } \quad|\circledSix'| \leq \frac{M}{6}\right\}. \notag 
    \end{eqnarray}

    Therefore, probability event $E_{T-1} \cap E_{\circledOne} \cap E_{\circledThree} \cap E_{\circledSix'}$ implies 
    \begin{eqnarray}
        B_T &\leq& \frac{M}{6} + \frac{M}{6} + \frac{M}{9} + \frac{M}{9}+ \frac{M}{6}+ \frac{M}{6}  \notag\\
        &&+ 8\sum\limits_{k=0}^{T-1} \sum\limits_{j=2}^n \left\langle \frac{\alpha_{k+1}}{n}\sum\limits_{i=1}^{j-1} \omega_{i,k+1}^u - \chi_j^k, \omega_{j,k+1}^u \right\rangle \notag\\
        &\leq& M +  8 \sum\limits_{j=2}^n \left\langle \frac{\alpha_{T}}{n}\sum\limits_{i=1}^{j-1} \omega_{i,T}^u - \chi_j^{T-1}, \omega_{j,T}^u \right\rangle. \label{eq:prox_clipped_SSTM_technical_8}
    \end{eqnarray}

    In the final part of the proof, we will show that $\frac{\alpha_{k+1}}{n}\sum\limits_{i=1}^{j-1} \omega_{i,k+1}^u = \chi_j^k$ with high probability. In particular, we consider probability event $\widetilde{E}_{T-1,j}$ defined as follows: inequalities
    \begin{equation}
       \left\|\frac{\alpha_{T}}{n}\sum\limits_{i=1}^{r-1} \omega_{i,T}^u\right\| \leq \frac{\sqrt{M}}{2}
    \end{equation}
    hold for $r = 2, \ldots, j$ simultaneously. We want to show that $\PP\{E_{T-1} \cap \widetilde{E}_{T-1,j}\} \geq 1 - \frac{(T-1)\beta}{K} - \frac{2j\beta}{5nK}$ for all $j = 2, \ldots, n$. For $j = 2$ the statement is trivial since
    \begin{eqnarray*}
         \left\|\frac{\alpha_{T}}{n} \omega_{1,T}^u\right\| \overset{\eqref{eq:D_prox_clipped_SSTM_shift_norm_omega_u_bound}}{\leq} \frac{2\alpha_{T}\lambda_{T-1}}{n} \leq \frac{\sqrt{M}}{2}.
    \end{eqnarray*}
    
    Next, we assume that the statement holds for some $j = m-1 < n$, i.e.,  $\PP\{E_{T-1}\cap\widetilde{E}_{T-1,m-1}\} \geq 1 - \frac{(T-1)\beta}{K+1} - \frac{2(m-1)\beta}{5n(K+1)}$. Our goal is to prove that $\PP\{E_{T-1}\cap\widetilde{E}_{T-1,m}\} \geq 1 - \frac{(T-1)\beta}{K} - \frac{2m\beta}{5nK}$. First, we consider $\left\|\frac{\alpha_{T}}{n}\sum\limits_{i=1}^{m-1} \omega_{i,T}^u\right\|$: 
    \begin{eqnarray}
        \left\|\frac{\alpha_{T}}{n}\sum\limits_{i=1}^{m-1} \omega_{i,T}^u\right\| &=& \sqrt{\frac{\alpha^2_{T}}{n^2}\left\|\sum\limits_{i=1}^{m-1} \omega_{i,T}^u\right\|^2} \notag\\
        &=& \sqrt{\frac{\alpha^2_{T}}{n^2}\sum\limits_{i=1}^{m-1}\|\omega_{i,T}^u\|^2 + \frac{2\alpha_{T}}{n}\sum\limits_{i=1}^{m-1}\left\langle \frac{\alpha_{T}}{n}\sum\limits_{r=1}^{i-1}\omega_{r,T}^u , \omega_{i,T}^u \right\rangle} \notag\\
        &\leq& \sqrt{\sum\limits_{k=0}^{T-1}\sum\limits_{i=1}^{m-1}\frac{\alpha_{k+1}^2}{n^2}\|\omega_{i,k+1}^u\|^2 + \frac{2\alpha_{T}}{n}\sum\limits_{i=1}^{m-1}\left\langle \frac{\alpha_{T}}{n}\sum\limits_{r=1}^{i-1}\omega_{r,T}^u , \omega_{i,T}^u \right\rangle}. \notag
    \end{eqnarray}
    Next, we introduce a new notation:
    \begin{equation*}
        \rho_{i,T-1} = \begin{cases}
            \frac{\alpha_{T}}{n}\sum\limits_{r=1}^{i-1}\omega_{r,T}^u,& \text{if } \left\|\frac{\alpha_{T}}{n}\sum\limits_{r=1}^{i-1}\omega_{r,T}^u\right\| \leq \frac{\sqrt{M}}{2},\\
            0,& \text{otherwise}
        \end{cases}
    \end{equation*}
    for $i = 1,\ldots,m-1$ a. By definition, we have
    \begin{equation}
        \|\rho_{i,T-1}\| \leq \frac{\sqrt{M}}{2} \label{eq:prox_clipped_SSTM_bound_rho}
    \end{equation}
    for $i = 1,\ldots,m-1$. Moreover, $\widetilde{E}_{m-1}$ implies $\rho_{i,T-1} = \frac{\alpha_{T}}{n}\sum\limits_{r=1}^{i-1}\omega_{r,T}^u$ for $i = 1,\ldots,m-1$ and
    \begin{eqnarray}
        \left\|\frac{\alpha_{T}}{n}\sum\limits_{i=1}^{m-1} \omega_{i,T}^u\right\| &\leq& \sqrt{\circledThree + \circledFour + \circledSeven}, \notag
    \end{eqnarray}
    where
    \begin{gather*}
        \circledSeven =  \frac{2\alpha_{T}}{n}\sum\limits_{i=1}^{m-1}\left\langle \rho_{i,T-1} , \omega_{i,T}^u \right\rangle.
    \end{gather*}
    It remains to estimate $\circledSeven$.

    \paragraph{Upper bound for $\circledSeven$}. To estimate this sum, we will use Bernstein's inequality. The summands have conditional expectations equal to zero:
    \begin{equation*}
        \EE_{\xi_{i}^{T-1}}\left[\frac{2\alpha_{T}}{n} \langle \rho_{i,T-1}, \omega_{i,T}^u \rangle\right] = \frac{2\alpha_{T}}{n} \left\langle \rho_{i,T-1}, \EE_{\xi_{i}^{T-1}}[\omega_{i,T}^u] \right\rangle = 0,
    \end{equation*}
    since random vectors $\{\omega_{i,T}^u\}_{i=1}^n$ are independent. Thus, sequence $\left\{\frac{2\alpha_{T}}{n} \langle \rho_{i,T-1}, \omega_{i,T}^u \rangle\right\}_{i = 1}^{m-1}$ is a martingale difference sequence. Next, the summands are bounded:
    \begin{eqnarray}
        \left|\frac{2\alpha_{T}}{n}\langle \rho_{i,T-1}, \omega_{i,T}^u \rangle \right| \leq \frac{2\alpha_T}{n} \|\rho_{i,T-1}\|\cdot \|\omega_{i,T}^u\| \overset{\eqref{eq:D_prox_clipped_SSTM_shift_norm_omega_u_bound}, \eqref{eq:prox_clipped_SSTM_bound_rho}}{\leq} \frac{2\alpha_T}{n} \sqrt{M}\lambda_{T-1} \overset{\eqref{eq:Dprox_clipped_SSTM_shift_clipping_level}}{=} \frac{M}{36\ln\tfrac{10nK}{\beta}} \eqdef c. \label{eq:prox_SSTM_technical_7_1}
    \end{eqnarray}
    Finally, conditional variances $\Bar{\sigma}_{i,T-1}^2 \eqdef \EE_{\xi_{i}^{T-1}}\left[\frac{4\alpha_{T}^2}{n^2} \langle \rho_{i,T-1}, \omega_{i,T}^u \rangle^2\right]$ of the summands are bounded:
    \begin{equation}
        \Bar{\sigma}_{i,T-1}^2 \leq \EE_{\xi_{i}^{T-1}}\left[\frac{4\alpha^2_T }{n^2}\|\rho_{i,T-1}\|^2\cdot \|\omega_{i,T}^u\|^2\right] \overset{\eqref{eq:prox_clipped_SSTM_bound_rho}}{\leq} \frac{\alpha^2_T M}{n^2} \EE_{\xi_{i}^{T-1}}\left[\|\omega_{i,T}^u\|^2\right]. \label{eq:prox_SSTM_technical_7_2}
    \end{equation}

    Applying Bernstein's inequality (Lemma~\ref{lem:Bernstein_ineq}) with $X_{i} = \frac{2\alpha_T}{n} \langle \rho_{i,T-1}, \omega_{i,T}^u \rangle$, constant $c$ defined in \eqref{eq:prox_SSTM_technical_7_1}, $b = \frac{V}{36}$, $G = \frac{M^2}{6^5\ln\frac{10nK}{\beta}}$, we get
    \begin{equation*}
        \PP\left\{|\circledSeven| > \frac{M}{36} \text{ and } \sum\limits_{i=1}^{m-1}\bar{\sigma}_{i,T-1}^2 \leq \frac{M^2}{6^5\ln\frac{10nK}{\beta}}\right\} \leq 2\exp\left(- \frac{b^2}{2G + \nicefrac{2cb}{3}}\right) = \frac{\beta}{5nK}.
    \end{equation*}
    The above is equivalent to 
    \begin{equation}
        \PP\{E_{\circledSeven}\} \geq 1 - \frac{\beta}{5nK},\; \text{ for }\; E_{\circledSeven} = \left\{\text{either } \sum\limits_{i=1}^{m-1}\bar{\sigma}_{i,T-1}^2 > \frac{M^2}{6^5\ln\frac{10nK}{\beta}} \text{ or } |\circledSeven| \leq \frac{M}{36}\right\}. \label{eq:prox_SSTM_technical_7_3}
    \end{equation}

    Moreover, $E_{T-1}$ implies 
    \begin{eqnarray}
        \sum^{m-1}_{i=1}\bar{\sigma}_{i,T-1}^2 &\overset{\eqref{eq:prox_SSTM_technical_7_2}}{\leq}& \sum^{m-1}_{i=1} \frac{\alpha^2_T M}{n^2} \EE_{\xi_{i}^{T-1}}\left[\|\omega_{i,T}^u\|^2\right] \overset{\eqref{eq:D_prox_clipped_SSTM_shift_second_moment_omega_u_bound}}{\leq} 18 (m-1)\frac{\alpha^2_T}{n^2} M \sigma^{\alpha}\lambda_{T-1}^{2-\alpha} \notag\\
        &\overset{\eqref{eq:Dprox_clipped_SSTM_shift_clipping_level}}{\leq}& 18 (m-1) \frac{\alpha^2_T}{n^2} \cdot \frac{n^{2-\alpha} M^{2-\nicefrac{\alpha}{2}}}{72^{2-\alpha}\widetilde{\alpha}_T^{2-\alpha}} \ln^{\alpha-2}\frac{10nK}{\beta} \notag\\
        &\overset{m\leq n, T \leq K}{\leq}& 18\cdot 6^5 \frac{\alpha_K^2}{n^{\alpha-1}} \left(\frac{\sigma}{\sqrt{M}}\right)^{\alpha}\ln^{\alpha-1}\frac{10nK}{\beta} \cdot \frac{M^2}{6^5\ln\frac{10nK}{\beta}}\notag\\
        &\leq& \frac{1}{a^{\alpha}} \frac{18\cdot 6^5}{2^{\alpha}} \frac{1}{n^{\alpha-1}}\left(\frac{\sigma}{L\sqrt{M}}\right)^{\alpha} (K+1)^{\alpha} \ln^{\alpha-1}\frac{10nK}{\beta} \cdot \frac{M^2}{6^5\ln\frac{10nK}{\beta}}\notag\\
        &\overset{\eqref{eq:Dprox_clipped_SSTM_shift_parameter_a}}{\leq}& \frac{M^2}{6^5\ln\frac{10nK}{\beta}}. \label{eq:prox_clipped_SSTM_shift_sum_7_variance_bound}
    \end{eqnarray}

    Putting all together we get that $E_{T-1}\cap \widetilde{E}_{T-1,m-1}$ implies
    \begin{gather*}
        \left\|\frac{\alpha_{T}}{n}\sum\limits_{i=1}^{m-1} \omega_{i,T}^u\right\| \leq \sqrt{\circledThree + \circledFour + \circledSeven},\quad \circledFour \overset{\eqref{eq:D_prox_clipped_SSTM_shift_sum_4_upper_bound}}{\leq} \frac{M}{9}, \\
        \sum\limits_{k=0}^{T-1}\sum^n_{i=1} \widetilde{\sigma}_{i, k}^2 \overset{\eqref{eq:D_prox_clipped_SSTM_shift_sum_3_variance_bound}}{\leq} \frac{M^2}{6\cdot 9^2\ln \frac{10nK}{\beta}},\quad \sum^{m-1}_{i=1}\bar{\sigma}_{i,T-1}^2 \overset{\eqref{eq:prox_clipped_SSTM_shift_sum_7_variance_bound}}{\leq} \frac{M^2}{6^5\ln\frac{10nK}{\beta}}
    \end{gather*}
    In addition, we also establish (see and our induction assumption): 
    \begin{gather*}
        \PP\{E_{T-1}\cap \widetilde{E}_{T-1,m-1}\} \geq 1 - \frac{(T-1)\beta}{K+1} - \frac{2(m-1)\beta}{5nK},\\
        \PP\{E_{\circledThree}\} \geq 1 - \frac{\beta}{5nK}, \quad \PP\{E_{\circledSeven}\} \geq 1 - \frac{\beta}{5nK},
    \end{gather*}
    where
    \begin{eqnarray}
        E_{\circledThree} &=& \left\{\text{either } \sum\limits_{k=0}^{T-1}\sum\limits_{i=1}^n\widetilde\sigma_{i,k}^2 > \frac{M^2}{6\cdot 9^2\ln\frac{10nK}{\beta}} \text{ or } |\circledFour| \leq \frac{M}{9}\right\},\notag\\
        E_{\circledSeven} &=& \left\{\text{either } \sum\limits_{i=1}^{m-1}\bar{\sigma}_{i,k}^2 > \frac{M^2}{6^5\ln\frac{10nK}{\beta}} \text{ or } |\circledSeven| \leq \frac{M}{36}\right\} \notag
    \end{eqnarray}
    Therefore, probability event $E_{T-1}\cap \widetilde E_{T-1, m-1} \cap E_{\circledThree} \cap E_{\circledSeven}$ implies
    \begin{equation*}
        \left\|\frac{\alpha_{T}}{n}\sum\limits_{i=1}^{m-1} \omega_{i,T}^u\right\| \leq \sqrt{\frac{M}{9} + \frac{M}{9} + \frac{M}{36}} \leq \frac{\sqrt{M}}{2}.
    \end{equation*}
    This implies $\widetilde E_{T-1,m}$ and
    \begin{eqnarray*}
        \PP\{E_{T-1} \cap \widetilde{E}_{T-1,m}\} &\geq& \PP\{E_{T-1}\cap \widetilde E_{T-1, m-1} \cap E_{\circledThree} \cap E_{\circledSeven}\} \\
        &=& 1 - \PP\left\{\overline{E_{T-1} \cap \widetilde{E}_{T-1,m-1}} \cup \overline{E}_{\circledThree} \cup \overline{E}_{\circledSeven}\right\}\\
        &\geq& 1 - \frac{(T-1)\beta}{K} - \frac{2m\beta}{5nK}.
    \end{eqnarray*}
    Therefore, for all $m = 2,\ldots,n$ the statement holds and, in particular, $\PP\{E_{T-1} \cap \widetilde{E}_{T-1, n}\} \geq 1 - \frac{(T-1)\beta}{K} - \frac{2\beta}{5K}$. Taking into account \eqref{eq:prox_clipped_SSTM_technical_8}, we conclude that $E_{T-1} \cap \widetilde{E}_{T-1, n} \cap E_{\circledOne} \cap E_{\circledThree} \cap E_{\circledSix'} \cap E_{\circledSeven}$ implies
    \begin{equation*}
        B_T \leq M
    \end{equation*}
    that is equivalent to \eqref{eq:SSTM_induction_inequality_1} for $t = T$. Moreover,
    \begin{eqnarray*}
        \PP\left\{E_T\right\} &\geq& \PP\left\{E_{T-1} \cap \widetilde{E}_{T-1, n} \cap E_{\circledOne} \cap E_{\circledThree} \cap E_{\circledSix'} \cap E_{\circledSeven} \right\} \\
        &=& 1 - \PP\left\{\overline{E_{T-1} \cap \widetilde{E}_{n}} \cup \overline{E}_{\circledOne} \cup \overline{E}_{\circledThree} \cup \overline{E}_{\circledSix'} \cup \overline{E}_{\circledSeven}\right\}\\
        &=& 1 - \frac{(T-1)\beta}{K} - \frac{2\beta}{5K} - 3\cdot \frac{\beta}{5nK} \geq 1 - \frac{T\beta}{K}.
    \end{eqnarray*}

    Finally, if 
    \begin{equation*}
        a = \max\left\{2, \frac{8\cdot 3^5\cdot 72^4}{n} \ln^4\frac{10nK}{\beta}, \frac{ 18\cdot 6^5\sigma K^{\frac{1}{\alpha}}(K+1)}{\sqrt{M}Ln^{\frac{\alpha-1}{\alpha}}} \ln^{\frac{\alpha-1}{\alpha}}\frac{10nK}{\beta} \right\},
    \end{equation*}

    then with probability at least $1-\beta$
    \begin{eqnarray*}
        \Phi(y^K) - \Phi(x^*) &\leq& \frac{6aLM}{K(K+3)} \\
        &=& \max \left\{\frac{12LM}{K(K+3)}, \frac{162\cdot 72^5LM}{nK(K+3)}\ln^4\frac{10nK}{\beta}, \frac{3\cdot 6^7 \sigma \frac{K+1}{K+3}}{\sqrt{M}(Kn)^{\frac{\alpha-1}{\alpha}}} \ln^{\frac{\alpha-1}{\alpha}}\frac{10nK}{\beta}\right\}\\
        &=& \cO\left(\max\left\{\frac{LM}{K^2}, \frac{LM\ln^4\frac{nK}{\beta}}{nK^2},\frac{\sigma \sqrt{M} \ln^{\frac{\alpha-1}{\alpha}}\frac{nK}{\beta}}{n^{\frac{\alpha-1}{\alpha}}K^{\frac{\alpha-1}{\alpha}}}\right\}\right).
    \end{eqnarray*}
    To get $\Phi(y^K) - \Phi(x^*) \leq \varepsilon$ wit probability $1-\beta$, $K$ should be 
    \begin{equation*}
        \cO\left(\max\left\{\sqrt{\frac{LM}{\varepsilon}}, \sqrt{\frac{LM}{\varepsilon n}}\ln^2\frac{nLM}{\varepsilon\beta}, \frac{1}{n}\left(\frac{\sigma \sqrt{M}}{\varepsilon}\right)^{\frac{\alpha}{\alpha-1}}\ln\frac{\sigma \sqrt{M}}{\varepsilon\beta}\right\}\right) 
    \end{equation*}
    that concludes the proof.
\end{proof}

\subsection{Strongly Convex case}\label{appendix:SSTM_str_cvx}

In this section, we provide the complete formulation of our result for \algname{R-DProx-clipped-SSTM-shift} (a restarted version for \algname{DProx-clipped-SSTM-shift}) and proofs. We should mention that the results for \algname{DProx-clipped-SSTM-shift}), Theorem~\ref{thm:D_prox_clipped_SSTM_shift_full} and Lemma~\ref{lem:SSTM_optimization_lemma_convex}, can be proven in the same way if we assume that $h_i^0 = \nabla f_i(x^0)$ and $M \geq \|x^0-x^*\|^2 + C^2\alpha^2_{K_0+1}\frac{1}{n}\sum^n_{i=1}\|h_i^0  - \nabla f_i(x^*)\|^2$. 

For the readers' convenience, the method’s update rule is repeated below:

\begin{algorithm}[h]
\caption{Restarted \algname{DProx-clipped-SSTM-shift} (\algname{R-DProx-clipped-SSTM-shift})}
\label{alg:R-DProx-clipped-SSTM-shift}   
\begin{algorithmic}[1]
\REQUIRE starting point $x^0$, number of restarts $\tau$, number of steps of \algname{DProx-clipped-SSTM-shift} between restarts $\{K_t\}_{t=1}^{\tau}$, stepsize parameters $\{a_t\}_{t=1}^\tau$, clipping levels $\{\lambda_{k}^1\}_{k=0}^{K_1-1}$, $\{\lambda_{k}^2\}_{k=0}^{K_2-1}$, \ldots, $\{\lambda_{k}^\tau\}_{k=0}^{K_\tau - 1}$, smoothness constant $L$, the constant $\{N_t\}^{\tau}_{t=1}$.
\STATE $\hat{x}^0 = x^0$
\FOR{$t=1,\ldots, \tau$}
\STATE Run \algname{DProx-clipped-SSTM-shift} for $K_t$ iterations with stepsize parameter $a_{t}$, clipping levels $\{\lambda_{k}^t\}_{k=0}^{K_t-1}$, and starting point $\hat{x}^{t-1}$. Define the output of \algname{DProx-clipped-SSTM-shift} by $\hat{x}^{t}$.
\ENDFOR
\ENSURE $\hat{x}^\tau$ 
\end{algorithmic}
\end{algorithm}

\begin{theorem}\label{thm:R_DProx_clipped_SSTM_shift_main_theorem_appendix}
    Let Assumptions~\ref{as:bounded_alpha_moment}, \ref{as:L_smoothness}, \ref{as:str_cvx} with $\mu > 0$ hold for $Q = B_{5n\sqrt{M}}(x^*)$, where $M \geq \|x^0-x^*\|^2 + C^2_t\alpha^2_{N_1+1}\frac{1}{n}\sum^n_{i=1}\|h_i^0  - \nabla f_i(x^*)\|^2$ and \algname{R-DProx-clipped-SSTM-shift} runs \algname{DProx-clipped-SSTM-shift} $\tau$ times. Let
    \begin{gather}
        K_t =  \Bigg\lceil \max \Bigg\{ \sqrt{\frac{24LM_{t-1}}{\varepsilon_t}}, 2\cdot10^{15}\sqrt{\frac{LM_{t-1}}{n\varepsilon_t}}\ln\frac{2\cdot10^{16}n\sqrt{LM_{t-1}}\tau}{\sqrt{\varepsilon_t}\beta},\notag\\ 
        \frac{1}{n}\left(\frac{6^8\sigma \sqrt{M_{t-1}}}{\varepsilon_t}\right)^{\frac{\alpha}{\alpha-1}} \ln\left(\frac{10\tau}{\beta}\left(\frac{6^8\sigma \sqrt{M_{t-1}}}{\varepsilon_t}\right)^{\frac{\alpha}{\alpha-1}}\right) \notag\\ 
        \frac{1}{n^{\frac{5\alpha-1}{\alpha-1}}}\left(\frac{16\cdot 10^{24}\sigma \sqrt{M_{t-1}}}{\varepsilon_t}\right)^{\frac{\alpha}{\alpha-1}} \ln^{\frac{7\alpha-1}{\alpha-1}}\left(\frac{10\tau}{\beta}\left(\frac{16\cdot 10^{24}\sigma \sqrt{M_{t-1}}}{\varepsilon_t}\right)^{\frac{\alpha}{\alpha-1}}\right)\Bigg\} \Bigg \rceil, \label{eq:R_clipped_SSTM_K_t} \\
        \varepsilon_t = \frac{\mu M_{t-1}}{4},\quad M_{t-1} = \frac{M}{2^{(t-1)}}, \quad \tau = \left\lceil \log_2 \frac{\mu M}{2\varepsilon} \right\rceil,\quad \ln\frac{10nK_t\tau}{\beta} \geq 1, \label{eq:R_clipped_SSTM_epsilon_R_t_tau} \\
        a \geq \max\left\{2, \frac{8\cdot 3^5\cdot 72^4}{n} \ln^4\frac{10nK_t}{\beta}, \frac{ 18\cdot 6^5\sigma K_t^{\frac{1}{\alpha}}(K_t+1)}{\sqrt{M_t}Ln^{\frac{\alpha-1}{\alpha}}} \ln^{\frac{\alpha-1}{\alpha}}\frac{10nK_t}{\beta} \right\},\label{eq:R_Dprox_clipped_SSTM_shift_parameter_a}\\
        \lambda^t_{k} = \frac{n\sqrt{M_t}}{72\widetilde\alpha^t_{k+1}\ln\frac{10nK_t}{\beta}}, \label{eq:R_Dprox_clipped_SSTM_shift_clipping_level}
    \end{gather}
    for $t = 1, \ldots, \tau$, where $C_t =\frac{864}{n}\ln \frac{10nK_t}{\beta}$, $ N_t =  \left\lceil\frac{3}{2}C^2_tn\right\rceil > 0$. Then to achieve $\Phi(\hat x^\tau) - \Phi(x^*) \leq \varepsilon$ with probability at least $ 1 - \beta$ \algname{R-DProx-clipped-SSTM-shift} requires
    \begin{gather}
        \cO\Bigg(\max\Bigg\{ \sqrt{\frac{L}{\mu}}\ln\left(\frac{\mu M}{\varepsilon}\right), \sqrt{\frac{L}{n\mu}}\ln\left(\frac{\mu M}{\varepsilon}\right)\ln^5\left(\frac{\sqrt{L}}{\sqrt{\mu}\beta}\ln\left(\frac{\mu M}{\varepsilon}\right)\right), \notag \\
        \frac{1}{n}\left(\frac{\sigma^2}{\mu \varepsilon}\right)^{\frac{\alpha}{2(\alpha-1)}} \ln\left(\frac{1}{\beta}\left(\frac{\sigma^2}{\mu \varepsilon}\right)^{\frac{\alpha}{2(\alpha-1)}}\ln\left(\frac{\mu M}{\varepsilon}\right)\right),\notag\\
        \frac{1}{n^{\frac{5\alpha-1}{\alpha-1}}}\left(\frac{\sigma^2}{\mu \varepsilon}\right)^{\frac{\alpha}{2(\alpha-1)}} \ln^{\frac{7\alpha-1}{\alpha-1}}\left(\frac{1}{\beta}\left(\frac{\sigma^2}{\mu \varepsilon}\right)^{\frac{\alpha}{2(\alpha-1)}}\ln\left(\frac{\mu M}{\varepsilon}\right)\right)\Bigg\} \Bigg) \label{eq:R_clipped_SSTM_main_result_appendix}
    \end{gather}
    iterations/oracle calls per worker. Moreover, with probability $\geq 1-\beta$ the iterates of \algname{R-DProx-clipped-SSTM-shift} at stage $t$ stay in the ball $B_{2\sqrt{M_{t-1}}}(x^*)$.
\end{theorem}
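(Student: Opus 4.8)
The plan is to treat each of the $\tau$ stages as a single invocation of \algname{DProx-clipped-SSTM-shift} in the convex regime and to chain them via strong convexity, so that the squared distance to $x^*$ (together with the shift discrepancy) is halved at every restart. I would run the induction over stages $t = 1, \ldots, \tau$ with the invariant that, on a high-probability event, the input $\hat x^{t-1}$ and the re-initialized shifts $h_i^0 = \nabla f_i(\hat x^{t-1})$ to stage $t$ satisfy the Lyapunov-input bound $\|\hat x^{t-1} - x^*\|^2 + C_t^2 \alpha_{N_t+1}^2 \tfrac{1}{n}\sum_{i=1}^n \|h_i^0 - \nabla f_i(x^*)\|^2 \le M_{t-1}$. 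The base case $t=1$ is exactly the hypothesis on $M = M_0$. Granting the invariant at stage $t$, I would apply Theorem~\ref{thm:D_prox_clipped_SSTM_shift_full} (in the version noted at the start of this subsection, where shifts are initialized at $\nabla f_i(\hat x^{t-1})$ and $M$ may include the shift term), with $M \to M_{t-1}$, the stepsize parameter $a_t$ from \eqref{eq:R_Dprox_clipped_SSTM_shift_parameter_a}, and clipping levels \eqref{eq:R_Dprox_clipped_SSTM_shift_clipping_level}. This yields $\Phi(\hat x^t) - \Phi(x^*) \le 6 a_t L M_{t-1}/(K_t(K_t+3)) \le \varepsilon_t$ with probability at least $1 - \beta/\tau$, provided $K_t$ is at least the per-stage complexity \eqref{eq:Dprox_clipped_SSTM_shift_complexity} evaluated at $(M_{t-1}, \varepsilon_t)$; the four-way maximum \eqref{eq:R_clipped_SSTM_K_t} is precisely this bound, where the fourth (high-log-power) term absorbs the self-referential appearance of $K_t$ inside the logarithms defining $a_t$ and $\lambda_k^t$ when the stochastic term dominates.

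The induction is then closed using strong convexity. Since $f$ is $\mu$-strongly convex (Assumption~\ref{as:str_cvx}) and $\Psi$ is convex with $x^*$ minimizing $\Phi$, we have $\Phi(\hat x^t) - \Phi(x^*) \ge \tfrac{\mu}{2}\|\hat x^t - x^*\|^2$, so the bound $\Phi(\hat x^t) - \Phi(x^*) \le \varepsilon_t = \mu M_{t-1}/4$ gives $\|\hat x^t - x^*\|^2 \le 2\varepsilon_t/\mu = M_{t-1}/2 = M_t$. It remains to re-establish the combined invariant for stage $t+1$: here I would exploit the large-$a$ choice, estimating $C_{t+1}^2 \alpha_{N_{t+1}+1}^2 L^2$. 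Using $C_t = \tfrac{864}{n}\ln\tfrac{10nK_t}{\beta}$, $N_t = \lceil \tfrac32 C_t^2 n\rceil$, $\alpha_{N_t+1} = (N_t+2)/(2a_t L)$, and the lower bound $a_t \ge \tfrac{8\cdot 3^5 \cdot 72^4}{n}\ln^4\tfrac{10nK_t}{\beta}$, a short computation (telescoping the powers of $864 = 12\cdot 72$) shows $C_{t+1}\alpha_{N_{t+1}+1}L \le \tfrac{1}{n\ln(\cdot)} \le 1$, hence the shift contribution is $O(n^{-2}\ln^{-2})\,\|\hat x^t - x^*\|^2$ and is negligible against $M_t$. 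Thus $\|\hat x^t - x^*\|^2 + (\text{shift term}) \le M_t$ after a trivial constant adjustment of the target accuracy, completing the inductive step; a union bound over the $\tau$ stages (each failing with probability at most $\beta/\tau$) gives the overall $1-\beta$ guarantee and confinement of stage-$t$ iterates to $B_{2\sqrt{M_{t-1}}}(x^*)$.

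Finally I would sum $\sum_{t=1}^\tau K_t$. The decisive structural fact is that $M_{t-1}/\varepsilon_t = 4/\mu$ is \emph{constant across stages}, so the two deterministic terms of \eqref{eq:R_clipped_SSTM_K_t} are stage-independent and contribute a factor $\tau = \lceil \log_2(\mu M / 2\varepsilon)\rceil$, producing the $\sqrt{L/\mu}\,\ln(\cdot)$ and $\sqrt{L/(n\mu)}\,\ln^5(\cdot)$ terms in \eqref{eq:R_clipped_SSTM_main_result_appendix}. For the two stochastic terms, $\sigma\sqrt{M_{t-1}}/\varepsilon_t = 4\sigma/(\mu\sqrt{M_{t-1}})$ \emph{increases} geometrically as $M_{t-1} = M/2^{t-1}$ shrinks, so the sums are geometric with ratio $2^{\alpha/(2(\alpha-1))} > 1$ and are dominated (up to a constant) by the last stage, where $M_{\tau-1} = \Theta(\varepsilon/\mu)$ turns $\tfrac1n(\sigma\sqrt{M_{\tau-1}}/\varepsilon_\tau)^{\alpha/(\alpha-1)}$ into $\tfrac1n(\sigma^2/(\mu\varepsilon))^{\alpha/(2(\alpha-1))}$, and likewise for the fourth term. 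The main obstacle, in my view, is not any single estimate but the coupled bookkeeping of (i) carrying the combined Lyapunov invariant—including the re-initialized shifts—through the restarts, which is what forces the precise lower bound on $a_t$, and (ii) resolving the self-referential dependence of $K_t$ on the logarithms in $a_t$ and $\lambda_k^t$ so that the geometric summation yields exactly the log powers $\tfrac{7\alpha-1}{\alpha-1}$ and $5$ appearing in \eqref{eq:R_clipped_SSTM_main_result_appendix}.
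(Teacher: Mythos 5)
Your proposal is correct and follows essentially the same route as the paper: a stage-wise induction in which Theorem~\ref{thm:D_prox_clipped_SSTM_shift_full} (with shifts re-initialized at $\nabla f_i(\hat x^{t-1})$) is applied per stage, strong convexity converts $\Phi(\hat x^t)-\Phi(x^*)\le\varepsilon_t=\mu M_{t-1}/4$ into the halving $M_t=M_{t-1}/2$ of the Lyapunov quantity, a union bound over the $\tau$ stages gives the $1-\beta$ guarantee, and the sum $\sum_t K_t$ is handled exactly as you describe (stage-independent deterministic terms times $\tau$, geometrically growing stochastic terms dominated by the last stage). The one substantive difference is your treatment of the inflation factor $G_t=1+C_t^2\alpha_{N_t+1}^2L^2$: your computation $C_t\alpha_{N_t+1}L\lesssim 1/(nA)$ via the lower bound $a_t\ge\tfrac{8\cdot3^5\cdot72^4}{n}\ln^4(\cdot)$ is valid and gives $G_t\le 2$, which only strengthens the conclusion, but the paper instead uses the crude bound $G_t=\cO\bigl(n^{-4}\ln^6(\cdot)\bigr)$ (exploiting only $a_t\ge2$), and it is this factor --- not the self-referential appearance of $K_t$ inside the logarithms, as you suggest --- that generates the second and fourth terms of \eqref{eq:R_clipped_SSTM_K_t} and hence the $\ln^5$ and $n^{-\frac{5\alpha-1}{\alpha-1}}\ln^{\frac{7\alpha-1}{\alpha-1}}$ terms in \eqref{eq:R_clipped_SSTM_main_result_appendix}.
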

\begin{proof}
The key idea behind the proof is similar to the one used in \citep{gorbunov2021near,sadiev2023high}. We prove by induction that for any $t = 1,\ldots, \tau$ with probability at least $1 - \nicefrac{t\beta}{\tau}$ inequalities
    \begin{equation}
        \Phi(\hat{x}^l) - \Phi(x^*) \leq \varepsilon_l, \quad \hat{M}_l \leq  M_l = \frac{M}{2^l}
    \end{equation}
    hold for $l = 1,\ldots, t$ simultaneously. We recall the Lyapunov function is determined as
    \begin{equation*}
        \hat{M}_l = \|\hat{x}^l-x^*\|^2 +C^2_l(\alpha^l_{N_l+1})^2\frac{1}{n}\sum^n_{i=1}\|\nabla f_i(\hat{x}^l)-\nabla f_i(x^*)\|^2 \overset{\eqref{eq:L_smoothness}}{\leq} \underbrace{(1+C^2_l(\alpha^l_{N_l+1})^2L^2)}_{ \eqdef G_l}\|\hat{x}^l-x^*\|^2,
    \end{equation*}
    where by definition of $C_l$, $\alpha^l_{N_l+1}$, we can estimate $G_l$
    \begin{equation}
        \label{eq:G_t}
        G_l \leq 2 \max\left\{1, \frac{9\cdot 864^6}{4n^4}\ln^6\frac{10nK_{l+1}\tau}{\beta}\right\}
    \end{equation}
    Now, we prove the base of the induction. Theorem~\ref{thm:D_prox_clipped_SSTM_shift_full} implies that with probability at least $1 - \nicefrac{\beta}{\tau}$
    \begin{eqnarray}
        G_1(\Phi(\hat x^1) - \Phi(x^*)) &\leq& G_1\frac{6a_1LR^2}{K_1(K_1+3)} \overset{\eqref{eq:R_Dprox_clipped_SSTM_shift_parameter_a}}{=} 2 \max\left\{1, \frac{9\cdot 864^6}{4n^4}\ln^6\frac{10nK_{1}\tau}{\beta}\right\} \notag\\
        &&\times~ \max \left\{\frac{12LM}{K_1(K_1+3)}, \frac{162\cdot 72^5LM}{nK_1(K_1+3)}\ln^4\frac{10nK_1\tau}{\beta}, \frac{3\cdot 6^7 \sigma \frac{K_1+1}{K_1+3}}{\sqrt{M}(K_1n)^{\frac{\alpha-1}{\alpha}}} \ln^{\frac{\alpha-1}{\alpha}}\frac{10nK_1\tau}{\beta}\right\} \notag\\
        &\leq& \max\Bigg\{\frac{24LM}{K^2_1}, \frac{162\cdot 72^5\cdot9\cdot 864^6LM}{nK_1^2}\ln^{10}\frac{10nK_1\tau}{\beta},\notag\\
        &&\quad\frac{6^8\sigma \sqrt{M}\ln^{\frac{\alpha-1}{\alpha}}\frac{10nK_1\tau}{\beta}}{(nK_1)^{\frac{\alpha-1}{\alpha}}}, 
        \frac{81\cdot5184^6\cdot \sigma \sqrt{M}\ln^{\frac{7\alpha-1}{\alpha}}\frac{10nK_1\tau}{\beta}}{n^{\frac{5\alpha-1}{\alpha}}K_1^{\frac{\alpha-1}{\alpha}}}\Bigg\}\notag\\
        &\overset{\eqref{eq:R_clipped_SSTM_K_t}}{\leq}& \varepsilon_1 = \frac{\mu M}{4} \notag
    \end{eqnarray}
    and, due to the strong convexity,
    \begin{equation*}
        \hat{M}_1\leq G_1\|\hat x^1 - x^*\|^2 \leq \frac{2G_1(\Phi(\hat x^1) - \Phi(x^*))}{\mu} \leq \frac{M}{2} = M_1.
    \end{equation*}
    The base of the induction is proven. Now, assume that the statement holds for some $t = T < \tau$, i.e., with probability at least $1 - \nicefrac{T\beta}{\tau}$ inequalities
    \begin{equation}
        \Phi(\hat{x}^l) - \Phi(x^*) \leq \varepsilon_l, \quad \hat{M}_l \leq  M_l = \frac{M}{2^l}
    \end{equation}
    hold for $l = 1,\ldots, T$ simultaneously. In particular, with probability at least $1 - \nicefrac{T\beta}{\tau}$ we have $\hat{M}_T \leq M_T$. Applying Theorem~\ref{thm:D_prox_clipped_SSTM_shift_full} and using union bound for probability events, we get that with probability at least $1 - \nicefrac{(T+1)\beta}{\tau}$
    \begin{eqnarray}
        G_{T+1}(\Phi(\hat x^{T+1}) - \Phi(x^*)) &\leq& G_{T+1}\frac{6a_{T+1}LM^2_T}{K_{T+1}(K_{T+1}+3)} \overset{\eqref{eq:R_Dprox_clipped_SSTM_shift_parameter_a}}{=} 2 \max\left\{1, \frac{9\cdot 864^6}{4n^4}\ln^6\frac{10nK_{T+1}\tau}{\beta}\right\} \notag\\
        &&\times~ \max \Bigg\{\frac{12LM_{T}}{K_{T+1}(K_{T+1}+3)}, \frac{162\cdot 72^5LM_{T+1}}{nK_{T+1}(K_{T+1}+3)}\ln^4\frac{10nK_{T+1}\tau}{\beta}, \notag\\
        &&\quad\quad\frac{3\cdot 6^7 \sigma \frac{K_{T+1}+1}{K_{T+1}+3}}{\sqrt{M}(K_{T+1}n)^{\frac{\alpha-1}{\alpha}}} \ln^{\frac{\alpha-1}{\alpha}}\frac{10nK_{T+1}\tau}{\beta}\Bigg\} \notag\\
        &\leq& \max\Bigg\{\frac{24LM_T}{K^2_{T+1}}, \frac{162\cdot 72^5\cdot9\cdot 864^6LM_{T+1}}{nK_{T+1}^2}\ln^{10}\frac{10nK_{T+1}\tau}{\beta},\notag\\
        &&\quad\frac{6^8\sigma \sqrt{M_T}\ln^{\frac{\alpha-1}{\alpha}}\frac{10nK_{T+1}\tau}{\beta}}{(nK_{T+1})^{\frac{\alpha-1}{\alpha}}}, 
        \frac{81\cdot5184^6\cdot \sigma \sqrt{M_T}\ln^{\frac{7\alpha-1}{\alpha}}\frac{10nK_{T}\tau}{\beta}}{n^{\frac{5\alpha-1}{\alpha}}K_{T+1}^{\frac{\alpha-1}{\alpha}}}\Bigg\}\notag\\
        &\overset{\eqref{eq:R_clipped_SSTM_K_t}}{\leq}& \varepsilon_{T+1} = \frac{\mu M_T}{4} \notag
    \end{eqnarray}
    and, due to the strong convexity,
    \begin{equation*}
        \hat{M}_{T+1}\leq G_{T+1}\|\hat x^{T+1} - x^*\|^2 \leq \frac{2G_{T+1}(\Phi(\hat x^{T+1}) - \Phi(x^*))}{\mu} \leq \frac{M_T}{2} = M_{T+1}.
    \end{equation*}
    Thus, we finished the inductive part of the proof. In particular, with probability at least $1 - \beta$ inequalities
    \begin{equation}
        \Phi(\hat{x}^l) - \Phi(x^*) \leq \varepsilon_l, \quad \hat{M}_l \leq  M_l = \frac{M}{2^l}\notag
    \end{equation}
    hold for $l = 1,\ldots, \tau$ simultaneously, which gives for $l = \tau$ that with probability at least $1 - \beta$
    \begin{equation*}
        \Phi(\hat{x}^l) - \Phi(x^*) \leq \varepsilon_\tau =  \frac{\mu M_{\tau-1}}{4} = \frac{\mu M}{2^{\tau+1}}\overset{\eqref{eq:R_clipped_SSTM_epsilon_R_t_tau}}{\leq} \varepsilon.
    \end{equation*}
    It remains to calculate the overall number of oracle calls during all runs of \algname{clipped-SSTM}. We have
    \begin{eqnarray*}
        \sum\limits_{t=1}^\tau K_t &=& \cO\Bigg(\sum\limits_{t=1}^\tau \max\Bigg\{ 
        \sqrt{\frac{LM_{t-1}}{\varepsilon_t}},
        \sqrt{\frac{LM_{t-1}^2}{n\varepsilon_t}}\ln^5\left(\frac{n\sqrt{LM_{t-1}^2}\tau}{\sqrt{\varepsilon_t}\beta}\right), \\
        &&\frac{1}{n}\left(\frac{\sigma \sqrt{M_{t-1}}}{\varepsilon_t}\right)^{\frac{\alpha}{\alpha-1}} \ln\left(\frac{\tau}{\beta}\left(\frac{\sigma \sqrt{M_{t-1}}}{\varepsilon_t}\right)^{\frac{\alpha}{\alpha-1}}\right),\\
        &&\frac{1}{n^{\frac{7\alpha-1}{\alpha-1}}}\left(\frac{\sigma \sqrt{M_{t-1}}}{\varepsilon_t}\right)^{\frac{\alpha}{\alpha-1}} \ln^{\frac{5\alpha-1}{\alpha-1}}\left(\frac{\tau}{\beta}\left(\frac{\sigma \sqrt{M_{t-1}}}{\varepsilon_t}\right)^{\frac{\alpha}{\alpha-1}}\right) \Bigg\} \Bigg)\\
        &=& \cO\Bigg(\sum\limits_{t=1}^\tau \max\Bigg\{ \sqrt{\frac{L}{\mu}}, \sqrt{\frac{L}{n\mu}}\ln\left(\frac{n\sqrt{L}\tau}{\sqrt{\mu}\beta}\right),\frac{1}{n}\left(\frac{\sigma}{\mu \sqrt{M_{t-1}}}\right)^{\frac{\alpha}{\alpha-1}} \ln\left(\frac{\tau}{\beta}\left(\frac{\sigma }{\mu\sqrt{M_{t-1}}}\right)^{\frac{\alpha}{\alpha-1}}\right),\\
        && \frac{1}{n^{\frac{5\alpha-1}{\alpha-1}}}\left(\frac{\sigma}{\mu \sqrt{M_{t-1}}}\right)^{\frac{\alpha}{\alpha-1}} \ln^{\frac{7\alpha-1}{\alpha-1}}\left(\frac{\tau}{\beta}\left(\frac{\sigma }{\mu\sqrt{M_{t-1}}}\right)^{\frac{\alpha}{\alpha-1}}\right) \Bigg\} \Bigg)\\
        &=& \cO\Bigg(\max\Bigg\{ \{ \tau \sqrt{\frac{L}{\mu}}, \tau \sqrt{\frac{L}{n\mu}}\ln\left(\frac{n\sqrt{L}\tau}{\sqrt{\mu}\beta}\right), \frac{1}{n} \sum\limits_{t=1}^\tau\left(\frac{\sigma \cdot 2^{\nicefrac{t}{2}}}{\mu \sqrt{M}}\right)^{\frac{\alpha}{\alpha-1}} \ln\left(\frac{\tau}{\beta}\left(\frac{\sigma \cdot 2^{\nicefrac{t}{2}}}{\mu \sqrt{M}}\right)^{\frac{\alpha}{\alpha-1}}\right),\\
        &&\frac{1}{n^{\frac{5\alpha-1}{\alpha-1}}} \sum\limits_{t=1}^\tau\left(\frac{\sigma \cdot 2^{\nicefrac{t}{2}}}{\mu R}\right)^{\frac{\alpha}{\alpha-1}} \ln^{\frac{7\alpha-1}{\alpha-1}}\left(\frac{\tau}{\beta}\left(\frac{\sigma \cdot 2^{\nicefrac{t}{2}}}{\mu \sqrt{M}}\right)^{\frac{\alpha}{\alpha-1}}\right)\Bigg\} \Bigg)\\
        &=& \cO\Bigg(\max\Bigg\{ \sqrt{\frac{L}{\mu}}\ln\left(\frac{\mu M}{\varepsilon}\right),
        \sqrt{\frac{L}{n\mu}}\ln\left(\frac{\mu M}{\varepsilon}\right)\ln^5\left(\frac{n\sqrt{L}}{\sqrt{\mu}\beta}\ln\left(\frac{\mu M}{\varepsilon}\right)\right),\\
        &&
        \frac{1}{n}\left(\frac{\sigma}{\mu \sqrt{M}}\right)^{\frac{\alpha}{\alpha-1}} \ln\left(\frac{\tau}{\beta}\left(\frac{\sigma \cdot 2^{\nicefrac{\tau}{2}}}{\mu \sqrt{M}}\right)^{\frac{\alpha}{\alpha-1}}\right)\sum\limits_{t=1}^\tau 2^\frac{\alpha t}{2(\alpha-1)},\\
        && \frac{1}{n^{\frac{5\alpha-1}{\alpha-1}}}\left(\frac{\sigma}{\mu \sqrt{M}}\right)^{\frac{\alpha}{\alpha-1}} \ln^{\frac{7\alpha-1}{\alpha-1}}\left(\frac{\tau}{\beta}\left(\frac{\sigma \cdot 2^{\nicefrac{\tau}{2}}}{\mu \sqrt{M}}\right)^{\frac{\alpha}{\alpha-1}}\right)\sum\limits_{t=1}^\tau 2^\frac{\alpha t}{2(\alpha-1)}\Bigg\} \Bigg)\\
        &=& \cO\Bigg(\max\Bigg\{ \sqrt{\frac{L}{\mu}}\ln\left(\frac{\mu M} {\varepsilon}\right), 
        \sqrt{\frac{L}{n\mu}}\ln\left(\frac{\mu M} {\varepsilon}\right)\ln\left(\frac{\sqrt{L}}{\sqrt{\mu}\beta}\ln\left(\frac{\mu R^2}{\varepsilon}\right)\right),\\
        &&
        \frac{1}{n}\left(\frac{\sigma}{\mu \sqrt{M}}\right)^{\frac{\alpha}{\alpha-1}} \ln\left(\frac{\tau}{\beta}\left(\frac{\sigma}{\mu \sqrt{M}}\right)^{\frac{\alpha}{\alpha-1}}\cdot 2^{\frac{\alpha}{2(\alpha-1)}}\right)2^\frac{\alpha \tau}{2(\alpha-1)},\\
        &&
        \frac{1}{n^{\frac{5\alpha-1}{\alpha-1}}}\left(\frac{\sigma}{\mu \sqrt{M}}\right)^{\frac{\alpha}{\alpha-1}} \ln^{\frac{7\alpha-1}{\alpha-1}}\left(\frac{\tau}{\beta}\left(\frac{\sigma}{\mu \sqrt{M}}\right)^{\frac{\alpha}{\alpha-1}}\cdot 2^{\frac{\alpha}{2(\alpha-1)}}\right)2^\frac{\alpha \tau}{2(\alpha-1)}\Bigg\} \Bigg).
    \end{eqnarray*}
    Thus, we have 
    \begin{eqnarray*}
        \sum\limits_{t=1}^\tau K_t &=&\cO\Bigg(\max\Bigg\{ \sqrt{\frac{L}{\mu}}\ln\left(\frac{\mu M}{\varepsilon}\right), \sqrt{\frac{L}{n\mu}}\ln\left(\frac{\mu M}{\varepsilon}\right)\ln^5\left(\frac{\sqrt{L}}{\sqrt{\mu}\beta}\ln\left(\frac{\mu M}{\varepsilon}\right)\right), \notag \\
        &&
        \frac{1}{n}\left(\frac{\sigma^2}{\mu \varepsilon}\right)^{\frac{\alpha}{2(\alpha-1)}} \ln\left(\frac{1}{\beta}\left(\frac{\sigma^2}{\mu \varepsilon}\right)^{\frac{\alpha}{2(\alpha-1)}}\ln\left(\frac{\mu M}{\varepsilon}\right)\right),\\
        &&
        \frac{1}{n^{\frac{5\alpha-1}{\alpha-1}}}\left(\frac{\sigma^2}{\mu \varepsilon}\right)^{\frac{\alpha}{2(\alpha-1)}} \ln^{\frac{7\alpha-1}{\alpha-1}}\left(\frac{1}{\beta}\left(\frac{\sigma^2}{\mu \varepsilon}\right)^{\frac{\alpha}{2(\alpha-1)}}\ln\left(\frac{\mu M}{\varepsilon}\right)\right)\Bigg\} \Bigg),
    \end{eqnarray*}
    which concludes the proof.
\end{proof}

\clearpage

\section{Missing Proofs for \algname{DProx-clipped-SGDA-shift}}\label{appendix:dprox_clipped_SGDA_shift}

\subsection{Cocoercive case}
In this section, we give the complete formulations of our results for \algname{DProx-clipped-SGDA-shift} and rigorous proofs. For the readers' convenience, the method's update rule is repeated below:
\begin{gather*}
    x^{k+1} = \proxkPsi \left(x^k - \gamma \tg^k\right),\;\; \text{where}\;\; \tg^k = \frac{1}{n}\sum\limits_{i=1}^n \tg_i^k,\;\; \tg_i^k = h_i^k + \hat\Delta_i^k, \\
    h_i^{k+1} = h_i^k + \nu \hat \Delta_i^k,\quad \hat\Delta_i^k = \clip\left(F_{\xi_i^k}(x^k) - h_i^k, \lambda_k\right). 
\end{gather*}

\begin{lemma}\label{lem:optimization_lemma_gap_prox_clipped_SGDA}
    Let Assumptions~\ref{as:monotonicity}, \ref{as:star_cocoercivity} and \ref{as:cocoercivity} hold for $Q = B_{3\sqrt{V}}(x^*)$, where $V \geq \|x^0 - x^*\|^2 + \frac{25600\gamma^2\ln^2\frac{48n(K+1)}{\beta}}{n^2}\sum_{i=1}^n\|F_i(x^*)\|^2$ and $0 < \gamma \leq \nicefrac{1}{\ell}$. If $x^k$ lies in $B_{3\sqrt{V}}(x^*)$ for all $k = 0,1,\ldots, K-1$ for some $K\geq 0$, then for all $u \in B_{3\sqrt{V}}(x^*)$ the iterates produced by \algname{DProx-clipped-SGDA-shift} satisfy
    \begin{eqnarray}
        \langle F(u), x^K_{\avg} - u\rangle + \Psi(x^K_{\avg}) - \Psi(u) &\leq& \frac{\|x^0 - u\|^2 - \|x^{K} - u\|^2}{2\gamma K} + \frac{\gamma}{K}\sum\limits_{k=0}^{K-1} \|\omega_k\|^2\notag\\
        &&\quad + \frac{1}{K}\sum\limits_{k=0}^{K-1}\langle x^k - u , \omega_k\rangle, \label{eq:optimization_lemma_prox_clipped_SGDA}\\
        x^K_{\avg} &\eqdef& \frac{1}{K}\sum\limits_{k=0}^{K-1}x^{k+1}, \label{eq:x_avg_prox_clipped_SGDA}\\
        \omega_k &\eqdef& F(x^k) - \tg^k. \label{eq:omega_k_rpox_clipped_SGDA}
    \end{eqnarray}
\end{lemma}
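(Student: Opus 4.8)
The plan is to establish a per-iteration inequality and then average it. First I would apply the prox inequality (Lemma~\ref{lem:prox_lemma}) to the update $x^{k+1} = \proxkPsi(x^k - \gamma\tg^k)$ with the test point $y = u$, giving $\langle x^{k+1} - x^k + \gamma\tg^k, u - x^{k+1}\rangle \ge \gamma(\Psi(x^{k+1}) - \Psi(u))$. Combining this with the three-point identity $\langle x^{k+1} - x^k, u - x^{k+1}\rangle = \tfrac12\|x^k - u\|^2 - \tfrac12\|x^{k+1}-u\|^2 - \tfrac12\|x^{k+1}-x^k\|^2$ yields the base estimate
\begin{equation*}
\gamma(\Psi(x^{k+1}) - \Psi(u)) + \gamma\langle \tg^k, x^{k+1} - u\rangle \le \tfrac12\|x^k - u\|^2 - \tfrac12\|x^{k+1}-u\|^2 - \tfrac12\|x^{k+1}-x^k\|^2.
\end{equation*}

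Next I would substitute $\tg^k = F(x^k) - \omega_k$ and split $\langle F(x^k), x^{k+1}-u\rangle$ so as to expose the operator at $u$. Writing $d_k = x^{k+1} - x^k$, monotonicity (Assumption~\ref{as:monotonicity}, valid since $x^k, u \in B_{3\sqrt{V}}(x^*)$) gives $\langle F(x^k), x^{k+1}-u\rangle = \langle F(u), x^{k+1}-u\rangle + \langle F(x^k)-F(u), x^k - u\rangle + \langle F(x^k)-F(u), d_k\rangle$, where the middle inner product is nonnegative. The noise contributes $-\gamma\langle\omega_k, x^{k+1}-u\rangle$, which I would rewrite as $-\gamma\langle\omega_k, x^k-u\rangle - \gamma\langle\omega_k, d_k\rangle$ --- this is the crucial bookkeeping, because it keeps the surviving linear noise term anchored at $x^k$ (so that later, once $\omega_k$ is split into its mean-zero and bias parts, it forms a martingale difference amenable to Bernstein's inequality), while the gap itself is produced at $x^{k+1}$.

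The key step is to collect all terms involving $d_k$, namely $-\tfrac12\|d_k\|^2 - \gamma\langle F(x^k)-F(u), d_k\rangle + \gamma\langle\omega_k, d_k\rangle$, and bound them by completing the square: $\langle a, d_k\rangle - \tfrac12\|d_k\|^2 \le \tfrac12\|a\|^2$ with $a = \gamma(\omega_k - (F(x^k)-F(u)))$ gives an upper bound of $\gamma^2\|\omega_k\|^2 + \gamma^2\|F(x^k)-F(u)\|^2$. The residual $\gamma^2\|F(x^k)-F(u)\|^2$ is exactly what cocoercivity (Assumption~\ref{as:cocoercivity}) handles: since $\|F(x^k)-F(u)\|^2 \le \ell\langle F(x^k)-F(u), x^k-u\rangle$, it combines with the retained term $-\gamma\langle F(x^k)-F(u), x^k-u\rangle$ into $-\gamma(1-\gamma\ell)\langle F(x^k)-F(u), x^k-u\rangle \le 0$, using $\gamma \le 1/\ell$ together with monotonicity. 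What remains is the clean per-step bound $\gamma(\Psi(x^{k+1})-\Psi(u)) + \gamma\langle F(u), x^{k+1}-u\rangle \le \tfrac12\|x^k-u\|^2 - \tfrac12\|x^{k+1}-u\|^2 + \gamma^2\|\omega_k\|^2 + \gamma\langle\omega_k, x^k-u\rangle$.

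Finally I would sum over $k=0,\dots,K-1$ (telescoping the squared-norm difference), divide by $\gamma K$, and use the linearity of $\langle F(u),\cdot\rangle$ together with Jensen's inequality for the convex $\Psi$ to pass from $\tfrac1K\sum_k(\Psi(x^{k+1})-\Psi(u)) \ge \Psi(x^K_{\avg}) - \Psi(u)$ and $\tfrac1K\sum_k\langle F(u), x^{k+1}-u\rangle = \langle F(u), x^K_{\avg}-u\rangle$, which delivers \eqref{eq:optimization_lemma_prox_clipped_SGDA}. The main obstacle is the simultaneous index management described above: arranging the decomposition so that the averaged gap is read off at $x^K_{\avg} = \tfrac1K\sum_k x^{k+1}$ while the linear noise term stays pinned at $x^k$, and then spending cocoercivity and the stepsize restriction $\gamma\le 1/\ell$ precisely to annihilate the $\|F(x^k)-F(u)\|^2$ residual instead of letting it contaminate the bound.
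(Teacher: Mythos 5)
Your proposal is correct and follows essentially the same route as the paper's proof: the prox inequality of Lemma~\ref{lem:prox_lemma}, the three-point identity, Young's inequality to absorb the $\|x^{k+1}-x^k\|^2$ term, cocoercivity together with $\gamma\le\nicefrac{1}{\ell}$ to annihilate the $\gamma^2\|F(x^k)-F(u)\|^2$ residual, and telescoping plus Jensen at the end, with the linear noise term deliberately anchored at $x^k$. The only differences are cosmetic reorganizations of the algebra (you complete the square on the increment directly, the paper applies Young to $\langle \tg^k - F(u), x^k - x^{k+1}\rangle$ and then the triangle inequality), leading to the same per-step bound.
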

\begin{proof}
    % Applying Lemma C.2. from \citep{khaled2020unified} with $p = x^{k+1}$, $y = x^k - \gamma \tg^k$, $x = x^k$ we obtain
    % \begin{eqnarray*}
    %     2\gamma\left(\langle F(u), x^K_{\avg} - u\rangle + \Psi(x^K_{\avg}) - \Psi(u)\right) \leq \|x^k - u\|^2 - \|x^{k+1} - u\|^2 + 2 \gamma \langle \omega_k, x^k - u\rangle +2\gamma^2\|\omega_k\|^2.
    % \end{eqnarray*}
    % Summing up the previous inequality by $k$ from $0$ to $K-1$, we get
    % \begin{eqnarray*}
    %      \frac{1}{K}\sum\limits_{k=0}^{K-1}\left(\langle F(u), x^{k+1}- u\rangle + \Psi(x^{k+1}) - \Psi(u)\right) &\leq& \frac{\|x^0 - u\|^2 - \|x^{K} - u\|^2}{2\gamma K} + \frac{\gamma}{K}\sum\limits_{k=0}^{K-1} \|\omega_k\|^2\notag\\
    %     &&\quad + \frac{1}{K}\sum\limits_{k=0}^{K-1}\langle x^k - u , \omega_k\rangle.
    % \end{eqnarray*}
    The proof of this lemma follows the proof of Theorem D.3 from \citep{beznosikov2022stochastic}. For completeness, we provide here the full proof. We start with the application of Lemma~\ref{lem:prox_lemma} with $x^+= x^{k+1}$, $x = x^k - \gamma g^k$, and $y = u$ for arbitrary $u \in B_{3\sqrt{V}}(x^*)$:
    \begin{equation*}
        \langle x^{k+1} - x^k + \gamma \tg^k, u - x^{k+1} \rangle \geq \gamma\left(\Psi(x^{k+1}) - \Psi(u)\right).
    \end{equation*}
    Rearranging the terms, we get
    \begin{eqnarray*}
        2\gamma\left(\Psi(x^{k+1}) - \Psi(u)\right) &\leq& 2\gamma\langle \tg^k, u - x^k \rangle + 2\langle x^{k+1} - x^k, u - x^k \rangle \\
        &&\quad + 2\langle x^{k+1} - x^k + \gamma \tg^k, x^k - x^{k+1} \rangle 
    \end{eqnarray*}
    implying
    \begin{eqnarray*}
        2\gamma\left(\langle F(x^k), x^k - u \rangle + \Psi(x^{k+1}) - \Psi(u)\right) &\leq& 2\langle x^{k+1} - x^k, u - x^k \rangle + 2\gamma\langle F(x^k) - \tg^k, x^k - u\rangle \\
        &&\quad + 2\langle x^{k+1} - x^k + \gamma \tg^k, x^k - x^{k+1} \rangle\\
        &=& \|x^{k+1} - x^k\|^2 + \|x^k - u\|^2 - \|x^{k+1} - u\|^2\\
        &&\quad + 2\gamma\langle F(x^k) - \tg^k, x^k - u\rangle \\
        &&\quad - 2\|x^{k+1} - x^k\|^2 + 2\gamma\langle \tg^k, x^k - x^{k+1} \rangle\\
        &=& \|x^k - u\|^2 - \|x^{k+1} - u\|^2 - \|x^{k+1} - x^k\|^2\\
        &&\quad + 2\gamma\langle F(x^k) - \tg^k, x^k - u\rangle\\
        &&\quad + 2\gamma\langle F(u), x^k - x^{k+1} \rangle\\
        &&\quad + 2\gamma\langle \tg^k - F(u), x^k - x^{k+1} \rangle\\
        &\leq& \|x^k - u\|^2 - \|x^{k+1} - u\|^2\\
        &&\quad + 2\gamma\langle F(x^k) - \tg^k, x^k - u\rangle\\
        &&\quad + 2\gamma\langle F(u), x^k - x^{k+1} \rangle + \gamma^2\|\tg^k - F(u)\|^2,
    \end{eqnarray*}
    where in the last step we apply $2\gamma\langle \tg^k - F(u), x^k - x^{k+1} \rangle \leq \gamma^2\|\tg^k - F(u)\|^2 + \|x^k - x^{k+1}\|^2$. Adding $2\gamma\langle F(u), x^{k+1} - u \rangle - 2\gamma\langle F(x^k), x^k - u \rangle$ to the both sides, we derive
    \begin{eqnarray*}
        2\gamma\left(\langle F(u), x^{k+1} - u \rangle + \Psi(x^{k+1}) - \Psi(u)\right) &\leq& \|x^k - u\|^2 - \|x^{k+1} - u\|^2\\
        &&\quad + 2\gamma\langle F(u) - \tg^k, x^k - u\rangle + \gamma^2\|\tg^k - F(u)\|^2\\
        &=& \|x^k - u\|^2 - \|x^{k+1} - u\|^2\\
        &&\quad - 2\gamma\langle F(x^k) - F(u), x^k - u\rangle + \gamma^2\|\tg^k - F(u)\|^2\\
        &&\quad + 2\gamma\langle F(x^k) - \tg^k, x^k - u\rangle\\
        &\overset{\eqref{eq:cocoercivity}}{\leq}& \|x^k - u\|^2 - \|x^{k+1} - u\|^2\\
        &&\quad - \frac{2\gamma}{\ell}\|F(x^k) - F(u)\|^2 + 2\gamma^2\|F(x^k) - F(u)\|^2\\
        &&\quad + 2\gamma\langle F(x^k) - \tg^k, x^k - u\rangle + 2\gamma^2\|F(x^k) - \tg^k\|^2\\
        &\leq& \|x^k - u\|^2 - \|x^{k+1} - u\|^2\\
        &&\quad+ 2\gamma\langle \omega_k, x^k - u\rangle + 2\gamma^2\|\omega_k\|^2.
    \end{eqnarray*}
    Next, we sum up the above inequality for $k=0,1,\ldots,K-1$ and divide both sides by $2\gamma K$:
    \begin{eqnarray*}
         \frac{1}{K}\sum\limits_{k=0}^{K-1}\left(\langle F(u), x^{k+1} - u\rangle + \Psi(x^{k+1}) - \Psi(u)\right) &\leq& \frac{\|x^0 - u\|^2 - \|x^{K} - u\|^2}{2\gamma K} + \frac{\gamma}{K}\sum\limits_{k=0}^{K-1} \|\omega_k\|^2\notag\\
        &&\quad + \frac{1}{K}\sum\limits_{k=0}^{K-1}\langle x^k - u , \omega_k\rangle,
    \end{eqnarray*}
    To finish the proof, we need to use Jensen's inequality $\Psi\left(\frac{1}{K}\sum\limits_{k=0}^{K-1}x^{k+1}\right) \leq \frac{1}{K}\sum\limits_{k=0}^{K-1}\Psi(x^{k+1})$:
    \begin{eqnarray*}
         \langle F(u), x^K_{\avg} - u\rangle + \Psi(x^K_{\avg}) - \Psi(u) &\leq& \frac{\|x^0 - u\|^2 - \|x^{K} - u\|^2}{2\gamma K} + \frac{\gamma}{K}\sum\limits_{k=0}^{K-1} \|\omega_k\|^2\notag\\
        &&\quad + \frac{1}{K}\sum\limits_{k=0}^{K-1}\langle x^k - u , \omega_k\rangle,
    \end{eqnarray*}
    where 
    $x^K_{\avg} = \frac{1}{K}\sum\limits_{k=0}^{K-1}x^{k+1}.$
\end{proof}

\begin{theorem}\label{thm:main_result_gap_prox_clipped_SGDA}
    Let Assumptions~\ref{as:monotonicity}, \ref{as:star_cocoercivity}, and \ref{as:cocoercivity} hold for $Q = B_{3\sqrt{V}}(x^*)$, where $V \geq \|x^0 - x^*\|^2 + \frac{25600\gamma^2\ln^2\frac{48n(K+1)}{\beta}}{n^2}\sum_{i=1}^n\|F_i(x^*)\|^2$, and    
    \begin{eqnarray}
        0< \gamma &\leq& \min\left\{\frac{1}{480\ell \ln \tfrac{48n(K+1)}{\beta}}, \frac{\sqrt{V} n^{\frac{\alpha-1}{\alpha}}}{(86400)^{\frac{1}{\alpha}}(K+1)^{\frac{1}{\alpha}}\sigma \ln^{\frac{\alpha-1}{\alpha}} \tfrac{48n(K+1)}{\beta}}\right\}, \label{eq:gamma_prox_clipped_SGDA}\\
        \lambda_{k} \equiv \lambda &=& \frac{n\sqrt{V}}{40\gamma \ln \tfrac{48n(K+1)}{\beta}}, \label{eq:lambda_prox_clipped_SGDA}
    \end{eqnarray}
    for some $K \geq 0$ and $\beta \in (0,1]$. Then, after $K$ iterations the iterates produced by \algname{DProx-clipped-SGDA-shift} with probability at least $1 - \beta$ satisfy 
    \begin{equation}
        \gap_{\sqrt{V}}(x_{\avg}^{K+1}) \leq \frac{4V}{\gamma(K+1)} \quad \text{and}\quad \{x^k\}_{k=0}^{K+1} \subseteq B_{3\sqrt{V}}(x^*), \label{eq:main_result_prox_clipped_SGDA}
    \end{equation}
    where $x_{\avg}^{K+1}$ is defined in \eqref{eq:x_avg_prox_clipped_SGDA}. In particular, when $\gamma$ equals the minimum from \eqref{eq:gamma_prox_clipped_SGDA}, then the iterates produced by \algname{DProx-clipped-SGDA-shift} after $K$ iterations with probability at least $1-\beta$ satisfy
    \begin{equation}
        \gap_{\sqrt{V}}(x_{\avg}^{K+1}) = \cO\left(\max\left\{\frac{\ell V\ln\frac{nK}{\beta}}{K}, \frac{\sigma \sqrt{V} \ln^{\frac{\alpha-1}{\alpha}}\frac{nK}{\beta}}{n^{\frac{\alpha-1}{\alpha}}K^{\frac{\alpha-1}{\alpha}}}\right\}\right), \label{eq:prox_clipped_SGDA_monotone_case_2}
    \end{equation}
    meaning that to achieve $\gap_{\sqrt{V}}(x_{\avg}^{K+1}) \leq \varepsilon$ with probability at least $1 - \beta$ \algname{DProx-clipped-SGDA-shift} requires
    \begin{equation}
        K = \cO\left(\frac{\ell V}{\varepsilon}\ln\frac{n\ell V}{\varepsilon\beta}, \frac{1}{n}\left(\frac{\sigma \sqrt{V}}{\varepsilon}\right)^{\frac{\alpha}{\alpha-1}}\ln\left(\frac{1}{\beta}\left(\frac{\sigma \sqrt{V}}{\varepsilon}\right)^{\frac{\alpha}{\alpha-1}}\right)\right)\quad \text{iterations/oracle calls.} \label{eq:prox_clipped_SGDA_monotone_case_complexity}
    \end{equation}
\end{theorem}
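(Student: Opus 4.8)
The plan is to combine the deterministic ``optimization'' estimate from Lemma~\ref{lem:optimization_lemma_gap_prox_clipped_SGDA} with a careful probabilistic control of the stochastic error terms via the induction-over-iterations technique already deployed in Theorem~\ref{thm:prox_clipped_SGD_convex_case} and Theorem~\ref{thm:main_result_prox_clipped_SGDA_star}. Concretely, I would introduce the probability event $E_k$ asserting that both an accumulated-error bound of the form
\begin{equation*}
    \|x^0 - x^*\|^2 - 2\gamma\sum_{l=0}^{t-1}\langle \omega_l, \eta_l\rangle + 2\gamma^2 \sum_{l=0}^{t-1}\|\omega_l\|^2 \leq 2V
\end{equation*}
and the partial-sum bound $\|\tfrac{\gamma}{n}\sum_{i=1}^{r-1}\omega_{i,t-1}^u\| \leq \tfrac{\sqrt V}{2}$ hold for all $t\leq k$ and $r\in[n]$. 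The goal is to show $\PP\{E_k\}\geq 1-\nicefrac{k\beta}{(K+1)}$ by induction, so that on $E_{K+1}$ the iterates never leave $B_{3\sqrt V}(x^*)$ (making the assumptions applicable) and, by Lemma~\ref{lem:optimization_lemma_gap_prox_clipped_SGDA} applied with the maximizing $u\in B_{\sqrt V}(x^*)$, the gap is bounded by $\nicefrac{4V}{\gamma(K+1)}$.

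First I would verify the base case and then, assuming $E_{T-1}$, show that the iterates stay in the ball: using star-cocoercivity, $\|x^t-x^*\|^2\leq 2V$ combined with the clipping level \eqref{eq:lambda_prox_clipped_SGDA} and the bound $\|F(x^*)\|\lesssim \nicefrac{n\sqrt V}{\gamma\ln(\cdots)}$ coming from the definition of $V$ gives $\|x^T-x^*\|\leq 3\sqrt V$ via the nonexpansiveness of the prox. The core of the argument is then to decompose $\omega_l = \omega_l^u + \omega_l^b$ into its unbiased and biased parts as in \eqref{eq:prox_clipped_SGD_convex_theta_u_b}, expand $\|\omega_l\|^2$ exactly as in \eqref{eq:omega_bound_x_t_prox_clipped_SGD_convex} to expose the cross terms $\sum_{j}\langle\sum_{i<j}\omega_{i,l}^u,\omega_{j,l}^u\rangle$, and split the resulting sum into six pieces $\circledOne,\dots,\circledSix$. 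For each piece I would invoke either Lemma~\ref{lem:bias_and_variance_clip} (for the deterministic bias/variance control \eqref{eq:bias_X_main}--\eqref{eq:variance_X_main}, valid because $\|F_i(x^t)\|\leq\nicefrac\lambda2$ on $E_{T-1}$) or Bernstein's inequality (Lemma~\ref{lem:Bernstein_ineq}) for the two martingale-difference sums $\circledOne$ and $\circledThree$, verifying in each case that the summands are bounded and the conditional variances sum to the required threshold; the parameter choices \eqref{eq:gamma_prox_clipped_SGDA}--\eqref{eq:lambda_prox_clipped_SGDA} are exactly calibrated so that each term is at most a constant fraction of $V$.

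The main obstacle, as in the minimization proof, will be handling the cross-term $\circledSix$: it is not a martingale-difference sequence in the natural filtration because the inner partial sums $\sum_{i<j}\omega_{i,l}^u$ are correlated with $\omega_{j,l}^u$ through conditioning. The resolution is the auxiliary clipped vectors $\delta_j^l$ (cf.\ \eqref{eq:SGD_zeta_delta_def}) which truncate the partial sums at level $\nicefrac{\sqrt V}2$; one then runs a second, nested induction over the worker index $r$ to establish that these partial sums genuinely stay below $\nicefrac{\sqrt V}2$ with high probability, which retroactively justifies $\delta_j^l=\tfrac{\gamma}{n}\sum_{i<j}\omega_{i,l}^u$ and turns $\circledSix$ into a bona fide martingale sum amenable to Bernstein. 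Once all six terms are controlled on the intersection of the relevant events, a union bound yields $\PP\{E_T\}\geq 1-\nicefrac{T\beta}{(K+1)}$, closing the induction. Finally, substituting the explicit minimizing value of $\gamma$ from \eqref{eq:gamma_prox_clipped_SGDA} into $\nicefrac{4V}{\gamma(K+1)}$ and solving for $K$ to reach accuracy $\varepsilon$ produces the two-term complexity \eqref{eq:prox_clipped_SGDA_monotone_case_complexity}, with the $\nicefrac1n$ speed-up in the noise term arising precisely because the clipping level scales like $n\sqrt V$, as explained after Lemma~\ref{lem:bias_and_variance_clip_distributed}.
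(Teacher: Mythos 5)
Your overall strategy (induction over iterations, splitting $\omega_l$ into unbiased and biased parts, Bernstein for the martingale sums, and the nested induction over workers with the truncated partial sums $\delta_j^l$ to tame the cross-term) matches the paper's proof. However, there is one genuine gap: your induction hypothesis is modeled on the single-point criteria (function value at $x^*$, or $\|x^k-x^*\|^2$), whereas the quantity to be bounded here is $\gap_{\sqrt V}(x_{\avg}^{K+1})=\max_{u\in B_{\sqrt V}(x^*)}\{\langle F(u),x_{\avg}^{K+1}-u\rangle+\Psi(x_{\avg}^{K+1})-\Psi(u)\}$. Lemma~\ref{lem:optimization_lemma_gap_prox_clipped_SGDA} produces the error term $\frac{1}{K}\sum_{l}\langle x^l-u,\omega_l\rangle$, which depends on $u$, and the maximizing $u$ is itself a random variable correlated with the noise; so controlling only $\sum_l\langle \omega_l,\eta_l\rangle$ with $\eta_l$ a truncation of $x^l-x^*$ does not suffice. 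The paper resolves this by writing $\langle x^l-u,\omega_l\rangle=\langle x^l-x^*,\omega_l\rangle+\langle x^*-u,\omega_l\rangle$ and bounding $\max_{u\in B_{\sqrt V}(x^*)}\langle x^*-u,\sum_l\omega_l\rangle\le\sqrt V\,\big\|\sum_l\omega_l\big\|$, which forces a third induction inequality $\|\gamma\sum_{l=0}^{t-1}\omega_l\|\le\sqrt V$ (see \eqref{eq:induction_inequality_prox_clipped_SGDA_gap}) and two additional Bernstein estimates ($\circledSeven$ and $\circledEight$ in the paper's notation, built on truncated accumulated-noise vectors $\zeta_l$). Without this extra layer your argument only yields a bound at $u=x^*$, not the restricted gap.

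A smaller, related issue: because the starting term in the uniform bound is $\max_{u\in B_{\sqrt V}(x^*)}\|x^0-u\|^2\le 4V$ rather than $\|x^0-x^*\|^2\le V$, the induction threshold must be taken as $8V$ (not $2V$) to leave room for the stochastic contributions, and this is what produces the stated constant $\nicefrac{4V}{\gamma(K+1)}$ after dividing by $2\gamma(K+1)$. Everything else in your plan — the containment argument $\|x^T-x^*\|\le 3\sqrt V$, the use of Lemma~\ref{lem:bias_and_variance_clip} once $\|F_i(x^l)\|\le\nicefrac{\lambda}{2}$ is verified on the event, the nested induction over the worker index, and the final substitution of $\gamma$ to obtain \eqref{eq:prox_clipped_SGDA_monotone_case_complexity} — is consistent with the paper.
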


\begin{proof}
    The key idea behind the proof is similar to the one used in \citep{gorbunov2022clipped, sadiev2023high}: we prove by induction that the iterates do not leave some ball and the sums decrease as $\nicefrac{1}{K+1}$. To formulate the statement rigorously, we introduce probability event $E_k$ for each $k = 0,1,\ldots, K+1$ as follows: inequalities
    \begin{gather}
        \underbrace{\max\limits_{u \in B_{\sqrt{V}}(x^*)}\left\{ \|x^0 - u\|^2 + 2\gamma \sum\limits_{l = 0}^{t-1} \langle x^l - u, \omega_l \rangle + 2\gamma^2 \sum\limits_{l=0}^{t-1}\|\omega_l\|^2\right\}}_{A_t} \leq 8V, \label{eq:induction_inequality_1_prox_clipped_SGDA}\\
        \left\|\gamma\sum\limits_{l=0}^{t-1}\omega_l\right\| \leq \sqrt{V} \label{eq:induction_inequality_prox_clipped_SGDA_gap},\\
        \left\|\gamma\sum\limits_{i=1}^{r-1}\omega_{i,t-1}^u\right\| \leq \frac{\sqrt{V}}{2} \label{eq:induction_inequality_prox_clipped_SGDA_gap_2}
    \end{gather}
    hold for $t = 0,1,\ldots,k$ and $r = 1,2,\ldots, n$ simultaneously, where
    \begin{gather}
        \omega_l = \omega_l^u + \omega_l^b, \label{eq:gap_thm_prox_clipped_SGDA_technical_4_full_theta_omega}\\
         \omega_l^u \eqdef \frac{1}{n}\sum\limits_{i=1}^n \omega_{i,l}^u,\quad \omega_l^b \eqdef \frac{1}{n}\sum\limits_{i=1}^n \omega_{i,l}^b, \\
        \omega_{i,l}^u \eqdef \EE_{\xi_{i}^l}\left[\tg_{i}^l\right] - \tg_{i}^l,\quad \omega_{i,l}^b \eqdef F_i(x^l) - \EE_{\xi_{i}^l}\left[\tg_{i}^l\right] \quad \forall\; i\in [n]. \label{eq:thm_prox_clipped_SGDA_technical_gap_4}
    \end{gather}
    We will prove by induction that $\PP\{E_k\} \geq  1 - \nicefrac{k\beta}{(K+1)}$ for all $k = 0,1,\ldots,K+1$. The base of induction follows immediately: for all $u \in B_{\sqrt{V}}(x^*)$ we have $\|x^0 - u\|^2 \leq 2\|x^0 - x^*\|^2 + 2\|x^* - u\|^2 \leq 4V < 8V$ and for $k = 0$ we have $\|\gamma\sum_{l=0}^{k-1} \omega_l\| = 0$. Next, we assume that the statement holds for $k = T-1 \leq K$, i.e., $\PP\{E_{T-1}\} \geq  1 - \nicefrac{(T-1)\beta}{(K+1)}$. Let us show that it also holds for $k = T$, i.e., $\PP\{E_{T}\} \geq  1 - \nicefrac{T\beta}{(K+1)}$. 

    To proceed, we need to show that $E_{T-1}$ implies $\|x^t - x^*\| \leq 3\sqrt{V}$ for all $t = 0,1,\ldots,T-1$. We will use the induction argument as well. The base is already proven. Next, we assume that $\|x^t - x^*\| \leq 3\sqrt{V}$ for all $t = 0,1,\ldots,t'$ for some $t' < T-1$. This means that $x^t \in B_{3\sqrt{V}}(x^*)$ for $t = 0,1,\ldots, t'$ and we can apply Lemma~\ref{lem:optimization_lemma_gap_prox_clipped_SGDA}: $E_{T-1}$ implies
    \begin{align*}
        \max_{u \in B_{\sqrt{V}}(x^*)}\Big\{ 2\gamma(t'+1)\Big(\langle F(u), x^{t'+1}_{\avg} - u\rangle &+ \Psi(x^{t'+1}_{\avg}) - \Psi(u)\Big) + \|x^{t'+1} - u\|^2  \Big\}\\
        &\leq \max_{u \in B_{\sqrt{V}}(x^*)}\left\{\|x^0 - u\|^2 + 2\gamma \sum\limits_{l = 0}^{t} \langle x^l - u, \omega_l \rangle \right\}\\
        &\quad + 2\gamma^2 \sum\limits_{l=0}^{t}\|\omega_l\|^2\\
        &\overset{\eqref{eq:induction_inequality_1_prox_clipped_SGDA}}{\leq} 8V.
    \end{align*}
    that gives
    \begin{eqnarray*}
        \|x^{t'+1} - x^*\|^2 &\leq& \max_{u\in B_{\sqrt{V}}(x^*)}\left\{ 2\gamma(t'+1)\left(\langle F(u), x^{t'}_{\avg} - u\rangle + \Psi(x^{t'}_{\avg}) - \Psi(u)\right) + \|x^{t'+1} - u\|^2  \right\}\\
        &\leq& 8V.
    \end{eqnarray*}
    That is, we showed that $E_{T-1}$ implies $\|x^t - x^*\| \leq 3\sqrt{V}$ and
    \begin{equation}
        \max_{u \in B_{\sqrt{V}}(x^*)}\left\{ 2\gamma t\left(\langle F(u), \tx^{t}_{\avg} - u\rangle + \Psi(\tx^{t}_{\avg}) - \Psi(u)\right) + \|x^{t+1} - u\|^2  \right\} \leq 8V \label{eq:gap_thm_prox_clipped_SGDA_technical_1_5}
    \end{equation}
    for all $t = 0, 1, \ldots, T-1$. Before we proceed, we introduce a new notation:
    \begin{equation*}
        \eta_t = \begin{cases}x^t - x^* ,& \text{if } \|x^t - x^*\| \leq 3\sqrt{V},\\ 0,& \text{otherwise,} \end{cases}
    \end{equation*}
    for all $t = 0, 1, \ldots, T-1$. Random vectors $\{\eta_t\}_{t=0}^T$ are bounded almost surely:
     \begin{equation}
        \|\eta_t\| \leq 3\sqrt{V}  \label{eq:thm_prox_clipped_SGDA_technical_gap_3}
    \end{equation}
    for all $t = 0, 1, \ldots, T$. In addition, $\eta_t = x^t - x^*$ follows from $E_{T-1}$ for all $t = 0, 1, \ldots, T$ and, thus, $E_{T-1}$ implies
   \begin{eqnarray}
        A_T &\overset{\eqref{eq:induction_inequality_1_prox_clipped_SGDA}}{=}& \max\limits_{u \in B_{\sqrt{V}}(x^*)}\left\{ \|x^0 - u\|^2 + 2\gamma \sum\limits_{l = 0}^{T-1} \langle x^* - u, \omega_l \rangle \right\}  + 2\gamma \sum\limits_{l = 0}^{T-1} \langle x^l - x^*, \omega_l \rangle + 2\gamma^2 \sum\limits_{l=0}^{T-1}\|\omega_l\|^2\notag\\
        &\leq& 4V + 2\gamma\max\limits_{u \in B_{\sqrt{V}}(x^*)}\left\{\left\langle x^* - u, \sum\limits_{l = 0}^{T-1}\omega_l \right\rangle \right\} + 2\gamma \sum\limits_{l = 0}^{T-1} \langle \eta_l, \omega_l \rangle + 2\gamma^2 \sum\limits_{l=0}^{T-1}\|\omega_l\|^2\notag\\
        &=& 4V + 2\gamma \sqrt{V} \left\|\sum\limits_{l = 0}^{T-1}\omega_l\right\| + 2\gamma \sum\limits_{l = 0}^{T-1} \langle \eta_l, \omega_l \rangle + 2\gamma^2 \sum\limits_{l=0}^{T-1}\|\omega_l\|^2. \notag
    \end{eqnarray}
    Using the notation from \eqref{eq:gap_thm_prox_clipped_SGDA_technical_4_full_theta_omega}-\eqref{eq:thm_prox_clipped_SGDA_technical_gap_4}, we can rewrite $\|\omega_l\|^2$ as
    \begin{eqnarray}
        \|\omega_l\|^2 &\leq&  2\|\omega_l^u\|^2 + 2\|\omega_l^b\|^2 = \frac{2}{n}\left\|\sum\limits_{i=1}^n \omega_{i,l}^u\right\|^2 + 2\|\omega_l^b\|^2\notag \\
        &=& \frac{2}{n^2}\sum\limits_{i=1}^n\|\omega_{i,l}^u\|^2 + \frac{4}{n^2}\sum\limits_{j=2}^n\left\langle \sum\limits_{i=1}^{j-1} \omega_{i,l}^u, \omega_{j,l}^u \right\rangle + 2\|\omega_l^b\|^2. \label{eq:omega_bound_x_t_SGDA_3_prox_clipped}
    \end{eqnarray}
    Putting all together, we obtain that $E_{T-1}$ implies
    \begin{eqnarray}
        A_T &\leq&4V + 2\gamma \sqrt{V} \left\|\sum\limits_{l = 0}^{T-1}\omega_l\right\| + \underbrace{\frac{2\gamma}{n} \sum\limits_{l = 0}^{T-1}\sum\limits_{i=1}^n \langle \eta_l, \omega_{i,l}^u \rangle}_{\circledOne} + \underbrace{2\gamma \sum\limits_{l = 0}^{T-1} \langle \eta_l, \omega_l^b \rangle}_{\circledTwo}\notag\\
        &&\quad + \underbrace{\frac{4\gamma^2}{n^2} \sum\limits_{l=0}^{T-1} \sum\limits_{i=1}^n\EE_{\xi_{i}^l}\left[\|\omega_{i,l}^u\|^2\right]}_{\circledThree} + \underbrace{\frac{4\gamma^2}{n^2} \sum\limits_{l=0}^{T-1} \sum\limits_{i=1}^n\left(\|\omega_{i,l}^u\|^2 - \EE_{\xi_{i}^l}\left[\|\omega_{i,l}^u\|^2\right]\right)}_{\circledFour}\notag\\
        &&\quad + \underbrace{4\gamma^2 \sum\limits_{l=0}^{T-1}\|\omega_l^b\|^2}_{\circledFive} + \underbrace{\frac{8\gamma^2}{n^2} \sum\limits_{l=0}^{T-1} \sum\limits_{j=2}^n \left\langle \sum\limits_{i=1}^{j-1} \omega_{i,l}^u, \omega_{j,l}^u \right\rangle}_{\circledSix}.\label{eq:thm_prox_clipped_SGDA_technical_gap_5}
    \end{eqnarray}
    To finish the proof, it remains to estimate $2\gamma \sqrt{V} \left\|\sum_{l = 0}^{T-1}\omega_l\right\|, \circledOne, \circledTwo, \circledThree, \circledFour, \circledFive, \circledSix$ with high probability. More precisely, the goal is to prove that $2\gamma \sqrt{V} \left\|\sum_{l = 0}^{T-1}\omega_l\right\| + \circledOne + \circledTwo + \circledThree + \circledFour + \circledFive + \circledSix \leq 4V$ with high probability. Before we proceed, we need to derive several useful inequalities related to $ \omega_{i,l}^u, \omega_{l}^b$. First of all, we have
    \begin{equation}
        \|\omega_{i,l}^u\| \leq 2\lambda \label{eq:omega_magnitude_prox_clipped_SGDA_gap}
    \end{equation}
    by definition of the clipping operator. Next, probability event $E_{T-1}$ implies 
    \begin{eqnarray}
        \|F_i(x^l)\| &\leq& \|F_i(x^l) - F_i(x^*)\| + \|F_i(x^*)\| \leq \ell\|x^l - x^*\| + \sqrt{\sum\limits_{i=1}^n\|F_i(x^*)\|^2}\notag\\
        &\leq& 3\ell \sqrt{V} + \frac{n\sqrt{V}}{160\gamma \ln\tfrac{48n(K+1)}{\beta}} \overset{\eqref{eq:gamma_prox_clipped_SGDA}}{\leq} \frac{n\sqrt{V}}{80\gamma \ln\tfrac{48n(K+1)}{\beta}} \overset{\eqref{eq:lambda_prox_clipped_SGDA}}{=} \frac{\lambda}{2} \label{eq:operator_bound_x_t_prox_clipped_SGDA_gap}
    \end{eqnarray}
    for $l = 0,1,\ldots,T-1$ and $i\in [n]$.
    Therefore, Lemma~\ref{lem:bias_and_variance_clip} and $E_{T-1}$ imply
    \begin{gather}
        \left\|\omega_l^b\right\| \leq \frac{1}{n}\sum\limits_{i=1}^n \|\omega_{i,l}^b\| \leq \frac{2^\alpha\sigma^\alpha}{\lambda^{\alpha-1}}, \label{eq:bias_omega_prox_clipped_SGDA_gap}\\
        \EE_{\xi_{i}^l}\left[\left\|\omega_{i,l}^u\right\|^2\right] \leq 18 \lambda^{2-\alpha}\sigma^\alpha, \label{eq:variance_omega_prox_clipped_SGDA_gap}
    \end{gather}
    for all $l = 0,1, \ldots, T-1$ and $i\in [n]$.    
    
 \paragraph{Upper bound for $\circledOne$.} To estimate this sum, we will use Bernstein's inequality. The summands have conditional expectations equal to zero:
    \begin{equation*}
        \EE_{\xi^l_i}\left[\frac{2\gamma}{n}\langle \eta_l, \omega_{i,l}^u \rangle\right] = \frac{2\gamma}{n} \left\langle \eta_l, \EE_{\xi_{i}^l}[\omega_{i,l}^u] \right\rangle = 0.
    \end{equation*}
    Moreover, for all $l = 0,\ldots, T-1$ random vectors $\{\omega_{i,l}^u\}_{i=1}^n$ are independent. Thus, sequence $\left\{\frac{2\gamma}{n} \langle \eta_l, \omega_{i,l}^u \rangle\right\}_{l,i = 0,1}^{T-1,n}$ is a martingale difference sequence. Next, the summands are bounded:
    \begin{eqnarray}
        \left|\frac{2\gamma}{n}\langle \eta_l, \omega_{i,l}^u \rangle \right| \leq \frac{2\gamma}{n} \|\eta_l\|\cdot \|\omega_{i,l}^u\| \overset{\eqref{eq:thm_prox_clipped_SGDA_technical_gap_3},\eqref{eq:omega_magnitude_prox_clipped_SGDA_gap}}{\leq} \frac{12 \gamma \sqrt{V} \lambda}{n} \overset{\eqref{eq:lambda_prox_clipped_SGDA}}{\leq} \frac{3V}{10\ln\tfrac{48n(K+1)}{\beta}} \eqdef c. \label{eq:SGDA_neg_mon_technical_1_1}
    \end{eqnarray}
    Finally, conditional variances $\sigma_{i,l}^2 \eqdef \EE_{\xi^l_i}\left[\frac{4\gamma^2}{n^2} \langle \eta_l, \omega_{i,l}^u \rangle^2\right]$ of the summands are bounded:
    \begin{eqnarray}
        \sigma_{i,l}^2 \leq \EE_{\xi^l_i}\left[\frac{4\gamma^2}{n^2} \|\eta_l\|^2\cdot \|\omega_{i,l}^u\|^2\right] \overset{\eqref{eq:thm_prox_clipped_SGDA_technical_gap_3}}{\leq} \frac{36\gamma^2 V}{n^2} \EE_{\xi^l_i}\left[\|\omega_{i,l}^u\|^2\right]. \label{eq:prox_clipped_SGDA_neg_mon_technical_1_2}
    \end{eqnarray}
    Applying Bernstein's inequality (Lemma~\ref{lem:Bernstein_ineq}) with $X_{i,l} = \frac{2\gamma}{n} \langle \eta_l, \omega_{i,l}^u \rangle$, constant $c$ defined in \eqref{eq:SGDA_neg_mon_technical_1_1}, $b = \frac{3V}{10}$, $G = \tfrac{3V^2}{200\ln\frac{48n(K+1)}{\beta}}$, we get
    \begin{equation*}
        \PP\left\{|\circledOne| > \frac{3V}{10} \text{ and } \sum\limits_{l=0}^{T}\sum\limits_{i=1}^n\sigma_{i,l}^2 \leq \frac{3V^2}{200\ln\tfrac{48n(K+1)}{\beta}}\right\} \leq 2\exp\left(- \frac{b^2}{2G + \nicefrac{2cb}{3}}\right) = \frac{\beta}{24n(K+1)}.
    \end{equation*}
    The above is equivalent to 
    \begin{equation}
        \PP\{E_{\circledOne}\} \geq 1 - \tfrac{\beta}{24n(K+1)}, \text{ for }E_{\circledOne} = \left\{\text{either} \quad \sum\limits_{l=0}^{T}\sum\limits_{i=1}^n\sigma_{i,l}^2 > \frac{3V^2}{200\ln\tfrac{48n(K+1)}{\beta}}\quad \text{or}\quad |\circledOne| \leq \frac{3V}{10}\right\}. \label{eq:bound_1_prox_clipped_SGDA_neg_mon}
    \end{equation}
    Moreover, $E_{T-1}$ implies
    \begin{eqnarray}
        \sum\limits_{l=0}^{T}\sum\limits_{i=1}^n\sigma_{i,l}^2 &\overset{\eqref{eq:prox_clipped_SGDA_neg_mon_technical_1_2}}{\leq}& \frac{36\gamma^2V}{n^2}\sum\limits_{l=0}^{T} \sum\limits_{i=1}^n\EE_{\xi^l_i}\left[\|\omega_{i,l}^u\|^2\right] \notag\\ &\overset{\eqref{eq:variance_omega_prox_clipped_SGDA_gap}, T \leq K+1}{\leq}& \frac{648\gamma^2 V \sigma^{\alpha} (K+1) \lambda^{2-\alpha}}{n} \notag\\
        &\overset{\eqref{eq:lambda_prox_clipped_SGDA}}{\leq}& \frac{648 \gamma^{\alpha}\sqrt{V}^{4-\alpha}\sigma^{\alpha} (K+1)\ln^{\alpha-2}\frac{48n(K+1)}{\beta}}{40^{2-\alpha}n^{\alpha-1}} \notag\\
        &\overset{\eqref{eq:gamma_prox_clipped_SGDA}}{\leq}& \frac{3V^2}{200\ln\tfrac{48n(K+1)}{\beta}}. \label{eq:bound_1_variances_prox_clipped_SGDA_neg_mon}
    \end{eqnarray}
    
    \paragraph{Upper bound for $\circledTwo$.} Probability event $E_{T-1}$ implies
    \begin{eqnarray}
        \circledTwo &\leq& 2\gamma \sum\limits_{l=0}^{T}\|\eta_l\| \cdot \|\omega_l^b\| \overset{\eqref{eq:thm_prox_clipped_SGDA_technical_gap_3},\eqref{eq:bias_omega_prox_clipped_SGDA_gap}, T \leq K+1}{\leq} 6\cdot 2^{\alpha}\gamma \sqrt{V}(K+1) \frac{\sigma^{\alpha}}{\lambda^{\alpha-1}}\notag\\
        &\overset{\eqref{eq:lambda_prox_clipped_SGDA}}{=}& \frac{6\cdot 40^{\alpha-1}\cdot2^{\alpha}}{n^{\alpha-1}}\gamma^{\alpha}\sigma^{\alpha}\sqrt{V}^{2-\alpha} (K+1)\ln^{\alpha-1}\left(\frac{48n(K+1)}{\beta}\right) \overset{\eqref{eq:gamma_prox_clipped_SGDA}}{\leq} \frac{3V}{100}. \label{eq:bound_2_prox_clipped_SGDA_neg_mon}
    \end{eqnarray}
    
    \paragraph{Upper bound for $\circledThree$.}  Probability event $E_{T-1}$ implies
    \begin{eqnarray}
        \circledThree &=&  \frac{4\gamma^2}{n^2} \sum\limits_{l = 0}^{T}\sum\limits_{i=1}^n\EE_{\xi^l_i}\left[\|\omega_{i,l}^u\|^2\right] \overset{\eqref{eq:variance_omega_prox_clipped_SGDA_gap}, T \leq K+1}{\leq} \frac{72\gamma^2 \lambda^{2-\alpha} \sigma^{\alpha} (K+1)}{n} \notag\\
        &\overset{\eqref{eq:lambda_prox_clipped_SGDA}}{\leq}& \frac{72}{40^{2-\alpha} n^{\alpha-1}} \gamma^{\alpha} \sqrt{V}^{2-\alpha} \sigma^{\alpha} (K+1)  \ln^{\alpha-2}\left(\frac{48n(K+1)}{\beta}\right)
        \overset{\eqref{eq:gamma_prox_clipped_SGDA}}{\leq} \frac{3V}{100}. \label{eq:bound_3_prox_clipped_SGDA_neg_mon}
    \end{eqnarray}
    
    \paragraph{Upper bound for $\circledFour$.} To estimate this sum, we will use Bernstein's inequality. The summands have conditional expectations equal to zero:
    \begin{equation*}
        \frac{4\gamma^2}{n^2}\EE_{\xi^l_i}\left[\|\omega_{i,l}^u\|^2 - \EE_{\xi^l_i}\left[\|\omega_{i,l}^u\|^2\right]\right] = 0.
    \end{equation*}
    Moreover, for all $l = 0,\ldots, T-1$ random vectors  $\{\omega_{i,l}^u\}_{i=1}^n$ are independent. Thus, sequence $\left\{\frac{4\gamma^2}{n^2}\left(\|\omega_{i,l}^u\|^2 -\EE_{\xi_{i}^l}\left[\|\omega_{i,l}^u\|^2\right]\right)\right\}_{l,i = 0,1}^{T-1,n}$ is a martingale difference sequence. Next, the summands are bounded:
    \begin{eqnarray}
        \frac{4\gamma^2}{n^2} \left|\|\omega_{i,l}^u\|^2 - \EE_{\xi^l_i}\left[\|\omega_{i,l}^u\|^2\right] \right| &\leq& \frac{4\gamma^2}{n^2}\left( \|\omega_{i,l}^u\|^2 + \EE_{\xi^l_i}\left[\|\omega_{i,l}^u\|^2\right] \right) 
        \overset{\eqref{eq:omega_magnitude_prox_clipped_SGDA_gap}}{\leq} \frac{32\gamma^2 \lambda^2}{n^2} \notag\\
        &\overset{\eqref{eq:lambda_prox_clipped_SGDA}}{\leq}& \frac{V}{20\ln^2\tfrac{48n(K+1)}{\beta}} \leq \frac{V}{10\ln\tfrac{48n(K+1)}{\beta}} \eqdef c. \label{eq:prox_clipped_SGDA_neg_mon_technical_4_1}
    \end{eqnarray}
    Finally, conditional variances $$\widetilde\sigma_{i,l}^2 \eqdef \frac{16\gamma^4}{n^2}\EE_{\xi^l_i}\left[\left(\|\omega_{i,l}^u\|^2 - \EE_{\xi^l_i}\left[\|\omega_{i,l}^u\|^2\right]\right)^2\right]$$ of the summands are bounded:
    \begin{eqnarray}
        \widetilde\sigma_{i,l}^2 \overset{\eqref{eq:prox_clipped_SGDA_neg_mon_technical_4_1}}{\leq} \frac{\gamma^2 V}{5n^2\ln^2 \frac{48n(K+1)}{\beta}} \EE_{\xi^l_i}\left[\left| \|\omega_{i,l}^u\|^2 - \EE_{\xi^l_i}\left[\|\omega_{i,l}^u\|^2\right] \right|\right]  \leq \frac{2\gamma^2 V}{5n^2\ln^2 \frac{48n(K+1)}{\beta}} \EE_{\xi^l_i}\left[\|\omega_{i,l}^u\|^2\right]. \label{eq:prox_clipped_SGDA_neg_mon_technical_4_2}
    \end{eqnarray}
    Applying Bernstein's inequality (Lemma~\ref{lem:Bernstein_ineq}) with  $X_{i,l} = \frac{4\gamma^2}{n^2}\left(\|\omega_{i,l}^u\|^2 - \EE_{\xi^l_i}[\|\omega_{i,l}^u\|^2]\right)$, constant $c$ defined in \eqref{eq:prox_clipped_SGDA_neg_mon_technical_4_1}, $b = \frac{V}{10}$, $G = \tfrac{V^2}{600\ln\frac{48n(K+1)}{\beta}}$, we get
    \begin{equation*}
        \PP\left\{|\circledFour| > \frac{V}{10} \text{ and } \sum\limits_{t=0}^{T}\sum\limits_{i=1}^n\widetilde\sigma_{i,t}^2 \leq \frac{V^2}{600\ln\tfrac{48n(K+1)}{\beta}}\right\} \leq 2\exp\left(- \frac{b^2}{2G + \nicefrac{2cb}{3}}\right) = \frac{\beta}{24n(K+1)}.
    \end{equation*}
    The above is equivalent to 
    \begin{equation}
        \PP\{E_{\circledFour}\} \geq 1 - \tfrac{\beta}{24n(K+1)}, \text{ for }E_{\circledFour} = \left\{\text{either} \quad \sum\limits_{l=0}^{T}\sum\limits_{i=1}^n\widetilde\sigma_{i,l}^2 > \frac{V^2}{600\ln\tfrac{48n(K+1)}{\beta}}\quad \text{or}\quad |\circledFour| \leq \frac{V}{10}\right\}. \label{eq:bound_4_prox_clipped_SGDA_neg_mon}
    \end{equation}
    Moreover, $E_{T-1}$ implies
    \begin{eqnarray}
        \sum\limits_{l=0}^{T}\sum\limits_{i=1}^n\widetilde\sigma_{i,l}^2 &\overset{\eqref{eq:prox_clipped_SGDA_neg_mon_technical_4_2}}{\leq}& \frac{2\gamma^2 V}{5n^2\ln^2 \frac{48n(K+1)}{\beta}}\sum\limits_{l=0}^{T}\sum\limits_{i=1}^n \EE_{\xi^l_i}\left[\|\omega_{i,l}^u\|^2\right] \overset{\eqref{eq:variance_omega_prox_clipped_SGDA_gap}, T \leq K+1}{\leq} \frac{36\gamma^2 V (K+1)}{5 n \ln\tfrac{48n(K+1)}{\beta}} \lambda^{2-\alpha} \sigma^{\alpha} \notag\\
        &\overset{\eqref{eq:lambda_prox_clipped_SGDA}}{\leq}& \frac{9\cdot 40^{\alpha}\sqrt{2}^{\alpha}}{2000 n^{\alpha-1}}\gamma^{\alpha}
         \sqrt{V}^{4-\alpha} (K+1) \sigma^{\alpha}\ln^{\alpha-4}\frac{48n(K+1)}{\beta}\notag\\
        &\overset{\eqref{eq:gamma_prox_clipped_SGDA}}{\leq}&\frac{V^2}{600\ln\tfrac{48n(K+1)}{\beta}}. \label{eq:bound_4_variances_prox_clipped_SGDA_neg_mon}
    \end{eqnarray}
    
    \paragraph{Upper bound for $\circledFive$.} Probability event $E_{T-1}$ implies
    \begin{eqnarray}
        \circledFive &=&  4\gamma^2 \sum\limits_{l = 0}^{T}\|\omega_l^b\|^2  \overset{\eqref{eq:bias_omega_prox_clipped_SGDA_gap}, T \leq K+1}{\leq} 2^{2\alpha+2}\gamma^2 (K+1) \frac{\sigma^{2\alpha}}{\lambda^{2\alpha-2}}\notag\\
        &\overset{\eqref{eq:lambda_prox_clipped_SGDA}}{=}& \frac{12800^{\alpha}}{800}\gamma^{2\alpha} (K+1) \frac{\sigma^{2\alpha}}{n^{2\alpha-2}\sqrt{V}^{2\alpha-2}}\ln^{2\alpha-2}\frac{48n(K+1)}{\beta} \notag\\ &\overset{\eqref{eq:gamma_prox_clipped_SGDA}}{\leq}& \frac{V}{10}. \label{eq:bound_5_prox_clipped_SGDA_neg_mon}
    \end{eqnarray}

     \paragraph{Upper bound for $\circledSix$.} This sum requires more refined analysis. We introduce new vectors:
     \begin{equation}
        \delta_{j}^l = \begin{cases}
            \frac{\gamma}{n}\sum\limits_{i=1}^{j-1} \omega_{i,l}^u,& \text{if } \left\|\frac{\gamma}{n}\sum\limits_{i=1}^{j-1} \omega_{i,l}^u\right\| \leq \frac{\sqrt{V}}{2},\\
            0,& \text{otherwise,}
        \end{cases}\label{eq:prox_clipped_SGDA_delta_def}
    \end{equation}
    for all $j \in [n]$ and $l = 0,\ldots, T-1$. Then, by definition
    \begin{equation}
         \|\delta_j^l\| \leq \frac{\sqrt{V}}{2} \label{eq:prox_clipped_SGDA_delta_bound}
    \end{equation}
    and

\begin{eqnarray}
        \circledSix &=& \underbrace{\frac{8\gamma}{n} \sum\limits_{l=0}^{T-1} \sum\limits_{j=2}^n \left\langle \delta_j^l, \omega_{j,l}^u \right\rangle}_{\circledSix'} + \frac{8\gamma}{n} \sum\limits_{l=0}^{T-1} \sum\limits_{j=2}^n \left\langle \frac{\gamma}{n}\sum\limits_{i=1}^{j-1} \omega_{i,l}^u - \delta_j^l, \omega_{j,l}^u \right\rangle. \label{eq:prox_clipped_SGDA_extra_sums_six_distributed}
    \end{eqnarray}

    We also note here that $E_{T-1}$ implies
    \begin{eqnarray}
        \frac{8\gamma}{n} \sum\limits_{l=0}^{T-1} \sum\limits_{j=2}^n \left\langle \frac{\gamma}{n}\sum\limits_{i=1}^{j-1} \omega_{i,l}^u - \delta_j^l, \omega_{j,l}^u \right\rangle &=& \frac{8\gamma}{n}  \sum\limits_{j=2}^n \left\langle \frac{\gamma}{n}\sum\limits_{i=1}^{j-1} \omega_{i,l}^u - \delta_j^l, \omega_{j,l}^u \right\rangle. \label{eq:SGDA_extra_sums_six_distributed_1}
    \end{eqnarray}

    \paragraph{Upper bound for $\circledSix'$.} To estimate this sum, we will use Bernstein's inequality. The summands have conditional expectations equal to zero:

    \begin{equation*}
        \EE_{\xi_{j}^l}\left[\frac{8\gamma}{n}\left\langle \delta_j^l, \omega_{j,l}^u \right\rangle\right] = \frac{8\gamma}{n}\left\langle \delta_j^l, \EE_{\xi_{j}^l}[\omega_{j,l}^u] \right\rangle = 0.
    \end{equation*}

    Moreover, for all $l = 0,\ldots, T-1$ random vectors $\{\omega_{i,l}^u\}_{i=1}^n$ are independent. Thus, sequence $\left\{\frac{8\gamma}{n}\left\langle \delta_j^l, \omega_{j,l}^u \right\rangle\right\}_{l,j = 0,2}^{T-1,n}$ is a martingale difference sequence. Next, the summands are bounded:
    \begin{eqnarray}
        \left|\frac{8\gamma}{n}\left\langle \delta_j^l, \omega_{j,l}^u \right\rangle \right| \leq \frac{8\gamma}{n} \left\|\delta_j^l\right\| \cdot \|\omega_{j,l}^u\| \overset{\eqref{eq:prox_clipped_SGDA_delta_bound},\eqref{eq:omega_magnitude_prox_clipped_SGDA_gap}}{\leq} \frac{8\gamma}{n}\cdot \frac{\sqrt{V}}{2}\cdot 2\lambda \leq \frac{V}{5\ln\frac{48n(K+1)}{\beta}} \eqdef c. \label{eq:gap_thm_prox_clipped_SGDA_technical_6_5_1234}
    \end{eqnarray}
    Finally, conditional variances $(\sigma_{j,l}')^2 \eqdef \EE_{\xi_{j}^l}\left[\frac{64\gamma^2}{n^2} \langle \delta_j^l, \omega_{j,l}^u \rangle^2\right]$ of the summands are bounded:
    \begin{equation}
        (\sigma_{j,l}')^2 \leq \EE_{\xi_{j}^l}\left[\frac{64\gamma^2 }{n^2}\|\delta_j^l\|^2\cdot \|\omega_{j,l}^u\|^2\right] \overset{\eqref{eq:prox_clipped_SGDA_delta_bound}}{\leq} \frac{16\gamma^2 V}{n^2} \EE_{\xi_{j}^l}\left[\|\omega_{j,l}^u\|^2\right]. \label{eq:gap_thm_prox_clipped_SGDA_technical_7_1234}
    \end{equation}
    Applying Bernstein's inequality (Lemma~\ref{lem:Bernstein_ineq}) with $X_{i,l} = \frac{8\gamma}{n}\left\langle \delta_j^l, \omega_{j,l}^u \right\rangle$, constant $c$ defined in \eqref{eq:gap_thm_prox_clipped_SGDA_technical_6_5_1234}, $b = \frac{V}{5}$, $G = \frac{V^2}{150\ln\frac{48n(K+1)}{\beta}}$, we get
    \begin{equation*}
        \PP\left\{|\circledSix'| > \frac{V}{5} \text{ and } \sum\limits_{l=0}^{T-1}\sum\limits_{j=2}^n(\sigma_{i,l}')^2 \leq \frac{V^2}{150\ln\frac{48n(K+1)}{\beta}}\right\} \leq 2\exp\left(- \frac{b^2}{2G + \nicefrac{2cb}{3}}\right) = \frac{\beta}{24n(K+1)}.
    \end{equation*}
    The above is equivalent to 
    \begin{equation}
        \PP\{E_{\circledSix'}\} \geq 1 - \frac{\beta}{24n(K+1)},\; \text{ for }\; E_{\circledSix'} = \left\{\text{either } \sum\limits_{l=0}^{T-1}\sum\limits_{j=2}^n(\sigma_{i,l}')^2 > \frac{V^2}{150\ln\frac{48n(K+1)}{\beta}} \text{ or } |\circledSix'| \leq \frac{V}{5}\right\}. \label{eq:bound_7_gap_prox_clipped_SGDA}
    \end{equation}
    Moreover, $E_{T-1}$ implies
    \begin{eqnarray}
        \sum\limits_{l=0}^{T-1}\sum\limits_{j=2}^n(\sigma_{j,l}')^2 &\overset{\eqref{eq:gap_thm_prox_clipped_SGDA_technical_7_1234}}{\leq}& \frac{16\gamma^2 V}{n^2} \sum\limits_{l=0}^{T-1} \sum\limits_{j=2}^n \EE_{\xi_{j}^l}\left[\|\omega_{j,l}^u\|^2\right] \overset{\eqref{eq:variance_omega_prox_clipped_SGDA_gap}, T \leq K+1}{\leq} \frac{288(K+1)\gamma^2 V \lambda^{2-\alpha} \sigma^\alpha}{n} \notag \\
        &\overset{\eqref{eq:lambda_prox_clipped_SGDA}}{\leq}& \frac{288 (K+1)\gamma^{\alpha}\sigma^\alpha V^{2-\frac{\alpha}{2}}}{40^{2-\alpha} \sqrt{2}^{2-\alpha} n^{\alpha-1} \ln^{2-\alpha}\frac{48n(K+1)}{\beta}} \overset{\eqref{eq:gamma_prox_clipped_SGDA}}{\leq} \frac{V^2}{150\ln\frac{48n(K+1)}{\beta}}. \label{eq:bound_6_variances_gap_prox_clipped_SGDA}
    \end{eqnarray}

    \paragraph{Upper bound for $2\gamma \sqrt{V} \left\|\sum_{l=0}^{T-1}\omega_t\right\|$.} We introduce new random vectors:
    \begin{equation*}
        \zeta_l = \begin{cases} \gamma \sum\limits_{r=0}^{l-1}\omega_r,& \text{if } \left\|\gamma \sum\limits_{r=0}^{l-1}\omega_r\right\| \leq \sqrt{V},\\ 0, & \text{otherwise} \end{cases}
    \end{equation*}
    for $l = 1, 2, \ldots, T-1$. With probability $1$ we have
    \begin{equation}
        \|\zeta_l\| \leq \sqrt{V}.  \label{eq:gap_thm_prox_clipped_SGDA_technical_8}
    \end{equation}
    Using this and \eqref{eq:induction_inequality_prox_clipped_SGDA_gap}, we obtain that $E_{T-1}$ implies
    \begin{eqnarray}
        \gamma\left\|\sum\limits_{l = 0}^{T-1}\omega_l\right\| &=& \sqrt{\gamma^2\left\|\sum\limits_{l = 0}^{T-1}\omega_l\right\|^2}\notag\\
        &=& \sqrt{\gamma^2\sum\limits_{l=0}^{T-1}\|\omega_l\|^2 + 2\gamma\sum\limits_{l=0}^{T-1}\left\langle \gamma\sum\limits_{r=0}^{l-1} \omega_r, \omega_l \right\rangle} \notag\\
        &=& \sqrt{\gamma^2\sum\limits_{l=0}^{T-1}\|\omega_l\|^2 + 2\gamma \sum\limits_{l=0}^{T-1} \langle \zeta_l, \omega_l\rangle} \notag\\
        &\overset{\eqref{eq:thm_prox_clipped_SGDA_technical_gap_5}}{\leq}& \sqrt{\frac{1}{4}\left(\circledThree + \circledFour + \circledFive + \circledSix\right) + \underbrace{\frac{2\gamma}{n} \sum\limits_{l=0}^{T-1}\sum\limits_{i=1}^n \langle \zeta_l, \omega_{i,l}^u\rangle}_{\circledSeven} + \underbrace{2\gamma \sum\limits_{l=0}^{T-1} \langle \zeta_l, \omega_l^b}_{\circledEight}\rangle}. \label{eq:norm_sum_omega_bound_gap_prox_clipped_SGDA}
    \end{eqnarray}
    
    \paragraph{Upper bound for $\circledSeven$.} 
    To estimate this sum, we will use Bernstein's inequality. The summands have conditional expectations equal to zero:
    
    \begin{equation*}
        \EE_{\xi_i^l}\left[\frac{2\gamma}{n}\langle \zeta_l, \omega_{i,l}^u \rangle\right] = \frac{2\gamma}{n} \left\langle \zeta_l, \EE_{\xi_{i}^l}[\omega_{i,l}^u] \right\rangle = 0.
    \end{equation*}

    Moreover, for all $l = 0,\ldots, T-1$ random vectors $\{\omega_{i,l}^u\}_{i=1}^n$ are independent. Thus, sequence $\left\{\frac{2\gamma}{n} \langle \zeta_l, \omega_{i,l}^u \rangle\right\}_{l,i = 0,1}^{T-1,n}$ is a martingale difference sequence. Next, the summands are bounded:
    \begin{eqnarray}
        \left|\frac{2\gamma}{n}\langle \zeta_l, \omega_{i,l}^u \rangle \right| \leq \frac{2\gamma}{n}  \|\zeta_l\|\cdot \|\omega_{i,l}^u\| 
        \overset{\eqref{eq:gap_thm_prox_clipped_SGDA_technical_8},\eqref{eq:omega_magnitude_prox_clipped_SGDA_gap}}{\leq} \frac{4 \gamma}{n} R \lambda \overset{\eqref{eq:lambda_prox_clipped_SGDA}}{\leq} \frac{V}{5\ln\tfrac{48n(K+1)}{\beta}} \eqdef c. \label{eq:prox_clipped_SGDA_neg_mon_technical_6_1}
    \end{eqnarray}
    Finally, conditional variances $\widehat\sigma_{i,l}^2 \eqdef \EE_{\xi_i^l}\left[\frac{4\gamma^2}{n^2} \langle \zeta_l, \omega_{i,l}^u \rangle^2\right]$ of the summands are bounded:
    
    \begin{eqnarray}
        \widehat\sigma_{i,l}^2 \leq \EE_{\xi^l_i}\left[\frac{4\gamma^2}{n^2} \|\zeta_l\|^2\cdot \|\omega_{i,l}^u\|^2\right] \overset{\eqref{eq:gap_thm_prox_clipped_SGDA_technical_8}}{\leq} \frac{4\gamma^2}{n^2} V \EE_{\xi^l_i}\left[\|\omega_{i,l}^u\|^2\right]. \label{eq:prox_clipped_SGDA_neg_mon_technical_6_2}
    \end{eqnarray}
    Applying Bernstein's inequality (Lemma~\ref{lem:Bernstein_ineq}) with $X_{i,l} = \frac{2\gamma}{n} \langle \zeta_l, \omega_{i,l}^u \rangle$, constant $c$ defined in \eqref{eq:prox_clipped_SGDA_neg_mon_technical_6_1}, $b = \frac{V}{5}$, $G = \tfrac{V^2}{150\ln\frac{48n(K+1)}{\beta}}$, we get
    \begin{equation*}
        \PP\left\{|\circledSeven| > \frac{V}{5} \text{ and } \sum\limits_{l=0}^{T}\sum\limits_{i=1}^n\widehat\sigma_{i,l}^2 \leq \frac{V^2}{150\ln\tfrac{48n(K+1)}{\beta}}\right\} \leq 2\exp\left(- \frac{b^2}{2G + \nicefrac{2cb}{3}}\right) = \frac{\beta}{24n(K+1)}.
    \end{equation*}
    The above is equivalent to 
    \begin{equation}
        \PP\{E_{\circledSeven}\} \geq 1 - \tfrac{\beta}{24n(K+1)} \text{ for }E_{\circledSeven} = \left\{\text{either} \quad \sum\limits_{l=0}^{T}\sum\limits_{i=1}^n\widehat\sigma_{i,l}^2 > \frac{V^2}{150\ln\tfrac{48n(K+1)}{\beta}}\quad \text{or}\quad |\circledSeven| \leq \frac{V}{5}\right\}. \label{eq:bound_6_prox_clipped_SGDA_neg_mon}
    \end{equation}
    Moreover, $E_{T-1}$ implies
    \begin{eqnarray}
        \sum\limits_{l=0}^{T}\sum\limits_{i=1}^n\widehat\sigma_{i,l}^2 &\overset{\eqref{eq:prox_clipped_SGDA_neg_mon_technical_6_2}}{\leq}& \frac{4\gamma^2}{n^2}V\sum\limits_{l=0}^{T} \EE_{\xi^l}\left[\|\omega_l^u\|^2\right] \notag\\ &\overset{\eqref{eq:variance_omega_prox_clipped_SGDA_gap}, T \leq K+1}{\leq}& \frac{72\gamma^2 V \sigma^{\alpha} (K+1) \lambda^{2-\alpha}}{n} \notag\\
        &\overset{\eqref{eq:lambda_prox_clipped_SGDA}}{\leq}& \frac{9 \cdot 20^{\alpha} \sqrt{2}^{\alpha}}{100 \cdot n^{\alpha-1}} \gamma^{\alpha}R^{4-\alpha}\sigma^{\alpha} (K+1)\ln^{\alpha-2}\frac{48n(K+1)}{\beta} \notag\\
        &\overset{\eqref{eq:gamma_prox_clipped_SGDA}}{\leq}& \frac{V^2}{150\ln\tfrac{48n(K+1)}{\beta}} . \label{eq:bound_6_variances_prox_clipped_SGDA_neg_mon}
    \end{eqnarray}
    
    \paragraph{Upper bound for $\circledEight$.} Probability event $E_{T-1}$ implies
    \begin{eqnarray}
        \circledEight &\leq& 2\gamma \sum\limits_{l=0}^{T}\|\zeta_l\| \cdot \|\omega_l^b\| \overset{\eqref{eq:thm_prox_clipped_SGDA_technical_gap_3},\eqref{eq:bias_omega_prox_clipped_SGDA_gap}, T \leq K+1}{\leq} 2\cdot 2^{\alpha}\gamma R(K+1) \frac{\sigma^{\alpha}}{\lambda^{\alpha-1}}\notag\\
        &\overset{\eqref{eq:lambda_prox_clipped_SGDA}}{=}& \frac{40^{\alpha}\sqrt{2}^{\alpha}}{n^{\alpha-1}10\sqrt{2}}\gamma^{\alpha}\sigma^{\alpha}R^{2-\alpha}(K+1)\ln^{\alpha-1}\frac{48n(K+1)}{\beta} \overset{\eqref{eq:gamma_prox_clipped_SGDA}}{\leq} \frac{V}{5}. \label{eq:bound_7_prox_clipped_SGDA_neg_mon}
    \end{eqnarray}

That is, we derive the upper bounds for  $2\gamma \sqrt{V}\left\|\sum_{l=0}^{T-1}\omega_l\right\|, \circledOne, \circledTwo, \circledThree, \circledFour, \circledFive, \circledSix$. More precisely, $E_{T-1}$ implies

    \begin{gather*}
        A_T \overset{\eqref{eq:thm_prox_clipped_SGDA_technical_gap_5}}{\leq} 4V + 2\gamma \sqrt{V}\left\|\sum\limits_{l=0}^{T-1}\omega_l\right\| + \circledOne + \circledTwo + \circledThree + \circledFour + \circledFive + \circledSix, \\
        \circledSix \overset{\eqref{eq:prox_clipped_SGDA_extra_sums_six_distributed}}{=} \circledSix' + \frac{8\gamma}{n} \sum\limits_{j=2}^n \left\langle \frac{\gamma}{n}\sum\limits_{i=1}^{j-1} \omega_{i,T-1}^u - \delta_j^{T-1}, \omega_{j,T-1}^u \right\rangle, \\
        2\gamma\sqrt{V}\left\|\sum\limits_{l = 0}^{T-1}\omega_l\right\| \overset{\eqref{eq:norm_sum_omega_bound_gap_prox_clipped_SGDA}}{\leq} 2\sqrt{V}\sqrt{\frac{1}{4}\left(\circledThree + \circledFour + \circledFive + \circledSix\right) + \circledSeven + \circledEight},\\
        \circledTwo \overset{\eqref{eq:bound_2_prox_clipped_SGDA_neg_mon}}{\leq} \frac{3V}{100},\quad \circledThree \overset{\eqref{eq:bound_3_prox_clipped_SGDA_neg_mon}}{\leq} \frac{3V}{100},\quad \circledFive \overset{\eqref{eq:bound_5_prox_clipped_SGDA_neg_mon}}{\leq} \frac{V}{10},\quad \circledEight \overset{\eqref{eq:bound_7_prox_clipped_SGDA_neg_mon}}{\leq} \frac{V}{5},\\
        \sum\limits_{l=0}^{T-1}\sum\limits_{i=1}^n\sigma_{i,l}^2 \overset{\eqref{eq:bound_1_variances_prox_clipped_SGDA_neg_mon}}{\leq}  \frac{3V^2}{200\ln\tfrac{48n(K+1)}{\beta}},\quad \sum\limits_{l=0}^{T-1}\sum\limits_{i=1}^n\widetilde\sigma_{i,l}^2 \overset{\eqref{eq:bound_4_variances_prox_clipped_SGDA_neg_mon}}{\leq} \frac{V^2}{600\ln\tfrac{48n(K+1)}{\beta}}, \\
        \sum\limits_{l=0}^{T-1}\sum\limits_{i=1}^n\widehat\sigma_{i,l}^2 \overset{\eqref{eq:bound_6_variances_prox_clipped_SGDA_neg_mon}}{\leq} \frac{V^2}{150\ln\tfrac{48n(K+1)}{\beta}}, \quad \sum\limits_{l=0}^{T-1}\sum\limits_{i=1}^n(\sigma_{j,l}')^2  \overset{\eqref{eq:bound_6_variances_gap_prox_clipped_SGDA}}{\leq} \frac{V^2}{150\ln\tfrac{48n(K+1)}{\beta}}.
    \end{gather*}
    In addition, we also establish (see \eqref{eq:bound_1_prox_clipped_SGDA_neg_mon}, \eqref{eq:bound_4_prox_clipped_SGDA_neg_mon}, \eqref{eq:bound_7_prox_clipped_SGDA_neg_mon}, \eqref{eq:bound_7_gap_prox_clipped_SGDA} and our induction assumption)
    \begin{gather*}
        \PP\{E_{T-1}\} \geq 1 - \frac{(T-1)\beta}{K+1},\\
        \PP\{E_{\circledOne}\} \geq 1 - \frac{\beta}{24n(K+1)}, \quad \PP\{E_{\circledFour}\} \geq 1 - \frac{\beta}{24n(K+1)}, \\ \PP\{E_{\circledSix'}\} \geq 1 - \frac{\beta}{24n(K+1)}, \quad \PP\{E_{\circledSeven}\} \geq 1 - \frac{\beta}{24n(K+1)},
    \end{gather*}
    where
    \begin{eqnarray}
        E_{\circledOne}&=& \left\{\text{either} \quad \sum\limits_{l=0}^{T-1}\sum\limits_{i=1}^n\sigma_{i,l}^2 > \frac{3V^2}{200\ln\tfrac{48n(K+1)}{\beta}}\quad \text{or}\quad |\circledOne| \leq \frac{3V}{10}\right\},\notag\\
        E_{\circledFour}&=& \left\{\text{either} \quad \sum\limits_{l=0}^{T-1}\sum\limits_{i=1}^n\widetilde\sigma_{i,l}^2 > \frac{V^2}{600\ln\tfrac{48n(K+1)}{\beta}}\quad \text{or}\quad |\circledFour| \leq \frac{V}{10}\right\},\notag\\
        E_{\circledSix'} &=& \left\{\text{either } \sum\limits_{l=0}^{T-1}\sum\limits_{j=2}^n(\sigma_{i,l}')^2 > \frac{V^2}{150\ln\frac{48n(K+1)}{\beta}} \text{ or } |\circledSix'| \leq \frac{V}{5}\right\},\notag\\
        E_{\circledSeven}&=& \left\{\text{either} \quad \sum\limits_{l=0}^{T-1}\sum\limits_{i=1}^n\widehat\sigma_{i,l}^2 > \frac{V^2}{150\ln\tfrac{48n(K+1)}{\beta}}\quad \text{or}\quad |\circledSeven| \leq \frac{V}{5}\right\}.\notag
    \end{eqnarray}
    Therefore, probability event $E_{T-1} \cap E_{\circledOne} \cap E_{\circledFour} \cap E_{\circledSix'} \cap E_{\circledSeven}$ implies

    \begin{eqnarray}
        \left\|\gamma\sum\limits_{l = 0}^{T-1}\omega_l\right\| &\leq& \sqrt{\frac{1}{4}\left(\frac{3V}{10} + \frac{V}{10} + \frac{V}{10} + \frac{V}{5}\right) + \frac{V}{5} + \frac{V}{5} + \frac{2\gamma}{n} \sum\limits_{j=2}^n \left\langle \frac{\gamma}{n}\sum\limits_{i=1}^{j-1} \omega_{i,T-1}^u - \delta_j^{T-1}, \omega_{j,T-1}^u \right\rangle}\notag\\
        &\leq& \sqrt{V} + \sqrt{\frac{2\gamma}{n} \sum\limits_{j=2}^n \left\langle \frac{\gamma}{n}\sum\limits_{i=1}^{j-1} \omega_{i,T-1}^u - \delta_j^{T-1}, \omega_{j,T-1}^u \right\rangle},\label{eq:gap_thm_prox_clipped_SGDA_technical_10}\\
        A_T &\leq& 4V + 2V + 2\sqrt{V}\sqrt{\frac{2\gamma}{n} \sum\limits_{j=2}^n \left\langle \frac{\gamma}{n}\sum\limits_{i=1}^{j-1} \omega_{i,T-1}^u - \delta_j^{T-1}, \omega_{j,T-1}^u \right\rangle} \notag\\
        &&\quad + \frac{3V}{10} + \frac{3V}{100} + \frac{3V}{100} + \frac{V}{10} + \frac{V}{5} + \frac{V}{5}\notag\\
        &&\quad + \frac{8\gamma}{n} \sum\limits_{j=2}^n \left\langle \frac{\gamma}{n}\sum\limits_{i=1}^{j-1} \omega_{i,T-1}^u - \delta_j^{T-1}, \omega_{j,T-1}^u \right\rangle \notag \\
        &\leq& 8V + 2\sqrt{V}\sqrt{\frac{2\gamma}{n} \sum\limits_{j=2}^n \left\langle \frac{\gamma}{n}\sum\limits_{i=1}^{j-1} \omega_{i,T-1}^u - \delta_j^{T-1}, \omega_{j,T-1}^u \right\rangle}\notag\\
        &&\quad + \frac{8\gamma}{n} \sum\limits_{j=2}^n \left\langle \frac{\gamma}{n}\sum\limits_{i=1}^{j-1} \omega_{i,T-1}^u - \delta_j^{T-1}, \omega_{j,T-1}^u \right\rangle.\label{eq:gap_thm_prox_clipped_SGDA_technical_11}
    \end{eqnarray}
    
    In the final part of the proof, we will show that $\frac{\gamma}{n}\sum\limits_{i=1}^{j-1} \omega_{i,T-1}^u = \delta_j^{T-1}$ with high probability. In particular, we consider probability event $\widetilde{E}_{T-1,j}$ defined as follows: inequalities
    \begin{equation}
       \left\|\frac{\gamma}{n}\sum\limits_{i=1}^{r-1} \omega_{i,T-1}^u\right\| \leq \frac{\sqrt{V}}{2}
    \end{equation}
    hold for $r = 2, \ldots, j$ simultaneously. We want to show that $\PP\{E_{T-1} \cap \widetilde{E}_{T-1,j}\} \geq 1 - \frac{(T-1)\beta}{K+1} - \frac{j\beta}{8n(K+1)}$ for all $j = 2, \ldots, n$. For $j = 2$ the statement is trivial since
    \begin{eqnarray*}
         \left\|\frac{\gamma}{n} \omega_{1,T-1}^u\right\| \overset{\eqref{eq:omega_magnitude_prox_clipped_SGDA_gap}}{\leq} \frac{2\gamma\lambda}{n} \leq \frac{\sqrt{V}}{2}.
    \end{eqnarray*}
    Next, we assume that the statement holds for some $j = m-1 < n$, i.e.,  $\PP\{E_{T-1}\cap\widetilde{E}_{T-1,m-1}\} \geq 1 - \frac{(T-1)\beta}{K+1} - \frac{(m-1)\beta}{8n(K+1)}$. Our goal is to prove that $\PP\{E_{T-1}\cap\widetilde{E}_{T-1,m}\} \geq 1 - \frac{(T-1)\beta}{K+1} - \frac{m\beta}{8n(K+1)}$. First, we consider $\left\|\frac{\gamma}{n}\sum\limits_{i=1}^{m-1} \omega_{i,T-1}^u\right\|$:
    \begin{eqnarray}
        \left\|\frac{\gamma}{n}\sum\limits_{i=1}^{m-1} \omega_{i,T-1}^u\right\| &=& \sqrt{\frac{\gamma^2}{n^2}\left\|\sum\limits_{i=1}^{m-1} \omega_{i,T-1}^u\right\|^2} \notag\\
        &=& \sqrt{\frac{\gamma^2}{n^2}\sum\limits_{i=1}^{m-1}\|\omega_{i,T-1}^u\|^2 + \frac{2\gamma}{n}\sum\limits_{i=1}^{m-1}\left\langle \frac{\gamma}{n}\sum\limits_{r=1}^{i-1}\omega_{r,T-1}^u , \omega_{i,T-1}^u \right\rangle} \notag\\
        &\leq& \sqrt{\frac{\gamma^2}{n^2}\sum\limits_{l=0}^{T-1}\sum\limits_{i=1}^{m-1}\|\omega_{i,l}^u\|^2 + \frac{2\gamma}{n}\sum\limits_{i=1}^{m-1}\left\langle \frac{\gamma}{n}\sum\limits_{r=1}^{i-1}\omega_{r,T-1}^u , \omega_{i,T-1}^u \right\rangle}. \notag
    \end{eqnarray}
    Next, we introduce a new notation:
    \begin{equation*}
        \rho_{i,T-1} = \begin{cases}
            \frac{\gamma}{n}\sum\limits_{r=1}^{i-1}\omega_{r,T-1}^u,& \text{if } \left\|\frac{\gamma}{n}\sum\limits_{r=1}^{i-1}\omega_{r,T-1}^u\right\| \leq \frac{\sqrt{V}}{2},\\
            0,& \text{otherwise}
        \end{cases}
    \end{equation*}
    for $i = 1,\ldots,m-1$. By definition, we have
    \begin{equation}
        \|\rho_{i,T-1}\| \leq \frac{\sqrt{V}}{2} \label{eq:prox_clipped_SGDA_bound_rho}
    \end{equation}
    for $i = 1,\ldots,m-1$. Moreover, $\widetilde{E}_{T-1,m-1}$ implies $\rho_{i,T-1} = \frac{\gamma}{n}\sum\limits_{r=1}^{i-1}\omega_{r,T-1}^u$ for $i = 1,\ldots,m-1$ and
    \begin{eqnarray}
        \left\|\frac{\gamma}{n}\sum\limits_{i=1}^{m-1} \omega_{i,l}^u\right\| &\leq& \sqrt{\circledThree + \circledFour + \circledNine}, \notag
    \end{eqnarray}
    where
    \begin{gather*}
        \circledNine =  \frac{2\gamma}{n}\sum\limits_{i=1}^{m-1}\left\langle \rho_{i,T-1} , \omega_{i,T-1}^u \right\rangle.
    \end{gather*}
    It remains to estimate $\circledNine$.
    
    \textbf{Upper bound for $\circledNine$.} To estimate this sum, we will use Bernstein's inequality. The summands have conditional expectations equal to zero:
    \begin{equation*}
        \EE_{\xi_{i}^{T-1}}\left[\frac{2\gamma}{n} \langle \rho_{i,T-1}, \omega_{i,T-1}^u \rangle\right] = \frac{2\gamma}{n} \left\langle \rho_{i,T-1}, \EE_{\xi_{i}^T-1}[\omega_{i,T-1}^u] \right\rangle = 0
    \end{equation*}
    since random vectors $\{\omega_{i,T-1}^u\}_{i=1}^n$ are independent. Thus, sequence $\left\{\frac{2\gamma}{n} \langle \rho_{i,T-1}, \omega_{i,T-1}^u \rangle\right\}_{i = 1}^{m-1}$ is a martingale difference sequence. Next, the summands are bounded:
    \begin{eqnarray}
        \left|\frac{2\gamma}{n}\langle \rho_{i,T-1}, \omega_{i,T-1}^u \rangle \right| \leq \frac{2\gamma}{n} \|\rho_{i,T-1}\|\cdot \|\omega_{i,T-1}^u\| \overset{\eqref{eq:prox_clipped_SGDA_bound_rho},\eqref{eq:omega_magnitude_prox_clipped_SGDA_gap}}{\leq} \frac{\gamma}{n} \sqrt{V}\lambda \overset{\eqref{eq:lambda_prox_clipped_SGDA}}{\leq} \frac{V}{20\ln\tfrac{48n(K+1)}{\beta}} \eqdef c. \label{eq:gap_thm_prox_clipped_SGDA_technical_6_5_10}
    \end{eqnarray}
    Finally, conditional variances $(\widehat\sigma_{i,T-1}')^2 \eqdef \EE_{\xi_{i}^{T-1}}\left[\frac{4\gamma^2}{n^2} \langle \rho_{i,T-1}, \omega_{i,T-1}^u \rangle^2\right]$ of the summands are bounded:
    \begin{equation}
        (\widehat\sigma_{i,T-1}')^2 \leq \EE_{\xi_{i}^{T-1}}\left[\frac{4\gamma^2 }{n^2}\|\rho_{i,T-1}\|^2\cdot \|\omega_{i,T-1}^u\|^2\right] \overset{\eqref{eq:prox_clipped_SGDA_bound_rho}}{\leq} \frac{\gamma^2 V}{n^2} \EE_{\xi_{i}^{T-1}}\left[\|\omega_{i,T-1}^u\|^2\right]. \label{eq:gap_thm_prox_clipped_SGDA_technical_7_10}
    \end{equation}
    Applying Bernstein's inequality (Lemma~\ref{lem:Bernstein_ineq}) with $X_{T-1,i} = \frac{2\gamma}{n} \langle \rho_{i,T-1}, \omega_{i,T-1}^u \rangle$, constant $c$ defined in \eqref{eq:gap_thm_prox_clipped_SGDA_technical_6_5_10}, $b = \frac{V}{20}$, $G = \frac{V^2}{2400\ln\frac{48n(K+1)}{\beta}}$, we get
    \begin{equation*}
        \PP\left\{|\circledNine| > \frac{V}{20} \text{ and } \sum\limits_{i=1}^{m-1}(\widehat\sigma_{i,T-1}')^2 \leq \frac{V^2}{2400\ln\frac{48n(K+1)}{\beta}}\right\} \leq 2\exp\left(- \frac{b^2}{2G + \nicefrac{2cb}{3}}\right) = \frac{\beta}{24n(K+1)}.
    \end{equation*}
    The above is equivalent to 
    \begin{equation}
        \PP\{E_{\circledNine}\} \geq 1 - \frac{\beta}{24n(K+1)},\; \text{ for }\; E_{\circledNine} = \left\{\text{either } \sum\limits_{i=1}^{m-1}(\widehat\sigma_{i,T-1}')^2 > \frac{V^2}{2400\ln\frac{48n(K+1)}{\beta}} \text{ or } |\circledNine| \leq \frac{V}{20}\right\}. \label{eq:bound_10_gap_prox_clipped_SGDA}
    \end{equation}
    Moreover, $E_{T-1}$ implies
    \begin{eqnarray}
        \sum\limits_{i=1}^{m-1}(\widehat\sigma_{i,T-1}')^2 &\overset{\eqref{eq:gap_thm_prox_clipped_SGDA_technical_7_10}}{\leq}& \frac{\gamma^2 V}{n^2} \sum\limits_{i=1}^n \EE_{\xi_{i}^l}\left[\|\omega_{i,l}^u\|^2\right] \overset{\eqref{eq:variance_omega_prox_clipped_SGDA_gap}}{\leq} \frac{18\gamma^2 V \lambda^{2-\alpha} \sigma^\alpha}{n} \notag \\
        &\overset{\eqref{eq:lambda_prox_clipped_SGDA}}{\leq}& \frac{18 \gamma^{\alpha}\sigma^\alpha V^{2-\frac{\alpha}{2}}}{40^{2-\alpha} n^{\alpha-1} \ln^{2-\alpha}\frac{48n(K+1)}{\beta}} \overset{\eqref{eq:gamma_prox_clipped_SGDA}}{\leq} \frac{V^2}{2400\ln\tfrac{48n(K+1)}{\beta}}. \label{eq:bound_10_variances_gap_prox_clipped_SGDA}
    \end{eqnarray}
    Putting all together we get that $E_{T-1}\cap \widetilde{E}_{T-1, m-1}$ implies
    \begin{gather*}
        \left\|\frac{\gamma}{n}\sum\limits_{i=1}^{m-1} \omega_{i,T-1}^u\right\| \leq \sqrt{\circledThree + \circledFour + \circledNine},\quad \circledThree \overset{\eqref{eq:bound_3_prox_clipped_SGDA_neg_mon}}{\leq} \frac{V}{10},\\
        \sum\limits_{l=0}^{T-1}\sum\limits_{i=1}^n\widetilde\sigma_{i,l}^2 \overset{\eqref{eq:bound_4_variances_prox_clipped_SGDA_neg_mon}}{\leq} \frac{V^2}{600\ln\frac{48n(K+1)}{\beta}},\quad \sum\limits_{i=1}^{m-1}(\widehat\sigma_{i,T-1}')^2 \leq \frac{V^2}{2400\ln\tfrac{48n(K+1)}{\beta}}
    \end{gather*}
    In addition, we also establish (see \eqref{eq:bound_4_prox_clipped_SGDA_neg_mon}, \eqref{eq:bound_10_gap_prox_clipped_SGDA} and our induction assumption)
    \begin{gather*}
        \PP\{E_{T-1}\cap \widetilde{E}_{T-1,m-1}\} \geq 1 - \frac{(T-1)\beta}{K+1} - \frac{(m-1)\beta}{8n(K+1)},\\
        \PP\{E_{\circledFour}\} \geq 1 - \frac{\beta}{24n(K+1)}, \quad \PP\{E_{\circledNine}\} \geq 1 - \frac{\beta}{24n(K+1)},
    \end{gather*}
    where
    \begin{eqnarray}
        E_{\circledFour} &=& \left\{\text{either } \sum\limits_{l=0}^{T-1}\sum\limits_{i=1}^n\widetilde\sigma_{i,l}^2 > \frac{V^2}{600\ln\frac{48n(K+1)}{\beta}} \text{ or } |\circledFour| \leq \frac{V}{10}\right\},\notag\\
        E_{\circledNine} &=& \left\{\text{either } \sum\limits_{i=1}^{m-1}(\widehat\sigma_{i,T-1}')^2 > \frac{V^2}{2400\ln\frac{48n(K+1)}{\beta}} \text{ or } |\circledTen| \leq \frac{V}{20}\right\}. \notag
    \end{eqnarray}
    Therefore, probability event $E_{T-1} \cap \widetilde{E}_{T-1, m-1} \cap E_{\circledFour} \cap E_{\circledNine}$ implies
    \begin{equation*}
        \left\|\frac{\gamma}{n}\sum\limits_{i=1}^{m-1} \omega_{i,l}^u\right\| \leq \sqrt{\frac{V}{10} + \frac{V}{10} + \frac{V}{20}} = \frac{\sqrt{V}}{2}.
    \end{equation*} 
    This implies $\widetilde E_{T-1,m}$ and
    \begin{eqnarray*}
        \PP\{E_{T-1} \cap \widetilde{E}_{T-1,m}\} &\geq& \PP\{E_{T-1} \cap \widetilde{E}_{T-1,m-1} \cap E_{\circledFour} \cap E_{\circledNine} \} \\
        &=& 1 - \PP\left\{\overline{E_{T-1} \cap \widetilde{E}_{T-1,m-1}} \cup \overline{E}_{\circledFour} \cup \overline{E}_{\circledNine} \right\}\\
        &\geq& 1 - \frac{(T-1)\beta}{K+1} - \frac{m\beta}{8n(K+1)}.
    \end{eqnarray*}
    Therefore, for all $m = 2,\ldots,n$ the statement holds and, in particular, $\PP\{E_{T-1} \cap \widetilde{E}_{T-1,n}\} \geq 1 - \frac{(T-1)\beta}{K+1} - \frac{\beta}{8(K+1)}$. Taking into account  \eqref{eq:gap_thm_prox_clipped_SGDA_technical_10} and \eqref{eq:gap_thm_prox_clipped_SGDA_technical_11}, we conclude that $E_{T-1} \cap \widetilde{E}_{T-1,n}$ implies  
    \begin{equation*}
       \left\|\gamma\sum\limits_{l = 0}^{T-1}\omega_l\right\| \leq \sqrt{V},\quad  A_T \leq 8V,
    \end{equation*}
    which is equivalent to \eqref{eq:induction_inequality_1_prox_clipped_SGDA} and \eqref{eq:induction_inequality_prox_clipped_SGDA_gap} for $t = T$. Moreover,
    \begin{eqnarray*}
        \PP\left\{E_T\right\} &\geq& \PP\left\{E_{T-1} \cap \widetilde{E}_{T-1,n} \cap E_{\circledOne} \cap E_{\circledFour} \cap E_{\circledSix'} \cap E_{\circledSeven} \cap E_{\circledNine}\right\} \\
        &=& 1 - \PP\left\{\overline{E_{T-1} \cap \widetilde{E}_{T-1,n}} \cup \overline{E}_{\circledOne} \cup \overline{E}_{\circledFour} \cup \overline{E}_{\circledSix'} \cup \overline{E}_{\circledSeven} \cup \overline{E}_{\circledNine} \right\}\\
        &=& 1 - \frac{(T-1)\beta}{K+1} - \frac{\beta}{8(K+1)} - 5\cdot \frac{\beta}{8(K+1)} \geq 1 - \frac{T\beta}{K+1}.
    \end{eqnarray*}

    In other words, we showed that $\PP\{E_k\} \geq 1 - \nicefrac{k\beta}{(K+1)}$ for all $k = 0,1,\ldots,K+1$. For $k = K+1$ we have that with probability at least $1 - \beta$

    \begin{eqnarray*}
        \gap_{\sqrt{V}}(x_{\avg}^{K+1})
        &\overset{\eqref{eq:gap_thm_prox_clipped_SGDA_technical_1_5}}{\leq}& \frac{4V}{\gamma(K+1)}.
    \end{eqnarray*}

    Finally, if 
    \begin{equation*}
        \gamma = \min\left\{\frac{1}{480\ell \ln \tfrac{48n(K+1)}{\beta}}, \left(\frac{1}{86400}\right)^{\frac{1}{\alpha}}\cdot\frac{\sqrt{V}n^{\frac{\alpha-1}{\alpha}}}{(K+1)^{\frac{1}{\alpha}}\sigma \ln^{\frac{\alpha-1}{\alpha}} \tfrac{48n(K+1)}{\beta}}\right\}
    \end{equation*}
    then with probability at least $1-\beta$
    \begin{eqnarray*}
        \gap_{\sqrt{V}}(x^{K+1}_{\avg}) &\leq& \frac{4V}{\gamma(K+1)} = \max\left\{\frac{480 \ell V \ln \tfrac{48n(K+1)}{\beta}}{K+1}, \left(\frac{86400}{1}\right)^{\frac{1}{\alpha}}\cdot\frac{4 \sigma \sqrt{V} \ln^{\frac{\alpha-1}{\alpha}} \tfrac{48n(K+1)}{\beta}}{n^{\frac{\alpha-1}{\alpha}}(K+1)^{\frac{\alpha-1}{\alpha}}}\right\}\\
        &=& \cO\left(\max\left\{\frac{\ell \sqrt{V}\ln\frac{nK}{\beta}}{K}, \frac{\sigma \sqrt{V} \ln^{\frac{\alpha-1}{\alpha}}\frac{K}{\beta}}{n^{\frac{\alpha-1}{\alpha}}K^{\frac{\alpha-1}{\alpha}}}\right\}\right).
    \end{eqnarray*}
    To get $\gap_{R}(x^{K+1}_{\avg}) \leq \varepsilon$ with probability at least $1-\beta$ it is sufficient to choose $K$ such that both terms in the maximum above are $\cO(\varepsilon)$. This leads to
    \begin{equation*}
         K = \cO\left(\frac{\ell V}{\varepsilon}\ln\frac{n\ell V}{\varepsilon\beta}, \frac{1}{n}\left(\frac{\sigma \sqrt{V}}{\varepsilon}\right)^{\frac{\alpha}{\alpha-1}}\ln\left(\frac{1}{\beta}\left(\frac{\sigma \sqrt{V}}{\varepsilon}\right)^{\frac{\alpha}{\alpha-1}}\right)\right)
    \end{equation*}
    that concludes the proof.
\end{proof}

\subsection{Quasi-Strongly Monotone case}

\begin{lemma}\label{lem:optimization_lemma_str_mon_SGDA_2}
    Let Assumptions~\ref{as:str_monotonicity}, \ref{as:star_cocoercivity} hold for $Q = B_{\sqrt{2V}}(x^*)$, where $V \geq \|x^0 - x^*\| + \frac{9000000\gamma^2\ln^2\left(\frac{48n(K+1)}{\beta}\right)}{n^2}\revision{\sum\limits_{i=1}^n\|F_i(x^*)\|^2}$, and $0 < \gamma \leq \frac{1}{\ell + \nicefrac{18000000 \nu \ln^2\left(\frac{48n(K+1)}{\beta}\right)\ell}{n}}, \; \nu \leq \frac{1}{18000000 \ln^2 \left(\frac{48n(K+1)}{\beta}\right)}$. If $x^k$ lies in $B_{\sqrt{2V}}(x^*)$ for all $k = 0,1,\ldots, K$ for some $K\geq 0$, then the iterates produced by \algname{DProx-clipped-SGDA-shift} satisfy
    \begin{eqnarray}
        V_{K+1} &\leq& (1 - \gamma \mu)^{K+1}V_0 + \frac{2\gamma}{n} \sum\limits_{k=0}^K\sum\limits_{i=1}^n (1-\gamma\mu)^{K-k}\langle x^k - x^* - \gamma (F(x^k)-h^*), \omega_{i,k} \rangle\notag\\
        &&\quad  + \frac{ \gamma^2  }{n^2}\sum\limits_{k=0}^K\sum\limits_{i=1}^n (1-\gamma \mu)^{K-k} \|\omega_{i,k}\|^2  + \gamma^2 \sum\limits_{k=0}^K (1-\gamma\mu)^{K-k} \|\omega_k\|^2, \label{eq:optimization_lemma_prox_SGDA_str_mon_2}
        % \\
        % \omega_k &\eqdef& F(x^k) - h^k - \frac{1}{n}\sum\limits_{i=1}^n \hat\Delta^k_i = \frac{1}{n}\sum\limits_{i=1}^{n}\omega_{i,k}, \label{eq:omega_k_prox_SGDA_2}\\
        % h^k &\eqdef& \frac{1}{n}\sum\limits_{i=1}^{n}h_i^k,\label{eq:h_k_prox_SGDA_2}
    \end{eqnarray}
    where $V_k = \|x^k - x^*\|^2 + \frac{9000000\gamma^2\ln^2\left(\frac{48n(K+1)}{\beta}\right)}{n^2}\sum\limits_{i=1}^n\|h^k_i - h^*_i\|^2$, $h_i^* = F_i(x^*)$, and $\omega_k, \omega_k^u, \omega_k^b, \omega_{k,i}^u, \omega_{k,i}^b$ are defined in \eqref{eq:gap_thm_prox_clipped_SGDA_technical_4_full_theta_omega}-\eqref{eq:thm_prox_clipped_SGDA_technical_gap_4}.
\end{lemma}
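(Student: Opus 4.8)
The plan is to mirror the proof of Lemma~\ref{lem:optimization_lemma_str_mon_SGDA} (the shift-free \algname{SGDA-star} descent lemma) but to carry the shift block $\tfrac{D}{n}\sum_{i=1}^n\|h_i^k-h_i^*\|^2$, with $D\eqdef\tfrac{9000000\gamma^2\ln^2(\nicefrac{48n(K+1)}{\beta})}{n}$, inside the Lyapunov function and to show that the coupled potential $V_k$ contracts at rate $(1-\gamma\mu)$ per step up to the stochastic remainders. Throughout I use that $x^*$ solving \eqref{eq:main_problem} is a fixed point of the prox step, i.e. $x^*=\prox_{\gamma\Psi}(x^*-\gamma F(x^*))$, that $h^*=\tfrac1n\sum_i h_i^*=F(x^*)$, and that averaging \eqref{eq:star_cocoercivity} with Jensen's inequality makes $F$ itself $\ell$-star-cocoercive.

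First I would treat the iterate term. By nonexpansiveness of the prox and the fixed-point identity,
\[
\|x^{k+1}-x^*\|^2\le\|x^k-x^*-\gamma(\tg^k-F(x^*))\|^2,
\]
and, writing $\tg^k-F(x^*)=(F(x^k)-F(x^*))-\omega_k$, expanding the square isolates four groups: $\|x^k-x^*\|^2$; the bilinear group $-2\gamma\langle F(x^k)-F(x^*),x^k-x^*\rangle+\gamma^2\|F(x^k)-F(x^*)\|^2$, which star-cocoercivity collapses to $-\gamma(2-\gamma\ell)\langle F(x^k)-F(x^*),x^k-x^*\rangle$; the cross term $\tfrac{2\gamma}{n}\sum_{i=1}^n\langle x^k-x^*-\gamma(F(x^k)-h^*),\omega_{i,k}\rangle$ (using $\omega_k=\tfrac1n\sum_i\omega_{i,k}$ and $h^*=F(x^*)$); and the variance $\gamma^2\|\omega_k\|^2$. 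This already produces three of the four remainder terms of \eqref{eq:optimization_lemma_prox_SGDA_str_mon_2} verbatim.

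Next I would expand the shift block. Since $\hat\Delta_i^k=\tg_i^k-h_i^k=(F_i(x^k)-h_i^*)-(h_i^k-h_i^*)-\omega_{i,k}$,
\[
\|h_i^{k+1}-h_i^*\|^2=\|h_i^k-h_i^*\|^2+2\nu\langle h_i^k-h_i^*,\hat\Delta_i^k\rangle+\nu^2\|\hat\Delta_i^k\|^2 .
\]
Bounding $\langle h_i^k-h_i^*,F_i(x^k)-h_i^*\rangle$ by Young, $\|\hat\Delta_i^k\|^2\le3(\cdots)$, and the genuinely new cross term $-2\nu\langle h_i^k-h_i^*,\omega_{i,k}\rangle$ again by Young, every occurrence of $\|F_i(x^k)-F_i(x^*)\|^2$ is converted through \eqref{eq:star_cocoercivity} into $\ell\langle F_i(x^k)-F_i(x^*),x^k-x^*\rangle$; after averaging and multiplying by $\tfrac{D}{n}$ this becomes a \emph{positive} multiple of the global bilinear term. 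The $-2\nu\langle h_i^k-h_i^*,\omega_{i,k}\rangle$ contribution splits into a piece that slightly degrades the $\|h_i^k-h_i^*\|^2$ coefficient and a piece that, because $\nu\le\nicefrac{1}{18000000\ln^2}$ and $D=\Theta(\gamma^2\ln^2/n)$, is absorbed into $\tfrac{\gamma^2}{n^2}\sum_i\|\omega_{i,k}\|^2$.

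Adding the iterate part to $\tfrac{D}{n}$ times the shift part, I would collect coefficients. The coefficient of $\langle F(x^k)-F(x^*),x^k-x^*\rangle$ is $-\gamma(2-\gamma\ell)+D\ell(\nu+3\nu^2)$, and the stepsize bound $\gamma\le(\ell+\nicefrac{18000000\nu\ln^2\ell}{n})^{-1}$ is exactly what forces this to be $\le-\gamma$, so that quasi-strong monotonicity \eqref{eq:str_monotonicity} turns it into $\le-\gamma\mu\|x^k-x^*\|^2$; meanwhile the constraint on $\nu$ (with $\gamma\mu\lesssim\nu$) keeps the surviving coefficient of $\tfrac{D}{n}\sum_i\|h_i^k-h_i^*\|^2$ at most $(1-\gamma\mu)$. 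This yields the one-step bound $V_{k+1}\le(1-\gamma\mu)V_k+\tfrac{2\gamma}{n}\sum_i\langle x^k-x^*-\gamma(F(x^k)-h^*),\omega_{i,k}\rangle+\tfrac{\gamma^2}{n^2}\sum_i\|\omega_{i,k}\|^2+\gamma^2\|\omega_k\|^2$, and unrolling it from $0$ to $K$ with weights $(1-\gamma\mu)^{K-k}$ gives \eqref{eq:optimization_lemma_prox_SGDA_str_mon_2}. The main obstacle is precisely this balancing step: the bilinear term $\langle F(x^k)-F(x^*),x^k-x^*\rangle$ gets a negative contribution from the descent and a positive one from the shift update, and one must check that the two stepsize/shift constraints make the net effect on both that term and on the $\|h_i^k-h_i^*\|^2$ block consistent with a single rate $(1-\gamma\mu)$ while routing all $\|\omega_{i,k}\|^2,\|\omega_k\|^2$ debris into exactly the variance terms the statement permits.
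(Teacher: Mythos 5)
Your proposal follows essentially the same route as the paper: the same Lyapunov function $V_k$, the same start from prox-nonexpansiveness and the fixed-point identity $x^*=\prox_{\gamma\Psi}(x^*-\gamma F(x^*))$, the same conversion of $\|F_i(x^k)-h_i^*\|^2$ into the bilinear term via \eqref{eq:star_cocoercivity}, the same use of \eqref{eq:str_monotonicity} to extract the rate, and the same unrolling. The one place you diverge is the shift recursion, and it matters for the constants. The paper does not bound $2\nu\langle h_i^k-h_i^*,\hat\Delta_i^k\rangle$ and $\nu^2\|\hat\Delta_i^k\|^2$ by a three-way triangle/Young split; it first uses $\nu^2\le\nu$ and then the exact identity $2\nu\langle h_i^k-h_i^*,\tg_i^k-h_i^k\rangle+\nu\|\tg_i^k-h_i^k\|^2=\nu\bigl(\|\tg_i^k-h_i^*\|^2-\|h_i^k-h_i^*\|^2\bigr)$, which yields $(1-\nu)\|h_i^k-h_i^*\|^2+2\nu\|\omega_{i,k}\|^2+2\nu\|F_i(x^k)-h_i^*\|^2$ with no slack lost. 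Your version produces coefficients of roughly $\nu a+\nu b+3\nu^2$ on $\|h_i^k-h_i^*\|^2$ and $\nicefrac{\nu}{a}+3\nu^2$, $\nicefrac{\nu}{b}+3\nu^2$ on the other two terms; any admissible choice of the Young parameters makes the $\|\omega_{i,k}\|^2$ and $\|F_i(x^k)-h_i^*\|^2$ coefficients strictly larger than $2\nu$, and with the lemma's fixed thresholds ($\nu\le\nicefrac{1}{18000000\ln^2(\nicefrac{48n(K+1)}{\beta})}$ and the stated bound on $\gamma$) the factor $2\nu$ is exactly what is needed to absorb the shift-variance into $\tfrac{\gamma^2}{n^2}\sum_i\|\omega_{i,k}\|^2$ and to keep the bilinear coefficient at $-\gamma$. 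So either tighten the shift step to the exact identity, or accept slightly more restrictive hypotheses than the lemma states.
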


\begin{proof}
     Using the update rule of \algname{DProx-clipped-SGDA-shift} and
    % \begin{equation*}
    %     h^k = \frac{1}{n}\sum\limits_{i=1}^n h_i^k,\quad h^* = F(x^*),\quad \hat\Delta^k = \frac{1}{n}\sum\limits_{i=1}^n \hat\Delta_i^k,
    % \end{equation*}
    $\omega_k = F(x^k) - \tg^k$ we obtain
     \begin{eqnarray*}
        \|x^{k+1} - x^*\|^2 &=& \|\proxkPsi\left(x^k - \gamma\tg^k\right) - \proxkPsi\left(x^* - \gamma h^*\right)\|^2\\
        &\le& \|x^k - x^* - \gamma(\tg^k - h^*)\|^2\\
        &=& \|x^k - x^*\|^2 -2\gamma \langle x^k - x^*, \tg^k - h^* \rangle + \gamma^2\|\tg^k - h^*\|^2\\
        &=& \|x^k - x^*\|^2 - 2\gamma \langle x^k - x^*, F(x^k)-h^* \rangle - 2\gamma^2\langle F(x^k) - h^*, \omega_k\rangle\\
        &&\quad  + 2\gamma \langle x^k - x^*, \omega_k \rangle  + \gamma^2\|F(x^k) - h^*\|^2 + \gamma^2\|\omega_k\|^2,
    \end{eqnarray*}
    Next, let us recall that
    \begin{gather*}
    h_i^{k+1} = h_i^k + \nu \hat \Delta_i^k,\quad \hat\Delta_i^k = \clip\left(F_{\xi_i^k}(x^k) - h_i^k, \lambda_k\right),\quad \tg_i^k = h_i^k + \hat\Delta_i^k,\quad \omega_{i,k} = F_i(x^k) - \tg_i^k.
\end{gather*}
    Then, $\forall i \in [n]$ we have
    \begin{eqnarray*}
        \|h^{k+1}_i - h^*_i\|^2 &=& \|h^k_i - h^*_i + \nu \hat\Delta^k_i\|^2 = \|h^k_i - h^*_i\|^2 + 2\nu \langle h^k_i - h^*_i, \hat\Delta^k_i\rangle + \nu^2\|\hat\Delta^k_i\|^2\\
        &=& \|h^k_i - h^*_i\|^2 + 2\nu \langle h^k_i - h^*_i, \tg^k_i - h_i^k\rangle + \nu^2\|\tg^k_i - h_i^k\|^2\\
        &\overset{\nu \leq \frac{1}{2}}{\leq}& \|h^k_i - h^*_i\|^2 + 2\nu \langle h^k_i - h^*_i, \tg^k_i - h_i^k\rangle + \nu\|\tg^k_i - h_i^k\|^2\\
        &=& \|h^k_i - h^*_i\|^2 + \nu \langle \tg^k_i - h_i^k, \tg^k_i + h^k_i - 2h^*_i \rangle \\
        &=& (1-\nu)\|h^k_i - h^*_i\|^2 + \nu\|\tg^k_i - h_i^*\|^2\\
        &\le&  (1-\nu)\|h^k_i - h^*_i\|^2 + 2\nu\|\tg^k_i - F_i(x^k)\|^2 + 2\nu\|F_i(x^k) - h_i^*\|^2\\
        &=& (1-\nu)\|h^k_i - h^*_i\|^2 + 2\nu\|\omega_{i,k}\|^2 + 2\nu \|F_i(x^k) - h^*_i\|^2.
    \end{eqnarray*}
    Let us consider the following stepsize condition
    \begin{eqnarray}
        0 < \gamma \leq \frac{1}{\ell + \frac{18000000 \nu \ln^2\left(\frac{48n(K+1)}{\beta}\right)\ell}{n}}.\label{eq:gamma_lemma_prox_SGDA_str_mon_2}
    \end{eqnarray}
    Lyapunov function
    \begin{eqnarray*}
        V_k = \|x^k - x^*\|^2 + \frac{9000000\gamma^2\ln^2\left(\frac{48n(K+1)}{\beta}\right)}{n^2}\sum\limits_{i=1}^n\|h^k_i - h^*_i\|^2.
    \end{eqnarray*}
    \begin{eqnarray*}
        V_{k+1} &\le& \|x^k - x^*\|^2 - 2\gamma \langle x^k - x^*, F(x^k)-h^* \rangle - 2\gamma^2\langle F(x^k) - h^*, \omega_k\rangle\\
        &&\quad  + 2\gamma \langle x^k - x^*, \omega_k \rangle  + \gamma^2\|F(x^k) - h^*\|^2 + \gamma^2\|\omega_k\|^2\\ 
        &&\quad + \frac{9\cdot 10^6\gamma^2\ln^2\left(\frac{48n(K+1)}{\beta}\right)}{n^2} \sum\limits_{i=1}^n \Bigr[ (1-\nu)\|h^k_i - h^*_i\|^2 + 2\nu \|\omega_{i,k}\|^2 + 2\nu \|F_i(x^k) - h^*_i\|^2\Bigl]\\
         &\overset{\eqref{eq:star_cocoercivity}}{\le}& \|x^k-x^*\|^2 + (1-\nu)\frac{9\cdot 10^6\gamma^2\ln^2\left(\frac{48n(K+1)}{\beta}\right)}{n^2}\sum\limits_{i=1}^n\|h^k_i - h^*_i\|^2 \\
         &&\quad -2 \gamma \left(1 - \frac{\gamma \ell}{2} - \frac{\nu n}{\gamma}\cdot \frac{9\cdot 10^6\gamma^2\ln^2\left(\frac{48n(K+1)}{\beta}\right)}{n^2} \ell_{\max}\right)\langle x^k - x^*, F(x^k)-h^* \rangle \\
         &&\quad + \frac{2\gamma}{n}\sum\limits_{i=1}^n\langle x^k - x^* - \gamma(F(x^k) - h^*), \omega_{i,k} \rangle + \gamma^2\|\omega_k\|^2 + \frac{\gamma^2}{n^2}\sum\limits_{i=1}^n\|\omega_{i,k}\|^2\\
          &\overset{ \eqref{eq:gamma_lemma_prox_SGDA_str_mon_2}}{\le}& \|x^k-x^*\|^2 + (1-\nu)\frac{9\cdot 10^6\gamma^2\ln^2\left(\frac{48n(K+1)}{\beta}\right)}{n^2}\sum\limits_{i=1}^n\|h^k_i - h^*_i\|^2\\
         &&\quad - \gamma \langle x^k - x^*, F(x^k)-h^* \rangle \\
         &&\quad + \frac{2\gamma}{n}\sum\limits_{i=1}^n\langle x^k - x^* - \gamma(F(x^k) - h^*), \omega_{i,k} \rangle + \gamma^2\|\omega_k\|^2 + \frac{\gamma^2}{n^2}\sum\limits_{i=1}^n\|\omega_{i,k}\|^2\\
         &\overset{\eqref{eq:str_monotonicity}}{\le}& (1-\gamma \mu) \|x^k - x^*\|^2 + (1-\nu)\frac{9\cdot 10^6\gamma^2\ln^2\left(\frac{48n(K+1)}{\beta}\right)}{n^2}\sum\limits_{i=1}^n\|h^k_i - h^*_i\|^2\\
         &&\quad + \frac{2\gamma}{n}\sum\limits_{i=1}^n\langle x^k - x^* - \gamma(F(x^k) - h^*), \omega_{i,k} \rangle + \gamma^2\|\omega_k\|^2 + \frac{\gamma^2}{n^2}\sum\limits_{i=1}^n\|\omega_{i,k}\|^2\\
         &\overset{ \gamma \leq \frac{\nu}{\mu}}{\le}& (1 - \gamma \mu) V_k + \frac{2\gamma}{n}\sum\limits_{i=1}^n\langle x^k - x^* - \gamma(F(x^k) - h^*), \omega_{i,k} \rangle\\
         &&\quad + \gamma^2\|\omega_k\|^2 + \frac{\gamma^2}{n^2}\sum\limits_{i=1}^n\|\omega_{i,k}\|^2.
    \end{eqnarray*}
    Unrolling the recurrence, we obtain \eqref{eq:optimization_lemma_prox_SGDA_str_mon}.
\end{proof}

\begin{theorem}\label{thm:main_result_prox_clipped_SGDA_2}
Let Assumptions~\ref{as:str_monotonicity}, \ref{as:star_cocoercivity}, hold for $Q = B_{\sqrt{2V}}(x^*)$, 
% \begin{equation*}
% Q = B_{\sqrt{2V}}(x^*) = \{x\in\R^d\mid \|x - x^*\|^2 + \frac{C^2\gamma^2\ln^2\left(\frac{48n(K+1)}{\beta}\right)}{n^2}\|F(x^*)\|^2 \leq 2V\},
% \end{equation*}
where $V \geq \|x^0 - x^*\| + \frac{9000000\gamma^2\ln^2\left(\frac{48n(K+1)}{\beta}\right)}{n^2}\revision{\sum\limits_{i=1}^n\|F_i(x^*)\|^2}$, and \revision{$R \geq \|x^0 - x^*\|$,}
    \begin{eqnarray}
        0< \gamma &\leq& \min\left\{\frac{1}{4096 \ell \ln \tfrac{48n(K+1)}{\beta}}, \revision{\frac{\sqrt{n}R}{3000 \zeta_* \ln \tfrac{48n(K+1)}{\beta}},} \frac{\ln(B_K)}{\mu(K+1)}\right\}, \label{eq:gamma_SGDA_2_prox_clipped}\\
        B_K &=& \max\left\{2, \left(\frac{\sqrt{2}}{3456}\right)^{\frac{2}{\alpha}}\cdot\frac{(K+1)^{\frac{2(\alpha-1)}{\alpha}}\mu^2V n^{\frac{2(\alpha-1)}{\alpha}}}{\sigma^2\ln^{\frac{2(\alpha-1)}{\alpha}}\left(\frac{48n(K+1)}{\beta}\right)\ln^2(B_K)} \right\} \label{eq:B_K_SGDA_2_prox_clipped_1} \\
       &=& \cO\left(\max\left\{2, \frac{K^{\frac{2(\alpha-1)}{\alpha}}\mu^2V n^{\frac{2(\alpha-1)}{\alpha}}}{\sigma^2\ln^{\frac{2(\alpha-1)}{\alpha}}\left(\frac{nK}{\beta}\right)\ln^2\left(\max\left\{2, \frac{K^{\frac{2(\alpha-1)}{\alpha}}\mu^2V n^{\frac{2(\alpha-1)}{\alpha}}}{\sigma^2\ln^{\frac{2(\alpha-1)}{\alpha}}\left(\frac{nK}{\beta}\right)} \right\}\right)} \right\}\right), \label{eq:B_K_SGDA_2_prox_clipped_2} \\
        \lambda_k &=& \frac{n\cdot\exp(-\gamma\mu(1 + \nicefrac{k}{2}))\sqrt{V}}{256\sqrt{2}\gamma \ln \tfrac{48n(K+1)}{\beta}}, \label{eq:lambda_SGDA_2_prox_clipped}
    \end{eqnarray}
    for some $K \geq 0$ and $\beta \in (0,1]$. Then, after $K$ iterations the iterates produced by \algname{DProx-clipped-SGDA-shift} with probability at least $1 - \beta$ satisfy 
    \begin{equation}
        V_{K+1} \leq 2\exp(-\gamma\mu(K+1))V, \label{eq:main_result_str_mon_SGDA_2}
    \end{equation}
    \revision{where $V_k = \|x^k - x^*\|^2 + \frac{9000000\gamma^2\ln^2\left(\frac{48n(K+1)}{\beta}\right)}{n^2}\sum\limits_{i=1}^n\|h^k_i - h^*_i\|^2$, $h_i^* = F_i(x^*)$.} 
    In particular, \revision{$V \leq 2R^2$, and} when $\gamma$ equals the minimum from \eqref{eq:gamma_SGDA_2_prox_clipped}, then the iterates produced by \algname{Dprox-clipped-SGDA-shift} after $K$ iterations with probability at least $1-\beta$ satisfy
    \begin{equation}
       V_K = \cO\left(\max\left\{\revision{R^2}\exp\left(- \frac{\mu K}{\ell \ln \tfrac{nK}{\beta}}\right), \revision{\revision{R^2}\exp\left(- \frac{\mu \sqrt{n} RK}{\zeta_* \ln \tfrac{nK}{\beta}}\right),} \frac{\sigma^2\ln^{\frac{2(\alpha-1)}{\alpha}}\left(\frac{nK}{\beta}\right)\ln^2\revision{B_K}}{K^{\frac{2(\alpha-1)}{\alpha}}\mu^2 n^{\frac{2(\alpha-1)}{\alpha}}}\right\}\right), \label{eq:clipped_SGDA_2_prox_clipped_case_1}
    \end{equation}
    meaning that to achieve $V_K \leq \varepsilon$ with probability at least $1 - \beta$ \algname{DProx-clipped-SGDA-shift} requires
    \begin{align}
        K = \cO\Bigg(\max\Bigg\{\frac{\ell}{\mu}\ln\left(\frac{\revision{R^2}}{\varepsilon}\right)&\ln\left(\frac{n\ell}{\mu \beta}\ln\frac{\revision{R^2}}{\varepsilon}\right), \revision{\frac{\zeta_*}{\sqrt{n}R\mu}\ln\left(\frac{\revision{R^2}}{\varepsilon}\right)\ln\left(\frac{\sqrt{n}\zeta_*}{R\mu \beta}\ln\frac{\revision{R^2}}{\varepsilon}\right),}\notag\\
        &\frac{1}{n}\left(\frac{\sigma^2}{\mu^2\varepsilon}\right)^{\frac{\alpha}{2(\alpha-1)}}\ln \left(\frac{1}{\beta} \left(\frac{\sigma^2}{\mu^2\varepsilon }\right)^{\frac{\alpha}{2(\alpha-1)}}\right)\ln^{\frac{\alpha}{\alpha-1}}\left(B_\varepsilon\right)\Bigg\}\Bigg) \label{eq:clipped_SGDA_2_prox_clipped_case_complexity_appendix}
    \end{align}
    iterations/oracle calls, where
    \begin{equation*}
         B_\varepsilon = \max\left\{2, \frac{\revision{2R^2}}{\varepsilon \ln \left(\frac{1}{\beta} \left(\frac{\sigma^2}{\mu^2\varepsilon }\right)^{\frac{\alpha}{2(\alpha-1)}}\right)}\right\}.
    \end{equation*}
\end{theorem}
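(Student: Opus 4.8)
The plan is to follow the induction-based template already used for \algname{Prox-clipped-SGDA-star} (Theorem~\ref{thm:main_result_prox_clipped_SGDA_star}) and for the cocoercive analysis of \algname{DProx-clipped-SGDA-shift} (Theorem~\ref{thm:main_result_gap_prox_clipped_SGDA}), but now organized around the shift-aware Lyapunov function $V_k = \|x^k - x^*\|^2 + \tfrac{9000000\gamma^2}{n^2}\ln^2(\tfrac{48n(K+1)}{\beta})\sum_{i=1}^n \|h_i^k - h_i^*\|^2$ that already appears in Lemma~\ref{lem:optimization_lemma_str_mon_SGDA_2}. For each $k = 0,1,\ldots,K+1$ I would introduce the probability event $E_k$ asserting that $V_t \leq 2\exp(-\gamma\mu t)V$ and $\|\tfrac{\gamma}{n}\sum_{i=1}^{r-1}\omega_{i,t-1}^u\| \leq \exp(-\nicefrac{\gamma\mu(t-1)}{2})\sqrt{\nicefrac{V}{2}}$ hold for all $t \leq k$ and all $r \in [n]$ simultaneously, and then prove $\PP\{E_k\} \geq 1 - \nicefrac{k\beta}{(K+1)}$ by induction on $k$. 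The first part of $E_k$ forces $V_t \leq 2V$, hence $\|x^t-x^*\|^2 \leq V_t \leq 2V$ and $x^t \in B_{\sqrt{2V}}(x^*) = Q$, which is exactly the hypothesis needed to invoke Lemma~\ref{lem:optimization_lemma_str_mon_SGDA_2}; the second part is the auxiliary statement required to legitimize the truncated cross-term vectors in the distributed setting.

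Granting $E_{T-1}$, I would feed the bound of Lemma~\ref{lem:optimization_lemma_str_mon_SGDA_2} and split $\omega_{i,k} = \omega_{i,k}^u + \omega_{i,k}^b$ into unbiased and biased parts as in \eqref{eq:gap_thm_prox_clipped_SGDA_technical_4_full_theta_omega}--\eqref{eq:thm_prox_clipped_SGDA_technical_gap_4}. Since the recursion tracks $V_t$ directly (there is no supremum over $u$ as in the monotone gap case), the main decomposition produces only the six families of terms $\circledOne$--$\circledSix$: the inner-product martingale sum $\circledOne$, the bias inner-product sum $\circledTwo$, the conditional-variance sum $\circledThree$, its centered martingale counterpart $\circledFour$, the squared-bias sum $\circledFive$, and the client cross-term sum $\circledSix = \circledSix' + \text{(remainder)}$. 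Each martingale sum would be estimated by Bernstein's inequality (Lemma~\ref{lem:Bernstein_ineq}), and each deterministic bias/variance sum by the clipping estimates of Lemma~\ref{lem:bias_and_variance_clip}, namely $\|\omega_{i,k}^b\| \leq \nicefrac{2^\alpha\sigma^\alpha}{\lambda_k^{\alpha-1}}$ and $\EE_{\xi_i^k}[\|\omega_{i,k}^u\|^2] \leq 18\lambda_k^{2-\alpha}\sigma^\alpha$. These are applicable once $E_{T-1}$ is shown to force $\|F_i(x^t) - h_i^*\| \leq \nicefrac{\lambda_t}{2}$, which follows from star-cocoercivity, the decay $V_t \leq 2\exp(-\gamma\mu t)V$, and the clipping level \eqref{eq:lambda_SGDA_2_prox_clipped}. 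The target of each estimate is that the term is at most a fixed fraction of $\exp(-\gamma\mu T)V$ with probability $\geq 1 - \nicefrac{\beta}{O(n(K+1))}$, so that the union of failure probabilities stays below $\nicefrac{\beta}{(K+1)}$ per step.

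The main obstacle, and the place where this proof departs most from the monotone case, is the interaction of the exponentially decaying recursion weights $(1-\gamma\mu)^{K-k}$ with the exponentially decaying clipping level $\lambda_k \sim \exp(-\nicefrac{\gamma\mu k}{2})$. Unlike the constant-$\lambda$ cocoercive analysis, here every variance sum of the form $\sum_t (1-\gamma\mu)^{K-t}\lambda_t^{2-\alpha}\exp(\gamma\mu t)$ must be collapsed by geometric-series bounds such as $\sum_t \exp(\nicefrac{\gamma\mu\alpha t}{2}) \leq (K+1)\exp(\nicefrac{\gamma\mu\alpha K}{2})$, exactly as in the derivations \eqref{eq:bound_1_variances_SGDA_prox_clipped}--\eqref{eq:bound_5_SGDA_prox_clipped} for \algname{Prox-clipped-SGDA-star}; carrying these weights while still producing a clean factor $\exp(-\gamma\mu T)$ on the right-hand side is the most delicate bookkeeping. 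Layered on top is the second induction over clients $r = 1,\ldots,n$ at the step $t = T$, which establishes the auxiliary bound and thereby controls $\circledSix$ (introducing one further Bernstein term $\circledSeven$); this is precisely what lets the noise-dependent term carry the $\nicefrac{1}{n}$ linear-speedup factor, with the $n$-scaling of $\lambda_k$ in \eqref{eq:lambda_SGDA_2_prox_clipped} balancing bias against variance in the spirit of Lemma~\ref{lem:bias_and_variance_clip_distributed}.

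Once $E_{K+1}$ is established, \eqref{eq:main_result_str_mon_SGDA_2} is immediate, and the complexity \eqref{eq:clipped_SGDA_2_prox_clipped_case_complexity_appendix} follows by substituting the minimum-achieving stepsize from \eqref{eq:gamma_SGDA_2_prox_clipped} into $V_{K+1} \leq 2\exp(-\gamma\mu(K+1))V$, expanding the three branches of the $\min$, and solving each inequality $V_K \leq \varepsilon$ for $K$ — a routine, if lengthy, computation mirroring the end of the proof of Theorem~\ref{thm:main_result_prox_clipped_SGDA_star}. Finally, the quasi-strongly convex minimization guarantee (Theorem~\ref{thm:D_prox_clipped_SGD_main}) is recovered from this result by applying Lemma~\ref{lem:QSC_to_QSM} with $F = \nabla f$ and $\ell = 2L$.
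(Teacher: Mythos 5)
Your proposal follows essentially the same route as the paper's proof: the same Lyapunov function $V_k$, the same two-part probability event (decay of $V_t$ plus the truncated partial sums $\|\tfrac{\gamma}{n}\sum_{i=1}^{r-1}\omega_{i,t-1}^u\|$), induction via Lemma~\ref{lem:optimization_lemma_str_mon_SGDA_2}, the six-term decomposition with Bernstein's inequality for the martingale sums and Lemma~\ref{lem:bias_and_variance_clip} for the bias/variance terms, the nested client-level induction producing $\circledSeven$, and the reduction of Theorem~\ref{thm:D_prox_clipped_SGD_main} via Lemma~\ref{lem:QSC_to_QSM}. The only nit is that the quantity that must be at most $\nicefrac{\lambda_t}{2}$ is $\|F_i(x^t)-h_i^t\|$ (the mean of the clipped vector), bounded by $\|F_i(x^t)-h_i^*\|+\|h_i^t-h_i^*\|$ with the second term controlled by the Lyapunov decay — which your sketch implicitly accounts for.
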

\begin{proof}
The Lyapunov function has the following form
    \begin{eqnarray*}
        V_k = \|x^k - x^*\|^2 + \frac{9000000\gamma^2\ln^2\left(\frac{48n(K+1)}{\beta}\right)}{n^2}\sum\limits_{i=1}^n\|h^k_i - h^*_i\|^2.
    \end{eqnarray*} Similar to previous results, our proof is induction-based.  To formulate the statement rigorously, we introduce probability event $E_k$ for each $k = 0,1,\ldots, K+1$ as follows: inequalities
    \begin{equation}
        V_t \leq 2 \exp(-\gamma\mu t) V \label{eq:induction_inequality_prox_clipped_SGDA_2}
    \end{equation}
    \begin{gather}
         \left\|\frac{\gamma}{n}\sum\limits_{i=1}^{r-1}\omega_{i,t-1}^u\right\| \leq \exp\left(-\frac{\gamma\mu (t-1)}{2}\right)\frac{\sqrt{V}}{2} \label{eq:induction_inequality_str_mon_SGDA_2}
    \end{gather}
    hold for $t = 0,1,\ldots,k$ and $r = 1, 2, \ldots, n$ simultaneously. We will prove by induction that $\PP\{E_k\} \geq  1 - \nicefrac{k\beta}{(K+1)}$ for all $k = 0,1,\ldots,K+1$. The base of induction follows immediately by the definition of $V$. Next, we assume that the statement holds for $k = T-1 \leq K$, i.e., $\PP\{E_{T-1}\} \geq  1 - \nicefrac{(T-1)\beta}{(K+1)}$. Let us show that it also holds for $k = T$, i.e., $\PP\{E_{T}\} \geq  1 - \nicefrac{T\beta}{(K+1)}$.

    Similarly to the monotone case, one can show that due to our choice of the clipping level, we have that $E_{T-1}$ implies $x^t \in B_{\sqrt{2}n\sqrt{V}}(x^*)$ for $t=0,\ldots,T-1$. Indeed, for $t = 0,1,\ldots,T-1$ inequality \eqref{eq:induction_inequality_prox_clipped_SGDA_2} gives $x^t \in B_{\sqrt{2V}}(x^*)$. This means that we can apply Lemma~\ref{lem:optimization_lemma_str_mon_SGDA_2}: $E_{T-1}$ implies

    \begin{eqnarray*}
        V_{T} &\leq& (1 - \gamma \mu)^{T}V + \frac{2\gamma}{n} \sum\limits_{t=0}^{T-1}\sum\limits_{i=1}^n (1-\gamma\mu)^{T-1-t}\langle x^t - x^* - \gamma (F(x^t)-h^*), \omega_{i,t} \rangle\notag\\
        &&\quad  + \frac{\gamma^2}{n^2}\sum\limits_{t=0}^{T-1}\sum\limits_{i=1}^n (1-\gamma \mu)^{T-1-t} \|\omega_{i,t}\|^2 + \gamma^2 \sum\limits_{t=0}^{T-1} (1-\gamma\mu)^{T-1-t} \|\omega_t\|^2.
    \end{eqnarray*}
Before we proceed, we introduce a new notation:
\begin{gather}
        \xi_t = \begin{cases} x^t - x^* - \gamma (F(x^t)-h^*),& \text{if } \|x^t - x^* - \gamma (F(x^t)-h^*)\| \leq 2\sqrt{2} \exp(- \nicefrac{\gamma\mu t}{2})\sqrt{V},\\ 0,& \text{otherwise}, \end{cases} \label{eq:xi_t_SGDA_2_str_mon}
\end{gather}
for $t = 0, 1, \ldots, T$. Random vectors $\{\xi_t\}_{t=0}^T$ are bounded almost surely:
\begin{equation}
         \|\xi_t\| \leq 2\sqrt{2}\exp(-\nicefrac{\gamma\mu t}{2})\sqrt{V} \label{eq:xi_t_bound_SGDA_2_prox_clipped} 
\end{equation}
for all $t = 0, 1, \ldots, T$. 
    In addition, $\xi_t = x^t - x^* - \gamma (F(x^t)-h^*)$ follows from $E_{T-1}$ for all $t = 0, 1, \ldots, T$ and, thus, $E_{T-1}$ implies
\begin{eqnarray}
        V_{T} &\leq& \exp(-\gamma\mu T)V + \underbrace{\frac{2\gamma}{n} \sum\limits_{t=0}^{T-1}\sum\limits_{i=1}^n (1-\gamma\mu)^{T-1-t}\langle \xi_t, \omega^u_{i,t} \rangle}_{\circledOne} + \underbrace{\frac{2\gamma}{n} \sum\limits_{t=0}^{T-1}\sum\limits_{i=1}^n (1-\gamma\mu)^{T-1-t}\langle \xi_t, \omega^b_{i,t} \rangle}_{\circledTwo}\notag\\
        &&\quad  + \underbrace{\frac{4\gamma^2}{n^2}  \sum\limits_{t=0}^{T-1}\sum\limits_{i=1}^n (1-\gamma \mu)^{T-1-t} \left[\|\omega^u_{i,t}\|^2 - \EE_{\xi^t}[\|\omega^u_{i,t}\|^2] \right]}_{\circledThree}\notag\\
        &&\quad  + \underbrace{\frac{4\gamma^2}{n^2} \sum\limits_{t=0}^{T-1}\sum\limits_{i=1}^n (1-\gamma \mu)^{T-1-t} \EE_{\xi^t}[\|\omega^u_{i,t}\|^2]}_{\circledFour} \notag\\
        &&\quad  + \underbrace{\frac{4\gamma^2}{n} \sum\limits_{t=0}^{T-1}\sum\limits_{i=1}^n (1-\gamma \mu)^{T-1-t} \|\omega^b_{i,t}\|^2}_{\circledFive}\notag\\
        &&\quad + \underbrace{\frac{4\gamma^2}{n^2} \sum\limits_{t=0}^{T-1}\sum\limits_{j=1}^n (1-\gamma\mu)^{T-1-t} \left\langle \sum\limits_{i=1}^{j-1} \omega_{i,t}^u, \omega_{j,t}^u\right\rangle}_{\circledSix}. \label{eq:SGDA_2_prox_clipped_12345_bound}
\end{eqnarray}
To derive high-probability bounds for $\circledOne, \circledTwo, \circledThree, \circledFour, \circledFive, \circledSix$ we need to establish several useful inequalities related to $\omega_{i,t}^u, \omega_{i,t}^b$. First, by definition of clipping
    \begin{equation}
         \|\omega_{i,t}^u\| \leq 2\lambda_t. \label{eq:omega_magnitude_prox_clipped_2}
    \end{equation}
Next, we notice that $E_{T-1}$ implies
\begin{eqnarray}
        \|F_i(x^t) - h_i^t\| &\le& \|F_i(x^t) - h^*_i\| + \|h^t_i- h^*_i\| \overset{\eqref{eq:star_cocoercivity}}{\le} \ell\|x^t - x^*\| + \sqrt{\sum\limits_{i=1}^n\|h^t_i -  h^*_i\|^2}\notag\\
        &\le& \left( \ell + \frac{n}{3000\gamma\ln\left(\frac{48n(K+1)}{\beta}\right)}\right)\sqrt{V_t}\notag\\ &\overset{\eqref{eq:induction_inequality_prox_clipped_SGDA_2}}{\leq}& \sqrt{2}\left( \ell + \frac{n}{3000\gamma\ln\left(\frac{48n(K+1)}{\beta}\right)}\right)\exp(- \nicefrac{\gamma\mu t}{2})\sqrt{V}
        \overset{\eqref{eq:gamma_SGDA_2_prox_clipped},\eqref{eq:lambda_SGDA_2_prox_clipped}}{\leq} \frac{\lambda_t}{2}. \label{eq:operator_bound_x_t_SGDA_2_prox_clipped}
    \end{eqnarray}
for $t = 0,1,\ldots,T-1$ and $i\in [n]$. Therefore, one can apply Lemma~\ref{lem:bias_and_variance_clip} and get \begin{gather}
         \left\|\omega_t^b\right\| \leq \frac{1}{n}\sum\limits_{i=1}^n \|\omega_{i,t}^b\| \leq \frac{2^\alpha\sigma^\alpha}{\lambda_t^{\alpha-1}}, \label{eq:bias_omega_prox_clipped_2}\\
         \EE_{\xi_{i}^t}\left[\left\|\omega_{i,t}^u\right\|^2\right] \leq 18 \lambda_t^{2-\alpha}\sigma^\alpha, \label{eq:variance_omega_prox_clipped_2}
    \end{gather}
    for all $t = 0,1, \ldots, T-1$ and $i\in [n]$. In addition, we require the following condition
    \begin{eqnarray}
        \nu \leq \frac{1}{18000000 \ln^2 \left(\frac{48n(K+1)}{\beta}\right)}.\label{eq:nu_theorem_prox_SGDA_str_mon_2}
    \end{eqnarray}

\paragraph{Upper bound for $\circledOne$.}
To estimate this sum, we will use Bernstein's inequality. The summands have conditional expectations equal to zero:
\begin{equation*}
        \EE_{\xi_{i}^l}\left[\frac{2\gamma}{n} (1-\gamma\mu)^{T-1-l} \langle \xi_t, \omega_{i,l}^u \rangle\right] = \frac{2\gamma}{n} \exp\left(-\gamma\mu(T-1-l)\right)\left\langle \xi_l, \EE_{\xi_{i}^l}[\omega_{i,l}^u] \right\rangle = 0.
\end{equation*}
Moreover, for all $l = 0,\ldots, T-1$ random vectors $\{\omega_{i,l}^u\}_{i=1}^n$ are independent. Thus, sequence $\left\{\frac{2\gamma}{n}\exp\left(-\gamma\mu(T-1-l)\right) \langle \xi_l, \omega_{i,l}^u \rangle\right\}_{l,i = 0,1}^{T-1,n}$ is a martingale difference sequence. Next, the summands are bounded:
    \begin{eqnarray}
        \left|\frac{2\gamma}{n} \exp(-\gamma\mu (T - 1 - l)) \langle \xi_l, \omega_{i,l}^u \rangle \right| &\leq& \frac{2\gamma}{n}\exp(-\gamma\mu (T - 1 - l)) \|\xi_l\|\cdot \|\omega_{i,l}^u\|\notag\\
        &\overset{\eqref{eq:xi_t_bound_SGDA_2_prox_clipped},\eqref{eq:omega_magnitude_prox_clipped_2}}{\leq}& \frac{8\sqrt{2}}{n}\gamma  \exp(-\gamma\mu (T - 1 - \nicefrac{l}{2})) \sqrt{V} \lambda_l\notag\\
        &\overset{\eqref{eq:lambda_SGDA_2_prox_clipped}}{\leq}& \frac{\exp(-\gamma\mu T)V}{8\ln\tfrac{48n(K+1)}{\beta}} \eqdef c. \label{eq:SGDA_2_prox_clipped_technical_1_1_xi}
    \end{eqnarray}
    Finally, conditional variances $\sigma_{i,l}^2 \eqdef \EE_{\xi_{i}^l}\left[\frac{4\gamma^2}{n^2}\exp\left(-2\gamma\mu(T-1-l)\right) \langle \xi_l, \omega_{i,l}^u \rangle^2\right]$ of the summands are bounded:
    \begin{eqnarray}
        \sigma_{i,l}^2 &\leq& \EE_{\xi^l_i}\left[\frac{4\gamma^2}{n^2}\exp(-2\gamma\mu (T - 1 - l)) \|\xi_l\|^2\cdot \|\omega_{i,l}^u\|^2\right]\notag\\
        &\overset{\eqref{eq:xi_t_bound_SGDA_2_prox_clipped}}{\leq}& \frac{32\gamma^2}{n^2}  \exp(-\gamma\mu (2T - 2 - l)) V \EE_{\xi^l_i}\left[\|\omega_{i,l}^u\|^2\right]. \label{eq:SGDA_2_prox_clipped_technical_1_2_xi}
    \end{eqnarray}
Applying Bernstein's inequality (Lemma~\ref{lem:Bernstein_ineq}) with $X_{i,l} = \frac{2\gamma}{n}\exp\left(-\gamma\mu(T-1-l)\right) \langle \xi_l, \omega_{i,l}^u \rangle$, constant $c$ defined in \eqref{eq:SGDA_2_prox_clipped_technical_1_1_xi}, $b = \frac{\exp\left(-\gamma\mu T\right)V}{8}$, $G = \frac{\exp\left(-2\gamma\mu T\right)V^2}{384\ln\frac{48n(K+1)}{\beta}}$, we get

    \begin{equation*}
        \PP\left\{|\circledOne| > \frac{\exp(-\gamma\mu T) V}{8} \text{ and } \sum\limits_{l=0}^{T-1}\sum\limits_{i=1}^{n}\sigma_{i,l}^2 \leq \frac{\exp(- 2\gamma\mu T) V^2}{384\ln\tfrac{48n(K+1)}{\beta}}\right\} \leq 2\exp\left(- \frac{b^2}{2G + \nicefrac{2cb}{3}}\right) = \frac{\beta}{24n(K+1)}.
    \end{equation*}
    The above is equivalent to $\PP\{E_{\circledOne}\} \geq 1 - \frac{\beta}{24n(K+1)}$ for
    \begin{equation}
         E_{\circledOne} = \left\{\text{either} \quad \sum\limits_{l=0}^{T-1}\sum\limits_{i=1}^{n}\sigma_{i,l}^2 > \frac{\exp(- 2\gamma\mu T) V^2}{384\ln\tfrac{48n(K+1)}{\beta}}\quad \text{or}\quad |\circledOne| \leq \frac{\exp(-\gamma\mu T) V}{8}\right\}. \label{eq:bound_1_SGDA_2_prox_clipped}
    \end{equation}
    Moreover, $E_{T-1}$ implies
    \begin{eqnarray}
        \sum\limits_{l=0}^{T-1}\sum\limits_{i=1}^n\sigma_{i,l}^2 &\overset{\eqref{eq:SGDA_2_prox_clipped_technical_1_2_xi}}{\leq}& \frac{32\gamma^2}{n}\exp(- 2\gamma\mu (T-1))V\sum\limits_{l=0}^{T-1} \frac{\EE_{\xi^l_i}\left[\|\omega_{i,l}^u\|^2\right]}{\exp(-\gamma\mu l)}\notag\\ 
        &\overset{\eqref{eq:variance_omega_prox_clipped_2}, T \leq K+1}{\leq}& \frac{576\gamma^2}{n}\exp(-2\gamma\mu (T-1) V \sigma^\alpha \sum\limits_{l=0}^{K} \frac{\lambda_l^{2-\alpha}}{\exp(-\gamma\mu l)}\notag\\
         &\overset{\eqref{eq:lambda_SGDA_2_prox_clipped}}{\leq}& \frac{9 (64\sqrt{2})^{\alpha}}{\sqrt{2}}\frac{\gamma^\alpha\exp(-2\gamma\mu (T-1) \sqrt{V}^{4-\alpha} \sigma^\alpha (K+1)\exp(\frac{\gamma\mu\alpha K}{2})}{ n^{\alpha-1}\ln^{2-\alpha}\tfrac{48n(K+1)}{\beta}}\notag\\
        &\overset{\eqref{eq:gamma_SGDA_2_prox_clipped}}{\leq}& \frac{\exp(-2\gamma\mu T)V^2}{384\ln\tfrac{48n(K+1)}{\beta}}. \label{eq:bound_1_variances_SGDA_2_prox_clipped}
    \end{eqnarray}
    
\paragraph{Upper bound for $\circledTwo$.} Probability event $E_{T-1}$ implies
    \begin{eqnarray}
        \circledTwo &\leq& \frac{2\gamma}{n} \exp(-\gamma\mu (T-1)) \sum\limits_{l=0}^{T-1}\sum\limits_{i=1}^n \frac{\|\xi_l\|\cdot \|\omega_{i,l}^b\|}{\exp(-\gamma\mu l)}\notag\\
        &\overset{\eqref{eq:xi_t_bound_SGDA_2_prox_clipped}, \eqref{eq:bias_omega_prox_clipped_2}}{\leq}& 2^{2+\alpha}\sqrt{2} \gamma \exp(-\gamma\mu (T-1)) \sqrt{V} \sigma^\alpha \sum\limits_{l=0}^{T-1} \frac{1}{\lambda_l^{\alpha-1} \exp(-\nicefrac{\gamma\mu l}{2})}\notag\\
        &\overset{\eqref{eq:lambda_SGDA_2_prox_clipped}, T \leq K+1}{\leq}& \frac{(128\sqrt{2})^{\alpha}}{16}\cdot\frac{\gamma^\alpha \sigma^{\alpha} \exp(-\gamma\mu (T-1)) (K+1) \exp\left(\frac{\gamma\mu\alpha K}{2}\right)\exp(\gamma\mu\alpha) \ln^{\alpha-1}\tfrac{48n(K+1)}{\beta}}{n^{\alpha-1}\sqrt{V}^{\alpha-2}} \notag \\
        &\overset{\eqref{eq:gamma_SGDA_2_prox_clipped}}{\leq}& \frac{\exp(-\gamma\mu T) V}{8}. \label{eq:bound_2_SGDA_2_prox_clipped}
    \end{eqnarray}

\paragraph{Upper bound for $\circledThree$.} To estimate this sum, we will use Bernstein's inequality. The summands have conditional expectations equal to zero:
    \begin{equation*}
        \EE_{\xi^l_i}\left[\frac{4\gamma^2}{n^2}(1-\gamma\mu)^{T-1-l}\left[\|\omega_{i,l}^u\|^2  -\EE_{\xi^l_i}\left[\|\omega_{i,l}^u\|^2\right]\right] \right] = 0.
    \end{equation*}
    Moreover, for all $l = 0,\ldots, T-1$ random vectors $\{\omega_{i,l}^u\}_{i=1}^n$ are independent. Thus, sequence $\left\{ \frac{4\gamma^2}{n^2}  \exp\left(-\gamma\mu(T-1-l)\right)\left(\|\omega_{i,l}^u\|^2 - \EE_{\xi_{i}^l}\left[\|\omega_{i,l}^u\|^2\right]\right)\right\}_{l,i = 0,1}^{T-1,n}$ is a martingale difference sequence. Next, the summands are bounded:
    \begin{eqnarray}
        \frac{4\gamma^2}{n^2} (1-\gamma\mu)^{T-1-l}\left| \|\omega_{i,l}^u\|^2  -\EE_{\xi^l_i}\left[\|\omega_{i,l}^u\|^2\right] \right| &\overset{\eqref{eq:omega_magnitude_prox_clipped_2}}{\leq}&
        \frac{32\gamma^2\lambda_l^2}{n^2} \frac{\exp(-\gamma\mu T) }{\exp(-\gamma\mu (1+l))}\notag\\
        &\overset{\eqref{eq:lambda_SGDA_2_prox_clipped}}{\leq}& \frac{\exp(-\gamma\mu (T+1))V}{256\ln^2\tfrac{48n(K+1)}{\beta}}\notag\\
        &\leq& \frac{\exp(-\gamma\mu T)V}{8\ln\tfrac{48n(K+1)}{\beta}}
        \eqdef c. \label{eq:SGDA_2_prox_clipped_technical_4_1}
    \end{eqnarray}

    Finally, conditional variances 
    \begin{equation*}
        \widetilde\sigma_{i,l}^2 \eqdef \EE_{\xi^l_i}\left[\frac{16\gamma^4}{n^4}  (1-\gamma\mu)^{2T-2-2l} \left|\|\omega_{i,l}^u\|^2  -\EE_{\xi^l_i}\left[\|\omega_{i,l}^u\|^2\right] \right|^2 \right]
    \end{equation*}
    of the summands are bounded:
    \begin{eqnarray}
        \widetilde\sigma_{i,l}^2 &\overset{\eqref{eq:SGDA_2_prox_clipped_technical_4_1}}{\leq}& \frac{4\gamma^2\exp(-2\gamma\mu T)V}{8n^2\exp(-\gamma\mu (1+l))\ln\tfrac{48n(K+1)}{\beta}} \EE_{\xi^l_i}\left[ \left|\|\omega_{i,l}^u\|^2  -\EE_{\xi^l_i}\left[\|\omega_{i,l}^u\|^2\right] \right|\right]\notag\\
        &\leq& \frac{\gamma^2\exp(-2\gamma\mu T)V}{n^2\exp(-\gamma\mu (1+l))\ln\tfrac{48n(K+1)}{\beta}} \EE_{\xi^l_i}\left[\|\omega_{i,l}^u\|^2\right]. \label{eq:SGDA_2_prox_clipped_technical_4_2}
    \end{eqnarray}
    Applying Bernstein's inequality (Lemma~\ref{lem:Bernstein_ineq}) with $X_{i,l} = \frac{4\gamma^2}{n^2} (1-\gamma\mu)^{T-1-l}\left[\|\omega_{i,l}^u\|^2  -\EE_{\xi^l_i}\left[\|\omega_{i,l}^u\|^2\right]\right]$, constant $c$ defined in \eqref{eq:SGDA_2_prox_clipped_technical_4_1}, $b = \frac{\exp\left(-\gamma\mu T\right)V}{8}$, $G = \frac{\exp\left(-2\gamma\mu T\right)V^2}{384\ln\frac{48n(K+1)}{\beta}}$, we get
    \begin{equation*}
        \PP\left\{|\circledThree| > \frac{\exp(-\gamma\mu T) V}{8} \text{ and } \sum\limits_{l=0}^{T-1}\sum\limits_{i=1}^n\widetilde\sigma_{i,l}^2 \leq \frac{\exp(-2\gamma\mu T) V^2}{384\ln\frac{48n(K+1)}{\beta}}\right\} \leq 2\exp\left(- \frac{b^2}{2G + \nicefrac{2cb}{3}}\right) = \frac{\beta}{24n(K+1)}.
    \end{equation*}
    The above is equivalent to $\PP\{E_{\circledThree}\} \geq 1 - \frac{\beta}{24n(K+1)}$ for
    \begin{equation}
        E_{\circledThree} = \left\{\text{either} \quad \sum\limits_{l=0}^{T-1}\sum\limits_{i=1}^n\widetilde\sigma_{i,l}^2 > \frac{\exp(-2\gamma\mu T) V^2}{384\ln\tfrac{48n(K+1)}{\beta}}\quad \text{or}\quad |\circledThree| \leq \frac{\exp(-\gamma\mu T) V}{8}\right\}. \label{eq:bound_5_SGDA_2_prox_clipped}
    \end{equation}
    Moreover, $E_{T-1}$ implies
    \begin{eqnarray}
        \sum\limits_{l=0}^{T-1}\sum\limits_{i=1}^n\widetilde\sigma_{i,l}^2 &\overset{\eqref{eq:SGDA_2_prox_clipped_technical_4_2}}{\leq}& \frac{\gamma^2\exp(-\gamma\mu (2T-1))V}{n^2\ln\tfrac{48n(K+1)}{\beta}} \sum\limits_{l=0}^{T-1}\sum\limits_{i=1}^n \frac{\EE_{\xi^l_i}\left[\|\omega_{i,l}^u\|^2\right]}{\exp(-\gamma\mu l)}\notag\\
        &\overset{\eqref{eq:variance_omega_prox_clipped_2}, T \leq K+1}{\leq}& \frac{18\gamma^2\exp(-\gamma\mu (2T-1)) V \sigma^\alpha}{n\ln\tfrac{48n(K+1)}{\beta}} \sum\limits_{l=0}^{K} \frac{\lambda_l^{2-\alpha}}{\exp(-\gamma\mu l)}\notag\\
        &\overset{\eqref{eq:lambda_SGDA_2_prox_clipped}}{\leq}& 
        \frac{9(64\sqrt{2})^{\alpha}}{4096}\cdot\frac{\gamma^\alpha\exp(-\gamma\mu (2T-1)) \sqrt{V}^{4-\alpha} \sigma^\alpha (K+1)\exp(\frac{\gamma\mu\alpha K}{2})}{n^{\alpha-1}\ln^{3-\alpha}\tfrac{48n(K+1)}{\beta}} \notag\\
        &\overset{\eqref{eq:gamma_SGDA_2_prox_clipped}}{\leq}& \frac{\exp(-2\gamma\mu T)V^2}{384\ln\tfrac{48n(K+1)}{\beta}}. \label{eq:bound_5_variances_SGDA_2_prox_clipped}
    \end{eqnarray}

\paragraph{Upper bound for $\circledFour$.} Probability event $E_{T-1}$ implies
    \begin{eqnarray}
        \circledFour &=& \frac{4\gamma^2}{n^2} \sum\limits_{l=0}^{T-1}\sum\limits_{i=1}^n (1-\gamma \mu)^{T-1-l} \EE_{\xi^l_i}[\|\omega^u_{i,l}\|^2]\notag\\
        &\overset{\eqref{eq:variance_omega_prox_clipped_2}}{\leq}& \frac{72\gamma^2\exp(-\gamma\mu (T-1)) \sigma^\alpha}{n}\sum\limits_{l=0}^{T-1} \frac{\lambda_l^{2-\alpha}}{\exp(-\gamma\mu l)} \notag\\
        &\overset{\eqref{eq:lambda_SGDA_2_prox_clipped}, T \leq K+1}{\leq}& \frac{9(64\sqrt{2})^{\alpha}}{1024} \cdot \frac{\gamma^\alpha \sqrt{V}^{2-\alpha}\exp(-\gamma\mu (T-1)) \sigma^\alpha (K+1)\exp(\frac{\gamma\mu\alpha K}{2})}{n^{\alpha-1} \ln^{2-\alpha}\tfrac{48n(K+1)}{\beta}} \notag\\
        &\overset{\eqref{eq:gamma_SGDA_2_prox_clipped}}{\leq}& \frac{\exp(-\gamma\mu T) V}{8} . \label{eq:bound_6_SGDA_2_prox_clipped}
    \end{eqnarray}

\paragraph{Upper bound for $\circledFive$.} Probability event $E_{T-1}$ implies
    \begin{eqnarray}
        \circledFive &=&  \frac{4\gamma^2}{n} \sum\limits_{l=0}^{T-1}\sum\limits_{i=1}^n (1-\gamma \mu)^{T-1-l} \|\omega^b_{i,l}\|^2\notag\\
        &\overset{\eqref{eq:bias_omega_prox_clipped_2}}{\leq}& 4\cdot 2^{2\alpha}\gamma^2 \exp(-\gamma\mu (T-1)) \sigma^{2\alpha} \sum\limits_{l=0}^{T-1} \frac{1}{\lambda_l^{2\alpha-2} \exp(-\gamma\mu l)} \notag\\
        &\overset{\eqref{eq:lambda_SGDA_2_prox_clipped}, T \leq K+1}{\leq}&  \frac{(128\sqrt{2})^{\alpha}}{2048}\cdot\frac{\gamma^{2\alpha} \exp(-\gamma\mu (T-3)) \sigma^{2\alpha} \ln^{2(\alpha-1)}\tfrac{48n(K+1)}{\beta} (K+1) \exp(\gamma\mu \alpha K)}{n^{2(\alpha-1)}V^{\alpha-1}}\notag\\
        &\overset{\eqref{eq:gamma_SGDA_2_prox_clipped}}{\leq}& \frac{\exp(-\gamma\mu T) V}{8}. \label{eq:bound_7_SGDA_2_prox_clipped}
    \end{eqnarray}

\paragraph{Upper bounds for $\circledSix$.} This sum requires more refined analysis. We introduce new vectors:
\begin{eqnarray}
        \delta_{j}^l = \begin{cases}
            \frac{\gamma}{n}\sum\limits_{i=1}^{j-1} \omega_{i,l}^u,& \text{if } \left\|\frac{\gamma}{n}\sum\limits_{i=1}^{j-1} \omega_{i,l}^u\right\| \leq \exp\left(-\frac{\gamma\mu l}{2}\right)\frac{\sqrt{V}}{2},\\
            0,& \text{otherwise,}
        \end{cases}\label{eq:SGDA_zeta_delta_def_str_mon_2}
\end{eqnarray}
    for all $j \in [n]$ and $l = 0,\ldots, T-1$. Then, by definition
    \begin{equation}
        \|\delta_j^l\| \leq \exp\left(-\frac{\gamma\mu l}{2}\right)\frac{\sqrt{V}}{2} \label{eq:SGDA_zeta_delta_bound_str_mon}
    \end{equation}
    and
    \begin{eqnarray}
        \circledSix &=& \underbrace{\frac{4\gamma}{n} \sum\limits_{l=0}^{T-1} \sum\limits_{j=2}^n \exp\left(-\gamma\mu(T-1-t)\right)\left\langle \delta_j^l, \omega_{j,l}^u \right\rangle}_{\circledSix'}\notag\\
        &&\quad + \frac{4\gamma}{n} \sum\limits_{l=0}^{T-1} \sum\limits_{j=2}^n \exp\left(-\gamma\mu(T-1-t)\right) \left\langle \frac{\gamma}{n}\sum\limits_{i=1}^{j-1} \omega_{i,l}^u - \delta_j^l, \omega_{j,l}^u \right\rangle. \label{eq:SGDA_extra_sums_seven_distributed_str_mon}
    \end{eqnarray}
We also note here that $E_{T-1}$ implies
    \begin{align}
        \frac{4\gamma}{n} \sum\limits_{l=0}^{T-1} \sum\limits_{j=2}^n \exp&\left(-\gamma\mu(T-1-t)\right)\left\langle \frac{\gamma}{n}\sum\limits_{i=1}^{j-1} \omega_{i,l}^u - \delta_j^l, \omega_{j,l}^u \right\rangle \notag\\
        &= \frac{4\gamma}{n}  \sum\limits_{j=2}^n \exp\left(-\gamma\mu(T-1-t)\right)\left\langle \frac{\gamma}{n}\sum\limits_{i=1}^{j-1} \omega_{i,T-1}^u - \delta_j^{T-1}, \omega_{j,T-1}^u \right\rangle. \label{eq:SGDA_extra_sums_seven_distributed_1_str_mon}
    \end{align}
\paragraph{Upper bound for $\circledSix'$.} To estimate this sum, we will use Bernstein's inequality. The summands have conditional expectations equal to zero:
    \begin{equation*}
        \EE_{\xi_{j}^l}\left[\frac{4\gamma}{n}\exp\left(-\gamma\mu(T-1-l)\right) \langle \delta_j^l, \omega_{j,l}^u \rangle\right] = \frac{4\gamma}{n} \exp\left(-\gamma\mu(T-1-l)\right)\left\langle \delta_j^l, \EE_{\xi_{j}^l}[\omega_{j,l}^u] \right\rangle = 0.
    \end{equation*}
    Moreover, for all $l = 0,\ldots, T-1$ random vectors $\{\omega_{j,l}^u\}_{j=1}^n$ are independent. Thus, sequence $\left\{\frac{4\gamma}{n}\exp\left(-\gamma\mu(T-1-l)\right) \langle \delta_j^l, \omega_{j,l}^u \rangle\right\}_{l,j = 0,1}^{T-1,n}$ is a martingale difference sequence. Next, the summands are bounded:
    \begin{eqnarray}
        \left|\frac{4\gamma}{n} \exp\left(-\gamma\mu(T-1-l)\right)\langle \delta_j^l, \omega_{j,l}^u \rangle \right| &\leq& \frac{4\gamma}{n}\exp\left(-\gamma\mu(T-1-l)\right) \|\delta_j^l\|\cdot \|\omega_{j,l}^u\|\notag\\
        &\overset{\eqref{eq:SGDA_zeta_delta_bound_str_mon},\eqref{eq:omega_magnitude_prox_clipped_2}}{\leq}& \frac{4\sqrt{V}\gamma \exp\left(-\gamma\mu(T-1)\right)}{n} \exp\left(\frac{\gamma\mu l}{2}\right)\lambda_l\notag\\
        &\overset{\eqref{eq:lambda_SGDA_2_prox_clipped}}{=}& \frac{\exp\left(-\gamma\mu T\right)V}{16\sqrt{2}\ln\frac{48n(K+1)}{\beta}}\notag\\
        &\leq& \frac{\exp\left(-\gamma\mu T\right)V}{8\ln\frac{48n(K+1)}{\beta}} \eqdef c. \label{eq:str_mon_thm_SGDA_technical_6_5_seven}
    \end{eqnarray}
    Finally, conditional variances $(\sigma_{j,l}')^2 \eqdef \EE_{\xi_{j}^l}\left[\frac{16\gamma^2}{n^2}\exp\left(-\gamma\mu(2T-2-2l)\right) \langle \delta_j^l, \omega_{j,l}^u \rangle^2\right]$ of the summands are bounded:
    \begin{eqnarray}
        (\sigma_{j,l}')^2 &\leq& \EE_{\xi_{j}^l}\left[\frac{16\gamma^2 }{n^2}\exp\left(-\gamma\mu(2T-2-2l)\right)\|\delta_j^l\|^2\cdot \|\omega_{j,l}^u\|^2\right] \notag\\
        &\overset{\eqref{eq:SGDA_zeta_delta_bound_str_mon}}{\leq}& \frac{4\gamma^2 V\exp\left(-\gamma\mu\left(2T-2-l\right)\right)}{n^2} \EE_{\xi_{j}^l}\left[\|\omega_{j,l}^u\|^2\right]. \label{eq:str_mon_thm_SGDA_technical_7_seven}
    \end{eqnarray}
    Applying Bernstein's inequality (Lemma~\ref{lem:Bernstein_ineq}) with $X_{j,l} = \frac{4\gamma}{n}\exp\left(-\gamma\mu(T-1-l)\right) \langle \delta_j^l, \omega_{j,l}^u \rangle$, constant $c$ defined in \eqref{eq:str_mon_thm_SGDA_technical_7_seven}, $b = \frac{\exp\left(-\gamma\mu T\right)V}{8}$, $G = \frac{\exp\left(-2\gamma\mu T\right)V^2}{384\ln\frac{48n(K+1)}{\beta}}$, we get
    \begin{eqnarray*}
        \PP\left\{|\circledSix'| > \frac{\exp\left(-\gamma\mu T\right)V}{8} \text{ and } \sum\limits_{l=0}^{T-1}\sum\limits_{j=2}^n (\sigma_{j,l}')^2 \leq \frac{\exp\left(-2\gamma\mu T\right)V^2}{384\ln\frac{48n(K+1)}{\beta}}\right\} &\leq& 2\exp\left(- \frac{b^2}{2G + \nicefrac{2cb}{3}}\right)\\
        &=& \frac{\beta}{24n(K+1)}.
    \end{eqnarray*}
    The above is equivalent to $\PP\{E_{\circledSix'}\} \geq 1 - \frac{\beta}{24n(K+1)}$ for
    \begin{equation}
        E_{\circledSix'} = \left\{\text{either } \sum\limits_{l=0}^{T-1}\sum\limits_{j=2}^n (\sigma_{j,l}')^2 > \frac{\exp\left(-2\gamma\mu T\right)V^2}{384\ln\frac{48n(K+1)}{\beta}} \text{ or } |\circledSix'| \leq \frac{\exp\left(-\gamma\mu T\right)V}{8}\right\}. \label{eq:bound_6_str_mon_SGDA_seven}
    \end{equation}
    Moreover, $E_{T-1}$ implies
    \begin{eqnarray}
        \sum\limits_{l=0}^{T-1}\sum\limits_{j=2}^n (\sigma_{j,l}')^2 &\overset{\eqref{eq:str_mon_thm_SGDA_technical_7_seven}}{\leq}& \frac{4\gamma^2 V\exp\left(-\gamma\mu\left(2T-2\right)\right)}{n^2} \sum\limits_{l=0}^{T-1}\exp\left(\gamma\mu l\right) \sum\limits_{i=1}^n \EE_{\xi_{i}^l}\left[\|\omega_{i,l}^u\|^2\right]\notag\\
        &\overset{\eqref{eq:variance_omega_prox_clipped_2}, T \leq K+1}{\leq}&  \frac{72\gamma^2 V\exp\left(-\gamma\mu\left(2T-2\right)\right)\sigma^\alpha}{n} \sum\limits_{l=0}^{T-1}\exp\left(\gamma\mu l\right) \lambda_l^{2-\alpha}\notag\\
        &\overset{\eqref{eq:lambda_SGDA_2_prox_clipped}}{\leq}& \frac{72\gamma^\alpha V^{2 - \frac{\alpha}{2}}\exp\left(-2\gamma\mu T\right)\sigma^\alpha}{(64\sqrt{2})^{2-\alpha} n^{\alpha-1} \ln^{2-\alpha}\frac{48n(K+1)}{\beta}} \sum\limits_{l=0}^{T-1}\exp\left(\frac{\gamma\mu l \alpha}{2}\right)\notag\\
        &\leq& \frac{72\gamma^\alpha V^{2 - \frac{\alpha}{2}}\exp\left(-2\gamma\mu T\right)\sigma^\alpha (K+1) \exp\left(\frac{\gamma\mu K \alpha}{2}\right)}{(64\sqrt{2})^{2-\alpha} n^{\alpha-1} \ln^{2-\alpha}\frac{48n(K+1)}{\beta}}\notag\\
        &\overset{\eqref{eq:gamma_SGDA_2_prox_clipped}}{\leq}& \frac{\exp\left(-2\gamma\mu T\right)V^2}{384\ln\frac{48n(K+1)}{\beta}}.\label{eq:bound_6_variances_str_mon_SGDA_seven}
    \end{eqnarray}

That is, we derive the upper bounds for  $\circledOne, \circledTwo, \circledThree, \circledFour, \circledFive, \circledSix$. More precisely, $E_{T-1}$ implies
    \begin{gather*}
        V_T \overset{\eqref{eq:SGDA_2_prox_clipped_12345_bound}}{\leq} \exp\left(\gamma\mu T\right)V + \circledOne + \circledTwo + \circledThree + \circledFour + \circledFive + \circledSix,\\
         \circledSix \overset{\eqref{eq:SGDA_extra_sums_seven_distributed_str_mon}}{=} \circledSix' + \frac{4\gamma}{n}  \sum\limits_{j=2}^n \exp\left(-\gamma\mu(T-1-t)\right)\left\langle \frac{\gamma}{n}\sum\limits_{i=1}^{j-1} \omega_{i,T-1}^u - \delta_j^{T-1}, \omega_{j,T-1}^u \right\rangle,\\
         \circledTwo \overset{\eqref{eq:bound_2_SGDA_2_prox_clipped}}{\leq} \frac{\exp\left(-\gamma\mu T\right) V}{8},\quad
         \circledFour \overset{\eqref{eq:bound_6_SGDA_2_prox_clipped}}{\leq} \frac{\exp\left(-\gamma\mu T\right) V}{8},\\ \circledFive \overset{\eqref{eq:bound_7_SGDA_2_prox_clipped}}{\leq} \frac{\exp\left(-\gamma\mu T\right) V}{8}, \\
         \sum\limits_{l=0}^{T-1}\sum\limits_{i=1}^n\sigma_{i,l}^2 \overset{\eqref{eq:bound_1_variances_SGDA_2_prox_clipped}}{\leq} \frac{\exp\left(-2\gamma\mu T\right)V^2}{384\ln\frac{48n(K+1)}{\beta}},\quad 
         \sum\limits_{l=0}^{T-1}\sum\limits_{j=2}^n \widetilde \sigma_{j,l}^2 \overset{\eqref{eq:bound_5_variances_SGDA_2_prox_clipped}}{\leq} \frac{\exp\left(-2\gamma\mu T\right)V^2}{384\ln\frac{48n(K+1)}{\beta}},\\ \sum\limits_{l=0}^{T-1}\sum\limits_{j=2}^n (\sigma_{j,l}')^2 \overset{\eqref{eq:bound_6_variances_str_mon_SGDA_seven}}{\leq} \frac{\exp\left(-2\gamma\mu T\right)V^2}{384\ln\frac{48n(K+1)}{\beta}}.
    \end{gather*}
    In addition, we also establish (see \eqref{eq:bound_1_SGDA_2_prox_clipped}, \eqref{eq:bound_5_SGDA_2_prox_clipped}, \eqref{eq:bound_6_str_mon_SGDA_seven}, and our induction assumption)
    \begin{gather*}
        \PP\{E_{T-1}\} \geq 1 - \frac{(T-1)\beta}{K+1},\\ \PP\{E_{\circledOne}\} \geq 1 - \frac{\beta}{24n(K+1)}, \quad \PP\{E_{\circledThree}\} \geq 1 - \frac{\beta}{24n(K+1)},\quad \PP\{E_{\circledSix'}\} \geq 1 - \frac{\beta}{24n(K+1)},
    \end{gather*}
    where
    \begin{eqnarray*}
        E_{\circledOne} &=& \left\{\text{either } \sum\limits_{l=0}^{T-1}\sum\limits_{i=1}^n\sigma_{i,l}^2 > \frac{\exp\left(-2\gamma\mu T\right)V^2}{384\ln\frac{48n(K+1)}{\beta}} \text{ or } |\circledOne| \leq \frac{\exp\left(-\gamma\mu T\right)V}{8}\right\},\\
        E_{\circledThree} &=& \left\{\text{either } \sum\limits_{l=0}^{T-1}\sum\limits_{j=2}^n\widetilde \sigma_{j,l}^2 > \frac{\exp\left(-2\gamma\mu T\right)V^2}{384\ln\frac{48n(K+1)}{\beta}} \text{ or } |\circledThree| \leq \frac{\exp\left(-\gamma\mu T\right)V}{8}\right\},\\
        E_{\circledSix'} &=& \left\{\text{either } \sum\limits_{l=0}^{T-1}\sum\limits_{j=2}^n (\sigma_{j,l}')^2 > \frac{\exp\left(-2\gamma\mu T\right)V^2}{384\ln\frac{48n(K+1)}{\beta}} \text{ or } |\circledSix'| \leq \frac{\exp\left(-\gamma\mu T\right)V}{8}\right\}.
    \end{eqnarray*}
    Therefore, probability event $E_{T-1} \cap E_{\circledOne}  \cap E_{\circledThree} \cap E_{\circledSix'}$ implies
    \begin{eqnarray}
        V_T &\leq& \exp\left(-\gamma\mu T\right)V \underbrace{\left(1 + \frac{1}{8} + \frac{1}{8} + \frac{1}{8} + \frac{1}{8} + \frac{1}{8} + \frac{1}{8}\right)}_{< 2} \notag\\
        &&\quad + \frac{4\gamma}{n}  \sum\limits_{j=2}^n \exp\left(-\gamma\mu (T-1-t)\right)\left\langle \frac{\gamma}{n}\sum\limits_{i=1}^{j-1} \omega_{i,T-1}^u - \delta_j^{T-1}, \omega_{j,T-1}^u \right\rangle. \label{eq:SGDA_new}
    \end{eqnarray}

To finish the proof, we need to show that  $\frac{\gamma}{n}\sum\limits_{i=1}^{j-1} \omega_{i,T-1}^u = \delta_j^{T-1}$ with high probability. In particular, we consider probability event $\widetilde{E}_{T-1,j}$ defined as follows: inequalities
    \begin{equation}
         \left\|\frac{\gamma}{n}\sum\limits_{i=1}^{r-1} \omega_{i,T-1}^u\right\| \leq \exp\left(-\frac{\gamma\mu (T-1)}{2}\right)\frac{\sqrt{V}}{2} \notag
    \end{equation}
    hold for $r = 2, \ldots, j$ simultaneously. We want to show that $\PP\{E_{T-1} \cap \widetilde{E}_{T-1,j}\} \geq 1 - \frac{(T-1)\beta}{K+1} - \frac{j\beta}{8n(K+1)}$ for all $j = 2, \ldots, n$. For $j = 2$ the statement is trivial since
    \begin{eqnarray*}
        \left\|\frac{\gamma}{n} \omega_{1,T-1}^u\right\| &\overset{\eqref{eq:omega_magnitude_prox_clipped_2}}{\leq}& \frac{2\gamma\lambda_{T-1}}{n} \leq \exp\left(-\frac{\gamma\mu (T-1)}{2}\right)\frac{\sqrt{V}}{2}.
    \end{eqnarray*}
    Next, we assume that the statement holds for some $j = m-1 < n$, i.e.,  $\PP\{E_{T-1}\cap\widetilde{E}_{T-1,m-1}\} \geq 1 - \frac{(T-1)\beta}{K+1} - \frac{(m-1)\beta}{8n(K+1)}$. Our goal is to prove that $\PP\{E_{T-1}\cap\widetilde{E}_{T-1,m}\} \geq 1 - \frac{(T-1)\beta}{K+1} - \frac{m\beta}{8n(K+1)}$. First, we consider $\left\|\frac{\gamma}{n}\sum\limits_{i=1}^{m-1} \omega_{i,T-1}^u\right\|$:
    \begin{eqnarray}
        \left\|\frac{\gamma}{n}\sum\limits_{i=1}^{m-1} \omega_{i,T-1}^u\right\| &=& \sqrt{\frac{\gamma^2}{n^2}\left\|\sum\limits_{i=1}^{m-1} \omega_{i,T-1}^u\right\|^2} \notag\\
        &=& \sqrt{\frac{\gamma^2}{n^2}\sum\limits_{i=1}^{m-1}\|\omega_{i,T-1}^u\|^2 + \frac{2\gamma}{n}\sum\limits_{i=1}^{m-1}\left\langle \frac{\gamma}{n}\sum\limits_{r=1}^{i-1}\omega_{r,T-1}^u , \omega_{i,T-1}^u \right\rangle} \notag\\
        &\leq& \sqrt{\frac{\gamma^2}{n^2}\sum\limits_{t=0}^{T-1}\exp\left(-\gamma\mu(T-1-t)\right)\sum\limits_{i=1}^{m-1}\|\omega_{i,t}^u\|^2 + \frac{2\gamma}{n}\sum\limits_{i=1}^{m-1}\left\langle \frac{\gamma}{n}\sum\limits_{r=1}^{i-1}\omega_{r,T-1}^u , \omega_{i,T-1}^u \right\rangle}. \notag
    \end{eqnarray}

    Next, we introduce a new notation:
    \begin{gather*}
        \rho_{i,T-1}' = \begin{cases}
            \frac{\gamma}{n}\sum\limits_{r=1}^{i-1}\omega_{r,T-1}^u,& \text{if } \left\|\frac{\gamma}{n}\sum\limits_{r=1}^{i-1}\omega_{r,T-1}^u\right\| \leq \exp\left(-\frac{\gamma\mu (T-1)}{2}\right)\frac{\sqrt{V}}{2},\\
            0,& \text{otherwise}
        \end{cases}
    \end{gather*}
    for $i = 1,\ldots,m-1$. By definition, we have
    \begin{equation}
         \|\rho_{i,T-1}'\| \leq \exp\left(-\frac{\gamma\mu (T-1)}{2}\right)\frac{\sqrt{V}}{2} \label{eq:SGDA_bound_rho_str_mon}
    \end{equation}
    for $i = 1,\ldots,m-1$. Moreover, $\widetilde{E}_{T-1,m-1}$ implies $\rho_{i,T-1}' = \frac{\gamma}{n}\sum\limits_{r=1}^{i-1}\omega_{r,T-1}^u$ for $i = 1,\ldots,m-1$ and
    \begin{eqnarray}
        \left\|\frac{\gamma}{n}\sum\limits_{i=1}^{m-1} \omega_{i,l}^u\right\| &\leq& \sqrt{\circledThree + \circledFour + \circledSeven}, \notag
    \end{eqnarray}
    where
    \begin{gather*}
        \circledSeven =  \frac{2\gamma}{n}\sum\limits_{i=1}^{m-1}\left\langle \rho_{i,T-1}' , \omega_{i,T-1}^u \right\rangle.
    \end{gather*}
    It remains to estimate $\circledNine$.

\paragraph{Upper bound for $\circledSeven$.} To estimate this sum, we will use Bernstein's inequality. The summands have conditional expectations equal to zero:
    \begin{equation*}
        \EE_{\xi_{i}^{T-1}}\left[\frac{2\gamma}{n} \langle \rho_{i,T-1}', \omega_{i,T-1}^u \rangle\right] = \frac{2\gamma}{n} \left\langle \rho_{i,T-1}', \EE_{\xi_{i}^{T-1}}[\omega_{i,T-1}^u] \right\rangle = 0.
    \end{equation*}
    Thus, sequence $\left\{\frac{2\gamma}{n}\langle \rho_{i,T-1}', \omega_{i,T-1}^u \rangle\right\}_{i = 1}^{n}$ is a martingale difference sequence. Next, the summands are bounded:
    \begin{eqnarray}
        \left|\frac{2\gamma}{n} \langle \rho_{i,T-1}', \omega_{i,T-1}^u \rangle \right| &\leq& \frac{2\gamma}{n}\|\rho_{i,T-1}'\|\cdot \|\omega_{i,T-1}^u\|\notag\\
        &\overset{\eqref{eq:SGDA_bound_rho_str_mon},\eqref{eq:omega_magnitude_prox_clipped_2}}{\leq}& \frac{2\sqrt{V}\gamma \exp\left(-\frac{\gamma\mu(T-1)}{2}\right)}{n} \lambda_{T-1}\notag\\
        &\overset{\eqref{eq:lambda_SGDA_2_prox_clipped}}{=}& \frac{\exp\left(-\gamma\mu T\right)V}{32\sqrt{2}\ln\frac{48n(K+1)}{\beta}} \notag\\
        &\leq& \frac{\exp\left(-\gamma\mu T\right)V}{8\ln\frac{48n(K+1)}{\beta}} \eqdef c. \label{eq:str_mon_thm_SGDA_technical_6_5_eight'}
    \end{eqnarray}
    Finally, conditional variances $(\widetilde\sigma_{i,T-1}')^2 \eqdef \EE_{\xi_{i}^{T-1}}\left[\frac{4\gamma^2}{n^2} \langle \rho_{i,T-1}', \omega_{i,T-1}^u \rangle^2\right]$ of the summands are bounded:
    \begin{eqnarray}
        (\widetilde\sigma_{i,T-1}')^2 &\leq& \EE_{\xi_{i}^{T-1}}\left[\frac{4\gamma^2 }{n^2}\|\rho_{i,T-1}'\|^2\cdot \|\omega_{i,T-1}^u\|^2\right] \notag\\
        &\overset{\eqref{eq:SGDA_bound_rho_str_mon}}{\leq}& \frac{\gamma^2 V\exp\left(-\gamma\mu(T-1)\right)}{n^2} \EE_{\xi_{i}^{T-1}}\left[\|\omega_{i,T-1}^u\|^2\right]. \label{eq:str_mon_thm_SGDA_technical_7_eight'}
    \end{eqnarray}
    Applying Bernstein's inequality (Lemma~\ref{lem:Bernstein_ineq}) with $X_{i} = \frac{2\gamma}{n}\langle \rho_{i,T-1}', \omega_{i,T-1}^u \rangle$, constant $c$ defined in \eqref{eq:str_mon_thm_SGDA_technical_6_5_eight'}, $b = \frac{\exp\left(-\gamma\mu T\right)V}{8}$, $G = \frac{\exp\left(-2\gamma\mu T\right)V^2}{384\ln\frac{48n(K+1)}{\beta}}$, we get
    \begin{eqnarray*}
        \PP\left\{|\circledSeven| > \frac{\exp\left(-\gamma\mu T\right)V}{8} \text{ and } \sum\limits_{i=1}^n(\widetilde\sigma_{i,T-1}')^2 \leq \frac{\exp\left(-2\gamma\mu T\right)V^2}{384\ln\frac{48n(K+1)}{\beta}}\right\} &\leq& 2\exp\left(- \frac{b^2}{2G + \nicefrac{2cb}{3}}\right)\\
        &=& \frac{\beta}{24n(K+1)}.
    \end{eqnarray*}
    The above is equivalent to $\PP\{E_{\circledSeven}\} \geq 1 - \frac{\beta}{24n(K+1)}$ for
    \begin{equation}
        E_{\circledSeven} = \left\{\text{either } \sum\limits_{i=1}^n(\widetilde\sigma_{i,T-1}')^2 > \frac{\exp\left(-2\gamma\mu T\right)V^2}{384\ln\frac{48n(K+1)}{\beta}} \text{ or } |\circledSeven| > \frac{\exp\left(-\gamma\mu T\right)V}{8}\right\}. \label{eq:bound_1_str_mon_SGDA_eight'}
    \end{equation}
    Moreover, $E_{T-1}$ implies
    \begin{eqnarray}
        \sum\limits_{i=1}^n(\widetilde\sigma_{i,T-1}')^2 &\overset{\eqref{eq:str_mon_thm_SGDA_technical_7_eight'}}{\leq}& \frac{\gamma^2 V\exp\left(-\gamma\mu\left(T-1\right)\right)}{n^2} \sum\limits_{i=1}^n \EE_{\xi_{i}^T-1}\left[\|\omega_{i,T-1}^u\|^2\right]\notag\\
        &\overset{\eqref{eq:variance_omega_prox_clipped_2}}{\leq}&  \frac{18\gamma^2 V\exp\left(-\gamma\mu\left(T-1\right)\right)\sigma^\alpha}{n} \lambda_{T-1}^{2-\alpha}\notag\\
        &\overset{\eqref{eq:lambda_SGDA_2_prox_clipped}}{\leq}& \frac{18\gamma^\alpha V^{2 - \frac{\alpha}{2}}\exp\left(-\gamma\mu (T-1)\right)\sigma^\alpha}{(64\sqrt{2})^{2-\alpha} n^{\alpha-1} \ln^{2-\alpha}\frac{48n(K+1)}{\beta}} \exp\left(\frac{\gamma\mu (T-1) \alpha}{4}\right)\notag\\
        &\overset{T-1 \leq K}{\leq}& \frac{18\gamma^\alpha V^{2 - \frac{\alpha}{2}}\exp\left(-\gamma\mu (T-1)\right)\sigma^\alpha \exp\left(\frac{\gamma\mu K \alpha}{2}\right)}{(64\sqrt{2})^{2-\alpha} n^{\alpha-1} \ln^{2-\alpha}\frac{48n(K+1)}{\beta}}\notag\\
        &\overset{\eqref{eq:gamma_SGDA_2_prox_clipped}}{\leq}& \frac{\exp\left(-2\gamma\mu T\right)V^2}{384\ln\frac{48n(K+1)}{\beta}}.\label{eq:bound_1_variances_str_mon_SGDA_eight'}
    \end{eqnarray}
Putting all together we get that $E_{T-1}\cap \widetilde{E}_{T-1,m-1}$ implies
    \begin{gather*}
        \left\|\frac{\gamma}{n}\sum\limits_{i=1}^{m-1} \omega_{i,T-1}^u\right\| \leq \sqrt{\circledThree + \circledFour + \circledSeven},\\ \circledFour \overset{\eqref{eq:bound_6_SGDA_2_prox_clipped}}{\leq} \frac{\exp\left(-\gamma\mu T\right)V}{8},\quad
        \sum\limits_{l=0}^{T-1}\sum\limits_{i=1}^n\widetilde\sigma_{i,l}^2 \overset{\eqref{eq:bound_5_variances_SGDA_2_prox_clipped}}{\leq} \frac{\exp\left(-2\gamma\mu T\right)V}{384\ln\frac{48n(K+1)}{\beta}},\\
        \sum\limits_{i=1}^{m-1}(\widetilde\sigma_{i,T-1}')^2 \leq \frac{\exp\left(-2\gamma\mu T\right)V^2}{384\ln\frac{48n(K+1)}{\beta}}.
    \end{gather*}
In addition, we also establish (see \eqref{eq:bound_5_SGDA_2_prox_clipped}, \eqref{eq:bound_1_str_mon_SGDA_eight'} and our induction assumption)
    \begin{gather*}
        \PP\{E_{T-1}\cap \widetilde{E}_{T-1,m-1}\} \geq 1 - \frac{(T-1)\beta}{K+1} - \frac{(m-1)\beta}{8n(K+1)},\\
        \PP\{E_{\circledThree}\} \geq 1 - \frac{\beta}{24n(K+1)}, \quad \PP\{E_{\circledSeven}\} \geq 1 - \frac{\beta}{24n(K+1)}
    \end{gather*}
    where
    \begin{eqnarray}
        E_{\circledThree} &=& \left\{\text{either } \sum\limits_{l=0}^{T-1}\sum\limits_{i=1}^n\widetilde\sigma_{i,l}^2 > \frac{\exp\left(-2\gamma\mu T\right)V^2}{384\ln\frac{48n(K+1)}{\beta}} \text{ or } |\circledThree| \leq \frac{\exp\left(-\gamma\mu T\right)V}{8}\right\}, \notag\\
        E_{\circledSeven} &=& \left\{\text{either } \sum\limits_{i=1}^n(\widetilde\sigma_{i,T-1}')^2 > \frac{\exp\left(-2\gamma\mu T\right)V^2}{384\ln\frac{48n(K+1)}{\beta}} \text{ or } |\circledSeven| > \frac{\exp\left(-\gamma\mu T\right)V}{8}\right\}. \notag
    \end{eqnarray}
    Therefore, probability event $E_{T-1} \cap \widetilde{E}_{m-1} \cap E_{\circledThree} \cap E_{\circledSeven}$ implies
    \begin{gather*}
        \left\|\frac{\gamma}{n}\sum\limits_{i=1}^{m-1} \omega_{i,T-1}^u\right\| \leq \exp\left(-\frac{\gamma\mu(T-1)}{2}\right)\sqrt{V}\sqrt{\frac{1}{8} + \frac{1}{8} } \leq \frac{\exp\left(-\frac{\gamma\mu(T-1)}{2}\right)\sqrt{V}}{2}.
    \end{gather*}
This implies $\widetilde E_{T-1,m}$ and
    \begin{eqnarray*}
        \PP\{E_{T-1} \cap \widetilde{E}_{T-1,m}\} &\geq& \PP\{E_{T-1} \cap \widetilde{E}_{T-1,m-1} \cap E_{\circledSix} \cap E_{\circledSix'}\} \\
        &=& 1 - \PP\left\{\overline{E_{T-1} \cap \widetilde{E}_{T-1,m-1}}  \cup \overline{E}_{\circledSix} \cup \overline{E}_{\circledSix'}\right\}\\
        &\geq& 1 - \frac{(T-1)\beta}{K+1} - \frac{m\beta}{8n(K+1)}.
    \end{eqnarray*}
    Therefore, for all $m = 2,\ldots,n$ the statement holds and, in particular, $\PP\{E_{T-1} \cap \widetilde{E}_{T-1, n}\} \geq 1 - \frac{(T-1)\beta}{K+1} - \frac{\beta}{8(K+1)}$, i.e., \eqref{eq:induction_inequality_str_mon_SGDA_2} holds. Taking into account \eqref{eq:SGDA_new}, we conclude that $E_{T-1} \cap \widetilde{E}_{T-1, n} \cap E_{\circledOne} \cap E_{\circledThree} \cap E_{\circledFive} \cap E_{\circledEight'}$ implies
    \begin{equation*}
        V_T \leq 2\exp\left(-\gamma\mu T\right)V
    \end{equation*}
    that is equivalent to \eqref{eq:induction_inequality_prox_clipped_SGDA_2} for $t = T$. Moreover,
    \begin{eqnarray*}
        \PP\left\{E_T\right\} &\geq& \PP\left\{E_{T-1} \cap \widetilde{E}_{T-1,n} \cap E_{\circledOne} \cap E_{\circledThree} \cap E_{\circledSix'} \right\} \\
        &=& 1 - \PP\left\{\overline{E_{T-1} \cap \widetilde{E}_{n}} \cup \overline{E}_{\circledOne} \cup \overline{E}_{\circledThree} \cup \overline{E}_{\circledSix'}\right\}\\
        &=& 1 - \frac{(T-1)\beta}{K+1} - \frac{\beta}{8(K+1)} - 3\cdot \frac{\beta}{24n(K+1)} \geq 1 - \frac{T\beta}{K+1}.
    \end{eqnarray*}

    In other words, we showed that $\PP\{E_k\} \geq 1 - \nicefrac{k\beta}{(K+1)}$ for all $k = 0,1,\ldots,K+1$. For $k = K+1$ we have that with probability at least $1 - \beta$
    \begin{equation}
        \|x^{K+1} - x^*\|^2 \leq 2\exp(-\gamma\mu (K+1))V. \notag
    \end{equation}
    Finally, if 
    \begin{eqnarray*}
        \gamma &=& \min\left\{\frac{1}{4096 \ell \ln \tfrac{48n(K+1)}{\beta}}, \revision{\frac{\sqrt{n}R}{3000 \zeta_* \ln \tfrac{48n(K+1)}{\beta}},} \frac{\ln(B_K)}{\mu(K+1)}\right\}, \notag\\
        B_K &=& \max\left\{2, \left(\frac{\sqrt{2}}{3456}\right)^{\frac{2}{\alpha}}\cdot\frac{(K+1)^{\frac{2(\alpha-1)}{\alpha}}\mu^2V n^{\frac{2(\alpha-1)}{\alpha}}}{\sigma^2\ln^{\frac{2(\alpha-1)}{\alpha}}\left(\frac{48n(K+1)}{\beta}\right)\ln^2(B_K)} \right\}  \\
        &=& \cO\left(\max\left\{2, \frac{K^{\frac{2(\alpha-1)}{\alpha}}\mu^2V n^{\frac{2(\alpha-1)}{\alpha}}}{\sigma^2\ln^{\frac{2(\alpha-1)}{\alpha}}\left(\frac{nK}{\beta}\right)\ln^2\left(\max\left\{2, \frac{K^{\frac{2(\alpha-1)}{\alpha}}\mu^2V n^{\frac{2(\alpha-1)}{\alpha}}}{\sigma^2\ln^{\frac{2(\alpha-1)}{\alpha}}\left(\frac{nK}{\beta}\right)} \right\}\right)} \right\}\right)
    \end{eqnarray*}
    then with probability at least $1-\beta$
    \begin{eqnarray*}
        \|x^{K+1} - x^*\|^2 &\leq& 2\exp(-\gamma\mu (K+1))V\\
        &=& 2V\max\left\{\exp\left(-\frac{\mu(K+1)}{4096 \ell \ln \tfrac{48n(K+1)}{\beta}}\right), \revision{\exp\left(- \frac{\mu \sqrt{n} RK}{3000\zeta_* \ln \tfrac{48nK}{\beta}}\right),} \frac{1}{B_K} \right\}\\
        &\hspace{-5cm}=&\hspace{-2.5cm} \cO\left(\max\left\{R^2\exp\left(- \frac{\mu K}{\ell \ln \tfrac{nK}{\beta}}\right), \revision{R^2\exp\left(- \frac{\mu \sqrt{n} RK}{\zeta_* \ln \tfrac{nK}{\beta}}\right),} \frac{\sigma^2\ln^2B_K}{\ln^{\frac{2(1-\alpha)}{\alpha}}\left(\frac{nK}{\beta}\right)K^{\frac{2(\alpha-1)}{\alpha}}\mu^2 n^{\frac{2(\alpha-1)}{\alpha}}}\right\}\right).
    \end{eqnarray*}
    To get $\|x^{K+1} - x^*\|^2 \leq \varepsilon$ with probability $\geq 1 - \beta$, $K$ should be
    \begin{align*}
         K = \cO\Bigg(\max\Bigg\{\frac{\ell}{\mu}\ln\left(\frac{\revision{R^2}}{\varepsilon}\right)&\ln\left(\frac{n\ell}{\mu \beta}\ln\frac{\revision{R^2}}{\varepsilon}\right), \revision{\frac{\zeta_*}{\sqrt{n}R\mu}\ln\left(\frac{\revision{R^2}}{\varepsilon}\right)\ln\left(\frac{\sqrt{n}\zeta_*}{R\mu \beta}\ln\frac{\revision{R^2}}{\varepsilon}\right),}\notag\\
        &\frac{1}{n}\left(\frac{\sigma^2}{\mu^2\varepsilon}\right)^{\frac{\alpha}{2(\alpha-1)}}\ln \left(\frac{1}{\beta} \left(\frac{\sigma^2}{\mu^2\varepsilon }\right)^{\frac{\alpha}{2(\alpha-1)}}\right)\ln^{\frac{\alpha}{\alpha-1}}\left(B_\varepsilon\right)\Bigg\}\Bigg),
    \end{align*}
    where
    \begin{equation*}
        B_\varepsilon = \max\left\{2, \frac{\revision{2R^2}}{\varepsilon \ln \left(\frac{1}{\beta} \left(\frac{\sigma^2}{\mu^2\varepsilon}\right)^{\frac{\alpha}{2(\alpha-1)}}\right)}\right\}.
    \end{equation*}
    This concludes the proof.
\end{proof}

\clearpage

\section{Missing Proofs for \algname{DProx-clipped-SEG-shift}}\label{appendix:dprox_clipped_SEG_shift}

In this section, we give the complete formulations of our results for \algname{DProx-clipped-SEG-shift} and rigorous proofs. For the readers' convenience, the method's update rule is repeated below:
\begin{gather*}
    \tx^k = \prox_{\gamma\Psi}\left(x^k - \gamma \tg^k\right),\;\; \tg^k = \frac{1}{n}\sum\limits_{i=1}^n \tg_i^k,\;\; \tg_i^k = \thh_{i}^k + \tilde\Delta_i^k,\;\; \thh_i^{k+1} = \thh_i^k + \nu \tilde \Delta_i^k,\\
    x^{k+1} = \prox_{\gamma\Psi}\left(x^k - \gamma \hg^k\right),\;\; \hg^k = \frac{1}{n}\sum\limits_{i=1}^n \hg_i^k,\;\; \hg_i^k = \hh_{i}^k + \hat\Delta_i^k,\;\; \hh_i^{k+1} = \hh_i^k + \nu \hat \Delta_i^k, \\
    % \thh_i^{k+1} = \thh_i^k + \nu \tilde \Delta_i^k,\;\; \hh_i^{k+1} = \hh_i^k + \nu \hat \Delta_i^k, \label{eq:clipped_SEG_shifts} \\
    \tilde\Delta_i^k = \clip(F_{\xi_{1,i}^k}(x^k) - \thh_i^k, \lambda_k),\;\; \hat\Delta_i^k = \clip(F_{\xi_{2,i}^k}(\tx^k) - \hh_i^k, \lambda_k).
\end{gather*}

\subsection{Monotone Case}

The following lemma is the main ``optimization'' part of the analysis of \algname{DProx-clipped-SEG-shift}.

\begin{lemma}\label{lem:SEG_optimization_lemma_monotone}
    Let Assumptions \ref{as:L_Lip} and \ref{as:monotonicity} hold for $Q = B_{4n\sqrt{V}}(x^*)$, where $V \geq \|x^0 - x^*\|^2 + \frac{409600\gamma^2\ln^2\frac{48n(K+1)}{\beta}}{n^2}\sum_{i=1}^n\|F_i(x^*)\|^2$, and $0 < \gamma \leq \nicefrac{1}{\sqrt{12}L}$. If $x^k$ and $\tx^k$ lie in $B_{4n\sqrt{V}}(x^*)$ for all $k = 0,1,\ldots,K$ for some $K \geq 0$, then for all $u \in B_{4n\sqrt{V}}(x^*)$ the iterates produced by \algname{DProx-clipped-SEG-shift} satisfy
    \begin{eqnarray}
        \langle F(u), \tx^K_{\avg} - u\rangle + \Psi(\tx^K_{\avg}) - \Psi(u) &\leq& \frac{\|x^0 - u\|^2 - \|x^{K+1}-u\|^2}{2\gamma(K+1)} \notag\\
        &&\quad + \frac{\gamma}{K+1}\sum\limits_{k=0}^K (3\|\omega_k\|^2 + 4\|\theta_k\|^2) \notag\\
        &&\quad + \frac{1}{K+1}\sum\limits_{k=0}^K \langle \theta_k, x^k - u \rangle, \label{eq:optimization_lemma_SEG}\\
        \tx^K_{\avg} &\eqdef& \frac{1}{K+1}\sum\limits_{k=0}^{K}\tx^k, \label{eq:tx_avg_SEG}\\
        \theta_k &\eqdef& F(\tx^k) - \hg^k, \label{eq:theta_k_SEG}\\
        \omega_k &\eqdef& F(x^k) - \tg^k. \label{eq:omega_k_SEG}
    \end{eqnarray}
\end{lemma}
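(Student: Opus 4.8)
The plan is to reduce the averaged gap bound to a single per-iteration estimate and then telescope. Since $\Psi$ is convex and $u\mapsto\langle F(u),\cdot\rangle$ is affine, Jensen's inequality applied to $\tx^K_{\avg}=\frac{1}{K+1}\sum_{k=0}^K\tx^k$ reduces \eqref{eq:optimization_lemma_SEG} to proving, for every $u\in B_{4n\sqrt{V}}(x^*)$ and every $k$, the one-step inequality
\begin{align*}
    2\gamma\left(\langle F(u),\tx^k-u\rangle+\Psi(\tx^k)-\Psi(u)\right) &\leq \|x^k-u\|^2-\|x^{k+1}-u\|^2\\
    &\quad +2\gamma^2\left(3\|\omega_k\|^2+4\|\theta_k\|^2\right)+2\gamma\langle\theta_k,x^k-u\rangle,
\end{align*}
after which summing over $k=0,\dots,K$ and dividing by $2\gamma(K+1)$ gives the claim.

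First I would invoke Lemma~\ref{lem:prox_lemma} for both proximal steps: once for the update step $x^{k+1}=\prox_{\gamma\Psi}(x^k-\gamma\hg^k)$ with test point $y=u$, and once for the extrapolation step $\tx^k=\prox_{\gamma\Psi}(x^k-\gamma\tg^k)$ with test point $y=x^{k+1}$. Expanding the update-step inner products via $2\langle a-b,c-a\rangle=\|b-c\|^2-\|b-a\|^2-\|a-c\|^2$ produces the telescoping term $\|x^k-u\|^2-\|x^{k+1}-u\|^2-\|x^{k+1}-x^k\|^2$, the quantity $2\gamma\langle\hg^k,u-x^{k+1}\rangle$, and $-2\gamma(\Psi(x^{k+1})-\Psi(u))$. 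Writing $\hg^k=F(\tx^k)-\theta_k$ and splitting $x^{k+1}-u=(x^{k+1}-\tx^k)+(\tx^k-u)$, the crucial move is to handle $2\gamma\langle\hg^k,\tx^k-x^{k+1}\rangle+2\gamma(\Psi(\tx^k)-\Psi(x^{k+1}))$ using the extrapolation inequality; this replaces $\tg^k$ by $\hg^k-\tg^k$ and creates $2\langle\tx^k-x^k,x^{k+1}-\tx^k\rangle=\|x^{k+1}-x^k\|^2-\|\tx^k-x^k\|^2-\|x^{k+1}-\tx^k\|^2$, so the positive $\|x^{k+1}-x^k\|^2$ cancels exactly against the telescoping one.

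What remains is the cross term $2\gamma\langle\hg^k-\tg^k,\tx^k-x^{k+1}\rangle$ together with $-\|\tx^k-x^k\|^2-\|x^{k+1}-\tx^k\|^2$. Here I would use monotonicity $\langle F(\tx^k)-F(u),\tx^k-u\rangle\geq0$ (valid since $\tx^k,u\in Q$) to pass from $F(\tx^k)$ to $F(u)$ on the left, bound the cross term by Young's inequality by $\gamma^2\|\hg^k-\tg^k\|^2+\|x^{k+1}-\tx^k\|^2$ (absorbing the second square), and then expand $\hg^k-\tg^k=(F(\tx^k)-F(x^k))+\omega_k-\theta_k$. Lipschitzness gives $\|F(\tx^k)-F(x^k)\|^2\leq L^2\|\tx^k-x^k\|^2$, and the stepsize restriction $\gamma\leq\nicefrac{1}{\sqrt{12}L}$ forces $3\gamma^2L^2\leq\tfrac14$, so the resulting multiple of $\|\tx^k-x^k\|^2$ is absorbed by the surviving $-\|\tx^k-x^k\|^2$. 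Finally I would rewrite $\langle\theta_k,\tx^k-u\rangle=\langle\theta_k,x^k-u\rangle+\langle\theta_k,\tx^k-x^k\rangle$ and dispose of the last inner product by Young's inequality, spending the leftover negative $\|\tx^k-x^k\|^2$; this is precisely why the noise term is phrased with $x^k$ rather than $\tx^k$, as $x^k$ is measurable before the step's randomness, which is what the Bernstein argument in Theorem~\ref{thm:D_prox_clipped_SEG_main} requires. Collecting the Young constants leaves coefficients no larger than $6\gamma^2$ on $\|\omega_k\|^2$ and $8\gamma^2$ on $\|\theta_k\|^2$, matching the stated bound.

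The main obstacle I expect is the bookkeeping of the three squared-norm quantities $\|x^{k+1}-x^k\|^2$, $\|x^{k+1}-\tx^k\|^2$, and $\|\tx^k-x^k\|^2$: every one of them must be either exactly cancelled or absorbed by the stepsize condition, and no spurious positive square can survive. Secondarily, one must check throughout that all points at which $F$ is evaluated and all pairs entering monotonicity and Lipschitzness lie in $Q=B_{4n\sqrt{V}}(x^*)$, which is exactly what the hypothesis $x^k,\tx^k\in B_{4n\sqrt{V}}(x^*)$ guarantees.
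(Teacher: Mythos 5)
Your proposal is correct and follows essentially the same route as the paper's proof: the same two applications of the prox inequality, the same telescoping identities with exact cancellation of $\|x^{k+1}-x^k\|^2$, the same Young/Lipschitz treatment of the cross term $\gamma\langle \tg^k-\hg^k, x^{k+1}-\tx^k\rangle$ absorbed via $\gamma\le\nicefrac{1}{\sqrt{12}L}$, the same use of monotonicity and the split $\langle\theta_k,\tx^k-u\rangle=\langle\theta_k,x^k-u\rangle+\langle\theta_k,\tx^k-x^k\rangle$, and the same final coefficients $3\gamma^2\|\omega_k\|^2$ and $4\gamma^2\|\theta_k\|^2$ after summation and Jensen. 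No gaps.
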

\begin{proof}
    Since $\tx^k = \prox_{\gamma\Psi}\left(x^k - \gamma \tg^k\right)$ and $x^{k+1} = \prox_{\gamma\Psi}\left(x^k - \gamma \hg^k\right)$, we have $x^k - \gamma \tg^k - \tx^k \in \gamma \partial \Psi(\tx^k)$ and $x^k - \gamma \hg^k - x^{k+1} \in \gamma \partial\Psi(x^{k+1})$. By definition of the subgradient, we have $\forall u \in \R^d$
    \begin{eqnarray}
        \gamma\left(\Psi(\tx^{k}) - \Psi(x^{k+1})\right) &\leq& \langle \tx^k - x^k + \gamma \tg^k, x^{k+1} - \tx^k \rangle, \notag\\
        \gamma\left(\Psi(x^{k+1}) - \Psi(u)\right) &\leq& \langle x^{k+1} - x^k + \gamma \hg^k, u - x^{k+1} \rangle. \notag
    \end{eqnarray}
    Summing up the above inequalities, we get
    \begin{eqnarray}
        \gamma\left(\Psi(\tx^k) - \Psi(u)\right) &\leq& \langle \tx^k - x^k, x^{k+1} - \tx^k \rangle + \langle x^{k+1} - x^k, u - x^{k+1} \rangle \notag\\
        &&\quad + \gamma \langle \tg^k - \hg^k, x^{k+1} - \tx^k \rangle + \gamma \langle \hg^k, u - \tx^k \rangle. \label{eq:ddmvklfkdvnfkvdn}
    \end{eqnarray}
    Since
    \begin{eqnarray}
        \langle \tx^k - x^k, x^{k+1} - \tx^k \rangle &=& \frac{1}{2}\|x^{k+1} - x^k\|^2 - \frac{1}{2}\|\tx^k - x^k\|^2 - \frac{1}{2}\|x^{k+1} - \tx^k\|^2, \notag\\
        \langle x^{k+1} - x^k, u - x^{k+1} \rangle &=& \frac{1}{2}\|x^k - u\|^2 - \frac{1}{2}\|x^{k+1} - x^k\|^2 - \frac{1}{2}\|x^{k+1} - u\|^2, \notag
    \end{eqnarray}
    we can rewrite \eqref{eq:ddmvklfkdvnfkvdn} as follows
    \begin{eqnarray}
        \gamma\left(\langle F(\tx^k), \tx^k - u \rangle +  \Psi(\tx^k) - \Psi(u)\right) &\leq& \frac{1}{2}\|x^k - u\|^2 - \frac{1}{2}\|x^{k+1} - u\|^2  - \frac{1}{2}\|\tx^k - x^k\|^2 \notag\\
        &&\quad - \frac{1}{2}\|x^{k+1} - \tx^k\|^2 + \gamma \langle \tg^k - \hg^k, x^{k+1} - \tx^k \rangle\notag\\
        &&\quad + \gamma \langle \theta_k, \tx^k - u \rangle. \label{eq:shvghaxsacshdbjchjd}
    \end{eqnarray}
    Next, we upper-bound $\gamma \langle \tg^k - \hg^k, x^{k+1} - \tx^k \rangle$ using Young's inequality, stating that $\langle a, b \rangle \leq \frac{1}{2\eta}\|a\|^2 + \frac{\eta}{2}\|b\|^2$ for all $a,b\in\R^d$ and $\eta > 0$, and Jensen's inequality for the squared norm:
    \begin{eqnarray}
        \gamma \langle \tg^k - \hg^k, x^{k+1} - \tx^k \rangle &\leq& \gamma^2 \|\tg^k - \hg^k\|^2 + \frac{1}{4}\|x^{k+1} - \tx^k\|^2 \notag\\
        &=& \gamma^2\|F(x^k) - F(\tx^k) - \omega_k + \theta_k\|^2 + \frac{1}{4}\|x^{k+1} - \tx^k\|^2\notag\\
        &\leq& 3\gamma^2 \|F(x^k) - F(\tx^k)\|^2 + 3\gamma^2 \|\omega_k\|^2 + 3\gamma^2\|\theta_k\|^2 + \frac{1}{4}\|x^{k+1} - \tx^k\|^2\notag\\
        &\overset{\eqref{eq:L_Lip}}{\leq}& 3\gamma^2 L^2 \|x^k - \tx^k\|^2 + 3\gamma^2 \|\omega_k\|^2 + 3\gamma^2\|\theta_k\|^2 + \frac{1}{4}\|x^{k+1} - \tx^k\|^2 .\label{eq:njksdnjnsdkjncikds}
    \end{eqnarray}
    Plugging \eqref{eq:njksdnjnsdkjncikds} in \eqref{eq:shvghaxsacshdbjchjd}, we derive for all $u \in \R^d$
    \begin{eqnarray}
        \gamma\left(\langle F(\tx^k), \tx^k - u \rangle +  \Psi(\tx^k) - \Psi(u)\right) &\leq& \frac{1}{2}\|x^k - u\|^2 - \frac{1}{2}\|x^{k+1} - u\|^2  \notag\\
        &&\quad - \frac{1}{2}\left(1 - 6\gamma^2 L^2\right)\|\tx^k - x^k\|^2 - \frac{1}{4}\|x^{k+1} - \tx^k\|^2 \notag\\
        &&\quad + 3\gamma^2 \|\omega_k\|^2 + 3\gamma^2\|\theta_k\|^2  + \gamma \langle \theta_k, \tx^k - u \rangle. \label{eq:important_optimization_ineq_SEG}
    \end{eqnarray}
    We notice that the above inequality does not rely on monotonicity. Next, we apply monotonicity and get that for all $u \in B_{4n\sqrt{V}}(x^*)$:
    \begin{eqnarray}
        \gamma \left(\langle F(u), \tx^k - u \rangle +  \Psi(\tx^k) - \Psi(u)\right) &\leq& \frac{1}{2}\|x^k - u\|^2 - \frac{1}{2}\|x^{k+1} - u\|^2  \notag\\
        &&\quad - \frac{1}{2}\left(1 - 6\gamma^2 L^2\right)\|\tx^k - x^k\|^2 + \gamma\langle \theta^k, \tx^k - x^k \rangle\notag\\
        &&\quad + 3\gamma^2 \|\omega_k\|^2 + 3\gamma^2\|\theta_k\|^2  + \gamma \langle \theta_k, x^k - u \rangle \notag\\
        &\leq& \frac{1}{2}\|x^k - u\|^2 - \frac{1}{2}\|x^{k+1} - u\|^2  \notag\\
        &&\quad - \frac{1}{2}\left(\frac{1}{2} - 6\gamma^2 L^2\right)\|\tx^k - x^k\|^2 \notag\\
        &&\quad + 3\gamma^2 \|\omega_k\|^2 + 4\gamma^2\|\theta_k\|^2  + \gamma \langle \theta_k, x^k - u \rangle, \notag
    \end{eqnarray}
    where in the last step we apply $\gamma\langle \theta^k, \tx^k - x^k \rangle \leq \gamma^2\|\theta^k\|^2 + \frac{1}{4}\|\tx^k - x^k\|^2$. Since $\gamma \leq \nicefrac{1}{\sqrt{12}L}$, we have
    \begin{eqnarray}
        \gamma \left(\langle F(u), \tx^k - u \rangle +  \Psi(\tx^k) - \Psi(u)\right) &\leq& \frac{1}{2}\|x^k - u\|^2 - \frac{1}{2}\|x^{k+1} - u\|^2  \notag\\
        &&\quad + 3\gamma^2 \|\omega_k\|^2 + 4\gamma^2\|\theta_k\|^2  + \gamma \langle \theta_k, x^k - u \rangle, \notag
    \end{eqnarray}
    Summing up the above inequalities for $k = 0,1,\ldots, K$ and dividing both sides by $\gamma(K+1)$, we obtain
    \begin{eqnarray}
        \frac{1}{K+1}\sum\limits_{k=0}^{K+1}\left(\langle F(u), \tx^k - u \rangle +  \Psi(\tx^k) - \Psi(u)\right) &\leq& \frac{\|x^0 - u\|^2 - \|x^{K+1}-u\|^2}{2\gamma(K+1)} \notag\\
        &&\quad + \frac{\gamma}{K+1}\sum\limits_{k=0}^{K+1} (3\|\omega_k\|^2 + 4\|\theta_k\|^2) \notag\\
        &&\quad + \frac{1}{K+1}\sum\limits_{k=0}^{K+1} \langle \theta_k, x^k - u \rangle. \notag
    \end{eqnarray}
    Applying $\frac{1}{K+1}\sum\limits_{i=1}^n \langle F(u), \tx^k - u \rangle = \langle F(u), \tx^K_{\avg} - u \rangle$ and $\Psi(\tx^K_{\avg}) - \Psi(x^*) \leq \frac{1}{K+1}\sum\limits_{i=1}^n\Psi(\tx^k)$, we get the result.
\end{proof}

Next, we proceed with the full statement of our main result for \algname{DProx-clipped-SEG-shift} in the monotone case.

\begin{theorem}[Case 2 from Theorem~\ref{thm:D_prox_clipped_SEG_main}]\label{thm:D_prox_clipped_SEG_monotone_appendix}
    Let Assumptions \ref{as:bounded_alpha_moment}, \ref{as:L_Lip} and \ref{as:monotonicity} hold for $Q = B_{4n\sqrt{V}}(x^*)$, where $V \geq \|x^0 - x^*\|^2 + \frac{409600\gamma^2\ln^2 \frac{48n(K+1)}{\beta}}{n^2}\sum_{i=1}^n\|F_i(x^*)\|^2$ and
    \begin{eqnarray}
        \gamma &\leq& \min\left\{\frac{1}{1920 L \ln \frac{48n(K+1)}{\beta}}, \frac{60^{\frac{2-\alpha}{\alpha}}\sqrt{V}n^{\frac{\alpha-1}{\alpha}}}{97200^{\frac{1}{\alpha}}(K+1)^{\frac{1}{\alpha}}\sigma \ln^{\frac{\alpha-1}{\alpha}} \frac{48n(K+1)}{\beta}}\right\}, \label{eq:gamma_SEG_monotone}\\
        \lambda_k \equiv \lambda &=& \frac{n\sqrt{V}}{60\gamma \ln\frac{48n(K+1)}{\beta}}, \label{eq:lambda_SEG_monotone}\\
        \nu &=& 0 \label{eq:nu_SEG_monotone}
    \end{eqnarray}
    for some $K \geq 1$ and $\beta \in (0,1]$. Then, after $K$ iterations of \algname{DProx-clipped-SEG-shift}, the following inequality holds with probability at least $1 - \beta$:
    \begin{equation}
        \gap_{\sqrt{V}}(\tx_{\avg}^K) \leq \frac{9V}{2\gamma(K+1)} \quad \text{and}\quad \{x^k\}_{k=0}^{K+1} \subseteq B_{3\sqrt{V}}(x^*), \{\tx^k\}_{k=0}^{K+1} \subseteq B_{4n\sqrt{V}}(x^*), \label{eq:main_result_SEG_monotone}
    \end{equation}
    where $\tx_{\avg}^K$ is defined in \eqref{eq:tx_avg_SEG}. In particular, when $\gamma$ equals the minimum from \eqref{eq:gamma_SEG_monotone}, then after $K$ iterations of \algname{DProx-clipped-SEG-shift}, we have with probability at least $1-\beta$
    \begin{equation}
        \gap_{\sqrt{V}}(\tx_{\avg}^K) = \cO\left(\max\left\{\frac{LV\ln\frac{nK}{\beta}}{K}, \frac{\sigma \sqrt{V} \ln^{\frac{\alpha-1}{\alpha}}\frac{nK}{\beta}}{n^{\frac{\alpha-1}{\alpha}}K^{\frac{\alpha-1}{\alpha}}}\right\}\right), \label{eq:clipped_SEG_monotone_rate_appendix}
    \end{equation}
    i.e., to achieve $\gap_{\sqrt{V}}(\tx_{\avg}^K) \leq \varepsilon$ with probability at least $1 - \beta$ \algname{DProx-clipped-SEG-shift} needs
    \begin{equation}
        K = \cO\left(\max\left\{\frac{LV}{\varepsilon}\ln\frac{nLV}{\varepsilon\beta}, \frac{1}{n}\left(\frac{\sigma \sqrt{V}}{\varepsilon}\right)^{\frac{\alpha}{\alpha-1}}\ln \frac{\sigma \sqrt{V}}{\varepsilon\beta}\right\}\right) \label{eq:clipped_SEG_monotone_complexity_appendix}
    \end{equation}
    iterations/oracle calls per worker.
\end{theorem}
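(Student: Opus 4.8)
The plan is to follow the induction-based, high-probability template already used for \algname{DProx-clipped-SGDA-shift} in Theorem~\ref{thm:main_result_gap_prox_clipped_SGDA}, treating the update-step error $\theta_k = F(\tx^k)-\hg^k$ as the analogue of the single error sequence there, while carrying the extrapolation-step error $\omega_k = F(x^k)-\tg^k$ as an additional (and easier) second-moment contribution. The starting point is the deterministic descent estimate of Lemma~\ref{lem:SEG_optimization_lemma_monotone}: after maximizing over $u \in B_{\sqrt V}(x^*)$ and multiplying by $2\gamma(K+1)$, it suffices to show that the accumulator
\[
A_t = \max_{u\in B_{\sqrt V}(x^*)}\Big\{\|x^0-u\|^2 + 2\gamma\sum_{l=0}^{t-1}\langle x^l-u,\theta_l\rangle + 2\gamma^2\sum_{l=0}^{t-1}\big(3\|\omega_l\|^2+4\|\theta_l\|^2\big)\Big\}
\]
stays below $9V$ with high probability, since this yields both the gap bound $\gap_{\sqrt V}(\tx_{\avg}^K)\le \tfrac{9V}{2\gamma(K+1)}$ and the localization of the iterates.

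First I would define the event $E_k$ requiring, for all $t\le k$, the bound $A_t\le 9V$ together with the auxiliary martingale-control inequalities $\big\|\gamma\sum_{l=0}^{t-1}\theta_l\big\|\le\sqrt V$ and the client-wise partial-sum bounds $\big\|\gamma\sum_{i=1}^{r-1}\theta^u_{i,t-1}\big\|\le\tfrac{\sqrt V}{2}$, $\big\|\gamma\sum_{i=1}^{r-1}\omega^u_{i,t-1}\big\|\le\tfrac{\sqrt V}{2}$ for all $r\in[n]$, and then prove $\PP\{E_k\}\ge 1-\tfrac{k\beta}{K+1}$ by induction. The base case is immediate. In the inductive step, $E_{T-1}$ first forces $x^t\in B_{3\sqrt V}(x^*)$ (from $A_t\le 9V$) and then, via nonexpansiveness of $\prox$, the triangle inequality, and $\|\tg^t\|\le\lambda$ (since $\nu=0$), forces $\tx^t\in B_{4n\sqrt V}(x^*)$; the factor $n$ enters because the clipping radius $\lambda=\Theta(n\sqrt V/(\gamma A))$ is taken $n$ times larger, exactly as motivated in Lemma~\ref{lem:bias_and_variance_clip_distributed}. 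This places all iterates in $Q$, so Lemma~\ref{lem:SEG_optimization_lemma_monotone} applies.

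I would then split each error using $\omega_l=\omega^u_l+\omega^b_l$ and $\theta_l=\theta^u_l+\theta^b_l$ with per-client unbiased/biased pieces as in \eqref{eq:thm_prox_clipped_SGDA_technical_gap_4}, now built from the two independent samples $\xi_{1,i}^k$ and $\xi_{2,i}^k$. Using $E_{T-1}$ together with the Lipschitz bound I would verify $\|F_i(x^l)\|\le\lambda/2$ and $\|F_i(\tx^l)\|\le\lambda/2$ (here the $\|F_i(x^*)\|^2$ term baked into the definition of $V$ is exactly what guarantees the clipping threshold dominates the operator norm at the solution), so that Lemma~\ref{lem:bias_and_variance_clip} supplies $\|\omega^b_{i,l}\|,\|\theta^b_{i,l}\|\le 2^\alpha\sigma^\alpha/\lambda^{\alpha-1}$ and $\EE\|\omega^u_{i,l}\|^2,\EE\|\theta^u_{i,l}\|^2\le 18\lambda^{2-\alpha}\sigma^\alpha$. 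The cross term $\langle\theta_l,x^l-u\rangle$, the fluctuation parts of $\|\omega_l\|^2$ and $\|\theta_l\|^2$, the deterministic biased pieces, and the self-referential bound on $\big\|\gamma\sum_l\theta_l\big\|$ are then each controlled exactly as the circled quantities in the proof of Theorem~\ref{thm:main_result_gap_prox_clipped_SGDA}: Bernstein's inequality (Lemma~\ref{lem:Bernstein_ineq}) for the martingale pieces, using the truncated bounded surrogates $\eta_l$ (for $x^l-x^*$) and $\zeta_l$ (for $\gamma\sum_{r<l}\theta_r$), and direct summation for the biased pieces, with each contribution engineered to be an $O(1)$ fraction of $V$. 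The only genuinely new structural feature is the doubling of stochastic sequences, so I would run the nested client-level induction (the $\circledSix$ / $\delta_j^l$ argument) separately for $\theta^u$ and for $\omega^u$.

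The step I expect to be the main obstacle is the treatment of $\theta_k$: because $\tx^k$ is itself random through the extrapolation noise $\xi_{1,\cdot}^k$, one must confirm that $\theta^u_{i,k}=\EE_{\xi_{2,i}^k}[\hg_i^k]-\hg_i^k$ is a genuine martingale-difference sequence relative to the filtration that already conditions on $\tx^k$, and that the surrogate used in Bernstein for the cross term (the truncation of $x^k-u$, which is $\tx^k$-free) preserves the martingale property. The delicate quantitative point is verifying $\|F_i(\tx^k)\|\le\lambda/2$ when $\tx^k$ may lie in the \emph{larger} ball $B_{4n\sqrt V}(x^*)$ rather than $B_{3\sqrt V}(x^*)$, which forces $\gamma\ell A = O(1)$ with constants tight enough to absorb the factor $n$ in the radius; this is precisely what fixes the numerical constants in \eqref{eq:gamma_SEG_monotone}--\eqref{eq:lambda_SEG_monotone}. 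Once these bounds are assembled, summing the $O(1)$-fractions of $V$ gives $A_T\le 9V$ and $\big\|\gamma\sum_l\theta_l\big\|\le\sqrt V$ on $E_T$, closing the induction, and the complexity \eqref{eq:clipped_SEG_monotone_complexity_appendix} follows by substituting the minimal admissible $\gamma$.
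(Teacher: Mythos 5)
Your proposal is correct and follows essentially the same route as the paper's proof: the same accumulator $A_t$ (your $2\gamma^2\sum(3\|\omega_l\|^2+4\|\theta_l\|^2)$ is exactly the paper's $\gamma^2\sum(8\|\theta_l\|^2+6\|\omega_l\|^2)$), the same induction event with the auxiliary bounds on $\gamma\sum_l\theta_l$ and the client-wise partial sums, the same two-stage localization $x^t\in B_{3\sqrt V}(x^*)$ then $\tx^t\in B_{4n\sqrt V}(x^*)$, the same Bernstein-plus-truncation treatment of each term, and the doubled nested client induction for $\theta^u$ and $\omega^u$. The two delicate points you flag (the martingale property of $\theta^u_{i,k}$ conditional on $\tx^k$, and verifying $\|F_i(\tx^k)\|\le\lambda/2$ on the larger ball, where the $n$ in the radius is absorbed by the $n$ in $\lambda$ so that only $\gamma L\ln(\cdot)=O(1)$ is needed) are resolved in the paper exactly as you anticipate.
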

\begin{proof}
    The key idea behind the proof is similar to the one used in \citep{gorbunov2022clipped, sadiev2023high}: we prove by induction that the iterates do not leave some ball and the sums decrease as $\nicefrac{1}{K+1}$. To formulate the statement rigorously, we introduce probability event $E_k$ for each $k = 0,1,\ldots, K+1$ as follows: inequalities
    \begin{gather}
        \underbrace{\max\limits_{u \in B_{\sqrt{V}}(x^*)}\left\{ \|x^0 - u\|^2 + 2\gamma \sum\limits_{l = 0}^{t-1} \langle x^l - u, \theta_l \rangle + \gamma^2 \sum\limits_{l=0}^{t-1}\left(8\|\theta_l\|^2 + 6\|\omega_l\|^2\right)\right\}}_{A_t} \leq 9V, \label{eq:induction_inequality_1_SEG}\\
        \left\|\gamma\sum\limits_{l=0}^{t-1}\theta_l\right\| \leq \sqrt{V}, \label{eq:induction_inequality_2_SEG}\\
        \left\|\frac{\gamma}{n}\sum\limits_{i=1}^{r-1}\theta_{i,t-1}^u\right\| \leq \frac{\sqrt{V}}{2}, \quad \left\|\frac{\gamma}{n}\sum\limits_{i=1}^{r-1}\omega_{i,t-1}^u\right\| \leq \frac{\sqrt{V}}{2}
    \end{gather}
    hold for $t = 0,1,\ldots,k$ and $r = 1, 2, \ldots, n$ simultaneously, where
    \begin{gather}
        \theta_l = \theta_l^u + \theta_l^b,\quad  \omega_l = \omega_l^u + \omega_l^b, \label{eq:gap_thm_SEG_technical_4_full_theta_omega}\\
        \theta_l^u \eqdef \frac{1}{n}\sum\limits_{i=1}^n \theta_{i,l}^u,\quad \theta_l^b \eqdef \frac{1}{n}\sum\limits_{i=1}^n \theta_{i,l}^b,\quad \omega_l^u \eqdef \frac{1}{n}\sum\limits_{i=1}^n \omega_{i,l}^u,\quad \omega_l^b \eqdef \frac{1}{n}\sum\limits_{i=1}^n \omega_{i,l}^b, \\
        \theta_{i,l}^u \eqdef \EE_{\xi_{2,i}^l}\left[\hg_{i}^l\right] - \hg_{i}^l,\quad \theta_{i,l}^b \eqdef F_i(\tx^l) - \EE_{\xi_{2,i}^l}\left[\hg_{i}^l\right] \quad \forall\; i\in [n], \label{eq:gap_thm_SEG_technical_4}\\
        \omega_{i,l}^u \eqdef \EE_{\xi_{1,i}^l}\left[\tg_{i}^l\right] - \tg_{i}^l,\quad \omega_{i,l}^b \eqdef F_i(x^l) - \EE_{\xi_{1,i}^l}\left[\hg_{i}^l\right] \quad \forall\; i\in [n]. \label{eq:gap_thm_SEG_technical_5}
    \end{gather}

    We will prove by induction that $\PP\{E_k\} \geq  1 - \nicefrac{k\beta}{(K+1)}$ for all $k = 0,1,\ldots,K+1$. The base of induction follows immediately: for all $u \in B_{\sqrt{V}}(x^*)$ we have $\|x^0 - u\|^2 \leq 2\|x^0 - x^*\|^2 + 2\|x^* - u\|^2 \leq 4V < 9V$ and for $k = 0$ we have $\|\gamma\sum_{l=0}^{k-1} \theta_l\| = 0$, $\|\frac{\gamma}{n}\sum_{i=1}^{r-1} \theta_{i,k-1}^u\| = \|\frac{\gamma}{n}\sum_{i=1}^{r-1} \omega_{i,k-1}^u\| = 0$ since $\theta_{i,-1}^u = \omega_{i,-1}^u = 0$. Next, we assume that the statement holds for $k = T-1 \leq K$, i.e., $\PP\{E_{T-1}\} \geq  1 - \nicefrac{(T-1)\beta}{(K+1)}$. Let us show that it also holds for $k = T$, i.e., $\PP\{E_{T}\} \geq  1 - \nicefrac{T\beta}{(K+1)}$. 
    
    To proceed, we need to show that $E_{T-1}$ implies $\|x^t - x^*\| \leq 3\sqrt{V}$ for all $t = 0,1,\ldots,T$. We will use the induction argument as well. The base is already proven. Next, we assume that $\|x^t - x^*\| \leq 3\sqrt{V}$ for all $t = 0,1,\ldots,t'$ for some $t' < T$. Then 
    \begin{eqnarray}
        \|F(x^*)\| &=& \sqrt{\|F(x^*)\|^2} \leq \sqrt{\sum\limits_{i=1}^n \|F_i(x^*)\|^2} \leq \frac{n\sqrt{V}}{160\gamma \ln \frac{48n(K+1)}{\beta}} < \lambda \label{eq:njscnjdnsdcibv}
    \end{eqnarray}
    and for $t = 0,1,\ldots, t'$
    \begin{eqnarray}
        \|\tx^t - x^*\| &=& \|\prox_{\gamma \Psi}(x^t - \gamma \tg^t) - \prox_{\gamma \Psi}(x^* - \gamma F(x^*))\| \notag\\
        &\leq& \|x^t - x^* - \gamma(\tg^t - F(x^*))\| \leq \|x^t - x^*\| + \gamma \|\tg^t - F(x^*)\|\notag\\
        &\leq& \|x^t - x^*\| + \gamma(\|\tg^k\| + \|F(x^*)\|)  \overset{\eqref{eq:lambda_SEG_monotone}, \eqref{eq:njscnjdnsdcibv}}{\leq} 3\sqrt{V} + 2\gamma\lambda \leq 3\sqrt{V} + \frac{n\sqrt{V}}{30\ln\frac{48(K+1)}{\beta}} \notag\\
        &\leq& 4n\sqrt{V}. \label{eq:gap_thm_SEG_technical_1}
    \end{eqnarray}
    This means that $x^t, \tx^t \in B_{4n\sqrt{V}}(x^*)$ for $t = 0,1,\ldots, t'$ and we can apply Lemma~\ref{lem:SEG_optimization_lemma_monotone}: $E_{T-1}$ implies
    \begin{align*}
        \max_{B_{\sqrt{V}}(x^*)}\Big\{ 2\gamma(t'+1)\Big(\langle F(u), \tx^{t'}_{\avg} - u\rangle &+ \Psi(\tx^{t'}_{\avg}) - \Psi(u)\Big) + \|x^{t'+1} - u\|^2  \Big\}\\
        &\leq \max_{B_{\sqrt{V}}(x^*)}\left\{\|x^0 - u\|^2 + 2\gamma \sum\limits_{l = 0}^{t-1} \langle x^l - u, \theta_l \rangle \right\}\\
        &\quad + \gamma^2 \sum\limits_{l=0}^{t-1}\left(8\|\theta_l\|^2 + 6\|\omega_l\|^2\right)\\
        &\overset{\eqref{eq:induction_inequality_1_SEG}}{\leq} 9V
    \end{align*}
    that gives
    \begin{eqnarray*}
        \|x^{t'+1} - x^*\|^2 &\leq& \max_{B_{\sqrt{V}}(x^*)}\left\{ 2\gamma(t'+1)\left(\langle F(u), \tx^{t'}_{\avg} - u\rangle + \Psi(\tx^{t'}_{\avg}) - \Psi(u)\right) + \|x^{t'+1} - u\|^2  \right\}\\
        &\leq& 9V.
    \end{eqnarray*}
    That is, we showed that $E_{T-1}$ implies $\|x^t - x^*\| \leq 3\sqrt{V}$, $\|\tx^t - x^*\| \leq 4n\sqrt{V}$ and
    \begin{equation}
        \max_{B_{\sqrt{V}}(x^*)}\left\{ 2\gamma(t+1)\left(\langle F(u), \tx^{t}_{\avg} - u\rangle + \Psi(\tx^{t}_{\avg}) - \Psi(u)\right) + \|x^{t+1} - u\|^2  \right\} \leq 9V \label{eq:gap_thm_SEG_technical_1_5}
    \end{equation}
    for all $t = 0, 1, \ldots, T$. Before we proceed, we introduce a new notation:
    \begin{equation*}
        \eta_t = \begin{cases}x^t - x^*,& \text{if } \|x^t - x^*\| \leq 3\sqrt{V},\\ 0,& \text{otherwise,} \end{cases}
    \end{equation*}
    for all $t = 0, 1, \ldots, T$. Random vectors $\{\eta_t\}_{t=0}^T$ are bounded almost surely:
     \begin{equation}
        \|\eta_t\| \leq 3\sqrt{V}  \label{eq:gap_thm_SEG_technical_3}
    \end{equation}
    for all $t = 0, 1, \ldots, T$. In addition, $\eta_t = x^t - x^*$ follows from $E_{T-1}$ for all $t = 0, 1, \ldots, T$ and, thus, $E_{T-1}$ implies
    \begin{eqnarray}
        A_T &\overset{\eqref{eq:induction_inequality_1_SEG}}{=}& \max\limits_{u \in B_{\sqrt{V}}(x^*)}\left\{ \|x^0 - u\|^2 + 2\gamma \sum\limits_{l = 0}^{T-1} \langle x^* - u, \theta_l \rangle \right\}  + 2\gamma \sum\limits_{l = 0}^{T-1} \langle x^l - x^*, \theta_l \rangle \notag\\
        &&\quad + \gamma^2 \sum\limits_{l=0}^{T-1}\left(8\|\theta_l\|^2 + 6\|\omega_l\|^2\right)\notag\\
        &\leq& 4V + 2\gamma\max\limits_{u \in B_{\sqrt{V}}(x^*)}\left\{\left\langle x^* - u, \sum\limits_{l = 0}^{T-1}\theta_l \right\rangle \right\} + 2\gamma \sum\limits_{l = 0}^{T-1} \langle \eta_l, \theta_l \rangle + \gamma^2 \sum\limits_{l=0}^{T-1}\left(8\|\theta_l\|^2 + 6\|\omega_l\|^2\right)\notag\\
        &=& 4V + 2\gamma \sqrt{V} \left\|\sum\limits_{l = 0}^{T-1}\theta_l\right\| + 2\gamma \sum\limits_{l = 0}^{T-1} \langle \eta_l, \theta_l \rangle + \gamma^2 \sum\limits_{l=0}^{T-1}\left(8\|\theta_l\|^2 + 6\|\omega_l\|^2\right). \notag
    \end{eqnarray}

    % Next, we consider the following representation of $\theta_l$ and $\omega_l$:
    % \begin{gather}
    %     \theta_l = \theta_l^u + \theta_l^b,\quad  \omega_l = \omega_l^u + \omega_l^b, \label{eq:gap_thm_SEG_technical_4_full_theta_omega}\\
    %     \theta_l^u \eqdef \frac{1}{n}\sum\limits_{i=1}^n \theta_{i,l}^u,\quad \theta_l^b \eqdef \frac{1}{n}\sum\limits_{i=1}^n \theta_{i,l}^b,\quad \omega_l^u \eqdef \frac{1}{n}\sum\limits_{i=1}^n \omega_{i,l}^u,\quad \omega_l^b \eqdef \frac{1}{n}\sum\limits_{i=1}^n \omega_{i,l}^b, \\
    %     \theta_{i,l}^u \eqdef \EE_{\xi_{2,i}^l}\left[\hg_{i}^l\right] - \hg_{i}^l,\quad \theta_{i,l}^b \eqdef F(\tx^l) - \EE_{\xi_{2,i}^l}\left[\hg_{i}^l\right] \quad \forall\; i\in [n], \label{eq:gap_thm_SEG_technical_4}\\
    %     \omega_{i,l}^u \eqdef \EE_{\xi_{1,i}^l}\left[\tg_{i}^l\right] - \tg_{i}^l,\quad \omega_{i,l}^b \eqdef F(x^l) - \EE_{\xi_{1,i}^l}\left[\hg_{i}^l\right] \quad \forall\; i\in [n]. \label{eq:gap_thm_SEG_technical_5}
    % \end{gather}
    Using the notation from \eqref{eq:gap_thm_SEG_technical_4_full_theta_omega}-\eqref{eq:gap_thm_SEG_technical_5}, we can rewrite $\|\theta_l\|^2$ as
    \begin{eqnarray}
        \|\theta_l\|^2 &\leq&  2\|\theta_l^u\|^2 + 2\|\theta_l^b\|^2 = \frac{2}{n^2}\left\|\sum\limits_{i=1}^n \theta_{i,l}^u\right\|^2 + 2\|\theta_l^b\|^2\notag \\
        &=& \frac{2}{n^2}\sum\limits_{i=1}^n\|\theta_{i,l}^u\|^2 + \frac{4}{n^2}\sum\limits_{j=2}^n\left\langle \sum\limits_{i=1}^{j-1} \theta_{i,l}^u, \theta_{j,l}^u \right\rangle + 2\|\theta_l^b\|^2 \label{eq:SEG_sum_theta_representation}
    \end{eqnarray}
    and, similarly,
    it holds for $\|\omega_l\|^2$.
    Putting all together, we obtain that $E_{T-1}$ implies
    \begin{eqnarray}
        A_T &\leq&4V + 2\gamma \sqrt{V} \left\|\sum\limits_{l = 0}^{T-1}\theta_l\right\| + \underbrace{\frac{2\gamma}{n} \sum\limits_{l = 0}^{T-1}\sum\limits_{i=1}^n \langle \eta_l, \theta_{i,l}^u \rangle}_{\circledOne} + \underbrace{2\gamma \sum\limits_{l = 0}^{T-1} \langle \eta_l, \theta_l^b \rangle}_{\circledTwo}\notag\\
        &&\quad + \underbrace{\frac{2\gamma^2}{n^2} \sum\limits_{l=0}^{T-1} \sum\limits_{i=1}^n\left(8\EE_{\xi_{2,i}^l}\left[\|\theta_{i,l}^u\|^2\right] + 6\EE_{\xi_{1,i}^l}\left[\|\omega_{i,l}^u\|^2\right]\right)}_{\circledThree} \notag\\
        &&\quad + \underbrace{\frac{2\gamma^2}{n^2} \sum\limits_{l=0}^{T-1} \sum\limits_{i=1}^n\left(8\|\theta_{i,l}^u\|^2 + 6\|\omega_{i,l}^u\|^2 - 8\EE_{\xi_{2,i}^l}\left[\|\theta_{i,l}^u\|^2\right] - 6\EE_{\xi_{1,i}^l}\left[\|\omega_{i,l}^u\|^2\right]\right)}_{\circledFour}\notag\\
        &&\quad + \underbrace{2\gamma^2 \sum\limits_{l=0}^{T-1}\left(8\|\theta_l^b\|^2 + 6\|\omega_l^b\|^2\right)}_{\circledFive} + \underbrace{\frac{32\gamma^2}{n^2} \sum\limits_{l=0}^{T-1} \sum\limits_{j=2}^n \left\langle \sum\limits_{i=1}^{j-1} \theta_{i,l}^u, \theta_{j,l}^u \right\rangle}_{\circledSix}\notag\\
        &&\quad + \underbrace{\frac{24\gamma^2}{n^2} \sum\limits_{l=0}^{T-1} \sum\limits_{j=2}^n \left\langle \sum\limits_{i=1}^{j-1} \omega_{i,l}^u, \omega_{j,l}^u \right\rangle}_{\circledSeven}.\label{eq:gap_thm_SEG_technical_6}
    \end{eqnarray}
    To finish the proof, it remains to estimate $2\gamma \sqrt{V} \left\|\sum_{l = 0}^{T-1}\theta_l\right\|, \circledOne, \circledTwo, \circledThree, \circledFour, \circledFive, \circledSix, \circledSeven$ with high probability. More precisely, the goal is to prove that $2\gamma \sqrt{V} \left\|\sum_{l = 0}^{T-1}\theta_l\right\| + \circledOne + \circledTwo + \circledThree + \circledFour + \circledFive + \circledSix + \circledSeven \leq 5V$ with high probability. Before we proceed, we need to derive several useful inequalities related to $\theta_{i,l}^u, \omega_{i,l}^u, \theta_{l}^b, \omega_{l}^b$. First of all, we have
    \begin{equation}
        \|\theta_{i,l}^u\| \leq 2\lambda,\quad \|\omega_{i,l}^u\| \leq 2\lambda \label{eq:theta_omega_magnitude}
    \end{equation}
    by definition of the clipping operator. Next, probability event $E_{T-1}$ implies
    \begin{eqnarray*}
        \|F_i(x^l)\| &\leq& \|F_i(x^l) - F_i(x^*)\| + \|F_i(x^*)\| \overset{\eqref{eq:L_Lip}}{\leq} L\|x^l - x^*\| + \sqrt{\sum\limits_{i=1}^n \|F_i(x^*)\|^2} \\
        &\leq& 3L\sqrt{V} + \frac{n\sqrt{V}}{160\gamma \ln \frac{48n(K+1)}{\beta}} \overset{\eqref{eq:gamma_SEG_monotone}}{\leq} \frac{n\sqrt{V}}{120\gamma \ln\tfrac{48n(K+1)}{\beta}} \overset{\eqref{eq:lambda_SEG_monotone}}{=} \frac{\lambda}{2},\\
        \|F_i(\tx^l)\| &\leq& \|F_i(\tx^l) - F_i(x^*)\| + \|F_i(x^*)\| \overset{\eqref{eq:L_Lip}}{\leq} L\|\tx^l - x^*\| + \sqrt{\sum\limits_{i=1}^n \|F_i(x^*)\|^2} \\
        &\leq& 4Ln\sqrt{V} + \frac{n\sqrt{V}}{160\gamma \ln \frac{48n(K+1)}{\beta}} \overset{\eqref{eq:gamma_SEG_monotone}}{\leq} \frac{n\sqrt{V}}{120\gamma \ln\tfrac{48n(K+1)}{\beta}} \overset{\eqref{eq:lambda_SEG_monotone}}{=} \frac{\lambda}{2}
    \end{eqnarray*}
    for $l = 0,1,\ldots,T-1$ and $i\in [n]$. Therefore, Lemma~\ref{lem:bias_and_variance_clip} and $E_{T-1}$ imply
    \begin{gather}
        \left\|\theta_l^b\right\| \leq \frac{1}{n}\sum\limits_{i=1}^n \|\theta_{i,l}^b\| \leq \frac{2^\alpha\sigma^\alpha}{\lambda^{\alpha-1}},\quad \left\|\omega_l^b\right\| \leq \frac{1}{n}\sum\limits_{i=1}^n \|\omega_{i,l}^b\| \leq \frac{2^\alpha\sigma^\alpha}{\lambda^{\alpha-1}}, \label{eq:bias_theta_omega}\\
        % \EE_{\xi_{2,i}^l}\left[\left\|\theta_{i,l}\right\|^2\right] \leq 18 \lambda^{2-\alpha}\sigma^\alpha,\quad \EE_{\xi_1^l}\left[\left\|\omega_l\right\|^2\right] \leq 18 \lambda^{2-\alpha}\sigma^\alpha, \label{eq:distortion_theta_omega}\\
        \EE_{\xi_{2,i}^l}\left[\left\|\theta_{i,l}^u\right\|^2\right] \leq 18 \lambda^{2-\alpha}\sigma^\alpha,\quad \EE_{\xi_{1,i}^l}\left[\left\|\omega_{i,l}^u\right\|^2\right] \leq 18 \lambda^{2-\alpha}\sigma^\alpha, \label{eq:variance_theta_omega}
    \end{gather}
    for all $l = 0,1, \ldots, T-1$ and $i\in [n]$.

    \paragraph{Upper bound for $\circledOne$.} To estimate this sum, we will use Bernstein's inequality. The summands have conditional expectations equal to zero:
    \begin{equation*}
        \EE_{\xi_{2,i}^l}\left[\frac{2\gamma}{n} \langle \eta_l, \theta_{i,l}^u \rangle\right] = \frac{2\gamma}{n} \left\langle \eta_l, \EE_{\xi_{2,i}^l}[\theta_{i,l}^u] \right\rangle = 0.
    \end{equation*}
    Moreover, for all $l = 0,\ldots, T-1$ random vectors $\{\theta_{i,l}^u\}_{i=1}^n$ are independent. Thus, sequence $\left\{\frac{2\gamma}{n} \langle \eta_l, \theta_{i,l}^u \rangle\right\}_{l,i = 0,1}^{T-1,n}$ is a martingale difference sequence. Next, the summands are bounded:
    \begin{eqnarray}
        \left|\frac{2\gamma}{n}\langle \eta_l, \theta_{i,l}^u \rangle \right| \leq \frac{2\gamma}{n} \|\eta_l\|\cdot \|\theta_{i,l}^u\| \overset{\eqref{eq:gap_thm_SEG_technical_3},\eqref{eq:theta_omega_magnitude}}{\leq} \frac{12\gamma}{n} \sqrt{V}\lambda \overset{\eqref{eq:lambda_SEG_monotone}}{\leq} \frac{3V}{10\ln\frac{48n(K+1)}{\beta}} \eqdef c. \label{eq:gap_thm_SEG_technical_6_5}
    \end{eqnarray}
    Finally, conditional variances $\sigma_{i,l}^2 \eqdef \EE_{\xi_{2,i}^l}\left[\frac{4\gamma^2}{n^2} \langle \eta_l, \theta_{i,l}^u \rangle^2\right]$ of the summands are bounded:
    \begin{equation}
        \sigma_{i,l}^2 \leq \EE_{\xi_{2,i}^l}\left[\frac{4\gamma^2 }{n^2}\|\eta_l\|^2\cdot \|\theta_{i,l}^u\|^2\right] \overset{\eqref{eq:gap_thm_SEG_technical_3}}{\leq} \frac{36\gamma^2 V}{n^2} \EE_{\xi_{2,i}^l}\left[\|\theta_{i,l}^u\|^2\right]. \label{eq:gap_thm_SEG_technical_7}
    \end{equation}
    Applying Bernstein's inequality (Lemma~\ref{lem:Bernstein_ineq}) with $X_{i,l} = \frac{2\gamma}{n} \langle \eta_l, \theta_{i,l}^u \rangle$, constant $c$ defined in \eqref{eq:gap_thm_SEG_technical_6_5}, $b = \frac{3V}{10}$, $G = \frac{3V^2}{200\ln\frac{48n(K+1)}{\beta}}$, we get
    \begin{equation*}
        \PP\left\{|\circledOne| > \frac{3V}{10} \text{ and } \sum\limits_{l=0}^{T-1}\sum\limits_{i=1}^n\sigma_{i,l}^2 \leq \frac{3V^2}{200\ln\frac{48n(K+1)}{\beta}}\right\} \leq 2\exp\left(- \frac{b^2}{2G + \nicefrac{2cb}{3}}\right) = \frac{\beta}{24n(K+1)}.
    \end{equation*}
    The above is equivalent to 
    \begin{equation}
        \PP\{E_{\circledOne}\} \geq 1 - \frac{\beta}{24n(K+1)},\; \text{ for }\; E_{\circledOne} = \left\{\text{either } \sum\limits_{l=0}^{T-1}\sum\limits_{i=1}^n\sigma_{i,l}^2 > \frac{3V^2}{200\ln\frac{48n(K+1)}{\beta}} \text{ or } |\circledOne| \leq \frac{3V}{10}\right\}. \label{eq:bound_1_gap_SEG}
    \end{equation}
    Moreover, $E_{T-1}$ implies
    \begin{eqnarray}
        \sum\limits_{l=0}^{T-1}\sum\limits_{i=1}^n\sigma_{i,l}^2 &\overset{\eqref{eq:gap_thm_SEG_technical_7}}{\leq}& \frac{36\gamma^2 V}{n^2} \sum\limits_{l=0}^{T-1} \sum\limits_{i=1}^n \EE_{\xi_{2,i}^l}\left[\|\theta_{i,l}^u\|^2\right] \overset{\eqref{eq:variance_theta_omega}, T \leq K+1}{\leq} \frac{648(K+1)\gamma^2 V \lambda^{2-\alpha} \sigma^\alpha}{n} \notag \\
        &\overset{\eqref{eq:lambda_SEG_monotone}}{\leq}& \frac{648 (K+1)\gamma^{\alpha}\sigma^\alpha V^{2-\frac{\alpha}{2}}}{60^{2-\alpha} n^{\alpha-1} \ln^{2-\alpha}\frac{48n(K+1)}{\beta}} \overset{\eqref{eq:gamma_SEG_monotone}}{\leq} \frac{3V^2}{200\ln\frac{48n(K+1)}{\beta}}. \label{eq:bound_1_variances_gap_SEG}
    \end{eqnarray}

    \paragraph{Upper bound for $\circledTwo$.} Probability event $E_{T-1}$ implies
    \begin{eqnarray}
        \circledTwo &\leq& 2\gamma \sum\limits_{l=0}^{T-1}\|\eta_l\| \cdot \|\theta_l^b\| \overset{\eqref{eq:gap_thm_SEG_technical_3}, \eqref{eq:bias_theta_omega}, T \leq K+1}{\leq} \frac{6\cdot 2^\alpha(K+1)\gamma \sqrt{V}\sigma^\alpha}{\lambda^{\alpha-1}} \notag \\
        &\overset{\eqref{eq:lambda_SEG_monotone}}{=}& \frac{6\cdot 2^\alpha\cdot 60^{\alpha-1} (K+1)\gamma^\alpha \sigma^\alpha \ln^{\alpha-1}\frac{48n(K+1)}{\beta}}{n^{\alpha-1}V^{\frac{\alpha}{2}-1}} \overset{\eqref{eq:gamma_SEG_monotone}}{\leq} \frac{3V}{100}. \label{eq:bound_2_variances_gap_SEG}
    \end{eqnarray}

    \paragraph{Upper bound for $\circledThree$.} Probability event $E_{T-1}$ implies
    \begin{eqnarray}
        \frac{16\gamma^2}{n^2} \sum\limits_{l=0}^{T-1}\sum\limits_{i=1}^n\EE_{\xi_{2,i}^l}[\|\theta_{i,l}^u\|^2] &\overset{\eqref{eq:variance_theta_omega}, T \leq K+1}{\leq}& \frac{288\gamma^2 (K+1)\lambda^{2-\alpha}\sigma^\alpha}{n} \overset{\eqref{eq:lambda_SEG_monotone}}{=} \frac{288\gamma^\alpha (K+1)\sigma^\alpha V^{1-\frac{\alpha}{2}}}{60^{2-\alpha} n^{\alpha-1} \ln^{2-\alpha} \frac{48n(K+1)}{\beta}}\notag\\
        &\overset{\eqref{eq:gamma_SEG_monotone}}{\leq}& \frac{3}{100} V, \label{eq:sum_theta_squared_bound_gap_SEG}\\
        \frac{12\gamma^2}{n^2} \sum\limits_{l=0}^{T-1}\sum\limits_{i=1}^n\EE_{\xi_{1,i}^l}[\|\omega_{i,l}^u\|^2] &\overset{\eqref{eq:variance_theta_omega}, T \leq K+1}{\leq}& \frac{216\gamma^2 (K+1)\lambda^{2-\alpha}\sigma^\alpha}{n} \overset{\eqref{eq:lambda_SEG_monotone}}{=} \frac{216\gamma^\alpha (K+1)\sigma^\alpha V^{1-\frac{\alpha}{2}}}{60^{2-\alpha} n^{\alpha-1} \ln^{2-\alpha} \frac{48n(K+1)}{\beta}}\notag\\
        &\overset{\eqref{eq:gamma_SEG_monotone}}{\leq}& \frac{1}{50} V, \label{eq:sum_omega_squared_bound_gap_SEG}\\
        \circledThree &\overset{\eqref{eq:sum_theta_squared_bound_gap_SEG}, \eqref{eq:sum_omega_squared_bound_gap_SEG}}{\leq}& \frac{1}{20}V. \label{eq:bound_3_variances_gap_SEG}
    \end{eqnarray}

    \paragraph{Upper bound for $\circledFour$.} 
    To estimate this sum, we will use Bernstein's inequality. The summands have conditional expectations equal to zero:
    \begin{equation*}
        \frac{2\gamma^2}{n^2}\EE_{\xi_{1,i}^l,\xi_{2,i}^l}\left[8\|\theta_{i,l}^u\|^2 + 6\|\omega_{i,l}^u\|^2 - 8\EE_{\xi_{2,i}^l}\left[\|\theta_{i,l}^u\|^2\right] - 6\EE_{\xi_{1,i}^l}\left[\|\omega_{i,l}^u\|^2\right]\right] = 0.
    \end{equation*}
    Moreover, for all $l = 0,\ldots, T-1$ random vectors $\{\theta_{i,l}^u\}_{i=1}^n$, $\{\omega_{i,l}^u\}_{i=1}^n$ are independent. Thus, sequence $\left\{\frac{2\gamma^2}{n^2}\left(8\|\theta_{i,l}^u\|^2 + 6\|\omega_{i,l}^u\|^2 - 8\EE_{\xi_{2,i}^l}\left[\|\theta_{i,l}^u\|^2\right] - 6\EE_{\xi_{1,i}^l}\left[\|\omega_{i,l}^u\|^2\right]\right)\right\}_{l,i = 0,1}^{T-1,n}$ is a martingale difference sequence. Next, the summands are bounded:
    \begin{align}
        \frac{2\gamma^2}{n^2}\Big|8\|\theta_{i,l}^u\|^2 + 6\|\omega_{i,l}^u\|^2 &- 8\EE_{\xi_{2,i}^l}\left[\|\theta_{i,l}^u\|^2\right] - 6\EE_{\xi_{1,i}^l}\left[\|\omega_{i,l}^u\|^2\right] \Big| \notag\\
        &\leq \frac{16\gamma^2}{n^2} \left(\|\theta_{i,l}^u\|^2 + \EE_{\xi_{2,i}^l}\left[\|\theta_{i,l}^u\|^2\right]\right) + \frac{12\gamma^2}{n^2} \left(\|\omega_{i,l}^u\|^2 + \EE_{\xi_{1,i}^l}\left[\|\omega_{i,l}^u\|^2\right]\right) \notag\\
        &\overset{\eqref{eq:theta_omega_magnitude}}{\leq} \frac{224\gamma^2 \lambda^2}{n^2} \notag\\
        &\overset{\eqref{eq:lambda_SEG_monotone}}{\leq} \frac{V}{6\ln\frac{48n(K+1)}{\beta}} \eqdef c.\label{eq:gap_thm_SEG_technical_6_5_1}
    \end{align}
    Finally, conditional variances 
    $$\widetilde\sigma_{i,l}^2 \eqdef \frac{4\gamma^4}{n^4}\EE_{\xi_{1,i}^l, \xi_{2,i}^l}\left[\left|8\|\theta_{i,l}^u\|^2 + 6\|\omega_{i,l}^u\|^2 - 8\EE_{\xi_{2,i}^l}\left[\|\theta_{i,l}^u\|^2\right] - 6\EE_{\xi_{1,i}^l}\left[\|\omega_{i,l}^u\|^2\right] \right|^2\right]$$ 
    of the summands are bounded:
    \begin{eqnarray}
        \widetilde\sigma_{i,l}^2 &\overset{\eqref{eq:gap_thm_SEG_technical_6_5_1}}{\leq}& \frac{\gamma^2V}{3n^2\ln\frac{48n(K+1)}{\beta}}\EE_{\xi_{1,i}^l, \xi_{2,i}^l}\left[\left|8\|\theta_{i,l}^u\|^2 + 6\|\omega_{i,l}^u\|^2 - 8\EE_{\xi_{2,i}^l}\left[\|\theta_{i,l}^u\|^2\right] - 6\EE_{\xi_{1,i}^l}\left[\|\omega_{i,l}^u\|^2\right] \right|\right]\notag\\
        &\leq& \frac{4\gamma^2V}{3n^2\ln\frac{48n(K+1)}{\beta}} \EE_{\xi_{1,i}^l, \xi_{2,i}^l}\left[4\|\theta_{i,l}^u\|^2 + 3\|\omega_{i,l}^u\|^2 \right]. \label{eq:gap_thm_SEG_technical_7_1}
    \end{eqnarray}
    Applying Bernstein's inequality (Lemma~\ref{lem:Bernstein_ineq}) with $X_{i,l} = \frac{2\gamma^2}{n^2}\left(8\|\theta_{i,l}^u\|^2 + 6\|\omega_{i,l}^u\|^2 - 8\EE_{\xi_{2,i}^l}\left[\|\theta_{i,l}^u\|^2\right] - 6\EE_{\xi_{1,i}^l}\left[\|\omega_{i,l}^u\|^2\right]\right)$, constant $c$ defined in \eqref{eq:gap_thm_SEG_technical_6_5_1}, $b = \frac{V}{6}$, $G = \frac{V^2}{216\ln\frac{48n(K+1)}{\beta}}$, we get
    \begin{equation*}
        \PP\left\{|\circledFour| > \frac{V}{6} \text{ and } \sum\limits_{l=0}^{T-1}\sum\limits_{i=1}^n\widetilde\sigma_{i,l}^2 \leq \frac{V^2}{216\ln\frac{48n(K+1)}{\beta}}\right\} \leq 2\exp\left(- \frac{b^2}{2G + \nicefrac{2cb}{3}}\right) = \frac{\beta}{24n(K+1)}.
    \end{equation*}
    The above is equivalent to 
    \begin{equation}
        \PP\{E_{\circledFour}\} \geq 1 - \frac{\beta}{24n(K+1)},\; \text{ for }\; E_{\circledFour} = \left\{\text{either } \sum\limits_{l=0}^{T-1}\sum\limits_{i=1}^n\widetilde\sigma_{i,l}^2 > \frac{V^2}{216\ln\frac{48n(K+1)}{\beta}} \text{ or } |\circledFour| \leq \frac{V}{6}\right\}. \label{eq:bound_1_gap_SEG_1}
    \end{equation}
    Moreover, $E_{T-1}$ implies
    \begin{eqnarray}
        \sum\limits_{l=0}^{T-1}\sum\limits_{i=1}^n\widetilde\sigma_{i,l}^2 &\overset{\eqref{eq:gap_thm_SEG_technical_7_1}}{\leq}& \frac{4\gamma^2V}{3n^2\ln\frac{48n(K+1)}{\beta}} \sum\limits_{l=0}^{T-1}\sum\limits_{i=1}^n \EE_{\xi_{1,i}^l, \xi_{2,i}^l}\left[4\|\theta_{i,l}^u\|^2 + 3\|\omega_{i,l}^u\|^2 \right]\notag\\
        &\overset{\eqref{eq:variance_theta_omega}, T \leq K+1}{\leq}& \frac{168(K+1)\gamma^2 V \lambda^{2-\alpha} \sigma^\alpha}{n\ln\frac{48n(K+1)}{\beta}} \notag\\
        &\overset{\eqref{eq:lambda_SEG_monotone}}{\leq}& \frac{168(K+1)\gamma^\alpha V^{2-\frac{\alpha}{2}} \sigma^\alpha}{60^{2-\alpha}n^{\alpha-1}\ln^{3-\alpha}\frac{48n(K+1)}{\beta}}  \overset{\eqref{eq:gamma_SEG_monotone}}{\leq} \frac{V^2}{216\ln\frac{48n(K+1)}{\beta}}. \label{eq:bound_1_variances_gap_SEG_1}
    \end{eqnarray}

    \paragraph{Upper bound for $\circledFive$.} Probability event $E_{T-1}$ implies
    \begin{eqnarray}
        \circledFive &=& 2\gamma^2 \sum\limits_{l=0}^{T-1}\left(8\|\theta_l^b\|^2 + 6\|\omega_l^b\|^2\right) \overset{\eqref{eq:bias_theta_omega}, T\leq K+1}{\leq} \frac{28\cdot 2^{2\alpha}\gamma^2\sigma^{2\alpha} (K+1)}{\lambda^{2\alpha-2}}  \notag\\
        &\overset{\eqref{eq:lambda_SEG_monotone}}{=}&  \frac{28\cdot 2^{2\alpha}\cdot 60^{2\alpha - 2}\gamma^{2\alpha}\sigma^{2\alpha} (K+1) \ln^{2\alpha-2}\frac{48n(K+1)}{\beta}}{n^{2\alpha-2}V^{\alpha-1}} \overset{\eqref{eq:gamma_SEG_monotone}}{\leq} \frac{V}{6}. \label{eq:nsjcbdjhcbfjdhfbj}
    \end{eqnarray}

    \paragraph{Upper bounds for $\circledSix$ and $\circledSeven$.} These sums require more refined analysis. We introduce new vectors:
    \begin{equation}
        \zeta_{j}^l = \begin{cases}
            \frac{\gamma}{n}\sum\limits_{i=1}^{j-1} \theta_{i,l}^u,& \text{if } \left\|\frac{\gamma}{n}\sum\limits_{i=1}^{j-1} \theta_{i,l}^u\right\| \leq \frac{\sqrt{V}}{2},\\
            0,& \text{otherwise,}
        \end{cases}, \quad \delta_{j}^l = \begin{cases}
            \frac{\gamma}{n}\sum\limits_{i=1}^{j-1} \omega_{i,l}^u,& \text{if } \left\|\frac{\gamma}{n}\sum\limits_{i=1}^{j-1} \omega_{i,l}^u\right\| \leq \frac{\sqrt{V}}{2},\\
            0,& \text{otherwise,}
        \end{cases}\label{eq:SEG_zeta_delta_def}
    \end{equation}
    for all $j \in [n]$ and $l = 0,\ldots, T-1$. Then, by definition
    \begin{equation}
        \|\zeta_j^l\| \leq \frac{\sqrt{V}}{2},\quad \|\delta_j^l\| \leq \frac{\sqrt{V}}{2} \label{eq:SEG_zeta_delta_bound}
    \end{equation}
    and
    \begin{eqnarray}
        \circledSix &=& \underbrace{\frac{32\gamma}{n} \sum\limits_{l=0}^{T-1} \sum\limits_{j=2}^n \left\langle \zeta_j^l, \theta_{j,l}^u \right\rangle}_{\circledSix'} + \frac{32\gamma}{n} \sum\limits_{l=0}^{T-1} \sum\limits_{j=2}^n \left\langle \frac{\gamma}{n}\sum\limits_{i=1}^{j-1} \theta_{i,l}^u - \zeta_j^l, \theta_{j,l}^u \right\rangle, \label{eq:SEG_extra_sums_six_distributed} \\
        \circledSeven &=& \underbrace{\frac{24\gamma}{n} \sum\limits_{l=0}^{T-1} \sum\limits_{j=2}^n \left\langle \delta_j^l, \omega_{j,l}^u \right\rangle}_{\circledSeven'} + \frac{24\gamma}{n} \sum\limits_{l=0}^{T-1} \sum\limits_{j=2}^n \left\langle \frac{\gamma}{n}\sum\limits_{i=1}^{j-1} \omega_{i,l}^u - \delta_j^l, \omega_{j,l}^u \right\rangle. \label{eq:SEG_extra_sums_seven_distributed}
    \end{eqnarray}
    We also note here that $E_{T-1}$ implies
    \begin{eqnarray}
        \frac{32\gamma}{n} \sum\limits_{l=0}^{T-1} \sum\limits_{j=2}^n \left\langle \frac{\gamma}{n}\sum\limits_{i=1}^{j-1} \theta_{i,l}^u - \zeta_j^l, \theta_{j,l}^u \right\rangle &=& \frac{32\gamma}{n}  \sum\limits_{j=2}^n \left\langle \frac{\gamma}{n}\sum\limits_{i=1}^{j-1} \theta_{i,T-1}^u - \zeta_j^{T-1}, \theta_{j,T-1}^u \right\rangle, \label{eq:SEG_extra_sums_six_distributed_1}\\
        \frac{24\gamma}{n} \sum\limits_{l=0}^{T-1} \sum\limits_{j=2}^n \left\langle \frac{\gamma}{n}\sum\limits_{i=1}^{j-1} \omega_{i,l}^u - \delta_j^l, \omega_{j,l}^u \right\rangle &=& \frac{24\gamma}{n}  \sum\limits_{j=2}^n \left\langle \frac{\gamma}{n}\sum\limits_{i=1}^{j-1} \omega_{i,T-1}^u - \delta_j^{T-1}, \omega_{j,T-1}^u \right\rangle. \label{eq:SEG_extra_sums_seven_distributed_1}
    \end{eqnarray}
    
    \paragraph{Upper bound for $\circledSix'$.} To estimate this sum, we will use Bernstein's inequality. The summands have conditional expectations equal to zero:
    \begin{equation*}
    \EE_{\xi_{2,j}^l}\left[\frac{32\gamma}{n}\left\langle \zeta_j^l, \theta_{j,l}^u \right\rangle\right] = \frac{32\gamma}{n}\left\langle \zeta_j^l, \EE_{\xi_{2,j}^l}[\theta_{j,l}^u] \right\rangle = 0.
    \end{equation*}
    Moreover, for all $l = 0,\ldots, T-1$ random vectors $\{\theta_{i,l}^u\}_{i=1}^n$ are independent. Thus, sequence $\left\{\frac{32\gamma}{n}\left\langle \zeta_j^l, \theta_{j,l}^u \right\rangle\right\}_{l,j = 0,2}^{T-1,n}$ is a martingale difference sequence. Next, the summands are bounded:
    \begin{eqnarray}
        \left|\frac{32\gamma}{n}\left\langle \zeta_j^l, \theta_{j,l}^u \right\rangle \right| \leq \frac{32\gamma}{n} \left\|\zeta_j^l\right\| \cdot \|\theta_{j,l}^u\| \overset{\eqref{eq:SEG_zeta_delta_bound},\eqref{eq:theta_omega_magnitude}}{\leq} \frac{32\gamma}{n}\cdot \frac{\sqrt{V}}{2}\cdot 2\lambda \overset{\eqref{eq:lambda_SEG_monotone}}{\leq} \frac{4V}{5\ln\frac{48n(K+1)}{\beta}} \eqdef c. \label{eq:gap_thm_SEG_technical_6_5_123}
    \end{eqnarray}
    Finally, conditional variances $\widehat\sigma_{j,l}^2 \eqdef \EE_{\xi_{2,j}^l}\left[\frac{1024\gamma^2}{n^2} \langle \zeta_j^l, \theta_{j,l}^u \rangle^2\right]$ of the summands are bounded:
    \begin{equation}
        \widehat\sigma_{j,l}^2 \leq \EE_{\xi_{2,j}^l}\left[\frac{1024\gamma^2 }{n^2}\|\zeta_j^l\|^2\cdot \|\theta_{j,l}^u\|^2\right] \overset{\eqref{eq:SEG_zeta_delta_bound}}{\leq} \frac{256\gamma^2 V}{n^2} \EE_{\xi_{2,j}^l}\left[\|\theta_{j,l}^u\|^2\right]. \label{eq:gap_thm_SEG_technical_7_123}
    \end{equation}
    Applying Bernstein's inequality (Lemma~\ref{lem:Bernstein_ineq}) with $X_{i,l} = \frac{32\gamma}{n}\left\langle \zeta_j^l, \theta_{j,l}^u \right\rangle$, constant $c$ defined in \eqref{eq:gap_thm_SEG_technical_6_5_123}, $b = \frac{4V}{5}$, $G = \frac{8V^2}{75\ln\frac{48n(K+1)}{\beta}}$, we get
    \begin{equation*}
        \PP\left\{|\circledSix'| > \frac{4V}{5} \text{ and } \sum\limits_{l=0}^{T-1}\sum\limits_{j=2}^n\widehat\sigma_{i,l}^2 \leq \frac{8V^2}{75\ln\frac{48n(K+1)}{\beta}}\right\} \leq 2\exp\left(- \frac{b^2}{2G + \nicefrac{2cb}{3}}\right) = \frac{\beta}{24n(K+1)}.
    \end{equation*}
    The above is equivalent to 
    \begin{equation}
        \PP\{E_{\circledSix'}\} \geq 1 - \frac{\beta}{24n(K+1)},\; \text{ for }\; E_{\circledSix'} = \left\{\text{either } \sum\limits_{l=0}^{T-1}\sum\limits_{j=2}^n\widehat\sigma_{i,l}^2 > \frac{8V^2}{75\ln\frac{48n(K+1)}{\beta}} \text{ or } |\circledSix'| \leq \frac{4V}{5}\right\}. \label{eq:bound_6_gap_SEG}
    \end{equation}
    Moreover, $E_{T-1}$ implies
    \begin{eqnarray}
        \sum\limits_{l=0}^{T-1}\sum\limits_{j=2}^n\widehat\sigma_{j,l}^2 &\overset{\eqref{eq:gap_thm_SEG_technical_7_123}}{\leq}& \frac{256\gamma^2 V}{n^2} \sum\limits_{l=0}^{T-1} \sum\limits_{j=2}^n \EE_{\xi_{2,j}^l}\left[\|\theta_{j,l}^u\|^2\right] \overset{\eqref{eq:variance_theta_omega}, T \leq K+1}{\leq} \frac{4608(K+1)\gamma^2 V \lambda^{2-\alpha} \sigma^\alpha}{n} \notag \\
        &\overset{\eqref{eq:lambda_SEG_monotone}}{\leq}& \frac{4608 (K+1)\gamma^{\alpha}\sigma^\alpha V^{2-\frac{\alpha}{2}}}{40^{2-\alpha} n^{\alpha-1} \ln^{2-\alpha}\frac{48n(K+1)}{\beta}} \overset{\eqref{eq:gamma_SEG_monotone}}{\leq} \frac{8V^2}{75\ln\frac{48n(K+1)}{\beta}}. \label{eq:bound_6_variances_gap_SEG}
    \end{eqnarray}

    \paragraph{Upper bound for $\circledSeven'$.} To estimate this sum, we will use Bernstein's inequality. The summands have conditional expectations equal to zero:
    \begin{equation*}
    \EE_{\xi_{1,j}^l}\left[\frac{24\gamma}{n}\left\langle \delta_j^l, \omega_{j,l}^u \right\rangle\right] = \frac{24\gamma}{n}\left\langle \delta_j^l, \EE_{\xi_{1,j}^l}[\omega_{j,l}^u] \right\rangle = 0.
    \end{equation*}
    Moreover, for all $l = 0,\ldots, T-1$ random vectors $\{\omega_{i,l}^u\}_{i=1}^n$ are independent. Thus, sequence $\left\{\frac{24\gamma}{n}\left\langle \delta_j^l, \omega_{j,l}^u \right\rangle\right\}_{l,j = 0,2}^{T-1,n}$ is a martingale difference sequence. Next, the summands are bounded:
    \begin{eqnarray}
        \left|\frac{24\gamma}{n}\left\langle \delta_j^l, \omega_{j,l}^u \right\rangle \right| \leq \frac{24\gamma}{n} \left\|\delta_j^l\right\| \cdot \|\omega_{j,l}^u\| \overset{\eqref{eq:SEG_zeta_delta_bound},\eqref{eq:theta_omega_magnitude}}{\leq} \frac{24\gamma}{n}\cdot \frac{\sqrt{V}}{2}\cdot 2\lambda \overset{\eqref{eq:lambda_SEG_monotone}}{\leq} \frac{3V}{5\ln\frac{48n(K+1)}{\beta}} \eqdef c. \label{eq:gap_thm_SEG_technical_6_5_1234}
    \end{eqnarray}
    Finally, conditional variances $(\sigma_{j,l}')^2 \eqdef \EE_{\xi_{1,j}^l}\left[\frac{576\gamma^2}{n^2} \langle \delta_j^l, \omega_{j,l}^u \rangle^2\right]$ of the summands are bounded:
    \begin{equation}
        (\sigma_{j,l}')^2 \leq \EE_{\xi_{1,j}^l}\left[\frac{576\gamma^2 }{n^2}\|\delta_j^l\|^2\cdot \|\omega_{j,l}^u\|^2\right] \overset{\eqref{eq:SEG_zeta_delta_bound}}{\leq} \frac{144\gamma^2 V}{n^2} \EE_{\xi_{1,j}^l}\left[\|\omega_{j,l}^u\|^2\right]. \label{eq:gap_thm_SEG_technical_7_1234}
    \end{equation}
    Applying Bernstein's inequality (Lemma~\ref{lem:Bernstein_ineq}) with $X_{i,l} = \frac{24\gamma}{n}\left\langle \delta_j^l, \omega_{j,l}^u \right\rangle$, constant $c$ defined in \eqref{eq:gap_thm_SEG_technical_6_5_1234}, $b = \frac{3V}{5}$, $G = \frac{3V^2}{50\ln\frac{48n(K+1)}{\beta}}$, we get
    \begin{equation*}
        \PP\left\{|\circledSeven'| > \frac{3V}{5} \text{ and } \sum\limits_{l=0}^{T-1}\sum\limits_{j=2}^n(\sigma_{i,l}')^2 \leq \frac{3V^2}{50\ln\frac{48n(K+1)}{\beta}}\right\} \leq 2\exp\left(- \frac{b^2}{2G + \nicefrac{2cb}{3}}\right) = \frac{\beta}{24n(K+1)}.
    \end{equation*}
    The above is equivalent to 
    \begin{equation}
        \PP\{E_{\circledSeven'}\} \geq 1 - \frac{\beta}{24n(K+1)},\; \text{ for }\; E_{\circledSeven'} = \left\{\text{either } \sum\limits_{l=0}^{T-1}\sum\limits_{j=2}^n(\sigma_{i,l}')^2 > \frac{3V^2}{50\ln\frac{48n(K+1)}{\beta}} \text{ or } |\circledSeven'| \leq \frac{3V}{5}\right\}. \label{eq:bound_7_gap_SEG}
    \end{equation}
    Moreover, $E_{T-1}$ implies
    \begin{eqnarray}
        \sum\limits_{l=0}^{T-1}\sum\limits_{j=2}^n(\sigma_{j,l}')^2 &\overset{\eqref{eq:gap_thm_SEG_technical_7_1234}}{\leq}& \frac{144\gamma^2 V}{n^2} \sum\limits_{l=0}^{T-1} \sum\limits_{j=2}^n \EE_{\xi_{1,j}^l}\left[\|\omega_{j,l}^u\|^2\right] \overset{\eqref{eq:variance_theta_omega}, T \leq K+1}{\leq} \frac{2592(K+1)\gamma^2 V \lambda^{2-\alpha} \sigma^\alpha}{n} \notag \\
        &\overset{\eqref{eq:lambda_SEG_monotone}}{\leq}& \frac{2592 (K+1)\gamma^{\alpha}\sigma^\alpha V^{2-\frac{\alpha}{2}}}{60^{2-\alpha} n^{\alpha-1} \ln^{2-\alpha}\frac{48n(K+1)}{\beta}} \overset{\eqref{eq:gamma_SEG_monotone}}{\leq} \frac{3V^2}{50\ln\frac{48n(K+1)}{\beta}}. \label{eq:bound_7_variances_gap_SEG}
    \end{eqnarray}

    \paragraph{Upper bound for $2\gamma \sqrt{V} \left\|\sum_{l=0}^{T-1} \theta_l\right\|$.} We introduce new random vectors:
    \begin{equation*}
        \eta_l' = \begin{cases} \gamma \sum\limits_{r=0}^{l-1}\theta_r,& \text{if } \left\|\gamma \sum\limits_{r=0}^{l-1}\theta_r\right\| \leq \sqrt{V},\\ 0, & \text{otherwise} \end{cases}
    \end{equation*}
    for $l = 1, 2, \ldots, T-1$. With probability $1$ we have
    \begin{equation}
        \|\zeta_l'\| \leq \sqrt{V}.  \label{eq:gap_thm_SEG_technical_8}
    \end{equation}
   Using this and \eqref{eq:induction_inequality_2_SEG}, we obtain that $E_{T-1}$ implies
    \begin{eqnarray}
        2\gamma \sqrt{V}\left\|\sum\limits_{l = 0}^{T-1}\theta_l\right\| &=& 2\sqrt{V}\sqrt{\gamma^2\left\|\sum\limits_{l = 0}^{T-1}\theta_l\right\|^2}\notag\\
        &=& 2\sqrt{V}\sqrt{\gamma^2\sum\limits_{l=0}^{T-1}\|\theta_l\|^2 + 2\gamma\sum\limits_{l=0}^{T-1}\left\langle \gamma\sum\limits_{r=0}^{l-1} \theta_r, \theta_l \right\rangle} \notag\\
        &=& 2\sqrt{V} \sqrt{\gamma^2\sum\limits_{l=0}^{T-1}\|\theta_l\|^2 + 2\gamma \sum\limits_{l=0}^{T-1} \langle \zeta_l', \theta_l\rangle} \notag\\
        &\overset{\eqref{eq:gap_thm_SEG_technical_4}}{\leq}& 2\sqrt{V} \sqrt{\frac{\circledThree + \circledFour + \circledFive + \circledSix}{8} + \underbrace{\frac{2\gamma}{n} \sum\limits_{l=0}^{T-1}\sum\limits_{i=1}^n \langle \zeta_l', \theta_{i,l}^u\rangle}_{\circledEight} + \underbrace{2\gamma \sum\limits_{l=0}^{T-1} \langle \zeta_l', \theta_l^b}_{\circledNine}\rangle}. \label{eq:norm_sum_theta_bound_gap_SEG}
    \end{eqnarray}

    \paragraph{Upper bound for $\circledEight$.} To estimate this sum, we will use Bernstein's inequality. The summands have conditional expectations equal to zero:
    \begin{equation*}
        \EE_{\xi_{2,i}^l}\left[\frac{2\gamma}{n} \langle \zeta_l', \theta_{i,l}^u \rangle\right] = \frac{2\gamma}{n} \left\langle \zeta_l', \EE_{\xi_{2,i}^l}[\theta_{i,l}^u] \right\rangle = 0.
    \end{equation*}
    Moreover, for all $l = 0,\ldots, T-1$ random vectors $\{\theta_{i,l}^u\}_{i=1}^n$ are independent. Thus, sequence $\left\{\frac{2\gamma}{n} \langle \zeta_l', \theta_{i,l}^u \rangle\right\}_{l,i = 0,1}^{T-1,n}$ is a martingale difference sequence. Next, the summands are bounded:
    \begin{eqnarray}
        \left|\frac{2\gamma}{n}\langle \zeta_l', \theta_{i,l}^u \rangle \right| \leq \frac{2\gamma}{n} \|\zeta_l'\|\cdot \|\theta_{i,l}^u\| \overset{\eqref{eq:gap_thm_SEG_technical_8},\eqref{eq:theta_omega_magnitude}}{\leq} \frac{4\gamma}{n} \sqrt{V}\lambda \overset{\eqref{eq:lambda_SEG_monotone}}{\leq} \frac{V}{10\ln\frac{48n(K+1)}{\beta}} \eqdef c. \label{eq:gap_thm_SEG_technical_6_5_8}
    \end{eqnarray}
    Finally, conditional variances $(\widetilde\sigma_{i,l}')^2 \eqdef \EE_{\xi_{2,i}^l}\left[\frac{4\gamma^2}{n^2} \langle \zeta_l', \theta_{i,l}^u \rangle^2\right]$ of the summands are bounded:
    \begin{equation}
        (\widetilde\sigma_{i,l}')^2 \leq \EE_{\xi_{2,i}^l}\left[\frac{4\gamma^2 }{n^2}\|\zeta_l'\|^2\cdot \|\theta_{i,l}^u\|^2\right] \overset{\eqref{eq:gap_thm_SEG_technical_8}}{\leq} \frac{4\gamma^2 V}{n^2} \EE_{\xi_{2,i}^l}\left[\|\theta_{i,l}^u\|^2\right]. \label{eq:gap_thm_SEG_technical_7_8}
    \end{equation}
    Applying Bernstein's inequality (Lemma~\ref{lem:Bernstein_ineq}) with $X_{i,l} = \frac{2\gamma}{n} \langle \zeta_l', \theta_{i,l}^u \rangle$, constant $c$ defined in \eqref{eq:gap_thm_SEG_technical_6_5_8}, $b = \frac{V}{10}$, $G = \frac{V^2}{600\ln\frac{48n(K+1)}{\beta}}$, we get
    \begin{equation*}
        \PP\left\{|\circledEight| > \frac{V}{10} \text{ and } \sum\limits_{l=0}^{T-1}\sum\limits_{i=1}^n(\widetilde\sigma_{i,l}')^2 \leq \frac{V^2}{600\ln\frac{48n(K+1)}{\beta}}\right\} \leq 2\exp\left(- \frac{b^2}{2G + \nicefrac{2cb}{3}}\right) = \frac{\beta}{24n(K+1)}.
    \end{equation*}
    The above is equivalent to 
    \begin{equation}
        \PP\{E_{\circledEight}\} \geq 1 - \frac{\beta}{24n(K+1)},\; \text{ for }\; E_{\circledEight} = \left\{\text{either } \sum\limits_{l=0}^{T-1}\sum\limits_{i=1}^n(\widetilde\sigma_{i,l}')^2 > \frac{V^2}{600\ln\frac{48n(K+1)}{\beta}} \text{ or } |\circledEight| \leq \frac{V}{10}\right\}. \label{eq:bound_8_gap_SEG}
    \end{equation}
    Moreover, $E_{T-1}$ implies
    \begin{eqnarray}
        \sum\limits_{l=0}^{T-1}\sum\limits_{i=1}^n(\widetilde\sigma_{i,l}')^2 &\overset{\eqref{eq:gap_thm_SEG_technical_7_8}}{\leq}& \frac{4\gamma^2 V}{n^2} \sum\limits_{l=0}^{T-1} \sum\limits_{i=1}^n \EE_{\xi_{2,i}^l}\left[\|\theta_{i,l}^u\|^2\right] \overset{\eqref{eq:variance_theta_omega}, T \leq K+1}{\leq} \frac{72(K+1)\gamma^2 V \lambda^{2-\alpha} \sigma^\alpha}{n} \notag \\
        &\overset{\eqref{eq:lambda_SEG_monotone}}{\leq}& \frac{72 (K+1)\gamma^{\alpha}\sigma^\alpha V^{2-\frac{\alpha}{2}}}{60^{2-\alpha} n^{\alpha-1} \ln^{2-\alpha}\frac{48n(K+1)}{\beta}} \overset{\eqref{eq:gamma_SEG_monotone}}{\leq} \frac{V^2}{600\ln\frac{48n(K+1)}{\beta}}. \label{eq:bound_8_variances_gap_SEG}
    \end{eqnarray}

    \paragraph{Upper bound for $\circledNine$.} Probability event $E_{T-1}$ implies
    \begin{eqnarray}
        \circledNine &\leq& 2\gamma \sum\limits_{l=0}^{T-1}\|\zeta_l'\| \cdot \|\theta_l^b\| \overset{\eqref{eq:gap_thm_SEG_technical_8}, \eqref{eq:bias_theta_omega}, T \leq K+1}{\leq} \frac{2\cdot 2^\alpha(K+1)\gamma \sqrt{V}\sigma^\alpha}{\lambda^{\alpha-1}} \notag \\
        &\overset{\eqref{eq:lambda_SEG_monotone}}{=}& \frac{2\cdot 2^\alpha\cdot 60^{\alpha-1} (K+1)\gamma^\alpha \sigma^\alpha \ln^{\alpha-1}\frac{48n(K+1)}{\beta}}{V^{\frac{\alpha}{2}-1}} \overset{\eqref{eq:gamma_SEG_monotone}}{\leq} \frac{V}{100}. \label{eq:bound_9_variances_gap_SEG}
    \end{eqnarray}

    That is, we derive the upper bounds for  $2\gamma \sqrt{V} \left\|\sum_{l = 0}^{T-1}\theta_l\right\|, \circledOne, \circledTwo, \circledThree, \circledFour, \circledFive, \circledSix, \circledSeven$. More precisely, $E_{T-1}$ implies
    \begin{gather*}
        A_T \overset{\eqref{eq:gap_thm_SEG_technical_6}}{\leq} 4V + 2\gamma \sqrt{V}\left\|\sum\limits_{l=0}^{T-1} \theta_l\right\| + \circledOne + \circledTwo + \circledThree + \circledFour + \circledFive + \circledSix + \circledSeven,\\
        \circledSix \overset{\eqref{eq:SEG_extra_sums_six_distributed}}{=} \circledSix' + \frac{32\gamma}{n} \sum\limits_{j=2}^n \left\langle \frac{\gamma}{n}\sum\limits_{i=1}^{j-1} \theta_{i,T-1}^u - \zeta_j^{T-1}, \theta_{j,T-1}^u \right\rangle,\\
        \circledSeven \overset{\eqref{eq:SEG_extra_sums_seven_distributed}}{=} \circledSeven' + \frac{24\gamma}{n} \sum\limits_{j=2}^n \left\langle \frac{\gamma}{n}\sum\limits_{i=1}^{j-1} \omega_{i,T-1}^u - \delta_j^{T-1}, \omega_{j,T-1}^u \right\rangle,\\
        2\gamma \sqrt{V}\left\|\sum\limits_{l=0}^{T-1} \theta_l\right\| \overset{\eqref{eq:norm_sum_theta_bound_gap_SEG}}{\leq} 2\sqrt{V} \sqrt{\frac{\circledThree + \circledFour + \circledFive + \circledSix}{8} + \circledEight + \circledNine},\\
        \circledTwo \overset{\eqref{eq:bound_2_variances_gap_SEG}}{\leq} \frac{3V}{100},\quad \circledThree \overset{\eqref{eq:bound_3_variances_gap_SEG}}{\leq} \frac{V}{20},\quad \circledFive \overset{\eqref{eq:nsjcbdjhcbfjdhfbj}}{\leq} \frac{V}{6},\quad \circledNine \overset{\eqref{eq:bound_9_variances_gap_SEG}}{\leq} \frac{V}{100},\\
        \sum\limits_{l=0}^{T-1}\sum\limits_{i=1}^n\sigma_{i,l}^2 \overset{\eqref{eq:bound_1_variances_gap_SEG}}{\leq}  \frac{3V^2}{200\ln\frac{48n(K+1)}{\beta}},\quad \sum\limits_{l=0}^{T-1}\sum\limits_{i=1}^n\widetilde\sigma_{i,l}^2 \overset{\eqref{eq:bound_1_variances_gap_SEG_1}}{\leq} \frac{V^2}{216\ln\frac{48n(K+1)}{\beta}},\quad \sum\limits_{l=0}^{T-1}\sum\limits_{j=2}^n\widehat\sigma_{j,l}^2 \overset{\eqref{eq:bound_6_variances_gap_SEG}}{\leq}  \frac{8V^2}{75\ln\frac{48n(K+1)}{\beta}},\\
        \sum\limits_{l=0}^{T-1}\sum\limits_{j=2}^n(\sigma_{j,l}')^2 \overset{\eqref{eq:bound_7_variances_gap_SEG}}{\leq} \frac{3V^2}{50\ln\frac{48n(K+1)}{\beta}},\quad \sum\limits_{l=0}^{T-1}\sum\limits_{i=1}^n(\widetilde\sigma_{i,l}')^2 \leq \frac{V^2}{600\ln\frac{48n(K+1)}{\beta}}.
    \end{gather*}
    In addition, we also establish (see \eqref{eq:bound_1_gap_SEG}, \eqref{eq:bound_1_gap_SEG_1}, \eqref{eq:bound_6_gap_SEG}, \eqref{eq:bound_7_gap_SEG}, \eqref{eq:bound_8_gap_SEG} and our induction assumption)
    \begin{gather*}
        \PP\{E_{T-1}\} \geq 1 - \frac{(T-1)\beta}{K+1},\\
        \PP\{E_{\circledOne}\} \geq 1 - \frac{\beta}{24n(K+1)}, \quad \PP\{E_{\circledFour}\} \geq 1 - \frac{\beta}{24n(K+1)}, \quad \PP\{E_{\circledSix'}\} \geq 1 - \frac{\beta}{24n(K+1)},\\
        \PP\{E_{\circledSeven'}\} \geq 1 - \frac{\beta}{24n(K+1)},\quad \PP\{E_{\circledEight}\} \geq 1 - \frac{\beta}{24n(K+1)},
    \end{gather*}
    where
    \begin{eqnarray}
        E_{\circledOne} &=& \left\{\text{either } \sum\limits_{l=0}^{T-1}\sum\limits_{i=1}^n\sigma_{i,l}^2 > \frac{3V^2}{200\ln\frac{48n(K+1)}{\beta}} \text{ or } |\circledOne| \leq \frac{3V}{10}\right\},\notag\\
        E_{\circledFour} &=& \left\{\text{either } \sum\limits_{l=0}^{T-1}\sum\limits_{i=1}^n\widetilde\sigma_{i,l}^2 > \frac{V^2}{216\ln\frac{48n(K+1)}{\beta}} \text{ or } |\circledFour| \leq \frac{V}{6}\right\},\notag\\
        E_{\circledSix'} &=& \left\{\text{either } \sum\limits_{l=0}^{T-1}\sum\limits_{j=2}^n\widehat\sigma_{i,l}^2 > \frac{8V^2}{75\ln\frac{48n(K+1)}{\beta}} \text{ or } |\circledSix'| \leq \frac{4V}{5}\right\},\notag\\
        E_{\circledSeven'} &=& \left\{\text{either } \sum\limits_{l=0}^{T-1}\sum\limits_{j=2}^n(\sigma_{i,l}')^2 > \frac{3V^2}{50\ln\frac{48n(K+1)}{\beta}} \text{ or } |\circledSeven'| \leq \frac{3V}{5}\right\}\notag \\
        E_{\circledEight} &=& \left\{\text{either } \sum\limits_{l=0}^{T-1}\sum\limits_{i=1}^n(\widetilde\sigma_{i,l}')^2 > \frac{V^2}{600\ln\frac{48n(K+1)}{\beta}} \text{ or } |\circledEight| \leq \frac{V}{10}\right\} \notag
    \end{eqnarray}
    Therefore, probability event $E_{T-1} \cap E_{\circledOne} \cap E_{\circledFour} \cap E_{\circledSix'} \cap E_{\circledSeven'} \cap E_{\circledEight}$ implies
    \begin{eqnarray}
        \left\|\gamma\sum\limits_{l=0}^{T-1} \theta_l\right\| &\leq& \sqrt{\frac{1}{8}\left(\frac{V}{20} + \frac{V}{6} + \frac{V}{6} + \frac{4V}{5}\right) + \frac{V}{10} + \frac{V}{100}}\notag\\
        &&\quad + \sqrt{\frac{32\gamma}{n} \sum\limits_{j=2}^n \left\langle \frac{\gamma}{n}\sum\limits_{i=1}^{j-1} \theta_{i,T-1}^u - \zeta_j^{T-1}, \theta_{j,T-1}^u \right\rangle}\notag\\
        &\leq& \sqrt{V} + \sqrt{\frac{32\gamma}{n} \sum\limits_{j=2}^n \left\langle \frac{\gamma}{n}\sum\limits_{i=1}^{j-1} \theta_{i,T-1}^u - \zeta_j^{T-1}, \theta_{j,T-1}^u \right\rangle}, \label{eq:gap_thm_SEG_technical_10}
    \end{eqnarray}
    and
    \begin{eqnarray}
        A_T &\leq& 4V + 2V + 2\sqrt{V}\sqrt{\frac{32\gamma}{n} \sum\limits_{j=2}^n \left\langle \frac{\gamma}{n}\sum\limits_{i=1}^{j-1} \theta_{i,T-1}^u - \zeta_j^{T-1}, \theta_{j,T-1}^u \right\rangle}\notag\\
        &&\quad + \frac{3V}{10} + \frac{3V}{100} + \frac{V}{20} + \frac{V}{6} + \frac{V}{6} + \frac{4V}{5} + \frac{3V}{5}\notag\\
        &&\quad + \frac{32\gamma}{n} \sum\limits_{j=2}^n \left\langle \frac{\gamma}{n}\sum\limits_{i=1}^{j-1} \theta_{i,T-1}^u - \zeta_j^{T-1}, \theta_{j,T-1}^u \right\rangle \notag\\
        &&\quad + \frac{24\gamma}{n} \sum\limits_{j=2}^n \left\langle \frac{\gamma}{n}\sum\limits_{i=1}^{j-1} \omega_{i,T-1}^u - \delta_j^{T-1}, \omega_{j,T-1}^u \right\rangle \notag \\
        &\leq& 9V +  2\sqrt{V}\sqrt{\frac{32\gamma}{n} \sum\limits_{j=2}^n \left\langle \frac{\gamma}{n}\sum\limits_{i=1}^{j-1} \theta_{i,T-1}^u - \zeta_j^{T-1}, \theta_{j,T-1}^u \right\rangle}\notag\\
        &&\quad + \frac{32\gamma}{n} \sum\limits_{j=2}^n \left\langle \frac{\gamma}{n}\sum\limits_{i=1}^{j-1} \theta_{i,T-1}^u - \zeta_j^{T-1}, \theta_{j,T-1}^u \right\rangle \notag\\
        &&\quad + \frac{24\gamma}{n} \sum\limits_{j=2}^n \left\langle \frac{\gamma}{n}\sum\limits_{i=1}^{j-1} \omega_{i,T-1}^u - \delta_j^{T-1}, \omega_{j,T-1}^u \right\rangle, \label{eq:gap_thm_SEG_technical_11}
    \end{eqnarray}

    In the final part of the proof, we will show that $\frac{\gamma}{n}\sum\limits_{i=1}^{j-1} \theta_{i,T-1}^u = \zeta_j^{T-1}$ and $\frac{\gamma}{n}\sum\limits_{i=1}^{j-1} \omega_{i,T-1}^u = \delta_j^{T-1}$ with high probability. In particular, we consider probability event $\widetilde{E}_{T-1,j}$ defined as follows: inequalities
    \begin{equation}
        \left\|\frac{\gamma}{n}\sum\limits_{i=1}^{r-1} \theta_{i,T-1}^u\right\| \leq \frac{\sqrt{V}}{2}, \quad \left\|\frac{\gamma}{n}\sum\limits_{i=1}^{r-1} \omega_{i,T-1}^u\right\| \leq \frac{\sqrt{V}}{2} \notag
    \end{equation}
    hold for $r = 2, \ldots, j$ simultaneously. We want to show that $\PP\{E_{T-1} \cap \widetilde{E}_{T-1,j}\} \geq 1 - \frac{(T-1)\beta}{K+1} - \frac{j\beta}{8n(K+1)}$ for all $j = 2, \ldots, n$. For $j = 2$ the statement is trivial since
    \begin{eqnarray*}
        \left\|\frac{\gamma}{n} \theta_{1,T-1}^u\right\| \overset{\eqref{eq:theta_omega_magnitude}}{\leq} \frac{2\gamma\lambda}{n} \leq \frac{\sqrt{V}}{2},\quad \left\|\frac{\gamma}{n} \omega_{1,T-1}^u\right\| \overset{\eqref{eq:theta_omega_magnitude}}{\leq} \frac{2\gamma\lambda}{n} \leq \frac{\sqrt{V}}{2}.
    \end{eqnarray*}
    Next, we assume that the statement holds for some $j = m-1 < n$, i.e.,  $\PP\{E_{T-1}\cap\widetilde{E}_{T-1,m-1}\} \geq 1 - \frac{(T-1)\beta}{K+1} - \frac{(m-1)\beta}{8n(K+1)}$. Our goal is to prove that $\PP\{E_{T-1}\cap\widetilde{E}_{T-1,m}\} \geq 1 - \frac{(T-1)\beta}{K+1} - \frac{m\beta}{8n(K+1)}$. First, we consider $\left\|\frac{\gamma}{n}\sum\limits_{i=1}^{m-1} \theta_{i,T-1}^u\right\|$:
    \begin{eqnarray}
        \left\|\frac{\gamma}{n}\sum\limits_{i=1}^{m-1} \theta_{i,T-1}^u\right\| &=& \sqrt{\frac{\gamma^2}{n^2}\left\|\sum\limits_{i=1}^{m-1} \theta_{i,T-1}^u\right\|^2} \notag\\
        &=& \sqrt{\frac{\gamma^2}{n^2}\sum\limits_{i=1}^{m-1}\|\theta_{i,T-1}^u\|^2 + \frac{2\gamma}{n}\sum\limits_{i=1}^{m-1}\left\langle \frac{\gamma}{n}\sum\limits_{r=1}^{i-1}\theta_{r,T-1}^u , \theta_{i,T-1}^u \right\rangle} \notag\\
        &\leq& \sqrt{\frac{\gamma^2}{n^2}\sum\limits_{l=0}^{T-1}\sum\limits_{i=1}^{m-1}\|\theta_{i,l}^u\|^2 + \frac{2\gamma}{n}\sum\limits_{i=1}^{m-1}\left\langle \frac{\gamma}{n}\sum\limits_{r=1}^{i-1}\theta_{r,T-1}^u , \theta_{i,T-1}^u \right\rangle}. \notag
    \end{eqnarray}
    Similarly, we have
    \begin{eqnarray}
        \left\|\frac{\gamma}{n}\sum\limits_{i=1}^{m-1} \omega_{i,T-1}^u\right\| &=& \sqrt{\frac{\gamma^2}{n^2}\left\|\sum\limits_{i=1}^{m-1} \omega_{i,T-1}^u\right\|^2} \notag\\
        &=& \sqrt{\frac{\gamma^2}{n^2}\sum\limits_{i=1}^{m-1}\|\omega_{i,T-1}^u\|^2 + \frac{2\gamma}{n}\sum\limits_{i=1}^{m-1}\left\langle \frac{\gamma}{n}\sum\limits_{r=1}^{i-1}\omega_{r,T-1}^u , \omega_{i,T-1}^u \right\rangle} \notag\\
        &\leq& \sqrt{\frac{\gamma^2}{n^2}\sum\limits_{l=0}^{T-1}\sum\limits_{i=1}^{m-1}\|\omega_{i,l}^u\|^2 + \frac{2\gamma}{n}\sum\limits_{i=1}^{m-1}\left\langle \frac{\gamma}{n}\sum\limits_{r=1}^{i-1}\omega_{r,T-1}^u , \omega_{i,T-1}^u \right\rangle}. \notag
    \end{eqnarray}

    Next, we introduce a new notation:
    \begin{gather*}
        \rho_{i,T-1} = \begin{cases}
            \frac{\gamma}{n}\sum\limits_{r=1}^{i-1}\theta_{r,T-1}^u,& \text{if } \left\|\frac{\gamma}{n}\sum\limits_{r=1}^{i-1}\theta_{r,T-1}^u\right\| \leq \frac{\sqrt{V}}{2},\\
            0,& \text{otherwise}
        \end{cases},\\
        \rho_{i,T-1}' = \begin{cases}
            \frac{\gamma}{n}\sum\limits_{r=1}^{i-1}\omega_{r,T-1}^u,& \text{if } \left\|\frac{\gamma}{n}\sum\limits_{r=1}^{i-1}\omega_{r,T-1}^u\right\| \leq \frac{\sqrt{V}}{2},\\
            0,& \text{otherwise}
        \end{cases}
    \end{gather*}
    for $i = 1,\ldots,m-1$. By definition, we have
    \begin{equation}
        \|\rho_{i,T-1}\| \leq \frac{\sqrt{V}}{2},\quad \|\rho_{i,T-1}'\| \leq \frac{\sqrt{V}}{2} \label{eq:SEG_bound_rho}
    \end{equation}
    for $i = 1,\ldots,m-1$. Moreover, $\widetilde{E}_{T-1,m-1}$ implies $\rho_{i,T-1} = \frac{\gamma}{n}\sum\limits_{r=1}^{i-1}\theta_{r,T-1}^u$, $\rho_{i,T-1}' = \frac{\gamma}{n}\sum\limits_{r=1}^{i-1}\omega_{r,T-1}^u$ for $i = 1,\ldots,m-1$ and
    \begin{eqnarray}
        \left\|\frac{\gamma}{n}\sum\limits_{i=1}^{m-1} \theta_{i,l}^u\right\| &\leq& \sqrt{\circledThree + \circledFour + \circledTen}, \notag\\
        \left\|\frac{\gamma}{n}\sum\limits_{i=1}^{m-1} \omega_{i,l}^u\right\| &\leq& \sqrt{\circledThree + \circledFour + \circledTen'}, \notag
    \end{eqnarray}
    where
    \begin{gather*}
        \circledTen =  \frac{2\gamma}{n}\sum\limits_{i=1}^{m-1}\left\langle \rho_{i,T-1} , \theta_{i,T-1}^u \right\rangle,\quad \circledTen' =  \frac{2\gamma}{n}\sum\limits_{i=1}^{m-1}\left\langle \rho_{i,T-1}' , \omega_{i,T-1}^u \right\rangle.
    \end{gather*}
    It remains to estimate $\circledTen$ and $\circledTen'$.

    \textbf{Upper bound for $\circledTen$.} To estimate this sum, we will use Bernstein's inequality. The summands have conditional expectations equal to zero:
    \begin{equation*}
        \EE_{\xi_{2,i}^{T-1}}\left[\frac{2\gamma}{n} \langle \rho_{i,T-1}, \theta_{i,T-1}^u \rangle\right] = \frac{2\gamma}{n} \left\langle \rho_{i,T-1}, \EE_{\xi_{2,i}^{T-1}}[\theta_{i,T-1}^u] \right\rangle = 0,
    \end{equation*}
    since random vectors $\{\theta_{i,T-1}^u\}_{i=1}^n$ are independent. Thus, sequence $\left\{\frac{2\gamma}{n} \langle \rho_{i,T-1}, \theta_{i,T-1}^u \rangle\right\}_{i = 1}^{m-1}$ is a martingale difference sequence. Next, the summands are bounded:
    \begin{eqnarray}
        \left|\frac{2\gamma}{n}\langle \rho_{i,T-1}, \theta_{i,T-1}^u \rangle \right| \leq \frac{2\gamma}{n} \|\rho_{i,T-1}\|\cdot \|\theta_{i,T-1}^u\| \overset{\eqref{eq:SEG_bound_rho},\eqref{eq:theta_omega_magnitude}}{\leq} \frac{2\gamma}{n} \sqrt{V}\lambda \overset{\eqref{eq:lambda_SEG_monotone}}{=} \frac{V}{30\ln\tfrac{48n(K+1)}{\beta}} \eqdef c. \label{eq:gap_thm_SEG_technical_6_5_10}
    \end{eqnarray}
    Finally, conditional variances $(\widehat\sigma_{i,T-1}')^2 \eqdef \EE_{\xi_{2,i}^{T-1}}\left[\frac{4\gamma^2}{n^2} \langle \rho_{i,T-1}, \theta_{i,T-1}^u \rangle^2\right]$ of the summands are bounded:
    \begin{equation}
        (\widehat\sigma_{i,T-1}')^2 \leq \EE_{\xi_{2,i}^{T-1}}\left[\frac{4\gamma^2 }{n^2}\|\rho_{i,T-1}\|^2\cdot \|\theta_{i,T-1}^u\|^2\right] \overset{\eqref{eq:gap_thm_SEG_technical_10}}{\leq} \frac{\gamma^2 V}{n^2} \EE_{\xi_{2,i}^{T-1}}\left[\|\theta_{i,T-1}^u\|^2\right]. \label{eq:gap_thm_SEG_technical_7_10}
    \end{equation}
    Applying Bernstein's inequality (Lemma~\ref{lem:Bernstein_ineq}) with $X_{i} = \frac{2\gamma}{n} \langle \rho_{i,T-1}, \theta_{i,T-1}^u \rangle$, constant $c$ defined in \eqref{eq:gap_thm_SEG_technical_6_5_10}, $b = \frac{V}{30}$, $G = \frac{V^2}{5400\ln\frac{48n(K+1)}{\beta}}$, we get
    \begin{equation*}
        \PP\left\{|\circledTen| > \frac{V}{30} \text{ and } \sum\limits_{i=1}^{m-1}(\widehat\sigma_{i,T-1}')^2 \leq \frac{V^2}{5400\ln\frac{48n(K+1)}{\beta}}\right\} \leq 2\exp\left(- \frac{b^2}{2G + \nicefrac{2cb}{3}}\right) = \frac{\beta}{24n(K+1)}.
    \end{equation*}
    The above is equivalent to 
    \begin{equation}
        \PP\{E_{\circledTen}\} \geq 1 - \frac{\beta}{24n(K+1)},\; \text{ for }\; E_{\circledTen} = \left\{\text{either } \sum\limits_{i=1}^{m-1}(\widehat\sigma_{i,T-1}')^2 > \frac{V^2}{5400\ln\frac{48n(K+1)}{\beta}} \text{ or } |\circledTen| \leq \frac{V}{30}\right\}. \label{eq:bound_10_gap_SEG}
    \end{equation}
    Moreover, $E_{T-1}$ implies
    \begin{eqnarray}
        \sum\limits_{i=1}^{m-1}(\widehat\sigma_{i,T-1}')^2 &\overset{\eqref{eq:gap_thm_SEG_technical_7_10}}{\leq}& \frac{\gamma^2 V}{n^2} \sum\limits_{i=1}^n \EE_{\xi_{2,i}^{T-1}}\left[\|\theta_{i,T-1}^u\|^2\right] \overset{\eqref{eq:variance_theta_omega}}{\leq} \frac{18\gamma^2 V \lambda^{2-\alpha} \sigma^\alpha}{n} \notag \\
        &\overset{\eqref{eq:lambda_SEG_monotone}}{\leq}& \frac{18 \gamma^{\alpha}\sigma^\alpha V^{2-\frac{\alpha}{2}}}{60^{2-\alpha} n^{\alpha-1} \ln^{2-\alpha}\frac{48n(K+1)}{\beta}} \overset{\eqref{eq:gamma_SEG_monotone}}{\leq} \frac{V^2}{5400\ln\tfrac{48n(K+1)}{\beta}}. \label{eq:bound_10_variances_gap_SEG}
    \end{eqnarray}

    \textbf{Upper bound for $\circledTen'$.} To estimate this sum, we will use Bernstein's inequality. The summands have conditional expectations equal to zero:
    \begin{equation*}
        \EE_{\xi_{1,i}^{T-1}}\left[\frac{2\gamma}{n} \langle \rho_{i,T-1}', \omega_{i,T-1}^u \rangle\right] = \frac{2\gamma}{n} \left\langle \rho_{i,T-1}', \EE_{\xi_{1,i}^{T-1}}[\omega_{i,T-1}^u] \right\rangle = 0,
    \end{equation*}
    since random vectors $\{\omega_{i,T-1}^u\}_{i=1}^n$ are independent. Thus, sequence $\left\{\frac{2\gamma}{n} \langle \rho_{i,T-1}', \omega_{i,T-1}^u \rangle\right\}_{i = 1}^{m-1}$ is a martingale difference sequence. Next, the summands are bounded:
    \begin{eqnarray}
        \left|\frac{2\gamma}{n}\langle \rho_{i,T-1}', \omega_{i,T-1}^u \rangle \right| \leq \frac{2\gamma}{n} \|\rho_{i,T-1}'\|\cdot \|\omega_{i,T-1}^u\| \overset{\eqref{eq:SEG_bound_rho},\eqref{eq:theta_omega_magnitude}}{\leq} \frac{2\gamma}{n} \sqrt{V}\lambda \overset{\eqref{eq:lambda_SEG_monotone}}{=} \frac{V}{30\ln\tfrac{48n(K+1)}{\beta}} \eqdef c. \label{eq:gap_thm_SEG_technical_6_5_10'}
    \end{eqnarray}
    Finally, conditional variances $(\widetilde\sigma_{i,T-1}')^2 \eqdef \EE_{\xi_{1,i}^{T-1}}\left[\frac{4\gamma^2}{n^2} \langle \rho_{i,T-1}', \omega_{i,T-1}^u \rangle^2\right]$ of the summands are bounded:
    \begin{equation}
        (\widetilde\sigma_{i,T-1}')^2 \leq \EE_{\xi_{1,i}^{T-1}}\left[\frac{4\gamma^2 }{n^2}\|\rho_{i,T-1}'\|^2\cdot \|\omega_{i,T-1}^u\|^2\right] \overset{\eqref{eq:gap_thm_SEG_technical_10}}{\leq} \frac{\gamma^2 V}{n^2} \EE_{\xi_{1,i}^{T-1}}\left[\|\omega_{i,T-1}^u\|^2\right]. \label{eq:gap_thm_SEG_technical_7_10'}
    \end{equation}
    Applying Bernstein's inequality (Lemma~\ref{lem:Bernstein_ineq}) with $X_{i} = \frac{2\gamma}{n} \langle \rho_{i,T-1}', \omega_{i,T-1}^u \rangle$, constant $c$ defined in \eqref{eq:gap_thm_SEG_technical_6_5_10'}, $b = \frac{V}{30}$, $G = \frac{V^2}{5400\ln\frac{48n(K+1)}{\beta}}$, we get
    \begin{equation*}
        \PP\left\{|\circledTen'| > \frac{V}{30} \text{ and } \sum\limits_{i=1}^{m-1}(\widetilde\sigma_{i,T-1}')^2 \leq \frac{V^2}{5400\ln\frac{48n(K+1)}{\beta}}\right\} \leq 2\exp\left(- \frac{b^2}{2G + \nicefrac{2cb}{3}}\right) = \frac{\beta}{24n(K+1)}.
    \end{equation*}
    The above is equivalent to 
    \begin{equation}
        \PP\{E_{\circledTen'}\} \geq 1 - \frac{\beta}{24n(K+1)},\; \text{ for }\; E_{\circledTen'} = \left\{\text{either } \sum\limits_{i=1}^{m-1}(\widetilde\sigma_{i,T-1}')^2 > \frac{V^2}{5400\ln\frac{48n(K+1)}{\beta}} \text{ or } |\circledTen'| \leq \frac{V}{30}\right\}. \label{eq:bound_10_gap_SEG'}
    \end{equation}
    Moreover, $E_{T-1}$ implies
    \begin{eqnarray}
        \sum\limits_{i=1}^{m-1}(\widetilde\sigma_{i,T-1}')^2 &\overset{\eqref{eq:gap_thm_SEG_technical_7_10'}}{\leq}& \frac{\gamma^2 V}{n^2} \sum\limits_{i=1}^n \EE_{\xi_{1,i}^{T-1}}\left[\|\omega_{i,T-1}^u\|^2\right] \overset{\eqref{eq:variance_theta_omega}}{\leq} \frac{18\gamma^2 V \lambda^{2-\alpha} \sigma^\alpha}{n} \notag \\
        &\overset{\eqref{eq:lambda_SEG_monotone}}{\leq}& \frac{18 \gamma^{\alpha}\sigma^\alpha V^{2-\frac{\alpha}{2}}}{60^{2-\alpha} n^{\alpha-1} \ln^{2-\alpha}\frac{48n(K+1)}{\beta}} \overset{\eqref{eq:gamma_SEG_monotone}}{\leq} \frac{V^2}{5400\ln\tfrac{48n(K+1)}{\beta}}. \label{eq:bound_10_variances_gap_SEG'}
    \end{eqnarray}

    Putting all together we get that $E_{T-1}\cap \widetilde{E}_{T-1, m-1}$ implies

    \begin{gather*}
        \left\|\frac{\gamma}{n}\sum\limits_{i=1}^{m-1} \theta_{i,T-1}^u\right\| \leq \sqrt{\circledThree + \circledFour + \circledTen},\quad \left\|\frac{\gamma}{n}\sum\limits_{i=1}^{m-1} \omega_{i,T-1}^u\right\| \leq \sqrt{\circledThree + \circledFour + \circledTen'},\quad \circledThree \overset{\eqref{eq:bound_3_variances_gap_SEG}}{\leq} \frac{V}{20},\\
        \sum\limits_{l=0}^{T-1}\sum\limits_{i=1}^n\widetilde\sigma_{i,l}^2 \overset{\eqref{eq:bound_1_variances_gap_SEG_1}}{\leq} \frac{V^2}{216\ln\frac{48n(K+1)}{\beta}},\quad \sum\limits_{i=1}^{m-1}(\widehat\sigma_{i,T-1}')^2 \leq \frac{V^2}{5400\ln\tfrac{48n(K+1)}{\beta}},\\
        \sum\limits_{i=1}^{m-1}(\widetilde\sigma_{i,T-1}')^2 \leq \frac{V^2}{5400\ln\tfrac{48n(K+1)}{\beta}}.
    \end{gather*}
    In addition, we also establish (see \eqref{eq:bound_1_gap_SEG_1}, \eqref{eq:bound_10_gap_SEG}, \eqref{eq:bound_10_gap_SEG'} and our induction assumption)
    \begin{gather*}
        \PP\{E_{T-1}\cap \widetilde{E}_{T-1,m-1}\} \geq 1 - \frac{(T-1)\beta}{K+1} - \frac{(m-1)\beta}{8n(K+1)},\\
        \PP\{E_{\circledFour}\} \geq 1 - \frac{\beta}{24n(K+1)}, \quad \PP\{E_{\circledTen}\} \geq 1 - \frac{\beta}{24n(K+1)}, \quad \PP\{E_{\circledTen'}\} \geq 1 - \frac{\beta}{24n(K+1)}
    \end{gather*}
    where
    \begin{eqnarray}
        E_{\circledFour} &=& \left\{\text{either } \sum\limits_{l=0}^{T-1}\sum\limits_{i=1}^n\widetilde\sigma_{i,l}^2 > \frac{V^2}{216\ln\frac{48n(K+1)}{\beta}} \text{ or } |\circledFour| \leq \frac{V}{6}\right\},\notag\\
        E_{\circledTen} &=& \left\{\text{either } \sum\limits_{i=1}^{m-1}(\widehat\sigma_{i,l}')^2 > \frac{V^2}{5400\ln\frac{48n(K+1)}{\beta}} \text{ or } |\circledTen| \leq \frac{V}{30}\right\}, \notag\\
        E_{\circledTen'} &=& \left\{\text{either } \sum\limits_{i=1}^{m-1}(\widetilde\sigma_{i,T-1}')^2 > \frac{V^2}{5400\ln\frac{48n(K+1)}{\beta}} \text{ or } |\circledTen'| \leq \frac{V}{30}\right\} \notag
    \end{eqnarray}
    Therefore, probability event $E_{T-1} \cap \widetilde{E}_{T-1,m-1} \cap E_{\circledFour} \cap E_{\circledTen} \cap E_{\circledTen'}$ implies
    \begin{gather*}
        \left\|\frac{\gamma}{n}\sum\limits_{i=1}^{m-1} \theta_{i,T-1}^u\right\| \leq \sqrt{\frac{V}{20} + \frac{V}{6} + \frac{V}{30}} = \frac{\sqrt{V}}{2},\\
        \left\|\frac{\gamma}{n}\sum\limits_{i=1}^{m-1} \omega_{i,T-1}^u\right\| \leq \sqrt{\frac{V}{20} + \frac{V}{6} + \frac{V}{30}} = \frac{\sqrt{V}}{2}.
    \end{gather*}
    This implies $\widetilde E_{T-1,m}$ and
    \begin{eqnarray*}
        \PP\{E_{T-1} \cap \widetilde{E}_{T-1,m}\} &\geq& \PP\{E_{T-1} \cap \widetilde{E}_{T-1,m-1} \cap E_{\circledFour} \cap E_{\circledTen} \cap E_{\circledTen'}\} \\
        &=& 1 - \PP\left\{\overline{E_{T-1} \cap \widetilde{E}_{T-1,m-1}} \cup \overline{E}_{\circledFour} \cup \overline{E}_{\circledTen} \cup \overline{E}_{\circledTen'}\right\}\\
        &\geq& 1 - \frac{(T-1)\beta}{K+1} - \frac{m\beta}{8n(K+1)}.
    \end{eqnarray*}
    Therefore, for all $m = 2,\ldots,n$ the statement holds and, in particular, $\PP\{E_{T-1} \cap \widetilde{E}_{T-1, n}\} \geq 1 - \frac{(T-1)\beta}{K+1} - \frac{\beta}{8(K+1)}$. Taking into account \eqref{eq:gap_thm_SEG_technical_11}, we conclude that $E_{T-1} \cap \widetilde{E}_{T-1, n} \cap E_{\circledOne} \cap E_{\circledFour} \cap E_{\circledSix'} \cap E_{\circledSeven'} \cap E_{\circledEight}$ implies
    \begin{equation*}
       \left\|\gamma\sum\limits_{l=0}^{T-1} \theta_l\right\| \leq \sqrt{V},\quad A_T \leq 9V
    \end{equation*}
    that is equivalent to \eqref{eq:induction_inequality_1_SEG} and \eqref{eq:induction_inequality_2_SEG} for $t = T$. Moreover,
    \begin{eqnarray*}
        \PP\left\{E_T\right\} &\geq& \PP\left\{E_{T-1} \cap \widetilde{E}_{T-1,n} \cap E_{\circledOne} \cap E_{\circledFour} \cap E_{\circledSix'} \cap E_{\circledSeven'} \cap E_{\circledEight} \right\} \\
        &=& 1 - \PP\left\{\overline{E_{T-1} \cap \widetilde{E}_{n}} \cup \overline{E}_{\circledOne} \cup \overline{E}_{\circledFour} \cup \overline{E}_{\circledSix'} \cup \overline{E}_{\circledSeven'} \cup \overline{E}_{\circledEight}\right\}\\
        &=& 1 - \frac{(T-1)\beta}{K+1} - \frac{\beta}{8(K+1)} - 5\cdot \frac{\beta}{24n(K+1)} = 1 - \frac{T\beta}{K+1}.
    \end{eqnarray*}

    In other words, we showed that $\PP\{E_k\} \geq 1 - \nicefrac{k\beta}{(K+1)}$ for all $k = 0,1,\ldots,K+1$. For $k = K+1$ we have that with probability at least $1 - \beta$
    \begin{eqnarray*}
        \gap_{\sqrt{V}}(\tx^{K}_{\avg}) &=& \max_{B_{\sqrt{V}}(x^*)}\left\{ \langle F(u), \tx^{t'}_{\avg} - u\rangle + \Psi(\tx^{t'}_{\avg}) - \Psi(u)  \right\}\\
        &\leq& \frac{1}{2\gamma(K + 1)}\max_{B_{\sqrt{V}}(x^*)}\left\{ 2\gamma(t'+1)\left(\langle F(u), \tx^{t'}_{\avg} - u\rangle + \Psi(\tx^{t'}_{\avg}) - \Psi(u)\right) + \|x^{t'+1} - u\|^2  \right\}\\
        &\overset{\eqref{eq:gap_thm_SEG_technical_1_5}}{\leq}& \frac{9V}{2\gamma(K+1)}.
    \end{eqnarray*}
    Finally, if 
    \begin{equation*}
        \gamma = \min\left\{\frac{1}{1920 L \ln \frac{48n(K+1)}{\beta}}, \frac{60^{\frac{2-\alpha}{\alpha}}\sqrt{V}n^{\frac{\alpha-1}{\alpha}}}{97200^{\frac{1}{\alpha}}(K+1)^{\frac{1}{\alpha}}\sigma \ln^{\frac{\alpha-1}{\alpha}} \frac{48n(K+1)}{\beta}}\right\}
    \end{equation*}
    then with probability at least $1-\beta$
    \begin{eqnarray*}
        \gap_{\sqrt{V}}(\tx^{K}_{\avg}) &\leq& \frac{9V}{2\gamma(K+1)} = \max\left\{\frac{8640 LV \ln \tfrac{48n(K+1)}{\beta}}{K+1}, \frac{9\cdot 60^{\frac{2-\alpha}{\alpha}} \cdot\sigma R \ln^{\frac{\alpha-1}{\alpha}} \tfrac{48n(K+1)}{\beta}}{2\cdot 97200^{\frac{1}{\alpha}} n^{\frac{\alpha-1}{\alpha}}(K+1)^{\frac{\alpha-1}{\alpha}}}\right\}\\
        &=& \cO\left(\max\left\{\frac{LV\ln\frac{nK}{\beta}}{K}, \frac{\sigma \sqrt{V} \ln^{\frac{\alpha-1}{\alpha}}\frac{nK}{\beta}}{n^{\frac{\alpha-1}{\alpha}}K^{\frac{\alpha-1}{\alpha}}}\right\}\right).
    \end{eqnarray*}
    To get $\gap_{R}(\tx^{K}_{\avg}) \leq \varepsilon$ with probability $\geq 1 - \beta$, $K$ should be
    \begin{equation*}
         K = \cO\left(\frac{LV}{\varepsilon}\ln\frac{nLV}{\varepsilon\beta}, \frac{1}{n}\left(\frac{\sigma \sqrt{V}}{\varepsilon}\right)^{\frac{\alpha}{\alpha-1}}\ln \frac{\sigma \sqrt{V}}{\varepsilon\beta}\right)
    \end{equation*}
    that concludes the proof.
\end{proof}

\subsection{Quasi-Strongly Monotone Case}

% In this case, it is convenient to reformulate the problem as follows: instead of \eqref{eq:main_problem}, we consider
% \begin{equation}
%     \text{find }\; x^* \in \R^d \; \text{ such that } \;  \langle \widetilde{F}(x), x - x^* \rangle + \widetilde\Psi(x) - \widetilde\Psi (x^*) \geq  0, \label{eq:main_problem_reformulation}
% \end{equation}
% where $\widetilde F(x) = F(x) - \mu (x - x^*)$ and $\widetilde \Psi(x) = \Psi(x) + \mu\|x - x^*\|^2$. Then, for all $x\in \R^d$
% \begin{equation*}
%     \langle \widetilde{F}(x), x - x^* \rangle + \widetilde\Psi(x) - \widetilde\Psi (x^*) = \langle F(x), x - x^* \rangle + \Psi(x) - \Psi (x^*),
% \end{equation*}
% i.e., problem \eqref{eq:main_problem_reformulation} is equivalent to
% \begin{equation}
%     \text{find }\; x^* \in \R^d \; \text{ such that } \;  \langle F(x), x - x^* \rangle + \Psi(x) - \Psi (x^*) \geq  0. \label{eq:main_problem_reformulation_weak}
% \end{equation}
% Since $F$ is continuous and monotone, problem \eqref{eq:main_problem_reformulation_weak} (also known as weak variational inequality) is equivalent to variational inequality from \eqref{eq:main_problem} \citep{nesterov2007dual}. 

We start with the following lemma.

\begin{lemma}\label{lem:SEG_optimization_lemma_QSM}
    Let Assumptions \ref{as:L_Lip} and \ref{as:str_monotonicity} hold for $Q = B_{4n\sqrt{V}}(x^*)$, where $V \geq \|x^0 - x^*\|^2 + \frac{36000000\gamma^2\ln^2\frac{48n(K+1)}{\beta}}{n^2}\sum_{i=1}^n\|F_i(x^*)\|^2$, $\nu = \gamma\mu$, and $0 < \gamma \leq \min\left\{\frac{1}{6L}, \frac{\sqrt{n}}{15000 L \ln \frac{48n(K+1)}{\beta}}, \frac{1}{72000000 \mu \ln^2 \frac{48n(K+1)}{\beta}}\right\}$. If $x^k$ and $\tx^k$ lie in $B_{4n\sqrt{V}}(x^*)$ for all $k = 0,1,\ldots,K$ for some $K \geq 0$, then the iterates produced by \algname{DProx-clipped-SEG-shift} satisfy
    \begin{eqnarray}
       V_{K+1} &\leq& \exp\left(-\frac{\gamma\mu}{2}(K+1)\right) V + 2\gamma \sum\limits_{k=0}^{K}\exp\left(-\frac{\gamma\mu}{2}(K-k)\right)\langle \theta_k, x^k - x^* \rangle\notag\\
       &&\quad + \frac{\gamma^2}{n^2}\sum\limits_{k=0}^{K}\sum\limits_{i=1}^n\exp\left(-\frac{\gamma\mu}{2}(K-k)\right)\left(18\|\theta_{i,k}^u\|^2 + 14\|\omega_{i,k}^u\|^2\right)\notag\\
        &&\quad  + \gamma^2\sum\limits_{k=0}^{K}\sum\limits_{i=1}^n\exp\left(-\frac{\gamma\mu}{2}(K-k)\right)\left(18\|\theta_{i,k}^b\|^2 + 14\|\omega_{i,k}^b\|^2\right)\notag\\
        &&\quad + \frac{32\gamma^2}{n^2}\sum\limits_{k=0}^{K}\sum\limits_{j=2}^n\exp\left(-\frac{\gamma\mu}{2}(K-k)\right)\left\langle \sum\limits_{i=1}^{j-1} \theta_{i,k}^u, \theta_{j,k}^u \right\rangle \notag\\
        &&\quad + \frac{24\gamma^2}{n^2}\sum\limits_{k=0}^{K}\sum\limits_{j=2}^n\exp\left(-\frac{\gamma\mu}{2}(K-k)\right)\left\langle \sum\limits_{i=1}^{j-1} \omega_{i,k}^u, \omega_{j,k}^u \right\rangle, \label{eq:optimization_lemma_SEG_QSM}
    \end{eqnarray}
    where $V_k = \|x^k - x^*\|^2 + \frac{36000000\gamma^2\ln^2\frac{48n(K+1)}{\beta}}{n^2}\sum_{i=1}^n\left(\|\thh_{i}^k - F_i(x^*)\|^2 + \|\hh_{i}^k - F_i(x^*)\|^2\right)$ and $\theta_k$, $\theta_{i,k}^u$, $\theta_{i,k}^b$, $\omega_k$, $\omega_{i,k}^u$, $\omega_{i,k}^b$ are defined in \eqref{eq:theta_k_SEG}, \eqref{eq:omega_k_SEG}, and \eqref{eq:gap_thm_SEG_technical_4}-\eqref{eq:gap_thm_SEG_technical_5}
\end{lemma}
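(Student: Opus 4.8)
The plan is to reuse the blueprint of the monotone SEG lemma (Lemma~\ref{lem:SEG_optimization_lemma_monotone}) but to trade its telescoping/averaging argument for a contraction argument, in the same spirit as the shift-augmented Lyapunov analysis of SGDA (Lemma~\ref{lem:optimization_lemma_str_mon_SGDA_2}). The key starting point is inequality \eqref{eq:important_optimization_ineq_SEG}, which was derived \emph{without} using monotonicity and therefore holds verbatim. Setting $u=x^*$ in it, I would split $\langle F(\tx^k),\tx^k-x^*\rangle = \langle F(\tx^k)-F(x^*),\tx^k-x^*\rangle + \langle F(x^*),\tx^k-x^*\rangle$, lower-bound the first term by $\mu\|\tx^k-x^*\|^2$ via Assumption~\ref{as:str_monotonicity}, and discard the nonnegative quantity $\langle F(x^*),\tx^k-x^*\rangle + \Psi(\tx^k)-\Psi(x^*)\ge 0$ coming from the VI optimality of $x^*$. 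This produces a one-step bound for $\gamma\mu\|\tx^k-x^*\|^2$ with right-hand side $\tfrac12\|x^k-x^*\|^2 - \tfrac12\|x^{k+1}-x^*\|^2$ plus the two negative terms $-\tfrac12(1-6\gamma^2L^2)\|\tx^k-x^k\|^2$ and $-\tfrac14\|x^{k+1}-\tx^k\|^2$, the noise $3\gamma^2\|\omega_k\|^2+3\gamma^2\|\theta_k\|^2$, and the residual inner product $\gamma\langle\theta_k,\tx^k-x^*\rangle$.

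Next I would massage this into a contraction for the distance component. Writing $\tx^k-x^* = (\tx^k-x^k)+(x^k-x^*)$, the residual inner product splits into $\gamma\langle\theta_k,x^k-x^*\rangle$ (kept) and $\gamma\langle\theta_k,\tx^k-x^k\rangle$, which is absorbed by Young's inequality into $\gamma^2\|\theta_k\|^2$ plus a fraction of the negative $\|\tx^k-x^k\|^2$ term (here $\gamma\le\tfrac1{6L}$ guarantees $\tfrac12(1-6\gamma^2L^2)\ge\tfrac14$ so there is room). To turn $\|\tx^k-x^*\|^2$ on the left into $\|x^{k+1}-x^*\|^2$, I use $\|\tx^k-x^*\|^2\ge\tfrac12\|x^{k+1}-x^*\|^2 - \|x^{k+1}-\tx^k\|^2$ and absorb the resulting $\gamma\mu\|x^{k+1}-\tx^k\|^2$ into the $-\tfrac14\|x^{k+1}-\tx^k\|^2$ term (valid since $\gamma\mu\le\tfrac14$). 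Rearranging gives $\|x^{k+1}-x^*\|^2\le\tfrac{1}{1+\gamma\mu}\|x^k-x^*\|^2 + \text{noise}$, and the elementary bound $\tfrac1{1+\gamma\mu}\le 1-\tfrac{\gamma\mu}{2}$ supplies the target contraction factor.

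I would then derive the shift contractions for $\thh_i^{k+1}$ and $\hh_i^{k+1}$ by repeating, almost verbatim, the expansion from Lemma~\ref{lem:optimization_lemma_str_mon_SGDA_2}: with $\nu=\gamma\mu\le\tfrac12$ and $h_i^*=F_i(x^*)$, one gets $\|\thh_i^{k+1}-h_i^*\|^2\le(1-\nu)\|\thh_i^k-h_i^*\|^2 + 2\nu\|\omega_{i,k}\|^2 + 2\nu\|F_i(x^k)-h_i^*\|^2$ and the analogous bound for $\hh_i^{k+1}$ with $\theta_{i,k}$ and $\|F_i(\tx^k)-h_i^*\|^2$. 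Using Assumption~\ref{as:L_Lip} to bound $\tfrac1n\sum_i\|F_i(x^k)-h_i^*\|^2\le L^2\|x^k-x^*\|^2$ and $\tfrac1n\sum_i\|F_i(\tx^k)-h_i^*\|^2\le L^2\|\tx^k-x^*\|^2$, I scale these by the Lyapunov weight $C^2=\tfrac{36000000\gamma^2A^2}{n^2}$ ($A=\ln\tfrac{48n(K+1)}{\beta}$) and add them to the distance recursion. The extra $\|x^k-x^*\|^2$ term is controlled by the contraction slack since $\gamma\le\tfrac{\sqrt n}{15000LA}$ forces $\gamma^2A^2L^2\le n/15000^2$, while the extra $\|\tx^k-x^*\|^2\le 2\|\tx^k-x^k\|^2+2\|x^k-x^*\|^2$ is split, its $\|\tx^k-x^k\|^2$ part absorbed by what remains of the negative $\|\tx^k-x^k\|^2$ term and its $\|x^k-x^*\|^2$ part by the slack; the condition $\nu\le\tfrac1{72000000A^2}$ makes these corrections negligible. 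This yields $V_{k+1}\le(1-\tfrac{\gamma\mu}{2})V_k + R_k$.

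Finally, I would expand the aggregate noise $R_k$ to match the stated coefficients: the SGD-style bounds $6\gamma^2\|\omega_k\|^2+8\gamma^2\|\theta_k\|^2$ (after the $\tfrac{2}{1+\gamma\mu}\le2$ factor) are decomposed through the identity \eqref{eq:SEG_sum_theta_representation} into variance sums $\tfrac{2}{n^2}\sum_i\|\cdot^u\|^2$, cross terms $\tfrac{4}{n^2}\sum_{j\ge2}\langle\sum_{i<j}\cdot^u,\cdot^u\rangle$, and bias terms bounded by $\tfrac2n\sum_i\|\cdot^b\|^2$ via Jensen; the shift noise $2\nu C^2\sum_i(\|\omega_{i,k}\|^2+\|\theta_{i,k}\|^2)\le\tfrac{\gamma^2}{n^2}\sum_i(\cdots)$ (using $2\nu C^2\le\tfrac{\gamma^2}{n^2}$) contributes the remaining pieces, and summing gives exactly the coefficients $14,18$ on the $\omega^u,\theta^u$ variance sums, $14,18$ on the bias sums, and $24,32$ on the cross terms. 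Unrolling $V_{k+1}\le(1-\tfrac{\gamma\mu}{2})V_k+R_k$ with $(1-\tfrac{\gamma\mu}{2})^{K-k}\le\exp(-\tfrac{\gamma\mu}{2}(K-k))$ produces \eqref{eq:optimization_lemma_SEG_QSM}. The main obstacle I anticipate is the simultaneous bookkeeping of the two negative quantities $-\tfrac12(1-6\gamma^2L^2)\|\tx^k-x^k\|^2$ and $-\tfrac14\|x^{k+1}-\tx^k\|^2$: they must absorb the cross term $\gamma\langle\theta_k,\tx^k-x^k\rangle$, the conversion residual $\gamma\mu\|x^{k+1}-\tx^k\|^2$, and the Lipschitz term $\|\tx^k-x^*\|^2$ arising from the $\hh_i$ shift, all while preserving the contraction rate exactly $1-\tfrac{\gamma\mu}{2}$, which is precisely what pins down the three stepsize constraints.
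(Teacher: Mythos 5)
Your overall architecture is the same as the paper's: start from \eqref{eq:important_optimization_ineq_SEG} with $u=x^*$ (which indeed holds without monotonicity), apply Assumption~\ref{as:str_monotonicity} together with $-F(x^*)\in\partial\Psi(x^*)$, split $2\gamma\langle\theta_k,\tx^k-x^*\rangle$ into the kept term $2\gamma\langle\theta_k,x^k-x^*\rangle$ plus a Young-absorbed piece, derive the shift recursions exactly as in Lemma~\ref{lem:optimization_lemma_str_mon_SGDA_2}, expand the noise via \eqref{eq:SEG_sum_theta_representation} to get the coefficients $18,14,32,24$, and unroll with $(1-\nicefrac{\gamma\mu}{2})^{K-k}\le\exp(-\nicefrac{\gamma\mu}{2}(K-k))$. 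The one place you diverge is how the term $2\gamma\mu\|\tx^k-x^*\|^2$ is spent, and that is where your accounting does not close with the stated constants.

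You propose to convert all of $2\gamma\mu\|\tx^k-x^*\|^2$ into $(1+\gamma\mu)\|x^{k+1}-x^*\|^2$ (minus a $\|x^{k+1}-\tx^k\|^2$ remainder) and then rely on the slack between $\tfrac{1}{1+\gamma\mu}$ and $1-\tfrac{\gamma\mu}{2}$ to absorb the Lipschitz terms $2\gamma\mu L^2\|\tx^k-x^*\|^2$ and $2\gamma\mu L^2\|x^k-x^*\|^2$ generated by the $\hh_i$ and $\thh_i$ recursions (after scaling by the Lyapunov weight). The paper instead spends only \emph{half}: it writes $2\gamma\mu\|\tx^k-x^*\|^2 \ge \gamma\mu\|\tx^k-x^*\|^2 + \tfrac{\gamma\mu}{2}\|x^k-x^*\|^2 - \gamma\mu\|\tx^k-x^k\|^2$, which yields the factor $(1-\tfrac{\gamma\mu}{2})$ on $\|x^k-x^*\|^2$ directly \emph{and} retains a free negative term $-\gamma\mu\|\tx^k-x^*\|^2$. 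That retained term is exactly what absorbs the total shift contribution $\tfrac{216\cdot 10^6\gamma^3\mu L^2\ln^2(\nicefrac{48n(K+1)}{\beta})}{n}\|\tx^k-x^*\|^2$, and the constraint $\gamma\le\tfrac{\sqrt n}{15000L\ln(\nicefrac{48n(K+1)}{\beta})}$ gives $216\cdot10^6 \le 15000^2 = 225\cdot10^6$ with almost no margin. In your route this negative term is gone; you must instead bound $\|\tx^k-x^*\|^2\le 2\|x^k-x^*\|^2+2\|\tx^k-x^k\|^2$ and fit the $\|x^k-x^*\|^2$ piece into a slack of only about $\tfrac{\gamma\mu}{2}$, which roughly quadruples the required inequality to $\approx 864\cdot10^6\,\gamma^2L^2\ln^2(\cdot)\le n$ --- false under the stated stepsize bound. (A smaller issue of the same kind: absorbing $2\gamma\mu\|x^{k+1}-\tx^k\|^2$ into $\tfrac14\|x^{k+1}-\tx^k\|^2$ needs $\gamma\mu\le\tfrac18$, not $\tfrac14$, though this one is harmlessly implied by the third stepsize constraint.) The fix is simply to adopt the paper's split: keep $-\gamma\mu\|\tx^k-x^*\|^2$ alive until after the shift terms are added, and only then discard whatever remains.
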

\begin{proof}
    From \eqref{eq:important_optimization_ineq_SEG} with $u = x^*$ we have
    \begin{eqnarray*}
        \gamma\left(\langle F(\tx^k), \tx^k - x^* \rangle +  \Psi(\tx^k) - \Psi(x^*)\right) &\leq& \frac{1}{2}\|x^k - x^*\|^2 - \frac{1}{2}\|x^{k+1} - x^*\|^2  \notag\\
        &&\quad - \frac{1}{2}\left(1 - 6\gamma^2 L^2\right)\|\tx^k - x^k\|^2 - \frac{1}{4}\|x^{k+1} - \tx^k\|^2 \notag\\
        &&\quad + 3\gamma^2 \|\omega_k\|^2 + 3\gamma^2\|\theta_k\|^2  + \gamma \langle \theta_k, \tx^k - x^* \rangle.
    \end{eqnarray*}
    Using quasi-strong monotonicity of $F$, the fact that $-F(x^*) \in \partial \Psi(x^*)$, and convexity of $\Psi(x^*)$, we derive
    \begin{eqnarray*}
        2\gamma\mu\|\tx^k - x^*\|^2 &\leq& 2\gamma \langle F(\tx^k) - F(x^*), \tx^k - x^* \rangle \\
        &\leq& 2\gamma\left(\langle F(\tx^k), \tx^k - x^* \rangle +  \Psi(\tx^k) - \Psi(x^*)\right)\\
        &\leq& \|x^k - x^*\|^2 - \|x^{k+1} - x^*\|^2   - \left(1 - 6\gamma^2 L^2\right)\|\tx^k - x^k\|^2\\
        &&\quad - \frac{1}{2}\|x^{k+1} - \tx^k\|^2  + 6\gamma^2 \|\omega_k\|^2 + 6\gamma^2\|\theta_k\|^2  + 2\gamma \langle \theta_k, \tx^k - x^* \rangle.
    \end{eqnarray*}
    Next, we apply $\|\tx^k - x^*\|^2 \geq \frac{1}{2}\|x^k - x^*\|^2 - \|\tx^k - x^k\|^2$ and rearrange the terms:
    \begin{eqnarray}
        \|x^{k+1} - x^*\|^2 &\leq& \left(1 - \frac{\gamma\mu}{2}\right)\|x^k - x^*\|^2 - \gamma\mu\|\tx^k - x^*\|^2 - \left(1-\gamma\mu - 6\gamma^2 L^2\right)\|\tx^k - x^k\|^2 \notag\\
        &&\quad  + 6\gamma^2 \|\omega_k\|^2 + 6\gamma^2\|\theta_k\|^2  + 2\gamma \langle \theta_k, \tx^k - x^* \rangle\notag.
    \end{eqnarray}
    Since $2\gamma\langle \theta^k, \tx^k - x^* \rangle = 2\gamma\langle \theta^k, x^k - x^* \rangle +  2\gamma\langle \theta^k, \tx^k - x^k \rangle \leq 2\gamma^2\|\theta^k\|^2 + \frac{1}{2}\|\tx^k - x^k\|^2$, we have
    \begin{eqnarray}
        \|x^{k+1} - x^*\|^2 &\leq& \left(1 - \frac{\gamma\mu}{2}\right)\|x^k - x^*\|^2 - \gamma\mu\|\tx^k - x^*\|^2 - \left(\frac{1}{2}-\gamma\mu - 6\gamma^2 L^2\right)\|\tx^k - x^k\|^2 \notag\\
        &&\quad  + 6\gamma^2 \|\omega_k\|^2 + 8\gamma^2\|\theta_k\|^2  + 2\gamma \langle \theta_k, x^k - x^* \rangle. \label{eq:nshjcbjdbhcjsd}
    \end{eqnarray}
    Now, we move on to the shifts: for all $i\in [n]$ (for convenience, we use the new notation: $h_i^* = F_i(x^*)$ for all $i\in [n]$)
    \begin{eqnarray}
        \|\thh_i^{k+1}-h^*_i\|^2 &=& \|\thh^{k}_i-h^*_i\|^2 + 2\nu \left\la \tilde \Delta^k_i, \thh^k_i-h^*_i\right\ra + \nu^2\|\tilde \Delta^k_i\|^2\notag\\
    &=& \|\thh^{k}_i-h^*_i\|^2 + 2\nu \left\la \tg_i^k -\thh^k_i, \thh^k_i-h^*_i\right\ra + \nu^2\|\tg_i^k -\thh^k_i\|^2\notag\\
    &\overset{\nu \leq 1}{\leq}& \|\thh^{k}_i-h^*_i\|^2 + 2\nu \left\la \tg_i^k-\thh^k_i, \thh^k_i-h^*_i\right\ra + \nu\|\tg_i^k-\thh^k_i\|^2\notag\\
    &=&\|\thh^{k}_i-h^*_i\|^2 + \nu \left\la \tg_i^k-\thh^k_i,  \tg_i^k + \thh^k_i- 2h^*_i\right\ra\notag\\
    &\leq& (1-\nu)\|\thh^{k}_i-h^*_i\|^2 + \nu\|\tg_i^k-h^*_i\|^2 \notag\\
    &\leq& (1-\nu)\|\thh^k_i-h^*_i\|^2 + 2\nu\|\tg_i^k- F_i(x^k)\|^2 + 2\nu\|F_i(x^{k})-h^*_i\|^2 \notag\\
    &\leq& (1-\nu)\|\thh^{k}_i-h^*_i\|^2 + 2\nu\|\omega_{i,k}\|^2 + 2\nu\|F_i(x^{k})-h_i^*\|^2\notag\\
    &=& (1-\gamma\mu)\|\thh^{k}_i-h^*_i\|^2 + 2\gamma\mu\|\omega_{i,k}\|^2 + 2\gamma\mu\|F_i(x^{k})-h_i^*\|^2\notag\\
    &\overset{\eqref{eq:L_Lip}}{\leq}&  (1-\gamma\mu)\|\thh^{k}_i-h^*_i\|^2 + 2\gamma\mu\|\omega_{i,k}\|^2 + 2\gamma\mu L^2\|x^{k} - x^*\|^2\notag\\
    &\leq& (1-\gamma\mu)\|\thh^{k}_i-h^*_i\|^2 + 2\gamma\mu\|\omega_{i,k}\|^2 + 4\gamma\mu L^2\|\tx^{k} - x^*\|^2 \notag\\
    &&\quad +  4\gamma\mu L^2\|\tx^{k} - x^k\|^2\label{eq:nvbhjsdbhdbfhjvbdjhvdjbvcs}
    \end{eqnarray}
    and, similarly,
    \begin{eqnarray}
        \|\hh_i^{k+1}-h^*_i\|^2 &\leq& (1-\gamma\mu)\|\hh^{k}_i-h^*_i\|^2 + 2\gamma\mu\|\theta_{i,k}\|^2 + 2\gamma\mu\|F_i(\tx^{k})-h_i^*\|^2 \notag\\
        &\overset{\eqref{eq:L_Lip}}{\leq}& (1-\gamma\mu)\|\hh^{k}_i-h^*_i\|^2 + 2\gamma\mu\|\theta_{i,k}\|^2 + 2\gamma\mu L^2\|\tx^{k} -x^*\|^2.\label{eq:nvbhjsdbhdbfhjvbdjhvdjbvcs_2}
    \end{eqnarray}

    Summing up \eqref{eq:nshjcbjdbhcjsd}, \eqref{eq:nvbhjsdbhdbfhjvbdjhvdjbvcs}, and \eqref{eq:nvbhjsdbhdbfhjvbdjhvdjbvcs_2}, we derive
    \begin{eqnarray}
        V_{k+1} &\leq& \left(1 - \frac{\gamma\mu}{2}\right)\|x^k - x^*\|^2\notag\\
        &&\quad + (1 - \gamma\mu) \frac{36\cdot 10^6\gamma^2\ln^2\frac{48n(K+1)}{\beta}}{n^2}\sum_{i=1}^n\left(\|\thh_i^k - h_i^*\|^2 + \|\hh_i^k - h_i^*\|^2\right)\notag\\
        &&\quad - \left(\gamma\mu - \frac{216\cdot 10^6 \gamma^3\mu L^2 \ln^2\frac{48n(K+1)}{\beta}}{n}\right)\|\tx^k - x^*\|^2\notag\\
        &&\quad - \left(\frac{1}{2}-\gamma\mu - 6\gamma^2 L^2 - \frac{144\cdot 10^6\gamma^3\mu L^2 \ln^2\frac{48n(K+1)}{\beta}}{n}\right)\|\tx^k - x^k\|^2 \notag\\
        &&\quad  + 6\gamma^2 \|\omega_k\|^2 + 8\gamma^2\|\theta_k\|^2  + 2\gamma \langle \theta_k, x^k - x^* \rangle\notag\\
        &&\quad + \frac{72\cdot10^6\gamma^3\mu\ln^2\frac{48n(K+1)}{\beta}}{n^2}\sum_{i=1}^n\left(\|\theta_{i,k}\|^2 + \|\omega_{i,k}\|^2\right) \notag\\
        &\leq& \left(1 - \frac{\gamma\mu}{2}\right) V_k + 6\gamma^2 \|\omega_k\|^2 + 8\gamma^2\|\theta_k\|^2  + 2\gamma \langle \theta_k, x^k - x^* \rangle\notag\\
        &&\quad + \frac{\gamma^2}{n^2}\sum\limits_{i=1}^n\left(\|\theta_{i,k}\|^2 + \|\omega_{i,k}\|^2\right) \notag\\
        &\overset{\eqref{eq:SEG_sum_theta_representation},\eqref{eq:gap_thm_SEG_technical_4_full_theta_omega}}{\leq}& \exp\left(-\frac{\gamma\mu}{2}\right) V_k + 2\gamma \langle \theta_k, x^k - x^* \rangle + \frac{\gamma^2}{n^2}\sum\limits_{i=1}^n\left(18\|\theta_{i,k}^u\|^2 + 14\|\omega_{i,k}^u\|^2\right)\notag\\
        &&\quad  + \gamma^2\sum\limits_{i=1}^n\left(18\|\theta_{i,k}^b\|^2 + 14\|\omega_{i,k}^b\|^2\right) + \frac{32\gamma^2}{n^2}\sum\limits_{j=2}^n\left\langle \sum\limits_{i=1}^{j-1} \theta_{i,k}^u, \theta_{j,k}^u \right\rangle \notag\\
        &&\quad + \frac{24\gamma^2}{n^2}\sum\limits_{j=2}^n\left\langle \sum\limits_{i=1}^{j-1} \omega_{i,k}^u, \omega_{j,k}^u \right\rangle.\notag
    \end{eqnarray}
    Unrolling the recurrence, we get the result.
\end{proof}

Next, we proceed with the full statement of our main result for \algname{DProx-clipped-SEG-shift} in the quasi-strongly monotone case.

\begin{theorem}[Case 1 from Theorem~\ref{thm:D_prox_clipped_SEG_main}]\label{thm:D_prox_clipped_SEG_QSM_appendix}
    Let Assumptions \ref{as:bounded_alpha_moment}, \ref{as:L_Lip} and \ref{as:str_monotonicity} hold for $Q = B_{3n\sqrt{V}}(x^*)$, where $V \geq \|x^0 - x^*\|^2 + \frac{36000000\gamma^2\ln^2 \frac{48n(K+1)}{\beta}}{n^2}\sum_{i=1}^n\|F_i(x^*)\|^2$ and
    \begin{eqnarray}
        0< \gamma &\leq& \min\left\{\frac{1}{72\cdot 10^6 \mu \ln^2 \frac{48n(K+1)}{\beta}}, \frac{1}{6L}, \frac{\sqrt{n}}{15000 L \ln \frac{48n(K+1)}{\beta}}, \frac{2\ln(B_K)}{\mu(K+1)}\right\}, \label{eq:gamma_SEG_str_mon}\\
        B_K &=& \max\left\{2, \frac{n^{\frac{2(\alpha-1)}{\alpha}}(K+1)^{\frac{2(\alpha-1)}{\alpha}}\mu^2V}{3110400^{\frac{2}{\alpha}}\sigma^2\ln^{\frac{2(\alpha-1)}{\alpha}}\left(\frac{48n(K+1)}{\beta}\right)\ln^2(B_K)} \right\} \label{eq:B_K_SEG_str_mon_1} \\
        &=& \cO\left(\max\left\{2, \frac{n^{\frac{2(\alpha-1)}{\alpha}}K^{\frac{2(\alpha-1)}{\alpha}}\mu^2V}{\sigma^2\ln^{\frac{2(\alpha-1)}{\alpha}}\left(\frac{nK}{\beta}\right)\ln^2\left(\max\left\{2, \frac{n^{\frac{2(\alpha-1)}{\alpha}}K^{\frac{2(\alpha-1)}{\alpha}}\mu^2V}{\sigma^2\ln^{\frac{2(\alpha-1)}{\alpha}}\left(\frac{nK}{\beta}\right)} \right\}\right)} \right\}\right), \label{eq:B_K_SEG_str_mon_2} \\
        \lambda_k &=& \frac{n\exp(-\gamma\mu(1 + \nicefrac{k}{4}))\sqrt{V}}{300\gamma \ln \tfrac{48n(K+1)}{\beta}}, \label{eq:lambda_SEG_str_mon}\\
        \nu &=& \gamma\mu \label{eq:nu_SEG_str_mon}
    \end{eqnarray}
    for some $K \geq 1$ and $\beta \in (0,1]$. Then, after $K$ iterations of \algname{DProx-clipped-SEG-shift}, the following inequality holds with probability at least $1 - \beta$:
    \begin{equation}
        \|x^{K+1} - x^*\|^2 \leq 2\exp\left(-\frac{\gamma\mu(K+1)}{2}\right)V. \label{eq:main_result_str_mon}
    \end{equation}
    In particular, when $\gamma$ equals the minimum from \eqref{eq:gamma_SEG_monotone}, then after $K$ iterations of \algname{DProx-clipped-SEG-shift}, we have with probability at least $1-\beta$ that 
    \begin{eqnarray*}
        \|x^{K+1} - x^*\|^2 &=& \cO\Bigg(\max\Bigg\{V\exp\left(- \frac{K}{\ln^2 \tfrac{nK}{\beta}}\right), V\exp\left(- \frac{\mu  K}{L}\right),\notag \\
        &&\quad\quad\quad\quad\quad\quad\quad\quad\quad V\exp\left(- \frac{\mu \sqrt{n} K}{L \ln \tfrac{nK}{\beta}}\right), \frac{\sigma^2\ln^{\frac{2(\alpha-1)}{\alpha}}\left(\frac{nK}{\beta}\right)\ln^2B_K}{n^{\frac{2(\alpha-1)}{\alpha}}K^{\frac{2(\alpha-1)}{\alpha}}\mu^2}\Bigg\}\Bigg),\label{eq:clipped_SEG_str_monotone_case_2_appendix}
    \end{eqnarray*}
    i.e., to achieve $\|x^{K} - x^*\|^2 \leq \varepsilon$ with probability at least $1 - \beta$ \algname{DProx-clipped-SEG-shift} needs
    \begin{align}
        K = \cO\Bigg(\max\Bigg\{\Bigg(\frac{L}{\sqrt{n}\mu}&+\ln\left(\frac{nL}{\mu \beta}\ln\frac{V}{\varepsilon}\right)\Bigg)\ln\left(\frac{V}{\varepsilon}\right)\ln\Bigg(\frac{nL}{\mu \beta}\ln\frac{V}{\varepsilon}\Bigg),\notag\\
        &\frac{L}{\mu}\ln\left(\frac{V}{\varepsilon}\right), \frac{1}{n}\left(\frac{\sigma^2}{\mu^2\varepsilon}\right)^{\frac{\alpha}{2(\alpha-1)}}\ln \left(\frac{n}{\beta} \left(\frac{\sigma^2}{\mu^2\varepsilon}\right)^{\frac{\alpha}{2(\alpha-1)}}\right)\ln^{\frac{\alpha}{\alpha-1}}\left(B_\varepsilon\right)\Bigg\}\Bigg) \label{eq:clipped_SEG_str_monotone_case_complexity_appendix}
    \end{align}
    iterations/oracle calls per worker, where
    \begin{equation*}
        B_\varepsilon = \max\left\{2, \frac{V}{\varepsilon \ln \left(\frac{1}{\beta} \left(\frac{\sigma^2}{\mu^2\varepsilon}\right)^{\frac{\alpha}{2(\alpha-1)}}\right)}\right\}.
    \end{equation*}
\end{theorem}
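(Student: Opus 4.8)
The plan is to follow the induction-based template already used for the monotone SEG result (Theorem~\ref{thm:D_prox_clipped_SEG_monotone_appendix}) and for the quasi-strongly monotone SGDA result (Theorem~\ref{thm:main_result_prox_clipped_SGDA_2}), adapting it to the exponentially weighted recursion furnished by Lemma~\ref{lem:SEG_optimization_lemma_QSM}. I would first fix the Lyapunov function $V_k = \|x^k - x^*\|^2 + \frac{36000000\gamma^2\ln^2\frac{48n(K+1)}{\beta}}{n^2}\sum_{i=1}^n(\|\thh_i^k - h_i^*\|^2 + \|\hh_i^k - h_i^*\|^2)$ with $h_i^* = F_i(x^*)$, and introduce, for each $k = 0,1,\ldots,K+1$, the event $E_k$ on which $V_t \leq 2\exp(-\gamma\mu t/2)V$ together with the auxiliary bounds $\|\frac{\gamma}{n}\sum_{i=1}^{r-1}\theta_{i,t-1}^u\| \leq \exp(-\gamma\mu(t-1)/4)\frac{\sqrt{V}}{2}$ and $\|\frac{\gamma}{n}\sum_{i=1}^{r-1}\omega_{i,t-1}^u\| \leq \exp(-\gamma\mu(t-1)/4)\frac{\sqrt{V}}{2}$ hold for all $t \leq k$ and $r \in [n]$. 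The goal is to show $\PP\{E_k\} \geq 1 - k\beta/(K+1)$ by induction on $k$, so that $k = K+1$ delivers \eqref{eq:main_result_str_mon} via $\|x^{K+1}-x^*\|^2 \leq V_{K+1}$.

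For the inductive step, assuming $\PP\{E_{T-1}\} \geq 1 - (T-1)\beta/(K+1)$, I would first verify that $E_{T-1}$ forces $x^t, \tx^t \in B_{4n\sqrt{V}}(x^*)$ for $t \leq T-1$: exactly as in the monotone proof this follows from $V_t \leq 2V$, the smoothness bound on $\|F_i(x^*)\|$, and nonexpansiveness of the prox operator, so that Lemma~\ref{lem:SEG_optimization_lemma_QSM} applies. Unrolling that lemma under $E_{T-1}$ bounds $V_T$ by $\exp(-\gamma\mu T/2)V$ plus several stochastic sums obtained by splitting each noise vector into unbiased and biased parts via \eqref{eq:gap_thm_SEG_technical_4}--\eqref{eq:gap_thm_SEG_technical_5} and expanding the squared norms as in \eqref{eq:SEG_sum_theta_representation}. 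I would bound the martingale-difference sums (the cross terms $\langle \xi_t, \theta_{i,t}^u\rangle$, $\langle \xi_t, \omega_{i,t}^u\rangle$, and the centered squared-norm terms) by Bernstein's inequality (Lemma~\ref{lem:Bernstein_ineq}), and the purely biased sums by the deterministic estimates $\|\theta_l^b\|,\|\omega_l^b\| \lesssim \sigma^\alpha/\lambda_l^{\alpha-1}$ and $\EE\|\theta_{i,l}^u\|^2,\EE\|\omega_{i,l}^u\|^2 \lesssim \lambda_l^{2-\alpha}\sigma^\alpha$ from Lemma~\ref{lem:bias_and_variance_clip}, all conditioned on the boundedness $\|\xi_t\| \leq 2\sqrt{2}\exp(-\gamma\mu t/4)\sqrt{V}$ of the clipped quantity. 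In each bound the exponential weight $\exp(-\gamma\mu(T-1-t)/2)$ from the recursion must be matched against the geometric growth $\lambda_t^{-(\alpha-1)} \sim \exp(\gamma\mu\alpha t/4)$ of the reciprocal clipping level so that the resulting geometric sum is dominated by $\exp(-\gamma\mu T/2)$; this matching, together with choosing the numerical constants so that all the terms sum to at most $\exp(-\gamma\mu T/2)V$, is where most of the calculation lives.

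The main obstacle, as in the single-node analyses, is the pair of inner-product cross terms $\frac{32\gamma^2}{n^2}\sum_{j=2}^n\langle\sum_{i=1}^{j-1}\theta_{i}^u,\theta_j^u\rangle$ and its $\omega$ analogue, which are not sums of quantities independent across the client index $j$. I would handle them exactly as in Theorems~\ref{thm:D_prox_clipped_SEG_monotone_appendix} and \ref{thm:main_result_prox_clipped_SGDA_2}: introduce truncated partial sums $\zeta_j^l,\delta_j^l$, bound the truncated versions by a further application of Bernstein, and then run a \emph{secondary induction over the client index} $j = 2,\ldots,n$ to show that, with high probability, $\|\frac{\gamma}{n}\sum_{i=1}^{r-1}\theta_{i,T-1}^u\|$ and $\|\frac{\gamma}{n}\sum_{i=1}^{r-1}\omega_{i,T-1}^u\|$ stay below $\exp(-\gamma\mu(T-1)/4)\frac{\sqrt{V}}{2}$, which retroactively identifies the truncated partial sums with the true ones and closes the auxiliary bounds in $E_T$. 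The role of the choice $\nu = \gamma\mu$ is precisely to keep both shift sequences $\{\thh_i^k\}$ and $\{\hh_i^k\}$ contracting toward $F_i(x^*)$ at the Lyapunov rate, which is what makes the clipping bias controllable. A union bound over all Bernstein events (each of probability $\leq \beta/(24n(K+1))$), across both noise sequences, then yields $\PP\{E_T\} \geq 1 - T\beta/(K+1)$. Finally, substituting the specific $\gamma$ from \eqref{eq:gamma_SEG_str_mon} into $V_{K+1} \leq 2\exp(-\gamma\mu(K+1)/2)V$ and optimizing over the regimes of the $\min$ defining $\gamma$ produces the rate and the complexity \eqref{eq:clipped_SEG_str_monotone_case_complexity_appendix} by the same elementary manipulation as in Theorem~\ref{thm:main_result_prox_clipped_SGDA_2}.
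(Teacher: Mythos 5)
Your proposal follows essentially the same route as the paper's proof of Theorem~\ref{thm:D_prox_clipped_SEG_QSM_appendix}: the same Lyapunov function and probability events, induction via Lemma~\ref{lem:SEG_optimization_lemma_QSM}, Bernstein bounds on the unbiased/centered sums and deterministic bounds on the bias terms from Lemma~\ref{lem:bias_and_variance_clip}, the secondary induction over the client index to control the cross terms, and the final substitution of $\gamma$. The only cosmetic discrepancy is that the bounded factor in the Bernstein applications here is the truncated displacement $\eta_t = x^t - x^*$ with $\|\eta_t\| \leq \exp(-\nicefrac{\gamma\mu t}{4})\sqrt{2V}$ rather than the shifted vector $\xi_t$ used in the SGDA analysis, which does not affect the argument.
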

\begin{proof}
    Similar to previous results, our proof is induction-based.  To formulate the statement rigorously, we introduce probability event $E_k$ for each $k = 0,1,\ldots, K+1$ as follows: inequalities
    \begin{gather}
        V_t \leq 2 \exp\left(-\frac{\gamma\mu t}{2}\right) V \label{eq:induction_inequality_str_mon_SEG}\\
         \left\|\frac{\gamma}{n}\sum\limits_{i=1}^{r-1}\theta_{i,t-1}^u\right\| \leq \exp\left(-\frac{\gamma\mu (t-1)}{4}\right)\frac{\sqrt{V}}{2}, \label{eq:induction_inequality_str_mon_SEG_1}\\
         \left\|\frac{\gamma}{n}\sum\limits_{i=1}^{r-1}\omega_{i,t-1}^u\right\| \leq \exp\left(-\frac{\gamma\mu (t-1)}{4}\right)\frac{\sqrt{V}}{2} \label{eq:induction_inequality_str_mon_SEG_2}
    \end{gather}
    hold for $t = 0,1,\ldots,k$ and $r = 1, 2, \ldots, n$ simultaneously. We will prove by induction that $\PP\{E_k\} \geq  1 - \nicefrac{k\beta}{(K+1)}$ for all $k = 0,1,\ldots,K+1$. The base of induction follows immediately by the definition of $V$. Next, we assume that the statement holds for $k = T-1 \leq K$, i.e., $\PP\{E_{T-1}\} \geq  1 - \nicefrac{(T-1)\beta}{(K+1)}$. Let us show that it also holds for $k = T$, i.e., $\PP\{E_{T}\} \geq  1 - \nicefrac{T\beta}{(K+1)}$.

    % Probability event $E_{T-1}$ implies that $\|x^t - x^*\|^2 \leq V_t \leq 2 \exp(-\nicefrac{\gamma\mu t}{2}) V \leq  9V$ for $t = 0,\ldots,T-1$, i.e., $x^t \in B_{3\sqrt{V}}(x^*)$. Then, from $E_{T-1}$ it also follows that 
    % \begin{eqnarray}
    %     \|F_i(x^t)\| &\leq& L\|x^t - x^*\| + \|F_i(x^*)\| \overset{\eqref{eq:induction_inequality_str_mon_SEG}}{\leq} \sqrt{2}L\exp(- \nicefrac{\gamma\mu t}{4})\sqrt{V} +  \overset{\eqref{eq:gamma_SEG_str_mon},\eqref{eq:lambda_SEG_str_mon}}{\leq} \frac{\lambda_t}{2}. \label{eq:operator_bound_x_t_SEG_str_mon}
    % \end{eqnarray}
    Similarly to the monotone case, one can show that due to our choice of the clipping level, we have that $E_{T-1}$ implies $x^t, \tx^t \in B_{4n\sqrt{V}}(x^*)$ for $t=0,\ldots,T-1$. Indeed, for $t = 0,1,\ldots,T-1$ inequality \eqref{eq:induction_inequality_str_mon_SEG} gives $x^t \in B_{2\sqrt{V}}(x^*)$. Next, for $\tx^t$, $t = 0, \ldots, T-1$ event $E_{T-1}$ implies
    \begin{eqnarray*}
        \|\tx^t - x^*\| &=& \|\prox_{\gamma\Psi}(x^t - \gamma \tg^t) - \prox_{\gamma \Psi}(x^* - \gamma F(x^*))\| \\
        &\leq& \|x^t - x^* - \gamma (\tg^t - F(x^*))\| \\
        &\leq& \|x^t - x^*\| + \gamma \left\|\frac{1}{n}\sum\limits_{i=1}^n (\thh_i^t - F_i(x^*)) + \frac{1}{n}\sum\limits_{i=1}^n \tilde\Delta_i^t \right\| \\
        &\leq& 2\sqrt{V} + \gamma \sqrt{\frac{1}{n}\sum\limits_{i=1}^n \|\thh_i^t - F_i(x^*)\|^2} + \frac{\gamma}{n}\sum\limits_{i=1}^n \|\tilde \Delta_i^t\|\\
        &\leq& 2\sqrt{V} + \frac{\sqrt{n}}{6000 \ln \frac{48n(K+1)}{\beta}}\sqrt{V_t} + \gamma \lambda_t\\
        &\overset{\eqref{eq:induction_inequality_str_mon_SEG}}{\leq}& \left(2 + \frac{20n + \sqrt{2n}}{6000 \ln \frac{48n(K+1)}{\beta}}\right)\sqrt{V} \leq 4n\sqrt{V}.
    \end{eqnarray*}

    This means that we can apply Lemma~\ref{lem:SEG_optimization_lemma_QSM}: $E_{T-1}$ implies
    \begin{eqnarray}
       V_{T} &\leq& \exp\left(-\frac{\gamma\mu}{2}T\right) V + 2\gamma \sum\limits_{t=0}^{T-1}\exp\left(-\frac{\gamma\mu}{2}(T-1-t)\right)\langle \theta_t, x^t - x^* \rangle\notag\\
       &&\quad + \frac{\gamma^2}{n^2}\sum\limits_{t=0}^{T-1}\sum\limits_{i=1}^n\exp\left(-\frac{\gamma\mu}{2}(T-1-t)\right)\left(18\|\theta_{i,t}^u\|^2 + 14\|\omega_{i,t}^u\|^2\right)\notag\\
        &&\quad  + \gamma^2\sum\limits_{t=0}^{T-1}\sum\limits_{i=1}^n\exp\left(-\frac{\gamma\mu}{2}(T-1-t)\right)\left(18\|\theta_{i,t}^b\|^2 + 14\|\omega_{i,t}^b\|^2\right)\notag\\
        &&\quad + \frac{32\gamma^2}{n^2}\sum\limits_{t=0}^{T-1}\sum\limits_{j=2}^n\exp\left(-\frac{\gamma\mu}{2}(T-1-t)\right)\left\langle \sum\limits_{i=1}^{j-1} \theta_{i,t}^u, \theta_{j,t}^u \right\rangle \notag\\
        &&\quad + \frac{24\gamma^2}{n^2}\sum\limits_{t=0}^{T-1}\sum\limits_{j=2}^n\exp\left(-\frac{\gamma\mu}{2}(T-1-t)\right)\left\langle \sum\limits_{i=1}^{j-1} \omega_{i,t}^u, \omega_{j,t}^u \right\rangle. \notag
    \end{eqnarray}
    Before we proceed, we introduce a new notation:
    \begin{equation*}
        \eta_t = \begin{cases}x^t - x^*,& \text{if } \|x^t - x^*\| \leq \exp\left(-\frac{\gamma\mu t}{4}\right)\sqrt{2V},\\ 0,& \text{otherwise,} \end{cases}
    \end{equation*}
    for all $t = 0, 1, \ldots, T$. Random vectors $\{\eta_t\}_{t=0}^T$ are bounded almost surely:
     \begin{equation}
        \|\eta_t\| \leq \exp\left(-\frac{\gamma\mu t}{4}\right)\sqrt{2V}  \label{eq:eta_t_bound_SEG_str_mon}
    \end{equation}
    for all $t = 0, 1, \ldots, T$. In addition, $\eta_t = x^t - x^*$ follows from $E_{T-1}$ for all $t = 0, 1, \ldots, T$ and, thus, $E_{T-1}$ implies
    \begin{eqnarray}
       V_{T} &\leq& \exp\left(-\frac{\gamma\mu}{2}T\right) V + \underbrace{\frac{2\gamma}{n} \sum\limits_{l=0}^{T-1}\sum\limits_{i=1}^n\exp\left(-\frac{\gamma\mu}{2}(T-1-l)\right)\langle \theta_{i,l}^u, \eta_l \rangle}_{\circledOne}\notag\\
       &&\quad + \underbrace{2\gamma \sum\limits_{l=0}^{T-1}\exp\left(-\frac{\gamma\mu}{2}(T-1-l)\right)\langle \theta_{l}^b, \eta_l \rangle}_{\circledTwo}\notag\\
       &&\quad + \underbrace{\frac{\gamma^2}{n^2}\sum\limits_{l=0}^{T-1}\sum\limits_{i=1}^n\exp\left(-\frac{\gamma\mu}{2}(T-1-l)\right)\left(18\EE_{\xi_{2,i}^l}\left[\|\theta_{i,l}^u\|^2\right] + 14\EE_{\xi_{1,i}^l}\left[\|\omega_{i,l}^u\|^2\right]\right)}_{\circledThree}\notag\\
       &&\quad + \underbrace{\frac{\gamma^2}{n^2}\sum\limits_{l=0}^{T-1}\sum\limits_{i=1}^n\exp\left(-\frac{\gamma\mu}{2}(T-1-l)\right)\left(18\|\theta_{i,l}^u\|^2 + 14\|\omega_{i,l}^u\|^2 - 18\EE_{\xi_{2,i}^l}\left[\|\theta_{i,l}^u\|^2\right] - 14\EE_{\xi_{1,i}^l}\left[\|\omega_{i,l}^u\|^2\right]\right)}_{\circledFour}\notag\\
        &&\quad  + \underbrace{\frac{\gamma^2}{n^2}\sum\limits_{l=0}^{T-1}\sum\limits_{i=1}^n\exp\left(-\frac{\gamma\mu}{2}(T-1-l)\right)\left(18\|\theta_{i,l}^b\|^2 + 14\|\omega_{i,l}^b\|^2\right)}_{\circledFive}\notag\\
        &&\quad + \underbrace{\frac{32\gamma^2}{n^2}\sum\limits_{l=0}^{T-1}\sum\limits_{j=2}^n\exp\left(-\frac{\gamma\mu}{2}(T-1-l)\right)\left\langle \sum\limits_{i=1}^{j-1} \theta_{i,l}^u, \theta_{j,l}^u \right\rangle}_{\circledSix} \notag\\
        &&\quad + \underbrace{\frac{24\gamma^2}{n^2}\sum\limits_{l=0}^{T-1}\sum\limits_{j=2}^n\exp\left(-\frac{\gamma\mu}{2}(T-1-l)\right)\left\langle \sum\limits_{i=1}^{j-1} \omega_{i,l}^u, \omega_{j,l}^u \right\rangle}_{\circledSeven}. \label{eq:ndvjinuibvbsvdhbvdhbfussdncdni}
    \end{eqnarray}

    To derive high-probability bounds for $\circledOne, \circledTwo, \circledThree, \circledFour, \circledFive, \circledSix, \circledSeven$ we need to establish several useful inequalities related to $\theta_{i,l}^u, \theta_{i,l}^b, \omega_{i,l}^u, \omega_{i,l}^b$. First, by definition of clipping
    \begin{equation}
        \|\theta_{i,l}^u\| \leq 2\lambda_l,\quad \|\omega_{i,l}^u\| \leq 2\lambda_l. \label{eq:theta_omega_magnitude_str_mon}
    \end{equation}
    Next, we notice that $E_{T-1}$ implies
    \begin{eqnarray*}
        \|F_i(x^l) - \thh_i^l\| &\leq& \|F_i(x^l) - F_i(x^*)\| + \|\thh_i^l - F_i(x^*)\|\\
        &\overset{\eqref{eq:L_Lip}}{\leq}& L\|x^l - x^*\| + \sqrt{\sum\limits_{j=1}^n \|\thh_i^l - F_i(x^*)\|^2}\\
        &\leq& L\sqrt{V_l} + \frac{n\sqrt{V_l}}{6000\gamma \ln \frac{48n(K+1)}{\beta}}\\
        &\overset{\eqref{eq:induction_inequality_str_mon_SEG}}{\leq}& \sqrt{2}\left(L + \frac{n}{6000\gamma \ln \frac{48n(K+1)}{\beta}}\right)\exp\left(-\frac{\gamma\mu l}{4}\right)\sqrt{V} \overset{\eqref{eq:gamma_SEG_str_mon}, \eqref{eq:lambda_SEG_str_mon}}{\leq} \frac{\lambda_l}{2}
    \end{eqnarray*}
    and
    \begin{eqnarray*}
        \|F_i(\tx^l) - \hh_i^l\| &\leq& \|F_i(\tx^l) - F_i(x^*)\| + \|\hh_i^l - F_i(x^*)\|\\
        &\overset{\eqref{eq:L_Lip}}{\leq}& L\|\tx^l - x^*\| + \sqrt{\sum\limits_{j=1}^n \|\hh_i^l - F_i(x^*)\|^2}\\
        &\leq& L\|\prox_{\gamma\Psi}(x^l - \gamma \tg^l) - \prox_{\gamma \Psi}(x^* - \gamma F(x^*))\| + \frac{n\sqrt{V_l}}{6000\gamma \ln \frac{48n(K+1)}{\beta}}\\
        &\leq& L\|x^l - x^* - \gamma (\tg^l - F(x^*))\| + \frac{n\sqrt{V_l}}{6000\gamma \ln \frac{48n(K+1)}{\beta}}\\
        &\leq& L\|x^l - x^*\| + L\gamma \left\|\frac{1}{n}\sum\limits_{i=1}^n (\thh_i^l - F_i(x^*)) + \frac{1}{n}\sum\limits_{i=1}^n \tilde\Delta_i^l \right\| + \frac{n\sqrt{V_l}}{6000\gamma \ln \frac{48n(K+1)}{\beta}}\\
        &\leq& \left(L + \frac{n}{6000\gamma \ln \frac{48n(K+1)}{\beta}}\right)\sqrt{V_l} + L\gamma \sqrt{\frac{1}{n}\sum\limits_{i=1}^n \|\thh_i^l - F_i(x^*)\|^2} + \frac{L\gamma}{n}\sum\limits_{i=1}^n \|\tilde \Delta_i^l\|\\
        &\leq& \left(L + \frac{n + L\gamma\sqrt{n}}{6000\gamma \ln \frac{48n(K+1)}{\beta}}\right)\sqrt{V_l} + L\gamma \lambda_t\\
        &\overset{\eqref{eq:induction_inequality_str_mon_SEG}}{\leq}& \sqrt{2}\left(L + \frac{n + L\gamma\sqrt{n}}{6000\gamma \ln \frac{48n(K+1)}{\beta}}\right)\exp\left(-\frac{\gamma\mu l}{4}\right)\sqrt{V} + L\gamma \lambda_l \overset{\eqref{eq:gamma_SEG_str_mon}, \eqref{eq:lambda_SEG_str_mon}}{\leq} \frac{\lambda_l}{2}
    \end{eqnarray*}
    for $l = 0,1,\ldots,T-1$ and $i\in [n]$. Therefore, one can apply Lemma~\ref{lem:bias_and_variance_clip} and get
    \begin{gather}
        \left\|\theta_l^b\right\| \leq \frac{1}{n}\sum\limits_{i=1}^n \|\theta_{i,l}^b\| \leq \frac{2^\alpha\sigma^\alpha}{\lambda_l^{\alpha-1}},\quad \left\|\omega_l^b\right\| \leq \frac{1}{n}\sum\limits_{i=1}^n \|\omega_{i,l}^b\| \leq \frac{2^\alpha\sigma^\alpha}{\lambda_l^{\alpha-1}}, \label{eq:bias_theta_omega_str_mon}\\
        % \EE_{\xi_{2,i}^l}\left[\left\|\theta_{i,l}\right\|^2\right] \leq 18 \lambda^{2-\alpha}\sigma^\alpha,\quad \EE_{\xi_1^l}\left[\left\|\omega_l\right\|^2\right] \leq 18 \lambda^{2-\alpha}\sigma^\alpha, \label{eq:distortion_theta_omega}\\
        \EE_{\xi_{2,i}^l}\left[\left\|\theta_{i,l}^u\right\|^2\right] \leq 18 \lambda_l^{2-\alpha}\sigma^\alpha,\quad \EE_{\xi_{1,i}^l}\left[\left\|\omega_{i,l}^u\right\|^2\right] \leq 18 \lambda_l^{2-\alpha}\sigma^\alpha, \label{eq:variance_theta_omega_str_mon}
    \end{gather}
    for all $l = 0,1, \ldots, T-1$ and $i\in [n]$.

    \paragraph{Upper bound for $\circledOne$.} To estimate this sum, we will use Bernstein's inequality. The summands have conditional expectations equal to zero:
    \begin{equation*}
        \EE_{\xi_{2,i}^l}\left[\frac{2\gamma}{n}\exp\left(-\frac{\gamma\mu}{2}(T-1-l)\right) \langle \eta_l, \theta_{i,l}^u \rangle\right] = \frac{2\gamma}{n} \exp\left(-\frac{\gamma\mu}{2}(T-1-l)\right)\left\langle \eta_l, \EE_{\xi_{2,i}^l}[\theta_{i,l}^u] \right\rangle = 0.
    \end{equation*}
    Moreover, for all $l = 0,\ldots, T-1$ random vectors $\{\theta_{i,l}^u\}_{i=1}^n$ are independent. Thus, sequence $\left\{\frac{2\gamma}{n}\exp\left(-\frac{\gamma\mu}{2}(T-1-l)\right) \langle \eta_l, \theta_{i,l}^u \rangle\right\}_{l,i = 0,1}^{T-1,n}$ is a martingale difference sequence. Next, the summands are bounded:
    \begin{eqnarray}
        \left|\frac{2\gamma}{n} \exp\left(-\frac{\gamma\mu}{2}(T-1-l)\right)\langle \eta_l, \theta_{i,l}^u \rangle \right| &\leq& \frac{2\gamma}{n}\exp\left(-\frac{\gamma\mu}{2}(T-1-l)\right) \|\eta_l\|\cdot \|\theta_{i,l}^u\|\notag\\
        &\overset{\eqref{eq:eta_t_bound_SEG_str_mon},\eqref{eq:theta_omega_magnitude_str_mon}}{\leq}& \frac{2\sqrt{2V}\gamma \exp\left(-\frac{\gamma\mu(T-1)}{2}\right)}{n} \exp\left(\frac{\gamma\mu l}{4}\right)\lambda_l\notag\\
        &\overset{\eqref{eq:lambda_SEG_monotone}}{=}& \frac{\exp\left(-\frac{\gamma\mu T}{2}\right)V}{100\ln\frac{48n(K+1)}{\beta}} \eqdef c. \label{eq:str_mon_thm_SEG_technical_6_5}
    \end{eqnarray}
    Finally, conditional variances $\sigma_{i,l}^2 \eqdef \EE_{\xi_{2,i}^l}\left[\frac{4\gamma^2}{n^2}\exp\left(-\gamma\mu(T-1-l)\right) \langle \eta_l, \theta_{i,l}^u \rangle^2\right]$ of the summands are bounded:
    \begin{eqnarray}
        \sigma_{i,l}^2 &\leq& \EE_{\xi_{2,i}^l}\left[\frac{4\gamma^2 }{n^2}\exp\left(-\gamma\mu(T-1-l)\right)\|\eta_l\|^2\cdot \|\theta_{i,l}^u\|^2\right] \notag\\
        &\overset{\eqref{eq:eta_t_bound_SEG_str_mon}}{\leq}& \frac{8\gamma^2 V\exp\left(-\gamma\mu\left(T-1-\frac{l}{2}\right)\right)}{n^2} \EE_{\xi_{2,i}^l}\left[\|\theta_{i,l}^u\|^2\right]. \label{eq:str_mon_thm_SEG_technical_7}
    \end{eqnarray}
    Applying Bernstein's inequality (Lemma~\ref{lem:Bernstein_ineq}) with $X_{i,l} = \frac{2\gamma}{n}\exp\left(-\frac{\gamma\mu}{2}(T-1-l)\right) \langle \eta_l, \theta_{i,l}^u \rangle$, constant $c$ defined in \eqref{eq:str_mon_thm_SEG_technical_6_5}, $b = \frac{\exp\left(-\frac{\gamma\mu T}{2}\right)V}{100}$, $G = \frac{\exp\left(-\gamma\mu T\right)V^2}{60000\ln\frac{48n(K+1)}{\beta}}$, we get
    \begin{eqnarray*}
        \PP\left\{|\circledOne| > \frac{\exp\left(-\frac{\gamma\mu T}{2}\right)V}{100} \text{ and } \sum\limits_{l=0}^{T-1}\sum\limits_{i=1}^n\sigma_{i,l}^2 \leq \frac{\exp\left(-\gamma\mu T\right)V^2}{60000\ln\frac{48n(K+1)}{\beta}}\right\} &\leq& 2\exp\left(- \frac{b^2}{2G + \nicefrac{2cb}{3}}\right)\\
        &=& \frac{\beta}{24n(K+1)}.
    \end{eqnarray*}
    The above is equivalent to $\PP\{E_{\circledOne}\} \geq 1 - \frac{\beta}{24n(K+1)}$ for
    \begin{equation}
        E_{\circledOne} = \left\{\text{either } \sum\limits_{l=0}^{T-1}\sum\limits_{i=1}^n\sigma_{i,l}^2 > \frac{\exp\left(-\gamma\mu T\right)V^2}{60000\ln\frac{48n(K+1)}{\beta}} \text{ or } |\circledOne| \leq \frac{\exp\left(-\frac{\gamma\mu T}{2}\right)V}{100}\right\}. \label{eq:bound_1_str_mon_SEG}
    \end{equation}
    Moreover, $E_{T-1}$ implies
    \begin{eqnarray}
        \sum\limits_{l=0}^{T-1}\sum\limits_{i=1}^n\sigma_{i,l}^2 &\overset{\eqref{eq:str_mon_thm_SEG_technical_7}}{\leq}& \frac{8\gamma^2 V\exp\left(-\gamma\mu\left(T-1\right)\right)}{n^2} \sum\limits_{l=0}^{T-1}\exp\left(\frac{\gamma\mu l}{2}\right) \sum\limits_{i=1}^n \EE_{\xi_{2,i}^l}\left[\|\theta_{i,l}^u\|^2\right]\notag\\
        &\overset{\eqref{eq:variance_theta_omega_str_mon}, T \leq K+1}{\leq}&  \frac{144\gamma^2 V\exp\left(-\gamma\mu\left(T-1\right)\right)\sigma^\alpha}{n} \sum\limits_{l=0}^{T-1}\exp\left(\frac{\gamma\mu l}{2}\right) \lambda_l^{2-\alpha}\notag\\
        &\overset{\eqref{eq:lambda_SEG_str_mon}}{\leq}& \frac{144\gamma^\alpha V^{2 - \frac{\alpha}{2}}\exp\left(-\gamma\mu T\right)\sigma^\alpha}{6000^{2-\alpha} n^{\alpha-1} \ln^{2-\alpha}\frac{48n(K+1)}{\beta}} \sum\limits_{l=0}^{T-1}\exp\left(\frac{\gamma\mu l \alpha}{4}\right)\notag\\
        &\leq& \frac{144\gamma^\alpha V^{2 - \frac{\alpha}{2}}\exp\left(-\gamma\mu T\right)\sigma^\alpha (K+1) \exp\left(\frac{\gamma\mu K \alpha}{4}\right)}{6000^{2-\alpha} n^{\alpha-1} \ln^{2-\alpha}\frac{48n(K+1)}{\beta}}\notag\\
        &\overset{\eqref{eq:gamma_SEG_str_mon}}{\leq}& \frac{\exp\left(-\gamma\mu T\right)V^2}{60000\ln\frac{48n(K+1)}{\beta}}.\label{eq:bound_1_variances_str_mon_SEG}
    \end{eqnarray}

    \paragraph{Upper bound for $\circledTwo$.} Probability event $E_{T-1}$ implies
    \begin{eqnarray}
        \circledTwo &\leq& 2\gamma \sum\limits_{l=0}^{T-1}\exp\left(-\frac{\gamma\mu(T-1-l)}{2}\right)\|\eta_l\| \cdot \|\theta_l^b\| \overset{\eqref{eq:eta_t_bound_SEG_str_mon}, \eqref{eq:bias_theta_omega_str_mon}}{\leq} \\
        &\leq& 2^{\alpha+1}\gamma\sigma^\alpha\sqrt{2V}\exp\left(-\frac{\gamma\mu(T-1)}{2}\right) \sum\limits_{l=0}^{T-1}\exp\left(\frac{\gamma\mu l}{4}\right)\frac{1}{\lambda_l^{\alpha-1}}\notag\\
        &\overset{\eqref{eq:lambda_SEG_str_mon}}{\leq}& \frac{2^{\alpha+1}\cdot 120^{\alpha-1} \exp\left(-\frac{\gamma\mu T}{2}\right) (K+1) \exp\left(\frac{\gamma\mu K \alpha}{4}\right)\gamma^\alpha \sigma^\alpha \ln^{\alpha-1}\frac{48n(K+1)}{\beta}}{n^{\alpha-1}V^{\frac{\alpha}{2}-1}} \notag\\
        &\overset{\eqref{eq:gamma_SEG_str_mon}}{\leq}& \frac{3\exp\left(-\frac{\gamma\mu T}{2}\right)V}{100}. \label{eq:bound_2_variances_str_mon_SEG}
    \end{eqnarray}

    \paragraph{Upper bound for $\circledThree$.} Probability event $E_{T-1}$ implies
    \begin{eqnarray}
        \frac{18\gamma^2}{n^2}\sum\limits_{l=0}^{T-1}\sum\limits_{i=1}^n\exp\left(-\frac{\gamma\mu(T-1-l)}{2}\right)\EE_{\xi_{2,i}^l}\left[\|\theta_{i,l}^u\|^2\right] &\overset{\eqref{eq:bound_1_variances_str_mon_SEG}}{\leq}& \frac{\exp\left(-\frac{\gamma\mu T}{2}\right)V}{100} \notag
    \end{eqnarray}
    and, similarly,
    \begin{eqnarray}
        \frac{14\gamma^2}{n^2}\sum\limits_{l=0}^{T-1}\sum\limits_{i=1}^n\exp\left(-\frac{\gamma\mu(T-1-l)}{2}\right)\EE_{\xi_{1,i}^l}\left[\|\omega_{i,l}^u\|^2\right] &\overset{\eqref{eq:bound_1_variances_str_mon_SEG}}{\leq}& \frac{\exp\left(-\frac{\gamma\mu T}{2}\right)V}{100} \notag
    \end{eqnarray}
    that give
    \begin{equation}
        \circledThree \leq \frac{\exp\left(-\frac{\gamma\mu T}{2}\right)V}{50}. \label{eq:bound_3_variances_str_mon_SEG}
    \end{equation}

    \paragraph{Upper bound for $\circledFour$.} 
    To estimate this sum, we will use Bernstein's inequality. The summands have conditional expectations equal to zero:
    \begin{equation*}
        \frac{2\gamma^2}{n^2}\EE_{\xi_{1,i}^l,\xi_{2,i}^l}\left[18\|\theta_{i,l}^u\|^2 + 14\|\omega_{i,l}^u\|^2 - 18\EE_{\xi_{2,i}^l}\left[\|\theta_{i,l}^u\|^2\right] - 14\EE_{\xi_{1,i}^l}\left[\|\omega_{i,l}^u\|^2\right]\right] = 0.
    \end{equation*}
    Moreover, for all $l = 0,\ldots, T-1$ random vectors $\{\theta_{i,l}^u\}_{i=1}^n$, $\{\omega_{i,l}^u\}_{i=1}^n$ are independent. Thus, sequence $\left\{\frac{2\gamma^2}{n^2}\exp\left(-\frac{\gamma\mu}{2}(T-1-l)\right)\left(18\|\theta_{i,l}^u\|^2 + 14\|\omega_{i,l}^u\|^2 - 18\EE_{\xi_{2,i}^l}\left[\|\theta_{i,l}^u\|^2\right] - 14\EE_{\xi_{1,i}^l}\left[\|\omega_{i,l}^u\|^2\right]\right)\right\}_{l,i = 0,1}^{T-1,n}$ is a martingale difference sequence. Next, the summands are bounded:
    \begin{align}
        \frac{2\gamma^2}{n^2}\exp\left(-\frac{\gamma\mu}{2}(T-1-l)\right)\Big|18\|\theta_{i,l}^u\|^2 &+ 14\|\omega_{i,l}^u\|^2 - 18\EE_{\xi_{2,i}^l}\left[\|\theta_{i,l}^u\|^2\right] - 14\EE_{\xi_{1,i}^l}\left[\|\omega_{i,l}^u\|^2\right] \Big| \notag\\
        &\overset{\eqref{eq:theta_omega_magnitude_str_mon}}{\leq} \frac{256\exp\left(-\frac{\gamma\mu}{2}(T-1-l)\right)\gamma^2 \lambda_l^2}{n^2} \notag\\
        &\overset{\eqref{eq:lambda_SEG_str_mon}}{\leq} \frac{\exp\left(-\frac{\gamma\mu T}{2}\right)V}{6\ln\frac{48n(K+1)}{\beta}} \eqdef c.\label{eq:str_mon_thm_SEG_technical_6_5_1}
    \end{align}

    Finally, conditional variance 
    $$\widetilde\sigma^2_{i,l} \eqdef \frac{4\gamma^4}{n^4}\exp\left(-\gamma\mu(T-1-l)\right)\EE_{\xi_{1,i}^l,\xi_{2,i}^l}\left[\left|18\|\theta_{i,l}^u\|^2 + 14\|\omega_{i,l}^u\|^2 - 18\EE_{\xi_{2,i}^l}\left[\|\theta_{i,l}^u\|^2\right] - 14\EE_{\xi_{1,i}^l}\left[\|\omega_{i,l}^u\|^2\right]\right|^2\right] $$
    of the summands are bounded: 
    \begin{eqnarray}
        \widetilde\sigma_{i,l}^2 &\overset{\eqref{eq:str_mon_thm_SEG_technical_6_5_1}}{\leq}& \frac{\gamma^2\exp\left(-\frac{\gamma\mu }{2}(2T-1-l)\right)V}{3n^2\ln\frac{48n(K+1)}{\beta}}\notag\\
        &&\times~ \EE_{\xi_{1,i}^l, \xi_{2,i}^l}\left[\left|18\|\theta_{i,l}^u\|^2 + 14\|\omega_{i,l}^u\|^2 - 18\EE_{\xi_{2,i}^l}\left[\|\theta_{i,l}^u\|^2\right] - 14\EE_{\xi_{1,i}^l}\left[\|\omega_{i,l}^u\|^2\right] \right|\right]\notag\\
        &\leq& \frac{4\gamma^2\exp\left(-\frac{\gamma\mu }{2}(2T-1-l)\right)V}{3n^2\ln\frac{48n(K+1)}{\beta}} \EE_{\xi_{1,i}^l, \xi_{2,i}^l}\left[9\|\theta_{i,l}^u\|^2 + 7\|\omega_{i,l}^u\|^2 \right]. \label{eq:str_mon_thm_SEG_technical_7_1}
    \end{eqnarray}
    Applying Bernstein's inequality (Lemma~\ref{lem:Bernstein_ineq}) with\newline $X_{i,l} = \frac{2\gamma^2}{n^2}\exp\left(-\frac{\gamma\mu}{2}(T-1-l)\right)\left(18\|\theta_{i,l}^u\|^2 + 14\|\omega_{i,l}^u\|^2 - 18\EE_{\xi_{2,i}^l}\left[\|\theta_{i,l}^u\|^2\right] - 14\EE_{\xi_{1,i}^l}\left[\|\omega_{i,l}^u\|^2\right]\right)$, constant $c$ defined in \eqref{eq:str_mon_thm_SEG_technical_6_5_1}, $b = \frac{\exp\left(-\frac{\gamma\mu T}{2}\right)V}{6}$, $G = \frac{\exp\left(-\gamma\mu T\right)V^2}{216\ln\frac{48n(K+1)}{\beta}}$, we get
    \begin{eqnarray*}
        \PP\left\{|\circledFour| > \frac{\exp\left(-\frac{\gamma\mu T}{2}\right)V}{6} \text{ and } \sum\limits_{l=0}^{T-1}\sum\limits_{i=1}^n\widetilde\sigma_{i,l}^2 \leq \frac{\exp\left(-\gamma\mu T\right)V^2}{216\ln\frac{48n(K+1)}{\beta}}\right\} &\leq& 2\exp\left(- \frac{b^2}{2G + \nicefrac{2cb}{3}}\right)\\
        &=& \frac{\beta}{24n(K+1)}.
    \end{eqnarray*}
    The above is equivalent to $\PP\{E_{\circledFour}\} \geq 1 - \frac{\beta}{24n(K+1)}$ for
    \begin{equation}
        E_{\circledFour} = \left\{\text{either } \sum\limits_{l=0}^{T-1}\sum\limits_{i=1}^n\widetilde\sigma_{i,l}^2 > \frac{\exp\left(-\gamma\mu T\right)V^2}{216\ln\frac{48n(K+1)}{\beta}} \text{ or } |\circledFour| \leq \frac{\exp\left(-\frac{\gamma\mu T}{2}\right)V}{6}\right\}. \label{eq:bound_1_str_mon_SEG_1}
    \end{equation}
    Moreover, $E_{T-1}$ implies
    \begin{eqnarray}
        \sum\limits_{l=0}^{T-1}\sum\limits_{i=1}^n\widetilde\sigma_{i,l}^2 &\overset{\eqref{eq:str_mon_thm_SEG_technical_7_1}}{\leq}& \frac{4\exp\left(-\gamma\mu (T-\frac{1}{2})\right)\gamma^2V}{3n^2\ln\frac{48n(K+1)}{\beta}} \sum\limits_{l=0}^{T-1}\exp\left(\frac{\gamma\mu l}{2}\right)\sum\limits_{i=1}^n \EE_{\xi_{1,i}^l, \xi_{2,i}^l}\left[9\|\theta_{i,l}^u\|^2 + 7\|\omega_{i,l}^u\|^2 \right]\notag\\
        &\overset{\eqref{eq:bound_1_variances_str_mon_SEG}}{\leq}& \frac{\exp\left(-\gamma\mu T\right)V}{216\ln\frac{48n(K+1)}{\beta}}. \label{eq:bound_1_variances_str_mon_SEG_1}
    \end{eqnarray}

    \paragraph{Upper bound for $\circledFive$.} Probability event $E_{T-1}$ implies
    \begin{eqnarray}
        \circledFive &\overset{\eqref{eq:bias_theta_omega_str_mon}}{\leq}& 32\gamma^2\sum\limits_{l=0}^{T-1}\exp\left(-\frac{\gamma\mu}{2}(T-1-l)\right) \frac{2^{2\alpha}\sigma^{2\alpha}}{\lambda_{l}^{2\alpha-2}} \overset{\eqref{eq:lambda_SEG_str_mon}, \eqref{eq:gamma_SEG_str_mon}}{\leq}  \frac{\exp\left(-\frac{\gamma\mu T}{2}\right)V}{6}. \label{eq:bound_5_variances_str_mon_SEG}
    \end{eqnarray}

    \paragraph{Upper bounds for $\circledSix$ and $\circledSeven$.} These sums require more refined analysis. We introduce new vectors:
    \begin{eqnarray}
        \zeta_{j}^l = \begin{cases}
            \frac{\gamma}{n}\sum\limits_{i=1}^{j-1} \theta_{i,l}^u,& \text{if } \left\|\frac{\gamma}{n}\sum\limits_{i=1}^{j-1} \theta_{i,l}^u\right\| \leq \exp\left(-\frac{\gamma\mu l}{4}\right)\frac{\sqrt{V}}{2},\\
            0,& \text{otherwise,}
        \end{cases}, \label{eq:SEG_zeta_delta_def_str_mon_1} \\
        \delta_{j}^l = \begin{cases}
            \frac{\gamma}{n}\sum\limits_{i=1}^{j-1} \omega_{i,l}^u,& \text{if } \left\|\frac{\gamma}{n}\sum\limits_{i=1}^{j-1} \omega_{i,l}^u\right\| \leq \exp\left(-\frac{\gamma\mu l}{4}\right)\frac{\sqrt{V}}{2},\\
            0,& \text{otherwise,}
        \end{cases}\label{eq:SEG_zeta_delta_def_str_mon_2}
    \end{eqnarray}
    for all $j \in [n]$ and $l = 0,\ldots, T-1$. Then, by definition
    \begin{equation}
        \|\zeta_j^l\| \leq \exp\left(-\frac{\gamma\mu l}{4}\right)\frac{\sqrt{V}}{2},\quad \|\delta_j^l\| \leq \exp\left(-\frac{\gamma\mu l}{4}\right)\frac{\sqrt{V}}{2} \label{eq:SEG_zeta_delta_bound_str_mon}
    \end{equation}
    and
    \begin{eqnarray}
        \circledSix &=& \underbrace{\frac{32\gamma}{n} \sum\limits_{l=0}^{T-1} \sum\limits_{j=2}^n \exp\left(-\frac{\gamma\mu}{2}(T-1-l)\right)\left\langle \zeta_j^l, \theta_{j,l}^u \right\rangle}_{\circledSix'}\notag\\
        &&\quad + \frac{32\gamma}{n} \sum\limits_{l=0}^{T-1} \sum\limits_{j=2}^n\exp\left(-\frac{\gamma\mu}{2}(T-1-l)\right) \left\langle \frac{\gamma}{n}\sum\limits_{i=1}^{j-1} \theta_{i,l}^u - \zeta_j^l, \theta_{j,l}^u \right\rangle, \label{eq:SEG_extra_sums_six_distributed_str_mon} \\
        \circledSeven &=& \underbrace{\frac{24\gamma}{n} \sum\limits_{l=0}^{T-1} \sum\limits_{j=2}^n \exp\left(-\frac{\gamma\mu}{2}(T-1-l)\right)\left\langle \delta_j^l, \omega_{j,l}^u \right\rangle}_{\circledSeven'}\notag\\
        &&\quad + \frac{24\gamma}{n} \sum\limits_{l=0}^{T-1} \sum\limits_{j=2}^n \exp\left(-\frac{\gamma\mu}{2}(T-1-l)\right) \left\langle \frac{\gamma}{n}\sum\limits_{i=1}^{j-1} \omega_{i,l}^u - \delta_j^l, \omega_{j,l}^u \right\rangle. \label{eq:SEG_extra_sums_seven_distributed_str_mon}
    \end{eqnarray}
    We also note here that $E_{T-1}$ implies
    \begin{align}
        \frac{32\gamma}{n} \sum\limits_{l=0}^{T-1} \sum\limits_{j=2}^n\exp\left(-\frac{\gamma\mu}{2}(T-1-l)\right)& \left\langle \frac{\gamma}{n}\sum\limits_{i=1}^{j-1} \theta_{i,l}^u - \zeta_j^l, \theta_{j,l}^u \right\rangle \notag \\
        &= \frac{32\gamma}{n}  \sum\limits_{j=2}^n \left\langle \frac{\gamma}{n}\sum\limits_{i=1}^{j-1} \theta_{i,T-1}^u - \zeta_j^{T-1}, \theta_{j,T-1}^u \right\rangle, \label{eq:SEG_extra_sums_six_distributed_1_str_mon}\\
        \frac{24\gamma}{n} \sum\limits_{l=0}^{T-1} \sum\limits_{j=2}^n \exp\left(-\frac{\gamma\mu}{2}(T-1-l)\right)&\left\langle \frac{\gamma}{n}\sum\limits_{i=1}^{j-1} \omega_{i,l}^u - \delta_j^l, \omega_{j,l}^u \right\rangle \notag\\
        &= \frac{24\gamma}{n}  \sum\limits_{j=2}^n \left\langle \frac{\gamma}{n}\sum\limits_{i=1}^{j-1} \omega_{i,T-1}^u - \delta_j^{T-1}, \omega_{j,T-1}^u \right\rangle. \label{eq:SEG_extra_sums_seven_distributed_1_str_mon}
    \end{align}

    \paragraph{Upper bound for $\circledSix'$.} To estimate this sum, we will use Bernstein's inequality. The summands have conditional expectations equal to zero:
    \begin{equation*}
        \EE_{\xi_{2,j}^l}\left[\frac{32\gamma}{n}\exp\left(-\frac{\gamma\mu}{2}(T-1-l)\right) \langle \zeta_j^l, \theta_{j,l}^u \rangle\right] = \frac{32\gamma}{n} \exp\left(-\frac{\gamma\mu}{2}(T-1-l)\right)\left\langle \zeta_j^l, \EE_{\xi_{2,j}^l}[\theta_{j,l}^u] \right\rangle = 0.
    \end{equation*}
    Moreover, for all $l = 0,\ldots, T-1$ random vectors $\{\theta_{j,l}^u\}_{j=1}^n$ are independent. Thus, sequence $\left\{\frac{32\gamma}{n}\exp\left(-\frac{\gamma\mu}{2}(T-1-l)\right) \langle \zeta_j^l, \theta_{j,l}^u \rangle\right\}_{l,j = 0,1}^{T-1,n}$ is a martingale difference sequence. Next, the summands are bounded:
    \begin{eqnarray}
        \left|\frac{32\gamma}{n} \exp\left(-\frac{\gamma\mu}{2}(T-1-l)\right)\langle \zeta_j^l, \theta_{j,l}^u \rangle \right| &\leq& \frac{32\gamma}{n}\exp\left(-\frac{\gamma\mu}{2}(T-1-l)\right) \|\zeta_j^l\|\cdot \|\theta_{j,l}^u\|\notag\\
        &\overset{\eqref{eq:SEG_zeta_delta_bound_str_mon},\eqref{eq:theta_omega_magnitude_str_mon}}{\leq}& \frac{16\sqrt{V}\gamma \exp\left(-\frac{\gamma\mu(T-1)}{2}\right)}{n} \exp\left(\frac{\gamma\mu l}{4}\right)\lambda_l\notag\\
        &\overset{\eqref{eq:lambda_SEG_monotone}}{=}& \frac{\exp\left(-\frac{\gamma\mu T}{2}\right)V}{10\ln\frac{48n(K+1)}{\beta}} \eqdef c. \label{eq:str_mon_thm_SEG_technical_6_5_six}
    \end{eqnarray}
    Finally, conditional variances $\widehat \sigma_{j,l}^2 \eqdef \EE_{\xi_{2,j}^l}\left[\frac{1024\gamma^2}{n^2}\exp\left(-\gamma\mu(T-1-l)\right) \langle \zeta_j^l, \theta_{j,l}^u \rangle^2\right]$ of the summands are bounded:
    \begin{eqnarray}
        \widehat \sigma_{j,l}^2 &\leq& \EE_{\xi_{2,j}^l}\left[\frac{1024\gamma^2 }{n^2}\exp\left(-\gamma\mu(T-1-l)\right)\|\zeta_j^l\|^2\cdot \|\theta_{j,l}^u\|^2\right] \notag\\
        &\overset{\eqref{eq:SEG_zeta_delta_bound_str_mon}}{\leq}& \frac{256\gamma^2 V\exp\left(-\gamma\mu\left(T-1-\frac{l}{2}\right)\right)}{n^2} \EE_{\xi_{2,j}^l}\left[\|\theta_{j,l}^u\|^2\right]. \label{eq:str_mon_thm_SEG_technical_7_six}
    \end{eqnarray}
    Applying Bernstein's inequality (Lemma~\ref{lem:Bernstein_ineq}) with $X_{j,l} = \frac{32\gamma}{n}\exp\left(-\frac{\gamma\mu}{2}(T-1-l)\right) \langle \zeta_j^l, \theta_{j,l}^u \rangle$, constant $c$ defined in \eqref{eq:str_mon_thm_SEG_technical_6_5_six}, $b = \frac{\exp\left(-\frac{\gamma\mu T}{2}\right)V}{10}$, $G = \frac{\exp\left(-\gamma\mu T\right)V^2}{600\ln\frac{48n(K+1)}{\beta}}$, we get
    \begin{eqnarray*}
        \PP\left\{|\circledSix'| > \frac{\exp\left(-\frac{\gamma\mu T}{2}\right)V}{10} \text{ and } \sum\limits_{l=0}^{T-1}\sum\limits_{j=2}^n\widehat \sigma_{j,l}^2 \leq \frac{\exp\left(-\gamma\mu T\right)V^2}{600\ln\frac{48n(K+1)}{\beta}}\right\} &\leq& 2\exp\left(- \frac{b^2}{2G + \nicefrac{2cb}{3}}\right)\\
        &=& \frac{\beta}{24n(K+1)}.
    \end{eqnarray*}
    The above is equivalent to $\PP\{E_{\circledSix'}\} \geq 1 - \frac{\beta}{24n(K+1)}$ for
    \begin{equation}
        E_{\circledSix'} = \left\{\text{either } \sum\limits_{l=0}^{T-1}\sum\limits_{j=2}^n\widehat \sigma_{j,l}^2 > \frac{\exp\left(-\gamma\mu T\right)V^2}{600\ln\frac{48n(K+1)}{\beta}} \text{ or } |\circledSix'| \leq \frac{\exp\left(-\frac{\gamma\mu T}{2}\right)V}{10}\right\}. \label{eq:bound_6_str_mon_SEG_six}
    \end{equation}
    Moreover, $E_{T-1}$ implies
    \begin{eqnarray}
        \sum\limits_{l=0}^{T-1}\sum\limits_{j=2}^n \widehat \sigma_{j,l}^2 &\overset{\eqref{eq:str_mon_thm_SEG_technical_7_six}}{\leq}& \frac{256\gamma^2 V\exp\left(-\gamma\mu\left(T-1\right)\right)}{n^2} \sum\limits_{l=0}^{T-1}\exp\left(\frac{\gamma\mu l}{2}\right) \sum\limits_{i=1}^n \EE_{\xi_{2,i}^l}\left[\|\theta_{i,l}^u\|^2\right]\notag\\
        &\overset{\eqref{eq:variance_theta_omega_str_mon}, T \leq K+1}{\leq}&  \frac{4608\gamma^2 V\exp\left(-\gamma\mu\left(T-1\right)\right)\sigma^\alpha}{n} \sum\limits_{l=0}^{T-1}\exp\left(\frac{\gamma\mu l}{2}\right) \lambda_l^{2-\alpha}\notag\\
        &\overset{\eqref{eq:lambda_SEG_str_mon}}{\leq}& \frac{4608\gamma^\alpha V^{2 - \frac{\alpha}{2}}\exp\left(-\gamma\mu T\right)\sigma^\alpha}{300^{2-\alpha} n^{\alpha-1} \ln^{2-\alpha}\frac{48n(K+1)}{\beta}} \sum\limits_{l=0}^{T-1}\exp\left(\frac{\gamma\mu l \alpha}{4}\right)\notag\\
        &\leq& \frac{4608\gamma^\alpha V^{2 - \frac{\alpha}{2}}\exp\left(-\gamma\mu T\right)\sigma^\alpha (K+1) \exp\left(\frac{\gamma\mu K \alpha}{4}\right)}{300^{2-\alpha} n^{\alpha-1} \ln^{2-\alpha}\frac{48n(K+1)}{\beta}}\notag\\
        &\overset{\eqref{eq:gamma_SEG_str_mon}}{\leq}& \frac{\exp\left(-\gamma\mu T\right)V^2}{600\ln\frac{48n(K+1)}{\beta}}.\label{eq:bound_6_variances_str_mon_SEG_six}
    \end{eqnarray}

    \paragraph{Upper bound for $\circledSeven'$.} To estimate this sum, we will use Bernstein's inequality. The summands have conditional expectations equal to zero:
    \begin{equation*}
        \EE_{\xi_{1,j}^l}\left[\frac{24\gamma}{n}\exp\left(-\frac{\gamma\mu}{2}(T-1-l)\right) \langle \delta_j^l, \omega_{j,l}^u \rangle\right] = \frac{24\gamma}{n} \exp\left(-\frac{\gamma\mu}{2}(T-1-l)\right)\left\langle \delta_j^l, \EE_{\xi_{1,j}^l}[\omega_{j,l}^u] \right\rangle = 0.
    \end{equation*}
    Moreover, for all $l = 0,\ldots, T-1$ random vectors $\{\omega_{j,l}^u\}_{j=1}^n$ are independent. Thus, sequence $\left\{\frac{24\gamma}{n}\exp\left(-\frac{\gamma\mu}{2}(T-1-l)\right) \langle \delta_j^l, \omega_{j,l}^u \rangle\right\}_{l,j = 0,1}^{T-1,n}$ is a martingale difference sequence. Next, the summands are bounded:
    \begin{eqnarray}
        \left|\frac{24\gamma}{n} \exp\left(-\frac{\gamma\mu}{2}(T-1-l)\right)\langle \delta_j^l, \omega_{j,l}^u \rangle \right| &\leq& \frac{24\gamma}{n}\exp\left(-\frac{\gamma\mu}{2}(T-1-l)\right) \|\delta_j^l\|\cdot \|\omega_{j,l}^u\|\notag\\
        &\overset{\eqref{eq:SEG_zeta_delta_bound_str_mon},\eqref{eq:theta_omega_magnitude_str_mon}}{\leq}& \frac{12\sqrt{V}\gamma \exp\left(-\frac{\gamma\mu(T-1)}{2}\right)}{n} \exp\left(\frac{\gamma\mu l}{4}\right)\lambda_l\notag\\
        &\overset{\eqref{eq:lambda_SEG_monotone}}{=}& \frac{\exp\left(-\frac{\gamma\mu T}{2}\right)V}{10\ln\frac{48n(K+1)}{\beta}} \eqdef c. \label{eq:str_mon_thm_SEG_technical_6_5_seven}
    \end{eqnarray}
    Finally, conditional variances $(\sigma_{j,l}')^2 \eqdef \EE_{\xi_{1,j}^l}\left[\frac{576\gamma^2}{n^2}\exp\left(-\gamma\mu(T-1-l)\right) \langle \delta_j^l, \omega_{j,l}^u \rangle^2\right]$ of the summands are bounded:
    \begin{eqnarray}
        (\sigma_{j,l}')^2 &\leq& \EE_{\xi_{1,j}^l}\left[\frac{576\gamma^2 }{n^2}\exp\left(-\gamma\mu(T-1-l)\right)\|\delta_j^l\|^2\cdot \|\omega_{j,l}^u\|^2\right] \notag\\
        &\overset{\eqref{eq:SEG_zeta_delta_bound_str_mon}}{\leq}& \frac{288\gamma^2 V\exp\left(-\gamma\mu\left(T-1-\frac{l}{2}\right)\right)}{n^2} \EE_{\xi_{1,j}^l}\left[\|\omega_{j,l}^u\|^2\right]. \label{eq:str_mon_thm_SEG_technical_7_seven}
    \end{eqnarray}
    Applying Bernstein's inequality (Lemma~\ref{lem:Bernstein_ineq}) with $X_{j,l} = \frac{24\gamma}{n}\exp\left(-\frac{\gamma\mu}{2}(T-1-l)\right) \langle \delta_j^l, \omega_{j,l}^u \rangle$, constant $c$ defined in \eqref{eq:str_mon_thm_SEG_technical_6_5_six}, $b = \frac{\exp\left(-\frac{\gamma\mu T}{2}\right)V}{10}$, $G = \frac{\exp\left(-\gamma\mu T\right)V^2}{600\ln\frac{48n(K+1)}{\beta}}$, we get
    \begin{eqnarray*}
        \PP\left\{|\circledSeven'| > \frac{\exp\left(-\frac{\gamma\mu T}{2}\right)V}{10} \text{ and } \sum\limits_{l=0}^{T-1}\sum\limits_{j=2}^n (\sigma_{j,l}')^2 \leq \frac{\exp\left(-\gamma\mu T\right)V^2}{600\ln\frac{48n(K+1)}{\beta}}\right\} &\leq& 2\exp\left(- \frac{b^2}{2G + \nicefrac{2cb}{3}}\right)\\
        &=& \frac{\beta}{24n(K+1)}.
    \end{eqnarray*}
    The above is equivalent to $\PP\{E_{\circledSix'}\} \geq 1 - \frac{\beta}{24n(K+1)}$ for
    \begin{equation}
        E_{\circledSeven'} = \left\{\text{either } \sum\limits_{l=0}^{T-1}\sum\limits_{j=2}^n (\sigma_{j,l}')^2 > \frac{\exp\left(-\gamma\mu T\right)V^2}{600\ln\frac{48n(K+1)}{\beta}} \text{ or } |\circledSeven'| \leq \frac{\exp\left(-\frac{\gamma\mu T}{2}\right)V}{10}\right\}. \label{eq:bound_6_str_mon_SEG_seven}
    \end{equation}
    Moreover, $E_{T-1}$ implies
    \begin{eqnarray}
        \sum\limits_{l=0}^{T-1}\sum\limits_{j=2}^n (\sigma_{j,l}')^2 &\overset{\eqref{eq:str_mon_thm_SEG_technical_7_seven}}{\leq}& \frac{288\gamma^2 V\exp\left(-\gamma\mu\left(T-1\right)\right)}{n^2} \sum\limits_{l=0}^{T-1}\exp\left(\frac{\gamma\mu l}{2}\right) \sum\limits_{i=1}^n \EE_{\xi_{2,i}^l}\left[\|\theta_{i,l}^u\|^2\right]\notag\\
        &\overset{\eqref{eq:variance_theta_omega_str_mon}, T \leq K+1}{\leq}&  \frac{5184\gamma^2 V\exp\left(-\gamma\mu\left(T-1\right)\right)\sigma^\alpha}{n} \sum\limits_{l=0}^{T-1}\exp\left(\frac{\gamma\mu l}{2}\right) \lambda_l^{2-\alpha}\notag\\
        &\overset{\eqref{eq:lambda_SEG_str_mon}}{\leq}& \frac{5184\gamma^\alpha V^{2 - \frac{\alpha}{2}}\exp\left(-\gamma\mu T\right)\sigma^\alpha}{300^{2-\alpha} n^{\alpha-1} \ln^{2-\alpha}\frac{48n(K+1)}{\beta}} \sum\limits_{l=0}^{T-1}\exp\left(\frac{\gamma\mu l \alpha}{4}\right)\notag\\
        &\leq& \frac{5184\gamma^\alpha V^{2 - \frac{\alpha}{2}}\exp\left(-\gamma\mu T\right)\sigma^\alpha (K+1) \exp\left(\frac{\gamma\mu K \alpha}{4}\right)}{300^{2-\alpha} n^{\alpha-1} \ln^{2-\alpha}\frac{48n(K+1)}{\beta}}\notag\\
        &\overset{\eqref{eq:gamma_SEG_str_mon}}{\leq}& \frac{\exp\left(-\gamma\mu T\right)V^2}{600\ln\frac{48n(K+1)}{\beta}}.\label{eq:bound_6_variances_str_mon_SEG_seven}
    \end{eqnarray}

    That is, we derive the upper bounds for  $\circledOne, \circledTwo, \circledThree, \circledFour, \circledFive, \circledSix, \circledSeven$. More precisely, $E_{T-1}$ implies
    \begin{gather*}
        V_T \overset{\eqref{eq:ndvjinuibvbsvdhbvdhbfussdncdni}}{\leq} \exp\left(-\frac{\gamma\mu}{2}T\right)V + \circledOne + \circledTwo + \circledThree + \circledFour + \circledFive + \circledSix + \circledSeven,\\
         \circledSix \overset{\eqref{eq:SEG_extra_sums_six_distributed_str_mon}}{=}  \circledSix' + \frac{32\gamma}{n}  \sum\limits_{j=2}^n \left\langle \frac{\gamma}{n}\sum\limits_{i=1}^{j-1} \theta_{i,T-1}^u - \zeta_j^{T-1}, \theta_{j,T-1}^u \right\rangle,\\
         \circledSeven \overset{\eqref{eq:SEG_extra_sums_seven_distributed_str_mon}}{=} \circledSeven' + \frac{24\gamma}{n}  \sum\limits_{j=2}^n \left\langle \frac{\gamma}{n}\sum\limits_{i=1}^{j-1} \omega_{i,T-1}^u - \delta_j^{T-1}, \omega_{j,T-1}^u \right\rangle,\\
         \circledTwo \overset{\eqref{eq:bound_2_variances_str_mon_SEG}}{\leq} \frac{3\exp\left(-\frac{\gamma\mu T}{2}\right) V}{100},\quad \circledThree \overset{\eqref{eq:bound_3_variances_str_mon_SEG}}{\leq} \frac{\exp\left(-\frac{\gamma\mu T}{2}\right) V}{50},\quad \circledFive \overset{\eqref{eq:bound_5_variances_str_mon_SEG}}{\leq} \frac{\exp\left(-\frac{\gamma\mu T}{2}\right) V}{6},\\
         \sum\limits_{l=0}^{T-1}\sum\limits_{i=1}^n\sigma_{i,l}^2 \overset{\eqref{eq:bound_1_variances_str_mon_SEG}}{\leq} \frac{\exp\left(-\gamma\mu T\right)V^2}{60000\ln\frac{48n(K+1)}{\beta}},\quad \sum\limits_{l=0}^{T-1}\sum\limits_{i=1}^n\widetilde\sigma_{i,l}^2 \overset{\eqref{eq:bound_1_variances_str_mon_SEG_1}}{\leq} \frac{\exp\left(-\gamma\mu T\right)V}{216\ln\frac{48n(K+1)}{\beta}},\\
         \sum\limits_{l=0}^{T-1}\sum\limits_{j=2}^n \widehat \sigma_{j,l}^2 \overset{\eqref{eq:bound_6_variances_str_mon_SEG_six}}{\leq} \frac{\exp\left(-\gamma\mu T\right)V^2}{600\ln\frac{48n(K+1)}{\beta}},\quad \sum\limits_{l=0}^{T-1}\sum\limits_{j=2}^n (\sigma_{j,l}')^2 \overset{\eqref{eq:bound_6_variances_str_mon_SEG_seven}}{\leq} \frac{\exp\left(-\gamma\mu T\right)V^2}{600\ln\frac{48n(K+1)}{\beta}}.
    \end{gather*}
    In addition, we also establish (see \eqref{eq:bound_1_str_mon_SEG}, \eqref{eq:bound_1_str_mon_SEG}, \eqref{eq:bound_6_str_mon_SEG_six}, \eqref{eq:bound_6_str_mon_SEG_six}, and our induction assumption)
    \begin{gather*}
        \PP\{E_{T-1}\} \geq 1 - \frac{(T-1)\beta}{K+1},\quad \PP\{E_{\circledOne}\} \geq 1 - \frac{\beta}{24n(K+1)}, \\
        \PP\{E_{\circledFour}\} \geq 1 - \frac{\beta}{24n(K+1)}, \quad \PP\{E_{\circledSix'}\} \geq 1 - \frac{\beta}{24n(K+1)},\quad \PP\{E_{\circledSeven'}\} \geq 1 - \frac{\beta}{24n(K+1)},
    \end{gather*}
    where
    \begin{eqnarray*}
        E_{\circledOne} &=& \left\{\text{either } \sum\limits_{l=0}^{T-1}\sum\limits_{i=1}^n\sigma_{i,l}^2 > \frac{\exp\left(-\gamma\mu T\right)V^2}{60000\ln\frac{48n(K+1)}{\beta}} \text{ or } |\circledOne| \leq \frac{\exp\left(-\frac{\gamma\mu T}{2}\right)V}{100}\right\},\\
        E_{\circledFour} &=& \left\{\text{either } \sum\limits_{l=0}^{T-1}\sum\limits_{i=1}^n\widetilde\sigma_{i,l}^2 > \frac{\exp\left(-\gamma\mu T\right)V^2}{216\ln\frac{48n(K+1)}{\beta}} \text{ or } |\circledFour| \leq \frac{\exp\left(-\frac{\gamma\mu T}{2}\right)V}{6}\right\},\\
        E_{\circledSix'} &=& \left\{\text{either } \sum\limits_{l=0}^{T-1}\sum\limits_{j=2}^n\widehat \sigma_{j,l}^2 > \frac{\exp\left(-\gamma\mu T\right)V^2}{600\ln\frac{48n(K+1)}{\beta}} \text{ or } |\circledSix'| \leq \frac{\exp\left(-\frac{\gamma\mu T}{2}\right)V}{10}\right\},\\
        E_{\circledSeven'} &=& \left\{\text{either } \sum\limits_{l=0}^{T-1}\sum\limits_{j=2}^n (\sigma_{j,l}')^2 > \frac{\exp\left(-\gamma\mu T\right)V^2}{600\ln\frac{48n(K+1)}{\beta}} \text{ or } |\circledSeven'| \leq \frac{\exp\left(-\frac{\gamma\mu T}{2}\right)V}{10}\right\}.
    \end{eqnarray*}
    Therefore, probability event $E_{T-1} \cap E_{\circledOne} \cap E_{\circledFour} \cap E_{\circledSix'} \cap E_{\circledSeven'}$ implies
    \begin{eqnarray}
        V_T &\leq& \exp\left(-\frac{\gamma\mu}{2}T\right)V \underbrace{\left(1 + \frac{1}{100} + \frac{3}{100} + \frac{1}{50} + \frac{1}{6} + \frac{1}{6} + \frac{1}{10} + \frac{1}{10}\right)}_{\leq 2} \notag\\
        &&\quad + \frac{32\gamma}{n}  \sum\limits_{j=2}^n \left\langle \frac{\gamma}{n}\sum\limits_{i=1}^{j-1} \theta_{i,T-1}^u - \zeta_j^{T-1}, \theta_{j,T-1}^u \right\rangle \notag\\
        &&\quad + \frac{24\gamma}{n}  \sum\limits_{j=2}^n \left\langle \frac{\gamma}{n}\sum\limits_{i=1}^{j-1} \omega_{i,T-1}^u - \delta_j^{T-1}, \omega_{j,T-1}^u \right\rangle. \label{eq:nvsdibhjbsjdbvshbcjs}
    \end{eqnarray}

    To finish the proof, we need to show that $\frac{\gamma}{n}\sum\limits_{i=1}^{j-1} \theta_{i,T-1}^u = \zeta_j^{T-1}$ and $\frac{\gamma}{n}\sum\limits_{i=1}^{j-1} \omega_{i,T-1}^u = \delta_j^{T-1}$ with high probability. In particular, we consider probability event $\widetilde{E}_{T-1,j}$ defined as follows: inequalities
    \begin{equation}
        \left\|\frac{\gamma}{n}\sum\limits_{i=1}^{r-1} \theta_{i,T-1}^u\right\| \leq \exp\left(-\frac{\gamma\mu (T-1)}{4}\right)\frac{\sqrt{V}}{2}, \quad \left\|\frac{\gamma}{n}\sum\limits_{i=1}^{r-1} \omega_{i,T-1}^u\right\| \leq \exp\left(-\frac{\gamma\mu (T-1)}{4}\right)\frac{\sqrt{V}}{2} \notag
    \end{equation}
    hold for $r = 2, \ldots, j$ simultaneously. We want to show that $\PP\{E_{T-1} \cap \widetilde{E}_{T-1,j}\} \geq 1 - \frac{(T-1)\beta}{K+1} - \frac{j\beta}{8n(K+1)}$ for all $j = 2, \ldots, n$. For $j = 2$ the statement is trivial since
    \begin{eqnarray*}
        \left\|\frac{\gamma}{n} \theta_{1,T-1}^u\right\| &\overset{\eqref{eq:theta_omega_magnitude_str_mon}}{\leq}& \frac{2\gamma\lambda_{T-1}}{n} \leq \exp\left(-\frac{\gamma\mu (T-1)}{4}\right)\frac{\sqrt{V}}{2},\\
        \left\|\frac{\gamma}{n} \omega_{1,T-1}^u\right\| &\overset{\eqref{eq:theta_omega_magnitude}}{\leq}& \frac{2\gamma\lambda_{T-1}}{n} \leq \exp\left(-\frac{\gamma\mu (T-1)}{4}\right)\frac{\sqrt{V}}{2}.
    \end{eqnarray*}
    Next, we assume that the statement holds for some $j = m-1 < n$, i.e.,  $\PP\{E_{T-1}\cap\widetilde{E}_{T-1,m-1}\} \geq 1 - \frac{(T-1)\beta}{K+1} - \frac{(m-1)\beta}{8n(K+1)}$. Our goal is to prove that $\PP\{E_{T-1}\cap\widetilde{E}_{T-1,m}\} \geq 1 - \frac{(T-1)\beta}{K+1} - \frac{m\beta}{8n(K+1)}$. First, we consider $\left\|\frac{\gamma}{n}\sum\limits_{i=1}^{m-1} \theta_{i,T-1}^u\right\|$:
    \begin{eqnarray}
        \left\|\frac{\gamma}{n}\sum\limits_{i=1}^{m-1} \theta_{i,T-1}^u\right\| &=& \sqrt{\frac{\gamma^2}{n^2}\left\|\sum\limits_{i=1}^{m-1} \theta_{i,T-1}^u\right\|^2} \notag\\
        &=& \sqrt{\frac{\gamma^2}{n^2}\sum\limits_{i=1}^{m-1}\|\theta_{i,T-1}^u\|^2 + \frac{2\gamma}{n}\sum\limits_{i=1}^{m-1}\left\langle \frac{\gamma}{n}\sum\limits_{r=1}^{i-1}\theta_{r,T-1}^u , \theta_{i,T-1}^u \right\rangle} \notag\\
        &\leq& \sqrt{\frac{\gamma^2}{n^2}\sum\limits_{l=0}^{T-1}\exp\left(-\frac{\gamma\mu(T-1-l)}{2}\right)\sum\limits_{i=1}^{m-1}\|\theta_{i,l}^u\|^2 + \frac{2\gamma}{n}\sum\limits_{i=1}^{m-1}\left\langle \frac{\gamma}{n}\sum\limits_{r=1}^{i-1}\theta_{r,T-1}^u , \theta_{i,T-1}^u \right\rangle}. \notag
    \end{eqnarray}
    Similarly, we have
    \begin{eqnarray}
        \left\|\frac{\gamma}{n}\sum\limits_{i=1}^{m-1} \omega_{i,T-1}^u\right\| &=& \sqrt{\frac{\gamma^2}{n^2}\left\|\sum\limits_{i=1}^{m-1} \omega_{i,T-1}^u\right\|^2} \notag\\
        &=& \sqrt{\frac{\gamma^2}{n^2}\sum\limits_{i=1}^{m-1}\|\omega_{i,T-1}^u\|^2 + \frac{2\gamma}{n}\sum\limits_{i=1}^{m-1}\left\langle \frac{\gamma}{n}\sum\limits_{r=1}^{i-1}\omega_{r,T-1}^u , \omega_{i,T-1}^u \right\rangle} \notag\\
        &\leq& \sqrt{\frac{\gamma^2}{n^2}\sum\limits_{l=0}^{T-1}\exp\left(-\frac{\gamma\mu(T-1-l)}{2}\right)\sum\limits_{i=1}^{m-1}\|\omega_{i,l}^u\|^2 + \frac{2\gamma}{n}\sum\limits_{i=1}^{m-1}\left\langle \frac{\gamma}{n}\sum\limits_{r=1}^{i-1}\omega_{r,T-1}^u , \omega_{i,T-1}^u \right\rangle}. \notag
    \end{eqnarray}

    Next, we introduce a new notation:
    \begin{gather*}
        \rho_{i,T-1} = \begin{cases}
            \frac{\gamma}{n}\sum\limits_{r=1}^{i-1}\theta_{r,T-1}^u,& \text{if } \left\|\frac{\gamma}{n}\sum\limits_{r=1}^{i-1}\theta_{r,T-1}^u\right\| \leq \exp\left(-\frac{\gamma\mu (T-1)}{4}\right)\frac{\sqrt{V}}{2},\\
            0,& \text{otherwise}
        \end{cases},\\
        \rho_{i,T-1}' = \begin{cases}
            \frac{\gamma}{n}\sum\limits_{r=1}^{i-1}\omega_{r,T-1}^u,& \text{if } \left\|\frac{\gamma}{n}\sum\limits_{r=1}^{i-1}\omega_{r,T-1}^u\right\| \leq \exp\left(-\frac{\gamma\mu (T-1)}{4}\right)\frac{\sqrt{V}}{2},\\
            0,& \text{otherwise}
        \end{cases}
    \end{gather*}
    for $i = 1,\ldots,m-1$. By definition, we have
    \begin{equation}
        \|\rho_{i,T-1}\| \leq \exp\left(-\frac{\gamma\mu (T-1)}{4}\right)\frac{\sqrt{V}}{2},\quad \|\rho_{i,T-1}'\| \leq \exp\left(-\frac{\gamma\mu (T-1)}{4}\right)\frac{\sqrt{V}}{2} \label{eq:SEG_bound_rho_str_mon}
    \end{equation}
    for $i = 1,\ldots,m-1$. Moreover, $\widetilde{E}_{T-1,m-1}$ implies $\rho_{i,T-1} = \frac{\gamma}{n}\sum\limits_{r=1}^{i-1}\theta_{r,T-1}^u$, $\rho_{i,T-1}' = \frac{\gamma}{n}\sum\limits_{r=1}^{i-1}\omega_{r,T-1}^u$ for $i = 1,\ldots,m-1$ and
    \begin{eqnarray}
        \left\|\frac{\gamma}{n}\sum\limits_{i=1}^{m-1} \theta_{i,l}^u\right\| &\leq& \sqrt{\circledThree + \circledFour + \circledEight}, \notag\\
        \left\|\frac{\gamma}{n}\sum\limits_{i=1}^{m-1} \omega_{i,l}^u\right\| &\leq& \sqrt{\circledThree + \circledFour + \circledEight'}, \notag
    \end{eqnarray}
    where
    \begin{gather*}
        \circledEight =  \frac{2\gamma}{n}\sum\limits_{i=1}^{m-1}\left\langle \rho_{i,T-1} , \theta_{i,T-1}^u \right\rangle,\quad \circledEight' =  \frac{2\gamma}{n}\sum\limits_{i=1}^{m-1}\left\langle \rho_{i,T-1}' , \omega_{i,T-1}^u \right\rangle.
    \end{gather*}
    It remains to estimate $\circledEight$ and $\circledEight'$.

    \paragraph{Upper bound for $\circledEight$.} To estimate this sum, we will use Bernstein's inequality. The summands have conditional expectations equal to zero:
    \begin{equation*}
        \EE_{\xi_{2,i}^{T-1}}\left[\frac{2\gamma}{n} \langle \rho_{i,T-1}, \theta_{i,T-1}^u \rangle\right] = \frac{2\gamma}{n} \left\langle \rho_{i,T-1}, \EE_{\xi_{2,i}^{T-1}}[\theta_{i,T-1}^u] \right\rangle = 0.
    \end{equation*}
    Thus, sequence $\left\{\frac{2\gamma}{n}\langle \rho_{i,T-1}, \theta_{i,T-1}^u \rangle\right\}_{i = 1}^{n}$ is a martingale difference sequence. Next, the summands are bounded:
    \begin{eqnarray}
        \left|\frac{2\gamma}{n} \langle \rho_{i,T-1}, \theta_{i,T-1}^u \rangle \right| &\leq& \frac{2\gamma}{n}\|\rho_{i,T-1}\|\cdot \|\theta_{i,T-1}^u\|\notag\\
        &\overset{\eqref{eq:SEG_bound_rho_str_mon},\eqref{eq:theta_omega_magnitude_str_mon}}{\leq}& \frac{2\sqrt{V}\gamma \exp\left(-\frac{\gamma\mu(T-1)}{4}\right)}{n} \lambda_{T-1}\notag\\
        &\overset{\eqref{eq:lambda_SEG_monotone}}{\leq}& \frac{\exp\left(-\frac{\gamma\mu (T-1)}{2}\right)V}{80\ln\frac{48n(K+1)}{\beta}} \eqdef c. \label{eq:str_mon_thm_SEG_technical_6_5_eight}
    \end{eqnarray}
    Finally, conditional variances $(\widehat\sigma_{i,T-1}')^2 \eqdef \EE_{\xi_{2,i}^{T-1}}\left[\frac{4\gamma^2}{n^2} \langle \rho_{i,T-1}, \theta_{i,T-1}^u \rangle^2\right]$ of the summands are bounded:
    \begin{eqnarray}
        (\widehat\sigma_{i,T-1}')^2 &\leq& \EE_{\xi_{2,i}^{T-1}}\left[\frac{4\gamma^2 }{n^2}\|\rho_{i,T-1}\|^2\cdot \|\theta_{i,T-1}^u\|^2\right] \notag\\
        &\overset{\eqref{eq:SEG_bound_rho_str_mon}}{\leq}& \frac{\gamma^2 V\exp\left(-\frac{\gamma\mu(T-1)}{2}\right)}{n^2} \EE_{\xi_{2,i}^{T-1}}\left[\|\theta_{i,T-1}^u\|^2\right]. \label{eq:str_mon_thm_SEG_technical_7_eight}
    \end{eqnarray}
    Applying Bernstein's inequality (Lemma~\ref{lem:Bernstein_ineq}) with $X_{i} = \frac{2\gamma}{n}\langle \rho_{i,T-1}, \theta_{i,T-1}^u \rangle$, constant $c$ defined in \eqref{eq:str_mon_thm_SEG_technical_6_5_eight}, $b = \frac{\exp\left(-\frac{\gamma\mu (T-1)}{2}\right)V}{80}$, $G = \frac{\exp\left(-\gamma\mu (T-1)\right)V^2}{38400\ln\frac{48n(K+1)}{\beta}}$, we get
    \begin{eqnarray*}
        \PP\left\{|\circledEight| > \frac{\exp\left(-\frac{\gamma\mu (T-1)}{2}\right)V}{80} \text{ and } \sum\limits_{i=1}^n(\widehat\sigma_{i,T-1}')^2 \leq \frac{\exp\left(-\gamma\mu (T-1)\right)V^2}{38400\ln\frac{48n(K+1)}{\beta}}\right\} &\leq& 2\exp\left(- \frac{b^2}{2G + \nicefrac{2cb}{3}}\right)\\
        &=& \frac{\beta}{24n(K+1)}.
    \end{eqnarray*}
    The above is equivalent to $\PP\{E_{\circledEight}\} \geq 1 - \frac{\beta}{24n(K+1)}$ for
    \begin{equation}
        E_{\circledEight} = \left\{\text{either } \sum\limits_{i=1}^n(\widehat\sigma_{i,T-1}')^2 > \frac{\exp\left(-\gamma\mu (T-1)\right)V^2}{38400\ln\frac{48n(K+1)}{\beta}} \text{ or } |\circledEight| > \frac{\exp\left(-\frac{\gamma\mu (T-1)}{2}\right)V}{80}\right\}. \label{eq:bound_1_str_mon_SEG_eight}
    \end{equation}
    Moreover, $E_{T-1}$ implies
    \begin{eqnarray}
        \sum\limits_{i=1}^n(\widehat\sigma_{i,T-1}')^2 &\overset{\eqref{eq:str_mon_thm_SEG_technical_7_eight}}{\leq}& \frac{\gamma^2 V\exp\left(-\gamma\mu\left(T-1\right)\right)}{n^2} \sum\limits_{i=1}^n \EE_{\xi_{2,i}^T-1}\left[\|\theta_{i,T-1}^u\|^2\right]\notag\\
        &\overset{\eqref{eq:variance_theta_omega_str_mon}}{\leq}&  \frac{18\gamma^2 V\exp\left(-\gamma\mu\left(T-1\right)\right)\sigma^\alpha}{n} \lambda_{T-1}^{2-\alpha}\notag\\
        &\overset{\eqref{eq:lambda_SEG_str_mon}}{\leq}& \frac{18\gamma^\alpha V^{2 - \frac{\alpha}{2}}\exp\left(-\gamma\mu (T-1)\right)\sigma^\alpha}{300^{2-\alpha} n^{\alpha-1} \ln^{2-\alpha}\frac{48n(K+1)}{\beta}} \exp\left(\frac{\gamma\mu (T-1) \alpha}{4}\right)\notag\\
        &\overset{T-1 \leq K}{\leq}& \frac{18\gamma^\alpha V^{2 - \frac{\alpha}{2}}\exp\left(-\gamma\mu (T-1)\right)\sigma^\alpha \exp\left(\frac{\gamma\mu K \alpha}{4}\right)}{300^{2-\alpha} n^{\alpha-1} \ln^{2-\alpha}\frac{48n(K+1)}{\beta}}\notag\\
        &\overset{\eqref{eq:gamma_SEG_str_mon}}{\leq}& \frac{\exp\left(-\gamma\mu (T-1)\right)V^2}{38400\ln\frac{48n(K+1)}{\beta}}.\label{eq:bound_1_variances_str_mon_SEG_eight}
    \end{eqnarray}

    \paragraph{Upper bound for $\circledEight'$.} To estimate this sum, we will use Bernstein's inequality. The summands have conditional expectations equal to zero:
    \begin{equation*}
        \EE_{\xi_{1,i}^{T-1}}\left[\frac{2\gamma}{n} \langle \rho_{i,T-1}', \omega_{i,T-1}^u \rangle\right] = \frac{2\gamma}{n} \left\langle \rho_{i,T-1}', \EE_{\xi_{1,i}^{T-1}}[\omega_{i,T-1}^u] \right\rangle = 0.
    \end{equation*}
    Thus, sequence $\left\{\frac{2\gamma}{n}\langle \rho_{i,T-1}', \omega_{i,T-1}^u \rangle\right\}_{i = 1}^{n}$ is a martingale difference sequence. Next, the summands are bounded:
    \begin{eqnarray}
        \left|\frac{2\gamma}{n} \langle \rho_{i,T-1}', \omega_{i,T-1}^u \rangle \right| &\leq& \frac{2\gamma}{n}\|\rho_{i,T-1}'\|\cdot \|\omega_{i,T-1}^u\|\notag\\
        &\overset{\eqref{eq:SEG_bound_rho_str_mon},\eqref{eq:theta_omega_magnitude_str_mon}}{\leq}& \frac{2\sqrt{V}\gamma \exp\left(-\frac{\gamma\mu(T-1)}{4}\right)}{n} \lambda_{T-1}\notag\\
        &\overset{\eqref{eq:lambda_SEG_monotone}}{=}& \frac{\exp\left(-\frac{\gamma\mu (T-1)}{2}\right)V}{80\ln\frac{48n(K+1)}{\beta}} \eqdef c. \label{eq:str_mon_thm_SEG_technical_6_5_eight'}
    \end{eqnarray}
    Finally, conditional variances $(\widetilde\sigma_{i,T-1}')^2 \eqdef \EE_{\xi_{1,i}^{T-1}}\left[\frac{4\gamma^2}{n^2} \langle \rho_{i,T-1}', \omega_{i,T-1}^u \rangle^2\right]$ of the summands are bounded:
    \begin{eqnarray}
        (\widetilde\sigma_{i,T-1}')^2 &\leq& \EE_{\xi_{1,i}^{T-1}}\left[\frac{4\gamma^2 }{n^2}\|\rho_{i,T-1}'\|^2\cdot \|\omega_{i,T-1}^u\|^2\right] \notag\\
        &\overset{\eqref{eq:SEG_bound_rho_str_mon}}{\leq}& \frac{\gamma^2 V\exp\left(-\frac{\gamma\mu(T-1)}{2}\right)}{n^2} \EE_{\xi_{1,i}^{T-1}}\left[\|\omega_{i,T-1}^u\|^2\right]. \label{eq:str_mon_thm_SEG_technical_7_eight'}
    \end{eqnarray}
    Applying Bernstein's inequality (Lemma~\ref{lem:Bernstein_ineq}) with $X_{i} = \frac{2\gamma}{n}\langle \rho_{i,T-1}', \omega_{i,T-1}^u \rangle$, constant $c$ defined in \eqref{eq:str_mon_thm_SEG_technical_6_5_eight'}, $b = \frac{\exp\left(-\frac{\gamma\mu (T-1)}{2}\right)V}{80}$, $G = \frac{\exp\left(-\gamma\mu (T-1)\right)V^2}{38400\ln\frac{48n(K+1)}{\beta}}$, we get
    \begin{eqnarray*}
        \PP\left\{|\circledEight'| > \frac{\exp\left(-\frac{\gamma\mu (T-1)}{2}\right)V}{80} \text{ and } \sum\limits_{i=1}^n(\widetilde\sigma_{i,T-1}')^2 \leq \frac{\exp\left(-\gamma\mu (T-1)\right)V^2}{38400\ln\frac{48n(K+1)}{\beta}}\right\} &\leq& 2\exp\left(- \frac{b^2}{2G + \nicefrac{2cb}{3}}\right)\\
        &=& \frac{\beta}{24n(K+1)}.
    \end{eqnarray*}
    The above is equivalent to $\PP\{E_{\circledEight'}\} \geq 1 - \frac{\beta}{24n(K+1)}$ for
    \begin{equation}
        E_{\circledEight} = \left\{\text{either } \sum\limits_{i=1}^n(\widetilde\sigma_{i,T-1}')^2 > \frac{\exp\left(-\gamma\mu (T-1)\right)V^2}{38400\ln\frac{48n(K+1)}{\beta}} \text{ or } |\circledEight'| > \frac{\exp\left(-\frac{\gamma\mu (T-1)}{2}\right)V}{80}\right\}. \label{eq:bound_1_str_mon_SEG_eight'}
    \end{equation}
    Moreover, $E_{T-1}$ implies
    \begin{eqnarray}
        \sum\limits_{i=1}^n(\widetilde\sigma_{i,T-1}')^2 &\overset{\eqref{eq:str_mon_thm_SEG_technical_7_eight'}}{\leq}& \frac{\gamma^2 V\exp\left(-\gamma\mu\left(T-1\right)\right)}{n^2} \sum\limits_{i=1}^n \EE_{\xi_{2,i}^T-1}\left[\|\theta_{i,T-1}^u\|^2\right]\notag\\
        &\overset{\eqref{eq:variance_theta_omega_str_mon}}{\leq}&  \frac{18\gamma^2 V\exp\left(-\gamma\mu\left(T-1\right)\right)\sigma^\alpha}{n} \lambda_{T-1}^{2-\alpha}\notag\\
        &\overset{\eqref{eq:lambda_SEG_str_mon}}{\leq}& \frac{18\gamma^\alpha V^{2 - \frac{\alpha}{2}}\exp\left(-\gamma\mu (T-1)\right)\sigma^\alpha}{300^{2-\alpha} n^{\alpha-1} \ln^{2-\alpha}\frac{48n(K+1)}{\beta}} \exp\left(\frac{\gamma\mu (T-1) \alpha}{4}\right)\notag\\
        &\overset{T-1 \leq K}{\leq}& \frac{18\gamma^\alpha V^{2 - \frac{\alpha}{2}}\exp\left(-\gamma\mu (T-1)\right)\sigma^\alpha \exp\left(\frac{\gamma\mu K \alpha}{4}\right)}{300^{2-\alpha} n^{\alpha-1} \ln^{2-\alpha}\frac{48n(K+1)}{\beta}}\notag\\
        &\overset{\eqref{eq:gamma_SEG_str_mon}}{\leq}& \frac{\exp\left(-\gamma\mu (T-1)\right)V^2}{38400\ln\frac{48n(K+1)}{\beta}}.\label{eq:bound_1_variances_str_mon_SEG_eight'}
    \end{eqnarray}

    Putting all together we get that $E_{T-1}\cap \widetilde{E}_{T-1,m-1}$ implies
    \begin{gather*}
        \left\|\frac{\gamma}{n}\sum\limits_{i=1}^{m-1} \theta_{i,T-1}^u\right\| \leq \sqrt{\circledThree + \circledFour + \circledEight},\quad \left\|\frac{\gamma}{n}\sum\limits_{i=1}^{m-1} \omega_{i,T-1}^u\right\| \leq \sqrt{\circledThree + \circledFour + \circledEight'},\quad \circledThree \overset{\eqref{eq:bound_3_variances_str_mon_SEG}}{\leq} \frac{\exp\left(-\frac{\gamma\mu T}{2}\right)V}{50},\\
        \sum\limits_{l=0}^{T-1}\sum\limits_{i=1}^n\widetilde\sigma_{i,l}^2 \overset{\eqref{eq:bound_1_variances_str_mon_SEG_1}}{\leq} \frac{\exp\left(-\gamma\mu T\right)V}{216\ln\frac{48n(K+1)}{\beta}},\quad \sum\limits_{i=1}^{m-1}(\widehat\sigma_{i,T-1}')^2 \leq \frac{\exp\left(-\gamma\mu (T-1)\right)V^2}{38400\ln\frac{48n(K+1)}{\beta}},\\
        \sum\limits_{i=1}^{m-1}(\widetilde\sigma_{i,T-1}')^2 \leq \frac{\exp\left(-\gamma\mu (T-1)\right)V^2}{38400\ln\frac{48n(K+1)}{\beta}}.
    \end{gather*}
    In addition, we also establish (see \eqref{eq:bound_1_str_mon_SEG_1}, \eqref{eq:bound_1_str_mon_SEG_eight}, \eqref{eq:bound_1_str_mon_SEG_eight'} and our induction assumption)
    \begin{gather*}
        \PP\{E_{T-1}\cap \widetilde{E}_{T-1,m-1}\} \geq 1 - \frac{(T-1)\beta}{K+1} - \frac{(m-1)\beta}{8n(K+1)},\\
        \PP\{E_{\circledFour}\} \geq 1 - \frac{\beta}{24n(K+1)}, \quad \PP\{E_{\circledEight}\} \geq 1 - \frac{\beta}{24n(K+1)}, \quad \PP\{E_{\circledEight'}\} \geq 1 - \frac{\beta}{24n(K+1)}
    \end{gather*}
    where
    \begin{eqnarray}
        E_{\circledFour} &=& \left\{\text{either } \sum\limits_{l=0}^{T-1}\sum\limits_{i=1}^n\widetilde\sigma_{i,l}^2 > \frac{\exp\left(-\gamma\mu T\right)V^2}{216\ln\frac{48n(K+1)}{\beta}} \text{ or } |\circledFour| \leq \frac{\exp\left(-\frac{\gamma\mu T}{2}\right)V}{6}\right\},\notag\\
        E_{\circledEight} &=& \left\{\text{either } \sum\limits_{i=1}^n(\widehat\sigma_{i,T-1}')^2 > \frac{\exp\left(-\gamma\mu (T-1)\right)V^2}{38400\ln\frac{48n(K+1)}{\beta}} \text{ or } |\circledEight| > \frac{\exp\left(-\frac{\gamma\mu (T-1)}{2}\right)V}{80}\right\}, \notag\\
        E_{\circledEight'} &=& \left\{\text{either } \sum\limits_{i=1}^n(\widetilde\sigma_{i,T-1}')^2 > \frac{\exp\left(-\gamma\mu (T-1)\right)V^2}{38400\ln\frac{48n(K+1)}{\beta}} \text{ or } |\circledEight'| > \frac{\exp\left(-\frac{\gamma\mu (T-1)}{2}\right)V}{80}\right\}. \notag
    \end{eqnarray}
    Therefore, probability event $E_{T-1} \cap \widetilde{E}_{m-1} \cap E_{\circledFour} \cap E_{\circledEight} \cap E_{\circledEight'}$ implies
    \begin{gather*}
        \left\|\frac{\gamma}{n}\sum\limits_{i=1}^{m-1} \theta_{i,T-1}^u\right\| \leq \exp\left(-\frac{\gamma\mu(T-1)}{4}\right)\sqrt{V}\sqrt{\frac{1}{50} + \frac{1}{6} + \frac{1}{80}} \leq \frac{\exp\left(-\frac{\gamma\mu(T-1)}{4}\right)\sqrt{V}}{2},\\
        \left\|\frac{\gamma}{n}\sum\limits_{i=1}^{m-1} \omega_{i,T-1}^u\right\| \leq \exp\left(-\frac{\gamma\mu(T-1)}{4}\right)\sqrt{V}\sqrt{\frac{1}{50} + \frac{1}{6} + \frac{1}{80}} \leq \frac{\exp\left(-\frac{\gamma\mu(T-1)}{4}\right)\sqrt{V}}{2}.
    \end{gather*}
    This implies $\widetilde E_{T-1,m}$ and
    \begin{eqnarray*}
        \PP\{E_{T-1} \cap \widetilde{E}_{T-1,m}\} &\geq& \PP\{E_{T-1} \cap \widetilde{E}_{T-1,m-1} \cap E_{\circledFour} \cap E_{\circledEight} \cap E_{\circledEight'}\} \\
        &=& 1 - \PP\left\{\overline{E_{T-1} \cap \widetilde{E}_{T-1,m-1}} \cup \overline{E}_{\circledFour} \cup \overline{E}_{\circledEight} \cup \overline{E}_{\circledEight'}\right\}\\
        &\geq& 1 - \frac{(T-1)\beta}{K+1} - \frac{m\beta}{8n(K+1)}.
    \end{eqnarray*}
    Therefore, for all $m = 2,\ldots,n$ the statement holds and, in particular, $\PP\{E_{T-1} \cap \widetilde{E}_{T-1, n}\} \geq 1 - \frac{(T-1)\beta}{K+1} - \frac{\beta}{8(K+1)}$, i.e., \eqref{eq:induction_inequality_str_mon_SEG_1} and \eqref{eq:induction_inequality_str_mon_SEG_2} hold. Taking into account \eqref{eq:nvsdibhjbsjdbvshbcjs}, we conclude that $E_{T-1} \cap \widetilde{E}_{T-1, n} \cap E_{\circledOne} \cap E_{\circledFour} \cap E_{\circledSix'} \cap E_{\circledSeven'} \cap E_{\circledEight}$ implies
    \begin{equation*}
        V_T \leq 2\exp\left(-\frac{\gamma\mu}{2}T\right)V
    \end{equation*}
    that is equivalent to \eqref{eq:induction_inequality_str_mon_SEG} for $t = T$. Moreover,
    \begin{eqnarray*}
        \PP\left\{E_T\right\} &\geq& \PP\left\{E_{T-1} \cap \widetilde{E}_{T-1,n} \cap E_{\circledOne} \cap E_{\circledFour} \cap E_{\circledSix'} \cap E_{\circledSeven'} \right\} \\
        &=& 1 - \PP\left\{\overline{E_{T-1} \cap \widetilde{E}_{n}} \cup \overline{E}_{\circledOne} \cup \overline{E}_{\circledFour} \cup \overline{E}_{\circledSix'} \cup \overline{E}_{\circledSeven'}\right\}\\
        &=& 1 - \frac{(T-1)\beta}{K+1} - \frac{\beta}{8(K+1)} - 4\cdot \frac{\beta}{24n(K+1)} = 1 - \frac{T\beta}{K+1}.
    \end{eqnarray*}

    In other words, we showed that $\PP\{E_k\} \geq 1 - \nicefrac{k\beta}{(K+1)}$ for all $k = 0,1,\ldots,K+1$. For $k = K+1$ we have that with probability at least $1 - \beta$
    \begin{equation*}
       \|x^{K+1} - x^*\|^2 \leq V_{K+1} \leq 2\exp\left(-\frac{\gamma\mu(K+1)}{2}\right)V.
    \end{equation*}

    Finally, if 
    \begin{eqnarray*}
        \gamma &=& \min\left\{\frac{1}{72\cdot10^6 \mu \ln^2 \frac{48n(K+1)}{\beta}}, \frac{1}{6L}, \frac{\sqrt{n}}{15000 L \ln \frac{48n(K+1)}{\beta}}, \frac{2\ln(B_K)}{\mu(K+1)}\right\}, \notag\\
        B_K &=& \max\left\{2, \frac{n^{\frac{2(\alpha-1)}{\alpha}}(K+1)^{\frac{2(\alpha-1)}{\alpha}}\mu^2V}{3110400^{\frac{2}{\alpha}}\sigma^2\ln^{\frac{2(\alpha-1)}{\alpha}}\left(\frac{48n(K+1)}{\beta}\right)\ln^2(B_K)} \right\} \notag \\
        &=& \cO\left(\max\left\{2, \frac{n^{\frac{2(\alpha-1)}{\alpha}}K^{\frac{2(\alpha-1)}{\alpha}}\mu^2V}{\sigma^2\ln^{\frac{2(\alpha-1)}{\alpha}}\left(\frac{nK}{\beta}\right)\ln^2\left(\max\left\{2, \frac{n^{\frac{2(\alpha-1)}{\alpha}}K^{\frac{2(\alpha-1)}{\alpha}}\mu^2V}{\sigma^2\ln^{\frac{2(\alpha-1)}{\alpha}}\left(\frac{nK}{\beta}\right)} \right\}\right)} \right\}\right) \notag
    \end{eqnarray*}
    then with probability at least $1-\beta$
    \begin{eqnarray*}
        \|x^{K+1} - x^*\|^2 &\leq& 2\exp\left(-\frac{\gamma\mu(K+1)}{2}\right)V\\
        &=& 2V\max\Bigg\{\exp\left(-\frac{K+1}{144\cdot 10^6 \ln^2 \frac{48n(K+1)}{\beta}}\right), \exp\left(-\frac{\mu(K+1)}{12 L}\right),\\
        &&\quad\quad\quad\quad\quad\quad\quad\quad\quad\quad\quad\quad\quad\quad\quad \exp\left(-\frac{\mu\sqrt{n}(K+1)}{30000 L \ln \frac{48n(K+1)}{\beta}}\right), \frac{1}{B_K} \Bigg\}\\
        &=& \cO\Bigg(\max\Bigg\{V\exp\left(- \frac{K}{\ln^2 \tfrac{nK}{\beta}}\right), V\exp\left(- \frac{\mu  K}{L}\right),\\
        &&\quad\quad\quad\quad\quad\quad\quad\quad\quad V\exp\left(- \frac{\mu \sqrt{n} K}{L \ln \tfrac{nK}{\beta}}\right), \frac{\sigma^2\ln^{\frac{2(\alpha-1)}{\alpha}}\left(\frac{nK}{\beta}\right)\ln^2B_K}{n^{\frac{2(\alpha-1)}{\alpha}}K^{\frac{2(\alpha-1)}{\alpha}}\mu^2}\Bigg\}\Bigg).
    \end{eqnarray*}
    To get $\|x^{K+1} - x^*\|^2 \leq \varepsilon$ with probability $\geq 1 - \beta$, $K$ should be
    \begin{align}
        K = \cO\Bigg(\max\Bigg\{\Bigg(\frac{L}{\sqrt{n}\mu}&+\ln\left(\frac{nL}{\mu \beta}\ln\frac{V}{\varepsilon}\right)\Bigg)\ln\left(\frac{V}{\varepsilon}\right)\ln\Bigg(\frac{nL}{\mu \beta}\ln\frac{V}{\varepsilon}\Bigg),\notag\\
        &\frac{L}{\mu}\ln\left(\frac{V}{\varepsilon}\right), \frac{1}{n}\left(\frac{\sigma^2}{\mu^2\varepsilon}\right)^{\frac{\alpha}{2(\alpha-1)}}\ln \left(\frac{n}{\beta} \left(\frac{\sigma^2}{\mu^2\varepsilon}\right)^{\frac{\alpha}{2(\alpha-1)}}\right)\ln^{\frac{\alpha}{\alpha-1}}\left(B_\varepsilon\right)\Bigg\}\Bigg), \notag
    \end{align}
    where
    \begin{equation*}
        B_\varepsilon = \max\left\{2, \frac{V}{\varepsilon \ln \left(\frac{1}{\beta} \left(\frac{\sigma^2}{\mu^2\varepsilon}\right)^{\frac{\alpha}{2(\alpha-1)}}\right)}\right\}.
    \end{equation*}
\end{proof}

\newpage
\revision{\section{Numerical Experiments}
In this section we provide numerical experiments for the following simple problem:
\begin{equation}
    \label{eq:experiment_problem}
    \min_{x \in B_r(\hat{x})} f(x),
\end{equation}
where a radius  $r=1$, a central point $\hat{x} = (3,3,\dots,3)^{\top} \in \mathbb{R}^{10}$, and $f(x) = \frac{1}{2}\|x\|^2$, $f_{\xi}(x) = \frac{1}{2}\|x\|^2 +\langle \xi, x\rangle$, where $\xi$ comes from the symmetric Levy $\alpha$-stable distribution $\alpha = \frac{3}{2}$. We use the following parameters: $\gamma = 0.001$, $x^0 = \hat{x} + r \frac{e}{\|e\|}$, where $e = (1,1,\dots,1)^{\top}$. We tried three values of $\lambda$: $0.1$, $0.01$ and $0.001$. 

\begin{figure}[h]
    \centering
    \includegraphics[width=0.32\textwidth]{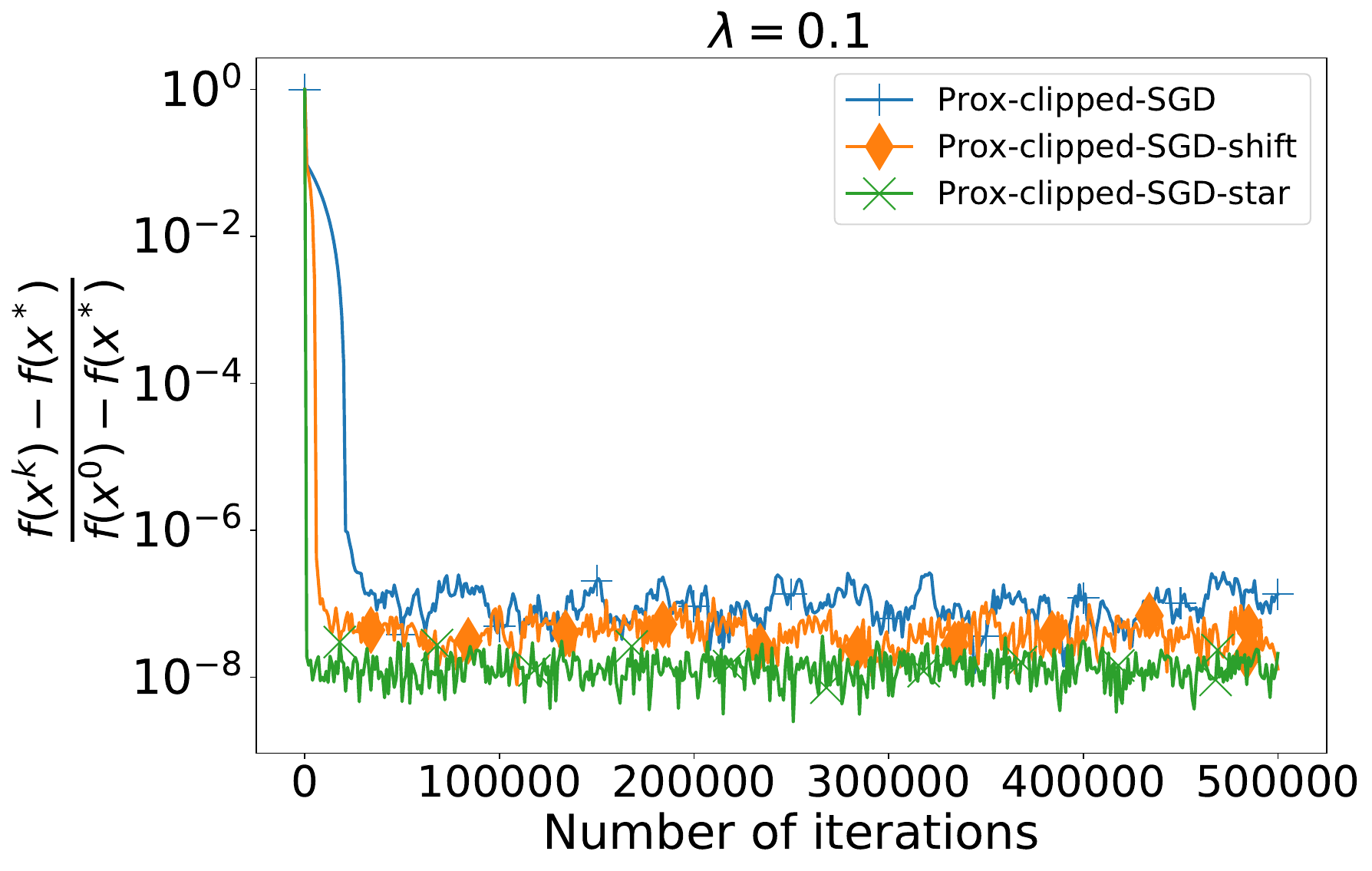}
    \includegraphics[width=0.32\textwidth]{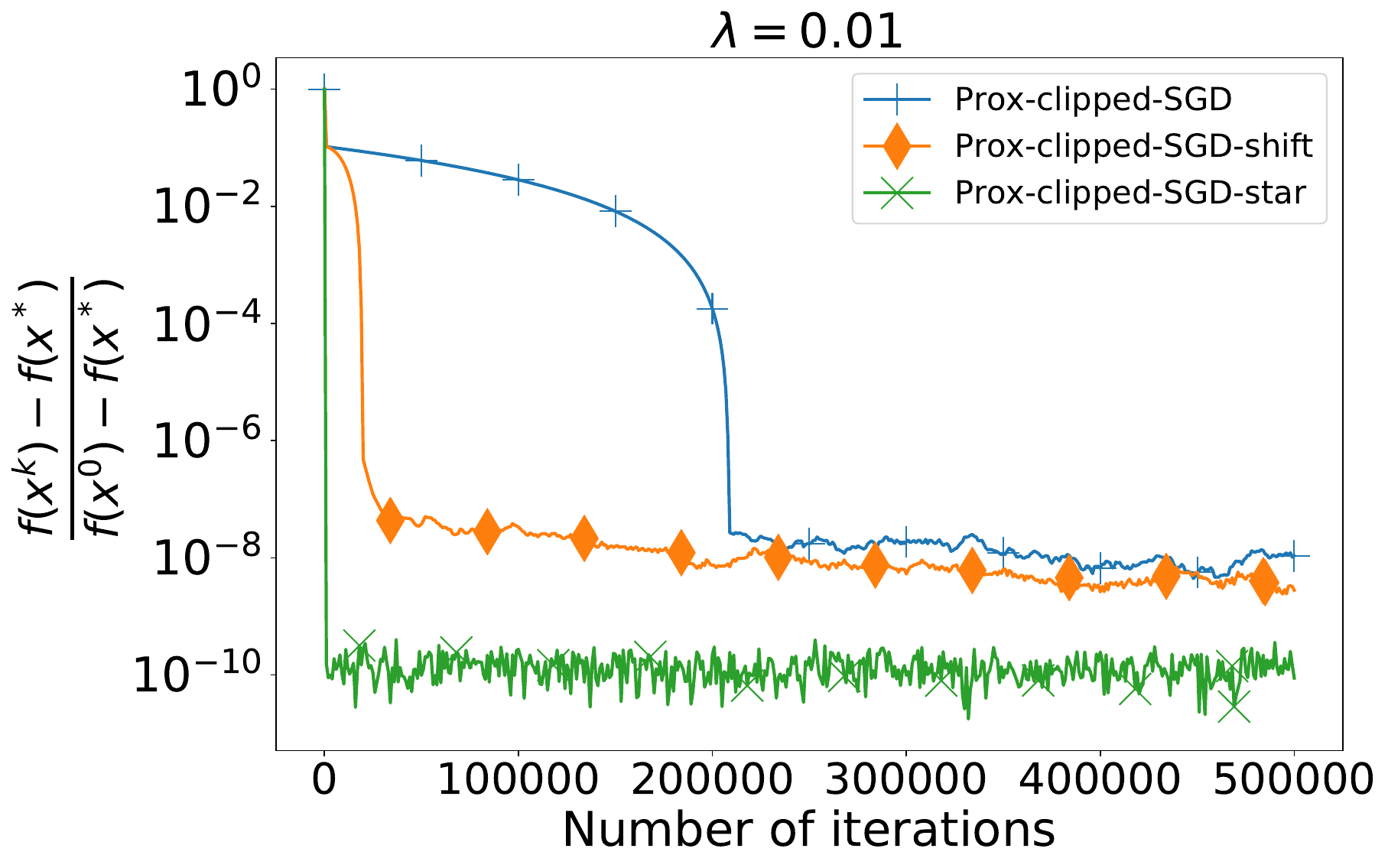}
    \includegraphics[width=0.32\textwidth]{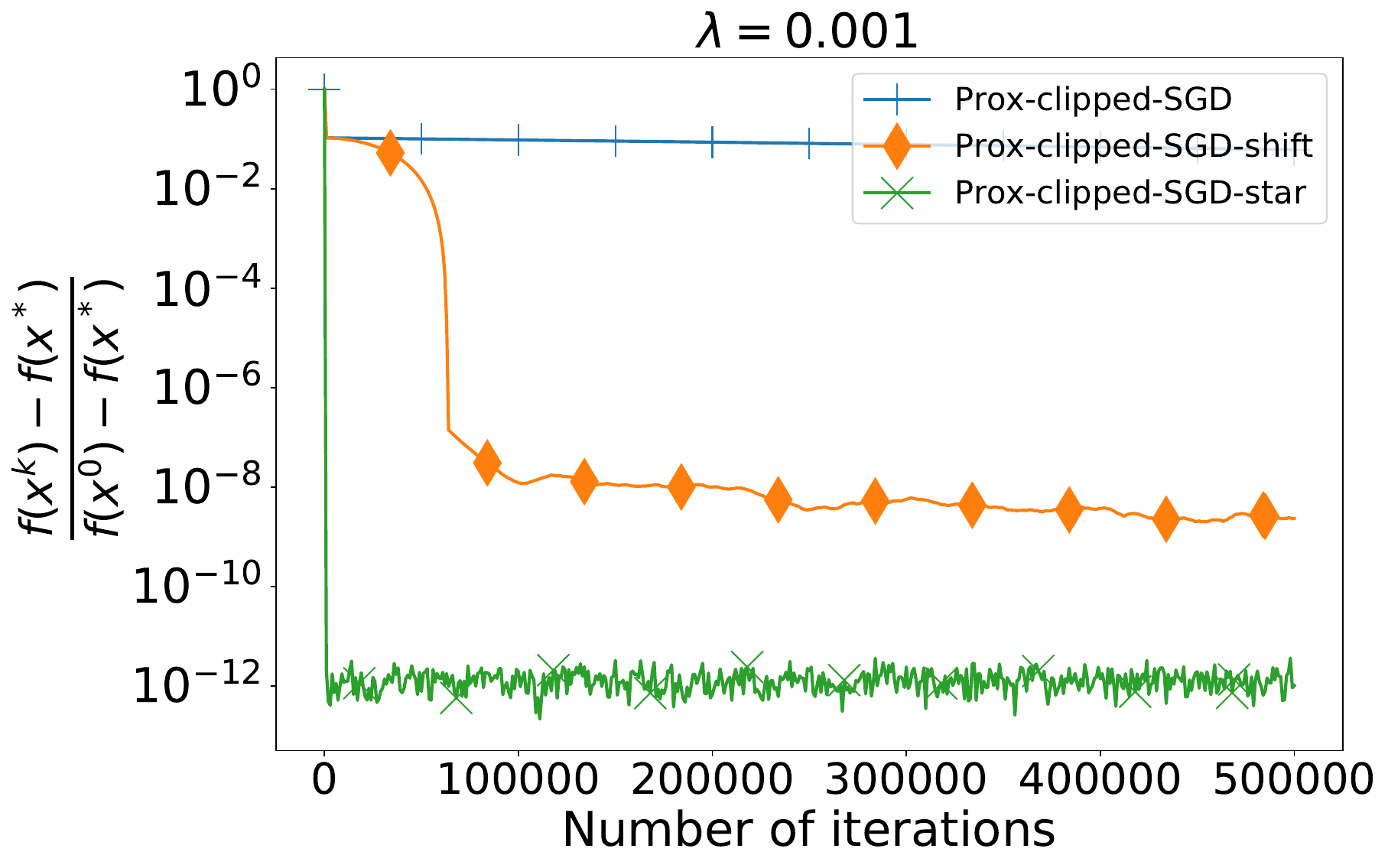}
    \caption{\revision{Comparison between performances of \algname{Prox-clipped-SGD}, \algname{Prox-clipped-SGD-star}, \algname{Prox-clipped-SGD-shift} in solving problem \eqref{eq:experiment_problem} with fixed clipping level for each of them $\lambda \in \{0.1, 0.01, 0.001\}$.}}
    \label{fig:exp_1}
\end{figure}

% \begin{figure}[h]
%     \centering
%     \includegraphics[width=0.5\textwidth]{plots/toy_composite_problem_lambda_0.01.pdf}
%     \caption{Comparison between performances of \algname{Prox-clipped-SGD}, \algname{Prox-clipped-SGD-star}, \algname{Prox-clipped-SGD-shift} in solving problem \eqref{eq:experiment_problem} with fixed clipping level for each of them $\lambda = 0.01$.}
%     \label{fig:exp_2}
% \end{figure}

% \begin{figure}[h]
%     \centering
%     \includegraphics[width=0.5\textwidth]{plots/toy_composite_problem_lambda_0.001.pdf}
%     \caption{Comparison between performances of \algname{Prox-clipped-SGD}, \algname{Prox-clipped-SGD-star}, \algname{Prox-clipped-SGD-shift} in solving problem \eqref{eq:experiment_problem} with fixed clipping level for each of them $\lambda = 0.001$.}
%     \label{fig:exp_3}
% \end{figure}

In our numerical experiments (see Figure~\ref{fig:exp_1}), we observe that the na\"ive \algname{Prox-clipped-SGD} converges slower than \algname{Prox-clipped-SGD-star} and \algname{Prox-clipped-SGD-shift}. Moreover, when the clipping level is small \algname{Prox-clipped-SGD} converges extremely slow, while \algname{Prox-clipped-SGD-shifts} takes some time to learn the shift and then converges to much better accuracy. We also see that the smaller clipping level is, the better accuracy \algname{Prox-clipped-SGD-star} achieves. For \algname{Prox-clipped-SGD-shift} we observe the same phenomenon when we reduce $\lambda$  from $0.1$ to $0.01$ . We expect the improvement in the accuracy even further if we decrease the stepsizes $\gamma$ and $\nu$ .}

\end{document}